\documentclass[12pt, a4paper, openright, twoside]{report}


\usepackage[left=30mm, right=20mm, top=30mm, bottom=22mm]{geometry}
\usepackage{emptypage}


\usepackage{amssymb, amsmath, amsthm, amsfonts, amscd}
\usepackage{mathrsfs,mathtools}
\usepackage[shortlabels]{enumitem}
\usepackage{varwidth}
\usepackage{xcolor}
\usepackage{accents}
\usepackage{romannum}
\usepackage{yfonts, euscript}
\usepackage{csquotes, dirtytalk}
\usepackage[pagewise]{lineno}


\usepackage{tabularx}
\usepackage{array, nicematrix}
\usepackage{booktabs}
\usepackage{diagbox}
\usepackage{longtable}
\usepackage{makecell}
\usepackage{colortbl}
\usepackage{multirow, multicol}


\usepackage{graphicx}
\usepackage{tikz, tikz-cd} 
\usepackage[cmtip, all]{xy}
\usepackage{pst-node}
\usepackage[new]{old-arrows}
\usepackage{subfig}
\usepackage{pdfpages}

\usetikzlibrary{arrows, arrows.meta}
\usetikzlibrary{positioning}


\usepackage{imakeidx} 
\makeindex[intoc] 



\usepackage[refpage,intoc]{nomencl}
\makeatletter
    
    \def\@@@nomenclature[#1]#2#3#4{
        \def\@tempa{#2} \def\@tempb{#3} \def\@tempc{#4}
        \protected@write\@nomenclaturefile{}{
            \string\nomenclatureentry{#1\nom@verb\@tempa @[{\nom@verb\@tempa}]
            \begingroup\nom@verb\@tempb\protect\nomeqref{\theequation}
            |nomlabelref}{\@tempc}} 
        \endgroup
        \@esphack}
\makeatother
\makenomenclature

\setlength{\nomlabelwidth}{5cm} 



\usepackage[colorlinks=true, allcolors=blue]{hyperref}



\usepackage{fancyhdr}  
\pagestyle{fancy} 
\fancyhead{}

\fancyhead[LE]{\nouppercase{\leftmark}}

\fancyhead[RO]{\nouppercase{\rightmark}}

\setlength{\headheight}{14.6721pt}


%
\usepackage{skt}


\newtheorem{thm}{{\bf Theorem}}[section]
\newtheorem{lemma}[thm]{{\bf Lemma}}
\newtheorem{prop}[thm]{{\bf Proposition}}
\newtheorem{cor}[thm]{{\bf Corollary}}
\newtheorem*{defn}{{\bf Definition}}

\newtheorem*{rmk}{{\bf Remark}}
\newtheorem*{conv}{{\bf Convention}}

\newtheorem*{facts}{{\bf Facts}}

\newtheorem*{ex}{{\bf Example}}

\newcommand{\m}{\mathfrak m}


\begin{document}


\pagenumbering{gobble} 


\thispagestyle{empty} 


\begin{center}

{\Large \bf{Some Properties of Twisted Chevalley Groups}}\\

\vspace{1.5cm}

A thesis submitted \\ 
in partial fulfillment of the requirements \\
for the degree of \\

\vspace{1cm}

{\bf {\large Doctor of Philosophy}} \\

\vspace{1cm}

by \\

\vspace{1cm}

{\large 
{\bf Makadiya Deepkumar Hasamukhbhai} \\
\vspace{0.4cm}
(Roll No. 194090008) \\
}

\vspace{1cm}

under the guidance of \\

\vspace{1cm}

{\large {\bf Professor Shripad M. Garge}} \\

\vspace {3cm}

\includegraphics[height=40mm,width=40mm]{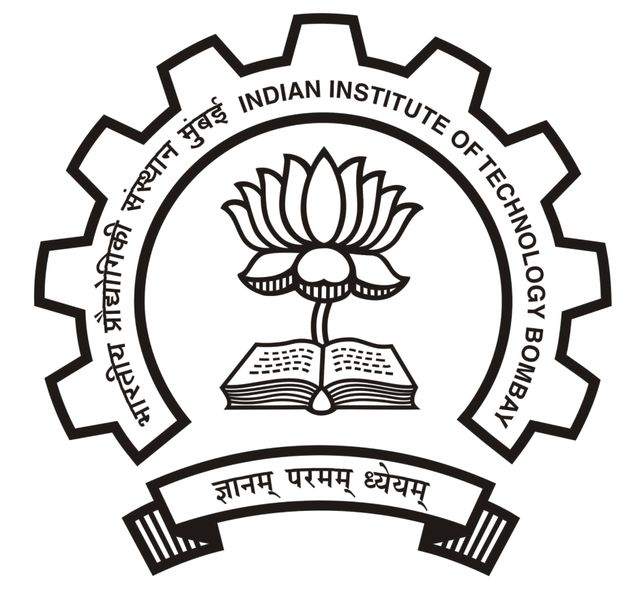} \\

\vspace {1cm}
{\large {\bf
Department of Mathematics \\
\vspace{0.5cm}
INDIAN INSTITUTE OF TECHNOLOGY BOMBAY \\
\vspace{0.6cm}
2025
}}

\end{center}


\newpage
\cleardoublepage




\thispagestyle{empty} 



\begin{center}
    \textbf{\Large {Declaration}}
\end{center}

\vspace{1.5cm}

I declare that this written submission represents my ideas in my own words. Where others' ideas and words have been included, I have adequately cited and referenced the original source. I declare that I have adhered to all principles of academic honesty and integrity and have not misrepresented or fabricated, or falsified any idea/data/fact/source in my submission. I understand that any violation of the above will cause disciplinary action by the Institute and can also evoke penal action from the sources which have thus not been properly cited or from whom proper permission has not been taken when needed.


\vspace{4cm}

\begin{center}
        \begin{tikzpicture}[scale=1]
            \node[align=left] at (0,0) {\textbf{Date:} May 28, 2025};
            \node[align=left] at (-0.05,-0.7) {\textbf{Place:} IIT Bombay};
            \draw [thick, dash pattern={on 7pt off 2pt}] (7,0.5) -- (13,0.5);
            \node at (10,0) {Makadiya Deepkumar Hasamukhbhai};
            \node at (10,-0.7) {(Roll No. 194090008)};
        \end{tikzpicture}
\end{center}






\newpage
\cleardoublepage



\setcounter{secnumdepth}{-2} 

\pagenumbering{roman} 

\thispagestyle{empty} 


\vspace*{\fill}
\begin{center}
    \textit{\Large Dedicated to \\
    \vspace{2mm}
    my beloved parents}
\end{center} 
\vspace*{\fill}


\newpage
\cleardoublepage

\thispagestyle{empty} 


\chapter{Abstract}

This thesis investigates certain structural properties of twisted Chevalley groups over commutative rings, focusing on three key problems. 

\medskip

Let $R$ be a commutative ring satisfying mild conditions. Let $G_{\pi,\sigma} (\Phi, R)$ denote a twisted Chevalley group over $R$, and let $E'_{\pi, \sigma} (\Phi, R)$ denote its elementary subgroup. 
The first problem concerns the normality of $E'_{\pi, \sigma} (\Phi, R, J)$, the relative elementary subgroups at level $J$, in the group $G_{\pi, \sigma} (\Phi, R)$. 
The second problem addresses the classification of the subgroups of $G_{\pi, \sigma}(\Phi, R)$ that are normalized by $E'_{\pi, \sigma}(\Phi, R)$. This classification provides a comprehensive characterization of the normal subgroups of $E'_{\pi, \sigma}(\Phi, R)$.
Lastly, the third problem investigates the normalizers of $E'_{\pi, \sigma}(\Phi, R)$ and $G_{\pi, \sigma}(\Phi, R)$ in the bigger group $G_{\pi, \sigma}(\Phi, S)$, where $S$ is a ring extension of $R$. We prove that these normalizers coincide. Moreover, for groups of adjoint type, we show that they are precisely equal to $G_{\pi, \sigma}(\Phi, R)$.


\newpage
\cleardoublepage 

\thispagestyle{empty} 


\chapter{Acknowledgements}

I am extremely grateful for the support and encouragement I have received from numerous individuals throughout the process of completing this thesis. 
Although I cannot mention everyone by name, I would like to express my heartfelt thanks to all who have contributed in one way or another.

First and foremost, I would like to thank my advisor, Prof. Shripad M. Garge, for his constant support and guidance. His insightful mentorship, constructive feedback, and consistent encouragement have played a key role in shaping my research trajectory and helping me reach this significant milestone. From my early days as a student to my development into an independent researcher, Prof. Garge has been a strong pillar in both my academic and personal growth. His expertise has not only deepened my understanding of the subject but also fostered a thoughtful approach to selecting research problems and significantly improved my scientific writing. I am truly grateful for the time, patience, and care he has invested in guiding me. I could not have asked for a better mentor.

I am sincerely grateful to Prof. Dipendra Prasad for his constant encouragement and support. He was always generous with his time, willing to engage in discussions, and offered many insightful ideas. I thank both Prof. Prasad and Prof. Garge for organizing seminars on algebraic groups and representation theory, which significantly enriched my understanding of these topics. I would like to extend my appreciation to my colleagues and friends—Akash, Chayan, Dibyendu, and Saad—who were regular participants and speakers in these seminars. Their invaluable support and thoughtful discussions have been truly invaluable. 

I would also like to thank my Research Progress Committee members, Prof. Saurav Bhaumik and Prof. Sudharsan Gurjar, for their valuable feedback and suggestions, which helped improve the quality of this thesis.

I am deeply grateful to all my teachers at IIT Bombay, where I pursued my MSc and PhD, for their unwavering support and invaluable guidance. I am especially thankful to Prof. Ananthnarayan Hariharan for his encouragement and thoughtful advice. I also extend my sincere gratitude to my teachers at The Maharaja Sayajirao University of Baroda, where I completed my BSc; their guidance helped shape my academic journey and love for mathematics. I am indebted to Prof. Yogesh Patel, whose mentorship during my IIT JAM preparation marked a turning point for me. I offer my heartfelt thanks to all my schoolteachers who first inspired my interest in mathematics, particularly Mr. Dinesh K. Kansagara, whose lasting influence continues to inspire me. To all who have taught and supported me along the way—thank you.

I warmly thank all my friends who stood by me through both joyful and challenging times. I am especially grateful to Swagat, Payal, and Rati for their emotional support and treasured friendship. This journey would have been much harder without you. I am also deeply thankful to Nitin, Shubham, Anshuman, and Rajesh for their steady support throughout my PhD days. My thanks also go to Rahul, Sakshi, Sai, Parvez, Umesh, Swapan, Pallab, Raunak, Ankita, Aratrika, Priyanka, Unnati, Renu, Omkar, Samrendra, Krishan, Saumyajit, Sayan, and Chaman for their constant encouragement and for making this journey more enjoyable. I would also like to acknowledge all my other friends from my MSc, BSc, and school days—though I cannot name everyone here, please know that your presence and support have meant a great deal to me.

Most importantly, I owe my deepest gratitude to my family—especially my parents, my brother Krunal, my sister-in-law Mansi, and my grandmother—for their endless encouragement and support. Their faith in me gave me the strength to follow my dreams. Without them, I would not be where I am today. I also warmly thank my cousins—Raj, Dhara, Komal, Bhavik, Vivek, Vidhi and Sagar—for their encouragement and guidance. To all my other family members, I remain truly grateful for your love, patience, and support, which have been a constant source of strength. Finally, I offer my heartfelt thanks to the Almighty for guiding and blessing me through every step of this journey.

And lastly, to chai and coffee—thank you for being the silent companions who understood me when no one else could.


\newpage
\cleardoublepage 

\setcounter{secnumdepth}{2} 



\nomenclature[B,001]{$R$}{A commutative ring with unity}{nomencl:R}
\nomenclature[B,002]{$\Phi$}{Root system}{nomencl:Phi}
\nomenclature[B,003]{$\Delta$}{Simple system}{nomencl:Delta}
\nomenclature[B,004]{$W = W(\Phi)$}{Weyl group of $\Phi$}{nomencl:W}
\nomenclature[B,005]{$\mathcal{L} = \mathcal{L}(\Phi, \mathbb{C})$}{Lie algebra over $\mathbb{C}$ with root system $\Phi$}{nomencl:L}
\nomenclature[B,006]{$\Lambda_r$ or $\Lambda_{\text{ad}}$}{The root lattice corresponding to $\Phi$}{nomencl:rootlattice}
\nomenclature[B,007]{$\Lambda_{sc}$}{The (fundamental) weight lattice corresponding to $\Phi$}{nomencl:weightlattice}
\nomenclature[B,008]{$\Lambda_{\pi}$}{A sublattice of $\Lambda_{sc}$ that contains $\Lambda_r$}{nomencl:lattice}

\nomenclature[D,001]{$G_{\pi}(\Phi, R)$ or $G(R)$}{Chevalley group of type $\Phi$ over $R$}{nomencl:G(R)}
\nomenclature[D,002]{$E_{\pi}(\Phi, R)$ or $E(R)$}{Elementary Chevalley group of type $\Phi$ over $R$}{nomencl:E(R)}
\nomenclature[D,003]{$G_{\pi}(\Phi, J)$ or $G(J)$}{Principal congruence subgroup of $G_{\pi}(\Phi, R)$ of level $J$}{nomencl:G(J)}
\nomenclature[D,004]{$G_{\pi}(\Phi, R, J)$ or $G(R, J)$}{Full congruence subgroup of $G_{\pi}(\Phi, R)$ of level $J$}{nomencl:G(R,J)}
\nomenclature[D,005]{$E_{\pi}(\Phi, R, J)$ or $E(R, J)$}{Relative elementary subgroup of $G_{\pi}(\Phi, R)$ of level $J$}{nomencl:E(R,J)}

\nomenclature[E,001]{$\theta$}{A ring automorphism of $G_{\pi}(\Phi, R)$}{nomencl:theta}
\nomenclature[E,002]{$\rho$}{A graph automorphism of $G_{\pi}(\Phi, R)$}{nomencl:rho}
\nomenclature[E,003]{$\sigma$}{An automorphism of $G_{\pi}(\Phi, R)$ given by $\sigma = \theta \circ \rho$}{nomencl:sigma}
\nomenclature[E,004]{$\Phi_\rho$}{Twisted root system}{nomencl:Phi_rho}

\nomenclature[F,001]{$G_{\pi, \sigma}(\Phi, R)$ or $G_\sigma(R)$}{Twisted Chevalley group of type $\Phi$ over $R$}{nomencl:G_sigma(R)}
\nomenclature[F,002]{$E'_{\pi, \sigma}(\Phi, R)$ or $E'_\sigma(R)$}{Elementary Twisted Chevalley group of type $\Phi$ over $R$}{nomencl:E'_sigma(R)}
\nomenclature[F,003]{$G_{\pi, \sigma}(\Phi, J)$ or $G_\sigma(J)$}{Principal congruence subgroup of $G_{\pi, \sigma}(\Phi, R)$ of level $J$}{nomencl:G_sigma(J)}
\nomenclature[F,004]{$G_{\pi,\sigma}(\Phi, R, J)$ or $G_\sigma(R, J)$}{Full congruence subgroup of $G_{\pi, \sigma}(\Phi, R)$ of level $J$}{nomencl:G_sigma(R,J)}
\nomenclature[F,005]{$E'_{\pi, \sigma}(\Phi, R, J)$ or $E'_\sigma(R, J)$}{Relative elementary subgroup of $G_{\pi, \sigma}(\Phi, R)$ of level $J$}{nomencl:E'_sigma(R,J)}

\printnomenclature


\thispagestyle{empty} 

\tableofcontents 

\newpage 

\cleardoublepage 


\pagenumbering{arabic} 

\chapter{Introduction}


This thesis aims to study certain structural properties of the twisted Chevalley groups. 
In this introductory chapter, we present the main results, followed by a historical overview and the motivations behind this study.
To provide context, we first introduce some basic definitions, which will be explored in greater detail in Chapters \ref{chapter:chevalley groups} and \ref{chapter:TCG}. 


\section{Basic Definitions}

It is well known that semisimple algebraic groups over $\mathbb{C}$ (or any algebraically closed field of characteristic zero) are classified by their root systems, which determine their type, and by the lattice between the root and (fundamental) weight lattices, which determines their isogeny class.

\smallskip

Let $\Phi$ be a (reduced) root system, and let $\Lambda_{\text{ad}}$ and $\Lambda_{\text{sc}}$ denote the corresponding root lattice and (fundamental) weight lattice, respectively. Consider a lattice $\Lambda_{\pi}$ such that $\Lambda_{\text{ad}} \subseteq \Lambda_{\pi} \subseteq \Lambda_{\text{sc}}$. The unique (up to isomorphism) semisimple linear algebraic group over $\mathbb{C}$ associated with the pair $(\Phi, \Lambda_{\pi})$ is denoted by $G_{\pi}(\Phi, \mathbb{C})$.

Moreover, this group is defined over $\mathbb{Z}$, which allows us to define the $R$-split group $G_{\pi}(\Phi, R)$ for any commutative ring $R$ with unity.\label{nomencl:R} This group is known as the \textit{Chevalley group} of type $\Phi$ over $R$.

For each root $\alpha \in \Phi$, there is an associated unipotent subgroup $U_{\alpha} = \{ x_{\alpha}(t) \mid t \in R \} \subseteq G_{\pi}(\Phi, R),$ which is isomorphic to the additive group $(R, +)$. The group generated by all elements $x_{\alpha}(t)$, where $\alpha \in \Phi$ and $t \in R$, is called the \textit{elementary Chevalley group}, and is denoted by $E_{\pi}(\Phi, R)$.

For an ideal $J$ of $R$, the natural projection map $R \longrightarrow R/J$ induces a group homomorphism
\[
\phi: G_\pi (\Phi, R) \longrightarrow G_\pi (\Phi, R/J).
\]  
Define  
\[
G_\pi (\Phi, J) \coloneq \ker(\phi) \quad \text{and} \quad G_\pi (\Phi, R, J) \coloneq \phi^{-1}(Z(G_\pi (\Phi, R/J))),
\]  
where $Z(G_\pi (\Phi, R/J))$ denotes the center of $G_\pi (\Phi, R/J)$.  
The subgroup $G_\pi(\Phi, J)$ of $G_\pi (\Phi, R)$ is referred to as the \emph{principal congruence subgroup} of level $J$, while $G_\pi(\Phi, R, J)$ is called the \emph{full congruence subgroup} of level $J$.
Let $E_\pi (\Phi, J)$ denote the subgroup of $E_\pi (\Phi, R)$ generated by $x_\alpha(t)$ for all $\alpha \in \Phi$ and $t \in J$.  
Additionally, define $E_\pi (\Phi, R, J)$ as the normal subgroup of $E_\pi (\Phi, R)$ generated by $E_\pi (\Phi, J)$. The subgroup $E_\pi(\Phi, R, J)$ of $G_\pi (\Phi, R)$ is referred to as the \emph{relative elementary subgroup} of level $J$. 

\smallskip

Now, let $\sigma$ be the composition of a graph automorphism $\rho$ and a ring automorphism $\theta$ of $G_\pi (\Phi, R)$, where $\rho$ and $\theta$ have the same order. The twisted Chevalley group $G_{\pi, \sigma} (\Phi, R)$ is defined to be the set of all elements in $G_\pi (\Phi, R)$ that are fixed by the automorphism $\sigma$. 
Let $E'_{\pi, \sigma} (\Phi, R)$ be the subgroup of $G_{\pi, \sigma} (\Phi, R)$ generated by $x_{[\alpha]}(t)$ where $[\alpha] \in \Phi_\rho$ and $t \in R_{[\alpha]}$ (see Section~\ref{sec:E(R)} for the notation). 

Consider an ideal $J$ of $R$ that is invariant under $\theta$ (i.e., $\theta (J) \subset J$). 
The natural projection map $R \longrightarrow R/J$ induces a group homomorphism  
\[
\phi: G_{\pi, \sigma} (\Phi, R) \longrightarrow G_{\pi, \sigma} (\Phi, R/J).
\]  
Define the subgroups 
\[
G_{\pi, \sigma} (\Phi, J) \coloneq \ker(\phi) \quad \text{and} \quad G_{\pi, \sigma} (\Phi, R, J) \coloneq \phi^{-1} (Z(G_{\pi, \sigma} (\Phi, R/J))),
\]  
where $Z(G_{\pi, \sigma} (\Phi, R/J))$ is the center of the group $G_{\pi, \sigma} (\Phi, R/J)$.
The subgroup $G_{\pi, \sigma} (\Phi, J)$ of $G_{\pi, \sigma} (\Phi, R)$ is called a \emph{principal congruence subgroup} of level $J$, while $G_{\pi, \sigma} (\Phi, R, J)$ is referred to as a \emph{full congruence subgroup} of level $J$. 
Let $E'_{\pi, \sigma} (\Phi, J)$ be the subgroup of $E'_{\pi, \sigma} (\Phi, R)$ generated by elements $x_{[\alpha]}(t)$ for all $[\alpha] \in \Phi_\rho$ and $t \in J_{[\alpha]}$. Additionally, define $E'_{\pi, \sigma} (\Phi, R, J)$, called the \emph{relative elementary subgroup} of level $J$, as the normal subgroup of $E'_{\pi, \sigma} (\Phi, R)$ generated by $E'_{\pi, \sigma} (\Phi, J)$. 


\section{Certain Commutator Formulas}

We begin by presenting one of the main structure theorems, which asserts that the elementary subgroups are normal in the Chevalley group $G_\pi (\Phi, R)$. 

\begin{thm}[{L. N. Vaserstein \cite{LV}}]\label{thm:Intro1}
    Let $\Phi$ be an irreducible root system of rank $\geq 2$ and \(R\) a commutative ring with unity. If $J$ is an ideal of $R$, then the following commutator relations hold: 
    \[
        [E_\pi(\Phi, R, J), G_\pi(\Phi, R)] \subset E_\pi(\Phi, R, J) \quad \text{and} \quad [E_\pi(\Phi, R), G_\pi(\Phi, R, J)] \subset E_\pi(\Phi, R, J).
    \]
    Except in the cases where $\Phi = B_2$ or $G_2$ and $R$ has a residue field with two elements, these inclusions are equalities.
\end{thm}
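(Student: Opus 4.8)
The plan is to take the first inclusion $[E_\pi(\Phi,R,J),G_\pi(\Phi,R)]\subseteq E_\pi(\Phi,R,J)$, i.e.\ the normality of the relative elementary subgroup in the full Chevalley group, as the central assertion, and to organise everything around the Chevalley commutator formula
\[
[x_\alpha(t),x_\beta(u)]=\prod_{\substack{i,j>0\\ i\alpha+j\beta\in\Phi}}x_{i\alpha+j\beta}\big(N_{\alpha\beta ij}\,t^{i}u^{j}\big).
\]
By definition $E_\pi(\Phi,R,J)$ is the normal closure in $E_\pi(\Phi,R)$ of the generators $x_\alpha(t)$ with $t\in J$, so it is tautologically normalised by $E_\pi(\Phi,R)$; moreover the formula above already settles the easy case of conjugating $x_\alpha(t)$ (with $t\in J$) by an elementary generator $x_\beta(u)$, since every exponent $t^iu^j$ carries a factor $t\in J$ and hence lands in $J$. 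The entire difficulty is to replace the elementary conjugator by an arbitrary $g\in G_\pi(\Phi,R)$, and the quotient $G_\pi/E_\pi$ is a $K_1$-type object that is far from trivial.

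To handle a general $g$ I would run the Suslin--Vaserstein localisation-and-patching argument. Over a local ring $R_{\mathfrak m}$ the Gauss decomposition gives $G_\pi(\Phi,R_{\mathfrak m})=T(R_{\mathfrak m})\cdot E_\pi(\Phi,R_{\mathfrak m})$, so a local conjugator splits into a torus part and an elementary part. Conjugation by the elementary part is controlled by the commutator formula as above, and conjugation by a torus element uses $h_\alpha(v)\,x_\beta(t)\,h_\alpha(v)^{-1}=x_\beta\big(v^{\langle\beta,\alpha^{\vee}\rangle}t\big)$, which again leaves the argument inside $J$. Thus the statement holds after localising at every maximal ideal $\mathfrak m$.

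The main obstacle is the descent from these local statements back to $R$. I would set up a local--global principle of Quillen--Suslin type, asserting that an element lies in $E_\pi(\Phi,R,J)$ as soon as all of its localisations lie in $E_\pi(\Phi,R_{\mathfrak m},J_{\mathfrak m})$, and make it effective via the dilation/denominator technique: write $g\,x_\alpha(t)\,g^{-1}$ elementarily over a principal localisation $R_s$, then show that after multiplying the variable by a high power $s^{N}$ the expression is already defined and elementary over $R$ while never leaving level $J$. The two delicate points are the uniform control of denominators and the preservation of the relative level; this is exactly where the hypothesis $\operatorname{rank}\Phi\ge 2$ becomes indispensable, since one needs, for each root $\alpha$, a decomposition $\alpha=\beta+\gamma$ into roots (possible in every irreducible system of rank $\ge 2$ but not in rank $1$) so that the commutator formula can both produce $x_\alpha(\,\cdot\,)$ and absorb the resulting error terms.

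The second inclusion $[E_\pi(\Phi,R),G_\pi(\Phi,R,J)]\subseteq E_\pi(\Phi,R,J)$ is treated by the same machinery, now exploiting that every $g\in G_\pi(\Phi,R,J)$ is central modulo $J$, so that its commutators with elementary generators are trivial modulo $J$ and the local analysis pushes them into $E_\pi(\Phi,R,J)$. For the equalities I would prove the reverse inclusions by realising each generator $x_\alpha(t)$, $t\in J$, as a commutator $[x_\beta(t),x_\gamma(1)]$ with $x_\beta(t)\in E_\pi(\Phi,R,J)$ and $x_\gamma(1)\in E_\pi(\Phi,R)$, which is possible whenever the governing structure constant is a unit multiple allowing $t$ to be recovered. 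The excluded cases $\Phi=B_2,G_2$ with residue field $\F_2$ are precisely those multiply-laced systems in which the relevant structure constants become even and the values available over $\F_2$ are too few to recover all short-root generators as commutators, so that only containment, not equality, survives.
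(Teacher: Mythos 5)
The paper does not actually prove this theorem: it is quoted from Vaserstein \cite{LV} (here and again as Theorem~\ref{thm:commrelations}), so there is no in-paper proof to compare against line by line. The closest material is Chapter~\ref{chapter:Normal_subgroups}, where the twisted analogue (Theorem~\ref{Ch5_mainthm1}) is proved, and the route taken there differs from yours in two respects. First, for normality of the relative elementary subgroup the paper does not rerun localization at level $J$; it reduces to the absolute case $J=R$ (normality of the elementary subgroup, quoted from Taddei/Suzuki) by Stein's relativization trick with the auxiliary ring $R'=\{(r,s)\in R\times R \mid r-s\in J\}$ and the ideal $J'=\{(r,0)\mid r\in J\}$ (Proposition~\ref{normal}). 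Second, the inclusion $[E,G(R,J)]\subset E(R,J)$ is obtained not by localization but by an algebraic shortcut: once $[E,G(J)]\subset E(R,J)$ is known (Corollary~\ref{mixcom}) and $E$ is shown to be perfect, the map $h\mapsto [h,g]\,E(R,J)$ is a homomorphism from a perfect group to an abelian group and is therefore trivial. Your plan --- full localization-and-patching in the style of Suslin--Vaserstein, with Gauss decomposition over local rings and dilation of denominators --- is essentially Vaserstein's original argument and is a legitimate alternative; it is heavier on the patching side but avoids the auxiliary ring.

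One step as written is not sound and needs repair. For the second inclusion you argue that $g\in G_\pi(\Phi,R,J)$ is central modulo $J$, so $[x_\alpha(t),g]$ is ``trivial modulo $J$'', and you then push it into $E_\pi(\Phi,R,J)$. But ``elementary and congruent to $1$ modulo $J$'' only places the commutator in $E_\pi(\Phi,R)\cap G_\pi(\Phi,J)$, and this intersection is in general strictly larger than $E_\pi(\Phi,R,J)$ (the discrepancy is a relative $K_1$-type obstruction); identifying the two is precisely what must not be assumed. To close the gap you need either (a) the local Gauss decomposition of the full congruence subgroup, $G_\pi(\Phi,R_{\mathfrak m},J_{\mathfrak m})=U(J_{\mathfrak m})\,T(R_{\mathfrak m},J_{\mathfrak m})\,U^{-}(J_{\mathfrak m})$, so that the local conjugator itself splits into level-$J$ elementary and toral pieces before the commutator is formed (the analogue of Proposition~\ref{lavidecomposition} and Corollary~\ref{G(R,J)=UTV}), or (b) the relativization-plus-perfectness argument sketched above. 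The rest of the outline --- the elementary and toral conjugation computations, the recovery of $x_\alpha(t)$ as a commutator for the reverse inclusions, and the role of the even structure constants in excluding $B_2$ and $G_2$ over a residue field with two elements --- is correct in substance.
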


\begin{rmk}
    The first commutator relation in the above theorem is equivalent to saying that $E_\pi (\Phi, R, J)$ is a normal subgroup of $G_\pi (\Phi, R)$. In particular, $E_\pi (\Phi, R)$ is normal in $G_\pi (\Phi, R)$.
\end{rmk}

Several variants of this theorem for different classical and exceptional groups over various rings are available in the literature. 
These results have several important applications which includes describing normal subgroups (see the next section). Indeed, such results were first observed in the context of the general linear group over the ring \(\mathbb{Z}\) in the works of H. Bass~\cite{HB1, HB2}. 
It was subsequently extended to other classical groups by A. Bak~\cite{ABak1, ABak2}. 
A significant breakthrough came with the work of A. A. Suslin~\cite{AS1}, who proved that the elementary subgroup is always normal in \(GL(n, R)\) when \(R\) is commutative and \(n \geq 3\).
This result was later generalized by A. A. Suslin and V. I. Kopeiko~\cite{AS&VK, VK} to the even-dimensional split orthogonal group and the symplectic group. The symplectic case was independently rediscovered by G. Taddei~\cite{GT2}, while the odd-dimensional split orthogonal group was studied by N. A. Vavilov~\cite{NV4}.
For Chevalley groups, these types of results were primarily studied by M. R. Stein, G. Taddei, L. N. Vaserstein, Z. I. Borevich, E. B. Plotkin and N. A. Vavilov (see \cite{MS, GT, LV, LV2, LV3, LV4, NV5, NV6, NV7, ZB&NV, EP1, EP2}). For a more comprehensive historical perspective, we refer the reader to N. A. Vavilov~\cite{NV1}.

In a joint study with S. M. Garge~\cite{SG&DM1}, the author established similar results for twisted Chevalley groups. The statement of the result is as follows:

\begin{thm}[Main Theorem 1]\label{mainthm1}
    Let $\Phi_\rho$ be one of the following types: ${}^2 A_n \ (n \geq 3), {}^2 D_n \ (n \geq 4), {}^2 E_6$, or ${}^3 D_4$. Assume that $1/2 \in R$ and in addition, $1/3 \in R$ if $\Phi_\rho \sim {}^3 D_4$.
    Let $J$ be a $\theta$-invariant ideal of $R$. Then 
    \begin{align*}
        E'_{\pi,\sigma} (\Phi, R, J) &= [E'_{\pi, \sigma} (\Phi, R), E'_{\pi,\sigma}(\Phi, J)] \\
        &= [E'_{\pi,\sigma} (\Phi, R), G_{\pi,\sigma} (\Phi, R, J)] \\
        &= [G_{\pi,\sigma} (\Phi, R), E'_{\pi,\sigma} (\Phi, R, J)].
    \end{align*}
\end{thm}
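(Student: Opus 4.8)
The plan is to reduce the three asserted equalities to three self-contained claims and then close a cycle of inclusions. For brevity write $E = E'_{\pi,\sigma}(\Phi, R)$, $E(J) = E'_{\pi,\sigma}(\Phi, J)$, $E(R,J) = E'_{\pi,\sigma}(\Phi, R, J)$, $G = G_{\pi,\sigma}(\Phi, R)$ and $G(R,J) = G_{\pi,\sigma}(\Phi, R, J)$. The three claims are: (i) \emph{relative perfectness}, $E(J) \subseteq [E, E(J)]$; (ii) \emph{normality}, $[E, E(J)]$ is normal in the whole group $G$; and (iii) a \emph{Vaserstein-type containment}, $[E, G(R,J)] \subseteq E(R,J)$. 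Granting these, the theorem is formal. Since $E(R,J)$ is by definition the normal closure of $E(J)$ in $E$, claims (i) and (ii) force $E(R,J) \subseteq [E, E(J)]$, while $[E,E(J)] \subseteq [E, E(R,J)] \subseteq E(R,J)$ is automatic from the normality of $E(R,J)$ in $E$; hence $E(R,J) = [E,E(J)]$. Each generator $x_{[\alpha]}(t)$ with $t \in J_{[\alpha]}$ dies modulo $J$, so $E(J) \subseteq G_{\pi,\sigma}(\Phi, J) \subseteq G(R,J)$ and therefore $[E,E(J)] \subseteq [E, G(R,J)]$; combined with (iii) this pins $[E, G(R,J)]$ between $[E,E(J)]$ and $E(R,J) = [E,E(J)]$, giving equality. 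Finally $[E,E(J)] \subseteq [G, E(R,J)]$ because $E \subseteq G$ and $E(J) \subseteq E(R,J)$, while $[G, E(R,J)] \subseteq E(R,J)$ follows from (ii); so $[G, E(R,J)] = E(R,J)$ as well.

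For claim (i) I would argue root by root. The rank hypotheses ($n \geq 3$ for ${}^2A_n$, $n \geq 4$ for ${}^2D_n$, together with the exceptional types ${}^2E_6$ and ${}^3D_4$) ensure that every twisted root $[\alpha] \in \Phi_\rho$ decomposes as a sum $[\beta]+[\gamma]$ of twisted roots for which the relevant structure constant becomes a unit once $2$ (and $3$, for ${}^3D_4$) is inverted. Feeding a parameter from $J_{[\gamma]}$ into one factor and an $R$-parameter into the other, the twisted Chevalley commutator formula writes $x_{[\alpha]}(t)$, $t \in J_{[\alpha]}$, modulo already-produced generators, as a commutator $[x_{[\beta]}(\cdot), x_{[\gamma]}(\cdot)] \in [E, E(J)]$. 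The delicate point is the short, possibly non-abelian, root subgroups created by folding, where the parameter lives in the module $R_{[\alpha]}$; invertibility of $2$ (respectively $3$) is precisely what lets one solve the resulting coefficient equations and recover all of $E(J)$.

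The core of the proof, and the step I expect to fight hardest, is claim (ii): that $[E, E(J)]$ is stable under conjugation by all of $G$, not merely by $E$. Here I would follow the localization-and-patching method of Taddei and Vaserstein, adapted to the twisted setting. Fixing a generating commutator $h = [x_{[\beta]}(s), x_{[\gamma]}(u)]$ with $u \in J_{[\gamma]}$ and an arbitrary $g \in G$, one must show $g h g^{-1} \in [E, E(J)]$. Because $J$ is $\theta$-invariant, localization at a maximal ideal $\mathfrak{m}$ commutes with the twisting and with the formation of the modules $R_{[\alpha]}$ and $J_{[\alpha]}$, so over each $R_{\mathfrak{m}}$ the conjugating element $g$ is, up to a central or congruence factor, a product of elementary generators, and the statement collapses to a commutator identity verifiable from the twisted Chevalley relations via the Hall--Witt identity and the three-subgroup lemma. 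The real obstacle is the sustained bookkeeping of the twisted structure constants and the non-commutativity of the short-root subgroups through this conjugation calculus; this is where most of the technical labour lies and where the invertibility hypotheses are used repeatedly.

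Claim (iii) I would obtain from the observation that $G(R,J)$ is, modulo $J$, central in $G_{\pi,\sigma}(\Phi, R/J)$. Thus for $g \in G(R,J)$ and any elementary generator $x = x_{[\alpha]}(t)$ the commutator $[x, g]$ is trivial modulo $J$, and a direct evaluation through the twisted commutator formulas exhibits $[x,g]$ as a product of generators $x_{[\delta]}(J_{[\delta]})$ and their $E$-conjugates, that is, as an element of $E(R,J)$. Ranging over a generating set of $E$ and invoking the normality of $E(R,J)$ in $E$ already secured by (i)--(ii) upgrades this to $[E, G(R,J)] \subseteq E(R,J)$, closing the cycle. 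Throughout, claims (ii) and (iii) share the same localization machinery, so in practice I would prove them in tandem.
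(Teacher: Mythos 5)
Your formal reduction is sound: granted (i) relative perfectness, (ii) normality in $G$, and (iii) the Vaserstein-type containment, the cycle of inclusions you close does yield all three equalities, and your claim (i) is essentially the paper's own first step (a root-by-root production of $x_{[\alpha]}(u)$, $u \in J_{[\alpha]}$, inside $[E,E(J)]$ via rank-two subsystems of type $A_2$, $B_2$, $G_2$, using $1/2$ and, for ${}^3D_4$, $1/3$). Where you diverge is in how (ii) and (iii) are obtained. For (ii) the paper does not localize at all: it proves normality of $E'_{\pi,\sigma}(\Phi,R,J)$ in $G_{\pi,\sigma}(\Phi,R)$ by the doubling trick, passing to the ring $R' = \{(r,s) \in R \times R \mid r - s \in J\}$ with ideal $J' = J \times 0$, identifying $E'_\sigma(R',J') = E'_\sigma(R') \cap G_\sigma(J')$, and then invoking the \emph{absolute} normality theorem of Suzuki (valid under the $1/2$, $1/3$ hypotheses) over $R'$. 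This reduces the relative statement to the absolute one in a few lines and, as a free byproduct, yields the mixed containment $[E'_\sigma(R), G_\sigma(J)] \subseteq E'_\sigma(R,J)$. A Taddei--Vaserstein localization argument could be made to work, but it is substantially heavier and is reserved in the paper for the genuinely harder classification theorem, not for this one.

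The concrete gap is in your claim (iii). You propose to evaluate $[x_{[\alpha]}(t), g]$ for $g \in G_{\pi,\sigma}(\Phi,R,J)$ ``through the twisted commutator formulas,'' but those formulas only apply when both entries are elementary generators, and an arbitrary $g \in G(R,J)$ need not be a product of root elements (this is exactly the $K_1$-obstruction). Knowing only that $[x,g] \equiv 1 \pmod J$ places the commutator in $G_\sigma(J)$, not in $E'_\sigma(R,J)$, and there is no direct computation that bridges this. The paper's fix is short and worth knowing: with $M = E'_\sigma(R)$ perfect (your claim (i) at $J=R$) and $N = (E'_\sigma(R) \cap G_\sigma(J))/E'_\sigma(R,J)$ abelian (by the mixed containment above), the map $h \mapsto [h,g]\,E'_\sigma(R,J)$ is a well-defined group homomorphism $M \to N$ for each fixed $g \in G(R,J)$, hence trivial, which is precisely $[E'_\sigma(R), G_\sigma(R,J)] \subseteq E'_\sigma(R,J)$. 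If you insist on your localization route for (iii), you would need the local decomposition $G_\sigma(R_S,R_S\cap J_S) = U_\sigma(J_S) T_\sigma(R_S,J_S) U^-_\sigma(J_S)$ plus a patching lemma of Taddei type to return to $R$ — all of which the perfect-to-abelian argument avoids.
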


\begin{rmk}
    The proof of the above theorem can be found in Chapter~\ref{chapter:Normal_subgroups} (see Theorem~\ref{Ch5_mainthm1}).
\end{rmk}

	
\section{Classification of Normal Subgroups}

The second main structure theorem focuses on the description of normal subgroups. However, the typical approach involves describing subgroups that are normalized by the elementary subgroups (whether they are normal in the entire Chevalley group is a separate matter).

\begin{thm}[{L. N. Vaserstein \cite{LV}}, E. Abe \cite{EA3}]\label{thm:Intro2}
    Let $\Phi$ be an irreducible root system of rank $\geq 2$ and $R$ a commutative ring with unity. Assume $1/ 2 \in R$ if $\Phi \sim B_\ell, C_\ell, F_4$, and that $1/3 \in R$ and $R$ has no factor ring with two elements if $\Phi \sim G_2$. If $H$ is a subgroup of $G_\pi(\Phi, R)$ normalized by $E_\pi(\Phi, R)$, then there exists a unique ideal $J$ of $R$ such that 
    \[
        E_\pi(\Phi, R, J) \subset H \subset G_\pi (\Phi, R, J).
    \]
\end{thm}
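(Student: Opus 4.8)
The plan is to recover the ideal $J$ intrinsically from $H$ as its \emph{level}, establish the upper inclusion essentially by definition, and then devote the real work to the lower inclusion. Write $\phi_J \colon G_\pi(\Phi, R) \to G_\pi(\Phi, R/J)$ for the reduction homomorphism. I would define $J$ to be the smallest ideal of $R$ such that $\phi_J(H) \subseteq Z(G_\pi(\Phi, R/J))$; concretely, $J$ is the ideal generated by the ``non-central coordinates'' of elements of $H$, i.e.\ for each $h \in H$ and each $\alpha \in \Phi$ one extracts the coordinate $t_\alpha(h)$ measuring the $x_\alpha$-component of the deviation of $h$ from a toral (central) element, and lets $J$ be generated by all such $t_\alpha(h)$ together with the relevant differences of torus coordinates. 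The first thing to check is that this set really generates an ideal: additivity comes from multiplying elements of $H$, while closure under scaling by an arbitrary $s \in R$ is where the hypothesis is used essentially — because $H$ is normalized by $E_\pi(\Phi, R)$ we have $[E_\pi(\Phi, R), H] \subseteq H$, so commutators $[x_\beta(s), h] \in H$ can be fed back into the coordinate machine, and the Chevalley commutator formula converts multiplication by $s$ into a new coordinate of an element of $H$. With $J$ defined this way, the inclusion $H \subseteq G_\pi(\Phi, R, J)$ is immediate, since $G_\pi(\Phi, R, J) = \phi_J^{-1}(Z(G_\pi(\Phi, R/J)))$ and $\phi_J(H)$ is central by construction.

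The substantial half is $E_\pi(\Phi, R, J) \subseteq H$. The goal is to produce, inside $H$, an elementary root element $x_\beta(t)$ for each generator $t$ of $J$ and each root $\beta$. Starting from $h \in H$ with a chosen nonzero coordinate $t_\alpha(h) = t$, I would compute a commutator such as $[x_{-\alpha}(s), h] \in H$ and expand it via the Chevalley commutator formula: the leading term is an elementary element $x_\beta(c\, t s + \cdots)$ whose principal coordinate lies in $J$, and the correction terms involve coordinates of $h$ that either already lie in $J$ or can be cleared by one further commutator. Iterating this, and using Weyl-group conjugation by the elements $w_\alpha$ to transport one root to another within a length orbit, one manufactures pure elements $x_\beta(u) \in H$ with $u$ ranging over a generating set of $J$. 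Since $H$ is normalized by $E_\pi(\Phi, R)$, the normal closure in $E_\pi(\Phi, R)$ of these elements is contained in $H$; but that normal closure is exactly $E_\pi(\Phi, R, J)$ by the very definition of the relative elementary subgroup, which yields the desired inclusion.

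For uniqueness I would argue that the level is an invariant of $H$: if $E_\pi(\Phi, R, J') \subseteq H \subseteq G_\pi(\Phi, R, J')$ for some ideal $J'$, then one checks that both $E_\pi(\Phi, R, J')$ and $G_\pi(\Phi, R, J')$ have level exactly $J'$, so the level of the sandwiched $H$ is squeezed to equal $J'$. Since $J$ was constructed as precisely this level, $J = J'$, and the ideal is forced to be unique.

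The hard part will be the coordinate-extraction step in the lower inclusion: isolating a single clean elementary element $x_\beta(u)$ from an arbitrary $h \in H$, and ensuring that the cascade of correction terms stays inside $J$ rather than enlarging it. This is exactly where the arithmetic hypotheses enter — for $\Phi \sim B_\ell, C_\ell, F_4$ the multiple-bond roots force structure constants equal to $2$ into the commutator formula, and for $\Phi \sim G_2$ one meets the coefficient $3$ and degeneracies over a residue field $\F_2$; inverting $2$ (respectively $3$) and excluding the two-element residue field are what make the isolation of a single root coordinate possible. Handling the rank-$2$ subsystems, where the commutator formula is least generous, is the most delicate point, and I expect the bookkeeping of these correction terms to be the genuine obstacle rather than the overall strategy.
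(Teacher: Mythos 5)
Your overall architecture (define $J$, prove the two inclusions, squeeze for uniqueness) is sound and your uniqueness argument is fine, but your division of labour is inverted relative to the Vaserstein--Abe proof that this thesis follows. (The thesis only cites this theorem; its own detailed proof is of the twisted analogue, Theorem~\ref{Ch5_mainthm}.) There $J$ is defined from the \emph{pure root elements already lying in $H$}, i.e.\ from the sets $\{t : x_\alpha(t)\in H\}$, so that $E_\pi(\Phi,J)\subseteq H$ holds essentially by construction and $E_\pi(\Phi,R,J)\subseteq H$ follows at once from normalization; all of the work then goes into $H\subseteq G_\pi(\Phi,R,J)$, for which one has the commutator criterion $G_\pi(\Phi,R,J)=\{x : [x,E_\pi(\Phi,R)]\subseteq E_\pi(\Phi,R,J)\}$ (the analogue of Corollary~\ref{C=G}) available. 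You instead take $J$ to be the level of $H$, which makes the upper inclusion trivial but pushes the entire difficulty into the lower one, where you no longer have that criterion to lean on. This inversion is not fatal in principle, but it does not remove any of the work that must actually be done.

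The genuine gap is in the extraction step. Your plan to read off ``coordinates'' $t_\alpha(h)$ from an arbitrary $h\in H$, and to expand $[x_{-\alpha}(s),h]$ by the Chevalley commutator formula into a leading elementary term plus corrections, presupposes that $h$ decomposes into root elements and a toral part. Over a general commutative ring no such global normal form exists, and the commutator formula only applies to pairs of root elements, so the step you yourself flag as ``the hard part'' cannot even be formulated as written. This is exactly why the actual proof localizes at every maximal ideal, uses the Gauss-type decomposition $G(R_S,J_S)=U(J_S)\,T(R_S,J_S)\,U^{-}(J_S)$ valid over semilocal rings (cf.\ Proposition~\ref{lavidecomposition} and Corollary~\ref{G(R,J)=UTV}), proves the extraction lemmas there (the analogues of Lemmas~\ref{lemma:U cap H} and~\ref{lemma:UTV cap H}), and then returns from $R_{\mathfrak m}$ to $R$ by a Quillen--Suslin-type patching argument (Taddei's Lemma~\ref{lemma:GT}) combined with an ideal-of-denominators argument. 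None of this local--global machinery appears in your proposal. Relatedly, you locate the essential obstacle in the structure constants $2$ and $3$ of the multiply-laced rank-two subsystems; those do explain the hypotheses on $1/2$, $1/3$ and the exclusion of residue fields with two elements, but they are a second-order issue compared with the missing localization and patching.
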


\begin{rmk}
    Both L. N. Vaserstein and E. Abe provide slightly more general versions of this result than the one presented here. However, for the sake of simplicity, we presented a weaker version.
\end{rmk}

Results similar to Theorem \ref{thm:Intro2} for the classical groups have a long history extending back to the 19th century. Since then numerous mathematicians have addressed the problem for various classical and exceptional groups over different rings. For a more detailed historical account, we recommend referring to N. A. Vavilov~\cite{NV1}.

In the context of Chevalley groups, the problem of describing its normal subgroups was first addressed by E. Abe~\cite{EA2}, who considered the case of local rings. Later, in 1976, E. Abe and K. Suzuki~\cite{EA&KS} proved results that closely approximated the result of Theorem~\ref{thm:Intro2}. In 1986, L. N. Vaserstein~\cite{LV} proved the result presented in Theorem~\ref{thm:Intro2}. In 1989, E. Abe~\cite{EA3} further generalized this result and provided an alternative proof to that of Vaserstein. 

In 1977, K. Suzuki~\cite{KS1} established a similar result for twisted Chevalley groups over local rings. More recently, the author, in collaboration with S. M. Garge~\cite{SG&DM1}, extended this work to twisted Chevalley groups over commutative rings.

\begin{thm}[Main Theorem 2]\label{mainthm}
    \normalfont
    Let $\Phi_\rho$ be one of the following types: ${}^2 A_n \ (n \geq 3), {}^2 D_n \ (n \geq 4), {}^2 E_6$, or ${}^3 D_4$. Assume that $1/2 \in R$, and in addition, $1/3 \in R$ if $\Phi_\rho \sim {}^3 D_4$. Then $H$ is a subgroup of $G_{\pi, \sigma} (\Phi, R)$ normalized by $E'_{\pi, \sigma} (\Phi, R)$ if and only if there exists a unique $\theta$-invariant ideal $J$ of $R$ such that 
    \[
        E'_{\pi, \sigma} (\Phi, R, J) \subseteq H \subseteq G_{\pi, \sigma} (\Phi, R, J).
    \]
\end{thm}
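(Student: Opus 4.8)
The plan is to prove both implications, with the forward (``if'') direction being a quick consequence of Main Theorem~\ref{mainthm1} and the reverse (``only if'') direction carrying the real content. For the ``if'' direction, suppose $E'_{\pi,\sigma}(\Phi, R, J) \subseteq H \subseteq G_{\pi,\sigma}(\Phi, R, J)$ for some $\theta$-invariant ideal $J$. Given $g \in E'_{\pi,\sigma}(\Phi, R)$ and $h \in H$, I would write $g h g^{-1} = [g,h]\, h$ and observe that $[g,h] \in [E'_{\pi,\sigma}(\Phi, R), G_{\pi,\sigma}(\Phi, R, J)]$, which equals $E'_{\pi,\sigma}(\Phi, R, J)$ by Main Theorem~\ref{mainthm1}; since the latter sits inside $H$, we get $g h g^{-1} \in H$, so $H$ is normalized by $E'_{\pi,\sigma}(\Phi, R)$.

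For the ``only if'' direction, the first task is to attach to $H$ a canonical level. I would define $J$ to be the $\theta$-invariant ideal of $R$ generated by all the root coordinates extractable from elements of $H$: concretely, the parameters $t$ for which an elementary generator $x_{[\alpha]}(t)$ arises as a factor of a suitable iterated commutator of an element of $H$ with elementary generators. The normalization of $H$ by $E'_{\pi,\sigma}(\Phi, R)$ guarantees that such commutators stay inside $H$, and the Chevalley commutator formula for the twisted group (together with the hypotheses $1/2 \in R$, and $1/3 \in R$ for ${}^3D_4$, which let one invert the relevant structure constants) is used to show that the set of extractable parameters is closed under addition and under multiplication by $R$, and is stable under $\theta$. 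Verifying that this $J$ is genuinely a $\theta$-invariant ideal, and that it interacts correctly with the several parametrizing modules $R_{[\alpha]}$ attached to roots of different types, is the first delicate point.

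Next I would establish the two inclusions. For the lower inclusion $E'_{\pi,\sigma}(\Phi, R, J) \subseteq H$, I would use the extraction procedure to place each elementary generator $x_{[\alpha]}(t)$ with $t \in J_{[\alpha]}$ inside $H$, invoking the transitivity of the Weyl group action on roots of a fixed type within the irreducible system $\Phi_\rho$ (here the rank and irreducibility hypotheses enter) to propagate membership from one root to all roots, and then passing to the normal closure. For the upper inclusion $H \subseteq G_{\pi,\sigma}(\Phi, R, J)$, I would consider the reduction homomorphism $\phi \colon G_{\pi,\sigma}(\Phi, R) \to G_{\pi,\sigma}(\Phi, R/J)$ and show $\phi(H)$ is central: by the very definition of $J$, every commutator $[x_{[\alpha]}(t), h]$ with $h \in H$ has all its coordinates in $J$ and hence dies under $\phi$, so $\phi(h)$ centralizes $E'_{\pi,\sigma}(\Phi, R/J)$; since the centralizer of the elementary subgroup coincides with the center of $G_{\pi,\sigma}(\Phi, R/J)$, this forces $h \in \phi^{-1}(Z(G_{\pi,\sigma}(\Phi, R/J))) = G_{\pi,\sigma}(\Phi, R, J)$. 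Uniqueness of $J$ follows because the level is intrinsic to $H$: any $\theta$-invariant ideal sandwiching $H$ as in the statement has precisely the extractable parameters of $H$ as its generators, so it must coincide with $J$.

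I expect the main obstacle to be the extraction step for the non-reduced twisted types, above all ${}^2A_{2m}$, where $\Phi_\rho$ is of type $BC_m$. There the root subgroups attached to the short roots are non-abelian, two-step nilpotent groups parametrized by pairs subject to a norm-trace relation, and the commutator formula acquires extra terms. Isolating a single, well-defined ideal $J$ from these two-component parameters, and checking that both components can be recovered inside $H$, is exactly where the hypothesis $1/2 \in R$ is indispensable, and it is the part requiring the most careful bookkeeping. The exceptional cases ${}^2E_6$ and ${}^3D_4$ demand analogous but heavier commutator computations, with ${}^3D_4$ additionally needing $1/3 \in R$.
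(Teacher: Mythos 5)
Your ``if'' direction is exactly the paper's Corollary~\ref{cor:converse}, and your definition of the level $J$ together with the lower inclusion $E'_{\pi,\sigma}(\Phi,R,J)\subseteq H$ follows the same strategy as the paper (Proposition~\ref{z to Rz} and Proposition~\ref{prop:E(R,J) subset of H}: compute the normal closure of a single root element and propagate through rank-$2$ subsystems). The difficulty you flag for ${}^2A_{2m}$ is also the right one. But the upper inclusion is where your argument has a genuine gap.

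You claim that ``by the very definition of $J$, every commutator $[x_{[\alpha]}(t),h]$ with $h\in H$ has all its coordinates in $J$ and hence dies under $\phi$.'' This is not a consequence of the definition of $J$; it is essentially the whole content of the hard half of the theorem. The ideal $J$ records only those parameters $t$ for which a \emph{pure root element} $x_{[\alpha]}(t)$ lies in $H$. A commutator $[x_{[\alpha]}(t),h]$ for an arbitrary $h\in H\leq G_{\pi,\sigma}(\Phi,R)$ is some element of $H$ that in general admits no decomposition into root elements at all over an arbitrary commutative ring, so the phrase ``its coordinates'' has no meaning until you place the element in a cell such as $U_\sigma T_\sigma U_\sigma^{-}$ — and the statement that elements of $H$ lying in such a cell have their unipotent coordinates in $J$ is itself a nontrivial theorem (Proposition~\ref{prop:U(R) cap H subset U(J)}, resting on the long case analyses of Lemmas~\ref{lemma:U cap H} and~\ref{lemma:UTV cap H}). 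The paper gets around the absence of a global Gauss decomposition by localizing at each maximal ideal, where $G_\sigma(R_S,J_S)=U_\sigma(J_S)T_\sigma(R_S,J_S)U_\sigma^-(J_S)$ does hold (Corollary~\ref{G(R,J)=UTV}), proving $\psi_{\mathfrak m}(H)\subseteq G_\sigma(R_S,J_S)$ there (Proposition~\ref{local:H subset G(R,J)}), producing for each $g\in H$ a denominator $s\in S_\theta$ with $[x_{[\alpha]}(s\cdot t),g]\in E'_\sigma(R,J)$ (Propositions~\ref{local:[x,g] in E(R,J)} and~\ref{general:[x,g] in E(R,J)}, the latter using Taddei's Lemma~\ref{lemma:GT} to clear denominators), and then patching over all maximal ideals via the ideals $I_{g,[\alpha]}$ to remove $s$ entirely; only then does Corollary~\ref{C=G} ($C_\sigma(R,J)=G_\sigma(R,J)$, which uses $Z(G)=C_G(E)$ as you intend) finish the job. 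Your sketch omits this entire localization--patching layer, and without it, or some substitute, the step ``$\phi(h)$ centralizes $E'_{\pi,\sigma}(\Phi,R/J)$'' is unsupported.
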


\begin{rmk}
        The proof of the above theorem is provided in Chapter~\ref{chapter:Normal_subgroups} (see Theorem~\ref{Ch5_mainthm} and Corollary~\ref{cor:converse}). Our approach to the proof is inspired by the work of L. N. Vaserstein~\cite{LV} and E. Abe~\cite{EA3}, although the calculations involved here are significantly more intricate.
\end{rmk}

As an immediate consequence of the above theorem, we obtain a classification of all normal subgroups of elementary twisted Chevalley groups:

\begin{cor}
    \normalfont
    Let $R$ and $\Phi_\rho$ be as in Theorem~\ref{mainthm}. A subgroup $H$ of $E'_{\pi, \sigma}(\Phi, R)$ is normal in $E'_{\pi, \sigma}(\Phi, R)$ if and only if there exists a $\theta$-invariant ideal $J$ of $R$ such that
    \[
        E'_{\pi, \sigma}(\Phi, R, J) \subseteq H \subseteq G_{\pi, \sigma}(\Phi, R, J) \cap E'_{\pi, \sigma}(\Phi, R).
    \]
\end{cor}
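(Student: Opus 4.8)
The plan is to deduce this directly from Theorem~\ref{mainthm}, since the corollary is just the restriction of that classification to subgroups that happen to lie inside $E'_{\pi, \sigma}(\Phi, R)$. The single structural observation I will lean on is elementary: for a subgroup $H \subseteq E'_{\pi, \sigma}(\Phi, R)$, the statement that $H$ is \emph{normalized by} $E'_{\pi, \sigma}(\Phi, R)$ coincides verbatim with the statement that $H$ is \emph{normal in} $E'_{\pi, \sigma}(\Phi, R)$, as both assert $g H g^{-1} = H$ for every $g \in E'_{\pi, \sigma}(\Phi, R)$. The only other ingredients are the two definitional inclusions $E'_{\pi, \sigma}(\Phi, R) \subseteq G_{\pi, \sigma}(\Phi, R)$ and $E'_{\pi, \sigma}(\Phi, R, J) \subseteq E'_{\pi, \sigma}(\Phi, R)$.

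For the forward implication I would argue as follows. Suppose $H$ is normal in $E'_{\pi, \sigma}(\Phi, R)$. Then, viewing $H$ as a subgroup of the larger group $G_{\pi, \sigma}(\Phi, R)$ through the inclusion above, $H$ is a subgroup of $G_{\pi, \sigma}(\Phi, R)$ that is normalized by $E'_{\pi, \sigma}(\Phi, R)$. Theorem~\ref{mainthm} then furnishes a $\theta$-invariant ideal $J$ with $E'_{\pi, \sigma}(\Phi, R, J) \subseteq H \subseteq G_{\pi, \sigma}(\Phi, R, J)$. Intersecting the upper bound with $E'_{\pi, \sigma}(\Phi, R)$ and invoking the hypothesis $H \subseteq E'_{\pi, \sigma}(\Phi, R)$ gives $H \subseteq G_{\pi, \sigma}(\Phi, R, J) \cap E'_{\pi, \sigma}(\Phi, R)$, while the lower bound is unchanged since $E'_{\pi, \sigma}(\Phi, R, J)$ already sits inside $E'_{\pi, \sigma}(\Phi, R)$. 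This produces exactly the required sandwich.

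Conversely, assume a $\theta$-invariant ideal $J$ exists with $E'_{\pi, \sigma}(\Phi, R, J) \subseteq H \subseteq G_{\pi, \sigma}(\Phi, R, J) \cap E'_{\pi, \sigma}(\Phi, R)$. Forgetting the intersection yields the coarser inclusion $E'_{\pi, \sigma}(\Phi, R, J) \subseteq H \subseteq G_{\pi, \sigma}(\Phi, R, J)$, so the ``if'' direction of Theorem~\ref{mainthm} shows that $H$ is normalized by $E'_{\pi, \sigma}(\Phi, R)$. Because $H \subseteq E'_{\pi, \sigma}(\Phi, R)$ by assumption, the equivalence noted in the first paragraph upgrades this at once to $H \trianglelefteq E'_{\pi, \sigma}(\Phi, R)$.

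I do not expect any genuine obstacle: the entire analytic content is carried by Theorem~\ref{mainthm}, and the corollary merely repackages it. The only points that deserve a moment's care are bookkeeping ones. First, one must check that intersecting the upper bound with $E'_{\pi, \sigma}(\Phi, R)$ is precisely what converts the ``normalized by'' conclusion of the theorem into a ``normal in'' statement, which is where the containment $H \subseteq E'_{\pi, \sigma}(\Phi, R)$ is used. Second, although the corollary asserts only existence of $J$, uniqueness in fact persists: any competing ideal $J'$ satisfying the corollary's bounds also satisfies the theorem's coarser bounds, whence $J' = J$ by the uniqueness already established in Theorem~\ref{mainthm}. This refinement is inessential to the statement but worth recording.
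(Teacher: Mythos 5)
Your proposal is correct and is essentially the paper's argument: the paper derives this corollary in one line from Theorem~\ref{Ch5_mainthm} (the forward classification) together with Corollary~\ref{cor:converse} (the converse, that any subgroup sandwiched between $E'_{\pi,\sigma}(\Phi,R,J)$ and $G_{\pi,\sigma}(\Phi,R,J)$ is normalized by $E'_{\pi,\sigma}(\Phi,R)$), exactly as you do. Your added remarks on intersecting the upper bound with $E'_{\pi,\sigma}(\Phi,R)$ and on the persistence of uniqueness of $J$ are correct bookkeeping that the paper leaves implicit.
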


	
\section{Normalizer of \texorpdfstring{$G_{\pi, \sigma} (\Phi, R)$}{G(R)} and \texorpdfstring{$E'_{\pi, \sigma} (\Phi, R)$}{E(R)} in the larger group \texorpdfstring{$G_{\pi, \sigma} (\Phi, S)$}{G(S)}}

Let \( R \) be a commutative ring with unity. Let \( G_{\pi, \sigma} (\Phi, R) \) denote a twisted Chevalley group over the ring \( R \) and let \( E'_{\pi, \sigma} (\Phi, R) \) be its elementary subgroup. Suppose \( S \) is a ring extension of \( R \); that is, \( S \) is a ring with unity such that \( R \) is a subring of \( S \) and \( 1_S = 1_R \). 
Assume further that there exists a ring automorphism of \( S \) whose restriction to \( R \) coincides with the ring automorphism \( \theta \) of \( R \), and the order of this automorphism of \( S \) is equal to the order of \( \theta \). Therefore, the group $G_{\pi, \sigma} (\Phi, S)$ make sense.

We aim to study the normalizer of \( G_{\pi, \sigma} (\Phi, R) \) and \( E'_{\pi, \sigma} (\Phi, R) \) in the larger group \( G_{\pi, \sigma} (\Phi, S) \). The motivation for this investigation comes from the problem of characterizing all automorphisms of twisted Chevalley groups. A version of this theorem for Chevalley groups was proven by E. I. Bunina~\cite{EB12:main}.

\begin{thm}[Main Theorem 3]\label{mainthm3}
    \normalfont
    Suppose that $\Phi_\rho$ is of type ${}^2 A_n \ (n \geq 3), {}^2 D_n \ (n \geq 4)$ or ${}^2 E_6$.  
    Assume that \( 1/2 \in R \), and in the case where \( \Phi_\rho \sim {}^2 A_{2n} \), also assume that \( 1/3 \in R \). Furthermore, assume that there exists an invertible element \( a \in R \) such that \( \theta(a) = -a \).  
    Under these assumptions, we have the following equality:
    \[
        N_{G_{\pi,\sigma}(\Phi, S)}(G_{\pi, \sigma}(\Phi, R)) = N_{G_{\pi,\sigma} (\Phi, S)} (E'_{\pi, \sigma} (\Phi, R)).
    \]
    Moreover, if $G$ is of adjoint type, we have
    \[
        N_{G_{\text{ad},\sigma}(\Phi, S)}(G_{\text{ad}, \sigma}(\Phi, R)) = N_{G_{\text{ad},\sigma} (\Phi, S)} (E'_{\text{ad}, \sigma} (\Phi, R)) = G_{\text{ad}, \sigma} (\Phi, R).
    \]
\end{thm}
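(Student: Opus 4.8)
The overall strategy is to isolate the adjoint case as the substantive part and then transport it to an arbitrary isogeny type $\pi$ through the central isogeny $\psi\colon G_{\pi,\sigma}(\Phi,-)\to G_{\mathrm{ad},\sigma}(\Phi,-)$. Two inclusions are available for free and should be recorded first. Taking $J=R$ in Main Theorem~\ref{mainthm1} shows that $E'_{\pi,\sigma}(\Phi,R)$ is perfect and that $[G_{\pi,\sigma}(\Phi,R),E'_{\pi,\sigma}(\Phi,R)]=E'_{\pi,\sigma}(\Phi,R)$; combined with the (standard, $K_1$-type) fact that $G_{\pi,\sigma}(\Phi,R)/E'_{\pi,\sigma}(\Phi,R)$ is abelian under the present hypotheses, this gives $E'_{\pi,\sigma}(\Phi,R)=[G_{\pi,\sigma}(\Phi,R),G_{\pi,\sigma}(\Phi,R)]$, so the elementary subgroup is characteristic in $G_{\pi,\sigma}(\Phi,R)$. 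Consequently any element of $G_{\pi,\sigma}(\Phi,S)$ that normalizes $G_{\pi,\sigma}(\Phi,R)$ automatically normalizes $E'_{\pi,\sigma}(\Phi,R)$, giving the inclusion $N_{G_{\pi,\sigma}(\Phi,S)}(G_{\pi,\sigma}(\Phi,R))\subseteq N_{G_{\pi,\sigma}(\Phi,S)}(E'_{\pi,\sigma}(\Phi,R))$ in all cases, while Main Theorem~\ref{mainthm1} also yields $G_{\pi,\sigma}(\Phi,R)\subseteq N_{G_{\pi,\sigma}(\Phi,S)}(E'_{\pi,\sigma}(\Phi,R))$.

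The heart of the proof is the adjoint identity $N_{G_{\mathrm{ad},\sigma}(\Phi,S)}(E'_{\mathrm{ad},\sigma}(\Phi,R))=G_{\mathrm{ad},\sigma}(\Phi,R)$, where one inclusion is already covered above. For the reverse inclusion I would use that the adjoint representation on $\mathcal{L}\otimes S$ is faithful, so that $G_{\mathrm{ad},\sigma}(\Phi,S)$ sits inside $\mathrm{GL}(\mathcal{L}\otimes S)$ and membership in $G_{\mathrm{ad},\sigma}(\Phi,R)$ is detected coordinatewise: an element lies there exactly when all of its matrix entries relative to a Chevalley basis lie in $R$. Given $g$ normalizing $E'_{\mathrm{ad},\sigma}(\Phi,R)$, for every $[\alpha]\in\Phi_\rho$ and every $t\in R_{[\alpha]}$ the conjugate $g\,x_{[\alpha]}(t)\,g^{-1}$ lies in $E'_{\mathrm{ad},\sigma}(\Phi,R)$ and is thus an $R$-matrix; decomposing this condition across the weight spaces of $\mathcal{L}$ and letting $t$ vary over $R_{[\alpha]}$ produces a system of relations among the entries of $g$. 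Solving this system to force every entry into $R$ is where the arithmetic hypotheses enter: $1/2\in R$ and, for ${}^2A_{2n}$, $1/3\in R$ are needed to decompose the twisted root modules $R_{[\alpha]}$ and to control the multi-term twisted commutator formulas, while an invertible $a\in R$ with $\theta(a)=-a$ provides the splitting of $R$ into its $\theta$-symmetric and $\theta$-antisymmetric parts that is used to separate coordinates. This yields $g\in G_{\mathrm{ad},\sigma}(\Phi,R)$; together with the characteristic-subgroup remark of the first paragraph it also gives $N_{G_{\mathrm{ad},\sigma}(\Phi,S)}(G_{\mathrm{ad},\sigma}(\Phi,R))=G_{\mathrm{ad},\sigma}(\Phi,R)$.

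To pass to a general isogeny type $\pi$ I would exploit that $\psi$ is a central isogeny and that the adjoint group acts on $G_{\pi,\sigma}(\Phi,-)$ by conjugation through a morphism defined over the base ring. The decisive observation is that, because $\ker\psi_S$ is central, conjugation by an element $g\in G_{\pi,\sigma}(\Phi,S)$ coincides with the integral adjoint action of its image $\bar g=\psi_S(g)\in G_{\mathrm{ad},\sigma}(\Phi,S)$. Hence if $\bar g\in G_{\mathrm{ad},\sigma}(\Phi,R)$, then since this action is an automorphism of $G_{\pi,\sigma}(\Phi,-)$ as an $R$-scheme it preserves the $R$-points $G_{\pi,\sigma}(\Phi,R)$ and, being detectable after $\psi_S$, also preserves $E'_{\pi,\sigma}(\Phi,R)$; conversely, normalizing either $G_{\pi,\sigma}(\Phi,R)$ or $E'_{\pi,\sigma}(\Phi,R)$ forces, using perfectness to push the condition through $\psi_S$, that $\bar g\in N_{G_{\mathrm{ad},\sigma}(\Phi,S)}(E'_{\mathrm{ad},\sigma}(\Phi,R))=G_{\mathrm{ad},\sigma}(\Phi,R)$. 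This identifies both normalizers with $\psi_S^{-1}(G_{\mathrm{ad},\sigma}(\Phi,R))$, proving their equality for every $\pi$, and for $\pi=\mathrm{ad}$ it recovers the value $G_{\mathrm{ad},\sigma}(\Phi,R)$.

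The main obstacle is squarely the adjoint entry computation of the second paragraph. In contrast to the untwisted split setting, the root subgroups of a twisted Chevalley group are governed by the modules $R_{[\alpha]}$ and by commutator formulas carrying several correction terms, so the relations imposed on the coordinates of $g$ are considerably more entangled and must be resolved uniformly across the types ${}^2A_n$, ${}^2D_n$ and ${}^2E_6$. It is precisely in making these relations solvable that the hypotheses $1/2\in R$, the extra $1/3\in R$ for ${}^2A_{2n}$, and the existence of an invertible $a$ with $\theta(a)=-a$ are consumed; confirming that they are adequate, and that the triality type ${}^3D_4$ genuinely lies outside the reach of this argument, constitutes the delicate technical core of the proof.
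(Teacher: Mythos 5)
Your first paragraph reaches the right conclusion (one inclusion follows because $E'_{\pi,\sigma}(\Phi,R)$ is characteristic in $G_{\pi,\sigma}(\Phi,R)$), but the route you take to characteristicness is not available: identifying $E'_{\pi,\sigma}(\Phi,R)$ with $[G_{\pi,\sigma}(\Phi,R),G_{\pi,\sigma}(\Phi,R)]$ requires knowing that $G_{\pi,\sigma}(\Phi,R)/E'_{\pi,\sigma}(\Phi,R)$ is abelian, a $K_1$-type statement that is neither standard for twisted groups over general commutative rings nor established anywhere in this work. What Main Theorem~\ref{mainthm1} with $J=R$ gives is $[G_{\pi,\sigma}(\Phi,R),E'_{\pi,\sigma}(\Phi,R)]=E'_{\pi,\sigma}(\Phi,R)$, which is normality, not $[G,G]\subseteq E'$. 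The paper instead derives characteristicness (Corollary~\ref{char subgrp}) from the intrinsic characterization of $E'_{\pi,\sigma}(\Phi,R)$ as the smallest normal, perfect, singly normally generated subgroup with abelian centralizer, via Theorem~\ref{Ch5_mainthm}.

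The decisive gap is in your second paragraph: the "entry computation" that you correctly identify as the technical core is never carried out, and as described it would not close. Conjugates of root elements landing in $M_n(R)$ does not by itself constrain the entries of $g$; the mechanism that makes integrality arguments work here is the Key Lemma of Section~\ref{sec:key lemma}, namely that $E'_{\pi,\sigma}(\Phi,R)$ generates $M_n(R)$ as an $R$-algebra (proved by explicitly recovering each $\pi(X_\alpha)$ from finitely many $x_{[\alpha]}(t)$, which is exactly where $1/2$, $1/3$ and the antisymmetric unit $a$ are consumed). From this one gets $gM_n(R)g^{-1}=M_n(R)$, hence $gG_{\pi,\sigma}(\Phi,R)g^{-1}\subseteq GL_n(R)\cap G_{\pi,\sigma}(\Phi,S)=G_{\pi,\sigma}(\Phi,R)$, which proves the first equality directly for every $\pi$ — no central-isogeny transport is needed, and your third paragraph (which in any case asserts without proof that the adjoint action preserves $R$-points of $G_{\pi,\sigma}$ and that normalization descends through $\psi_S$) can be discarded. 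For the adjoint statement the paper does not solve a system of coordinate relations at all: it identifies the tangent algebra $T(E'_{\mathrm{ad},\sigma}(R))$ with $\mathcal{L}_\sigma(\Phi,R)$ (Proposition~\ref{prop:tangentspace of E'(R)}), so that conjugation by $g$ preserves $\mathrm{ad}(\mathcal{L}_\sigma(\Phi,R))$ and, after complexification, $\mathrm{ad}(\mathcal{L}(\Phi,R))$; Klyachko's theorem then writes $i_g$ as an inner automorphism composed with a graph automorphism, the graph part is forced to be trivial, and the triviality of the center of the adjoint group eliminates the remaining scalar, giving $g\in G_{\mathrm{ad},\sigma}(\Phi,R)$. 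None of these three ingredients (generation of $M_n(R)$, the tangent-algebra reconstruction, Klyachko's classification) appears in your proposal, so the argument as written does not constitute a proof.
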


\begin{rmk}
    The proof of the above theorem is provided in Chapter~\ref{chapter:Normalizer_of_TCG} (see Theorem~\ref{thm:normalizer_of_G_and_E}).
\end{rmk}

	
\section{Outline of the Thesis} 

In this section, we provide an overview of the organizational structure and content of the thesis.

Chapter~\ref{chapter:preliminaries} introduces essential concepts and foundational material necessary for understanding the subsequent chapters. The chapter begins by defining the concepts of roots and weights. It then proceeds with introducing the concept of Lie algebras and an exploration of the structural theory of semisimple Lie algebras. The chapter concludes with a discussion of their representation theory.

Chapter~\ref{chapter:chevalley groups} focuses on the study of Chevalley algebras and Chevalley groups. It introduces their fundamental properties and provides a survey of several significant known results in this area.

Chapter~\ref{chapter:TCG} delves into twisted root systems, twisted Chevalley algebras, and twisted Chevalley groups. The chapter examines their fundamental properties, including the Chevalley commutator relations and Steinberg relations, and concludes with a review of additional significant results in this area.

Chapter~\ref{chapter:Normal_subgroups} is dedicated to proving Theorems~\ref{mainthm1} and \ref{mainthm}, which were introduced earlier in this chapter. Additionally, the chapter presents several other important results related to the structure of these groups.

The final chapter, Chapter~\ref{chapter:Normalizer_of_TCG}, provides a proof of Theorem~\ref{mainthm3}. Furthermore, the chapter examines the concept of the tangent space of twisted Chevalley groups of adjoint type.



\chapter{Preliminaries}\label{chapter:preliminaries}


This chapter covers key preliminaries, starting with the axiomatic definition of roots and weights.
We then define the concept of Lie algebras and proceed to classify semisimple Lie algebras, examining how root systems are intricately related to these algebras.
Finally, we provide a broad overview of the representation theory of semisimple Lie algebras.


\section{Roots and Weights}\label{roots and weights}

Let $E$ be a real Euclidean space endowed with a positive definite symmetric bilinear form $(\cdot, \cdot)$. 
For a non-zero $\alpha$ in $E$, the subspace $P_\alpha = \{ \lambda \in E \mid (\alpha, \lambda) = 0 \}$ is a hyperplane in $E$, orthogonal to $\alpha$. 
A \textbf{reflection} $s_\alpha$ is a linear map on $E$ to itself which sends $\alpha$ to $-\alpha$ and fixes all the elements of $P_\alpha$. 
Note that 
\[
s_\alpha (\lambda) = \lambda - \displaystyle\frac{2(\lambda, \alpha)}{(\alpha, \alpha)}\alpha, \text{ for all } \lambda \in E.
\]
It is clear that $s_\alpha$ is an element of order $2$ in the group $O(E)$ of all orthogonal transformations of $E$. 
If $t \in O(E)$, then we can easily check that $t s_\alpha t^{-1} = s_{t(\alpha)}$.


\subsection{Root Systems and Weyl Groups}\label{subsec:Root Systems and Weyl Groups}

\begin{defn}\label{def:rootsystem}
    \normalfont
    A nonempty finite set $\Phi \subset E$\label{nomencl:Phi} is called a \textbf{(reduced) root system} \index{root system} if it satisfies the following axioms:
    \begin{enumerate}[(a)]
        \item $0 \notin \Phi$ and span$(\Phi) = E$.
        \item $\Phi \cap \mathbb{R} \alpha = \{ \alpha, -\alpha \}$, for all $\alpha \in \Phi$.
        \item $s_\alpha (\Phi) \subset \Phi$, for all $\alpha \in \Phi$.
        \item For all $\alpha, \beta \in \Phi,$ $$\langle \beta, \alpha \rangle := \displaystyle\frac{2(\beta, \alpha)}{(\alpha, \alpha)} \in \mathbb{Z}.$$ 
    \end{enumerate}
\end{defn}

A \textbf{non-reduced root system} is a subset $\Phi \subset E$ that satisfies only axioms $(a), (c), (d)$. 
A \textbf{non-crystallographic root system} is a subset $\Phi \subset E$ that satisfies only axioms $(a), (b), (c)$. 
In the context of the latter case, we sometimes call our (reduced) root system to be a \textbf{crystallographic root system}. 
The elements of $\Phi$ are called \textbf{roots}. The \textbf{rank} of the root system $\Phi$ is $\ell =$ dim $(E)$. 

The subgroup $W$\label{nomencl:W} of $O(E)$, generated by the reflections $s_\alpha$ for $\alpha \in \Phi$, is called the \textbf{Weyl group} \index{Weyl group} of $\Phi$. 
By axiom (c), $W$ permutes the set $\Phi$, which is finite and spans $E$. 
This allows us to identify $W$ with a subgroup of the symmetric group on $\Phi$; in particular, $W$ is finite.

In light of axiom (a), it is often convenient to use the notation $(\Phi, E)$ rather than simply $\Phi$. 
Two pairs $(\Phi, E)$ and $(\Phi', E')$ are said to be \textbf{isomorphic} if there exists a vector space isomorphism $\phi: E \to E'$ that maps $\Phi$ onto $\Phi'$ and satisfies $\langle \phi(\beta), \phi(\alpha) \rangle = \langle \beta, \alpha \rangle$ for all roots $\alpha, \beta \in \Phi$. 
If two root systems are isomorphic (via the isomorphism $\phi$), then the corresponding Weyl groups are also isomorphic under the natural map $s \mapsto \phi \circ s \circ \phi^{-1}$.
An isomorphism from $(\Phi, E)$ to itself is called an \textbf{automorphism}, and the set of all automorphisms of $(\Phi, E)$ is denoted by $\text{Aut}(\Phi)$.

If $t \in \text{GL}(E)$ leaves $\Phi$ invariant, then $t s_{\alpha} t^{-1} = s_{t(\alpha)}$ for all $\alpha \in \Phi$, and $\langle \beta, \alpha \rangle = \langle t(\beta), t(\alpha) \rangle$ for all $\alpha, \beta \in \Phi$. 
In particular, this holds for any element $t \in \text{Aut}(\Phi)$. 
As a result, we can regard $W$ as a normal subgroup of $\text{Aut}(\Phi)$.

Let $\alpha, \beta \in \Phi$, and let $\theta$ denote the angle between them. 
Then $(\alpha, \beta) = \lVert \alpha \rVert \lVert \beta \rVert \cos(\theta)$. 
Therefore, $\langle \alpha, \beta \rangle \langle \beta, \alpha \rangle = 4 \cos^2(\theta)$. 
This leads to the following possibilities for the values of $\langle \alpha, \beta \rangle$ and $\langle \beta, \alpha \rangle$, under the assumption that $\alpha \neq \pm \beta$ and $\lVert \beta \rVert \geq \lVert \alpha \rVert$:

\begin{center}
    \begin{tabular}{cccc}
        $\langle \alpha, \beta \rangle$ \hspace{2mm} & \hspace{2mm} $\langle \beta, \alpha \rangle$ \hspace{2mm} & \hspace{5mm} $\theta$ \hspace{5mm} & \hspace{2mm} $\lVert \beta \rVert^2 / \lVert \alpha \rVert^2$ \\
        \hline
        $0$ & $0$ & $\pi/2$ & undermined \\
        $1$ & $1$ & $\pi/3$ & $1$ \\
        $-1$ & $-1$ & $2\pi/3$ & $1$ \\
        $1$ & $2$ & $\pi/4$ & $2$ \\
        $-1$ & $-2$ & $3\pi/4$ & $2$ \\
        $1$ & $3$ & $\pi/6$ & $3$ \\
        $-1$ & $-3$ & $5\pi/6$ & $3$ 
    \end{tabular}
\end{center}

Let $\alpha, \beta \in \Phi$ with $\beta \neq \pm \alpha$. 
If $(\alpha, \beta) > 0$ (respectively, $(\alpha, \beta) < 0$), then $\alpha - \beta$ (respectively, $\alpha + \beta$) is a root. 
Let $r$ and $q$ be the largest integers such that $\beta - r \alpha$ and $\beta + q \alpha$ are roots, respectively. 
Then $\beta + i\alpha \in \Phi$ is a root for all $-r \leq i \leq q$. 
In other words, the roots $\beta + i \alpha$ form a string, known as the \textbf{$\alpha$-string through $\beta$}, given by $\beta - r \alpha, \dots, \beta, \dots, \beta + q \alpha$. 
Observe that at most two root lengths occur in this string. 
Moreover, we have $r - q = \langle \beta, \alpha \rangle$, and $r + q \leq 3$, implying that root strings have a length at most 4. 
If $q \geq 1$, then $$ \frac{r+1}{q} = \frac{\lVert \alpha + \beta \rVert^2}{\lVert \beta \rVert^2}.$$


\subsubsection*{Simple and Positive Systems}

A subset $\Phi^+$ of a root system $\Phi$ is called a \textbf{positive system} \index{root system!positive} if it consists of the roots that are positive relative to some total ordering of $E$. 
Set $\Phi^- = -\Phi^+$, then $\Phi = \Phi^+ \cup \Phi^-$. 
A subset $\Delta$ of root system $\Phi$ is called a \textbf{simple system}\index{root system!simple}\label{nomencl:Delta} if: 
\begin{enumerate}[(a)]
    \item $\Delta$ is a basis of $E$, and
    \item each root $\alpha$ in $\Phi$ is an integral linear combination of the elements of $\Delta$ with coefficients all of the same sign (all nonnegative or all nonpositive).
\end{enumerate}
Note that each simple system is contained in a unique positive system, and each positive system contains a unique simple system. 
In particular, simple systems exist. 

Let $\Delta$ be a simple system and $\Phi^+$ be the positive system containing $\Delta$. 
Then $(\alpha, \beta) \leq 0$ for $\alpha \neq \beta$ in $\Delta$, and $\alpha - \beta$ is not a root. 
If $\alpha \in \Delta$ is a simple root, then $s_\alpha$ permutes the positive roots other than $\alpha$. 
For any positive root $\alpha \in \Phi^+$, there exists a simple root $\beta \in \Delta$ such that $(\alpha, \beta) > 0$.
If $\alpha$ is positive but not simple, then $\alpha - \beta$ is a root (necessarily positive) for some $\beta \in \Delta$. 
Consequently, each $\beta \in \Phi^+$ can be written in the from $\alpha_1 + \dots + \alpha_k \ (\alpha_i \in \Delta, \text{ not necessarily distinct})$ in such a way that each partial sum $\alpha_1 + \dots + \alpha_i$ is a root.

Any two simple (respectively, positive) systems in $\Phi$ are conjugate under $W$. In other words, $W$ acts transitively on simple (respectively, positive) systems. Furthermore, each root belongs to at least one base. Therefore, if $\Delta$ is a simple system and $\alpha$ is any root, there exists $w \in W$ such that $w(\alpha) \in \Delta$.


\subsubsection*{Structure of the Weyl Groups}

Fix a simple system $\Delta$ and the corresponding positive system $\Phi^+$.  
The Weyl group $W$ is generated by the simple reflections $s_\alpha$ for $\alpha \in \Delta$. 
Let $w = s_1 \cdots s_r$ (where $s_i = s_{\alpha_i}$ for some $\alpha_i \in \Delta$) be an expression of $w \in W$ as a product of simple reflections  (allowing for possible repetitions).
Define the \textbf{length} $\ell(w)$ of $w$ (relative to $\Delta$) to be the smallest $r$ for which such an expression exists; we call this expression \textbf{reduced}. 
By convention, we set $\ell(1) = 0$. 
Notice that $\ell(w) = 1$ if and only if $w = s_\alpha$ for some $\alpha \in \Delta$. 
Furthermore, it is clear that $\ell(w) = \ell(w^{-1})$.

Let $w = s_1 \cdots s_t$ be an expression of $w \in W$ as a product of simple reflections. 
If $w(\alpha_t) \in \Phi^+$, then there exists an index $1 \leq r < t$ such that $w$ can be rewritten as $$w = s_1 \cdots s_{r-1} s_{r+1} \cdots s_{t-1}.$$ 
In particular, if the expression $w = s_1 \cdots s_t$ is reduced, it must be the case that $w(\alpha_t) \in \Phi^-$. 
This leads to the \textbf{Deletion Condition}: given an expression $w = s_1 \cdots s_t$ that is not reduced, there must exist indices $1 \leq i < j \leq t$ such that $w = s_1 \cdots \hat{s_i} \cdots \hat{s_j} \cdots s_t$ (where the hat denotes omission).

Define $n(w):= \text{Card}(\Phi^+ \cap w^{-1}(\Phi^-))$, which is the number of positive roots that are sent to negative roots by $w$. 
Then we have the equality $n(w) = \ell(w)$.
The following conditions are equivalent for an element $w \in W$:
\[
    w(\Phi^+) = \Phi^+ \iff w(\Delta) = \Delta \iff \ell(w) = n(w) = 0 \iff w = 1.
\]
As a result, the action of $W$ on the set of simple (respectively, positive) systems is simply transitive. 

We now provide a precise description of the set of all positive roots that are mapped to negative roots by $w$. 
Let $\Phi^+(w) := \Phi^+ \cap w^{-1} (\Phi^{-})$. 
Suppose $w = s_1 \cdots s_t$ is the reduced expression of $w$. 
Define 
\[
    \beta_i := s_t s_{t-1} \cdots s_{i+1} (\alpha_i), \quad \text{with} \ \beta_t := \alpha_t.
\]
Then we have $\Phi^+(w) = \{ \beta_1, \dots, \beta_t \}$.

Observe that if $\Phi^+$ is a positive system, then $\Phi^{-}$ is also a positive system (by definition). 
Thus, there exists a unique element $w_0 \in W$ that maps $\Phi^+$ to $\Phi^{-}$ (uniqueness follows from the simply transitive action of $W$). 
This element is called the \textbf{longest} element in the Weyl group $W$ (with respect to the simple system $\Delta$). 
Moreover, $\ell(w_0) = n(w_0) = \text{Card}(\Phi^{+})$ is the largest possible length, and no other element of $W$ has a greater length (hence the name “longest element”). In particular, we have $w_0^{-1} = w_0$.
The element $w_0$ is the longest element if and only if $\ell(ws_\alpha) < \ell(w)$ for all $\alpha \in \Delta$. 
Consequently, for any $w \in W$, there exists $w' \in W$ such that $w_0 = ww'$ with $\ell(w_0) = \ell(w) + \ell(w')$. 
This can also be reformulated as 
$$
\ell(w_0w) = \ell(w_0) - \ell(w) \quad \text{for all } w \in W.
$$ 
Note that in any reduced expression for $w_0$, every simple reflection must appear at least once.

Finally, we present the Weyl group $W$ in terms of generators and relations. 
Let $m(\alpha, \beta)$ denote the order of the product $s_\alpha s_\beta$ in $W$ for any simple roots $\alpha, \beta \in \Delta$. 
Then the Weyl group $W$ is generated by the set 
\[
    S := \{ s_\alpha \mid \alpha \in \Delta \},
\]
subject to the following relations:
\[
    (s_\alpha s_\beta)^{m(\alpha, \beta)} = 1 \quad \text{for all} \ \alpha, \beta \in \Delta.
\]


\subsubsection*{The Weyl Chambers}

The connected components of $E \setminus \displaystyle\cup_{\alpha \in \Phi} P_\alpha$ are called the \textbf{Weyl chambers} \index{Weyl chambers} of $E$. 
For a simple system $\Delta$, the \textbf{fundamental Weyl chamber relative to $\Delta$} is defined as
$$
\mathfrak{C}(\Delta) = \{ \lambda \in E \mid (\lambda, \alpha) > 0 \ \text{for all} \ \alpha \in \Delta \}.
$$
There is a natural one-to-one correspondence between Weyl chambers and simple systems. 
The Weyl group $W$ acts simply transitively on the Weyl chambers (and thus on the simple systems). 
The closure $\overline{\mathfrak{C}(\Delta)}$ of the fundamental Weyl chamber $\mathfrak{C}(\Delta)$ relative to $\Delta$ is a \textbf{fundamental domain} for the action of $W$ on $E$, that is, each vector in $E$ is $W$-conjugate to precisely one point in this set.


\subsubsection*{Irreducible Root Systems}

A root system $\Phi$ is called \textbf{irreducible} \index{root system!irreducible} if it cannot be partitioned into the union of two proper subsets such that each root in one set is orthogonal to each root in the other. Observe that $\Phi$ is irreducible if and only if $\Delta$ cannot be partitioned in this manner. 

Any root system $\Phi$ can be uniquely decomposed as the disjoint union of irreducible root systems $\Phi_1, \dots, \Phi_n$. 
If $\Delta_i$ is a simple system for $\Phi_i$ and $W_i$ is the Weyl group of $\Phi_i$, then
$$
\Delta = \Delta_1 \cup \dots \cup \Delta_n \quad \text{and} \quad W = W_1 \times \dots \times W_n
$$
are a simple system and the Weyl group of $\Phi$, respectively.

Let $\Phi$ be an irreducible root system. The Weyl group $W$ acts irreducibly on $E$. In particular, the $W$-orbit of any root $\alpha$ spans $E$. There are at most two distinct root lengths that can occur in $\Phi$. When we have roots of two distinct lengths, we refer to them as \textbf{long} roots or \textbf{short} roots, and the ratio of their squared lengths can only be $2$ or $3$. If there is only one root length, all roots are referred to as long. Additionally, all roots of the same length are conjugate under the action of $W$. 

Fix a simple system $\Delta$ in a root system $\Phi$ (not necessarily irreducible). The \textbf{height} of a root $\beta = \sum_{\alpha \in \Delta} k_\alpha \alpha$ (relative to $\Delta$) is defined as $ht(\beta) = \sum_{\alpha \in \Delta} k_\alpha$.
Clearly, $ht(\beta) > 0$ if and only if $\beta \in \Phi^+$.
Next, we define a partial order `$\leq$’ on $E$ (relative to $\Delta$) as follows: for vectors $\mu, \lambda \in E$, we write $\mu \leq \lambda$ if $\lambda - \mu$ is a sum of positive roots (equivalently, a sum of simple roots), or if $\mu = \lambda$. Furthermore, for two roots $\alpha, \beta \in \Phi$, if $\alpha \leq \beta$, then $ht(\alpha) \leq ht(\beta)$. 

Now, let $\Phi$ be an irreducible root system. There exists a unique \textbf{maximal root} (or \textbf{highest root}) \index{highest root} $\beta$ with respect to the partial order `$\leq$'. When $\beta$ is expressed as $\beta = \sum_{\alpha \in \Delta} k_\alpha \alpha$, all coefficients $k_\alpha$ are strictly positive.
Furthermore, for any root $\gamma = \sum_{\alpha \in \Delta} p_\alpha \alpha$ in $\Phi$ with $\gamma \neq \beta$, the following conditions hold:
$p_\alpha \leq k_\alpha$ for all $\alpha \in \Delta$, $\text{ht}(\gamma) < \text{ht}(\beta)$, and $(\beta, \gamma) \geq 0$ (assuming $\gamma > 0$).
As a consequence, the maximal root $\beta$ lies in the fundamental Weyl chamber $\mathfrak{C}(\Delta)$. 
Observe that the maximal root $\beta$ is always a long root. 

We now present a key property of an irreducible root system $\Phi$, which will be utilized in Chapter~\ref{chapter:Normal_subgroups}. For any pair of roots $\alpha$ and $\beta$ in $\Phi$, there exists a sequence of roots $\gamma_1, \dots, \gamma_n$ such that $\gamma_1 = \alpha$, $\gamma_n = \beta$, and each consecutive pair $(\gamma_i, \gamma_{i+1})$ (for $i = 1, \dots, n-1$) forms a rank-2 subsystem of type $A_2$, $B_2$, or $G_2$ (see the diagram below).


\subsubsection*{Root Systems of Rank 2}


We now illustrate all possible rank 2 root systems:

\begin{center}
\begin{tikzpicture}[scale=1.8]
        \draw[->] (0,0) -- (1,0) node[right] {$\alpha$};
        \draw[->] (0,0) -- (0,1) node[above] {$\beta$};
        \draw[->] (0,0) -- (-1,0) node[left] {$-\alpha$};
        \draw[->] (0,0) -- (0,-1) node[below] {$-\beta$};
        \node at (0,-1.5) {$A_1 \times A_1$};
    \begin{scope}[shift={(5,0)}]
        \draw[->] (0,0) -- (1,0) node[right] {$\alpha$};
        \draw[->] (0,0) -- (0.5,0.866) node[above right] {$\alpha + \beta$};
        \draw[->] (0,0) -- (-0.5,0.866) node[above left] {$\beta$};
        \draw[->] (0,0) -- (-1,0) node[left] {$-\alpha$};
        \draw[->] (0,0) -- (-0.5,-0.866) node[below left] {$-\alpha - \beta$};
        \draw[->] (0,0) -- (0.5,-0.866) node[below right] {$-\beta$};
        \node at (0,-1.5) {$A_2$};
    \end{scope}
    \begin{scope}[shift={(0,-3.5)}]
        \draw[->] (0,0) -- (1,0) node[right] {$\alpha$};
        \draw[->] (0,0) -- (1,1) node[above right] {$2 \alpha + \beta$};
        \draw[->] (0,0) -- (0,1) node[above] {$\alpha + \beta$};
        \draw[->] (0,0) -- (-1,1) node[above left] {$\beta$};
        \draw[->] (0,0) -- (-1,0) node[left] {$-\alpha$};
        \draw[->] (0,0) -- (-1,-1) node[below left] {$-2 \alpha - \beta$};
        \draw[->] (0,0) -- (0,-1) node[below] {$-\alpha - \beta$};
        \draw[->] (0,0) -- (1,-1) node[below right] {$-\beta$};
        \node at (0,-1.5) {$B_2$};
    \end{scope}
    \begin{scope}[shift={(5,-3.5)}]
        \draw[->] (0,0) -- (0.58,0) node[right] {$\alpha$};
        \draw[->] (0,0) -- (0.87,0.5) node[above right] {$3 \alpha + \beta$};
        \draw[->] (0,0) -- (0.29,0.5) node[above] {\tiny $2 \alpha + \beta$};
        \draw[->] (0,0) -- (0,1) node[above] {$3 \alpha + 2 \beta$};
        \draw[->] (0,0) -- (-0.29,0.5) node[above] {\tiny $\alpha + \beta$};
        \draw[->] (0,0) -- (-0.87,0.5) node[above left] {$\beta$};
        \draw[->] (0,0) -- (-0.58,0) node[left] {$-\alpha$};
        \draw[->] (0,0) -- (-0.87,-0.5) node[below left] {$-3 \alpha - \beta$};
        \draw[->] (0,0) -- (-0.29,-0.5) node[below] {\tiny $-2 \alpha - \beta$};
        \draw[->] (0,0) -- (0,-1) node[below] {$-3 \alpha - 2 \beta$};
        \draw[->] (0,0) -- (0.29,-0.5) node[below] {\tiny $-\alpha - \beta$};
        \draw[->] (0,0) -- (0.87,-0.5) node[below right] {$-\beta$};
        \node at (0,-1.5) {$G_2$};
    \end{scope}
\end{tikzpicture}
\end{center}


\subsubsection*{Closed Root Subsystems}

A subset $\Phi'$ of $\Phi$ is called a \textbf{root subsystem} \index{root system!closed subsystem} if $\Phi'$ itself is a root system. A root subsystem $\Phi'$ of $\Phi$ is called \textbf{closed} if $\alpha, \beta \in \Phi'$ and $\alpha + \beta \in \Phi$ imply $\alpha + \beta \in \Phi'$.

For example, the long roots $\{ \pm \beta, \pm (2\alpha + \beta) \}$ form a closed root subsystem of $B_2$. However, the short roots $\{ \pm \alpha, \pm (\alpha + \beta) \}$ form a root subsystem of $B_2$ that is not closed.

Let $S$ be a subset of $\Phi$. The smallest closed root subsystem of $\Phi$ containing $S$ is called the \textbf{(closed) root subsystem of $\Phi$ generated by $S$}.


\subsubsection*{Closed Subsets}\label{subsubsec:closed subsets}

A subset $S$ of the root system $\Phi$ is called \textbf{closed} \index{root system!closed subset} if for any $\alpha, \beta \in S$ such that $\alpha + \beta \in \Phi$, one has $\alpha + \beta \in S$. Examples of closed subsets of $\Phi$ include: 

\begin{enumerate}[(1)]
    \item[(0)] $\emptyset$ and $\Phi$, 
    \item $\Phi^+$, the set of all positive roots,
    \item $\Phi^+ \setminus \{\alpha\}$, where $\alpha$ is a simple root,
    \item $\{\alpha, -\alpha\}$, where $\alpha$ is any root,
    \item $\Phi_r = \{\alpha \in \Phi \mid \mathrm{ht}(\alpha) \geq r\}$, where $r \in \mathbb{Z}^+$.
\end{enumerate}

A closed subset $S$ of $\Phi$ is called 
\begin{enumerate}[(a)]
    \item \textbf{symmetric} (or \textbf{reductive}) if $\alpha \in S$ implies $-\alpha \in S$, 
    \item \textbf{special} (or \textbf{unipotent}) if $\alpha \in S$ implies $-\alpha \notin S$, and 
    \item \textbf{parabolic} if $\Phi = S \cup -S$. 
\end{enumerate}

Any closed set $S$ can be uniquely decomposed into the disjoint union of its symmetric part $S^r = \{\alpha \in S \mid -\alpha \in S\}$ and its special part $S^u = \{\alpha \in S \mid -\alpha \notin S\}$.
Both $S^r$ and $S^u$ are also closed subsets of $\Phi$.

A subset $I$ of a closed set $S$ is called an \textbf{ideal} if $\alpha \in I$, $\beta \in S$, and $\alpha + \beta \in S$ imply $\alpha + \beta \in I$. Examples (1), (2), and (4) above are ideals in $\Phi^+$. Note that $S^u$ is always an ideal in $S$.

Closed subsets $S_1$ and $S_2$ of a root system $\Phi$ are conjugate if there exists an element $w \in W$, the Weyl group, such that $w (S_1) = S_2$.
A special closed set is conjugate to a subset of $\Phi^+$.


\subsection{Classification of Irreducible Root Systems}

Let $\Phi$ be a root system of rank $\ell$ in a Euclidean space $E$. Let $W$ be the Weyl group of $\Phi$. Let $\Delta = \{ \alpha_1, \dots, \alpha_\ell \}$ be a simple system for $\Phi$. 


\subsubsection*{Cartan Matrix}

Fix an ordering $(\alpha_1, \dots, \alpha_\ell)$ of the simple roots. The matrix defined by $(\langle \alpha_i, \alpha_j \rangle)$ is called the \textbf{Cartan matrix} \index{Cartan matrix} of $\Phi$, and its entries are referred to as \textbf{Cartan integers}. \index{Cartan integers} Notably, the diagonal entries are all equal to $2$, while the off-diagonal entries are non-positive integers. The Cartan matrix is nonsingular because the set of simple roots forms a basis for the ambient space.
For example, for the systems of rank 2, the matrices are
$$
A_1 \times A_1
\left(\begin{array}{ll}
    2 & 0 \\
    0 & 2
\end{array}\right); \hspace{2mm}
A_2 
\left(\begin{array}{rr}
    2 & -1 \\
    -1 & 2
    \end{array}\right); \hspace{2mm} 
B_2
\left(\begin{array}{rr}
    2 & -2 \\
    -1 & 2
\end{array}\right); \hspace{2mm} 
G_2
\left(\begin{array}{rr}
    2 & -1 \\
    -3 & 2
\end{array}\right).
$$

Observe that, the specific form of the Cartan matrix depends on the ordering of the simple roots, but this dependence is not significant since it is limited to a permutation of the index set. Moreover, since the Weyl group $W$ acts transitively on the set of bases, the Cartan matrix of $\Phi$ does not depend on the choice of the basis $\Delta$.
On the other hand, the Cartan matrix completely determines the root system $\Phi$ up to isomorphism. 

If $\Phi$ is a reducible root system, there exists an ordering of the simple roots such that the Cartan matrix becomes block diagonal, with each block corresponding to an irreducible component in the decomposition of $\Phi$.


\subsubsection*{Dynkin Diagrams}

Recall that if $\alpha$ and $\beta$ are non-proportional roots, then $\langle \alpha, \beta \rangle \langle \beta, \alpha \rangle$ can take the values $0, 1, 2,$ or $3$. Furthermore, the value $\langle \alpha, \beta \rangle \langle \beta, \alpha \rangle$ equals $2$ or $3$ only if $\alpha$ and $\beta$ have different lengths.

The \textbf{Dynkin diagram} \index{Dynkin diagram} of a root system $\Phi$ is a graph with $\ell$ vertices, each corresponding to a simple root, and the $i$-th vertex is joined to the $j$-th vertex $(i \neq j)$ by $\langle \alpha_i, \alpha_j \rangle \langle \alpha_j, \alpha_i \rangle$ edges. If multiple edges (double or triple) occur, an arrow is added, pointing toward the vertex representing the shorter root. 

For example, for the systems of rank 2, the Dynkin diagrams are
\begin{center}
\begin{tikzpicture}[scale=1]
     \begin{scope}[shift={(0,0)}]
        \node at (-1.5,-0.15){$A_1 \times A_1:$};
        \node[draw,circle,inner sep=0.6mm] at (0,0) {};
        \node at (0,-0.4) {$\alpha$};
        \node[draw,circle,inner sep=0.6mm] at (1.5,0) {};
        \node at (1.5,-0.4) {$\beta$};
    \end{scope}
    \begin{scope}[shift={(5,0)}]
        \node at (-1,-0.15){$A_2:$};
        \node[draw,circle,inner sep=0.6mm] at (0,0) {};
        \node at (0,-0.4) {$\alpha$};
        \draw (0.08,0)--(1.42,0);
        \node[draw,circle,inner sep=0.6mm] at (1.5,0) {};
        \node at (1.5,-0.4) {$\beta$};
    \end{scope}
    \begin{scope}[shift={(0,-1.5)}]
        \node at (-1,-0.15){$B_2:$};
        \node[draw,circle,inner sep=0.6mm] at (0,0) {};
        \node at (0,-0.4) {$\alpha$};
        \draw (0.08,0.05)--(1.42,0.05);
        \node at (0.75,0) {$<$};
        \draw (0.08,-0.05)--(1.42,-0.05);
        \node[draw,circle,inner sep=0.6mm] at (1.5,0) {};
        \node at (1.5,-0.4) {$\beta$};
    \end{scope}
    \begin{scope}[shift={(5,-1.5)}]
        \node at (-1,-0.15){$G_2:$};
        \node[draw,circle,inner sep=0.6mm] at (0,0) {};
        \node at (0,-0.4) {$\alpha$};
        \draw (0.08,0.06)--(1.42,0.06);
        \draw (0.08,0)--(1.42,0);
        \node at (0.75,0) {$<$};
        \draw (0.08,-0.06)--(1.42,-0.06);
        \node[draw,circle,inner sep=0.6mm] at (1.5,0) {};
        \node at (1.5,-0.4) {$\beta$};
    \end{scope}
\end{tikzpicture}
\end{center}

Note that $\Phi$ is irreducible if and only if its Dynkin diagram is connected (in the usual sense). Additionally, the Dynkin diagram completely determines the Cartan matrix (after fixing an ordering of the simple roots) and the root system (up to isomorphism), and vice versa.


\subsubsection*{Automorphism of $\Phi$}

Recall that, $W$ is a normal subgroup of Aut $\Phi$ (cf. Section~\ref{subsec:Root Systems and Weyl Groups}). Let $\Gamma = \{\sigma \in \text{Aut} \, \Phi \mid \sigma(\Delta) = \Delta\}$, where $\Delta$ is a fixed simple system of $\Phi$. Clearly, $\Gamma$ is a subgroup of Aut $\Phi$. If $\sigma \in \Gamma \cap W$, then $\sigma = 1$ due to the simple transitivity of $W$. For any $\sigma \in \text{Aut} \, \Phi$, since $\sigma(\Delta)$ is another simple system, there exists $w \in W$ such that $w \sigma(\Delta) = \Delta$, implying $\sigma \in \Gamma {W}$. Hence, Aut $\Phi$ is the semidirect product of $\Gamma$ and $W$.

For all $\sigma \in \text{Aut} \, \Phi$ and $\alpha, \beta \in \Phi$, we have $\langle \alpha, \beta \rangle = \langle \sigma(\alpha), \sigma(\beta) \rangle$. Therefore, each $\sigma \in \Gamma$ induces an automorphism of the Dynkin diagram of $\Phi$. If $\sigma$ acts trivially on the diagram, then $\sigma = 1$ (since $\Delta$ spans $E$). Conversely, every automorphism of the Dynkin diagram corresponds to an automorphism of $\Phi$. Therefore, $\Gamma$ can be identified with the group of \textbf{diagram automorphisms}. A complete description of these automorphisms for an irreducible root system is given in Table~\ref{tab:classification}.


\subsubsection*{Classification}

If $\Phi$ is an irreducible root system of rank $\ell$, its Dynkin diagram must be one of the following type: $A_\ell \, (\ell \geq 1)$, $B_\ell \, (\ell \geq 2)$, $C_\ell \, (\ell \geq 3)$, $D_\ell \, (\ell \geq 4)$, $E_6$, $E_7$, $E_8$, $F_4$, or $G_2$ (see Table~\ref{tab:classification}). 
Conversely, for each Dynkin diagram of type $A-G$ (as described above), there exists a unique (up to isomorphism) irreducible root system corresponding to that diagram (see Table~\ref{tab:classification}).

The table below offers a brief summary of the irreducible root systems for different types:

{
\small
\begin{longtable}{|c|c|c|}
    \caption{Properties of irreducible root systems by type}
    \label{tab:classification} \\
    \hline
    \textbf{Type} & \textbf{Aspects} & \textbf{Details} \\
    \hline
    \endfirsthead
    
    \hline
    \textbf{Type} & \textbf{Aspects} & \textbf{Details} \\
    \hline
    \endhead
    
    \hline
    \endfoot

    \hline
    \endlastfoot

    \multirow{8}{*}{$A_\ell \ (\ell \geq 1)$} 
    & Euclidean Space $E$ & \makecell{the $\ell$-dimensional subspace of $\mathbb{R}^{\ell+1}$ \\ orthogonal to $e_1 + e_2 + \cdots + e_{\ell+1}$} \\ 
    \cline{2-3}
    & Root System $\Phi$ & $\{\pm(e_i - e_j) \mid 1 \leq i < j \leq \ell+1 \}$, cardinality $\ell(\ell+1)$ \\ 
    \cline{2-3}
    & Simple System $\Delta$ & $\{ \alpha_i = e_i - e_{i+1} \mid 1 \leq i \leq \ell \}$ \\ 
    \cline{2-3}
    & Dynkin Diagram & {\begin{tikzpicture}[scale=1]
            \node[draw,circle,inner sep=0.6mm] at (0,0) {};
            \node at (0,-0.4) {$\alpha_1$};
            \draw (0.08,0)--(1.42,0);
            \node[draw,circle,inner sep=0.6mm] at (1.5,0) {};
            \node at (1.5,-0.4) {$\alpha_2$};
            \draw (1.58,0)--(2.6,0);
            \node at (3,0) {$\cdots$};
            \draw (3.4,0)--(4.42,0);
            \node[draw,circle,inner sep=0.6mm] at (4.5,0) {};
            \node at (4.5,-0.4) {$\alpha_{\ell-1}$};
            \draw (4.58,0)--(5.92,0);
            \node[draw,circle,inner sep=0.6mm] at (6,0) {};
            \node at (6,-0.4) {$\alpha_\ell$};
        \end{tikzpicture}} \\ 
    \cline{2-3}
    & Cartan Matrix & {$\begin{pmatrix} 
        2 & -1 & 0 & 0 & \cdots & 0 & 0 \\ 
        -1 & 2 & -1 & 0 & \cdots & 0 & 0 \\ 
        0 & -1 & 2 & -1 & \cdots & 0 & 0 \\ 
        0 & 0 & -1 & 2 & \cdots & 0 & 0 \\
        \vdots & \vdots & \vdots & \vdots & \ddots & \vdots & \vdots \\
        0 & 0 & 0 & 0 & \cdots & 2 & -1 \\
        0 & 0 & 0 & 0 & \cdots & -1 & 2 \\
    \end{pmatrix}$} \\ 
    \cline{2-3}
    & Weyl Group $W$ & the symmetric group $\mathscr{S}_{\ell+1}$ of order $(\ell+1)!$ \\ 
    \cline{2-3} 
    & $\Gamma$ & $\mathbb{Z}/2\mathbb{Z}$ \\
    \cline{2-3} 
    & Highest Root & $\alpha_1 + \alpha_2 + \cdots + \alpha_\ell$ \\ 
    \hline

    \multirow{8}{*}{$B_\ell \ (\ell \geq 2)$} 
    & Euclidean Space $E$ & $\mathbb{R}^\ell$ \\ 
    \cline{2-3}
    & Root System $\Phi$ & \makecell{$\{ \pm (e_i \pm e_j) \mid 1 \leq i < j \leq \ell \} \cup \{\pm e_i \mid 1 \leq i \leq \ell \}$, \\ cardinality $2 \ell^2$} \\ 
    \cline{2-3}
    & Simple System $\Delta$ & $\{ \alpha_i = e_i - e_{i+1} \mid 1 \leq i \leq \ell-1 \} \cup \{ \alpha_\ell = e_\ell \}$ \\ 
    \cline{2-3}
    & Dynkin Diagram & {\begin{tikzpicture}[scale=1]
            \node[draw,circle,inner sep=0.6mm] at (0,0) {};
            \node at (0,-0.4) {$\alpha_1$};
            \draw (0.08,0)--(1.42,0);
            \node[draw,circle,inner sep=0.6mm] at (1.5,0) {};
            \node at (1.5,-0.4) {$\alpha_2$};
            \draw (1.58,0)--(2.6,0);
            \node at (3,0) {$\cdots$};
            \draw (3.4,0)--(4.42,0);
            \node[draw,circle,inner sep=0.6mm] at (4.5,0) {};
            \node at (4.5,-0.4) {$\alpha_{\ell-1}$};
            \draw (4.58,0.05)--(5.92,0.05);
            \node at (5.25,0) {$>$};
            \draw (4.58,-0.05)--(5.92,-0.05);
            \node[draw,circle,inner sep=0.6mm] at (6,0) {};
            \node at (6,-0.4) {$\alpha_\ell$};
        \end{tikzpicture}} \\ 
    \cline{2-3}
    & Cartan Matrix & {$\begin{pmatrix} 
        2 & -1 & 0 & 0 & \cdots & 0 & 0 \\ 
        -1 & 2 & -1 & 0 & \cdots & 0 & 0 \\ 
        0 & -1 & 2 & -1 & \cdots & 0 & 0 \\ 
        0 & 0 & -1 & 2 & \cdots & 0 & 0 \\
        \vdots & \vdots & \vdots & \vdots & \ddots & \vdots & \vdots \\
        0 & 0 & 0 & 0 & \cdots & 2 & -2 \\
        0 & 0 & 0 & 0 & \cdots & -1 & 2 \\
    \end{pmatrix}$} \\ 
    \cline{2-3}
    & Weyl Group $W$ & the hyperoctahedral group $(\mathbb{Z}/2\mathbb{Z})^\ell \rtimes \mathscr{S}_\ell$ of order $2^\ell \ell!$ \\ 
    \cline{2-3}
    & $\Gamma$ & 1 \\
    \cline{2-3}
    & Highest Root & \makecell{long: $\alpha_1 + 2\alpha_2 + \cdots + 2\alpha_\ell$ \\
    short: $\alpha_1 + \alpha_2 + \cdots + \alpha_\ell$} \\ 
    \hline
    
    \multirow{8}{*}{$C_\ell \ (\ell \geq 3)$} 
    & Euclidean Space $E$ & $\mathbb{R}^\ell$ \\ 
    \cline{2-3}
    & Root System $\Phi$ & \makecell{$\{ \pm (e_i \pm e_j) \mid 1 \leq i < j \leq \ell \} \cup \{\pm 2 e_i \mid 1 \leq i \leq \ell \}$, \\ cardinality $2 \ell^2$} \\ 
    \cline{2-3}
    & Simple System $\Delta$ & $\{ \alpha_i = e_i - e_{i+1} \mid 1 \leq i \leq \ell-1 \} \cup \{ \alpha_\ell = 2 e_\ell \}$ \\ 
    \cline{2-3}
    & Dynkin Diagram & {\begin{tikzpicture}[scale=1]
            \node[draw,circle,inner sep=0.6mm] at (0,0) {};
            \node at (0,-0.4) {$\alpha_1$};
            \draw (0.08,0)--(1.42,0);
            \node[draw,circle,inner sep=0.6mm] at (1.5,0) {};
            \node at (1.5,-0.4) {$\alpha_2$};
            \draw (1.58,0)--(2.6,0);
            \node at (3,0) {$\cdots$};
            \draw (3.4,0)--(4.42,0);
            \node[draw,circle,inner sep=0.6mm] at (4.5,0) {};
            \node at (4.5,-0.4) {$\alpha_{\ell-1}$};
            \draw (4.58,0.05)--(5.92,0.05);
            \node at (5.25,0) {$<$};
            \draw (4.58,-0.05)--(5.92,-0.05);
            \node[draw,circle,inner sep=0.6mm] at (6,0) {};
            \node at (6,-0.4) {$\alpha_\ell$};
        \end{tikzpicture}} \\ 
    \cline{2-3}
    & Cartan Matrix & {$\begin{pmatrix} 
        2 & -1 & 0 & 0 & \cdots & 0 & 0 \\ 
        -1 & 2 & -1 & 0 & \cdots & 0 & 0 \\ 
        0 & -1 & 2 & -1 & \cdots & 0 & 0 \\ 
        0 & 0 & -1 & 2 & \cdots & 0 & 0 \\
        \vdots & \vdots & \vdots & \vdots & \ddots & \vdots & \vdots \\
        0 & 0 & 0 & 0 & \cdots & 2 & -1 \\
        0 & 0 & 0 & 0 & \cdots & -2 & 2 \\
    \end{pmatrix}$} \\ 
    \cline{2-3}
    & Weyl Group $W$ & the hyperoctahedral group $(\mathbb{Z}/2\mathbb{Z})^\ell \rtimes \mathscr{S}_\ell$ of order $2^\ell \ell!$ \\ 
    \cline{2-3}
    & $\Gamma$ & 1 \\
    \cline{2-3}
    & Highest Root & \makecell{long: $2 \alpha_1 + 2\alpha_2 + \cdots + 2\alpha_{\ell-1} + \alpha_\ell$ \\
    short: $\alpha_1 + 2\alpha_2 + \cdots + 2\alpha_{\ell-1} + \alpha_\ell$} \\ 
    \hline

    \multirow{8}{*}{$D_\ell \ (\ell \geq 4)$} 
    & Euclidean Space $E$ & $\mathbb{R}^\ell$ \\ 
    \cline{2-3}
    & Root System $\Phi$ & $\{\pm (e_i \pm e_j) \mid 1 \leq i < j \leq \ell \}$, cardinality $2 (\ell^2 - \ell)$ \\ 
    \cline{2-3}
    & Simple System $\Delta$ & $\{ \alpha_i = e_i - e_{i+1} \mid 1 \leq i \leq \ell-1 \} \cup \{ \alpha_\ell = e_{\ell-1} + e_\ell \}$ \\ 
    \cline{2-3}
    & Dynkin Diagram & {\begin{tikzpicture}[scale=1]
            \node[draw,circle,inner sep=0.6mm] at (0,0) {};
            \node at (0,-0.4) {$\alpha_1$};
            \draw (0.08,0)--(1.42,0);
            \node[draw,circle,inner sep=0.6mm] at (1.5,0) {};
            \node at (1.5,-0.4) {$\alpha_2$};
            \draw (1.58,0)--(2.6,0);
            \node at (3,0) {$\cdots$};
            \draw (3.4,0)--(4.42,0);
            \node[draw,circle,inner sep=0.6mm] at (4.5,0) {};
            \node at (4.5,-0.4) {$\alpha_{\ell-2}$};
            \draw (4.58,0)--(5.575,0.72);
            \node[draw,circle,inner sep=0.6mm] at (5.64,0.75) {};
            \node at (6.2,0.75) {$\alpha_{\ell-1}$};
            \draw (4.58,0)--(5.575,-0.72);
            \node[draw,circle,inner sep=0.6mm] at (5.64,-0.75) {};
            \node at (6.1,-0.75) {$\alpha_\ell$};
        \end{tikzpicture}} \\ 
    \cline{2-3}
    & Cartan Matrix & {$\begin{pmatrix} 
    2 & -1 & \cdots & 0 & 0 & 0 & 0 \\ 
    -1 & 2 & \cdots & 0 & 0 & 0 & 0 \\ 
    \vdots & \vdots & \ddots & \vdots & \vdots & \vdots & \vdots \\ 
    0 & 0 & \cdots & 2 & -1 & 0 & 0 \\ 
    0 & 0 & \cdots & -1 & 2 & -1 & -1 \\
    0 & 0 & \cdots & 0 & -1 & 2 & 0 \\
    0 & 0 & \cdots & 0 & -1 & 0 & 2
    \end{pmatrix}$} \\ 
    \cline{2-3}
    & Weyl Group $W$ & the group $(\mathbb{Z}/2\mathbb{Z})^{\ell-1} \rtimes \mathscr{S}_\ell$ of order $2^{\ell-1} \ell!$ \\ 
    \cline{2-3}
    & $\Gamma$ & $\mathscr{S}_3$, if $\ell = 4$; and $\mathbb{Z}/2\mathbb{Z}$, if $\ell > 4$ \\
    \cline{2-3}
    & Highest Root & $\alpha_1 + 2\alpha_2 + \cdots + 2\alpha_{\ell-2} + \alpha_{\ell-1} + \alpha_{\ell}$ \\ 
    \hline

    \multirow{8}{*}{$E_6$} 
    & Euclidean Space $E$ & \makecell{the $6$-dimensional subspace of $\mathbb{R}^8$ defined by \\ the solutions to the system $x_6 = x_7 = -x_8$} \\ 
    \cline{2-3}
    & Root System $\Phi$ & \makecell{$\{ \pm (e_i \pm e_j) \mid 1 \leq i < j \leq 5 \} \cup \{\pm \frac{1}{2} \Big(e_8 - e_7 - e_6 + $ \\ $ \sum_{i=1}^5 (-1)^{k(i)}e_i\Big) \mid k(i) = 0 \text{ or } 1 \text{ such that }$ \\ $\sum_{i=1}^5 k(i) \text{ is even} \}$, cardinality $72$}  \\ 
    \cline{2-3}
    & Simple System $\Delta$ & \makecell{$\alpha_1 = \frac{1}{2}(e_1 + e_8 - (e_2 + \cdots + e_7))$, \\ $\alpha_2 = e_1 + e_2$, $\alpha_3 = e_2 - e_1$, $\alpha_4 = e_3 - e_2$, \\ $\alpha_5 = e_4 - e_3$, $\alpha_6 = e_5 - e_4$ } \\ 
    \cline{2-3}
    & Dynkin Diagram & {\begin{tikzpicture}[scale=1]
        \node[draw,circle,inner sep=0.6mm] at (0,0) {};
        \node at (0,-0.4) {$\alpha_1$};
        \draw (0.08,0)--(1.42,0);
        \node[draw,circle,inner sep=0.6mm] at (1.5,0) {};
        \node at (1.5,-0.4) {$\alpha_3$};
        \draw (1.58,0)--(2.92,0);
        \node[draw,circle,inner sep=0.6mm] at (3,0) {};
        \node at (3,-0.4) {$\alpha_4$};
        \draw (3.08,0)--(4.42,0);
        \node[draw,circle,inner sep=0.6mm] at (4.5,0) {};
        \node at (4.5,-0.4) {$\alpha_5$};
        \draw (4.58,0)--(5.92,0);
        \node[draw,circle,inner sep=0.6mm] at (6,0) {};
        \node at (6,-0.4) {$\alpha_6$};
        \draw (3,0.08)--(3,1.02);
        \node[draw,circle,inner sep=0.6mm] at (3,1.1) {};
        \node at (3.4,1.1) {$\alpha_2$};
    \end{tikzpicture}} \\ 
    \cline{2-3}
    & Cartan Matrix & $\begin{pmatrix} 
        2 & 0 & -1 & 0 & 0 & 0 \\ 
        0 & 2 & 0 & -1 & 0 & 0 \\ 
        -1 & 0 & 2 & -1 & 0 & 0 \\ 
        0 & -1 & -1 & 2 & -1 & 0 \\ 
        0 & 0 & 0 & -1 & 2 & -1 \\ 
        0 & 0 & 0 & 0 & -1 & 2 
    \end{pmatrix}$ \\ 
    \cline{2-3}
    & Weyl Group $W$ & it is of order $51840 = 2^7 \times 3^4 \times 5$ \\ 
    \cline{2-3}
    & $\Gamma$ & $\mathbb{Z}/ 2 \mathbb{Z}$ \\
    \cline{2-3}
    & Highest Root & $\alpha_1 + 2\alpha_2 + 2\alpha_3 + 3\alpha_4 + 2\alpha_5 + \alpha_6$ \\ 
    \hline

    \multirow{8}{*}{$E_7$} 
    & Euclidean Space $E$ & \makecell{the $7$-dimensional subspace of $\mathbb{R}^8$ \\ orthogonal to $e_7 + e_8$} \\ 
    \cline{2-3}
    & Root System $\Phi$ & \makecell{$\{\pm (e_i \pm e_j) \mid 1 \leq i < j \leq 6 \} \cup \{ \pm(e_7 - e_8) \} \cup $ \\ $ \{\pm \frac{1}{2} \Big(e_7 - e_8 + \sum_{i=1}^6 (-1)^{k(i)} e_i \Big) \mid k(i) = 0 \text{ or } 1$ \\ $\text{ such that } \sum_{i=1}^6 k(i) \text{ is odd} \}$, cardinality $126$} \\ 
    \cline{2-3}
    & Simple System $\Delta$ & \makecell{$\alpha_1 = \frac{1}{2}(e_1 + e_8 - (e_2 + \cdots + e_7))$, \\ $\alpha_2 = e_1 + e_2$, $\alpha_3 = e_2 - e_1$, $\alpha_4 = e_3 - e_2$, \\ $\alpha_5 = e_4 - e_3$, $\alpha_6 = e_5 - e_4$, $\alpha_7 = e_6 - e_5$} \\ 
    \cline{2-3}
    & Dynkin Diagram & {\begin{tikzpicture}[scale=1]
        \node[draw,circle,inner sep=0.6mm] at (0,0) {};
        \node at (0,-0.4) {$\alpha_1$};
        \draw (0.08,0)--(1.42,0);
        \node[draw,circle,inner sep=0.6mm] at (1.5,0) {};
        \node at (1.5,-0.4) {$\alpha_3$};
        \draw (1.58,0)--(2.92,0);
        \node[draw,circle,inner sep=0.6mm] at (3,0) {};
        \node at (3,-0.4) {$\alpha_4$};
        \draw (3.08,0)--(4.42,0);
        \node[draw,circle,inner sep=0.6mm] at (4.5,0) {};
        \node at (4.5,-0.4) {$\alpha_5$};
        \draw (4.58,0)--(5.92,0);
        \node[draw,circle,inner sep=0.6mm] at (6,0) {};
        \node at (6,-0.4) {$\alpha_6$};
        \draw (6.08,0)--(7.42,0);
        \node[draw,circle,inner sep=0.6mm] at (7.5,0) {};
        \node at (7.5,-0.4) {$\alpha_7$};
        \draw (3,0.08)--(3,1.02);
        \node[draw,circle,inner sep=0.6mm] at (3,1.1) {};
        \node at (3.4,1.1) {$\alpha_2$};
    \end{tikzpicture}} \\ 
    \cline{2-3}
    & Cartan Matrix & $\begin{pmatrix} 
        2 & 0 & -1 & 0 & 0 & 0 & 0 \\ 
        0 & 2 & 0 & -1 & 0 & 0 & 0 \\ 
        -1 & 0 & 2 & -1 & 0 & 0 & 0 \\ 
        0 & -1 & -1 & 2 & -1 & 0 & 0 \\ 
        0 & 0 & 0 & -1 & 2 & -1 & 0 \\ 
        0 & 0 & 0 & 0 & -1 & 2 & -1 \\
        0 & 0 & 0 & 0 & 0 & -1 & 2
    \end{pmatrix}$ \\ 
    \cline{2-3}
    & Weyl Group $W$ & it is of order $2903040 = 2^{10} \times 3^4 \times 5 \times 7$ \\ 
    \cline{2-3}
    & $\Gamma$ & $1$ \\
    \cline{2-3}
    & Highest Root & $2\alpha_1 + 2\alpha_2 + 3\alpha_3 + 4\alpha_4 + 3\alpha_5 + 2\alpha_6 + \alpha_7$ \\ 
    \hline

    \multirow{8}{*}{$E_8$} 
    & Euclidean Space $E$ & $\mathbb{R}^8$ \\ 
    \cline{2-3}
    & Root System $\Phi$ & \makecell{ $\{ \pm (e_i \pm e_j) \mid 1 \leq i < j \leq 8 \} \cup $ \\
    $\{ \frac{1}{2} \Big( \sum_{i=1}^8 (-1)^{k(i)} e_i \Big) \mid k(i) = 0 \text{ or } 1 \text{ such that }$ \\ 
    $ \sum_{i=1}^8 k(i) \text{ is even} \}$, cardinality $240$}  \\ 
    \cline{2-3}
    & Simple System $\Delta$ & \makecell{$\alpha_1 = \frac{1}{2}(e_1 + e_8 - (e_2 + \cdots + e_7))$, $\alpha_2 = e_1 + e_2$, \\ $\alpha_3 = e_2 - e_1$, $\alpha_4 = e_3 - e_2$, $\alpha_5 = e_4 - e_3$, \\ $\alpha_6 = e_5 - e_4$, $\alpha_7 = e_6 - e_5$, $\alpha_8 = e_7 - e_6$} \\ 
    \cline{2-3}
    & Dynkin Diagram & {\begin{tikzpicture}[scale=0.8]
        \node[draw,circle,inner sep=0.6mm] at (0,0) {};
        \node at (0,-0.4) {$\alpha_1$};
        \draw (0.08,0)--(1.42,0);
        \node[draw,circle,inner sep=0.6mm] at (1.5,0) {};
        \node at (1.5,-0.4) {$\alpha_3$};
        \draw (1.58,0)--(2.92,0);
        \node[draw,circle,inner sep=0.6mm] at (3,0) {};
        \node at (3,-0.4) {$\alpha_4$};
        \draw (3.08,0)--(4.42,0);
        \node[draw,circle,inner sep=0.6mm] at (4.5,0) {};
        \node at (4.5,-0.4) {$\alpha_5$};
        \draw (4.58,0)--(5.92,0);
        \node[draw,circle,inner sep=0.6mm] at (6,0) {};
        \node at (6,-0.4) {$\alpha_6$};
        \draw (6.08,0)--(7.42,0);
        \node[draw,circle,inner sep=0.6mm] at (7.5,0) {};
        \node at (7.5,-0.4) {$\alpha_7$};
        \draw (7.58,0)--(8.92,0);
        \node[draw,circle,inner sep=0.6mm] at (9,0) {};
        \node at (9,-0.4) {$\alpha_8$};
        \draw (3,0.08)--(3,1.02);
        \node[draw,circle,inner sep=0.6mm] at (3,1.1) {};
        \node at (3.4,1.1) {$\alpha_2$};
    \end{tikzpicture}} \\ 
    \cline{2-3}
    & Cartan Matrix & $\begin{pmatrix} 
        2 & 0 & -1 & 0 & 0 & 0 & 0 & 0 \\ 
        0 & 2 & 0 & -1 & 0 & 0 & 0 & 0 \\ 
        -1 & 0 & 2 & -1 & 0 & 0 & 0 & 0 \\ 
        0 & -1 & -1 & 2 & -1 & 0 & 0 & 0 \\ 
        0 & 0 & 0 & -1 & 2 & -1 & 0 & 0 \\ 
        0 & 0 & 0 & 0 & -1 & 2 & -1 & 0 \\
        0 & 0 & 0 & 0 & 0 & -1 & 2 & -1 \\
        0 & 0 & 0 & 0 & 0 & 0 & -1 & 2
    \end{pmatrix}$ \\ 
    \cline{2-3}
    & Weyl Group $W$ & it is of order $696729600 = 2^{14} \times 3^5 \times 5^2 \times 7$ \\ 
    \cline{2-3}
    & $\Gamma$ & $1$ \\
    \cline{2-3}
    & Highest Root & $2\alpha_1 + 3\alpha_2 + 4\alpha_3 + 6\alpha_4 + 5\alpha_5 + 4\alpha_6 + 3\alpha_7 + 2\alpha_8$ \\ 
    \hline

    \multirow{7}{*}{$F_4$} 
    & Euclidean Space $E$ & $\mathbb{R}^4$ \\ 
    \cline{2-3}
    & Root System $\Phi$ & \makecell{$\{\pm e_i \mid 1 \leq i \leq 4 \} \cup \{ \pm (e_i \pm e_j) \mid 1 \leq i < j \leq 4 \}$ \\ $\cup \{\frac{1}{2} (\pm e_1 \pm e_2 \pm e_3 \pm e_4) \}$, cardinality $48$} \\ 
    \cline{2-3}
    & Simple System $\Delta$ & \makecell{$\{ \alpha_1 = e_2 - e_3, \alpha_2 = e_3 - e_4, \alpha_3 = e_4,$ \\ $\alpha_4 = \frac{1}{2}(e_1 - e_2 - e_3 - e_4) \}$} \\ 
    \cline{2-3}
    & Dynkin Diagram & {\begin{tikzpicture}[scale=1]
            \node[draw,circle,inner sep=0.6mm] at (0,0) {};
            \node at (0,-0.4) {$\alpha_1$};
            \draw (0.08,0)--(1.42,0);
            \node[draw,circle,inner sep=0.6mm] at (1.5,0) {};
            \node at (1.5,-0.4) {$\alpha_2$};
            \draw (1.58,0.05)--(2.92,0.05);
            \node at (2.25,0) {$>$};
            \draw (1.58,-0.05)--(2.92,-0.05);
            \node[draw,circle,inner sep=0.6mm] at (3,0) {};            
            \node at (3,-0.4) {$\alpha_{3}$};
            \draw (3.08,0)--(4.42,0);
            \node[draw,circle,inner sep=0.6mm] at (4.5,0) {};
            \node at (4.5,-0.4) {$\alpha_{4}$};
        \end{tikzpicture}} \\ 
    \cline{2-3}
    & Cartan Matrix & $\begin{pmatrix} 2 & -1 & 0 & 0 \\ 
        -1 & 2 & -2 & 0 \\ 
        0 & -1 & 2 & -1 \\ 
        0 & 0 & -1 & 2  \\ 
    \end{pmatrix}$ \\ 
    \cline{2-3}
    & Weyl Group $W$ & it is of order $1152 = 2^7 3^2$ \\ 
    \cline{2-3}
    & $\Gamma$ & $1$ \\
    \cline{2-3}
    & Highest Root & \makecell{long: $2\alpha_1 + 3\alpha_2 + 4\alpha_3 + 2\alpha_4$ \\ short: $\alpha_1 + 2\alpha_2 + 3\alpha_3 + 2\alpha_4$} \\ 
    \hline

    \multirow{7}{*}{$G_2$} 
    & Euclidean Space $E$ & \makecell{the $2$-dimensional subspace of $\mathbb{R}^{3}$ \\ orthogonal to $e_1 + e_2 + e_3$} \\ 
    \cline{2-3}
    & Root System $\Phi$ & \makecell{$\{\pm(e_1 - e_2), \pm(e_2 - e_3), \pm(e_1 - e_3), \pm(2e_1 - e_2 - e_3),$ \\ $\pm(2e_2 - e_1 - e_3), \pm(2e_3 - e_1 - e_2) \}$, cardinality $12$ } \\ 
    \cline{2-3}
    & Simple System $\Delta$ & $\{ e_1 - e_2, -2e_1 + e_2 + e_3 \}$ \\ 
    \cline{2-3}
    & Dynkin Diagram & {\begin{tikzpicture}[scale=1]
            \node[draw,circle,inner sep=0.6mm] at (0,0) {};
            \node at (0,-0.4) {$\alpha_1$};
            \draw (0.08,0.06)--(1.42,0.06);
            \draw (0.08,0)--(1.42,0);
            \node at (0.75,0) {$<$};
            \draw (0.08,-0.06)--(1.42,-0.06);
            \node[draw,circle,inner sep=0.6mm] at (1.5,0) {};
            \node at (1.5,-0.4) {$\alpha_2$};
        \end{tikzpicture}} \\ 
    \cline{2-3}
    & Cartan Matrix & $\begin{pmatrix} 2 & -1 \\ -3 & 2 \end{pmatrix}$ \\ 
    \cline{2-3}
    & Weyl Group $W$ & the dihedral group $\mathscr{D}_6$ of order $12$ \\ 
    \cline{2-3}
    & $\Gamma$ & $1$ \\
    \cline{2-3}
    & Highest Root & \makecell{long: $3\alpha_1 + 2\alpha_2$ \\ short: $2\alpha_1 + \alpha_2$} \\ 
\end{longtable}
}

\subsection{Non-reduced and Non-crystallographic Root Systems}

We now provide a brief discussion on the classification of irreducible non-reduced and non-crystallographic root systems (for definitions, see page \pageref{def:rootsystem}). Note that the previous definition of “irreducibility” continues to hold in these cases.


\subsubsection*{Classification of Irreducible Non-reduced Root Systems} \index{root system!Non-reduced}

For an irreducible non-reduced root system, the classification is relatively simple. Up to isomorphism, there is exactly one family of non-reduced root systems: $BC_\ell \ (\ell \geq 2)$. This family is a hybrid structure that combines elements from both the $B_\ell$ and $C_\ell$ root systems.

The precise description of $BC_\ell$ can be given as follows: The ambient space is $E = \mathbb{R}^\ell$, and the root system is given by
\[
    \Phi = \{ \pm (e_i \pm e_j) \mid 1 \leq i \neq j \leq \ell \} \cup \{ \pm e_i \mid 1 \leq i \leq \ell \} \cup \{ \pm 2 e_i \mid 1 \leq i \leq \ell \}.
\]

As an example, for $\ell = 2$, the root system is of type $BC_2$, as illustrated in the following figure:

\begin{center}
    \begin{tikzpicture}[scale=1.5]
        \draw[->] (0,0) -- (1,0) node[below] {\small $\alpha$};
        \draw[->] (1,0) -- (2,0) node[below] {\small $2 \alpha$};
        \draw[->] (0,0) -- (1,1);
        \draw[->] (0,0) -- (0,1);
        \draw[->] (0,1) -- (0,2);
        \draw[->] (0,0) -- (-1,1) node[above left] {\small $\beta$};
        \draw[->] (0,0) -- (-1,0);
        \draw[->] (-1,0) -- (-2,0);
        \draw[->] (0,0) -- (-1,-1);
        \draw[->] (0,0) -- (0,-1);
        \draw[->] (0,-1) -- (0,-2);
        \draw[->] (0,0) -- (1,-1);
    \end{tikzpicture}
\end{center}

We can derive the corresponding reduced root systems from the non-reduced system by focusing on either the long or short roots when a positive multiple of a given root exists. For instance, within the $BC_n$ family, we can recover the $B_n$ root system by selecting the short roots, and the $C_n$ root system by selecting the long roots.


\subsubsection*{Classification of Irreducible Non-crystallographic Root Systems} \index{root system!Non-crystallographic}

To classify irreducible non-crystallographic root systems, we begin by introducing the concept of Coxeter graphs, which can be associated with both crystallographic and non-crystallographic root systems.

Let $\Phi$ be a root system in a Euclidean space $E$, either crystallographic or non-crystallographic. Let $\Delta = \{\alpha_1, \dots, \alpha_\ell\}$ be a simple system of $\Phi$. (For non-crystallographic root systems, note that the simple system does not necessarily satisfy the integrality conditions usually associated with crystallographic systems.) For any $\alpha, \beta \in \Phi$, define $m(\alpha, \beta)$ as the order of the product $(s_\alpha s_\beta)$, where $s_\alpha$ and $s_\beta$ are the reflections corresponding to the roots $\alpha$ and $\beta$.

The \textbf{Coxeter graph} \index{Coxeter graph} of $\Phi$ is an undirected graph with $\ell$ vertices, each corresponding to a simple root $\alpha_i$. An edge is drawn between the vertices corresponding to $\alpha_i$ and $\alpha_j$ if $m(\alpha_i, \alpha_j) \geq 3$, and such an edge is labeled with the value of $m(\alpha_i, \alpha_j)$. Since the label $3$ occurs frequently, it is customary to omit it when drawing the graph. 
(It is worth noting that Coxeter graphs are typically defined in the broader context of Coxeter groups. However, the definition provided here is specifically tailored to root systems.)

If $\Phi$ is an irreducible non-crystallographic root system, then its Coxeter graph must be one of the following type:
\begin{center}
\begin{tikzpicture}[scale=1]
    \begin{scope}[shift={(0,0)}]
        \node at (-1,-0.15){$H_3:$};
        \node[draw,circle,inner sep=0.6mm] at (0,0) {};
        \node at (0.75,0.2){\small $5$};
        \node at (0,-0.4) {$\alpha_1$};
        \draw (0.08,0)--(1.42,0);
        \node[draw,circle,inner sep=0.6mm] at (1.5,0) {};
        \node at (1.5,-0.4) {$\alpha_2$};
        \draw (1.58,0)--(2.92,0);
        \node[draw,circle,inner sep=0.6mm] at (3,0) {};
        \node at (3,-0.4) {$\alpha_3$};
    \end{scope}
    \begin{scope}[shift={(0,-1.5)}]
        \node at (-1,-0.15){$H_4:$};
        \node[draw,circle,inner sep=0.6mm] at (0,0) {};
        \node at (0.75,0.2){\small $5$};
        \node at (0,-0.4) {$\alpha_1$};
        \draw (0.08,0)--(1.42,0);
        \node[draw,circle,inner sep=0.6mm] at (1.5,0) {};
        \node at (1.5,-0.4) {$\alpha_2$};
        \draw (1.58,0)--(2.92,0);
        \node[draw,circle,inner sep=0.6mm] at (3,0) {};
        \node at (3,-0.4) {$\alpha_3$};
        \draw (3.08,0)--(4.42,0);
        \node[draw,circle,inner sep=0.6mm] at (4.5,0) {};
        \node at (4.5,-0.4) {$\alpha_4$};
    \end{scope}
    \begin{scope}[shift={(0,-3)}]
        \node at (-1.2,-0.15){$I_2 (m):$};
        \node[draw,circle,inner sep=0.6mm] at (0,0) {};
        \node at (0.75,0.2){\small $m$};
        \node at (0,-0.4) {$\alpha_1$};
        \draw (0.08,0)--(1.42,0);
        \node[draw,circle,inner sep=0.6mm] at (1.5,0) {};
        \node at (1.5,-0.4) {$\alpha_2$};
    \end{scope}
\end{tikzpicture}
\end{center}
Conversely, for each Coxeter graph of type $H_3, H_4$ or $I_2(m) \ (m = 5 \text{ or } m \geq 7)$, there exists a unique (up to isomorphism) irreducible non-crystallographic root system corresponding to that graph (see Humphreys \cite{JH_RC}).

\subsection{Abstract Theory of Weights}\label{subsec:weights}

An element $\lambda \in E$ is called a \textbf{weight} \index{weight} if $\langle \lambda, \alpha \rangle \in \mathbb{Z}$ for all $\alpha \in \Phi$.
The set of all weights $\Lambda_{sc} := \{ \lambda \in E \mid \langle \lambda, \alpha \rangle \in \mathbb{Z}, \forall \alpha \in \Phi \}$ forms a lattice, called the \textbf{(fundamental) weight lattice}\index{weight lattice}\label{nomencl:weightlattice}.
Clearly, all the roots are contained in $\Lambda_{sc}$. A sublattice $\Lambda_r$ (or $\Lambda_{\text{ad}}$) of $\Lambda_{sc}$ generated by all the roots is called the \textbf{root lattice}\index{root lattice}\label{nomencl:rootlattice}.
Let $\Delta = \{ \alpha_1, \dots, \alpha_{\ell} \}$ be a simple system. There exist elements $\lambda_1, \dots, \lambda_\ell \in \Lambda_{sc}$ satisfying $\langle \lambda_i, \alpha_j \rangle = \delta_{ij}$; these are called the \textbf{fundamental dominant weights (relative to $\Delta$)}. They form a $\mathbb{Z}$-basis of $\Lambda_{sc}$. 

The quotient $\Lambda_{sc}/\Lambda_r$ is a finite group, whose order is $|\det M|$, where $M$ is the Cartan matrix corresponding to some fixed ordering of the simple roots. This group is called the \textbf{fundamental group} of $\Phi$. The following table provides an explicit description of the fundamental groups for all irreducible root systems.
\begin{center}
    \begin{tabular}{c|c|c|c|c|c|c|c|c|c|c}
        Type of $\Phi$ & $A_{\ell}$ & $B_{\ell}$ & $C_{\ell}$ & $D_{2{\ell}+1}$ & $D_{2{\ell}}$ & $E_6$ & $E_7$ & $E_8$ & $F_4$ & $G_2$ \\
        \hline
        $\Lambda_{sc} / \Lambda_{r}$ & $\mathbb{Z}_{{\ell}+1}$ & $\mathbb{Z}_{2}$ & $\mathbb{Z}_{2}$ & $\mathbb{Z}_{4}$  & $\mathbb{Z}_{2} \times \mathbb{Z}_{2}$ & $\mathbb{Z}_{3}$ & $\mathbb{Z}_{2}$ & $0$ & $0$ & $0$
    \end{tabular}
\end{center}

Fix a simple system $\Delta \subset \Phi$. Define $\lambda \in \Lambda_{sc}$ to be \textbf{dominant} \index{weight!dominant} if all the integers $\langle \lambda, \alpha \rangle \ (\alpha \in \Delta)$ are nonnegative, and \textbf{strongly dominant} \index{weight!strongly dominant} if all these integers are strictly positive. Let $\Lambda_{sc}^+$ denote the set of all dominant weights. An element $\lambda = \sum_{i=1}^{\ell} m_i \lambda_i \in \Lambda_{sc}^+$ if and only if $m_i \geq 0$ for all $i$. 

Since the Weyl group $W$ preserves the inner product on $E$, it leaves $\Lambda_{sc}$ invariant. Each weight in $\Lambda_{sc}$ is conjugate under $W$ to one and only one dominant weight. 
As a subset of $E$, the lattice $\Lambda_{sc}$ is partially ordered by the relation: $\lambda \geq \mu$ if and only if $\lambda - \mu$ is a sum of positive roots or $\lambda = \mu$.
If $\lambda$ is dominant, then $w(\lambda) \leq \lambda$ for all $w \in W$. Furthermore, if $\lambda$ is strongly dominant, then $w(\lambda) = \lambda$ only when $w = 1$. Finally, for $\lambda \in \Lambda_{sc}^+$, the number of dominant weights $\mu \leq \lambda$ is finite.

Set $\delta = \frac{1}{2} \sum_{\beta \in \Phi^+} \beta$. Then, for all $\alpha \in \Delta$, we have $s_\alpha(\delta) = \delta - \alpha$.
Moreover, $\delta$ can be expressed as $\delta = \sum_{j=1}^{\ell} \lambda_j$, which shows that $\delta$ is a strongly dominant weight. 


\subsubsection*{Saturated sets of weights}

A subset $\Pi \subset \Lambda_{sc}$ is called \textbf{saturated} if for all $\lambda \in \Pi$, $\alpha \in \Phi$, and integers $i$ between $0$ and $\langle \lambda, \alpha \rangle$, the weight $\lambda - i\alpha$ also lies in $\Pi$. 
It is evident that the Weyl group $W$ leaves $\Pi$ invariant.
A saturated set $\Pi$ is said to have a \textbf{highest weight} $\lambda$ if $\lambda \in \Pi \cap \Lambda_{sc}^+$ and $\mu \leq \lambda$ for all $\mu \in \Pi$.

\begin{ex}
    \normalfont
    \begin{enumerate}[(a)]
        \item The set consisting of $0$ alone is saturated, with highest weight $0$.
        \item The set $\Phi$ of all roots, together with $0$, forms a saturated set. When $\Phi$ is irreducible, the set $\Pi$ has a highest weight, which is the unique highest root of $\Phi$ (relative to fixed simple system $\Delta$).
    \end{enumerate}
\end{ex}

\begin{facts}
    \normalfont
    Let $\Pi$ be a saturated set with the highest weight $\lambda$. Then:
    \begin{enumerate}[label=(\alph*)]
        \item $\Pi$ is finite.
        \item If $\mu \in \Lambda_{sc}^+$ and $\mu \leq \lambda$, then $\mu \in \Pi$.
        \item If $\mu \in \Pi$, then $(\mu + \delta, \mu + \delta) \leq (\lambda + \delta, \lambda + \delta)$, with equality if and only if $\mu = \lambda$.
    \end{enumerate}
\end{facts}


\section{Semisimple Lie Algebras}\label{semi Lie algebra}

We assume that the reader is already familiar with the concept of semisimple Lie algebras and their classifications. However, we will briefly review these concepts to ensure that the reader becomes familiar with the notation used throughout the text. The material in this chapter is primarily derived from J. Humphreys~\cite{JH} and R. Steinberg~\cite{RS}.


\subsection{Introduction to Lie Algebras}

A \textbf{Lie algebra}\index{Lie algebra}\label{nomencl:L} $\mathcal{L}$ over a field $F$ is an $F$-vector space equipped with a bilinear operation, known as the \textbf{Lie bracket} (or \textbf{commutator}), 
\[
    [\cdot, \cdot]: \mathcal{L} \times \mathcal{L} \longrightarrow \mathcal{L}, \quad (X, Y) \longmapsto [X, Y],
\]
that satisfies the following axioms:
\begin{enumerate}[(a)]
    \item \textbf{(Alternativity)} $[X,X]=0$ for all $X \in \mathcal{L}$.
    \item \textbf{(Jacobi identity)} $[X,[Y,Z]]+[Y,[Z,X]]+[Z,[X,Y]] = 0$ for all $X,Y,Z \in \mathcal{L}$.
\end{enumerate}

A \textbf{Lie subalgebra} of $\mathcal{L}$ is a subspace $\mathcal{M}$ of $\mathcal{L}$ that is closed under the Lie bracket, that is, $[X, Y] \in \mathcal{M}$ for all $X, Y \in \mathcal{M}$.
An \textbf{ideal} of $\mathcal{L}$ is a subspace $\mathcal{I}$ satisfying the condition that for all $X \in \mathcal{I}$ and $Y \in \mathcal{L}$, we have $[X, Y] \in \mathcal{I}$.
Here are some examples of ideals of $\mathcal{L}$:
\begin{enumerate}[(a)]
    \item The subspaces $\{0\}$ and $\mathcal{L}$ are trivially ideals of $\mathcal{L}$.
    \item Let $V$ be a finite-dimensional vector space over a field $F$, and let $\text{End}(V)$ denote the set of all $F$-linear endomorphisms of $V$. Clearly, $\text{End}(V)$ forms an associative $F$-algebra. It can also be regarded as a Lie algebra with the bracket operation defined by $[X, Y] = XY - YX$ for all $X, Y \in \text{End}(V)$. When $\text{End}(V)$ is viewed as a Lie algebra under this operation, it is customary to denote it by $\mathfrak{gl}(V)$. The subalgebra $\mathfrak{sl}(V)$, consisting of all elements in $\mathfrak{gl}(V)$ with trace zero, is an ideal of $\mathfrak{gl}(V)$.
    \item If $\mathcal{I}$ and $\mathcal{J}$ are two ideals of $\mathcal{L}$, then
    \[
    \mathcal{I} + \mathcal{J} = \{ X + Y \mid X \in \mathcal{I}, Y \in \mathcal{J} \} \quad \text{and} \quad [\mathcal{I}, \mathcal{J}] = \Big\{ \sum [X_i, Y_i] \mid X_i \in \mathcal{I}, Y_i \in \mathcal{J} \Big\}
    \]
    are also ideals of $\mathcal{L}$. In particular, the \textbf{derived subalgebra} $[\mathcal{L}, \mathcal{L}]$ is an ideal of $\mathcal{L}$.
    \item The \textbf{center} of $\mathcal{L}$, defined as $Z(\mathcal{L}) = \{ Z \in \mathcal{L} \mid [X, Z] = 0 \text{ for all } X \in \mathcal{L} \}$, is an ideal of $\mathcal{L}$.
\end{enumerate}

A linear map $\phi: \mathcal{L}_1 \longrightarrow \mathcal{L}_2$ between two Lie algebras $\mathcal{L}_1$ and $\mathcal{L}_2$ is called a \textbf{Lie algebra homomorphism} if it preserves the Lie bracket, that is, $\phi([X,Y]) = [\phi (X), \phi (Y)], $ for all $X,Y \in \mathcal{L}_1.$ 
The notions of \textbf{isomorphism} and \textbf{automorphism} are defined as usual.

The \textbf{normalizer} of a subalgebra (or just subspace) $\mathcal{K}$ of $\mathcal{L}$ is defined as 
$$
    N_{\mathcal{L}}(\mathcal{K}) = \{ X \in \mathcal{L} \mid [X, \mathcal{K}] \subset \mathcal{K} \}.
$$ 
The normalizer $N_{\mathcal{L}}(\mathcal{K})$ is the largest subalgebra of $\mathcal{L}$ containing $\mathcal{K}$ as an ideal.
If $\mathcal{K} = N_{\mathcal{L}}(\mathcal{K})$, we say that $\mathcal{K}$ is \textbf{self-normalizing}.
The \textbf{centralizer} of a subset $\mathcal{S} \subset \mathcal{L}$ is given by
\[
    C_{\mathcal{L}}(\mathcal{S}) = \{ X \in \mathcal{L} \mid [X, \mathcal{S}] = 0 \}.
\]
The centralizer $C_{\mathcal{L}}(\mathcal{S})$ is always a subalgebra of $\mathcal{L}$.


\subsubsection*{Representations of Lie algebras}

A \textbf{representation} of a Lie algebra $\mathcal{L}$ is a homomorphism $\pi: \mathcal{L} \longrightarrow \mathfrak{gl} (V)$ for a finite-dimensional vector space $V$. 
One of the most significant example for us is the \textbf{adjoint representation} $\text{ad}: \mathcal{L} \longrightarrow \mathfrak{gl}(\mathcal{L})$ given by $(\text{ad} \ X) (Y)=[X, Y]$. 

An \textbf{$\mathcal{L}$-module} is a vector space $V$ endowed with an operation $\mathcal{L} \times V \longrightarrow V$, denoted $(X,v) \longmapsto X \cdot v$ (or simply $Xv$), satisfying the following axioms:
\begin{enumerate}[(a)]
    \item $(aX + bY) \cdot v = a(X \cdot v) + b(Y \cdot v),$
    \item $X \cdot (av + bw) = a(X \cdot v) + b(Y \cdot w),$
    \item $[X,Y] \cdot v = X \cdot Y \cdot v - Y \cdot X \cdot v$, 
\end{enumerate}
where $X, Y \in \mathcal{L}$; $v, w \in V$; and $a, b \in \mathbb{C}$. 

Note that, any representation $\pi: \mathcal{L} \longrightarrow \mathfrak{gl} (V)$ can be viewed as $\mathcal{L}$-module $V$ via the action $X \cdot v = \pi(X)(v)$. Conversely, given an $\mathcal{L}$-module $V$, this action defines a representation $\pi: \mathcal{L} \longrightarrow \mathfrak{gl} (V)$. 
An $\mathcal{L}$-module $V$ is called \textbf{irreducible} if it has precisely two $\mathcal{L}$-submodules (itself and $0$). It is said to be \textbf{completely reducible} if $V$ is a direct sum of irreducible $\mathcal{L}$-submodules. A representation is irreducible (respectively, completely reducible) if its corresponding $\mathcal{L}$-module is. 


\subsubsection*{Solvable and Nilpotent Lie algebras}

Let $\mathcal{L}$ be a Lie algebra over a field $F$. 
\begin{enumerate}[(a)]
    \item The \textbf{derived series} of $\mathcal{L}$ is the sequence of ideals of $\mathcal{L}$ given by 
    \[
        \mathcal{L}^{(0)} = \mathcal{L}, \quad \mathcal{L}^{(k+1)} = [\mathcal{L}^{(k)}, \mathcal{L}^{(k)}] \quad (\text{for all } k \geq 0).
    \]
    \item The \textbf{lower central series} (or the \textbf{descending central series}) of $\mathcal{L}$ is the sequence of ideals of $\mathcal{L}$ given by 
    \[
        \mathcal{L}^{0} = \mathcal{L}, \quad \mathcal{L}^{k+1} = [\mathcal{L}, \mathcal{L}^{k}] \quad (\text{for all } k \geq 0).
    \]
\end{enumerate}

\begin{defn}
    \normalfont
    A Lie algebra $\mathcal{L}$ over a field $F$ is called
    \begin{enumerate}[(a)]
        \item \textbf{solvable}, \index{Lie algebra!solvable} if its derived series terminates, i.e., $\mathcal{L}^{(m)} = 0$ for some $m \geq 0$. 
        \item \textbf{nilpotent}, \index{Lie algebra!nilpotent} if its lower central series terminates, i.e., $\mathcal{L}^{m} = 0$ for some $m \geq 0$. 
        \item \textbf{abelian}, if $[\mathcal{L}, \mathcal{L}] = 0$. 
    \end{enumerate}
\end{defn}

\begin{facts}
    \normalfont
    Let $\mathcal{L}$ be a Lie algebra over a field $F$. 
    \begin{enumerate}[(a)]
        \item $\mathcal{L}$ is abelian if and only if $Z(\mathcal{L}) = \mathcal{L}$. 
        \item If $\mathcal{L}$ is abelian, then $\mathcal{L}$ is nilpotent.
        \item If $\mathcal{L}$ is nilpotent, then $\mathcal{L}$ is solvable.
    \end{enumerate}
\end{facts}

\subsection{Semisimple Lie Algebras}

A \textbf{simple} \index{Lie algebra!simple} Lie algebra is a non-abelian Lie algebra that has no nonzero proper ideals. 
A Lie algebra $\mathcal{L}$ is called \textbf{semisimple} \index{Lie algebra!semisimple} if it can be written as a direct sum of simple Lie algebras. 

\begin{facts}
    \normalfont
    Let $\mathcal{L}$ be a finite-dimensional Lie algebra over a field $F$ of characteristic $0$. The following statements are equivalent:
    \begin{enumerate}[(a)]
        \item $\mathcal{L}$ is semisimple. 
        \item $\mathcal{L}$ has no non-zero abelian ideals.
        \item $\mathcal{L}$ has no non-zero solvable ideals.
        \item the radical $rad(\mathcal{L})$ (maximal solvable ideal) of $\mathcal{L}$ is zero.
        \item The Killing form $\kappa$ on $\mathcal{L}$ is non-degenerate, where the Killing form is defined as $\kappa(X, Y) = \operatorname{Tr}(\operatorname{ad} X \circ \operatorname{ad} Y)$ for all $X, Y \in \mathcal{L}$.
    \end{enumerate}
\end{facts}


\subsubsection*{Complete Reducibility of Representations}

Let $\mathcal{L}$ be a finite-dimensional semisimple Lie algebra over a field $F$ of characteristic zero.

\begin{thm}[Weyl]
    \normalfont
    Any finite-dimensional representation $\phi: \mathcal{L} \longrightarrow \mathfrak{gl}(V)$ of a semisimple Lie algebra $\mathcal{L}$ is completely reducible.
\end{thm}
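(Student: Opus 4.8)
The plan is to prove Weyl's theorem by the classical Casimir-operator argument, using only that semisimplicity of $\mathcal{L}$ forces the trace form of a faithful representation to be non-degenerate. First I would restate complete reducibility in the equivalent form that every $\mathcal{L}$-submodule $W \subseteq V$ possesses an $\mathcal{L}$-stable complement, and reduce to the case $\phi$ faithful by replacing $\mathcal{L}$ with $\mathcal{L}/\ker\phi$, which remains semisimple and acts on $V$ in the same way.

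The second reduction sends the general splitting problem to the codimension-one case. Given $W \subseteq V$, I would turn $\operatorname{Hom}_F(V,W)$ into an $\mathcal{L}$-module and single out the submodule $\mathcal{V}$ of maps whose restriction to $W$ is a scalar multiple of the identity, together with its submodule $\mathcal{W}$ of maps vanishing on $W$; the quotient $\mathcal{V}/\mathcal{W}$ is one-dimensional. A splitting of $0 \to \mathcal{W} \to \mathcal{V} \to \mathcal{V}/\mathcal{W} \to 0$ provides an $\mathcal{L}$-homomorphism $f : V \to W$ restricting to the identity on $W$, whence $\ker f$ is the complement sought. Thus it suffices to split $0 \to W \to V \to V/W \to 0$ when $\dim V/W = 1$; here $\mathcal{L} = [\mathcal{L},\mathcal{L}]$ forces the action on the line $V/W$ to be trivial, and an induction on $\dim W$ reduces to $W$ irreducible.

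The heart of the proof is the Casimir operator for the faithful representation. Using the non-degenerate associative trace form $\beta(X,Y) = \operatorname{Tr}(\phi(X)\phi(Y))$, I would pick bases $\{X_i\}$ and $\{Y_i\}$ of $\mathcal{L}$ dual with respect to $\beta$ and set $c_\phi = \sum_i \phi(X_i)\phi(Y_i)$. Two facts drive everything: $c_\phi$ commutes with $\phi(\mathcal{L})$, so it is an $\mathcal{L}$-endomorphism of $V$, and $\operatorname{Tr}(c_\phi) = \dim \mathcal{L} \neq 0$. In the codimension-one situation each $\phi(X)$ maps $V$ into $W$, so $c_\phi(V) \subseteq W$; when $W$ is a nontrivial irreducible, Schur's lemma makes $c_\phi|_W$ a nonzero scalar, so $\ker c_\phi$ is a one-dimensional $\mathcal{L}$-submodule meeting $W$ trivially and giving $V = W \oplus \ker c_\phi$. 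When $W$ is the trivial irreducible, the relation $\phi(X)\phi(Y) = 0$ forces $\phi(\mathcal{L}) = \phi([\mathcal{L},\mathcal{L}]) = 0$, so the sequence splits for free.

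The step I expect to be the main obstacle is verifying the two structural properties of $c_\phi$: that it is genuinely an $\mathcal{L}$-module endomorphism, which rests on the associativity identity $\beta([X,Y],Z) = \beta(X,[Y,Z])$ together with careful bookkeeping of how the dual bases transform under $\operatorname{ad}$, and that it acts by a nonzero scalar on each nontrivial irreducible via Schur's lemma and the trace computation. Non-degeneracy of $\beta$ — the point at which semisimplicity is indispensable, exactly as in the Killing-form criterion recalled above — is what guarantees the dual bases exist in the first place; over a non-algebraically-closed $F$ one first extends scalars to $\overline{F}$, where complete reducibility may be checked.
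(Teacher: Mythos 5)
Your proposal is the standard Casimir-operator proof of Weyl's theorem (the one in Humphreys, \S 6.3), and it is correct: the reduction to the faithful case, the $\operatorname{Hom}(V,W)$ trick reducing to codimension-one extensions, and the use of $c_\phi = \sum_i \phi(X_i)\phi(Y_i)$ with $\operatorname{Tr}(c_\phi) = \dim\mathcal{L} \neq 0$ to split off a one-dimensional complement are all sound. The paper itself offers no proof to compare against --- the theorem appears in the preliminaries chapter as a quoted background result, with the surrounding material attributed to Humphreys and Steinberg --- so there is nothing to reconcile; one small remark is that you do not actually need Schur's lemma or passage to $\overline{F}$: since $\ker(c_\phi|_W)$ is a submodule of the irreducible $W$ and $\operatorname{Tr}(c_\phi) \neq 0$ forces $c_\phi|_W \neq 0$, the restriction is automatically injective, which already gives $V = W \oplus \ker c_\phi$ over any field of characteristic zero.
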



\subsubsection*{Jordan-Chevalley Decomposition}

Let $V$ be a finite-dimensional vector space over a field $F$. An element $x \in \operatorname{End}(V)$ is said to be \textbf{semisimple} if the roots of its minimal polynomial over $F$ are all distinct. 
If $F$ is an algebraically closed field, then $x$ is semisimple if and only if $x$ is diagonalizable.

\begin{prop}
    \normalfont
    Let $F$ be an algebraically closed field, and let $x \in \operatorname{End}(V)$. 
    \begin{enumerate}[(a)]
        \item There exist unique elements $x_s, x_n \in \operatorname{End}(V)$ such that:
        \[
        x = x_s + x_n, \quad x_s x_n = x_n x_s, \quad x_s \text{ is semisimple} \quad \text{and} \quad x_n \text{ is nilpotent}.
        \]
        \item There are polynomials $p(T)$ and $q(T)$ in a single variable $T$, both without constant terms, such that 
        \[
        x_s = p(x) \quad \text{and} \quad x_n = q(x).
        \]
        As a consequence, $x_s$ and $x_n$ commute with any endomorphism that commutes with $x$.
    \end{enumerate}
    The decomposition $x = x_s + x_n$ is called the (additive) \textbf{Jordan–Chevalley decomposition} of $x$. The components $x_s$ and $x_n$ are referred to as the \textbf{semisimple part} and the \textbf{nilpotent part} of $x$, respectively.
\end{prop}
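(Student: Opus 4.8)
The plan is to build the decomposition from the generalized eigenspace (primary) decomposition of $V$ under $x$, and then upgrade it to a \emph{polynomial} statement via the Chinese Remainder Theorem. Since $F$ is algebraically closed, the characteristic polynomial of $x$ splits as $\prod_{i=1}^{k}(T-a_i)^{m_i}$, where $a_1,\dots,a_k$ are the distinct eigenvalues. First I would invoke the primary decomposition theorem to write $V=\bigoplus_{i=1}^{k}V_i$, where $V_i=\ker\bigl((x-a_i\,\mathrm{id})^{m_i}\bigr)$ is $x$-invariant and $x-a_i\,\mathrm{id}$ is nilpotent on $V_i$. For existence in part (a), define $x_s$ to act as the scalar $a_i$ on each $V_i$ and set $x_n:=x-x_s$. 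Then $x_s$ is diagonalizable, hence semisimple; on each $V_i$ one has $x_n|_{V_i}=(x-a_i\,\mathrm{id})|_{V_i}$, which is nilpotent, so $x_n$ is nilpotent; and $x_s$, being scalar on each summand, commutes with $x$ and hence with $x_n$. This already supplies a decomposition of the required shape.

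For part (b), I would produce the polynomials by solving a system of congruences. The moduli $(T-a_i)^{m_i}$ are pairwise coprime, so by the Chinese Remainder Theorem in $F[T]$ there is a polynomial $p(T)$ with $p(T)\equiv a_i \pmod{(T-a_i)^{m_i}}$ for every $i$. If $0$ is not among the $a_i$, I would adjoin the further congruence $p(T)\equiv 0\pmod{T}$ (coprime to the rest) in order to force $p(0)=0$, whereas if some $a_j=0$ the corresponding congruence already yields $p(0)=0$. Writing $p(T)=a_i+(T-a_i)^{m_i}g_i(T)$ and using $(x-a_i\,\mathrm{id})^{m_i}=0$ on $V_i$, one sees that $p(x)$ acts as $a_i\,\mathrm{id}$ on $V_i$, i.e.\ $p(x)=x_s$; then $q(T):=T-p(T)$ satisfies $q(0)=0$ and $q(x)=x_n$. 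The ``consequence'' is then immediate, since anything commuting with $x$ commutes with every polynomial in $x$, in particular with $x_s=p(x)$ and $x_n=q(x)$.

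For uniqueness, suppose $x=s+n$ is a second decomposition with $s$ semisimple, $n$ nilpotent, and $sn=ns$. Since $s$ and $n$ commute with their sum $x$, they commute with $p(x)=x_s$ and with $q(x)=x_n$. Then $x_s-s=n-x_n$, where the left-hand side is a difference of two commuting semisimple operators, hence semisimple (commuting diagonalizable operators are simultaneously diagonalizable), while the right-hand side is a difference of two commuting nilpotent operators, hence nilpotent. An operator that is simultaneously semisimple and nilpotent must be $0$, forcing $s=x_s$ and $n=x_n$.

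I expect the main obstacle to be the bookkeeping in part (b)—specifically, guaranteeing that $p$ and $q$ carry no constant term while still reproducing $x_s$ and $x_n$ exactly—together with the two linear-algebra facts underpinning uniqueness, namely that commuting semisimple operators are simultaneously diagonalizable and that the sum of commuting nilpotents is nilpotent. These are the points demanding care; the remaining verifications are routine.
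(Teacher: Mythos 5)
Your proof is correct and is precisely the standard argument (primary decomposition plus the Chinese Remainder Theorem, with the extra congruence modulo $T$ to kill the constant term when $0$ is not an eigenvalue, and uniqueness via the fact that a commuting semisimple-minus-semisimple equals nilpotent-minus-nilpotent must vanish). The paper states this proposition as background in its preliminaries chapter and gives no proof of its own, deferring to Humphreys' textbook, where the proof is exactly the one you have written; there is nothing to correct or to contrast.
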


\begin{lemma}
    \normalfont
    Let $x \in \operatorname{End}(V)$ and let $x = x_s + x_n$ be its Jordan decomposition. Then 
    \[
    \operatorname{ad}(x) = \operatorname{ad}(x_s) + \operatorname{ad}(x_n)
    \]
    is the Jordan decomposition of $\operatorname{ad}(x)$ (in $\operatorname{End}(\operatorname{End}(V))$).
\end{lemma}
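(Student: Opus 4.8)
The plan is to reduce everything to the uniqueness clause of the Jordan--Chevalley decomposition established in the preceding proposition. Since $\operatorname{ad}$ is linear, the identity $\operatorname{ad}(x) = \operatorname{ad}(x_s) + \operatorname{ad}(x_n)$ holds for free; so it suffices to verify three things: that $\operatorname{ad}(x_s)$ is semisimple, that $\operatorname{ad}(x_n)$ is nilpotent, and that these two operators commute. Once these are in place, the uniqueness of the decomposition of $\operatorname{ad}(x)$ forces $\operatorname{ad}(x)_s = \operatorname{ad}(x_s)$ and $\operatorname{ad}(x)_n = \operatorname{ad}(x_n)$, which is exactly the claim.

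First I would dispose of the commutation. Because $\operatorname{ad}$ is a Lie algebra homomorphism $\operatorname{End}(V) \longrightarrow \mathfrak{gl}(\operatorname{End}(V))$, one has $[\operatorname{ad}(x_s), \operatorname{ad}(x_n)] = \operatorname{ad}([x_s, x_n])$, and the defining relation $x_s x_n = x_n x_s$ gives $[x_s, x_n] = 0$; hence the bracket vanishes. Next, for nilpotency of $\operatorname{ad}(x_n)$, I would write $\operatorname{ad}(x_n) = L - R$, where $L$ and $R$ denote left and right multiplication by $x_n$ on the space $\operatorname{End}(V)$. Left and right multiplications always commute, and each is nilpotent since $x_n^k = 0$ for some $k$; expanding $(L - R)^{2k}$ by the binomial theorem then makes every term vanish, so $\operatorname{ad}(x_n)$ is nilpotent.

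The semisimplicity of $\operatorname{ad}(x_s)$ is the step that takes a touch more care, and it is the main obstacle. Working over the algebraically closed field $F$, semisimplicity of $x_s$ means it is diagonalizable, so I would fix a basis $v_1, \dots, v_n$ of eigenvectors with $x_s v_i = a_i v_i$. Letting $\{e_{ij}\}$ be the associated elementary endomorphisms (defined by $e_{ij}(v_k) = \delta_{jk} v_i$), a direct computation gives
\[
    \operatorname{ad}(x_s)(e_{ij}) = x_s e_{ij} - e_{ij} x_s = (a_i - a_j)\, e_{ij},
\]
so $\{e_{ij}\}$ is a basis of eigenvectors for $\operatorname{ad}(x_s)$. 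Thus $\operatorname{ad}(x_s)$ is diagonalizable, i.e.\ semisimple. Combining the three verifications with the uniqueness of the Jordan--Chevalley decomposition completes the argument.

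I expect no serious difficulty beyond keeping track of the two-sided multiplication structure on $\operatorname{End}(V)$: the only genuinely computational point is the eigenvalue identity for the elementary endomorphisms, and the only conceptual point is recognizing that the abstract uniqueness result does all the heavy lifting once the three properties are checked.
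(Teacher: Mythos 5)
Your proof is correct. The paper states this lemma without proof in its preliminaries (it is the standard result from Humphreys, whom the chapter follows), and your argument --- semisimplicity of $\operatorname{ad}(x_s)$ via the eigenbasis $\{e_{ij}\}$ with eigenvalues $a_i - a_j$, nilpotency of $\operatorname{ad}(x_n)$ from the commuting nilpotent left and right multiplications, commutativity from $\operatorname{ad}([x_s,x_n])=0$, and then the uniqueness clause of the Jordan--Chevalley decomposition --- is precisely the standard one.
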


Now, let $\mathcal{L}$ be a semisimple Lie algebra over an algebraically closed field. 
Consider the adjoint representation $\operatorname{ad}: \mathcal{L} \longrightarrow \mathfrak{gl}(\mathcal{L})$ of $\mathcal{L}$. 
For a given $x \in \mathcal{L}$, the Jordan–Chevalley decomposition implies that there exist elements $y_s, y_n \in \mathfrak{gl}(\mathcal{L})$ such that $\operatorname{ad}(x) = y_s + y_n,$ where $y_s$ is semisimple and $y_n$ is nilpotent. 
Moreover, it is known that there exist $x_s, x_n \in \mathcal{L}$ such that 
\[
    x = x_s + x_n, \quad [x_s, x_n] = 0, \quad y_s = \operatorname{ad}(x_s) \quad \text{and} \quad y_n = \operatorname{ad}(x_n).
\]
We refer to this as the \textbf{abstract Jordan–Chevalley decomposition} of $x$.
Clearly, $x_s$ is \textbf{ad-semisimple}, and $x_n$ is \textbf{ad-nilpotent}. By a slight abuse of terminology, we refer to $x_s$ and $x_n$ as the \textbf{semisimple} and \textbf{nilpotent} parts of $x$, respectively. The following theorem justifies this terminology.

\begin{thm}
    \normalfont
    Suppose $\mathcal{L} \subset \mathfrak{gl}(V)$ is a semisimple linear Lie algebra. Then, for every element of $\mathcal{L}$, its semisimple and nilpotent parts (as elements of $\mathfrak{gl}(V)$) also belong to $\mathcal{L}$. In particular, the abstract Jordan–Chevalley decomposition in $\mathcal{L}$ agrees with the usual Jordan decomposition in $\mathfrak{gl}(V)$.
\end{thm}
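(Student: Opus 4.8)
The plan is to fix an element $x \in \mathcal{L}$, write its Jordan decomposition $x = x_s + x_n$ in $\mathfrak{gl}(V)$, and manufacture a subalgebra $\mathcal{L}'$ of $\mathfrak{gl}(V)$ that is sandwiched as $\mathcal{L} \subseteq \mathcal{L}'$, that contains both $x_s$ and $x_n$, and that I can then prove equals $\mathcal{L}$. The containments $x_s, x_n \in \mathcal{L}'$ will be easy verifications; the equality $\mathcal{L}' = \mathcal{L}$ is where the semisimplicity hypothesis does the real work, via complete reducibility.

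First I would record the crude containment into the normalizer. Set $N = N_{\mathfrak{gl}(V)}(\mathcal{L}) = \{ y \in \mathfrak{gl}(V) \mid [y, \mathcal{L}] \subseteq \mathcal{L} \}$, a subalgebra of $\mathfrak{gl}(V)$ in which $\mathcal{L}$ sits as an ideal. Since $\operatorname{ad} x$ preserves $\mathcal{L}$, and since by the Jordan–Chevalley proposition $\operatorname{ad} x_s$ and $\operatorname{ad} x_n$ are polynomials in $\operatorname{ad} x$ with zero constant term, both $\operatorname{ad} x_s$ and $\operatorname{ad} x_n$ preserve $\mathcal{L}$, so $x_s, x_n \in N$. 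The normalizer is generally strictly larger than $\mathcal{L}$, so I refine it. For each $\mathcal{L}$-submodule $W \subseteq V$ put $\mathfrak{s}_W = \{ y \in \mathfrak{gl}(V) \mid y(W) \subseteq W,\ \operatorname{Tr}(y|_W) = 0 \}$, and define
\[
    \mathcal{L}' = N \cap \bigcap_{W} \mathfrak{s}_W ,
\]
the intersection over all $\mathcal{L}$-submodules $W$ of $V$. Each $\mathfrak{s}_W$ is a subalgebra (the commutator of two $W$-stabilizers stabilizes $W$ and has trace zero on $W$ automatically), so $\mathcal{L}'$ is a subalgebra. It contains $\mathcal{L}$: since $\mathcal{L}$ is semisimple it equals $[\mathcal{L}, \mathcal{L}]$, so each of its elements is a sum of commutators and hence is traceless on every stable $W$. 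I would likewise check $x_s, x_n \in \mathcal{L}'$: as $x$ stabilizes $W$, so do the constant-term-free polynomials $x_s, x_n$; and because $x \in [\mathcal{L},\mathcal{L}]$ forces $\operatorname{Tr}(x|_W) = 0$ while $\operatorname{Tr}(x_n|_W) = 0$ by nilpotence, subtraction yields $\operatorname{Tr}(x_s|_W) = 0$.

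The heart of the argument, and the step I expect to be the \emph{main obstacle}, is proving $\mathcal{L}' = \mathcal{L}$. Because $\mathcal{L}' \subseteq N$, the subalgebra $\mathcal{L}$ is an ideal of $\mathcal{L}'$, so $\mathcal{L}'$ becomes an $\mathcal{L}$-module under the adjoint action with $\mathcal{L}$ as a submodule. Weyl's complete reducibility theorem supplies an $\mathcal{L}$-stable complement $M$, giving $\mathcal{L}' = \mathcal{L} \oplus M$ with $[\mathcal{L}, M] \subseteq M$; but $M \subseteq \mathcal{L}'$ together with $\mathcal{L}$ being an ideal of $\mathcal{L}'$ also yields $[\mathcal{L}, M] \subseteq \mathcal{L}$, whence $[\mathcal{L}, M] \subseteq \mathcal{L} \cap M = 0$. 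Thus every $y \in M$ commutes with all of $\mathcal{L}$. To conclude $M = 0$, I would apply Weyl's theorem once more to decompose $V$ into irreducible $\mathcal{L}$-submodules; on each such $W$ the element $y$ is an $\mathcal{L}$-module endomorphism, so by Schur's lemma (over the algebraically closed field of characteristic zero in force here) it acts as a scalar, and the condition $\operatorname{Tr}(y|_W) = 0$ forces that scalar to vanish. Hence $y = 0$ on every $W$, so $M = 0$ and $\mathcal{L}' = \mathcal{L}$. Then $x_s, x_n \in \mathcal{L}' = \mathcal{L}$ proves the theorem, and the final clause that the abstract and usual Jordan decompositions coincide is immediate from the uniqueness statement in the Jordan–Chevalley proposition.
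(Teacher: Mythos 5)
Your proof is correct and is precisely the standard argument (Humphreys, \S 6.4) that the thesis implicitly relies on here: the paper states this theorem in its preliminaries without giving a proof. All the key steps --- passing to the normalizer, refining by the trace-zero stabilizers $\mathfrak{s}_W$, the two applications of Weyl's complete reducibility theorem, and the Schur's lemma argument killing the complement $M$ --- are in place and correctly executed.
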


\begin{cor}
    \normalfont
    Let $\mathcal{L}$ be a semisimple Lie algebra and let $\phi: \mathcal{L} \to \mathfrak{gl}(V)$ be a (finite-dimensional) representation of $\mathcal{L}$. 
    If $x = x_s + x_n$ represents the abstract Jordan–Chevalley decomposition of $x \in \mathcal{L}$, then $\phi(x) = \phi(s) + \phi(n)$ gives the usual Jordan decomposition of \(\phi(x)\) in \(\mathfrak{gl}(V)\).
\end{cor}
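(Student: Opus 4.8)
The plan is to reduce the statement to the Theorem just proved, which asserts that for a semisimple \emph{linear} Lie algebra $\mathcal{M} \subseteq \mathfrak{gl}(V)$ the abstract Jordan--Chevalley decomposition inside $\mathcal{M}$ coincides with the ordinary Jordan decomposition in $\mathfrak{gl}(V)$. The natural candidate for $\mathcal{M}$ is the image $\phi(\mathcal{L})$, so the first step is to check that $\phi(\mathcal{L})$ is itself semisimple. This follows from the structure theory of semisimple Lie algebras: $\ker \phi$ is an ideal of $\mathcal{L}$, hence a direct sum of some of the simple ideals in the decomposition $\mathcal{L} = \mathcal{L}_1 \oplus \cdots \oplus \mathcal{L}_k$, so $\phi(\mathcal{L}) \cong \mathcal{L}/\ker\phi$ is again a direct sum of simple ideals and therefore semisimple (the case $\phi = 0$ being trivial). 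In particular $\phi(\mathcal{L}) \subseteq \mathfrak{gl}(V)$ is a semisimple linear Lie algebra to which the Theorem applies.

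The heart of the argument is the second step: showing that $\phi$ carries the abstract Jordan decomposition of $\mathcal{L}$ onto the abstract Jordan decomposition of $\phi(\mathcal{L})$, i.e. that $\phi(x) = \phi(x_s) + \phi(x_n)$ is the abstract decomposition of $\phi(x)$ inside $\phi(\mathcal{L})$. Three things must be verified. First, $[\phi(x_s), \phi(x_n)] = \phi([x_s, x_n]) = 0$, which is immediate. Second, $\operatorname{ad}_{\phi(\mathcal{L})}\phi(x_n)$ is nilpotent: from the intertwining relation $\phi \circ \operatorname{ad}_{\mathcal{L}} x_n = \operatorname{ad}_{\phi(\mathcal{L})}\phi(x_n) \circ \phi$ and the nilpotence of $\operatorname{ad}_{\mathcal{L}} x_n$ one gets $(\operatorname{ad}_{\phi(\mathcal{L})}\phi(x_n))^m \circ \phi = \phi \circ (\operatorname{ad}_{\mathcal{L}} x_n)^m = 0$ for large $m$, and since $\phi$ surjects onto $\phi(\mathcal{L})$ this forces $(\operatorname{ad}_{\phi(\mathcal{L})}\phi(x_n))^m = 0$. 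Third, $\operatorname{ad}_{\phi(\mathcal{L})}\phi(x_s)$ is semisimple: choosing a basis of $\mathcal{L}$ consisting of $\operatorname{ad}_{\mathcal{L}} x_s$-eigenvectors, say $[x_s, y] = \lambda y$, one has $[\phi(x_s), \phi(y)] = \lambda \phi(y)$, and the images $\phi(y)$ span $\phi(\mathcal{L})$, so $\operatorname{ad}_{\phi(\mathcal{L})}\phi(x_s)$ is diagonalizable.

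Granting these, $\phi(x_s) + \phi(x_n)$ satisfies all the defining properties of the abstract Jordan decomposition of $\phi(x)$ in the semisimple algebra $\phi(\mathcal{L})$; uniqueness of that decomposition (which holds because $\operatorname{ad}_{\phi(\mathcal{L})}$ is injective, the center of a semisimple algebra being zero) identifies it as \emph{the} abstract decomposition. Finally, applying the Theorem to $\phi(\mathcal{L}) \subseteq \mathfrak{gl}(V)$ shows that this abstract decomposition is exactly the ordinary Jordan decomposition of $\phi(x)$ in $\mathfrak{gl}(V)$, which is the assertion (with the evident reading $\phi(x) = \phi(x_s) + \phi(x_n)$).

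I expect the main obstacle to be the semisimplicity transfer in the second step --- verifying that $\operatorname{ad}_{\phi(\mathcal{L})}\phi(x_s)$ is genuinely semisimple (diagonalizable), rather than merely having the expected eigenvalues --- together with the bookkeeping that the abstract Jordan decomposition is taken relative to the correct ambient algebra $\phi(\mathcal{L})$ at each stage, so that the earlier Theorem on semisimple linear Lie algebras can legitimately be invoked.
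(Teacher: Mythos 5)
Your proof is correct and is the standard argument (essentially the one in Humphreys, which this survey chapter follows): the paper states this corollary without proof, and your reduction to the preceding theorem via the semisimple image $\phi(\mathcal{L})$, together with the verification that $\phi$ transports the abstract Jordan--Chevalley decomposition and the uniqueness argument via injectivity of $\operatorname{ad}$, is exactly how the omitted proof goes. No gaps.
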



\subsubsection*{Structure of Semisimple Lie algebras}

Let $\mathcal{L}$ be a semisimple Lie algebra over $\mathbb{C}$ (or any algebraically closed field with characteristic zero). 
A nonzero subalgebra $\mathcal{T}$ of $\mathcal{L}$ is called \textbf{toral} if it only consists of semisimple elements. 
Every toral subalgebra of $\mathcal{L}$ is abelian. 
Let $\mathcal{H}$ be a maximal toral subalgebra of $\mathcal{L}$. Then $\mathcal{H}$ is abelian and every element of $\operatorname{ad}(\mathcal{H})$ is semisimple (i.e., diagonalizable). 
Consequently, $\operatorname{ad}(\mathcal{H})$ forms a commuting family of linear transformations on $\mathcal{L}$, and they can be simultaneously diagonalized.
For $\alpha \in \mathcal{H}^*$, define $\mathcal{L}_\alpha = \{ X \in \mathcal{L} \mid [H,X] = \alpha (H) X$ for all $H \in \mathcal{H} \}$. 
Then $\mathcal{L}_0 = C_{\mathcal{L}}(\mathcal{H}) = \mathcal{H}$. 
Let $\Phi$ be the set of all nonzero $\alpha \in \mathcal{H}^*$ such that $\mathcal{L}_\alpha \neq 0$. 
The elements of $\Phi$ are called the \textbf{roots} of $\mathcal{L}$ relative to $\mathcal{H}$. 
This leads to the \textbf{root space decomposition} of $\mathcal{L}$: 
\[
    \mathcal{L} = \mathcal{H} \oplus \coprod_{\alpha \in \Phi} \mathcal{L}_\alpha.
\]

For all $\alpha, \beta \in \Phi$, $[\mathcal{L}_{\alpha}, \mathcal{L}_\beta] = \mathcal{L}_{\alpha + \beta}.$ 
Each root space $\mathcal{L}_\alpha \hspace{2mm} (\alpha \in \Phi)$ is one-dimensional. 
If $\alpha \in \Phi$, then $-\alpha \in \Phi$. Moreover, the only scalar multiples of a root $\alpha$ in $\Phi$ are $\alpha$ and $-\alpha$. 
Note that $\Phi$ spans $\mathcal{H}^*$. Since the restriction of the Killing form $\kappa$ to $\mathcal{H}$ is nondegenerate, for given $\alpha \in \mathcal{H}^*$ there exists a unique element $H'_\alpha \in \mathcal{H}$ satisfying $\phi(H) = \kappa (H'_{\alpha},H)$ for all $H \in \mathcal{H}$. 
Let $\alpha \in \Phi, X \in \mathcal{L}_\alpha, Y \in \mathcal{L}_{-\alpha}$ then $[X, Y] = \kappa (X, Y) H'_\alpha$. 
For $\alpha \in \Phi$, define $H_\alpha = 2 H'_\alpha / \kappa (H'_\alpha, H'_\alpha)$. Then $H_\alpha = - H_{-\alpha}$. If $\alpha \in \Phi$ and $X_\alpha$ is any nonzero element of $\mathcal{L}_\alpha$, then there exists a unique $X_{-\alpha} \in \mathcal{L}_{-\alpha}$ such that $[X_\alpha, X_{-\alpha}] = H_\alpha$. 

Let $\alpha, \beta \in \Phi$ with $\beta \neq \pm \alpha$. Let $r, q$ be the largest integers for which $\beta - r \alpha$ and $\beta + q \alpha$ are roots, respectively. Then all the roots $\beta + i\alpha \in \Phi \ (-r \leq i \leq q)$, and $\beta (H_\alpha)= r - q \in \mathbb{Z}$. In other words, the roots $\beta + i \alpha$ form a string (the \textbf{$\alpha-$string through $\beta$}) $\beta - r \alpha, \dots, \beta, \dots, \beta + q \alpha$. Notice that $\beta - \beta (H_\alpha)\alpha \in \Phi$. The number $\beta(H_\alpha)$ are called \textbf{Cartan integers}.

The rank of $\mathcal{L}$ is defined by $l:=$ dim$_{\mathbb{C}} (\mathcal{H}) = \dim_{\mathbb{C}} (\mathcal{H}^*) = \dim_{\mathbb{C}} (\text{Span }(\Phi))$. Hence $$\text{dim}_{\mathbb{C}} (\mathcal{L}) = \text{rank} (\mathcal{L}) + |\Phi|.$$ 
Let \(E_\mathbb{Q}\) denote \(\mathcal{H}^*_\mathbb{Q}\), the vector space over \(\mathbb{Q}\) generated by the roots. Then \(\dim_{\mathbb{Q}} (E_\mathbb{Q}) = l\). Define the bilinear form \((\alpha, \beta) = \kappa(H'_\alpha, H'_\beta)\) for all \(\alpha, \beta \in E_\mathbb{Q}\). This defines a symmetric, nondegenerate, positive definite bilinear form on \(E_\mathbb{Q}\).
Let \(E = \mathbb{R} \otimes_\mathbb{Q} E_\mathbb{Q}\). The bilinear form on \(E_\mathbb{Q}\) extends canonically to \(E\) and remains positive definite, thus making \(E\) a Euclidean space. Note that \(\Phi\) contains a basis for \(E\), and \(\dim_{\mathbb{R}} (E) = l\). 

Finally, we observe that \(\Phi\) can be interpreted as the root system in the sense of Section~\ref{subsec:Root Systems and Weyl Groups}. In this context, we sometimes use the term crystallographic root system to refer to the axiomatic definition of a root system.


\subsubsection*{Existence and Isomorphism Theorems}

Let $\mathcal{L}$ be a semisimple Lie algebra over a field $\mathbb{C}$. Let $\mathcal{H}$ and $\Phi$ be as defined above. Then $\Phi$ is irreducible if and only if $\mathcal{L}$ is simple. 

\begin{thm}
    \normalfont
    \begin{enumerate}[(a)]
        \item Let $\Phi$ be a crystallographic root system. Then there exists a semisimple Lie algebra whose root system is $\Phi$.
        \item Let \(\mathcal{L}, \mathcal{L}'\) be semisimple Lie algebras over \(\mathbb{C}\), with maximal toral subalgebras \(\mathcal{H}, \mathcal{H}'\) and root systems \(\Phi, \Phi'\), respectively. Let an isomorphism \(\Phi \to \Phi'\) be given, sending a given simple system \(\Delta\) to \(\Delta'\), and denote by \(\pi: \mathcal{H} \to \mathcal{H}'\) the associated isomorphism. Choose nonzero elements \(X_\alpha \in \mathcal{L}_\alpha\) (\(X'_{\alpha'} \in \mathcal{L}'_{\alpha'}\)) for each \(\alpha \in \Delta\) (\(\alpha' \in \Delta'\)). Then, there exists a unique isomorphism \(\pi: \mathcal{L} \to \mathcal{L}'\) extending \(\pi\) on \(\mathcal{H}\) and mapping \(X_\alpha\) to \(X'_{\alpha'}\) for all \(\alpha \in \Delta\).
    \end{enumerate}
\end{thm}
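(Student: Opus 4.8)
The plan is to treat the two parts by their classical and essentially independent methods: part (b), the isomorphism (uniqueness) statement, by the \emph{diagonal subalgebra} trick inside $\mathcal{L} \oplus \mathcal{L}'$, and part (a), the existence statement, by exhibiting a concrete presentation of the candidate algebra through generators and Serre relations read off from the Cartan matrix of $\Phi$. Throughout I would use the structural facts already recorded in the excerpt: the root-space decomposition $\mathcal{L} = \mathcal{H} \oplus \coprod_{\alpha} \mathcal{L}_\alpha$, the one-dimensionality of each $\mathcal{L}_\alpha$, the identity $[\mathcal{L}_\alpha, \mathcal{L}_\beta] = \mathcal{L}_{\alpha+\beta}$, and the fact that every positive root is a sum of simple roots all of whose partial sums are again roots. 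This last fact yields, as a first reduction, the \textbf{generation lemma}: $\mathcal{L}$ is generated as a Lie algebra by the standard triples $X_\alpha, Y_\alpha, H_\alpha$ for $\alpha \in \Delta$, with $[X_\alpha, Y_\alpha] = H_\alpha$. Both parts lean on this.

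For part (b) I would first reduce to $\Phi$ irreducible (so $\mathcal{L}, \mathcal{L}'$ simple), since the given isomorphism $\Phi \to \Phi'$ carries connected components of the Dynkin diagram to connected components and hence respects the decomposition into simple summands; a direct sum of the component isomorphisms then settles the general case. In the simple case, form $\mathcal{M} = \mathcal{L} \oplus \mathcal{L}'$ and let $\mathcal{D} \subseteq \mathcal{M}$ be the subalgebra generated by the diagonal elements $x_\alpha = (X_\alpha, X'_{\alpha'})$ and $y_\alpha = (Y_\alpha, Y'_{\alpha'})$ for $\alpha \in \Delta$. By the generation lemma the projections $p \colon \mathcal{D} \to \mathcal{L}$ and $p' \colon \mathcal{D} \to \mathcal{L}'$ are surjective, with kernels that are ideals of $\mathcal{L}$ and $\mathcal{L}'$. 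The crux is to show $\mathcal{D}$ is the \emph{graph} of an isomorphism, i.e. both kernels vanish. To this end I would climb to the highest root $\beta$: bracketing the simple generators along a chain with all partial sums roots realizes $x_\beta = (X_\beta, X'_{\beta'}) \in \mathcal{D}$ with both components nonzero, and since $\beta + \alpha \notin \Phi$ for every simple $\alpha$ we get $[x_\alpha, x_\beta] = 0$. Hence the smallest $\operatorname{ad}\mathcal{D}$-invariant subspace $M$ containing $x_\beta$ is spanned by $x_\beta$ and its images under the lowering operators $\operatorname{ad} y_\alpha$; weight bookkeeping shows its weight-$\beta$ part is exactly $\mathbb{C}x_\beta$, so $(X_\beta, 0) \notin M$ and $M$ is a \emph{proper} nonzero $\operatorname{ad}\mathcal{D}$-submodule. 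If $\mathcal{D}$ were all of $\mathcal{M}$, then $M$ would be an ideal of $\mathcal{L} \oplus \mathcal{L}'$, impossible since the only ideals there are $0, \mathcal{L}\oplus 0, 0\oplus\mathcal{L}', \mathcal{M}$ and $M$ is none of them. Thus $\mathcal{D} \subsetneq \mathcal{M}$; by simplicity each kernel ideal is $0$ or everything, and a full kernel would force $\mathcal{D} = \mathcal{M}$, so both kernels vanish. Then $\phi = p' \circ p^{-1}$ is the desired isomorphism, sending $X_\alpha \mapsto X'_{\alpha'}$ and restricting to $\pi$ on $\mathcal{H}$; uniqueness is immediate from the generation lemma, as the image of each $Y_\alpha$ is pinned down by $[X'_{\alpha'}, \cdot] = H'_{\alpha'}$ and the one-dimensionality of $\mathcal{L}'_{-\alpha'}$.

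For part (a) I would build $\mathcal{L}$ from the Cartan matrix $(a_{ij}) = (\langle \alpha_i, \alpha_j \rangle)$ as the Lie algebra on generators $e_i, f_i, h_i$ subject to the Cartan relations $[h_i,h_j]=0$, $[e_i,f_j]=\delta_{ij}h_i$, $[h_i,e_j]=a_{ij}e_j$, $[h_i,f_j]=-a_{ij}f_j$, together with the Serre relations $(\operatorname{ad} e_i)^{1-a_{ij}}e_j = 0$ and $(\operatorname{ad} f_i)^{1-a_{ij}}f_j = 0$ for $i \ne j$. The plan is then three steps. First, analyze the algebra $\widehat{\mathcal{L}}$ defined by the Cartan relations alone: via its action on a tensor algebra on $\ell$ letters one shows it has a triangular decomposition $\widehat{\mathcal{L}} = \widehat{\mathcal{N}}^- \oplus \mathcal{H} \oplus \widehat{\mathcal{N}}^+$ with $\dim\mathcal{H} = \ell$ and $\widehat{\mathcal{N}}^\pm$ free on the $f_i$ (resp. $e_i$). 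Second, show the Serre elements generate an ideal respecting this decomposition; the key computation, via $\mathfrak{sl}_2$-representation theory inside $\widehat{\mathcal{L}}$, is that the $\operatorname{ad} f_k$ annihilate the raising Serre generators (and symmetrically), so the ideal splits as $\mathcal{K}^- \oplus \mathcal{K}^+$ and the quotient $\mathcal{L}$ inherits a triangular decomposition $\mathcal{N}^- \oplus \mathcal{H} \oplus \mathcal{N}^+$. Third, and this is the heart, prove $\mathcal{N}^+$ is finite-dimensional with weights exactly the positive roots, each of multiplicity one; this is where the root-system combinatorics and the $W$-invariance of the weight set enter, yielding $\dim\mathcal{L} = \ell + |\Phi|$. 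With finiteness secured, semisimplicity follows by checking $\mathcal{L}$ has no nonzero abelian ideal (equivalently, nondegeneracy of the Killing form), and by construction its root system relative to $\mathcal{H}$ is $\Phi$.

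The hard part, and the step I expect to be the genuine obstacle, is the finiteness in part (a): controlling the ideal generated by the Serre relations tightly enough to force $\dim \mathcal{N}^+ < \infty$. Everything preceding it is formal free-algebra and triangular-decomposition bookkeeping, and everything following it (semisimplicity, identification of the root system) is routine verification; but the finiteness is precisely Serre's theorem and rests on the delicate interplay between the $\mathfrak{sl}_2$-triples $e_i, f_i, h_i$ and the Weyl-group symmetry of the weight lattice. Part (b), by contrast, is essentially complete once the proper submodule $M$ is produced, so there the only subtlety is the weight bookkeeping that makes $M$ proper.
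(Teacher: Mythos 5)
The paper does not prove this theorem at all: it appears in the preliminaries chapter as a survey statement, with the surrounding material attributed to Humphreys and Steinberg, so there is no in-paper argument to compare yours against. Your proposal is a correct reconstruction of the standard proofs from those sources: part (b) is exactly the diagonal-subalgebra argument (form $\mathcal{D}\subseteq\mathcal{L}\oplus\mathcal{L}'$ generated by the paired triples, exhibit the proper nonzero $\operatorname{ad}\mathcal{D}$-submodule $M$ built from the highest-root vector to rule out $\mathcal{D}=\mathcal{L}\oplus\mathcal{L}'$, then use simplicity to kill both projection kernels), and part (a) is Serre's presentation by generators and relations from the Cartan matrix. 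Two points deserve flagging, though neither is a gap in the sense of a wrong step. First, in part (b) the assertion that $M$ is spanned by $x_\beta$ and its images under strings of lowering operators requires checking that the raising operators $\operatorname{ad}x_\alpha$ preserve this span; this is a routine commutator induction but it is the actual content of ``$M$ is $\operatorname{ad}\mathcal{D}$-invariant,'' and your writeup should not fold it silently into ``weight bookkeeping.'' Second, you correctly identify that the finiteness of $\mathcal{N}^+$ in part (a) is the genuine obstacle --- it is the whole of Serre's theorem --- so as written your part (a) is an accurate roadmap rather than a proof; if the intent is a self-contained argument, that step (the $\mathfrak{sl}_2$-theory showing the Serre ideal is graded and the Weyl-group argument bounding the weights of the quotient by $\Phi$) must be carried out in full.
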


\subsection{Universal Enveloping Algebra}

Let $\mathcal{L}$ be a Lie algebra over a field $F$, and $\mathcal{U}$ an associative algebra over $F$. Since $\mathcal{U}$ is an associative algebra, it naturally inherits a Lie algebra structure by defining the Lie bracket as $[X, Y] = XY-YX$ for all $X, Y \in \mathcal{U}$.

\begin{defn}\label{uni_env_algebra}
    \normalfont
    A {\bf universal enveloping algebra} \index{universal enveloping algebra} of a Lie algebra $\mathcal{L}$ is a pair $(\mathcal{U},\phi)$ satisfying:
    \begin{enumerate}[(a)]
        \item $\mathcal{U}$ is an associative algebra with $1$.
        \item $\phi: \mathcal{L} \to \mathcal{U}$ is a Lie algebra homomorphism, i.e., $\phi$ is $F$-linear and satisfies $$\phi([X, Y]) = [\phi(X), \phi(Y)] \quad \text{for all } X, Y \in \mathcal{L}.$$
        \item If $(\mathcal{A}, \psi)$ is any other such pair, there exists a unique algebra homomorphism $\theta: \mathcal{U} \longrightarrow \mathcal{A}$ such that $\theta \circ \phi = \psi$ and $\theta(1)=1$. 
    \end{enumerate}
\end{defn}

We denote the universal enveloping algebra of $\mathcal{L}$ by $\mathcal{U}(\mathcal{L})$ when specifying the Lie algebra explicitly.

\begin{rmk}[Existence and Uniqueness]
    \normalfont
    The universal enveloping algebra can be constructed as $\mathcal{U} = \mathcal{T}(\mathcal{L}) / J$, where $\mathcal{T}(\mathcal{L})$ is the tensor algebra of $\mathcal{L}$, and $J$ is the two-sided ideal generated by 
    \[
    \langle X \otimes Y - Y \otimes X - [X, Y] \mid X, Y \in \mathcal{L} \rangle.
    \]
    The map $\phi$ is defined as follows:
    \[
    \xymatrix{
    \mathcal{L} \ar[rd]_-{\phi=\pi \circ \textit{i}} \ar[r]^-{\textit{i}} & \mathcal{T}(\mathcal{L}) \ar[d]^-{\pi} \\
    & \mathcal{T}(\mathcal{L}) / J
    }
    \]
    This construction ensures that $(\mathcal{U}, \phi)$ satisfies the definition. Uniqueness follows from the universal property.
\end{rmk}

\begin{thm}[PBW Theorem]
    \normalfont
    Let $\mathcal{L}$ be a Lie algebra over a field $F$, and $(\mathcal{U}, \phi)$ its universal enveloping algebra. Then:
    \begin{enumerate}[(a)]
        \item The homomorphism $\phi$ is injective.
        \item Identifying $\mathcal{L}$ with its image under $\phi$, if $X_1, \dots, X_r$ is a basis of $\mathcal{L}$, then the monomials $X_1^{k_1} X_2^{k_2} \dots X_r^{k_r}$ (where $k_i \geq 0$) form a basis of $\mathcal{U}$.
    \end{enumerate}
\end{thm}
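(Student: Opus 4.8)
The plan is to prove part (b) — that the ordered monomials form a basis of $\mathcal{U} = \mathcal{U}(\mathcal{L})$ — and then extract part (a) at the end. Write $x_i = \phi(X_i)$ and call a monomial $x_{i_1} x_{i_2} \cdots x_{i_k}$ \emph{standard} if $i_1 \le i_2 \le \cdots \le i_k$; these are precisely the monomials $X_1^{k_1} \cdots X_r^{k_r}$ of the statement, with the empty product being $1$. First I would settle spanning. Since $\mathcal{U}$ is a quotient of the tensor algebra $\mathcal{T}(\mathcal{L})$, it is generated by the $x_i$, so every element is a combination of arbitrary monomials. The defining relation $x_i x_j - x_j x_i = \phi([X_i, X_j])$ lets me swap any adjacent out-of-order pair at the cost of a strictly shorter monomial. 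A double induction on the length $k$ and on the number of out-of-order adjacent pairs then rewrites every monomial as an $F$-linear combination of standard ones, so the standard monomials span $\mathcal{U}$.

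The real content is linear independence, and for this I would construct an auxiliary representation. Let $V = F[z_1, \dots, z_r]$, with basis the symbols $z_\Sigma = z_{i_1} \cdots z_{i_k}$ indexed by nondecreasing sequences $\Sigma = (i_1 \le \cdots \le i_k)$. The aim is a Lie algebra homomorphism $\psi \colon \mathcal{L} \to \mathfrak{gl}(V)$ governed by the direct rule $\psi(X_i) z_\Sigma = z_i z_\Sigma$ when $i \le i_1$ (so the result is again standard), with the out-of-order values forced by the bracket. Concretely, when $i > i_1$, writing $\Sigma = (i_1, T)$ with $i_1 \le T$, I would set $\psi(X_i) z_\Sigma = z_{i_1}\,\bigl(\psi(X_i) z_T\bigr) + \psi([X_i, X_{i_1}]) z_T$, where the right-hand side (expanding $[X_i,X_{i_1}]$ in the basis and applying $\psi$ to each generator) refers to $\psi$ only on the strictly shorter sequence $T$ and is therefore available by induction on length. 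This makes $\psi(X_i)$ a well-defined linear operator on $V$.

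The main obstacle is to check that this $\psi$ is genuinely a Lie module action, i.e. that $\psi(X_i)\psi(X_j) - \psi(X_j)\psi(X_i) = \psi([X_i,X_j])$ as operators on all of $V$. I would prove this by induction on the length of $\Sigma$, splitting into cases by the relative order of $i$, $j$, and the least index $i_1$ of $\Sigma$. The cases where $i \le i_1$ or $j \le i_1$ reduce quickly via the direct rule and the inductive hypothesis on $T$; the delicate case is when both $i, j > i_1$, where after peeling off $z_{i_1}$ one must reconcile the two orders of reduction, and this is exactly where the Jacobi identity in $\mathcal{L}$ is consumed. This verification is the computational heart of the argument and the step most vulnerable to bookkeeping errors. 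Once $\psi$ is known to be a Lie homomorphism, the universal property (part (c) of the definition of $\mathcal{U}$) produces a unique algebra homomorphism $\Theta \colon \mathcal{U} \to \operatorname{End}(V)$ with $\Theta \circ \phi = \psi$, so $V$ becomes a $\mathcal{U}$-module.

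Finally I would harvest both conclusions. Applying a standard monomial $x_{i_1} \cdots x_{i_k}$ (with $i_1 \le \cdots \le i_k$) to $1 \in V$ and unwinding the direct rule of $\psi$ yields $\Theta(x_{i_1}\cdots x_{i_k})\,1 = z_{i_1}\cdots z_{i_k} = z_\Sigma$, a basis vector of $V$, and distinct standard monomials land on distinct basis vectors. Hence any nontrivial $F$-linear relation among standard monomials in $\mathcal{U}$ would, upon applying $\Theta(\cdot)$ to $1$, force a nontrivial relation among the $z_\Sigma$ in $V$ — impossible. This gives linear independence, which together with spanning proves (b). For (a), the elements $x_1, \dots, x_r$ are themselves standard monomials, hence $F$-linearly independent in $\mathcal{U}$, so $\phi$ is injective, establishing (a).
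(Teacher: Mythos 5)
The paper states this theorem without proof (it is background material, drawn from Humphreys' book), so the benchmark is the classical argument you are following: spanning by reduction to standard form, linear independence via a representation on the polynomial algebra $F[z_1,\dots,z_r]$. Your spanning argument, the way you harvest independence by applying $\Theta(\cdot)$ to $1\in V$, and the deduction of (a) from (b) are all fine. The gap is in the construction of $\psi$. Your recursive rule for $i>i_1$,
\[
\psi(X_i)\,z_\Sigma \;=\; z_{i_1}\bigl(\psi(X_i)\,z_T\bigr) \;+\; \psi([X_i,X_{i_1}])\,z_T,
\]
uses multiplication by the \emph{variable} $z_{i_1}$ in the first term, and the resulting operators do not satisfy $\psi(X_i)\psi(X_j)-\psi(X_j)\psi(X_i)=\psi([X_i,X_j])$. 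The trouble is that $\psi(X_i)z_T$ equals $z_i z_T$ only up to lower-degree terms, and those correction terms can involve monomials whose least index is \emph{smaller} than $i_1$; on such monomials multiplication by $z_{i_1}$ and the action of $X_{i_1}$ genuinely differ. Concretely, take $\mathcal{L}=\mathfrak{sl}_2$ with the ordered basis $X_1=h$, $X_2=e$, $X_3=f$. Your rule gives $\psi(X_3)z_2=z_2z_3-z_1$ and then $\psi(X_3)z_2^2=z_2^2z_3-2z_1z_2$, while $\psi(X_2)\bigl(\psi(X_3)z_2\bigr)=z_2^2z_3-z_1z_2+2z_2$; hence $\psi(X_3)\psi(X_2)z_2-\psi(X_2)\psi(X_3)z_2=-z_1z_2-2z_2$, whereas $\psi([X_3,X_2])z_2=-z_1z_2$. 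So $\psi$ is not a Lie algebra homomorphism, and the universal property cannot be invoked to produce $\Theta$.

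The repair is to put the module action in the first term, $\psi(X_{i_1})\bigl(\psi(X_i)z_T\bigr)$, which builds the commutator identity for the pair $(i,i_1)$ into the definition. But then your justification of well-definedness (``the right-hand side refers to $\psi$ only on the strictly shorter sequence $T$'') collapses: $\psi(X_{i_1})$ is now applied to $\psi(X_i)z_T$, an element of degree $|\Sigma|$, not $|\Sigma|-1$. Breaking this circularity is exactly why the classical proof runs a simultaneous induction on the degree $k$, establishing three statements at once: the direct rule for $i\le\Sigma$; the leading-term property that $\psi(X_i)z_\Sigma-z_iz_\Sigma$ has degree at most $\deg\Sigma$ (so that $\psi(X_{i_1})$ applied to the leading term $z_iz_T$ is again covered by the direct rule, since $i_1\le\min(i,\min T)$, while the lower-degree remainder is handled by the induction); and the commutator identity on monomials of degree at most $k-1$. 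Without that apparatus — or a substitute such as the filtered-graded argument or Bergman's diamond lemma — the construction of the representation, which you correctly identify as the heart of the proof, does not go through as written.
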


\begin{rmk}
    PBW stands for Poincaré–Birkhoff–Witt.
\end{rmk}

\begin{rmk}
    Since $\phi$ is injective, elements of $\mathcal{L}$ can be viewed as elements of $\mathcal{U}$.
\end{rmk}

\subsection{Basic Representation Theory}

Let $\pi: \mathcal{L} \longrightarrow \mathfrak{gl}(V)$ be a finite-dimensional representation of $\mathcal{L}$, that is, $V$ is a finite-dimensional $\mathcal{L}$-module. For $\mu \in \mathcal{H}^*$, define $V_\mu = \{ v \in V \mid H \cdot v = \mu (H) v \text{ for all } H \in \mathcal{H} \}$. 
If $V_\mu \neq 0$, we call it a {\bf weight space} and $\mu$ a {\bf weight} of $V$. Any nonzero vector in $V_\mu$ is called a {\bf weight vector} corresponding to the weight $\mu$. In the case of the adjoint representation, the nonzero weights are precisely the roots. 

Let $\Omega_\pi$ denote the set of all weights of the representation $\pi$. Then, the module $V$ decomposes as 
\[
    V = \coprod_{\mu \in \Omega_\pi} V_\mu,
\]
where $V_\mu$ is the weight space corresponding to the weight $\mu$. 
For $\mu \in \Omega_\pi$ and $\alpha \in \Phi$, if $v \in V_\mu$, then $X_\alpha \cdot v \in V_{\mu + \alpha}$, i.e., the action of $\mathcal{L}_\alpha$ maps $V_\mu$ into $V_{\mu + \alpha}$.

\begin{thm}\label{thm:highestweight}
    \normalfont
    Let $\mathcal{L}$ be a finite-dimensional semisimple Lie algebra over $\mathbb{C}$ and let $\mathcal{H}$ be a maximal toral subalgebra of $\mathcal{L}$. Let $\pi: \mathcal{L} \to \mathfrak{gl}(V)$ be a finite-dimensional irreducible representation of $\mathcal{L}$.  
    \begin{enumerate}[(a)]
        \item\label{(a)} There exists a weight $\lambda \in \Omega_\pi$ of the representation $\pi$ and a corresponding nonzero weight vector $v^+ \in V_\lambda$ such that $X_\alpha v^+ = 0$ for all $\alpha > 0$. The weight $\lambda$ is uniquely determined and is called the {\bf highest weight} of the representation. Similarly, the line spanned by $v^+$ is uniquely determined, and any vector on this line is referred to as a {\bf highest weight vector} (or {\bf maximal vector}).
        
        \item If $\lambda$ is the highest weight as described in part~\ref{(a)}, then $\dim(V_\lambda) = 1$. Moreover, every weight $\mu \in \Omega_\pi$ can be expressed in the form $\mu = \lambda - \sum_{\alpha} \alpha$, where the $\alpha$'s are positive roots (repetition allowed).
    \end{enumerate}
\end{thm}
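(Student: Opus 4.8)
The plan is to produce a single \emph{maximal vector} $v^+$ and show it generates all of $V$ under the lowering operators, then read off every assertion of parts (a) and (b) from that one construction.

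First I would establish the existence of $\lambda$. Since $V$ is finite dimensional, $\Omega_\pi$ is a finite subset of $\mathcal{H}^*$, so I may choose $\lambda \in \Omega_\pi$ that is maximal for the partial order $\leq$. For any positive root $\alpha \in \Phi^+$ we have $\lambda + \alpha > \lambda$, so maximality forces $\lambda + \alpha \notin \Omega_\pi$. Now fix a nonzero $v^+ \in V_\lambda$. Because the action of $\mathcal{L}_\alpha$ sends $V_\lambda$ into $V_{\lambda + \alpha}$ and the latter is zero, we get $X_\alpha v^+ = 0$ for every $\alpha \in \Phi^+$. Thus $v^+$ is a maximal vector, settling the existence claim in (a).

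The heart of the argument is to show $V = \mathcal{U}(\mathcal{N}^-)\, v^+$, where $\mathcal{N}^+ = \coprod_{\alpha > 0} \mathcal{L}_\alpha$ and $\mathcal{N}^- = \coprod_{\alpha > 0} \mathcal{L}_{-\alpha}$. Here I would invoke the PBW theorem together with the triangular decomposition $\mathcal{L} = \mathcal{N}^- \oplus \mathcal{H} \oplus \mathcal{N}^+$ coming from the root-space decomposition, factoring an ordered PBW basis of $\mathcal{U}(\mathcal{L})$ into monomials in $\mathcal{N}^-$, then $\mathcal{H}$, then $\mathcal{N}^+$. Since $X_\alpha v^+ = 0$ for $\alpha > 0$ and $H v^+ = \lambda(H)\, v^+$ for $H \in \mathcal{H}$, the $\mathcal{N}^+$- and $\mathcal{H}$-factors act on $v^+$ as scalars, so $\mathcal{U}(\mathcal{L})\, v^+ = \mathcal{U}(\mathcal{N}^-)\, v^+$. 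This is a nonzero $\mathcal{L}$-submodule, hence equals $V$ by irreducibility. Enumerating $\Phi^+ = \{\beta_1, \dots, \beta_m\}$ and picking $X_{-\beta_i} \in \mathcal{L}_{-\beta_i}$, the monomials $X_{-\beta_1}^{k_1} \cdots X_{-\beta_m}^{k_m}\, v^+$ span $V$, and each lies in $V_{\lambda - \sum_i k_i \beta_i}$ by repeatedly applying that $\mathcal{L}_{-\beta}$ shifts weight by $-\beta$. This proves that every $\mu \in \Omega_\pi$ has the form $\lambda - \sum_\alpha \alpha$ with $\alpha \in \Phi^+$, and in particular $\mu \leq \lambda$ for all weights $\mu$.

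Finally I would harvest the remaining claims. The only spanning monomial of weight exactly $\lambda$ is the one with all $k_i = 0$, namely $v^+$ itself, so $V_\lambda = \mathbb{C}\, v^+$ and $\dim V_\lambda = 1$. For the uniqueness of $\lambda$: if $\lambda'$ were the weight of another maximal vector $w^+$, the same generation argument applied to $w^+$ gives $\mu \leq \lambda'$ for all weights $\mu$; taking $\mu = \lambda$ and comparing with the bound $\lambda' \leq \lambda$ yields $\lambda = \lambda'$. Consequently any maximal vector lies in the one-dimensional space $V_\lambda$, so the line $\mathbb{C}\, v^+$ is uniquely determined. I expect the main obstacle to be the PBW bookkeeping in the step $\mathcal{U}(\mathcal{L})\, v^+ = \mathcal{U}(\mathcal{N}^-)\, v^+$: justifying the triangular factorization of the ordered basis and checking that the $\mathcal{H}$- and $\mathcal{N}^+$-parts genuinely collapse to scalars requires care, even though it is conceptually routine once the ordered basis is in place.
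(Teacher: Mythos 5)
Your proof is correct. The paper itself gives no proof of this theorem --- it is stated as standard background in the preliminaries, drawn from Humphreys and Steinberg --- and your argument is precisely the classical one (maximal weight exists by finiteness of $\Omega_\pi$, the triangular decomposition plus PBW gives $\mathcal{U}(\mathcal{L})v^+ = \mathcal{U}(\mathcal{N}^-)v^+ = V$ by irreducibility, and all of (a) and (b) then follow by reading off the weights of the spanning monomials). The only step worth spelling out in a full write-up is the one you flag: that a sum of positive roots vanishes only when it is empty, which is what forces $V_\lambda = \mathbb{C}v^+$.
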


\begin{prop}
    \normalfont
    Let $\mathcal{L}$ and $\pi$ be as in Theorem~\ref{thm:highestweight}.
    \begin{enumerate}[(a)]
        \item If $\mu \in \Omega_\pi$ then $\mu (H_\alpha) = \langle \mu, \alpha \rangle \in \mathbb{Z}$ for all $\alpha \in \Phi$. That is, $\mu$ satisfies the definition of a weight in the sense of Section~\ref{subsec:weights}. 
        \item Let $\lambda$ be the highest weight of $\pi$. Then $\lambda (H_\alpha) \in \mathbb{Z}_{\geq 0}$ for all $\alpha > 0$. Since $\lambda (H_\alpha) = \langle \lambda, \alpha \rangle$, it follows that $\lambda$ is a dominant weight in the sense of Section~\ref{subsec:weights}
    \end{enumerate} 
\end{prop}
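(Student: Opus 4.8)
The plan is to reduce both statements to the representation theory of $\mathfrak{sl}_2(\mathbb{C})$. For each root $\alpha \in \Phi$, the structure theory recalled above supplies elements $X_\alpha \in \mathcal{L}_\alpha$, $X_{-\alpha} \in \mathcal{L}_{-\alpha}$ and $H_\alpha \in \mathcal{H}$ with $[X_\alpha, X_{-\alpha}] = H_\alpha$. Since $[H, X_{\pm\alpha}] = \pm\alpha(H)\,X_{\pm\alpha}$ for $H \in \mathcal{H}$ and $\alpha(H_\alpha) = \langle \alpha, \alpha \rangle = 2$, the triple satisfies $[H_\alpha, X_{\pm\alpha}] = \pm 2\, X_{\pm\alpha}$. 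Hence $\mathfrak{s}_\alpha := \operatorname{span}\{X_\alpha, H_\alpha, X_{-\alpha}\}$ is a three-dimensional subalgebra isomorphic to $\mathfrak{sl}_2(\mathbb{C})$, with $X_\alpha, X_{-\alpha}, H_\alpha$ playing the roles of $e, f, h$; restricting $\pi$ makes $V$ a finite-dimensional $\mathfrak{s}_\alpha$-module. I will invoke the standard facts that on any finite-dimensional $\mathfrak{sl}_2(\mathbb{C})$-module the element $h$ acts diagonalizably with integer eigenvalues, and that a vector annihilated by $e$ is a highest weight vector whose $h$-eigenvalue is a non-negative integer.

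For part (a), fix $\mu \in \Omega_\pi$ and a nonzero $v \in V_\mu$. By definition $H_\alpha \cdot v = \mu(H_\alpha)\, v$, so $\mu(H_\alpha)$ is an eigenvalue of $H_\alpha$ acting on $V$. Viewing $V$ as an $\mathfrak{s}_\alpha$-module and applying the integrality of $h$-eigenvalues for $\mathfrak{sl}_2(\mathbb{C})$, we conclude $\mu(H_\alpha) \in \mathbb{Z}$. It remains to identify this integer with $\langle \mu, \alpha \rangle$. Using $H_\alpha = 2H'_\alpha/\kappa(H'_\alpha, H'_\alpha)$ together with the defining property $\mu(H'_\alpha) = \kappa(H'_\mu, H'_\alpha) = (\mu, \alpha)$ and $\kappa(H'_\alpha, H'_\alpha) = (\alpha, \alpha)$, we obtain
\[
\mu(H_\alpha) = \frac{2\,\mu(H'_\alpha)}{\kappa(H'_\alpha, H'_\alpha)} = \frac{2(\mu, \alpha)}{(\alpha, \alpha)} = \langle \mu, \alpha \rangle,
\]
which gives the asserted integrality and shows that $\mu$ is a weight in the sense of Section~\ref{subsec:weights}.

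For part (b), let $v^+ \in V_\lambda$ be a maximal vector, so that $X_\beta v^+ = 0$ for every $\beta > 0$ by Theorem~\ref{thm:highestweight}(a). Fix a positive root $\alpha$. Then $X_\alpha v^+ = 0$, so $v^+$ is a highest weight vector for the copy $\mathfrak{s}_\alpha$, with $H_\alpha v^+ = \lambda(H_\alpha)\, v^+$. The $\mathfrak{s}_\alpha$-submodule generated by $v^+$ is finite-dimensional, being a submodule of $V$, and the classification of finite-dimensional $\mathfrak{sl}_2(\mathbb{C})$-modules forces the highest weight $\lambda(H_\alpha)$ to be a non-negative integer. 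Since this holds for every $\alpha > 0$, and $\lambda(H_\alpha) = \langle \lambda, \alpha \rangle$ by the computation in part (a), the weight $\lambda$ is dominant in the sense of Section~\ref{subsec:weights}.

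The only genuine input is the representation theory of $\mathfrak{sl}_2(\mathbb{C})$, namely that $h$ acts with integer eigenvalues and that highest weights of finite-dimensional modules are non-negative integers. Everything else is a matter of unwinding the definitions of $H_\alpha$ and the induced bilinear form, so I anticipate no serious obstacle beyond correctly assembling the $\mathfrak{sl}_2$-triple $\mathfrak{s}_\alpha$ and verifying that the maximal vector is indeed a highest weight vector for each $\mathfrak{s}_\alpha$ with $\alpha > 0$.
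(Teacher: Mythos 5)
Your proof is correct. The paper states this proposition without proof, as standard background material drawn from its references (Humphreys, Steinberg), and your argument --- forming the $\mathfrak{sl}_2$-triple $\{X_\alpha, H_\alpha, X_{-\alpha}\}$ for each root, invoking integrality of $h$-eigenvalues and non-negativity of the highest weight of a finite-dimensional $\mathfrak{sl}_2(\mathbb{C})$-module, and identifying $\mu(H_\alpha) = 2(\mu,\alpha)/(\alpha,\alpha) = \langle \mu, \alpha\rangle$ from the definitions of $H_\alpha$ and the Killing-form pairing --- is precisely the standard argument those references give, so there is nothing to correct or compare.
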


\begin{thm}
    \normalfont
    Let $\lambda \in \mathcal{H}^*$ be a dominant weight as defined in Section~\ref{subsec:weights}. Then, there exists a unique finite-dimensional $\mathcal{L}$-module $V$ such that $\lambda$ is its highest weight.
\end{thm}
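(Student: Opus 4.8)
The plan is to prove the two assertions — uniqueness and existence of the irreducible module with highest weight $\lambda$ — separately, in both cases exploiting the copies of $\mathfrak{sl}_2$ sitting inside $\mathcal{L}$ spanned by $\{X_\alpha, H_\alpha, X_{-\alpha}\}$ together with the triangular decomposition $\mathcal{L} = \mathfrak{n}^- \oplus \mathcal{H} \oplus \mathfrak{n}^+$ and the PBW theorem.

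For uniqueness, suppose $V$ and $V'$ are finite-dimensional irreducible modules with highest weight $\lambda$ and maximal vectors $v^+, v'^+$. I would form $W = V \oplus V'$ and let $M = \mathcal{U}(\mathcal{L}) \cdot (v^+, v'^+)$ be the submodule generated by the diagonal maximal vector. By the PBW theorem applied to the triangular decomposition, $M = \mathcal{U}(\mathfrak{n}^-)\cdot(v^+,v'^+)$, so every weight of $M$ has the form $\lambda - \sum \alpha$ with $\alpha \in \Phi^+$, and the weight space $M_\lambda$ is one-dimensional. A general lemma then shows that any module generated by a single maximal vector of weight $\lambda$ has a unique maximal proper submodule $M^{\max}$ — namely the sum of all submodules meeting $M_\lambda$ trivially — and hence a unique irreducible quotient. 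Since the two projections $M \to V$ and $M \to V'$ are surjective with proper kernels, each kernel must coincide with $M^{\max}$; therefore $V \cong M/M^{\max} \cong V'$.

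For existence, I would construct the standard cyclic (Verma) module. Let $\mathcal{B} = \mathcal{H} \oplus \mathfrak{n}^+$ with $\mathfrak{n}^+ = \coprod_{\alpha>0}\mathcal{L}_\alpha$, let $D_\lambda$ be the one-dimensional $\mathcal{B}$-module on which $\mathcal{H}$ acts by $\lambda$ and $\mathfrak{n}^+$ by $0$, and set $Z(\lambda) = \mathcal{U}(\mathcal{L}) \otimes_{\mathcal{U}(\mathcal{B})} D_\lambda$. This module carries a maximal vector of weight $\lambda$ and is infinite-dimensional in general, but by the uniqueness lemma above it admits a unique irreducible quotient $V := L(\lambda)$, which has $\lambda$ as its highest weight. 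It remains to prove that $L(\lambda)$ is finite-dimensional, and this is the crux.

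The finite-dimensionality argument is where the dominance hypothesis enters and is the main obstacle. For each simple root $\alpha$ put $m_\alpha = \langle \lambda, \alpha\rangle \in \mathbb{Z}_{\geq 0}$. Working inside the triple $\{X_\alpha, H_\alpha, X_{-\alpha}\}$, a direct computation shows that $X_{-\alpha}^{m_\alpha+1} v^+$ is again a maximal vector (for $\beta \neq \alpha$ simple, $X_\beta$ commutes with $X_{-\alpha}$ since $\beta-\alpha \notin \Phi$, while the case $\beta=\alpha$ is the $\mathfrak{sl}_2$ relation); by irreducibility of $L(\lambda)$ this vector must vanish. Consequently every weight vector generates a finite-dimensional $\mathfrak{sl}_2(\alpha)$-submodule, so $\mathfrak{sl}_2$-theory forces the set of weights $\Pi$ of $L(\lambda)$ to be stable under each simple reflection $s_\alpha$, hence under all of $W$. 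Since every weight lies below $\lambda$ and $\Pi$ is $W$-invariant, $\Pi$ is contained in the saturated set with highest weight $\lambda$, which is finite by the Facts on saturated sets; as each weight space is finite-dimensional, $V = L(\lambda)$ is finite-dimensional. Establishing the local $\mathfrak{sl}_2$-finiteness and the resulting $W$-invariance of $\Pi$ — the classical Harish-Chandra argument — is the delicate step, since it requires controlling the root-vector action across all weight spaces simultaneously.
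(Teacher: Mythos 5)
Your proposal is correct; it is the classical Verma-module argument (uniqueness via the unique maximal submodule of a standard cyclic module, existence via $L(\lambda)$ together with the $\mathfrak{sl}_2$-local-finiteness and $W$-invariance of the weight set). The paper itself states this theorem without proof as standard background, deferring to Humphreys \cite{JH}, and your argument is precisely the one given there (\S\S20--21), so there is nothing to compare beyond noting that you have correctly supplied the omitted proof, including the genuinely delicate step of showing $X_{-\alpha}^{m_\alpha+1}v^+$ is maximal and deducing finite-dimensionality from the saturated-set facts recorded in Section~\ref{subsec:weights}.
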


Before concluding this subsection, we introduce some useful notations:
Let $\pi: \mathcal{L} \to \mathfrak{gl}(V)$ be a finite-dimensional representation of $\mathcal{L}$. The {\bf weight lattice}\label{nomencl:lattice} $\Lambda_\pi$ of the representation $\pi$ is defined as the $\mathbb{Z}$-module generated by all the weights of $\pi$. 

If \(\pi\) is the adjoint representation, then \(\Lambda_\pi = \Lambda_r\), the root lattice. If \(\pi\) is a representation in which all the dominant fundamental weights lie in \(\Lambda_\pi\), then \(\Lambda_\pi = \Lambda_{sc}\), the fundamental weight lattice (cf. Section~\ref{subsec:weights}). 
For a general representation \(\pi\), we have 
\[
    \Lambda_r \subset \Lambda_\pi \subset \Lambda_{sc}.
\]
Since both \(\Lambda_r\) and \(\Lambda_{sc}\) are \(\mathbb{Z}\)-modules of rank \(\ell\), the lattice \(\Lambda_\pi\) also has rank \(\ell\), where \(\ell\) denotes the rank of the root system \(\Phi\). Furthermore, the quotient \(\Lambda_\pi / \Lambda_r\) is a subgroup of the finite group \(\Lambda_{sc} / \Lambda_r\).
It is known that \(\Lambda_{sc} / \Lambda_r\) is finite, and consequently, \(\Lambda_\pi / \Lambda_r\) is also finite. For a detailed list of \(\Lambda_\pi / \Lambda_r\), please refer to Section~\ref{subsec:weights}.



\chapter{Chevalley Groups}\label{chapter:chevalley groups}


In this chapter, we provide a comprehensive exposition of Chevalley groups and their key properties.
We begin by introducing the Chevalley basis and the corresponding Chevalley algebra.
Next, we discuss the concept of admissible lattices.
Utilizing these concepts, we define Chevalley groups and explore their properties. 
Furthermore, we introduce minimal representations, which play a crucial role in facilitating calculations for Chevalley groups.

The content of this chapter is primarily based on R. Steinberg~\cite{RS}, R. Carter~\cite{RC}, J. E. Humphreys~\cite{JH}, N. A. Vavilov~\cite{NV1}, E. Plotkin and N. A. Vavilov~\cite{EP&NV}, and E. I. Bunina~\cite{EB12:main, EB24:final}.

Throughout this chapter, we continue to use the same notations and conventions introduced in the preceding chapters.


\section{Chevalley Basis and Chevalley Algebra}


Let $\mathcal{L} = \mathcal{L}(\Phi, \mathbb{C})$ be a complex semisimple Lie algebra with root system $\Phi$. Let $\mathcal{H}$ be a Cartan subalgebra of $\mathcal{L}$ and consider the root decomposition: $\mathcal{L} = \mathcal{H} \oplus \coprod_{\alpha \in \Phi} \mathcal{L}_\alpha$, where
$$
    \mathcal{L}_\alpha = \{ X \in \mathcal{L} \mid [H,X] = \alpha (H) X, \ \forall H \in \mathcal{H} \}.
$$
Fix a simple system $\Delta = \{ \alpha_1, \dots, \alpha_\ell \}$. As before, for any $\alpha \in \Phi$, we define $H'_\alpha \in \mathcal{H}$ such that $\phi(H) = \kappa (H'_{\alpha},H)$ for all $H \in \mathcal{H}$. Define $H_\alpha = 2 H'_\alpha / \kappa (H'_\alpha, H'_\alpha) \ (\alpha \in \Phi)$ and $H_i = H_{\alpha_i} \ (\alpha_i \in \Delta)$.

\begin{lemma}[{\cite[Lemma~1]{RS}}]
    \normalfont
    For each root $\alpha$, the element $H_\alpha$ is an integral linear combination of the $H_i \ (i=1, \dots, l)$.
\end{lemma}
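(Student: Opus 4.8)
The plan is to exploit the standard identification of $H_\alpha$ with the coroot $2\alpha/(\alpha,\alpha)$ and to read off the expansion coefficients using the fundamental dominant weights as a dual basis. First I would record the key pairing formula: for any $\beta \in \mathcal{H}^*$ and any $\alpha \in \Phi$ one has $\beta(H'_\alpha) = \kappa(H'_\beta, H'_\alpha) = (\beta,\alpha)$, so that $\beta(H_\alpha) = 2(\beta,\alpha)/(\alpha,\alpha) = \langle \beta, \alpha\rangle$. In particular, taking $\beta = \alpha_j$ recovers the Cartan integers $\alpha_j(H_i)=\langle \alpha_j,\alpha_i\rangle$, while taking $\beta = \lambda_j$, a fundamental dominant weight, gives $\lambda_j(H_i) = \langle \lambda_j, \alpha_i \rangle = \delta_{ij}$ by the defining property of the $\lambda_j$.

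Next I would check that $H_1, \dots, H_\ell$ form a $\mathbb{C}$-basis of $\mathcal{H}$. Since the restriction of the Killing form to $\mathcal{H}$ is nondegenerate, the assignment $\beta \mapsto H'_\beta$ is a linear isomorphism $\mathcal{H}^* \to \mathcal{H}$; as $\{\alpha_1,\dots,\alpha_\ell\}$ is a basis of $\mathcal{H}^*$, the elements $H'_{\alpha_i}$ form a basis of $\mathcal{H}$, and the $H_i = \frac{2}{(\alpha_i,\alpha_i)}\,H'_{\alpha_i}$ are nonzero scalar multiples of them (the form being positive definite), hence also a basis. Consequently, for any root $\alpha$ there are unique scalars $c_1, \dots, c_\ell$ with $H_\alpha = \sum_{i=1}^\ell c_i H_i$, and it remains only to prove that each $c_i \in \mathbb{Z}$.

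To extract the coefficients I would apply the functionals $\lambda_j$ to this relation. Using $\lambda_j(H_i) = \delta_{ij}$ together with linearity gives $c_j = \lambda_j(H_\alpha)$, and by the pairing formula above $\lambda_j(H_\alpha) = \langle \lambda_j, \alpha \rangle$. Since $\lambda_j \in \Lambda_{sc}$ is a weight and $\alpha$ is a root, the defining property of weights (Section~\ref{subsec:weights}) yields $\langle \lambda_j, \alpha \rangle \in \mathbb{Z}$. Hence every $c_j$ is an integer, which is precisely the assertion of the lemma.

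The argument is essentially bookkeeping once the identification $\mathcal{H} \cong \mathcal{H}^*$ is in place; the only point demanding genuine care is the pairing formula $\beta(H_\alpha)=\langle\beta,\alpha\rangle$, from which the fact that the fundamental weights act as the dual basis to $\{H_i\}$ follows immediately. An alternative, more computational route avoiding weights altogether would induct on the height of $\alpha$: writing a positive non-simple root as $\alpha = \alpha' + \alpha_i$ with $\alpha'$ a root of smaller height, and using the $\mathfrak{sl}_2$-string relations to express $H_\alpha$ in terms of $H_{\alpha'}$ and $H_i$. This also works, but is messier and requires tracking the string data, so I would prefer the clean dual-basis approach above.
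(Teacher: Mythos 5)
Your argument is correct. Note that the thesis does not reproduce a proof of this lemma at all: it simply cites Steinberg's \emph{Lectures on Chevalley Groups}, Lemma~1. The proof given there is different from yours — it is essentially the ``messier'' alternative you mention at the end. Steinberg observes that for roots $\alpha,\beta$ one has $H_{s_\alpha\beta}=H_\beta-\langle\alpha,\beta\rangle H_\alpha$ (a computation with $H'_\gamma$ and the isometry $s_\alpha$, using that the Cartan integers are integers), writes an arbitrary root as $w\alpha_i$ for $w\in W$ a product of simple reflections, and inducts on the length of $w$. Your dual-basis route — checking that $\{H_i\}$ is a basis of $\mathcal{H}$, that $\lambda_j(H_i)=\delta_{ij}$, and then reading off $c_j=\lambda_j(H_\alpha)=\langle\lambda_j,\alpha\rangle\in\mathbb{Z}$ from the integrality axiom defining $\Lambda_{sc}$ — is cleaner and avoids tracking reflections, at the modest cost of invoking the fundamental weights and the identification $\mathcal{H}\cong\mathcal{H}^*$ via the Killing form. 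Both the pairing formula $\beta(H_\alpha)=\langle\beta,\alpha\rangle$ and the basis claim are justified correctly from the definitions in the paper ($H'_\alpha$ defined by $\alpha(H)=\kappa(H'_\alpha,H)$, positive definiteness of $(\cdot,\cdot)$ on the rational span), so there is no gap.
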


For each root $\alpha \in \Phi$, choose a nonzero element $X_\alpha \in \mathcal{L}_\alpha$. If $\alpha, \beta \in \Phi$ satisfy $\alpha + \beta \in \Phi$, define $N_{\alpha, \beta}$ by the relation $[X_\alpha, X_\beta] = N_{\alpha, \beta} X_{\alpha + \beta}$.
The constants $N_{\alpha, \beta}$ (for $\alpha, \beta \in \Phi$ with $\alpha + \beta \in \Phi$) are called the \textbf{structure constants}\index{structure constants} of $\mathcal{L}$. They clearly depend upon the choice of root vectors $X_\alpha \ (\alpha \in \Phi)$.
Before proceeding further, we outline some important properties of the structure constants (see, for example, \cite[Theorem 4.1.2]{RC}):
\begin{enumerate}
    \item $N_{\alpha, \beta} = -N_{\beta, \alpha}$ for all $\alpha, \beta \in \Phi$ such that $\alpha + \beta \in \Phi$.
    \item Let $\beta - r\alpha, \dots, \beta + q\alpha$ be the $\alpha$-string through $\beta$. If $q \geq 1$, then 
    \[
    N_{\alpha, \beta} N_{-\alpha, -\beta} = -(r+1)^2.
    \]
    \item If $\alpha, \beta, \gamma \in \Phi$ satisfy $\alpha + \beta + \gamma = 0$, then 
    \[
    \frac{N_{\alpha, \beta}}{\lVert \gamma \rVert^2} = \frac{N_{\beta, \gamma}}{\lVert \alpha \rVert^2} = \frac{N_{\gamma, \alpha}}{\lVert \beta \rVert^2}.
    \]
    \item If $\alpha, \beta, \gamma, \delta \in \Phi$ satisfy $\alpha + \beta + \gamma + \delta = 0$ and if no pairs are opposite, then 
    \[
    \frac{N_{\alpha, \beta} N_{\gamma, \delta}}{\lVert \alpha + \beta \rVert^2} + \frac{N_{\beta, \gamma} N_{\alpha, \delta}}{\lVert \beta + \gamma \rVert^2} + \frac{N_{\gamma, \alpha} N_{\beta, \delta}}{\lVert \gamma + \alpha \rVert^2} = 0.
    \]
\end{enumerate}

\subsection{Chevalley Basis}

We now present Chevalley’s basis theorem for a semisimple Lie algebra $\mathcal{L}$.

\begin{thm}[Chevalley]\label{chevalleybasis}
    \normalfont
    We can choose the root vectors $X_\alpha \in \mathcal{L}_\alpha$ for each $\alpha \in \Phi$ such that the set $\{ X_\alpha, H_i \mid \alpha \in \Phi, i=1, \dots, l \}$ forms a basis of the Lie algebra $\mathcal{L}$ that satisfies the following (integrality) conditions:
    \begin{enumerate}[(a)]
        \item $[H_i, H_j] = 0$.
        \item $[H_i, X_\alpha] = \langle \alpha, \alpha_i \rangle X_\alpha = \alpha (H_i) X_\alpha$.
        \item $[X_\alpha, X_{-\alpha}] = H_\alpha = $ an integral linear combination of the $H_i$.
        \item\label{part d} For $\alpha, \beta \in \Phi$, let $r$ be the largest nonnegative integer such that $\alpha - r \beta \in \Phi$. Then,
        \[
            [X_\alpha, X_\beta] = \begin{cases}
                \pm (r+1) X_{\alpha + \beta} & \text{if } \alpha + \beta \in \Phi; \\
                0 & \text{if } \alpha + \beta \not\in \Phi \cup \{0\}.
            \end{cases}
        \]
        That is, $N_{\alpha, \beta} = - N_{-\alpha, -\beta} = - N_{\beta, \alpha} = \pm (r+1)$.
    \end{enumerate}
    Such a basis is called a \textbf{Chevalley basis}\index{Chevalley basis}.
\end{thm}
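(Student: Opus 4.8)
The plan is to isolate all of the genuine content in part~(d); parts~(a)--(c) are essentially bookkeeping on the root space decomposition. First I would note that since $\mathcal{H}$ is abelian, (a) is immediate, and (b) is just the definition of $\mathcal{L}_\alpha$ together with the identity $\alpha(H_i) = \langle \alpha, \alpha_i\rangle$. For (c), for each unordered pair $\{\alpha, -\alpha\}$ the bracket $[X_\alpha, X_{-\alpha}]$ lies in $\mathcal{L}_0 = \mathcal{H}$ and equals $\kappa(X_\alpha, X_{-\alpha})H'_\alpha$, a nonzero multiple of $H_\alpha$; rescaling $X_{-\alpha}$ by a suitable scalar normalizes this to $[X_\alpha, X_{-\alpha}] = H_\alpha$, and the assertion that $H_\alpha$ is an integral combination of the $H_i$ is exactly the preceding lemma of Steinberg. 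From now on I assume all root vectors are normalized in this way.

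The crux is part~(d): arranging the signs so that $N_{\alpha,\beta} = \pm(r+1)$. The essential input is property~(2) of the structure constants, which (once the normalization $[X_\gamma, X_{-\gamma}] = H_\gamma$ is in force) gives $N_{\alpha,\beta}N_{-\alpha,-\beta} = -(r+1)^2$, with $r$ as in~(d), the two string-length conventions agreeing because $\alpha+\beta\in\Phi$. Thus it suffices to force $N_{\alpha,\beta} = -N_{-\alpha,-\beta}$; combined with the product relation this yields $N_{\alpha,\beta}^2 = (r+1)^2$, i.e.\ $N_{\alpha,\beta} = \pm(r+1)$, and simultaneously delivers the antisymmetry $N_{\alpha,\beta} = -N_{-\alpha,-\beta}$ recorded in the theorem.

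To produce the identity $N_{\alpha,\beta} = -N_{-\alpha,-\beta}$ uniformly, I would construct the \emph{Chevalley involution} $\sigma$. Since $-\mathrm{id}$ is an automorphism of the root system $\Phi$ carrying $\Delta$ to $-\Delta$, the Existence and Isomorphism Theorem stated above furnishes an automorphism $\sigma$ of $\mathcal{L}$ with $\sigma|_{\mathcal{H}} = -\mathrm{id}$ and $\sigma(X_{\alpha_i}) = -X_{-\alpha_i}$ on the simple root vectors, and one checks $\sigma^2 = \mathrm{id}$. I would then fix the remaining root vectors by induction on height: having chosen $X_\alpha$ for a positive root $\alpha$, set $X_{-\alpha} := -\sigma(X_\alpha)$ and rescale $X_\alpha$ (and hence $X_{-\alpha}$, by the same scalar) so that $[X_\alpha, X_{-\alpha}] = H_\alpha$ is preserved; since scaling $X_\alpha$ by $t$ scales $X_{-\alpha}$ by the same $t$, the relation $\sigma(X_\alpha) = -X_{-\alpha}$ survives. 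This makes $\sigma(X_\gamma) = -X_{-\gamma}$ hold for every $\gamma \in \Phi$. Applying $\sigma$ to $[X_\alpha, X_\beta] = N_{\alpha,\beta}X_{\alpha+\beta}$ then gives $[X_{-\alpha}, X_{-\beta}] = -N_{\alpha,\beta}X_{-\alpha-\beta}$, that is $N_{-\alpha,-\beta} = -N_{\alpha,\beta}$, as desired.

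I expect the main obstacle to be the global consistency of this sign choice, which hides in two places. First, invoking the Existence and Isomorphism Theorem to produce $\sigma$ requires checking that the assignment $H_i \mapsto -H_i$, $X_{\alpha_i}\mapsto -X_{-\alpha_i}$ is compatible with the defining relations of $\mathcal{L}$, so that it genuinely extends to an automorphism; this is the delicate bookkeeping underlying the whole argument. Second, one must verify that the inductive choice of the $X_\alpha$ for non-simple roots can be made coherently, i.e.\ that the rescalings needed to keep $[X_\alpha, X_{-\alpha}] = H_\alpha$ never conflict with the $\sigma$-equivariance or with the Jacobi identity. Once these compatibilities are secured, the magnitude statement $|N_{\alpha,\beta}| = r+1$ is forced by the product formula and requires no further sign analysis.
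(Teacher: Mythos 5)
Your proposal is correct and follows exactly the standard argument (the Chevalley involution $\sigma$ extending $-\mathrm{id}$ on $\Phi$, normalized so that $\sigma(X_\gamma)=-X_{-\gamma}$, combined with the identity $N_{\alpha,\beta}N_{-\alpha,-\beta}=-(r+1)^2$), which is precisely the proof of Humphreys, Theorem~25.2, and Steinberg, Theorem~1 — the sources the paper cites in lieu of giving its own proof. Your two worries at the end are already absorbed by the Isomorphism Theorem (which guarantees $\sigma$ exists without a separate relation check) and by the fact that the normalization of each pair $\{X_\alpha,X_{-\alpha}\}$ is independent across roots, so no coherence issue arises.
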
 

\begin{proof}
    See Theorem~1 of Steinberg \cite{RS} or Theorem~25.2 of Humphreys \cite{JH}.
\end{proof}

One can naturally wonder about the uniqueness of a Chevalley basis. 
Let us delve into this question. 
Once the simple system $\Delta$ is fixed, the elements $H_i$ are uniquely determined. 
However, there is some flexibility in the choice of the root vectors $X_\alpha$. 
Consider another Chevalley basis $\{ H_i, X'_\alpha \mid 1 \leq i \leq l, \alpha \in \Phi \}$, and let $N'_{\alpha, \beta}$ be such that $[X'_\alpha, X'_\beta] = N'_{\alpha, \beta} X'_{\alpha + \beta}$. 
Since $\dim \mathcal{L}_\alpha = 1$, there exists a scalar $c(\alpha) \in k$ such that $X'_\alpha = c(\alpha) X_\alpha$ for all $\alpha \in \Phi$. 
These scalars satisfy the following properties:
\begin{enumerate}[(a)]
    \item $c(\alpha) c(-\alpha) = 1$ for all $\alpha \in \Phi$;
    \item $c(\alpha) c(\beta) = \pm c(\alpha + \beta)$ whenever $\alpha + \beta \in \Phi$.
\end{enumerate}
Conversely, let $\{ H_i, X_\alpha \mid 1 \leq i \leq l, \alpha \in \Phi \}$ be a Chevalley basis of the Lie algebra $\mathcal{L}$, and suppose there exists a function $c: \Phi \to k$ satisfying the above two conditions. Then, the set $\{ H_i, c(\alpha) X_\alpha \mid 1 \leq i \leq l, \alpha \in \Phi \}$
also forms a Chevalley basis of $\mathcal{L}$.

Note that, as mentioned in part \ref{part d} above, we have $[X_\alpha, X_\beta] = \pm (r+1) X_{\alpha + \beta}$ where $\alpha, \beta \in \Phi$ are such that $\alpha + \beta \in \Phi$. However, the argument used to establish this equation left the choice of plus or minus undecided. This is not a coincidence; for instance, by substituting $\begin{pmatrix}
    0 & 0 & -1\\
    0 & 0 & 0 \\
    0 & 0 & 0 
\end{pmatrix}$ for $\begin{pmatrix}
    0 & 0 & 1 \\
    0 & 0 & 0 \\
    0 & 0 & 0 
\end{pmatrix}$ as part of a Chevalley basis for $\mathfrak{sl}(3, \mathbb{C})$ we can see there is no inherent reason to favor one choice over the other. Nevertheless, there does exist an algorithm for making a consistent choice of signs based solely on the knowledge of $\Phi$.

\subsection{Chevalley Algebras}\label{sec:CheAlg}

The integrality property of the Chevalley basis allows us to define the structure of the Lie algebra over an arbitrary (commutative) ring in a natural manner.

The $\mathbb{Z}$-span $\mathcal{L}(\mathbb{Z})$ of a Chevalley basis $\{X_\alpha, H_i\}$ forms a lattice in $\mathcal{L}$, irrespective of the choice of $\Delta$. Moreover, it even becomes a Lie algebra over $\mathbb{Z}$ (in the natural sense) under the bracket operation that is inherited from $\mathcal{L}$.

Let $R$ be a commutative ring with unity. The the tensor product $\mathcal{L}(\Phi, R) = \mathcal{L}(\mathbb{Z}) \otimes_{\mathbb{Z}} R$ is well-defined and is a free $R$-module with basis $\{ X_\alpha \otimes 1, H_i \otimes 1 \}$. Moreover, the bracket operation in $\mathcal{L}(\mathbb{Z})$ induces a natural Lie algebra structure on $\mathcal{L}(\Phi, R)$. We refer to $\mathcal{L}(\Phi, R)$ as a {\bf Chevalley  algebra}\index{Chevalley  algebra} over a ring $R$ (of type $\Phi$).

\subsection{Elementary Chevalley Groups of Adjoint Type}

We now introduce the concept of the adjoint elementary Chevalley group over a commutative ring with unity. We provide a more detailed definition later in Section~\ref{sec:Chevalley groups}.

Let $\text{ad}:\mathcal{L} \longrightarrow gl(\mathcal{L})$ be the adjoint representation defined by $X \mapsto \text{ad}(X)$, where $\text{ad}(X)$ is a linear map from $\mathcal{L}$ to itself given by $(\text{ad}(X)) (Y) = [X, Y]$ for every $Y \in \mathcal{L}$. 
Suppose $X \in \mathcal{L}$ is such that $\text{ad}(X)$ is nilpotent.
Then we can define the exponential map $\text{exp (ad} (X))$ as follows:
\[
    \text{exp (ad} (X)) = \sum_{n=0}^{\infty} \frac{(\text{ad}(X))^n}{n!}.
\]
It follows that $\text{exp (ad}(X))$ is an automorphism (as a vector space) of $\mathcal{L}$.

Now, let us fix a Chevalley basis $\{X_\alpha, H_i\}$ of $\mathcal{L}$. Since, for each $\alpha \in \Phi$, the map $\text{ad}(X_{\alpha})$ is a nilpotent endomorphism of $\mathcal{L}$, it follows that $\text{exp (ad}(X_\alpha))$ is well-defined and forms an automorphism of $\mathcal{L}$. The following proposition tells us that we can also consider $\text{exp (ad}(X_\alpha))$ as an automorphism of the $\mathbb{Z}$-algebra $\mathcal{L}(\mathbb{Z})$.

\begin{prop}[{\cite[Proposition 25.5]{JH}}]
    \normalfont
    Let $\alpha \in \Phi, \, m \in \mathbb{Z}^+$. Then $(\text{ad } X_\alpha)^m / m!$ leaves $\mathcal{L}(\mathbb{Z})$ invariant. In particular, $\text{exp} (t \text{ ad} (X_\alpha)) \ (t \in \mathbb{Z})$ also leaves $\mathcal{L}(\mathbb{Z})$ invariant.
\end{prop}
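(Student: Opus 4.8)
The plan is to prove invariance by checking it on a set of generators: since $\{X_\beta, H_i \mid \beta \in \Phi,\ 1 \le i \le \ell\}$ is a $\mathbb{Z}$-basis of $\mathcal{L}(\mathbb{Z})$, it suffices to show that $\operatorname{ad}(X_\alpha)^m/m!$ carries each of these basis vectors to an integral linear combination of basis vectors. I would organize the verification by the type of basis vector to which the operator is applied, handling the toral generators $H_i$, the vector $X_{-\alpha}$ (together with $X_\alpha$ itself), and the generic root vectors $X_\beta$ with $\beta \neq \pm\alpha$ as three separate cases, the last being where the real work lies.

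The first two cases reduce to short computations from parts (b)--(d) of Theorem~\ref{chevalleybasis}. For $H_i$ one has $\operatorname{ad}(X_\alpha)(H_i) = -\alpha(H_i)X_\alpha = -\langle\alpha,\alpha_i\rangle X_\alpha$, and a second application gives $0$ because $[X_\alpha,X_\alpha]=0$; since $\langle\alpha,\alpha_i\rangle\in\mathbb{Z}$, integrality is immediate and all higher powers vanish. For $X_{-\alpha}$, using $[X_\alpha,X_{-\alpha}]=H_\alpha$ and $\alpha(H_\alpha)=2$ one computes $\operatorname{ad}(X_\alpha)(X_{-\alpha})=H_\alpha$, $\operatorname{ad}(X_\alpha)^2(X_{-\alpha})=-2X_\alpha$, and $\operatorname{ad}(X_\alpha)^3(X_{-\alpha})=0$; after dividing by $m!$ the surviving coefficients are $1,-1,0$, all integers. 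The vector $X_\alpha$ is immediate since $\operatorname{ad}(X_\alpha)(X_\alpha)=0$.

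The main case, and the principal obstacle, is $\beta\neq\pm\alpha$. Here I would set $\beta_i = \beta + i\alpha$ and work along the $\alpha$-string through $\beta$, say $\beta - r\alpha,\dots,\beta + q\alpha$. By Theorem~\ref{chevalleybasis}(d), $[X_\alpha, X_{\beta_i}] = N_{\alpha,\beta_i} X_{\beta_{i+1}}$ with $|N_{\alpha,\beta_i}| = r + i + 1$, since the downward extent of the string from $\beta_i$ to the bottom $\beta - r\alpha$ is $r+i$ (and the factor is $0$ exactly when $\beta_{i+1}\notin\Phi$, i.e.\ $i = q$). Iterating gives
\[
    \operatorname{ad}(X_\alpha)^m(X_\beta) = \left(\prod_{i=0}^{m-1} N_{\alpha,\beta_i}\right) X_{\beta+m\alpha} = \pm\,(r+1)(r+2)\cdots(r+m)\, X_{\beta+m\alpha},
\]
which vanishes once $m > q$. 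Dividing by $m!$ yields
\[
    \frac{\operatorname{ad}(X_\alpha)^m}{m!}(X_\beta) = \pm \binom{r+m}{m}\, X_{\beta+m\alpha},
\]
an integer multiple of a basis vector. The crux is precisely this bookkeeping: recognizing the iterated bracket as a product of consecutive structure constants $\prod(r+i+1) = (r+m)!/r!$, and then observing that division by $m!$ turns it into the integer $\binom{r+m}{m}$, so the division is harmless.

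Finally, for the ``in particular'' clause, I would note that $\operatorname{ad}(X_\alpha)$ is nilpotent, so $\exp(t\operatorname{ad}(X_\alpha)) = \sum_{m\ge 0} t^m\,\dfrac{\operatorname{ad}(X_\alpha)^m}{m!}$ is a finite sum. For $t\in\mathbb{Z}$ each summand is an integer multiple of an operator already shown to preserve $\mathcal{L}(\mathbb{Z})$, hence the whole sum preserves $\mathcal{L}(\mathbb{Z})$ as well.
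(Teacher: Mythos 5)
Your proof is correct and is essentially the standard argument from Humphreys (the source the paper cites for this proposition without reproducing a proof): the verification on the basis vectors $H_i$, $X_{\pm\alpha}$, and $X_\beta$ with $\beta\neq\pm\alpha$, with the key point being that the iterated structure constants along the $\alpha$-string multiply to $\pm(r+m)!/r!$, so division by $m!$ yields the integer $\pm\binom{r+m}{m}$. Note only that your convention for $r$ (largest integer with $\beta - r\alpha\in\Phi$) is the correct one, consistent with the string property $N_{\alpha,\beta}N_{-\alpha,-\beta}=-(r+1)^2$ stated earlier in the chapter, even though part (d) of Theorem~\ref{chevalleybasis} as printed transposes the roles of $\alpha$ and $\beta$.
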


Now, let $R$ be a commutative ring with unity.
In light of the aforementioned proposition, we can consider $\text{exp} (t \, \text{ad} (X_\alpha)) \ (t \in R)$ as an automorphism of $\mathcal{L}(\Phi, R)$.
Our objective is to study these types of automorphisms of $\mathcal{L}(\Phi, R)$.

The subgroup $E_{\text{ad}} (\Phi, R)$ of $\text{Aut}_R(\mathcal{L}(\Phi, R))$ generated by all the automorphisms of $\mathcal{L}(\Phi, R)$ of the form $\text{exp} (t \text{ ad} (X_\alpha)) \ (t \in R, \, \alpha \in \Phi)$ is called an {\bf adjoint elementry Chevalley group}\index{elementary Chevalley group!adjoint} of type $\Phi$ over a ring $R$. 
We will discover in the future that the group $E_{\text{ad}} (\Phi,R)$ is independent of the choice of a Chevalley basis.


\subsubsection*{Action of $E_{\text{ad}} (\Phi,R)$ on $\mathcal{L}(\Phi, R)$}

\begin{lemma}[{\cite[Lemma 4.5.1]{RC}}]\label{lemma:actionofexpadx}
    \normalfont
    Let $\mathcal{L}(\Phi, R)$ be a standard semisimple Lie algebra over a ring $R$ (i.e., viewed as a subalgebra of $\mathfrak{gl}_n(R)$). Suppose $X \in \mathcal{L}(\Phi, R)$ be such that ad $(X)$ is nilpotent. Then $$ \text{exp } (\text{ad }X) \cdot Y = \text{exp } (X) \hspace{1mm} Y \hspace{1mm} (\text{exp } (X))^{-1}$$ for all $Y \in \mathcal{L} (\Phi, R)$. 
\end{lemma}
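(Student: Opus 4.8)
The plan is to exploit the fact that $\mathcal{L}(\Phi,R)$ is realized inside the associative matrix algebra $\mathfrak{gl}_n(R)$, so that the bracket is the commutator $[X,Y]=XY-YX$ and the adjoint operator splits as a difference of two \emph{commuting} operators. Concretely, I would introduce the left- and right-multiplication maps $L_X,R_X\colon \mathfrak{gl}_n(R)\to\mathfrak{gl}_n(R)$ given by $L_X(Y)=XY$ and $R_X(Y)=YX$. Associativity of matrix multiplication gives $L_XR_X(Y)=XYX=R_XL_X(Y)$, so $L_X$ and $R_X$ commute, while by definition $\operatorname{ad}(X)=L_X-R_X$. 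In the situation at hand $X$ is nilpotent as a matrix: in the applications one has $X=tX_\alpha$ with $X_\alpha$ a root vector whose matrix is nilpotent, and in general the hypothesis that $\operatorname{ad}(X)$ is nilpotent forces $X$ to be ad-nilpotent (its semisimple part lies in $Z(\mathcal{L})=0$), hence nilpotent in the standard representation by the agreement of the abstract and matrix Jordan decompositions recorded earlier. Thus $X^k=0$ for some $k$, so $L_X^k=L_{X^k}=0$ and $R_X^k=R_{X^k}=0$, and every exponential below is a finite sum.

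The key step is then the splitting of the exponential. Since $L_X$ and $-R_X$ commute, I would write
$$
\exp(\operatorname{ad}X)=\exp(L_X-R_X)=\exp(L_X)\,\exp(-R_X).
$$
Evaluating each factor on $Y$ and pulling the matrices out of the finite series gives
$$
\exp(L_X)(Y)=\sum_{m\geq 0}\frac{X^mY}{m!}=\exp(X)\,Y,\qquad
\exp(-R_X)(Y)=\sum_{m\geq 0}\frac{(-1)^mYX^m}{m!}=Y\,\exp(-X).
$$
Composing the two yields $\exp(\operatorname{ad}X)(Y)=\exp(X)\,Y\,\exp(-X)$. Finally, because $X$ and $-X$ commute, the one-variable identity gives $\exp(X)\exp(-X)=\exp(0)=I$, so $\exp(-X)=\exp(X)^{-1}$, which is exactly the asserted formula.

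I expect the main obstacle to be purely technical: justifying $\exp(A+B)=\exp(A)\exp(B)$ for the commuting operators $A=L_X$, $B=-R_X$ over a commutative ring $R$ in which the factorials $m!$ need not be invertible. The resolution is to phrase everything through the integral divided powers $X^{[m]}=X^m/m!$ coming from the Kostant/Chevalley $\mathbb{Z}$-form, so that $\exp(X)=\sum_m X^{[m]}$ and $L_X^{[m]}=L_{X^{[m]}}$ are genuinely defined over $\mathbb{Z}$ (and hence over $R$); the splitting then reduces to the divided-power binomial identity $(L_X-R_X)^{[n]}=\sum_{k=0}^{n}L_X^{[k]}(-R_X)^{[n-k]}$, which holds for commuting operators. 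Equivalently, and perhaps more cleanly, one notes that once the nilpotency index $k$ is fixed the desired equality is a universal polynomial identity in the entries of $X$ and $Y$ with integer coefficients; it may therefore be checked over $\mathbb{C}$, where convergence and the commuting-exponential rule are classical, and then specialized to $R$ by base change along $\mathbb{Z}\to R$, using that the Chevalley lattice is preserved. Once this bookkeeping is settled, the algebraic core of the argument is precisely the short computation above.
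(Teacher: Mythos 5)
Your argument is correct and is exactly the one behind the paper's citation to Carter: write $\operatorname{ad}(X)=L_X-R_X$ with $L_X,R_X$ commuting by associativity, split the exponential, and identify the two factors as left multiplication by $\exp(X)$ and right multiplication by $\exp(X)^{-1}$. The paper gives no independent proof (it simply cites \cite[Lemma 4.5.1]{RC}), and your closing remarks — handling the divided powers via the $\mathbb{Z}$-form or by checking the universal identity over $\mathbb{C}$ and specializing, and noting that in the intended applications $X$ is a scalar multiple of a root vector, so its nilpotency as a matrix is automatic rather than deduced from the Jordan decomposition (which is only available over a field) — correctly dispose of the only points where the ring-theoretic setting requires more care than Carter's original statement over $\mathbb{C}$.
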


\begin{ex}
    Let $k$ be a field. We aim to show that $E_{\text{ad}}(A_1, k)$ is isomorphic to $PSL_2(k)$. 
    Consider a Chevalley basis for $\mathcal{L} = \mathfrak{sl}_2(\mathbb{C})$, given by $\{ X_\alpha, H_\alpha, X_{-\alpha} \}$, where 
    \[
        X_\alpha = \begin{pmatrix} 0 & 1 \\ 0 & 0 \end{pmatrix}, \quad 
        H_\alpha = \begin{pmatrix} 1 & 0 \\ 0 & -1 \end{pmatrix}, \quad 
        X_{-\alpha} = \begin{pmatrix} 0 & 0 \\ 1 & 0 \end{pmatrix}.
    \]
    By definition, the group $E_{\text{ad}}(A_1, k)$ is generated by the automorphisms $\exp(\text{ad }X_\alpha)$ and $\exp(\text{ad }X_{-\alpha})$ of $\mathcal{L}(A_1, k)$, whose action on $\mathcal{L}(A_1, k)$ is as described in Lemma \ref{lemma:actionofexpadx}. 
    Observe that: 
    \[
        \exp(tX_\alpha) = \begin{pmatrix} 1 & t \\ 0 & 1 \end{pmatrix}, \quad 
        \exp(tX_{-\alpha}) = \begin{pmatrix} 1 & 0 \\ t & 1 \end{pmatrix}.
    \]
    The matrices 
    \[
        \begin{pmatrix} 1 & t \\ 0 & 1 \end{pmatrix} \quad \text{and} \quad 
        \begin{pmatrix} 1 & 0 \\ t & 1 \end{pmatrix}, 
    \]
    as $t$ varies over $k$, generate $SL_2(k)$. Thus, there is a surjective homomorphism 
    \[
        SL_2(k) \longrightarrow E_{\text{ad}}(A_1, k),
    \]
    where the image of $A \in SL_2(k)$ is the automorphism $X \mapsto AXA^{-1}$ of $\mathcal{L}(A_1, k)$. Under this homomorphism, we have:
    \[
        \begin{pmatrix} 1 & t \\ 0 & 1 \end{pmatrix} \mapsto \exp(t \, \text{ad}(X_\alpha)), \quad 
        \begin{pmatrix} 1 & 0 \\ t & 1 \end{pmatrix} \mapsto \exp(t \, \text{ad}(X_{-\alpha})).
    \]
    The kernel of this homomorphism consists of all $A \in SL_2(k)$ that commute with every $X \in \mathcal{L}(k)$. This is easily seen to be $\{\pm I_2\}$. Therefore, $E_{\text{ad}}(A_1, k)$ is isomorphic to $PSL_2(k)$.
\end{ex}


\section{Admissible Lattice}

We wish to generalize the concept of elementary Chevalley groups of adjoint type introduced in the previous section. 
Specifically, we intend to replace $\exp(t \, \text{ad}(X_\alpha))$ with $\exp(t \, \pi(X_\alpha))$, where $\pi$ is an arbitrary representation of the corresponding Lie algebra $\mathcal{L}$. 
In this section, we establish the essential framework to ensure that this generalization is well-defined.

\subsection{Kostant's Theorem}

Let $\mathcal{L}$ be a Lie algebra over a field $\mathbb{C}$ and let $\mathcal{U} := \mathcal{U}(\mathcal{L})$ denote its universal enveloping algebra (cf. page~\pageref{uni_env_algebra}). 

\begin{defn}
    The $\mathbb{Z}$-algebra $\mathcal{U}_{\mathbb{Z}} := \mathcal{U}_{\mathbb{Z}}(\mathcal{L})$ generated by all $X_\alpha^m / m! \ (\alpha \in \Phi, \, m \in \mathbb{Z}_{\geq 0})$ is called \textbf{Kostant's $\mathbb{Z}$-form}\index{Kostant's $\mathbb{Z}$-form}.
\end{defn}

\begin{thm}[Kostant]\label{thm:kostant}
    \normalfont
    Let $\{ H_i, X_\alpha \}$ be a Chevalley basis of a Lie algebra $\mathcal{L}$ over $\mathbb{C}$. Then the collection 
    \[
        \left\{ 
            \prod_{\alpha \in \Phi^{+}} \frac{X_\alpha^{m(\alpha)}}{m(\alpha)!} \ 
            \prod_{i=1}^{\ell} \binom{H_i}{n_i} \ 
            \prod_{\alpha \in \Phi^{-}} \frac{X_\alpha^{p(\alpha)}}{p(\alpha)!} 
            \ \bigg| \ m(\alpha), n_i, p(\alpha) \in \mathbb{Z}_{\geq 0} 
        \right\}
    \]
    forms a $\mathbb{Z}$-basis for $\mathcal{U}_\mathbb{Z}$. Here, 
    \[
        \binom{H_i}{n_i} = \frac{H_i (H_i - 1) \cdots (H_i - (n_i - 1))}{(n_i)!},
    \]
    and the products above are taken in some fixed order of roots.
\end{thm}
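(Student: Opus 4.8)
The plan is to prove the two halves of the basis assertion separately: the $\mathbb{Z}$-linear independence of the displayed monomials, which is comparatively routine, and the fact that they $\mathbb{Z}$-span $\mathcal{U}_\mathbb{Z}$, which is the real content. Fix once and for all a total order on $\Phi^{+}$ and on $\Phi^{-}$, so that ``ordered monomial'' has a definite meaning, and let $M \subseteq \mathcal{U}$ denote the $\mathbb{Z}$-submodule spanned by the displayed elements. The goal is then $M = \mathcal{U}_\mathbb{Z}$.

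For independence I would pass to $\mathcal{U} = \mathcal{U}(\mathcal{L})$ over $\mathbb{Q}$ and invoke the PBW theorem. Each displayed monomial is, up to a nonzero rational scalar and lower-order correction terms, the ordinary PBW monomial $\prod_{\Phi^{+}} X_\alpha^{m(\alpha)} \prod_i H_i^{n_i} \prod_{\Phi^{-}} X_\alpha^{p(\alpha)}$ of the same multidegree: the divided power $X_\alpha^{m}/m!$ differs from $X_\alpha^{m}$ only by the scalar $1/m!$, while $\binom{H_i}{n}$ is a polynomial of degree $n$ in $H_i$ whose top term is $H_i^{n}/n!$ and whose lower terms involve smaller powers of $H_i$. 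Since the positive and negative root vectors are already kept in the prescribed PBW order, the transition between the displayed family and the PBW basis is triangular with respect to $H$-degree and has nonzero diagonal over $\mathbb{Q}$. Hence the displayed family is $\mathbb{Q}$-linearly independent in $\mathcal{U}$, and a fortiori $\mathbb{Z}$-linearly independent.

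The spanning is the crux. First one checks $M \subseteq \mathcal{U}_\mathbb{Z}$: the only nonobvious generators to account for are the $\binom{H_i}{n}$, and these are produced inside $\mathcal{U}_\mathbb{Z}$ by the rank-one ($\mathfrak{sl}_2$) computation, in which $\frac{X_\alpha^{n}}{n!}\frac{X_{-\alpha}^{n}}{n!}$ equals $\binom{H_\alpha}{n}$ modulo terms of lower order in the root vectors. For the reverse inclusion it suffices to show that $M$ is closed under left multiplication by each generator $X_\beta^{n}/n!$; since $1 \in M$, this forces $\mathcal{U}_\mathbb{Z} \subseteq M$. Closure is established through a collection of integral reordering identities, each of which I would reduce to a rank~$\leq 2$ calculation: (i) if $\alpha + \beta \notin \Phi \cup \{0\}$ the divided powers of $X_\alpha$ and $X_\beta$ commute; (ii) for $\beta = -\alpha$ the $\mathfrak{sl}_2$ relation rewrites $\frac{X_{-\alpha}^{n}}{n!}\frac{X_\alpha^{m}}{m!}$ as a $\mathbb{Z}$-combination of ordered products $\frac{X_\alpha^{m-k}}{(m-k)!}\binom{H_\alpha - m - n + 2k}{k}\frac{X_{-\alpha}^{n-k}}{(n-k)!}$; (iii) for the remaining rank-$2$ configurations ($A_2$, $B_2$, $G_2$) the integral Chevalley commutator formula reorders $\frac{X_\beta^{n}}{n!}\frac{X_\alpha^{m}}{m!}$ as a $\mathbb{Z}$-combination of ordered products of divided powers of $\alpha$, $\beta$ and their positive integral combinations; (iv) a Cartan factor moves past a root-vector power via $\binom{H_i}{n}\frac{X_\alpha^{m}}{m!} = \frac{X_\alpha^{m}}{m!}\binom{H_i + m\langle \alpha, \alpha_i \rangle}{n}$, after which $\binom{H_i + c}{n}$ expands integrally in the $\binom{H_i}{j}$ by Vandermonde; and (v) a product $\binom{H_i}{m}\binom{H_i}{n}$ in one commuting variable re-expands integrally in the $\binom{H_i}{k}$. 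A double induction—on total degree, and within a fixed degree on the number of inversions relative to the fixed order—then shows that every product of generators lands in $M$.

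The main obstacle is the genuine integrality hidden in step (iii) and in the appearance of $\binom{H_\alpha}{k}$ in step (ii). The former rests on the fact that in a Chevalley basis every structure constant has the form $\pm(r+1)$ (part~\ref{part d} of Theorem~\ref{chevalleybasis}), and one must verify that iterating the commutator formula through an entire root string never introduces denominators; carrying out this bookkeeping cleanly—most safely by treating each rank-$2$ subsystem explicitly—is where the bulk of the labor lies. The latter requires that $\binom{H_\alpha}{k}$, with $H_\alpha = \sum_i c_i H_i$ an integer combination of the $H_i$ (by the lemma that $H_\alpha$ is an integral linear combination of the $H_i$), be expressible as a $\mathbb{Z}$-linear combination of the products $\prod_i \binom{H_i}{n_i}$. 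This is a nontrivial integrality statement about the divided-power structure on $\mathbb{Z}[H_1, \dots, H_\ell]$, which I would prove by reducing to the one-variable identity $\binom{a+b}{n} = \sum_j \binom{a}{j}\binom{b}{n-j}$ together with induction on the number of simple coroots appearing in $H_\alpha$.
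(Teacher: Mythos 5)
Your outline is essentially the standard proof of Kostant's theorem as given in Steinberg (Theorem 2) and Humphreys (\S 26), which is exactly what the paper cites in lieu of a proof: independence via triangularity against the PBW basis, and spanning via the $\mathfrak{sl}_2$ reordering identity, the integrality of the rank-$2$ commutation formulas, and the integer-valued-polynomial/Vandermonde lemmas for the $\binom{H_i}{n}$ factors. The approach and the key lemmas you identify match the cited argument, so there is nothing to add.
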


\begin{proof}
    See Theorem~2 of Steinberg~\cite{RS} or Theorem~26.4 of Humphreys~\cite{JH}.
\end{proof}

\begin{rmk}
    \normalfont
    If $\mathcal{U}_{\mathbb{Z}}^{+}$, $\mathcal{U}_{\mathbb{Z}}^{-}$, and $\mathcal{U}_{\mathbb{Z}}^0$ are the $\mathbb{Z}$-algebras generated by $\frac{X_\alpha^m}{m!} \ (\alpha \in \Phi^{+})$, $\frac{X_\alpha^m}{m!} \ (\alpha \in \Phi^{-})$, and $\binom{H_i}{n_i}$, respectively, then 
    \[
        \mathcal{U}_\mathbb{Z} = \mathcal{U}_\mathbb{Z}^- \mathcal{U}_\mathbb{Z}^0 \mathcal{U}_\mathbb{Z}^+.
    \]
\end{rmk}

\subsection{Admissible Lattice}

Let $V$ be a (finite-dimensional) vector space over $\mathbb{C}$. A subgroup $M$ of the additive group $V$ is said to be a \textbf{lattice} in $V$ if there exists a $\mathbb{C}$-basis of $V$ which is also a $\mathbb{Z}$-basis of $M$.

Let $\mathcal{L}$ be a finite-dimensional Lie algebra over the field $\mathbb{C}$. Suppose $\pi: \mathcal{L} \to \mathfrak{gl}(V)$ is a finite-dimensional representation of $\mathcal{L}$, that is, $V$ is a finite-dimensional $\mathcal{L}$-module. This representation extends naturally to the universal enveloping algebra $\mathcal{U}$, endowing $V$ with the structure of a $\mathcal{U}$-module. Consequently, the Kostant $\mathbb{Z}$-form $\mathcal{U}_{\mathbb{Z}}$ also acts on $V$, making $V$ a $\mathcal{U}_{\mathbb{Z}}$-module.

\begin{prop}[{\cite[Theorem~27.1]{JH}}]\label{prop:admissible lattice}
    \normalfont
    Let $V$ be a finite-dimensional $\mathcal{L}$-module. Then
    \begin{enumerate}[(a)]
            \item Any subgroup of $V$ invariant under $\mathcal{U}_{\mathbb{Z}}$ is the direct sum of its weight components.
            \item $V$ contains a lattice $M$ invariant under all $X_\alpha^m/m! \hspace{2mm} (\alpha \in \Phi, m \in \mathbb{Z}^+);$ i.e., $M$ is invariant under $\mathcal{U}_\mathbb{Z}$.
        \end{enumerate}
\end{prop}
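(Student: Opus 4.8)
The plan is to prove the two parts in order, using Kostant's theorem (Theorem~\ref{thm:kostant}) together with its triangular factorization $\mathcal{U}_\mathbb{Z} = \mathcal{U}_\mathbb{Z}^- \mathcal{U}_\mathbb{Z}^0 \mathcal{U}_\mathbb{Z}^+$ (noted in the remark following it) as the main engine, and exploiting that $V$ has only finitely many weights, each weight space being finite-dimensional. The decomposition $V = \coprod_\mu V_\mu$ into $\mathcal{H}$-weight spaces is available since $V$ is a finite-dimensional module over the semisimple $\mathcal{L}$.

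For part (a), the key observation is that $\mathcal{U}_\mathbb{Z}^0$, generated by the $\binom{H_i}{n_i}$, acts on each weight space $V_\mu$ by the integer scalar $\prod_i \binom{\mu(H_i)}{n_i}$, so every element of $\mathcal{U}_\mathbb{Z}^0$ acts diagonally with eigenvalues determined by the integer point $(\mu(H_1),\dots,\mu(H_\ell)) \in \mathbb{Z}^\ell$, and distinct weights give distinct points. I would then produce, for each weight $\mu$ of $V$, an element $u_\mu \in \mathcal{U}_\mathbb{Z}^0$ acting as the identity on $V_\mu$ and as $0$ on every other weight space; this realizes the weight-space projector inside $\mathcal{U}_\mathbb{Z}$. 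Concretely this is an integer-valued interpolation problem, since $\mathcal{U}_\mathbb{Z}^0$ is precisely the ring of integer-valued polynomials in the $H_i$: one needs such a polynomial equal to $1$ at one of the finitely many points and $0$ at the others. Granting the $u_\mu$, for any $\mathcal{U}_\mathbb{Z}$-invariant subgroup $M$ and any $v = \sum_\mu v_\mu \in M$ with $v_\mu \in V_\mu$, we get $v_\mu = u_\mu v \in M$, whence $M = \coprod_\mu (M \cap V_\mu)$.

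For part (b), by Weyl's complete reducibility theorem $V$ is a direct sum of irreducible modules, and a direct sum of $\mathcal{U}_\mathbb{Z}$-invariant lattices in the summands is again such a lattice; so I may assume $V$ irreducible with highest weight $\lambda$ and maximal vector $v^+ \in V_\lambda$ (Theorem~\ref{thm:highestweight}). Set $M := \mathcal{U}_\mathbb{Z} \cdot v^+$. Invariance under $\mathcal{U}_\mathbb{Z}$ (hence under all $X_\alpha^m/m!$) is immediate since $\mathcal{U}_\mathbb{Z}$ is a ring. Using the factorization and $X_\alpha v^+ = 0$ for $\alpha > 0$, one finds $\mathcal{U}_\mathbb{Z}^+ v^+ = \mathbb{Z} v^+$, and since $\binom{H_i}{n_i} v^+ = \binom{\lambda(H_i)}{n_i} v^+$ also $\mathcal{U}_\mathbb{Z}^0 v^+ = \mathbb{Z} v^+$; therefore $M = \mathcal{U}_\mathbb{Z}^- v^+$. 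As the $\mathbb{C}$-span of $\mathcal{U}_\mathbb{Z}^- v^+$ equals that of $\mathcal{U}(\mathcal{L})^- v^+ = V$, the group $M$ spans $V$ over $\mathbb{C}$.

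The remaining point — that $M$ is a genuine lattice — is the main obstacle, since a finitely generated subgroup spanning $V$ over $\mathbb{C}$ need not be discrete (for instance $\mathbb{Z} + \mathbb{Z}\sqrt{2} \subset \mathbb{C}$). I would control this in two steps. First, finiteness: by Kostant's $\mathbb{Z}$-basis, $\mathcal{U}_\mathbb{Z}^-$ is $\mathbb{Z}$-spanned by the monomials $\prod_{\alpha \in \Phi^-} X_\alpha^{p(\alpha)}/p(\alpha)!$, and such a monomial carries $v^+$ into $V_{\lambda + \sum_\alpha p(\alpha)\alpha}$; a height count shows that for each fixed target weight only finitely many exponent tuples occur, and $V$ has finitely many weights, so $M = \mathcal{U}_\mathbb{Z}^- v^+$ is finitely generated (and torsion-free, hence free of finite rank), with $M = \coprod_\mu (M \cap V_\mu)$ by part (a). Second, discreteness: because the Chevalley basis has integer structure constants, the module is defined over $\mathbb{Q}$, so $M \subseteq V_\mathbb{Q} := \mathcal{U}_\mathbb{Q}^- v^+$, a $\mathbb{Q}$-form of $V$; and a finitely generated subgroup of a $\mathbb{Q}$-vector space that spans it over $\mathbb{Q}$ is automatically a full lattice (clear a common denominator to embed $M$ into a fixed free $\mathbb{Z}$-module of rank $\dim V$). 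This forces $\operatorname{rank}_\mathbb{Z} M = \dim_\mathbb{C} V$ and that a $\mathbb{Z}$-basis of $M$ is a $\mathbb{C}$-basis of $V$. I expect the two substantive points to be the construction of the projectors $u_\mu$ in part (a) and this passage from ``finitely generated and spanning'' to ``lattice'' in part (b), which is exactly where the integral structure of the Chevalley basis is indispensable.
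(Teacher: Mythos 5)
The paper offers no proof of this proposition --- it is quoted verbatim from Humphreys with only the citation \cite[Theorem~27.1]{JH} --- so the only meaningful comparison is with the standard argument in that reference, and your proposal is essentially that argument: weight-space projectors inside $\mathcal{U}_{\mathbb{Z}}^{0}$ for part (a), and $M=\mathcal{U}_{\mathbb{Z}}\cdot v^{+}=\mathcal{U}_{\mathbb{Z}}^{-}\cdot v^{+}$ for an irreducible summand in part (b). Your part (a) is fine modulo the interpolation lemma you correctly isolate; that lemma is true and can be made completely explicit, e.g.\ in one variable $g(T)=\binom{N-T}{N}\binom{N+T}{N}$ equals $1$ at $T=0$ and $0$ at every integer $T$ with $0<|T|\le N$, and for several variables one takes a product over coordinates separating the given point from each of the finitely many others. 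Your finiteness and height-count arguments in part (b) are also correct.

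The one genuine soft spot is the final step of part (b). You assert that $V_{\mathbb{Q}}:=\mathcal{U}_{\mathbb{Q}}^{-}v^{+}$ is a $\mathbb{Q}$-form of $V$ ``because the Chevalley basis has integer structure constants,'' but integrality of the structure constants only gives you that $V_{\mathbb{Q}}$ is a finite-dimensional $\mathcal{L}_{\mathbb{Q}}$-submodule spanning $V$ over $\mathbb{C}$; it does not by itself give $\dim_{\mathbb{Q}}V_{\mathbb{Q}}=\dim_{\mathbb{C}}V$, which is exactly the rank bound you are trying to prove (compare $\mathbb{Q}+\mathbb{Q}\sqrt{2}\subset\mathbb{C}$). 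As written the argument is nearly circular at this point. To close it, show that the natural surjection $V_{\mathbb{Q}}\otimes_{\mathbb{Q}}\mathbb{C}\to V$ is injective: $V_{\mathbb{Q}}\otimes_{\mathbb{Q}}\mathbb{C}$ is a finite-dimensional $\mathcal{L}$-module generated by a maximal vector of weight $\lambda$ whose $\lambda$-weight space is one-dimensional, hence it has a unique maximal submodule and is indecomposable; by Weyl's theorem it is completely reducible, hence irreducible, hence isomorphic to $V(\lambda)=V$, forcing $\dim_{\mathbb{Q}}V_{\mathbb{Q}}=\dim_{\mathbb{C}}V$. (Alternatively, one can use a contravariant form with $(v^{+},v^{+})=1$, note $(M,M)\subseteq\mathbb{Z}$ via the triangular decomposition, and bound the rank of $M$ by the nonvanishing of an integral Gram determinant.) With that inserted, your ``clear denominators'' conclusion is correct and the proof is complete.
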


\begin{rmk}
    \normalfont
    A lattice $M$ in a finite-dimensional $\mathcal{L}$-module $V$ that is invariant under the action of $\mathcal{U}_\mathbb{Z}$ is called \textbf{admissible}\index{admissible lattice}. 
    Part (b) of the theorem mentioned above ensures the existence of such a lattice.
\end{rmk}

\begin{ex}
    Note that $\mathcal{L} (\mathbb{Z})$, the $\mathbb{Z}$-span of a Chevalley basis, is an admissible lattice in the $\mathcal{L}$-module $\mathcal{L}$. 
\end{ex}

\begin{prop}[{\cite[Proposition~27.2]{JH}}]
    \normalfont
    Let $\pi: \mathcal{L} \longrightarrow \mathfrak{gl}(V)$ be a finite-dimensional faithful representation of $\mathcal{L}$, and let $M$ be an admissible lattice in $V$. Define $\mathcal{L}_\pi(\mathbb{Z})$ as the stabilizer of $M$ in $\mathcal{L}$. Then:
    \begin{enumerate}[(a)]
        \item $\mathcal{L}_\pi(\mathbb{Z})$ is an admissible lattice in the $\mathcal{L}$-module $\mathcal{L}$.
        \item Moreover, 
        \[
            \mathcal{L}_\pi(\mathbb{Z}) = \mathcal{H}_\pi(\mathbb{Z}) \oplus \coprod_{\alpha \in \Phi} \mathbb{Z} X_\alpha,
        \]
        where 
        \[
            \mathcal{H}_\pi(\mathbb{Z}) = \mathcal{H} \cap \mathcal{L}_\pi(\mathbb{Z}) = \{ H \in \mathcal{H} \mid \mu(H) \in \mathbb{Z} \text{ for all } \mu \in \Omega_\pi \}.
        \]
        \item In particular, $\mathcal{L}(\mathbb{Z}) \subset \mathcal{L}_\pi (\mathbb{Z})$ and $\mathcal{L}_\pi(\mathbb{Z})$ is independent of the choice of the admissible lattice \( M \). (However, \( \mathcal{L}_\pi(\mathbb{Z}) \) does depend on the representation \( \pi \).)
    \end{enumerate}
\end{prop}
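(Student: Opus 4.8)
The plan is to view $\mathcal{L}$ itself as an $\mathcal{L}$-module through the adjoint representation, whose weight spaces are precisely $\mathcal{H}$ (weight $0$) and the root spaces $\mathcal{L}_\alpha = \mathbb{C} X_\alpha$ (weight $\alpha$), and then to apply part (a) of Proposition~\ref{prop:admissible lattice} to the stabilizer $\mathcal{L}_\pi(\mathbb{Z})$. Concretely, once I show that $\mathcal{L}_\pi(\mathbb{Z})$ is a subgroup of $(\mathcal{L},+)$ invariant under $\mathcal{U}_\mathbb{Z}$ (acting via $\operatorname{ad}$), that proposition forces the decomposition
\[
    \mathcal{L}_\pi(\mathbb{Z}) = \big(\mathcal{L}_\pi(\mathbb{Z}) \cap \mathcal{H}\big) \oplus \coprod_{\alpha \in \Phi} \big(\mathcal{L}_\pi(\mathbb{Z}) \cap \mathcal{L}_\alpha\big).
\]
It then remains to identify each graded piece and to check that the result is a full lattice; statement (a) follows because a $\mathcal{U}_\mathbb{Z}$-invariant lattice in $\mathcal{L}$ is, by definition, admissible.

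For the invariance I would first note that $X_\alpha = X_\alpha^1/1! \in \mathcal{U}_\mathbb{Z}$, so $\pi(X_\alpha)$ preserves $M$ and hence $X_\alpha \in \mathcal{L}_\pi(\mathbb{Z})$; this already gives $\mathbb{Z} X_\alpha \subseteq \mathcal{L}_\pi(\mathbb{Z}) \cap \mathcal{L}_\alpha$. To see that $\mathcal{L}_\pi(\mathbb{Z})$ is closed under the divided powers $(\operatorname{ad} X_\alpha)^m/m!$, I would invoke the associative-algebra identity
\[
    \frac{(\operatorname{ad} X_\alpha)^m}{m!}(Y) = \sum_{i+j=m} (-1)^j \, \frac{X_\alpha^i}{i!} \, Y \, \frac{X_\alpha^j}{j!}
\]
valid in $\mathcal{U}$. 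Applying $\pi$ and using that each $\pi(X_\alpha^k/k!)$ preserves $M$ (admissibility of $M$) together with $\pi(Y)M \subseteq M$ for $Y \in \mathcal{L}_\pi(\mathbb{Z})$, every summand preserves $M$, hence so does their sum; thus $(\operatorname{ad} X_\alpha)^m/m!$ maps $\mathcal{L}_\pi(\mathbb{Z})$ into itself, establishing $\mathcal{U}_\mathbb{Z}$-invariance.

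Identifying the zero-weight piece is routine: since $M = \bigoplus_\mu (M \cap V_\mu)$ and $\pi(H)$ acts on $V_\mu$ as the scalar $\mu(H)$, an element $H \in \mathcal{H}$ stabilizes $M$ exactly when $\mu(H)(M \cap V_\mu) \subseteq M \cap V_\mu$ for every $\mu$, i.e. when $\mu(H) \in \mathbb{Z}$ for all $\mu \in \Omega_\pi$; this is the claimed description of $\mathcal{H}_\pi(\mathbb{Z})$. The main obstacle is the reverse inclusion $\mathcal{L}_\pi(\mathbb{Z}) \cap \mathcal{L}_\alpha \subseteq \mathbb{Z} X_\alpha$, i.e. showing that no genuinely non-integral multiple $c X_\alpha$ can stabilize $M$. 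My plan here is to restrict to the $\mathfrak{sl}_2$-triple $\mathfrak{s}_\alpha = \langle X_\alpha, H_\alpha, X_{-\alpha}\rangle$: the lattice $M$ is invariant under $\mathcal{U}_\mathbb{Z}(\mathfrak{s}_\alpha)$, and by faithfulness $\pi(X_\alpha) \neq 0$, so $V$ has an $\mathfrak{s}_\alpha$-constituent on which $X_\alpha$ acts nontrivially. In the standard weight basis of such a constituent the top vector $v$ satisfies $\pi(X_\alpha)v = \pm(\text{a primitive vector of } M)$; since a primitive vector $w \in M$ satisfies $cw \in M$ only for $c \in \mathbb{Z}$, this forces $c \in \mathbb{Z}$. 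Care is needed to verify that $\pi(X_\alpha)M$ really contains a primitive vector of $M$, which is exactly where the explicit $\mathfrak{sl}_2$ representation theory (the divided-power action on the standard basis) must be used.

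Finally, for the lattice property and for (c): the root components contribute rank $|\Phi|$, while $\mathcal{H}_\pi(\mathbb{Z})$ is the lattice dual to the $\mathbb{Z}$-span of $\Omega_\pi$; faithfulness makes $\Omega_\pi$ span $\mathcal{H}^*$ (any $H$ with $\mu(H)=0$ for all $\mu \in \Omega_\pi$ acts as $0$, hence is $0$), so $\mathcal{H}_\pi(\mathbb{Z})$ has full rank $\ell$ and $\mathcal{L}_\pi(\mathbb{Z})$ is a lattice of rank $\ell + |\Phi| = \dim \mathcal{L}$, proving (a) and (b). For (c), each $X_\alpha$ lies in $\mathcal{L}_\pi(\mathbb{Z})$, and each $H_i = H_{\alpha_i}$ satisfies $\mu(H_{\alpha_i}) = \langle \mu, \alpha_i \rangle \in \mathbb{Z}$ for every weight $\mu$, so $H_i \in \mathcal{H}_\pi(\mathbb{Z})$; hence the Chevalley lattice $\mathcal{L}(\mathbb{Z})$ is contained in $\mathcal{L}_\pi(\mathbb{Z})$. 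The description in (b) involves only $\Omega_\pi$ and the vectors $X_\alpha$, with no reference to $M$, which yields independence of the choice of admissible lattice; and since $\mathcal{H}_\pi(\mathbb{Z})$ is cut out by integrality against $\Omega_\pi$, changing $\pi$ (and hence $\Omega_\pi$) genuinely changes $\mathcal{L}_\pi(\mathbb{Z})$.
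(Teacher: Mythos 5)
The paper does not prove this statement at all---it is quoted from Humphreys and the citation to \cite{JH} is the entire ``proof''---so your proposal can only be measured against the standard textbook argument, and your skeleton does follow it: establish $\mathcal{U}_\mathbb{Z}$-invariance of the stabilizer via the identity $\tfrac{(\operatorname{ad} X_\alpha)^m}{m!}(Y)=\sum_{i+j=m}(-1)^j\tfrac{X_\alpha^i}{i!}\,Y\,\tfrac{X_\alpha^j}{j!}$, apply Proposition~\ref{prop:admissible lattice}(a) to the adjoint module to split $\mathcal{L}_\pi(\mathbb{Z})$ into weight components, identify the zero-weight piece from the scalar action of $\mathcal{H}$ on $M\cap V_\mu$, and get the full-rank and (c) statements from the fact that $\Omega_\pi$ spans $\mathcal{H}^*$ by faithfulness. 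All of that is correct.

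The genuine gap is the inclusion $\mathcal{L}_\pi(\mathbb{Z})\cap\mathcal{L}_\alpha\subseteq\mathbb{Z}X_\alpha$, and the mechanism you sketch for it fails. The top (highest-weight) vector of an $\mathfrak{s}_\alpha$-constituent is annihilated by $X_\alpha$, so you presumably mean the bottom vector $v_m$, for which $X_\alpha v_m=v_{m-1}$; but $v_{m-1}$ need not be primitive in $M$, because $M$ need not decompose as a direct sum of lattices in the irreducible $\mathfrak{s}_\alpha$-constituents, so the standard basis of a constituent need not consist of primitive vectors of $M$. Concretely, for $\mathfrak{s}_\alpha\cong\mathfrak{sl}_2$ acting on $V(2)$ with standard basis $v_0,v_1,v_2$, the lattice $M=\mathbb{Z}v_0+\mathbb{Z}\tfrac{v_1}{2}+\mathbb{Z}v_2$ is admissible and $v_2$ is primitive, yet $X_\alpha v_2=v_1=2\cdot\tfrac{v_1}{2}$ is not primitive; the conclusion survives there only because $X_\alpha(\tfrac{v_1}{2})=v_0$ happens to be primitive, which your argument does not see. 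A correct route is: if $\tfrac{1}{\ell}X_\alpha$ stabilizes $M$ for some prime $\ell$, then $(\tfrac{1}{\ell}X_\alpha)^k=\tfrac{k!}{\ell^k}\cdot\tfrac{X_\alpha^k}{k!}$ does too, and since the $\ell$-adic valuation of $k!$ is strictly less than $k$, the group of rational scalars $c$ with $c\,\tfrac{X_\alpha^k}{k!}M\subseteq M$ contains $\tfrac{1}{\ell}$ for every $k\geq 1$; now take $k=q$ equal to the largest eigenvalue of $h_\alpha$ on $V$ (which is $\geq 1$ by faithfulness) and use that $\tfrac{X_\alpha^q}{q!}$ and $\tfrac{X_{-\alpha}^q}{q!}$ are mutually inverse between the extreme $h_\alpha$-eigenspaces, so $\tfrac{X_\alpha^q}{q!}$ maps $M\cap V^{(-q)}$ onto $M\cap V^{(q)}$; then $\tfrac{1}{\ell}(M\cap V^{(q)})\subseteq M$, contradicting primitivity of a basis vector of $M\cap V^{(q)}$. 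Without an argument of this kind (which is what \cite{JH} supplies), part (b), and hence the independence assertion in part (c), is not established.
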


\begin{ex}
    Let $\mathcal{L} = \mathfrak{sl}(2,\mathbb{C})$, with the standard basis $(X,Y,H)$. 
    \begin{enumerate}[(a)]
        \item Let $\pi: \mathcal{L} \longrightarrow \mathfrak{gl}(\mathcal{L})$ be the adjoint representation of $\mathcal{L}$. Define 
        \(
            M = \mathcal{L} (\mathbb{Z}) = \mathbb{Z}X + \mathbb{Z}Y + \mathbb{Z}H.
        \) 
        Then $M$ is an admissible lattice, and the corresponding stabilizer is given by
        \[
            \mathcal{L}_\pi (\mathbb{Z}) = \mathbb{Z}X + \mathbb{Z}Y + \mathbb{Z}(H/2).
        \]
        \item Let $V = \mathbb{C}^2$, and let $\pi: \mathcal{L} \longrightarrow \mathfrak{gl}(V)$ be the usual representation of $\mathcal{L}$. 
        Then $M = \mathbb{Z}^2$ is an admissible lattice, and the corresponding stabilizer is given by
        \[
            \mathcal{L}_\pi (\mathbb{Z}) = \mathbb{Z}X + \mathbb{Z}Y + \mathbb{Z}H.
        \]
    \end{enumerate}
\end{ex}

\subsection{Passage to an Arbitrary Ring}

Let $\mathcal{L}$ be a semisimple Lie algebra over $\mathbb{C}$ with the root system $\Phi$ and let $\pi: \mathcal{L} \longrightarrow \text{ GL} (V)$ be a finite-dimensional faithful representation. 
Consider the weight space decomposition 
\[
    V = \coprod_{\mu \in \Omega_\pi} V_{\mu},
\]
where $\Omega_\pi$ is the set of weights of the representation $\pi$. 

Let $M$ be an admissible lattice in $V$, and define $M_\mu = M \cap V_\mu$ (where $\mu \in \Omega_\pi$). 
Additionally, let $\mathcal{L}(\mathbb{Z})$, $\mathcal{L}_\pi (\mathbb{Z})$, and $\mathcal{H}_\pi (\mathbb{Z})$ be defined as before. 
To enhance clarity and avoid ambiguity, we sometimes explicitly include the root system $\Phi$ in the notation. 
For instance, we write $\mathcal{L}(\Phi, \mathbb{Z})$, $\mathcal{L}_\pi(\Phi, \mathbb{Z})$, and $\mathcal{H}_\pi(\Phi, \mathbb{Z})$, where $\Phi$ signifies the underlying root system associated with the Lie algebra.

Let $R$ be a commutative ring with unity. Define $V_\pi (\Phi, R) = M \otimes_{\mathbb{Z}} R,$ $V_{\pi, \mu}(\Phi, R) = M_\mu \otimes_{\mathbb{Z}} R,$ $\mathcal{L} (\Phi, R) = \mathcal{L} (\Phi, \mathbb{Z}) \otimes_{\mathbb{Z}} R,$ $\mathcal{L}_\pi (\Phi, R) = \mathcal{L}_\pi (\Phi, \mathbb{Z}) \otimes_{\mathbb{Z}} R,$ $\mathcal{H}_\pi (\Phi, R) = \mathcal{H}_\pi (\Phi, \mathbb{Z}) \otimes_{\mathbb{Z}} R$ and $R X_{\alpha} = \mathbb{Z} X_{\alpha} \otimes_{\mathbb{Z}} R$. 

\begin{prop}[{\cite[Corollary 3 on page 17]{RS}}]
    \normalfont
    Granting the above notations, the following hold: 
    \begin{enumerate}[(a)]
        \item $V_\pi (\Phi, R) = \coprod_{\mu \in \Omega_\pi} V_{\pi, \mu} (\Phi, R)$ and $\dim_{R} V_{\pi, \mu} (\Phi, R) = \dim_{\mathbb{C}} V_{\mu}$.
        \item $\mathcal{L}_\pi (\Phi, R) = \mathcal{H}_\pi (\Phi, R) \oplus \coprod_{\alpha \in \Phi} R X_{\alpha},$ $\dim_{R} (R X_{\alpha}) = 1,$ $\dim_{R} \mathcal{H}_\pi (\Phi, R) = \dim_{\mathbb{C}} (\mathcal{H}),$ and $\dim_{R} \mathcal{L}_\pi (\Phi, R) = \dim_{\mathbb{C}} (\mathcal{L})$.
    \end{enumerate}
\end{prop}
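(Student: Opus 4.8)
The plan is to deduce both parts from a single mechanism: the admissible lattice decomposes as the direct sum of its weight components (Proposition~\ref{prop:admissible lattice}(a)), and this decomposition survives the base change $-\otimes_{\mathbb{Z}} R$ because all the $\mathbb{Z}$-modules in sight are free and tensor product commutes with finite direct sums. So the real content is imported from the earlier propositions, and what remains is formal together with one rank bookkeeping step.

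For part (a), since $M$ is an admissible lattice it is invariant under $\mathcal{U}_{\mathbb{Z}}$, so Proposition~\ref{prop:admissible lattice}(a) yields the $\mathbb{Z}$-module decomposition $M = \bigoplus_{\mu \in \Omega_\pi} M_\mu$ with $M_\mu = M \cap V_\mu$. I would first verify that each $M_\mu$ is a genuine lattice in $V_\mu$: as a direct summand of the free $\mathbb{Z}$-module $M$ it is itself free, and tensoring $M = \bigoplus_\mu M_\mu$ with $\mathbb{C}$ gives $V = \bigoplus_\mu (M_\mu \otimes_{\mathbb{Z}} \mathbb{C})$; comparing with $V = \bigoplus_\mu V_\mu$ and using $M_\mu \otimes_{\mathbb{Z}} \mathbb{C} \subseteq V_\mu$ forces $M_\mu \otimes_{\mathbb{Z}} \mathbb{C} = V_\mu$, whence $\operatorname{rank}_{\mathbb{Z}} M_\mu = \dim_{\mathbb{C}} V_\mu$. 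Tensoring the decomposition with $R$ then gives
\[
    V_\pi(\Phi, R) = M \otimes_{\mathbb{Z}} R = \Big( \bigoplus_{\mu} M_\mu \Big) \otimes_{\mathbb{Z}} R = \coprod_{\mu \in \Omega_\pi} (M_\mu \otimes_{\mathbb{Z}} R) = \coprod_{\mu \in \Omega_\pi} V_{\pi,\mu}(\Phi, R),
\]
and since $M_\mu$ is $\mathbb{Z}$-free of rank $\dim_{\mathbb{C}} V_\mu$, its base change $V_{\pi,\mu}(\Phi, R)$ is $R$-free of the same rank, giving $\dim_{R} V_{\pi,\mu}(\Phi, R) = \dim_{\mathbb{C}} V_\mu$.

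For part (b), I would apply the same mechanism to $\mathcal{L}_\pi(\mathbb{Z})$ inside the adjoint module $\mathcal{L}$. By the earlier proposition describing $\mathcal{L}_\pi(\mathbb{Z})$, it is an admissible lattice and already carries the decomposition $\mathcal{L}_\pi(\mathbb{Z}) = \mathcal{H}_\pi(\mathbb{Z}) \oplus \coprod_{\alpha \in \Phi} \mathbb{Z} X_\alpha$, which is exactly its weight decomposition: the weight-$0$ space of the adjoint representation is $\mathcal{H}$ (lattice part $\mathcal{H}_\pi(\mathbb{Z})$) and for each root $\alpha$ the weight space is the line $\mathbb{C} X_\alpha$ (lattice part $\mathbb{Z} X_\alpha$). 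Commuting $-\otimes_{\mathbb{Z}} R$ past the direct sum gives
\[
    \mathcal{L}_\pi(\Phi, R) = \mathcal{H}_\pi(\Phi, R) \oplus \coprod_{\alpha \in \Phi} R X_\alpha.
\]
The ranks follow: $\mathbb{Z} X_\alpha$ is free of rank $1$, so $\dim_{R}(R X_\alpha) = 1$; the set $\mathcal{H}_\pi(\mathbb{Z}) = \{ H \in \mathcal{H} \mid \mu(H) \in \mathbb{Z} \ \forall \mu \in \Omega_\pi \}$ is a full lattice in $\mathcal{H}$ because the weights span $\mathcal{H}^*$, hence free of rank $\dim_{\mathbb{C}} \mathcal{H}$, giving $\dim_{R} \mathcal{H}_\pi(\Phi, R) = \dim_{\mathbb{C}} \mathcal{H}$; and summing, $\dim_{R} \mathcal{L}_\pi(\Phi, R) = \dim_{\mathbb{C}} \mathcal{H} + |\Phi| = \dim_{\mathbb{C}} \mathcal{L}$.

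The only point demanding care — and the closest thing to an obstacle — is confirming that each weight component $M_\mu$ (and likewise $\mathcal{H}_\pi(\mathbb{Z})$) is a lattice of the correct rank rather than merely a subgroup, which is handled by the freeness-of-a-direct-summand argument together with the rank count above. No flatness subtleties arise, since every $\mathbb{Z}$-module involved is free, so $-\otimes_{\mathbb{Z}} R$ both preserves the direct-sum decompositions and computes ranks correctly over an arbitrary commutative ring $R$; all the substantive input is already carried by Proposition~\ref{prop:admissible lattice}(a) and the structural description of $\mathcal{L}_\pi(\mathbb{Z})$.
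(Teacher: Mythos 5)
Your argument is correct. The paper itself gives no proof of this proposition — it is quoted directly from Steinberg \cite[Corollary 3, p.~17]{RS} — and your derivation (weight decomposition of the admissible lattice via Proposition on admissible lattices, freeness of the direct summands $M_\mu$ with rank $\dim_{\mathbb{C}} V_\mu$ obtained by tensoring up to $\mathbb{C}$, then base change to $R$, applied in part (b) to $\mathcal{L}_\pi(\mathbb{Z})$ with its already-established decomposition $\mathcal{H}_\pi(\mathbb{Z}) \oplus \coprod_\alpha \mathbb{Z}X_\alpha$) is exactly the standard argument underlying the cited corollary.
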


If $V$ is an irreducible $\mathcal{L}$-module with the highest weight $\lambda$, then $V_\pi(\Phi, R)$ is called the \textbf{Weyl module}\index{Weyl module} of the Chevalley algebra $\mathcal{L}(\Phi, R)$ with the highest weight $\lambda$


\section{Chevalley Groups}\label{sec:Chevalley groups}

We adopt the notation established in the preceding section. With these notations in place, we are now ready to study the automorphisms of $V_\pi (\Phi, R)$ of the form 
\[
    x_\alpha(t) := \exp(t \pi(X_\alpha)) \quad (t \in R, \alpha \in \Phi),
\]
where the exponential function is given by
\[
    \exp(t \pi(X_\alpha)) = \sum_{n=0}^{\infty} \frac{t^n \pi(X_\alpha)^n}{n!}.
\]

We now demonstrate that the action of $x_\alpha(t)$ on $V_\pi(\Phi, R)$ is well-defined. 
To do this, we denote the action of $\pi(X_\alpha)$ on $v \in V_\pi(\Phi, R)$ as $X_\alpha \cdot v$. Thus, we can express $x_{\alpha}(t)$ as
\[
    x_{\alpha}(t) = \exp(t X_\alpha) = \sum_{n=0}^{\infty} \frac{t^n X_\alpha^n}{n!}.
\]
Since ${X_\alpha^n}/{n!} \in \mathcal{U}_\mathbb{Z}$, it acts on $M$ (see Proposition~\ref{prop:admissible lattice}). This induces an action of $\lambda^n X^n_{\alpha}/n!$ on $M \otimes_{\mathbb{Z}} \mathbb{Z}[\lambda]$, where $\lambda$ is an indeterminate. 
Since $X_\alpha^n$ acts as zero for sufficiently large $n$, we observe that $\sum_{n=0}^{\infty} {\lambda^n X_\alpha^n}/{n!}$ acts on $M \otimes_{\mathbb{Z}} \mathbb{Z}[\lambda]$ and hence on \( M \otimes_{\mathbb{Z}} \mathbb{Z}[\lambda] \otimes_{\mathbb{Z}} R \).
Following this last action by the homomorphism from $M \otimes_\mathbb{Z} \mathbb{Z}[\lambda] \otimes_\mathbb{Z} R$ to $V_\pi(\Phi, R) = M \otimes_{\mathbb{Z}} R$ given by $\lambda \mapsto t$ we obtain an action of $\sum_{n=0}^{\infty} t^n X^n_{\alpha}/n!$ on $V_\pi(\Phi, R)$.

\subsection{Elementary Chevalley Groups}

The subgroup of $\text{Aut}_R(V_\pi(\Phi, R))$ generated by all $x_\alpha(t) \ (t \in R, \alpha \in \Phi)$ is called the \textbf{elementary Chevalley group}\index{elementary Chevalley group}\label{nomencl:E(R)} of type $\Phi$ over the ring $R$, denoted by $E_\pi(\Phi, R)$. 
For a representation $\pi$, let $\Lambda_\pi$ denote the weight lattice of $\pi$, i.e., the lattice generated by all weights of $\pi$. 
If $\pi$ and $\pi'$ are representations of $\mathcal{L}$ such that $\Lambda_\pi = \Lambda_{\pi'}$, then $E_\pi (\Phi, R) \cong E_{\pi'} (\Phi, R)$.
Let $\Lambda_r$ be the lattice generated by roots, and let $\Lambda_{\text{sc}}$ be the lattice generated by fundamental weights. 
If $\pi$ is such that $\Lambda_\pi = \Lambda_{r}$ (respectively, $\Lambda_\pi = \Lambda_{\text{sc}}$), then $E_\pi(\Phi, R) = E_{\text{ad}}(\Phi, R)$ (respectively, $E_\pi(\Phi, R) = E_{\text{sc}}(\Phi, R)$) is called an  \textbf{adjoint elementary Chevalley group}\index{elementary Chevalley group!adjoint} (respectively, \textbf{universal} (or \textbf{simply connected}) \textbf{elementary Chevalley group}\index{elementary Chevalley group!universal or simply connected}). 


\subsubsection*{The Root Subgroup $\mathfrak{X}_\alpha$}

Fix a root $\alpha \in \Phi$. Define $\mathfrak{X}_\alpha$ as the subgroup of $E_\pi(\Phi, R)$ generated by the elements $x_\alpha(t) \ (t \in R)$, referred to as the \textbf{root subgroup}\index{root subgroup} corresponding to the root $\alpha$.
It is straightforward to verify that 
\[
    x_\alpha(t)x_\alpha(u) = x_\alpha(t+u),
\]
for all $t, u \in R$.
As a result, the natural map $R \longrightarrow \mathfrak{X}_\alpha$, defined by $t \mapsto x_\alpha(t)$, is an isomorphism of the additive group of $R$ onto the root subgroup $\mathfrak{X}_\alpha$. In particular, we have $\mathfrak{X}_\alpha = \{x_\alpha(t) \mid t \in R\}$.
Furthermore, if $\beta$ is any other root different from $\alpha$, then $\mathfrak{X}_\alpha \neq \mathfrak{X}_\beta$.


\subsubsection*{Chevalley Commutator Formula} \index{Chevalley commutator formulas}

We now present a key relation in the study of Chevalley groups. For any group $G$ and elements $x, y \in G$, we define the commutator by $[x, y] = xyx^{-1}y^{-1}$.

\begin{prop}[{\cite[Lemma 15]{RS}}]\label{prop:chevalley commutator formula}
    \normalfont
    Let $\alpha, \beta$ be roots such that $\alpha + \beta \neq 0$. Then the following identity holds:
    \[
        [x_\alpha(t), x_\beta(u)] = \prod_{i,j > 0} x_{i\alpha + j\beta}\left(c_{\alpha, \beta; i, j} t^i u^j\right),
    \]
    where the product on the right-hand side is taken over all roots of the form $i\alpha + j\beta$ with $i, j \in \mathbb{Z}^+$, arranged in some fixed (but arbitrary) order. The constants $c_{\alpha, \beta; i, j}$ are unique integers depending on $\alpha$, $\beta$, and the chosen ordering, but are independent of $t$ and $u$.
\end{prop}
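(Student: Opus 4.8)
The plan is to prove the identity first as a polynomial identity over $\mathbb{Z}$ in two independent indeterminates and then specialise. First I would replace $t,u$ by commuting indeterminates $T,U$ and work inside $\operatorname{End}(M\otimes_{\mathbb{Z}}\mathbb{Z}[T,U])$, where $M$ is the admissible lattice underlying $V_\pi(\Phi,R)$; this is legitimate because, as in the well-definedness discussion preceding the statement, each $x_\alpha(T)=\sum_{n\ge 0}T^n X_\alpha^n/n!$ is a finite sum whose coefficients $X_\alpha^n/n!$ lie in $\mathcal{U}_{\mathbb{Z}}$ and hence preserve $M$. The first reduction is that every root of the form $i\alpha+j\beta$ with $i,j\in\mathbb{Z}^+$ lies in the plane $\mathbb{Q}\alpha+\mathbb{Q}\beta$, which meets $\Phi$ in a rank-$\le 2$ subsystem; consequently only finitely many such roots occur (and if $\alpha,\beta$ are proportional the commutator is trivial, so I may assume $\alpha,\beta$ linearly independent). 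In particular the product on the right-hand side is a genuine finite product and all the exponents $i,j$ are bounded.

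The heart of the argument is a grading/homogeneity device that forces the coefficient of $x_{i\alpha+j\beta}$ to be the monomial $c_{\alpha,\beta;i,j}\,T^iU^j$. I would equip $M\otimes_{\mathbb{Z}}\mathbb{Z}[T,U]$ with a $(\mathbb{Z}\alpha\oplus\mathbb{Z}\beta)$-grading by declaring $M_\mu\otimes T^aU^b$ to have degree $\mu-a\alpha-b\beta$, where $M_\mu=M\cap V_\mu$. Since $X_\alpha^n$ carries $M_\mu$ into $M_{\mu+n\alpha}$ while $T^n$ contributes $-n\alpha$, each generator $x_\alpha(T)$, and likewise $x_\beta(U)$, is homogeneous of degree $0$; hence so is the commutator $[x_\alpha(T),x_\beta(U)]$. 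A factor $x_{i\alpha+j\beta}(f(T,U))$ is homogeneous of degree $0$ precisely when $f$ has pure bidegree $(i,j)$, and because $\alpha,\beta$ are linearly independent the monomials $T^iU^j$ are separated by their bidegrees. This is what pins the coefficient functions down to constant multiples of $T^iU^j$.

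Next I would show that the commutator genuinely decomposes as such a product. The set $\{\,i\alpha+j\beta\in\Phi : i,j>0\,\}$ is separated from its negatives and is closed under addition-inside-$\Phi$, so it is a special (unipotent) closed set whose root subgroups generate a unipotent group in which products taken in a fixed height order are uniquely determined. Working modulo the span of the root directions of height $>N$ in the $\alpha,\beta$-plane and inducting on $N$, I would peel off one factor $x_{i\alpha+j\beta}(c_{\alpha,\beta;i,j}T^iU^j)$ at a time, using the Chevalley-basis relations $[X_\gamma,X_\delta]=N_{\gamma,\delta}X_{\gamma+\delta}$ and the nilpotence of the operators involved to identify the lowest nontrivial term; the homogeneity of the previous paragraph guarantees that what remains at each stage is again a product of the allowed monomial factors. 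Integrality of each $c_{\alpha,\beta;i,j}$ is automatic, since the whole computation takes place over $\mathbb{Z}$ and the identification $v\mapsto x_{i\alpha+j\beta}(v)$ of $(\mathbb{Z}[T,U],+)$ with the root subgroup is injective on the lattice; uniqueness for a fixed ordering follows from the unique-expression property just invoked. Finally, specialising $T\mapsto t$, $U\mapsto u$ for arbitrary $t,u\in R$ transports the identity from $\mathbb{Z}[T,U]$ to $R$, and the $c_{\alpha,\beta;i,j}$ remain the same integers, independent of $t,u$.

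I expect the main obstacle to be the inductive decomposition in the third paragraph: proving that the commutator lies in the product of the intermediate root subgroups \emph{with no leftover}, rather than merely that its nontrivial weight shifts are among the $i\alpha+j\beta$. The grading cleanly fixes the shape of each coefficient, but establishing the exact factorisation — and doing the bookkeeping uniformly across the $A_2$, $B_2$, and $G_2$ rank-$2$ possibilities, where heights up to $3\alpha+2\beta$ occur — is where the real work lies, and it is essentially the content of Steinberg's Lemma~15.
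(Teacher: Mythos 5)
Your argument is correct and is essentially Steinberg's own proof of Lemma~15 (reduction to $\mathbb{Z}[T,U]$, the weight/bidegree grading forcing each coefficient to be $c\,T^iU^j$, and unique expression in the unipotent group of the special closed set $\{i\alpha+j\beta\in\Phi: i,j>0\}$), which is exactly the source the paper defers to: the paper states this proposition with a citation to \cite[Lemma 15]{RS} and gives no independent proof. So there is nothing in the paper to diverge from, and your sketch matches the cited argument, including your accurate identification of the exact-factorisation step as the real work.
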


The integers $c_{\alpha, \beta; i, j}$ are referred to as the \textbf{structure constants} of the Chevalley group. 
It is known that they belong to the set $\{\pm 1, \pm 2, \pm 3\}$. 
When the roots $i\alpha + j\beta$ are ordered according to increasing values of $i + j$, the structure constants satisfy the following relations (see, for example, Section 9 of \cite{EP&NV}):
\[
    c_{\alpha, \beta; i, 1} = M_{\alpha, \beta; i}, \quad 
    c_{\alpha, \beta; 1, j} = -M_{\beta, \alpha; j}, \quad
    c_{\alpha, \beta; 2, 3} = \frac{1}{3} M_{\alpha + \beta, \beta; 2}, \quad
    c_{\alpha, \beta; 3, 2} = \frac{2}{3} M_{\alpha + \beta, \alpha; 2},
\]
where
\[
    M_{\gamma, \delta; k} = \frac{1}{k!} \prod_{i=1}^{k} N_{\gamma, \delta + (i-1)\gamma} = \pm \binom{r+k}{k},
\]
and $r$ is the largest nonnegative integer such that $\delta - r\gamma$ is a root.

\begin{rmk}
    The following are the immediate consequences of the above:
    \begin{enumerate}
        \item If $\alpha, \beta \in \Phi$ such that $\alpha + \beta \not\in \Phi \cup \{0\}$, then $x_{\alpha}(t)$ and $x_{\beta}(u)$ commutes for all $t, u \in R$.
        \item If $\alpha, \beta \in \Phi$ such that $\alpha + \beta \in \Phi$ but $i \alpha + j \beta \not\in \Phi$ for all $i,j$ such that $i+j > 2$, then 
        \[
            [x_\alpha(t), x_\beta(u)] = x_{\alpha + \beta} \left(N_{\alpha, \beta} t u \right).
        \]
    \end{enumerate}
    In particular, if $\Phi$ is a root system with all roots having the same length, the Chevalley commutator formula simplifies to:
    \[
        [x_\alpha(t), x_\beta(u)] = \begin{cases}
            x_{\alpha + \beta} (N_{\alpha, \beta} t u) & \text{if } \alpha, \beta \in \Phi \text{ such that } \alpha + \beta \in \Phi, \\
            1 & \text{if } \alpha, \beta \in \Phi \text{ such that } \alpha + \beta \not\in \Phi \cup \{0\}.
        \end{cases} 
    \]
\end{rmk}


\subsubsection*{The subgroups $U$ and $U^{-}$}\label{U and U-}

Let $S$ be a closed subset of $\Phi$ (cf. page~\pageref{subsubsec:closed subsets}). Define the subgroup $\mathfrak{X}_S$ of $E_\pi (\Phi, R)$ generated by all $x_{\alpha} (t) \ (t \in R, \, \alpha \in S)$. We sometimes use the notation $E_\pi(S, R)$ instead of $\mathfrak{X}_S$ to align with the notation used in \cite{EP&NV}.

\begin{lemma}[{\cite[Lemma 17]{RS}}]
    \normalfont
    Let $S$ be a special closed subset of $\Phi$. 
    \begin{enumerate}[(a)]
        \item Every element of $\mathfrak{X}_S$ can be written uniquely as $\prod_{\alpha \in S} x_{\alpha} (t_\alpha)$ where $t_\alpha \in R$ and the product is taken in any fixed order.
        \item If $I$ be an ideal in $S$, then $\mathfrak{X}_I$ is a normal subgroup of $\mathfrak{X}_S$.
        \item If $S = P \cup Q$ with $P$ and $Q$ closed sets such that $P \cap Q = \emptyset$, then $$\mathfrak{X}_S = \mathfrak{X}_P \, \mathfrak{X}_Q.$$
    \end{enumerate}
\end{lemma}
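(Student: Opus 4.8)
The three parts are tightly linked, so the plan is to make part (a) the engine, deduce (c) from it at once, and prove (b) as an auxiliary commutator computation on which the induction for (a) relies. The decisive structural input is that $S$ is \emph{special}: since a special closed set is conjugate to a subset of $\Phi^+$, the height function $\mathrm{ht}$ is strictly positive on $S$ and strictly additive. This yields two facts I will use repeatedly. If $\gamma \in S$ has minimal height, then $S \setminus \{\gamma\}$ is again closed (two roots of $S$ cannot sum to the minimal $\gamma$, since their sum has strictly larger height) and is in fact an ideal of $S$; more generally, every terminal height segment $S_{\geq r} = \{\alpha \in S : \mathrm{ht}(\alpha) \geq r\}$ is an ideal. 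Granting (a) in an arbitrary order, part (c) is immediate: order $S$ so that the roots of $P$ precede those of $Q$, and the canonical form of any $g \in \mathfrak{X}_S$ factors as an element of $\mathfrak{X}_P$ times an element of $\mathfrak{X}_Q$.

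For part (b), I would verify normality on generators: it suffices to show $x_\beta(u)\, x_\alpha(t)\, x_\beta(u)^{-1} \in \mathfrak{X}_I$ for $\alpha \in I$, $\beta \in S$. Writing this as $[x_\beta(u), x_\alpha(t)]\, x_\alpha(t)$ and expanding the commutator by Proposition~\ref{prop:chevalley commutator formula}, it becomes a product of terms $x_{i\beta + j\alpha}(\,\cdot\,)$ with $i, j \geq 1$, times $x_\alpha(t)$. Every root $i\beta + j\alpha$ lies in $S$ by closedness, and because $j \geq 1$ it can be reached from $\alpha$ by successively adding roots of $S$ with all partial sums again roots (the rank-$2$ chain property inside the subsystem generated by $\alpha, \beta$). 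Applying the ideal axiom along such a chain, starting from $\alpha \in I$, forces each $i\beta + j\alpha \in I$, so the whole expression lies in $\mathfrak{X}_I$.

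Existence in (a) then follows by induction on $|S|$. With $\gamma$ a root of minimal height and $I = S \setminus \{\gamma\}$, part (b) gives $\mathfrak{X}_I \trianglelefteq \mathfrak{X}_S$, hence $\mathfrak{X}_S = \mathfrak{X}_\gamma \mathfrak{X}_I$, and by the inductive hypothesis for $I$ every element is a product of root-group factors. To obtain the canonical form in a \emph{prescribed} order I would run the standard collection process: any two adjacent factors can be interchanged at the cost of inserting commutator terms $x_{i\beta+j\alpha}(\,\cdot\,)$, all of strictly greater height, by Proposition~\ref{prop:chevalley commutator formula}. Since the set of heights occurring in $S$ is finite and these corrections only ever raise height, the rewriting terminates, and the additivity $x_\alpha(t) x_\alpha(u) = x_\alpha(t+u)$ collapses repeated factors of a single root.

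The main obstacle is uniqueness, which I would settle using the faithful action of $E_\pi(\Phi, R)$ on the Weyl module $V_\pi(\Phi, R)$. For a weight vector $v_\mu \in V_{\pi,\mu}$ one has $x_\alpha(t) v_\mu = v_\mu + t\,X_\alpha v_\mu + (\text{terms in weights } \mu + k\alpha,\ k \geq 2)$, so the first-order part of $\big(\prod_\alpha x_\alpha(t_\alpha)\big) v_\mu$ is $\sum_\alpha t_\alpha X_\alpha v_\mu$, independent of the chosen order. Projecting onto $V_{\pi,\mu+\gamma}$ for $\gamma$ of minimal height isolates exactly $t_\gamma X_\gamma v_\mu$, because a minimal-height $\gamma$ cannot be a sum of two or more roots of $S$ and hence receives no higher-order contribution. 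Choosing $\mu$ and $v_\mu$ so that $X_\gamma v_\mu$ is, up to sign, a basis vector of $M_{\mu+\gamma}$ — possible since $\gamma$ is a root and $\mathfrak{sl}_2$-theory supplies a string vector with unit coefficient — lets me recover $t_\gamma$, giving in particular $\mathfrak{X}_\gamma \cap \mathfrak{X}_I = \{1\}$. With the coefficients of all minimal-height roots pinned down, I pass to the ideal $S_{\geq r}$ of strictly greater height and induct on $|S|$. The delicate points I expect to fight with are (i) making the collection process and the order-independence of the uniqueness argument compatible, so that the bijection holds for \emph{every} fixed order, and (ii) over a general commutative ring $R$, arranging a weight vector whose image under $X_\gamma$ has a unit coordinate, so that $t_\gamma X_\gamma v_\mu = 0$ genuinely forces $t_\gamma = 0$.
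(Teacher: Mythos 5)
The paper does not prove this lemma --- it is quoted verbatim from Steinberg \cite[Lemma~17]{RS} --- so there is no in-text argument to compare against; judged on its own, your proposal correctly reconstructs Steinberg's proof: reduce to $S\subseteq\Phi^{+}$, peel off a root of minimal height (whose complement is an ideal), get normality and existence from the Chevalley commutator formula plus the collection process, and get uniqueness by projecting the action on the Weyl module onto weight spaces $V_{\pi,\mu+\gamma}$ in order of increasing height. Your flagged worry (ii) is not a gap: the $\mathfrak{sl}_2$-theory of admissible lattices (Steinberg's Lemmas~11--13, or \cite[\S27]{JH}) supplies, for each root $\gamma$, a basis vector $v$ of some $M_\mu$ with $X_\gamma v$ equal to $\pm$ a basis vector of $M_{\mu+\gamma}$; this is exactly the fact the paper already relies on when it asserts that $t\mapsto x_\alpha(t)$ is an isomorphism of $(R,+)$ onto $\mathfrak{X}_\alpha$. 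Worry (i) also resolves as you suspect: the weight-projection computation of $t_\gamma$ is manifestly independent of the chosen ordering (only the single monomial $X_\gamma$ lands in weight $\mu+\gamma$ at the relevant height, and the order-dependent cross terms at higher heights involve only coefficients already pinned down), so uniqueness holds simultaneously for every fixed order.
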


Now, set
\[
    U = U_{\pi} (\Phi, R) := \mathfrak{X}_{\Phi^+} = E_\pi (\Phi^{+}, R) \text{ and } U^{-} = U^{-}_{\pi} (\Phi, R) := \mathfrak{X}_{\Phi^-} = E_\pi (\Phi^{-}, R).
\]

\begin{cor}
    \normalfont
    Every element of $U_\pi (\Phi, R)$ (respectively, $U^{-}_\pi (\Phi, R)$) can be written uniquely as $\prod_{\alpha \in \Phi^{+}} x_{\alpha} (t_\alpha)$ (respectively, $\prod_{\alpha \in \Phi^{-}} x_{\alpha} (t_\alpha)$) where $t_\alpha \in R$ and the product is taken in any fixed order.
\end{cor}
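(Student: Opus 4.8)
The plan is to obtain this statement as a direct specialization of Lemma~17(a), which already guarantees the unique ordered factorization for an arbitrary special closed subset $S$ of $\Phi$. Thus the entire content of the proof reduces to verifying that the positive system $\Phi^+$ (and dually $\Phi^-$) is a special closed subset of $\Phi$, after which one simply sets $S = \Phi^+$ and reads off the conclusion, recalling that $U_\pi(\Phi, R)$ was defined to be exactly $\mathfrak{X}_{\Phi^+}$ and $U^-_\pi(\Phi, R)$ to be $\mathfrak{X}_{\Phi^-}$.

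First I would check closedness of $\Phi^+$: given $\alpha, \beta \in \Phi^+$ with $\alpha + \beta \in \Phi$, I use the defining property that $\Phi^+$ consists of the roots positive relative to a fixed total ordering of $E$; since a sum of two positive vectors is again positive, $\alpha + \beta$ lies in $\Phi^+$, which is precisely the closedness requirement that $\alpha + \beta \in \Phi$ implies $\alpha + \beta \in \Phi^+$. Next I would verify that $\Phi^+$ is special: from the decomposition $\Phi = \Phi^+ \cup \Phi^-$ with $\Phi^- = -\Phi^+$ (a disjoint union), the membership $\alpha \in \Phi^+$ forces $-\alpha \in \Phi^-$ and hence $-\alpha \notin \Phi^+$, which is exactly the special (unipotent) condition $\alpha \in S \Rightarrow -\alpha \notin S$.

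With these two verifications in hand, Lemma~17(a) applied to $S = \Phi^+$ asserts that every element of $\mathfrak{X}_{\Phi^+} = U_\pi(\Phi, R)$ admits a unique expression $\prod_{\alpha \in \Phi^+} x_\alpha(t_\alpha)$ with $t_\alpha \in R$, for any fixed ordering of the roots in $\Phi^+$. The corresponding statement for $U^-_\pi(\Phi, R)$ follows identically by taking $S = \Phi^-$, which is special and closed by the same reasoning (indeed, the whole argument is symmetric under the involution $\alpha \mapsto -\alpha$, which interchanges $\Phi^+$ and $\Phi^-$).

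I do not expect any serious obstacle: the only point demanding genuine care is confirming that $\Phi^+$ satisfies both the closedness and the specialness axioms, so that the hypotheses of Lemma~17 are actually met. Everything else is definitional, since $U$ and $U^-$ were introduced precisely as $\mathfrak{X}_{\Phi^+}$ and $\mathfrak{X}_{\Phi^-}$, and hence no further group-theoretic computation is required.
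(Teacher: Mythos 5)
Your proposal is correct and is exactly the route the paper intends: the corollary is stated as an immediate specialization of Lemma~17(a) to $S=\Phi^{+}$ (and $S=\Phi^{-}$), and the paper already lists $\Phi^{+}$ among its examples of closed subsets, with specialness following from $\Phi=\Phi^{+}\sqcup(-\Phi^{+})$ just as you argue. Nothing further is needed.
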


\begin{cor}
    \normalfont
    For $r \in \mathbb{Z}^+$, define $U_r := \mathfrak{X}_{\Phi_r} = E_\pi (\Phi_r, R)$ (for definition of $\Phi_r$ see page~\pageref{subsubsec:closed subsets}). Then
    \begin{enumerate}[(a)]
        \item $U_r$ is normal subgroup of $U$.
        \item $[U_r, U_s] \subset U_{r+s}$.
        \item $U$ is nilpotent.
    \end{enumerate}
\end{cor}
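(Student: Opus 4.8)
The plan is to reduce all three parts to height bookkeeping, using only the preceding lemma (Steinberg's Lemma~17) and the Chevalley commutator formula (Proposition~\ref{prop:chevalley commutator formula}).

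For (a) I would first record that $\Phi^+$ is a special closed set and that $\Phi_r \subseteq \Phi^+$, since $\mathrm{ht}(\alpha) \geq r \geq 1$ already forces $\alpha$ to be positive. The substantive point is that $\Phi_r$ is an \emph{ideal} of $\Phi^+$ in the sense defined earlier: if $\alpha \in \Phi_r$, $\beta \in \Phi^+$ and $\alpha + \beta \in \Phi^+$, then $\mathrm{ht}(\alpha+\beta) = \mathrm{ht}(\alpha) + \mathrm{ht}(\beta) \geq r + 1 > r$, so $\alpha + \beta \in \Phi_r$. Part (b) of the preceding lemma, applied with $S = \Phi^+$ and $I = \Phi_r$, then gives at once that $U_r = \mathfrak{X}_{\Phi_r}$ is normal in $\mathfrak{X}_{\Phi^+} = U$.

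For (b) I would first establish the inclusion on generators. Given $\alpha \in \Phi_r$ and $\beta \in \Phi_s$, Proposition~\ref{prop:chevalley commutator formula} writes $[x_\alpha(t), x_\beta(u)]$ as a product of factors $x_{i\alpha + j\beta}(\ast)$ with $i, j \geq 1$; whenever $i\alpha + j\beta$ is actually a root it is positive and has height $\mathrm{ht}(i\alpha + j\beta) = i\,\mathrm{ht}(\alpha) + j\,\mathrm{ht}(\beta) \geq ir + js \geq r + s$, so each factor lies in $U_{r+s}$ and hence so does the commutator. To upgrade this from generators to the whole subgroups I would pass to the quotient $U/U_{r+s}$, which is legitimate precisely because $U_{r+s} \trianglelefteq U$ by part (a). In that quotient the images of the generators of $U_r$ commute with the images of the generators of $U_s$; since the centralizer of any element is a subgroup, this propagates first to all of $\overline{U_r}$ and then to all of $\overline{U_s}$, which is exactly the assertion $[U_r, U_s] \subseteq U_{r+s}$. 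This bootstrapping from generator-level commutators to the full groups is the only delicate step, and it is where part (a) is genuinely used.

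For (c) I would run the lower central series against the height filtration. Since every positive root has height at least $1$, we have $U = U_1$, and I would prove by induction that $U^k \subseteq U_{k+1}$: the base case $U^0 = U = U_1$ is clear, and the inductive step is $U^{k+1} = [U, U^k] = [U_1, U^k] \subseteq [U_1, U_{k+1}] \subseteq U_{k+2}$, using the inductive hypothesis together with part (b). Finally, if $N$ denotes the height of the highest root of $\Phi$, then $\Phi_{N+1} = \emptyset$, so $U_{N+1} = \mathfrak{X}_{\emptyset}$ is trivial; hence $U^N \subseteq U_{N+1} = 1$, proving that $U$ is nilpotent, of class at most $N$. No serious obstacle remains here once (b) is in hand.
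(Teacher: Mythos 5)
Your proof is correct and follows exactly the route the paper intends: the paper leaves this corollary without an explicit proof, but it has already noted that $\Phi_r$ is an ideal in $\Phi^+$, so (a) is immediate from part (b) of the preceding lemma, and your height bookkeeping via the Chevalley commutator formula plus the quotient/centralizer bootstrap and the lower-central-series induction is the standard derivation of (b) and (c). No gaps.
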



\subsubsection*{The subgroup $\langle \mathfrak{X}_\alpha, \mathfrak{X}_{-\alpha} \rangle$}

Let $E_2(R)$ denote the subgroup of $SL_2(R)$ generated by the matrices 
\[
\begin{pmatrix}
    1 & t \\
    0 & 1
\end{pmatrix}
\quad \text{and} \quad
\begin{pmatrix}
    1 & 0 \\
    t & 1
\end{pmatrix},
\]
where $t \in R$. 
It is a known result that if $R$ is a field, then $E_2(R) = SL_2(R)$. 

Now fix a root $\alpha \in \Phi$, and let $E_\alpha(R)$ denote the subgroup of $E_\pi(\Phi, R)$ generated by $x_{\alpha}(t)$ and $x_{-\alpha}(t)$ for all $t \in R$. That is,
\[
E_\alpha(R) = \langle \mathfrak{X}_\alpha, \mathfrak{X}_{-\alpha} \rangle,
\]
where $\mathfrak{X}_\alpha = \{x_{\alpha}(t) \mid t \in R\}$ and $\mathfrak{X}_{-\alpha} = \{x_{-\alpha}(t) \mid t \in R\}$.

\begin{prop}\label{prop:E_2 in E_alpha}
    \normalfont
    There exists an onto homomorphism $\phi_\alpha : E_2 (R) \to E_\alpha (R)$ such that 
    \[
        \phi_\alpha \begin{pmatrix}
            1 & t \\
            0 & 1
        \end{pmatrix} = x_{\alpha} (t) 
        \quad \text{and} \quad
        \phi_\alpha \begin{pmatrix}
            1 & 0 \\
            t & 1
        \end{pmatrix} = x_{-\alpha} (t).
    \]
\end{prop}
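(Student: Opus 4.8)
The plan is to realize $\phi_\alpha$ as the restriction to elementary subgroups of a representation of the algebraic group $SL_2$ attached to the $\mathfrak{sl}_2$-triple $(X_\alpha, H_\alpha, X_{-\alpha})$. These three elements span a subalgebra $\mathfrak{s}_\alpha \subset \mathcal{L}$ isomorphic to $\mathfrak{sl}_2(\mathbb{C})$, under the identification sending $X_\alpha, H_\alpha, X_{-\alpha}$ to the standard generators $E=\begin{pmatrix}0&1\\0&0\end{pmatrix}$, $H=\begin{pmatrix}1&0\\0&-1\end{pmatrix}$, $F=\begin{pmatrix}0&0\\1&0\end{pmatrix}$ (indeed $[X_\alpha,X_{-\alpha}]=H_\alpha$ and $[H_\alpha,X_{\pm\alpha}]=\pm 2X_{\pm\alpha}$). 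Restricting $\pi$ to $\mathfrak{s}_\alpha$ turns $V$ into an $\mathfrak{sl}_2$-module, which decomposes into irreducibles, each isomorphic to a symmetric power $\mathrm{Sym}^n(W)$ of the standard module $W=\mathbb{C}^2$. Write $e_{12}(t)=\begin{pmatrix}1&t\\0&1\end{pmatrix}$ and $e_{21}(t)=\begin{pmatrix}1&0\\t&1\end{pmatrix}$ for the generators of $E_2(R)$.

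The key point is that this decomposition is compatible with the admissible lattice. Since $M$ is invariant under $\mathcal{U}_{\mathbb{Z}}$ (Proposition~\ref{prop:admissible lattice}), it is in particular invariant under the divided powers $X_{\pm\alpha}^m/m!$, that is, under Kostant's $\mathbb{Z}$-form of $\mathcal{U}(\mathfrak{s}_\alpha)$ (Theorem~\ref{thm:kostant}). Consequently $M$ is a direct sum of integral $\mathfrak{sl}_2$-lattices, each a divided-power $\mathbb{Z}$-form of some $\mathrm{Sym}^n(W)$. Each symmetric power is an algebraic representation $\rho_n$ of $SL_2$ defined over $\mathbb{Z}$, because the matrix entries of $\rho_n(g)$ are integral polynomials in the entries of $g$. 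Assembling these over the isotypic components yields a morphism of group schemes $\Psi:\mathbf{SL}_2\to \mathbf{GL}(M)$ over $\mathbb{Z}$; over $\mathbb{C}$ this $\Psi$ is the unique integration of the $\mathfrak{sl}_2$-action, which exists because $SL_2(\mathbb{C})$ is simply connected.

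Evaluating $\Psi$ at $R$ gives a homomorphism $\Psi_R: SL_2(R)\to \mathrm{Aut}_R(V_\pi(\Phi,R))$, and it remains to identify the images of the generators. Both $\Psi_R(e_{12}(t))$ and $x_\alpha(t)=\exp(t\,\pi(X_\alpha))$ are obtained by applying the same integral divided-power series to the operator $X_\alpha$ (cf. the construction of $x_\alpha(t)$ and Lemma~\ref{lemma:actionofexpadx}), since $e_{12}(t)=\exp(tE)$ and $E$ corresponds to $X_\alpha$; hence $\Psi_R(e_{12}(t))=x_\alpha(t)$, and likewise $\Psi_R(e_{21}(t))=x_{-\alpha}(t)$. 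Restricting $\Psi_R$ to $E_2(R)=\langle e_{12}(t),e_{21}(t)\mid t\in R\rangle$ therefore lands in $E_\alpha(R)=\langle \mathfrak{X}_\alpha,\mathfrak{X}_{-\alpha}\rangle$ and hits each of its generators, so $\phi_\alpha:=\Psi_R|_{E_2(R)}$ is the desired surjective homomorphism. (Note that $\phi_\alpha$ need not be injective — for $\pi$ of adjoint type its kernel is nontrivial — which is why only surjectivity is asserted.)

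The main obstacle is the passage from $\mathbb{C}$ to an arbitrary commutative ring $R$: one must verify that the complex $\mathfrak{sl}_2$-decomposition and the integrated $SL_2(\mathbb{C})$-action descend to the integral lattice $M$, so that $\Psi$ is genuinely defined over $\mathbb{Z}$ and commutes with the base change $M\mapsto M\otimes_{\mathbb{Z}} R$, and that the resulting matrix of $\Psi_R(e_{12}(t))$ coincides termwise with the divided-power expansion defining $x_\alpha(t)$ over $R$. This is precisely where the admissible-lattice machinery (invariance under $\mathcal{U}_{\mathbb{Z}}$) and Kostant's theorem carry the weight; granting them, the matching of generators and the surjectivity of $\phi_\alpha$ are routine. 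An alternative route would verify directly that the generators $x_{\pm\alpha}(t)$ satisfy every defining relation of $E_2(R)$, but establishing a full set of such relations over a general ring is essentially equivalent to the present statement and less transparent than the functorial argument above.
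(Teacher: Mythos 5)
The paper itself offers no argument here beyond a citation to Section~12 of \cite{EP&NV}, and your strategy --- integrating the $\mathfrak{sl}_2$-triple $(X_\alpha,H_\alpha,X_{-\alpha})$ to a morphism of group schemes $\mathbf{SL}_2\to\mathbf{GL}(M)$ over $\mathbb{Z}$ and then restricting to elementary subgroups --- is exactly the standard mechanism behind that citation. However, as written there is one step that fails: the claim that $M$ ``is a direct sum of integral $\mathfrak{sl}_2$-lattices, each a divided-power $\mathbb{Z}$-form of some $\mathrm{Sym}^n(W)$.'' Part (a) of Proposition~\ref{prop:admissible lattice} only gives the decomposition of a $\mathcal{U}_{\mathbb{Z}}$-stable subgroup into its \emph{weight} components; it does not give a splitting of $M$ along the $\mathfrak{s}_\alpha$-isotypic decomposition of $V$. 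In general an admissible lattice does not split this way: over $\mathbb{Z}$ there are nontrivial extensions between the divided-power forms of the $\mathrm{Sym}^n(W)$ (and lattices cut out by congruence conditions between two summands), so the sum of the intersections $M\cap(\text{irreducible constituent})$ can be a proper sublattice of $M$, and $M$ can be indecomposable as a $\mathcal{U}_{\mathbb{Z}}(\mathfrak{s}_\alpha)$-lattice even though $V$ is decomposable. Since this decomposition is the load-bearing step in your assembly of $\Psi$ over $\mathbb{Z}$, the existence of $\Psi$ is not justified as stated.

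The gap is repairable without changing the strategy: you do not need to decompose $M$ at all. The stability of $M$ under all $X_{\pm\alpha}^m/m!$, i.e.\ under Kostant's $\mathbb{Z}$-form of $\mathcal{U}(\mathfrak{s}_\alpha)$, already implies that the integrated $SL_2(\mathbb{C})$-action has all of its matrix coefficients, computed in a $\mathbb{Z}$-basis of $M$, lying in $\mathbb{Z}[\mathbf{SL}_2]$; equivalently $M$ is a $\mathbb{Z}[\mathbf{SL}_2]$-comodule. This is the same Chevalley--Kostant fact by which $G_\pi(\Phi,-)$ itself is defined over $\mathbb{Z}$ in the paper, applied to the rank-one subsystem $\{\pm\alpha\}$ with its simply connected group, and it produces $\Psi$ directly. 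Granting $\Psi$, your identification $\Psi_R(e_{12}(t))=x_\alpha(t)$ by comparing the two integral polynomial expressions in $t$, and the resulting surjectivity of $\phi_\alpha=\Psi_R|_{E_2(R)}$ onto $E_\alpha(R)$, are correct.
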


\begin{proof}
    The proof immediately follows from the facts stated in Section 12 of \cite{EP&NV}.
\end{proof}

Observe that if $t \in R^*$ then 
\[
    \begin{pmatrix}
        0 & t \\
        -t^{-1} & 0
    \end{pmatrix} \in E_2(R)
    \quad \text{and} \quad
    \begin{pmatrix}
        t & 0 \\
        0 & t^{-1} 
    \end{pmatrix} \in E_2(R).
\]
For $t \in R^*$, we define 
\[
    w_{\alpha} (t) := \phi_\alpha \begin{pmatrix}
        0 & t \\
        -t^{-1} & 0
    \end{pmatrix}
    \quad \text{and} \quad
    h_{\alpha} (t) := \phi_\alpha \begin{pmatrix}
        t & 0 \\
        0 & t^{-1} 
    \end{pmatrix}.
\]

\begin{prop}
    \normalfont
    For all $t,u \in R^*$, we have 
    \begin{enumerate}[(a)]
        \item $w_{\alpha} (t) = x_{\alpha}(t) x_{-\alpha}(-t^{-1}) x_{\alpha}(t)$ and $h_{\alpha} (t) = w_{\alpha}(t) w_{\alpha}(1)^{-1}$.
        \item $h_\alpha (t) h_\alpha (u) = h_\alpha (t u)$ and $w_\alpha (t) w_\alpha (u) = h_\alpha (- t u^{-1})$. 
        \item $h_\alpha (t)^{-1} = h_{-\alpha} (t)$ and $w_\alpha (t)^{-1} = w_{\alpha} (-t) = w_{-\alpha}(t^{-1})$.
    \end{enumerate}
\end{prop}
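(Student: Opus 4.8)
The plan is to push every identity back through the surjection $\phi_\alpha : E_2(R) \to E_\alpha(R)$ of Proposition~\ref{prop:E_2 in E_alpha} and reduce each claim to a routine $2 \times 2$ matrix identity inside $E_2(R) \subseteq SL_2(R)$. Writing $u_+(t) = \begin{pmatrix} 1 & t \\ 0 & 1 \end{pmatrix}$ and $u_-(t) = \begin{pmatrix} 1 & 0 \\ t & 1 \end{pmatrix}$, and recalling that $w_\alpha(t) = \phi_\alpha\begin{pmatrix} 0 & t \\ -t^{-1} & 0 \end{pmatrix}$ and $h_\alpha(t) = \phi_\alpha\begin{pmatrix} t & 0 \\ 0 & t^{-1} \end{pmatrix}$, the fact that $\phi_\alpha$ is a homomorphism means it suffices to verify the corresponding relations among these matrices and then apply $\phi_\alpha$.

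For part (a) I would first multiply out $u_+(t)\,u_-(-t^{-1})\,u_+(t)$ and check that it equals $\begin{pmatrix} 0 & t \\ -t^{-1} & 0 \end{pmatrix}$; applying $\phi_\alpha$ gives the first formula. For the second, I compute $\begin{pmatrix} 0 & t \\ -t^{-1} & 0 \end{pmatrix}\begin{pmatrix} 0 & -1 \\ 1 & 0 \end{pmatrix} = \begin{pmatrix} t & 0 \\ 0 & t^{-1} \end{pmatrix}$, where the second factor is $w_\alpha(1)^{-1}$, and apply $\phi_\alpha$. Part (b) is similar: $h(t)h(u) = h(tu)$ is immediate for diagonal matrices, and $\begin{pmatrix} 0 & t \\ -t^{-1} & 0 \end{pmatrix}\begin{pmatrix} 0 & u \\ -u^{-1} & 0 \end{pmatrix} = \begin{pmatrix} -tu^{-1} & 0 \\ 0 & -t^{-1}u \end{pmatrix}$, which is exactly the matrix defining $h_\alpha(-tu^{-1})$.

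The only genuinely delicate part is (c), because $w_{-\alpha}$ and $h_{-\alpha}$ are defined through the \emph{different} homomorphism $\phi_{-\alpha}$, which swaps the roles of the upper- and lower-triangular generators. Rather than compare $\phi_\alpha$ and $\phi_{-\alpha}$ directly, I would apply part (a) to the root $-\alpha$ to rewrite $w_{-\alpha}(s) = x_{-\alpha}(s)\,x_\alpha(-s^{-1})\,x_{-\alpha}(s)$ and $h_{-\alpha}(t) = w_{-\alpha}(t)\,w_{-\alpha}(1)^{-1}$ purely as words in $x_{\pm\alpha}$, and then evaluate those words inside the single homomorphism $\phi_\alpha$ via $u_\pm$. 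A short computation then gives the image of $w_{-\alpha}(t^{-1})$ as $\begin{pmatrix} 0 & -t \\ t^{-1} & 0 \end{pmatrix}$, matching the image of $w_\alpha(-t)$, and the image of $h_{-\alpha}(t)$ as $\begin{pmatrix} t^{-1} & 0 \\ 0 & t \end{pmatrix} = h(t)^{-1}$; this yields $h_\alpha(t)^{-1} = h_{-\alpha}(t)$ and $w_\alpha(-t) = w_{-\alpha}(t^{-1})$, while the remaining equality $w_\alpha(t)^{-1} = w_\alpha(-t)$ falls straight out of $\begin{pmatrix} 0 & t \\ -t^{-1} & 0 \end{pmatrix}^{-1} = \begin{pmatrix} 0 & -t \\ t^{-1} & 0 \end{pmatrix}$. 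The main obstacle is thus purely organizational: keeping the sign conventions and the distinction between $\phi_\alpha$ and $\phi_{-\alpha}$ straight, which the word-rewriting trick sidesteps by never leaving the image of $\phi_\alpha$.
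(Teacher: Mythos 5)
Your proposal is correct and follows the same route as the paper, which simply invokes Proposition~\ref{prop:E_2 in E_alpha} and ``straightforward calculations in $E_2(R)$''; your matrix computations all check out. The one thing you add beyond the paper's one-line proof is the explicit handling of the $\phi_{-\alpha}$ versus $\phi_\alpha$ discrepancy in part (c), which is a genuine subtlety the paper glosses over and which your word-rewriting via part (a) resolves correctly.
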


\begin{proof}
    Using the definitions and Proposition~\ref{prop:E_2 in E_alpha}, the proof follows from straightforward calculations in $E_2 (R)$.
\end{proof}


\subsubsection*{The action of $w_\alpha (t)$ and $h_\alpha (t)$ on $V_\pi (\Phi, R)$}

Recall that, we have the weight space decomposition of a Weyl module $$V_\pi (\Phi, R) = \coprod_{\mu \in \Omega_\pi} V_{\pi, \mu} (\Phi, R).$$

\begin{prop}[{\cite[Lemma 19, (b) and (c)]{RS}}]
    \normalfont
    Let $v \in V_{\pi, \mu} (\Phi, R)$. Then for any $t \in R^{*}$,
    \begin{enumerate}[(a)]
        \item there exists $v' \in V_{\pi, s_{\alpha}(\mu)} (\Phi, R)$ independent of $t$ such that $w_\alpha (t) v = t^{-\langle \mu, \alpha \rangle} v'$.
        \item $h_\alpha (t) v = t^{\langle \mu, \alpha \rangle} v$, i.e., $h_\alpha (t)$ acts ``diagonally" on $V_{\pi, \mu} (\Phi, R)$ as multiplication by $t^{\langle \mu, \alpha \rangle}$.
    \end{enumerate}
\end{prop}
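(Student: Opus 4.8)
The plan is to reduce the whole computation to the rank-one case and then transport the answer to an arbitrary ring $R$ by a specialization argument. Write $m := \langle\mu,\alpha\rangle = \mu(H_\alpha) \in \Z$ (an integer, by the earlier proposition), and recall that the triple $(X_\alpha, H_\alpha, X_{-\alpha})$ spans a subalgebra of $\mathcal{L}$ isomorphic to $\mathfrak{sl}_2(\C)$, so that $V$ becomes an $\mathfrak{sl}_2$-module by restriction. Since $w_\alpha(t)$ and $h_\alpha(t)$ lie in the image of the homomorphism $\phi_\alpha$ of Proposition~\ref{prop:E_2 in E_alpha}, both are built only from the $x_{\pm\alpha}(\cdot)$, so their action depends only on this $\mathfrak{sl}_2$-datum. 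That the target weight space in part~(a) is $V_{\pi,s_\alpha(\mu)}$ is forced by the torus-normalizing identity $w_\alpha(1)\,\pi(H)\,w_\alpha(1)^{-1} = \pi(s_\alpha(H))$ for $H \in \mathcal{H}$; I would record this relation first, since it shows immediately that $w_\alpha(t)$ carries $V_{\pi,\mu}$ into $V_{\pi,s_\alpha(\mu)}$.

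To handle an arbitrary $R$ cleanly, I would work generically over $\Z[\lambda,\lambda^{-1}]$ with an indeterminate $\lambda$. By definition $w_\alpha(\lambda) = x_\alpha(\lambda)\,x_{-\alpha}(-\lambda^{-1})\,x_\alpha(\lambda)$, where each factor is a divided-power series in which the coefficient of $X_{\pm\alpha}^n/n! \in \mathcal{U}_\Z$ is $\pm\lambda^{\pm n}$; by Proposition~\ref{prop:admissible lattice} these divided powers preserve the admissible lattice $M$. Consequently, in a $\Z$-basis of $M$ adapted to the decomposition $M = \bigoplus_\mu M_\mu$, the operators $w_\alpha(\lambda)$ and $h_\alpha(\lambda)$ have matrix entries that are Laurent polynomials in $\lambda$ with integer coefficients. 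The whole problem is therefore to identify these Laurent polynomials, and because $\Z[\lambda,\lambda^{-1}] \hookrightarrow \C[\lambda,\lambda^{-1}]$ is injective, it suffices to pin them down after tensoring with $\C$.

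The identification is carried out by explicit $\mathfrak{sl}_2$-representation theory. Decomposing $V$ over $\C$ into irreducible $\mathfrak{sl}_2$-modules and using the divided-power (equivalently, symmetric-power) basis on each summand, a direct computation gives that $h_\alpha(\lambda)$ acts on every weight-$\mu$ vector as multiplication by $\lambda^{m}$, while $w_\alpha(\lambda)$ sends a weight-$\mu$ vector $v$ to $\lambda^{-m}v'$, where $v'$ is a fixed vector in the $(-m)$-eigenspace of $H_\alpha$ not depending on $\lambda$. The exponent $-m$ (respectively $m$) depends only on the $H_\alpha$-eigenvalue $\langle\mu,\alpha\rangle$ and so is uniform across summands, which is exactly what lets a general $v \in V_\mu$ be treated at once. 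Since these identities of matrices hold over $\C[\lambda,\lambda^{-1}]$ and the operators are defined over $\Z[\lambda,\lambda^{-1}]$, they hold over $\Z[\lambda,\lambda^{-1}]$: the element $\lambda^{m}w_\alpha(\lambda)v$ is an integral, $\lambda$-independent vector $v' \in M_{s_\alpha(\mu)}$, and $h_\alpha(\lambda)v = \lambda^{m}v$ in $M_\mu \otimes_\Z \Z[\lambda,\lambda^{-1}]$.

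Finally, since $t \in R^{*}$, the assignment $\lambda \mapsto t$ defines a ring homomorphism $\Z[\lambda,\lambda^{-1}] \to R$, and base change along it specializes the generic identities to $V_{\pi,\mu}(\Phi,R) = M_\mu \otimes_\Z R$, yielding $h_\alpha(t)v = t^{m}v$ and $w_\alpha(t)v = t^{-m}v'$ with $v' \in V_{\pi,s_\alpha(\mu)}(\Phi,R)$ independent of $t$, as claimed. I expect the main obstacle to be the bookkeeping in the $\mathfrak{sl}_2$ computation — in particular, checking that passing between the monomial basis (in which $w_\alpha(\lambda)$ is transparent) and the divided-power basis (in which integrality over $\mathcal{U}_\Z$ is manifest) does not disturb the claimed powers of $\lambda$ — together with verifying the torus-normalizing relation needed to land $v'$ in the weight space $V_{\pi,s_\alpha(\mu)}$.
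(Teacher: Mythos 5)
Your argument is correct, and it is essentially the standard proof of Steinberg's Lemma~19(b),(c), which is exactly what the paper cites here --- the thesis itself supplies no proof of this proposition beyond the reference to \cite{RS}. Your packaging (reduce to the $\mathfrak{sl}_2$-triple $(X_\alpha,H_\alpha,X_{-\alpha})$, compute on complex irreducibles, and transfer to arbitrary $R$ by observing that the matrix entries of $w_\alpha(\lambda)$ and $h_\alpha(\lambda)$ on the admissible lattice are integral Laurent polynomials in $\lambda$, then specialize $\lambda\mapsto t\in R^*$) is a faithful, slightly more explicit rendering of Steinberg's genericity argument, so there is nothing to correct.
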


\begin{cor}[{\cite[Lemma 19, (a)]{RS}}]\label{cor:c(alpha,beta)}
    For $\alpha, \beta \in \Phi$ and $t \in R^*$, we have
    \[
        w_\alpha (t) X_\beta w_\alpha (t)^{-1} = c \, t^{-\langle \beta, \alpha \rangle} X_{s_\alpha (\beta)},
    \]
    where $c = c(\alpha, \beta) = \pm 1$ is independent of $t$ or $R$ and the representation chosen.
\end{cor}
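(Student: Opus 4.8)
The plan is to separate the two ingredients on the right-hand side: the scalar $t^{-\langle\beta,\alpha\rangle}$ should come entirely from the diagonal element $h_\alpha(t)$, while the sign $c(\alpha,\beta)$ together with the relabelling $\beta\mapsto s_\alpha(\beta)$ should already be visible at $t=1$. Concretely, the preceding proposition gives $h_\alpha(t) = w_\alpha(t)\,w_\alpha(1)^{-1}$, hence $w_\alpha(t) = h_\alpha(t)\,w_\alpha(1)$, and therefore
\[
    w_\alpha(t)\,X_\beta\,w_\alpha(t)^{-1} = h_\alpha(t)\bigl(w_\alpha(1)\,X_\beta\,w_\alpha(1)^{-1}\bigr)h_\alpha(t)^{-1}.
\]
So first I would settle the case $t=1$, and then conjugate the result by $h_\alpha(t)$.

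For the case $t=1$, I would use that $w_\alpha(1) = x_\alpha(1)x_{-\alpha}(-1)x_\alpha(1) = \exp(X_\alpha)\exp(-X_{-\alpha})\exp(X_\alpha)$ as operators on $V_\pi(\Phi,R)$, together with the standard identity $\exp(A)\,B\,\exp(A)^{-1} = \exp(\operatorname{ad}A)(B)$ for a nilpotent operator $A$ (cf.\ Lemma~\ref{lemma:actionofexpadx}), applied to each of the three exponential factors. This yields
\[
    w_\alpha(1)\,X_\beta\,w_\alpha(1)^{-1} = \exp(\operatorname{ad}X_\alpha)\exp(-\operatorname{ad}X_{-\alpha})\exp(\operatorname{ad}X_\alpha)(X_\beta).
\]
Since $\operatorname{ad}X_\gamma(X_\delta) = [X_\gamma, X_\delta]$ is computed by iterated Lie brackets, which any representation preserves, the right-hand side is $\pi$ applied to the same expression computed inside the Chevalley algebra $\mathcal{L}(\Phi,R)$. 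Passing to $\mathcal{L}(\Phi,\mathbb{Z})$, the operator $w_\alpha(1)$ permutes root spaces according to $s_\alpha$ and preserves the lattice $\mathcal{L}(\Phi,\mathbb{Z})$, so it carries $\mathbb{Z}X_\beta$ bijectively onto $\mathbb{Z}X_{s_\alpha(\beta)}$; the proportionality constant is therefore a unit of $\mathbb{Z}$, i.e.\ $c(\alpha,\beta)=\pm1$. This integer is manifestly independent of $t$, of $R$, and of the representation $\pi$, as it is produced purely from the structure constants via iterated brackets. Hence $w_\alpha(1)\,X_\beta\,w_\alpha(1)^{-1} = c(\alpha,\beta)\,X_{s_\alpha(\beta)}$.

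It then remains to conjugate by $h_\alpha(t)$. Using part (b) of the preceding proposition together with $X_\gamma\bigl(V_{\pi,\mu}(\Phi,R)\bigr)\subseteq V_{\pi,\mu+\gamma}(\Phi,R)$, one checks on each weight vector $v\in V_{\pi,\mu}(\Phi,R)$ that
\[
    h_\alpha(t)\,X_\gamma\,h_\alpha(t)^{-1}v = t^{-\langle\mu,\alpha\rangle}\,t^{\langle\mu+\gamma,\alpha\rangle}\,X_\gamma v = t^{\langle\gamma,\alpha\rangle}\,X_\gamma v,
\]
so $h_\alpha(t)\,X_\gamma\,h_\alpha(t)^{-1} = t^{\langle\gamma,\alpha\rangle}\,X_\gamma$ as operators. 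Taking $\gamma = s_\alpha(\beta)$ and using $\langle s_\alpha(\beta),\alpha\rangle = \langle\beta,\alpha\rangle - \langle\beta,\alpha\rangle\langle\alpha,\alpha\rangle = -\langle\beta,\alpha\rangle$ produces exactly the factor $t^{-\langle\beta,\alpha\rangle}$, and combining with the $t=1$ case gives the asserted formula. I expect the main obstacle to be the base case $t=1$: the delicate points are showing that the constant is precisely $\pm1$ and, above all, that it does not depend on $\pi$. The representation-independence is precisely what the reduction to $\exp(\operatorname{ad})$ and iterated brackets is designed to secure, while the value $\pm1$ follows from the lattice-preservation of $w_\alpha(1)$ in the adjoint picture.
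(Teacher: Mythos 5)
Your proof is correct. The paper does not prove this corollary at all — it simply cites Steinberg's Lemma 19(a) — so there is no in-paper argument to compare against; your reconstruction (factor $w_\alpha(t)=h_\alpha(t)\,w_\alpha(1)$, settle $t=1$ by pushing the three exponentials through $\exp(\operatorname{ad})$ into the adjoint picture where lattice preservation forces the constant to be a unit of $\mathbb{Z}$, then recover $t^{-\langle\beta,\alpha\rangle}$ from the $h_\alpha(t)$-conjugation on weight spaces) is essentially Steinberg's own argument. The only point worth flagging is that the identity $\exp(A)B\exp(A)^{-1}=\exp(\operatorname{ad}A)(B)$ involves divided powers $A^k/k!$ over a general ring $R$; this is harmless here precisely because everything is defined over the admissible lattice, which is the citation to Lemma~\ref{lemma:actionofexpadx} you already make, so the gap is cosmetic rather than real.
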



\subsubsection*{Steinberg Relations}

Consider the generators $x_{\alpha}(t) \ (t \in R, \alpha \in \Phi)$ of $E_\pi (\Phi, R)$. Recall that for $t \in R^*$ and $\alpha \in \Phi$, we have $w_\alpha (t) = x_\alpha (t) x_{-\alpha}(-t^{-1}) x_{\alpha}(t)$ and $h_\alpha (t) = w_\alpha(t) w_\alpha(1)^{-1}$. Then the following relations $(R)$, known as the \textbf{Steinberg relations}, hold:

\begin{enumerate}\label{st-relations}
    \item[(R1)] $x_\alpha(t) x_\alpha (u) = x_\alpha (t + u)$ for all $\alpha \in \Phi$ and $t, u \in R$.
    \item[(R2)] $[x_\alpha (t), x_\beta (u)] = \prod x_{i \alpha + j \beta} (c_{\alpha, \beta; i, j} t^{i} u^{j})$ for all $\alpha, \beta \in \Phi$ such that $\alpha + \beta \neq 0$ and $t,u \in R$; where the structure constants $c_{\alpha, \beta; i, j}$ are as in Proposition~\ref{prop:chevalley commutator formula}.
    \item[(R3)] $w_\alpha (t) x_\beta(u) w_\alpha(t)^{-1} = x_{s_\alpha (\beta)} (c(\alpha,\beta) t^{-\langle \beta, \alpha \rangle} u)$ for all $\alpha, \beta \in \Phi, \ t \in R^{*}$ and $u \in R$; where the constants $c(\alpha, \beta)$ are as in Corollary~\ref{cor:c(alpha,beta)}
    \item[(R4)] $w_\alpha (t) w_\beta(u) w_\alpha(t)^{-1} = w_{s_\alpha (\beta)} (c(\alpha,\beta) t^{-\langle \beta, \alpha \rangle} u)$ for all $\alpha, \beta \in \Phi$ and $t,u \in R^{*}$.
    \item[(R5)] $w_\alpha (t) h_\beta(u) w_\alpha(t)^{-1} = h_{s_\alpha (\beta)} (u)$ for all $\alpha, \beta \in \Phi$ and $t,u \in R^{*}$.
    \item[(R6)] $h_\alpha (t) x_\beta(u) h_\alpha(t)^{-1} = x_{\beta} (t^{\langle \beta, \alpha \rangle} u)$ for all $\alpha, \beta \in \Phi, t \in R^{*}$ and $u \in R$.
    \item[(R7)] $h_\alpha (t) w_\beta(u) h_\alpha(t)^{-1} = w_{\beta} (t^{\langle \beta, \alpha \rangle} u)$ for all $\alpha, \beta \in \Phi$ and $t,u \in R^{*}$.
    \item[(R8)] $h_\alpha (t) h_\beta(u) h_\alpha(t)^{-1} = h_{\beta} (u)$ for all $\alpha, \beta \in \Phi$ and $t, u \in R^{*}$.
\end{enumerate}

Before we proceed further we will see the properties of the constants $c(\alpha, \beta)$. The $c(\alpha, \beta)$ satisfy the following properties:
\begin{gather*}
    c(\alpha, \beta) = c(\alpha, -\beta); \quad c(-\alpha, -\beta) = c(-\alpha, \beta) = c(\alpha, s_\alpha(\beta)); \\
    c(\alpha, \alpha) = c(\alpha, -\alpha) = -1; \quad c(\alpha,\beta) c(\alpha, s_\alpha (\beta)) = (-1)^{\langle \beta, \alpha \rangle}; \\
    c(\alpha, \beta) = 1, \quad \text{if } \alpha \pm \beta \not\in \Phi \cup \{ 0 \}; \\
    c(\alpha, \beta) = -1, \quad \text{if } \langle \alpha, \beta \rangle = 0, \alpha \pm \beta \in \Phi; \\
    c(\alpha, \beta) = N_{\alpha, \beta}, \quad \text{if } \alpha, \beta \in \Phi \text{ such that } \alpha + \beta \in \Phi \text{ but } \alpha - \beta \not \in \Phi. 
\end{gather*}

Since the simple reflections $s_\alpha \ (\alpha \in \Delta)$ generate the Weyl group $W$ of $\Phi$, as a consequence of the Steinberg relations, we can deduce the following corollary.

\begin{cor}
    \normalfont
    The group $E_\pi (\Phi, R)$ is generated by $x_{\alpha}(t) \ (t \in R, \ \pm \alpha \in \Delta)$.
\end{cor}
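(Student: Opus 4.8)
The plan is to let $E'$ denote the subgroup of $E_\pi(\Phi, R)$ generated by the elements $x_\alpha(t)$ with $t \in R$ and $\pm\alpha \in \Delta$, and to show that $E'$ already contains every generator $x_\gamma(t)$ of $E_\pi(\Phi, R)$, whence $E' = E_\pi(\Phi, R)$. The engine driving the argument is relation (R3), which says that conjugation by $w_\alpha(t)$ sends $x_\beta(u)$ to $x_{s_\alpha(\beta)}$ of a rescaled argument. Combined with the facts (recorded in Section~\ref{subsec:Root Systems and Weyl Groups}) that the simple reflections $s_\alpha$ ($\alpha \in \Delta$) generate $W$ and that every root is $W$-conjugate to a simple root, this lets us \emph{transport} a simple-root generator to a generator attached to an arbitrary root.

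First I would record that $w_\alpha(1) \in E'$ for every $\alpha \in \Delta$. Indeed $w_\alpha(1) = x_\alpha(1)\, x_{-\alpha}(-1)\, x_\alpha(1)$, and since $\alpha \in \Delta$ both $\alpha$ and $-\alpha$ satisfy $\pm\alpha \in \Delta$, so all three factors are generators of $E'$ (and $1 \in R^*$ automatically). Taking $t = 1$ is the crucial choice: in (R3) the scalar $t^{-\langle\beta,\alpha\rangle}$ then collapses to $1$, so conjugation by $w_\alpha(1)$ reduces to $w_\alpha(1)\, x_\beta(u)\, w_\alpha(1)^{-1} = x_{s_\alpha(\beta)}(c(\alpha,\beta)\, u)$ with $c(\alpha,\beta) = \pm 1$.

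Next, given an arbitrary root $\gamma \in \Phi$, I would choose $w \in W$ and a simple root $\delta \in \Delta$ with $w(\delta) = \gamma$, and write $w = s_{\alpha_1} \cdots s_{\alpha_k}$ as a product of simple reflections. Conjugating the generator $x_\delta(u)$ successively by $w_{\alpha_k}(1), \dots, w_{\alpha_1}(1)$ and applying (R3) at each step — each intermediate element $s_{\alpha_{i+1}} \cdots s_{\alpha_k}(\delta)$ being again a root, since $W$ permutes $\Phi$ — yields
\[
\big(w_{\alpha_1}(1) \cdots w_{\alpha_k}(1)\big)\, x_\delta(u)\, \big(w_{\alpha_1}(1) \cdots w_{\alpha_k}(1)\big)^{-1} = x_{\gamma}(c\, u),
\]
where $c$ is a product of the sign constants $c(\alpha_i, \cdot) = \pm 1$, hence $c = \pm 1$. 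The left-hand side lies in $E'$ because each $w_{\alpha_i}(1) \in E'$ and $x_\delta(u) \in E'$; consequently $x_\gamma(cu) \in E'$. Since $c = \pm 1$ and $u$ ranges over all of $R$, the argument $cu$ also ranges over all of $R$, so $x_\gamma(s) \in E'$ for every $s \in R$.

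The only point requiring care — and the closest thing to an obstacle — is the bookkeeping of the sign $c$: one must confirm that with $t = 1$ no genuine rescaling of the ring element survives, only an overall factor $\pm 1$, so that surjectivity onto $\{x_\gamma(s) : s \in R\}$ is not lost. Granting this, every generator $x_\gamma(s)$ of $E_\pi(\Phi, R)$ lies in $E'$, and therefore $E' = E_\pi(\Phi, R)$, completing the proof.
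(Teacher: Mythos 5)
Your argument is correct and is exactly the route the paper intends: it sketches the corollary in one line as a consequence of the Steinberg relations (specifically (R3)) together with the facts that the simple reflections generate $W$ and every root is $W$-conjugate to a simple root. Your write-up simply makes explicit the details the paper leaves implicit, including the key observation that taking $t=1$ in $w_\alpha(t)$ reduces the rescaling factor to a sign $c(\alpha,\beta)=\pm 1$.
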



\subsubsection*{The subgroups $H, B$ and $N$}

Let $H_{\pi}(\Phi, R)$ be the subgroup generated by all $h_\alpha (t) \ (t \in R^*, \alpha \in \Phi)$. 
Let $B_{\pi}(\Phi, R)$ be the subgroup generated by $U_{\pi}(\Phi, R)$ and $H_{\pi}(\Phi, R)$, called a \textbf{standard Borel subgroup}\index{subgroups of $G(R)$!standard Borel subgroup} of $E_\pi (\Phi, R)$. 
Then $U_{\pi}(\Phi, R) \cap H_{\pi}(\Phi, R) = 1$, $U_{\pi}(\Phi, R)$ is normal in $B_{\pi}(\Phi, R)$ and $B_{\pi}(\Phi, R) = U_{\pi}(\Phi, R) H_{\pi}(\Phi, R)$.

Let $N_\pi(\Phi, R)$ be the subgroup generated by all $w_\alpha (t) \ (t \in R^*, \alpha \in \Phi)$ and let $W$ be the Weyl group of $\Phi$. 
Then $H_\pi(\Phi, R)$ is normal in $N_\pi(\Phi, R)$ and $W \cong N_\pi(\Phi, R)/H_\pi(\Phi, R)$ with the map $$s_\alpha \mapsto w_\alpha(1) H_\pi(\Phi, R), \ \forall \alpha \in \Phi.$$

For abusive use of notations, we sometimes write $H(\Phi, R), H(R)$ or $H$ instead of $H_\pi(\Phi, R)$, similar for $B_\pi (\Phi, R)$ and $N_\pi (\Phi, R)$.


\subsubsection*{Relation with Algebraic Groups}

Let $k$ be an algebraically closed field. 
The semisimple linear algebraic groups over $k$ are precisely the elementary Chevalley groups $E_\pi(\Phi, k)$ (see \cite[Chapter 5]{RS}). 
These groups can be realized as subgroups of $GL_n(k)$, defined as the common set of zeros of a family of polynomials in the matrix entries $x_{ij}$ with integer coefficients.
Moreover, note that both the multiplication map and the inverse map are also defined by polynomials with integer coefficients.


\subsection{Chevalley Groups} 

Let $E_\pi(\Phi, \mathbb{C})$ be an elementary Chevalley group, viewed as a subgroup of $GL_n(\mathbb{C})$, defined by the zero locus of polynomials $p_1(x_{ij}), \dots, p_m(x_{ij})$.  
As previously noted, these polynomials can be chosen to have integer coefficients.  
Let $R$ be a commutative ring with unity. Define the group  
\[
G(R) = \{ (a_{ij}) \in GL_n(R) \mid \widetilde{p}_1(a_{ij}) = 0, \dots, \widetilde{p}_m(a_{ij}) = 0 \},
\]
where $\widetilde{p}_1(x_{ij}), \dots, \widetilde{p}_m(x_{ij})$ are the same polynomials as $p_1(x_{ij}), \dots, p_m(x_{ij})$, but considered over the ring $R$ (this can be done by tensoring $\mathbb{Z}$ with $R$).  
This group is called the \textbf{Chevalley group}\index{Chevalley group}\label{nomencl:G(R)} $G_\pi(\Phi, R)$ of type $\Phi$ over the ring $R$.  
If $\pi$ is a representation such that $\Lambda_\pi = \Lambda_r$, then $G_\pi(\Phi, R) = G_{ad}(\Phi, R)$ is referred to as the \textbf{adjoint Chevalley group}\index{Chevalley group!adjoint}. 
If $\pi$ is a representation such that $\Lambda_\pi = \Lambda_{sc}$, then $G_\pi(\Phi, R) = G_{sc}(\Phi, R)$ is called the \textbf{universal}\index{Chevalley group!universal or simply connected} (or \textbf{simply connected}) \textbf{Chevalley group}.


\subsubsection*{Definition via Group Scheme}

Let $G = G_{\mathbb{C}}$ be a connected complex semisimple Lie group with Lie algebra $\mathcal{L} = \mathcal{L}(\Phi, \mathbb{C})$ and weight lattice $\Lambda$. Consider a finite-dimensional faithful representation $\pi$ of $G$ on a complex vector space $V$, such that $\Lambda_\pi = \Lambda$. Let the differential of $\pi$, also denoted by $\pi$, be $\pi: \mathcal{L} \longrightarrow \operatorname{End}(V)$.

Let $\mathbb{C}[G]$ denote the affine algebra of $G$, i.e., the algebra of all regular complex-valued functions on $G$, viewed as a \emph{Hopf algebra}\index{Hopf algebra} (see \cite{EA5} for the definition). 
Choosing a basis $\{v_\lambda\}_{\lambda \in \Omega_\pi}$ of $V$ consisting of weight vectors, we can identify $V$ with $\mathbb{C}^n$, where $n = \dim_{\mathbb{C}}(V)$. 
This identification introduces coordinate functions $x_{\lambda, \mu}$ for $\lambda, \mu \in \Omega_\pi$ on $GL(V)$, which in turn identify $GL(V)$ with $GL_n(\mathbb{C})$. 
The restrictions of these coordinate functions to $\pi(G)$ generate a subring $\mathbb{Z}[G]$ of the affine algebra $\mathbb{C}[G]$. 
It can be verified that this subring is, in fact, a \emph{Hopf subalgebra} of $\mathbb{C}[G]$.

Thus, one can define an \emph{affine group scheme} over $\mathbb{Z}$, called the \textbf{Chevalley-Demazure group scheme}\index{Chevalley-Demazure group scheme} of type $\Phi$, by
\[
    G_\pi(\Phi,-): R \longmapsto \operatorname{Hom}(\mathbb{Z}[G], R),
\]
where the image of a ring $R$ under this functor, denoted $G_\pi(\Phi, R)$, is called the \textbf{Chevalley group}\index{Chevalley group} of type $\Phi$ over $R$. Up to isomorphism, $G_\pi(\Phi, R)$ depends only on $\Phi$ and $\Lambda_\pi$, but not on the specific choice of $\pi$. Moreover, by construction, these linear groups $G_\pi(\Phi, R)$ can be realized as subgroups of $GL_n(R)$, where $n = \dim_{\mathbb{C}}(V)$.

Now, let $\alpha \in \Phi$ and $t$ be an independent variable. The homomorphism of $\mathbb{Z}[G]$ on $\mathbb{Z}[t]$, assigning to each coordinate function $x_{\lambda, \mu}$ its value on $x_\alpha(t)$, induces a homomorphism
\[
    G_a(R) := \operatorname{Hom}(\mathbb{Z}[t], R) \longrightarrow G_\pi(\Phi, R) = \operatorname{Hom}(\mathbb{Z}[G], R),
\]
from the additive group $(R, +) \cong G_a(R)$ of the ring $R$ to the Chevalley group $G_\pi(\Phi, R)$. The image of this homomorphism is the \emph{root subgroup}
\[
    \mathfrak{X}_\alpha = \{x_\alpha(t) \mid t \in R\}.
\]
Hence, the \emph{elementary Chevalley group} $E_\pi(\Phi, R)$ is contained in the Chevalley group $G_\pi(\Phi, R)$.


\subsubsection*{The $K_1$ and $K_2$-functors} \index{the $K_1$ and $K_2$-functors}

A central problem in the theory of Chevalley groups is understanding the relationship between $E_\pi(\Phi, R)$ and $G_\pi(\Phi, R)$. As noted earlier, $E_\pi(\Phi, R) \subset G_\pi(\Phi, R)$. A natural question is whether this inclusion is an equality. When $R$ is an algebraically closed field, we have $E_\pi(\Phi, R) = G_\pi(\Phi, R)$. However, this equality does not always hold, even when $R$ is a field. 

If $G$ is simply connected, then for any field $k$, we have $E_{\text{sc}}(\Phi, k) = G_{\text{sc}}(\Phi, k)$. Moreover, this equality also holds for certain other classes of rings, including semilocal rings and Euclidean rings (see Section 7 of \cite{EP&NV}).

\begin{thm}[{\cite{GT}}]\label{E is normal in G}
    \normalfont
    Let $\Phi$ be an irreducible root system of rank $\geq 2$. Then $E_\pi(\Phi, R)$ is a normal subgroup of $G_\pi(\Phi, R)$.
\end{thm}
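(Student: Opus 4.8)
The plan is to reduce the normality of $E_\pi(\Phi,R)$ to a statement about conjugates of its generators and then to prove that statement by a localization-and-patching argument. Since $E_\pi(\Phi,R)$ is generated by the $x_\alpha(t)$, a subgroup is normal precisely when every conjugate $g\,x_\alpha(t)\,g^{-1}$ (for $g\in G_\pi(\Phi,R)$, $\alpha\in\Phi$, $t\in R$) again lies in $E_\pi(\Phi,R)$; equivalently, since $x_\alpha(-t)\in E_\pi(\Phi,R)$, that the commutator $[g,x_\alpha(t)]$ is elementary. So I would fix $g\in G_\pi(\Phi,R)$ and aim to show $g\,x_\alpha(t)\,g^{-1}\in E_\pi(\Phi,R)$ for all $\alpha,t$. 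It is worth noting at the outset that the hypothesis $\operatorname{rank}\Phi\ge 2$ is indispensable: for $A_1$ the group $E_2(R)$ is in general neither equal to nor normal in $SL_2(R)$, and the whole argument relies on each root sitting inside a rank-$2$ subsystem so that the Chevalley commutator formula (Proposition~\ref{prop:chevalley commutator formula}) can be brought to bear.

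The local input is the easy half. For a maximal ideal $\mathfrak{m}$ of $R$ the localization $R_\mathfrak{m}$ is local, hence semilocal, so by the structure theory of Chevalley groups over semilocal rings quoted in Section~\ref{sec:Chevalley groups} one has a Gauss-type decomposition $G_\pi(\Phi,R_\mathfrak{m})=E_\pi(\Phi,R_\mathfrak{m})\,H_\pi(\Phi,R_\mathfrak{m})$. Writing the image of $g$ accordingly and invoking relation (R6), namely $h_\alpha(s)x_\beta(u)h_\alpha(s)^{-1}=x_\beta(s^{\langle\beta,\alpha\rangle}u)$, the torus $H_\pi$ normalizes $E_\pi$, so the image of $g\,x_\alpha(t)\,g^{-1}$ lies in $E_\pi(\Phi,R_\mathfrak{m})$ for every $\mathfrak{m}$. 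Thus the conjugate is \emph{locally elementary}; the difficulty is entirely in passing from this local information back to $R$ itself.

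To globalize I would use the polynomial parametrization trick. Regard $g\in G_\pi(\Phi,R)\subset G_\pi(\Phi,R[T])$ and set $\varphi(T):=g\,x_\alpha(Tt)\,g^{-1}$, so that $\varphi(0)=1$ while $\varphi(1)$ is the element we wish to capture; the evaluation homomorphism $R[T]\to R$, $T\mapsto 1$, carries $E_\pi(\Phi,R[T])$ into $E_\pi(\Phi,R)$, so it suffices to prove $\varphi(T)\in E_\pi(\Phi,R[T])$. This is exactly the setting of the Quillen--Suslin local--global principle for elementary Chevalley groups: an element of $G_\pi(\Phi,R[T])$ that is trivial at $T=0$ and becomes elementary over $R_\mathfrak{m}[T]$ for every maximal $\mathfrak{m}$ is already elementary over $R[T]$. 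The local hypothesis is supplied by the previous paragraph (applied over $R_\mathfrak{m}$), and specializing $T\mapsto 1$ then gives $g\,x_\alpha(t)\,g^{-1}\in E_\pi(\Phi,R)$, completing the argument.

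The real content, and the step I expect to be the main obstacle, is the local--global principle itself. Its proof rests on Suslin's dilation (denominator-clearing) lemma: if $\psi(T)\in E_\pi(\Phi,R_a[T])$ with $\psi(0)=1$, then for a suitable exponent $N$ the rescaled path $\psi(a^{N}T)$ descends to an element of $E_\pi(\Phi,R[T])$ agreeing with $\psi$ after localizing at $a$. One then compares two such expressions coming from a pair of comaximal elements $b,c$ with $bR+cR=R$ and uses the commutator relations to splice them into a single global elementary element, finally reducing the infinitely many local conditions to finitely many comaximal localizations. Here irreducibility and $\operatorname{rank}\ge 2$ are used in earnest: the connectivity of $\Phi$ through rank-$2$ subsystems of type $A_2$, $B_2$, $G_2$ noted in Section~\ref{roots and weights}, combined with the Chevalley commutator formula, is what lets one bound commutator lengths uniformly and carry out the splicing. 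Assembling the dilation lemma, the comaximal patching, and these finiteness reductions yields the principle, and with it the theorem.
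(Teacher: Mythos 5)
The paper does not prove this theorem; it is quoted from Taddei~\cite{GT}, and your proposal is precisely the strategy of that reference (and of the localization--dilation arguments the thesis itself reruns in the twisted setting, e.g.\ Proposition~\ref{general:[x,g] in E(R,J)} via Lemma~\ref{lemma:GT}). The reduction to conjugates of generators, the polynomial parametrization $\varphi(T)=g\,x_\alpha(Tt)\,g^{-1}$ with $\varphi(0)=1$, and the Quillen--Suslin local--global principle built on the dilation lemma are all the right ingredients, and the rank~$\geq 2$ hypothesis enters exactly where you say it does.

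One correction to the local step: the semilocal decomposition is $G_\pi(\Phi,R_\mathfrak{m})=E_\pi(\Phi,R_\mathfrak{m})\,T_\pi(\Phi,R_\mathfrak{m})$ with $T_\pi$ the full split maximal torus, not $G=E_\pi H_\pi$. Since $H_\pi\subset E_\pi$ (it is generated by the $h_\alpha(s)$, which are words in root elements), the decomposition you wrote would assert $G=E$ over local rings, which is false in general. Accordingly, the normalization of $E_\pi$ is not given by relation (R6) but by the torus action $h(\chi)\,x_\beta(u)\,h(\chi)^{-1}=x_\beta(\chi(\beta)u)$ for arbitrary $\chi\in\operatorname{Hom}(\Lambda_\pi,R_\mathfrak{m}^*)$. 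With $g=\varepsilon\,h(\chi)$, $\varepsilon\in E_\pi(\Phi,R_\mathfrak{m})$, one gets $g\,x_\alpha(Tt)\,g^{-1}=\varepsilon\,x_\alpha(\chi(\alpha)Tt)\,\varepsilon^{-1}\in E_\pi(\Phi,R_\mathfrak{m}[T])$, which is the local input your patching argument needs; after this repair the rest of your outline goes through as in~\cite{GT}.
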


\begin{rmk}
    If the rank of $\Phi$ is $1$, then the above result is not true in general. See, for example, Section 7 of \cite{EP&NV}.
\end{rmk}

In view of Theorem~\ref{E is normal in G}, if $\Phi$ is an irreducible root system of rank $\geq 2$, then the quotient group 
\[
    K_1(\Phi, R) := G_{sc}(\Phi, R) / E_{sc}(\Phi, R)
\]
is well-defined. The functor $K_{1}(\Phi, -)$ is known as the \emph{$K_1$-functor} of type $\Phi$. 

\medskip

Let $\Phi$ be a root system of rank $\ell$. The \textbf{Steinberg group}\index{Steinberg group} of type $\Phi$ over a ring $R$, denoted by $\text{St}(\Phi, R)$, is the group generated by symbols  
\[
    x'_\alpha(t), \quad t \in R, \, \alpha \in \Phi,
\]  
subject to the relations $(R1)$ and $(R2)$ when $\ell \geq 2$, and the relations $(R1)$ and $(R3)$ when $\ell = 1$.  
The relations $(R1)$, $(R2)$, and $(R3)$ are as defined on page~\pageref{st-relations}, with $x'_\alpha(t)$ replacing $x_\alpha(t)$.

Since the generators $x_\alpha(t)$ of $E_{sc}(\Phi, R)$ also satisfy the relations $(R)$, there is a natural surjective map 
\[
    f(\Phi, R): \text{St}(\Phi, R) \longrightarrow E_{sc}(\Phi, R)
\]
defined by $x'_\alpha(t) \mapsto x_\alpha(t)$. 
We define
\[
    K_{2}(\Phi, R) = \text{ker}(f(\Phi, R)).
\]
The functor $K_{\pi, 2}(\Phi, -)$ is called the \emph{$K_2$-functor} of type $\Phi$.

Assume that $\Phi$ is an irreducible root system of rank $\geq 2$. By definition, we have the following exact sequence:
\[
    1 \longrightarrow K_{2}(\Phi, R) \longrightarrow \text{St}(\Phi, R) \longrightarrow G_{sc}(\Phi, R) \longrightarrow K_{1}(\Phi, R) \longrightarrow 1.
\]


\subsubsection*{Split Maximal Torus}

Let $T$ be a split maximal torus of a connected semisimple Lie group $G$. This defines the subgroup scheme 
\[
T_\pi(\Phi, -): R \longmapsto \operatorname{Hom}(\mathbb{Z}[T], R)
\]
of the Chevalley-Demazure group scheme $G_\pi(\Phi, -)$. 
The coordinate ring $\mathbb{Z}[T]$ is given by $\mathbb{Z}[\lambda_1, \lambda_1^{-1}, \dots, \lambda_\ell, \lambda_\ell^{-1}],$ the algebra of Laurent polynomials corresponding to a $\mathbb{Z}$-basis $\{ \lambda_1, \dots, \lambda_\ell \}$ of the lattice $\Lambda_\pi$. 

If $R$ is a commutative ring with unity, the corresponding group of points 
\[
T_\pi(\Phi, R) = \operatorname{Hom}(\mathbb{Z}[T], R)
\]
is called a \textbf{split maximal torus}\index{subgroups of $G(R)$!split maximal torus} of the Chevalley group $G_\pi(\Phi, R)$. It is well known that 
\[
T_\pi(\Phi, R) \cong \operatorname{Hom}(\Lambda_\pi, R^*).
\]

This identification can be described as follows: Let $\chi \in \operatorname{Hom}(\Lambda_\pi, R^*)$ be a character of $\Lambda_\pi$. Define $h(\chi)$ as an automorphism of $V_\pi(\Phi, R)$ given by 
\[
h(\chi) \cdot v = \chi(\mu) v,
\]
for all $v \in V_{\pi, \mu}(\Phi, R)$ and $\mu \in \Omega_\pi$, where $\Omega_\pi$ is the set of weights corresponding to the representation $\pi$.
Thus, we have 
\[
T_\pi(\Phi, R) = \{ h(\chi) \mid \chi \in \operatorname{Hom}(\Lambda_\pi, R^*) \}.
\]

Note that $H_\pi (\Phi, R)$ is contained in $T_{\pi}(\Phi, R)$. The element $h(\chi) \in H_\pi (\Phi, R) \subset E_\pi(\Phi, R)$ if and only if $\chi \in \text{Hom}(\Lambda_\pi, R^*)$ can be extented to a character $\chi'$ of $\Lambda_{sc}$, that is, $\chi' \in \text{Hom}(\Lambda_{sc}, R^*)$ such that $\chi'|_{\Lambda_{\pi}} = \chi$. Moreover, $h_\alpha(t) = h(\chi_{\alpha, t}) \ (t \in R^*, \alpha \in \Phi),$ where $$ \chi_{\alpha, t}: \lambda \mapsto t^{\langle \lambda, \alpha \rangle} \ (\lambda \in \Lambda_\pi).$$ Therefore $H_\pi (\Phi, R) = E_\pi (\Phi, R) \cap T_\pi(\Phi, R).$


\subsubsection*{Some Important Subgroups}

The element $h(\chi) \in T_\pi(\Phi, R)$ acts on the root group $\mathfrak{X}_{\alpha} = \{x_{\alpha}(\zeta) \mid \zeta \in R\}$ by conjugation as follows:  
\[
    h(\chi) x_\alpha(\zeta) h(\chi)^{-1} = x_\alpha(\chi(\alpha) \zeta).
\]  
Thus, $T_\pi(\Phi, R)$ normalizes $E_\pi(\Phi, R)$. 
Hence the set $G_\pi^0(\Phi, R) := E_\pi(\Phi, R) T_\pi(\Phi, R)$ forms a subgroup of $G_\pi(\Phi, R)$. 
If $R$ is a semilocal ring, then $G_\pi(\Phi, R) = G_\pi^0(\Phi, R)$ (see \cite[Corollary 2.4]{EA2}). 

By a similar argument, the set $B_\pi^0(\Phi, R) := U_\pi(\Phi, R) T_\pi(\Phi, R) = B_\pi(\Phi, R) T_\pi(\Phi, R)$ forms a subgroup of $G_\pi(\Phi, R)$, referred to as the \textbf{standard Borel subgroup}\index{subgroups of $G(R)$!standard Borel subgroup} of $G_\pi(\Phi, R)$. 

Define $N_\pi^0(\Phi, R) := N_\pi(\Phi, R) T_\pi(\Phi, R)$. 
Then $T_\pi(\Phi, R)$ is normal in $N_\pi^0(\Phi, R)$, and the following isomorphism holds:  
\[
    N_\pi^0(\Phi, R) / T_\pi(\Phi, R) \cong N_\pi(\Phi, R) / H_\pi(\Phi, R) \cong W,
\]  
where $W$ is the Weyl group of $\Phi$.

Recall (cf. page~\pageref{subsubsec:closed subsets}) that a subset $S \subset \Phi$ is called \emph{closed} if $\alpha, \beta \in S$ such that $\alpha + \beta \in \Phi$ implies $\alpha + \beta \in S$.  
As before (cf. page~\pageref{U and U-}), for a given closed set $S$ define  
\[
    E_\pi(S, R) := \langle x_\alpha(t) \mid t \in R, \, \alpha \in S \rangle.
\]  
Since $T_\pi(\Phi, R)$ normalizes $E_\pi(S, R)$, the product  
\[
    G_\pi^{0}(\Phi, S, R) := E_\pi(S, R) T_\pi(\Phi, R)
\]  
forms a subgroup of $G_\pi(\Phi, R)$.

As noted earlier (cf. page~\pageref{subsubsec:closed subsets}), any closed set $S$ can be uniquely decomposed into the disjoint union of its \emph{symmetric} (or \emph{reductive}) part, $S^r = \{\alpha \in S \mid -\alpha \in S\}$, and its \emph{special} (or \emph{unipotent}) part, $S^u = \{\alpha \in S \mid -\alpha \not\in S\}$.  
Since $S^u$ forms an ideal in $S$, the subgroup $E_\pi(S^u, R)$ is normal in $E_\pi(S, R)$ and in $G_\pi^0(\Phi, S, R)$. This allows the following Levi decomposition:
\[
    E_\pi(S, R) = E_\pi(S^u, R) \rtimes E_\pi(S^r, R) \quad \text{and} \quad G_\pi^0(\Phi, S, R) = E_\pi(S^u, R) \rtimes G_\pi^0(\Phi, S^r, R).
\]
The subgroup $E_\pi(S^u, R)$ is called the \emph{unipotent radical} of $E_\pi(S, R)$ (resp., $G_\pi(\Phi, S, R)$) and the subgroup $E_\pi(S^r, R)$ (resp., $G_\pi^0(\Phi, S^r, R)$) is called the \emph{Levi subgroup} of $E_\pi(S, R)$ (resp., $G_\pi(\Phi, S, R)$).

A special case arises when $S = P$, a \emph{parabolic} subset of the root system $\Phi$ (i.e., $S$ is a closed subset and satisfies $\Phi = S \cup -S$).  
The subgroup $G_\pi^0(P, R)$ is called the \textbf{standard parabolic subgroup}, and its conjugates are referred to as \textbf{parabolic subgroups}\index{subgroups of $G(R)$!parabolic subgroup}. 
Note that the standard Borel subgroup of $G_\pi(\Phi, R)$ is an example of a parabolic subgroup.


\subsubsection*{Identifications with the Classical Groups}

We now provide a list of some explicit examples of Chevalley groups:

\begin{center}
    \begin{tabular}{c|c|c}
        Type of $\Phi$ & $G_{sc}(\Phi, R)$ & $G_{\text{ad}}(\Phi, R)$ \\
        \hline
        $A_\ell$ & $SL_{\ell + 1}(R)$ & $PSL_{\ell + 1}(R)$ \\
        $B_\ell$ & $Spin_{2\ell + 1}(R)$ & $SO_{2\ell + 1}(R) = PSO_{2\ell + 1}(R)$ \\
        $C_\ell$ & $Sp_{2\ell}(R)$ & $PSp_{2\ell}(R)$ \\
        $D_\ell$ & $Spin_{2\ell}(R)$ & $PSO_{2\ell}(R)$ \\
    \end{tabular}
\end{center}

For a detailed discussion on exceptional groups, see Chapter $2$ of \cite{NV1} or Section $8$ of \cite{EP&NV}. 
We conclude this section with the following remark regarding the notation:

\begin{rmk}
    For convenience and to simplify notation, we often write \( E(R) \) or \( E(\Phi, R) \) instead of \( E_\pi(\Phi, R) \), following the same reasoning for other groups and subgroups defined earlier.
\end{rmk}


\section{Properties of Chevalley Groups}


This section presents a comprehensive survey of several well-known results of Chevalley groups. 


\subsection{Chevalley Groups over Fields}

Let $k$ be a field and let $G_\pi(\Phi, k)$ denote a Chevalley group of type $\Phi$ over $k$. As previously noted, if $k$ is algebraically closed, then $G_\pi(\Phi, k) = E_\pi(\Phi, k)$. In general, however, this equality does not always hold. Nevertheless, for any field $k$, we have the relation 
\[
G_\pi(\Phi, k) = E_\pi(\Phi, k) T_\pi(\Phi, k),
\]
where $T_\pi(\Phi, k)$ denotes the corresponding maximal torus.
Additionally, if $G$ is simply-connected, then for any field $k$, we have 
\[
G_{sc}(\Phi, k) = E_{sc}(\Phi, k).
\]


\subsubsection*{Bruhat Decomposition}

A fundamental property of elementary Chevalley groups over a field is that they form a $(B, N)$-pair in the sense of J. Tits \cite{JT}. This structure allows for a precise decomposition of the group as follows:

\begin{thm}[Bruhat decomposition]\index{Bruhat decomposition}
    \normalfont
    Let $G = E_\pi(\Phi, k)$ be an elementary Chevalley group of type $\Phi$ over a field $k$. Let $W$ denote the Weyl group associated with the root system $\Phi$. For each $w \in W$, define 
    \[
    Q_w = \Phi^+ \cap w^{-1}(\Phi^-) \quad \text{and} \quad U_w = \mathfrak{X}_{Q_w}.
    \]
    Furthermore, fix a representative $\eta_w \in N_\pi(\Phi, k)$ corresponding to each $w \in W$. Then the following properties hold:
    \begin{enumerate}[(a)]
        \item The group $G$ admits the decomposition 
        \[
        G = \bigsqcup_{w \in W} B \eta_w B, 
        \]
        as a disjoint union of double cosets of $B$, indexed by the elements of the Weyl group $W$.
        \item Each double coset $B \eta_w B$ can be expressed as 
        \[
        B \eta_w B = B \eta_w U_w.
        \]
        Moreover, every element of $B \eta_w U_w$ (and hence of $G$) has a unique expression of the form $b \cdot \eta_w \cdot u$, where $b \in B$, $\eta_w$ is the chosen representative of $w$, and $u \in U_w$.
    \end{enumerate}
\end{thm}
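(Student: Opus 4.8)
The plan is to realize the pair $(B,N)$, with $B=B_\pi(\Phi,k)=UH$ and $N=N_\pi(\Phi,k)$, as a Tits system with Weyl group $W\cong N/H$ and distinguished generators $S=\{s_\alpha\mid\alpha\in\Delta\}$, and to read off both assertions from this structure. The one computational input I keep reusing is the Steinberg relation (R3): conjugation by a representative $\eta_w$ of $w\in W$ sends $\mathfrak{X}_\alpha$ to $\mathfrak{X}_{w(\alpha)}$, so $\eta_w\,\mathfrak{X}_{S'}\,\eta_w^{-1}=\mathfrak{X}_{w(S')}$ for every closed $S'\subseteq\Phi$. I would first verify the Tits-system axioms, then deduce the covering and disjointness in part (a), and finally establish the explicit refinement in part (b).

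For the axioms: generation, i.e. $G=\langle B,N\rangle$, follows because $E_\pi(\Phi,k)$ is generated by the $x_{\pm\alpha}(t)$ with $\alpha\in\Delta$, the positive ones lying in $U\subseteq B$ and the negative ones lying in $B\eta_{s_\alpha}B$ (rewrite $x_{-\alpha}(u)$ through $w_\alpha$ and elements of $U,H$ via (R3)); and $H=B\cap N$ is normal in $N$ with $N/H\cong W$ generated by the involutions $s_\alpha$. The heart is the exchange condition
\[ \eta_{s_\alpha}\, B\, \eta_w \;\subseteq\; B\,\eta_{s_\alpha w}\,B \,\cup\, B\,\eta_w\,B \qquad(\alpha\in\Delta). \]
To check it I would split $U=\mathfrak{X}_\alpha\cdot\mathfrak{X}_{\Phi^+\setminus\{\alpha\}}$; since $s_\alpha$ permutes $\Phi^+\setminus\{\alpha\}$ we get $\eta_{s_\alpha}\mathfrak{X}_{\Phi^+\setminus\{\alpha\}}\eta_{s_\alpha}^{-1}\subseteq U$, reducing everything to a rank-one computation inside $\langle\mathfrak{X}_\alpha,\mathfrak{X}_{-\alpha}\rangle$, which I would carry out through the surjection of Proposition~\ref{prop:E_2 in E_alpha} and the Chevalley commutator formula (Proposition~\ref{prop:chevalley commutator formula}); the two cases correspond to whether $w^{-1}\alpha$ is positive or negative. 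The nondegeneracy axiom $\eta_{s_\alpha}B\eta_{s_\alpha}^{-1}\not\subseteq B$ is immediate, since conjugating $\mathfrak{X}_\alpha$ by $w_\alpha(1)$ yields $\mathfrak{X}_{-\alpha}\not\subseteq B$. With the four axioms in hand, part (a) — the decomposition $G=\bigsqcup_{w}B\eta_w B$, including the disjointness of distinct double cosets — is the standard Bruhat decomposition for Tits systems.

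For part (b), decompose $\Phi^+=Q_w\sqcup R_w$ with $R_w=\Phi^+\cap w^{-1}(\Phi^+)$. Both $Q_w$ and $R_w$ are closed (the sum of two roots lying in either set, if a root, stays positive and stays on the same side of $w^{-1}$), so the unique-factorization lemma for special closed subsets (Steinberg's Lemma~17) gives $U=\mathfrak{X}_{R_w}\,U_w$. Because $w(R_w)\subseteq\Phi^+$ we have $\eta_w\mathfrak{X}_{R_w}\eta_w^{-1}\subseteq U\subseteq B$, and, absorbing $H$ (which normalizes $U$ and is fixed by $\eta_w$ up to $B$),
\[ B\eta_w B \;=\; B\eta_w U \;=\; B\,\eta_w\,\mathfrak{X}_{R_w}\,U_w \;=\; B\,(\eta_w \mathfrak{X}_{R_w}\eta_w^{-1})\,\eta_w\,U_w \;=\; B\,\eta_w\,U_w. \]
For uniqueness of the expression $b\cdot\eta_w\cdot u$, observe that $w(Q_w)\subseteq\Phi^-$ gives $\eta_w U_w\eta_w^{-1}\subseteq U^-$; hence if $b\,\eta_w\,u=b'\,\eta_w\,u'$ with $u,u'\in U_w$, then $b'^{-1}b=\eta_w(u'u^{-1})\eta_w^{-1}\in B\cap U^-=\{1\}$, forcing $b=b'$ and then $u=u'$, where I use the standard fact $B\cap U^-=1$ together with the unique factorization of $U$.

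The main obstacle is not any of the explicit root-group manipulations but the exchange condition, together with the disjointness it buys through the abstract theory: verifying the exchange inclusion requires the careful rank-one analysis in $\langle\mathfrak{X}_\alpha,\mathfrak{X}_{-\alpha}\rangle$ and a clean case split on the sign of $w^{-1}\alpha$, and the ensuing induction on $\ell(w)$ (controlled by the deletion condition recorded earlier) is the genuinely combinatorial part. By contrast, the covering reduces to generation, and the refinement in part (b) is a formal consequence of the closedness of $Q_w,R_w$ and the factorization $U=\mathfrak{X}_{R_w}U_w$; these I expect to be routine.
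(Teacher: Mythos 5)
Your proposal is correct and follows exactly the route the paper intends: the theorem is stated in the survey section as a consequence of the fact that $(B,N)$ forms a Tits system (the paper cites Tits \cite{JT} and draws the material from Steinberg \cite{RS}), and your verification of the axioms, the reduction of the exchange condition to the rank-one subgroup $\langle\mathfrak{X}_\alpha,\mathfrak{X}_{-\alpha}\rangle$ with the case split on the sign of $w^{-1}\alpha$, and the refinement $B\eta_wB=B\eta_wU_w$ via $U=\mathfrak{X}_{R_w}U_w$ and $B\cap U^-=1$ are precisely the standard argument. No gaps.
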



\subsubsection*{Simplicity of adjoint groups}

\begin{thm}[]
    Let $G = E_{\mathrm{ad}}(\Phi, k)$ be the elementary adjoint Chevalley group of type $\Phi$ over a field $k$. Suppose that $\Phi$ is irreducible. Additionally, if $\lvert k \rvert = 2$, assume that $\Phi$ is not of type $A_1$, $B_2$ or $G_2$. If $\lvert k \rvert = 3$, assume that $\Phi$ is not of type $A_1$. Then the group $G$ is simple.
\end{thm}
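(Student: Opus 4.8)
The plan is to exploit the $(B,N)$-pair structure together with the Bruhat decomposition, following the standard Tits simplicity criterion. The key structural input is that $G = E_{\mathrm{ad}}(\Phi, k)$ admits a $(B,N)$-pair with Weyl group $W$, that $B = U H$ with $U = U_\pi(\Phi,k)$ the unipotent radical, and that $U$ is generated by the root subgroups $\mathfrak{X}_\alpha$. Tits' general theorem states that a group with a $(B,N)$-pair is simple modulo its center provided three conditions hold: the $(B,N)$-pair is irreducible (equivalently, the Coxeter diagram of $W$ is connected, which holds since $\Phi$ is irreducible); the group $G$ equals its own derived group $[G,G]$; and $B$ contains no nontrivial normal subgroup of $G$. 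Since we are in the adjoint case, the center of $G$ is trivial, so simplicity modulo center becomes simplicity outright. Thus the whole argument reduces to verifying these hypotheses and checking that the center is indeed trivial.

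First I would verify perfectness, i.e. $G = [G,G]$. This uses the Chevalley commutator formula from Proposition~\ref{prop:chevalley commutator formula}: for roots $\alpha,\beta$ with $\alpha+\beta \in \Phi$ one has $[x_\alpha(t), x_\beta(u)] = x_{\alpha+\beta}(N_{\alpha,\beta} tu)\cdots$, so every root subgroup $\mathfrak{X}_{\alpha+\beta}$ lies in the commutator subgroup whenever $\alpha+\beta$ is expressible as a sum of two roots. Since $\Phi$ is irreducible of rank $\geq 1$, and since the generators $x_\gamma(s)$ with $\pm\gamma \in \Delta$ suffice to generate $G$, I would show each simple root subgroup arises as such a commutator; the delicate cases are $A_1$ (where no root is a sum of two roots) and small fields, which is precisely why the hypotheses exclude $A_1$ with $|k| \leq 3$ and $B_2, G_2$ with $|k| = 2$. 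In these excluded low cases perfectness genuinely fails because there are too few field elements to produce the needed commutators $[h_\alpha(t), x_\alpha(u)] = x_\alpha((t^{\langle\alpha,\alpha\rangle}-1)u)$, which requires a unit $t$ with $t^2 \neq 1$.

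Second I would check that $B$ contains no nontrivial normal subgroup of $G$. The standard approach: suppose $N \trianglelefteq G$ with $N \subseteq B$. Conjugating by the Weyl group representatives $\eta_w$ (in particular by $w_0$, which sends $\Phi^+$ to $\Phi^-$), any element of $N \cap U$ would have to lie in $U \cap U^- = 1$, forcing $N \cap U = 1$; combined with the fact that $B = UH$ and $H$ normalizes each $\mathfrak{X}_\alpha$ by $h_\alpha(t) x_\beta(u) h_\alpha(t)^{-1} = x_\beta(t^{\langle\beta,\alpha\rangle}u)$ (relation (R6)), one pins $N$ down inside $H$, and then normality under the full group forces $N$ to centralize everything, hence $N \subseteq Z(G)$. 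Since the adjoint group has trivial center, $N = 1$.

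The main obstacle, and the step demanding the most care, is verifying triviality of the center $Z(E_{\mathrm{ad}}(\Phi, k))$ and correctly handling the small-field exceptional cases. For the center I would use that $Z(G) \subseteq H$ (any central element commutes with all $\mathfrak{X}_\alpha$, and the Bruhat decomposition forces it into $H$), and then identify $Z(G)$ with $\operatorname{Hom}(\Lambda_{sc}/\Lambda_\pi, k^*)$; in the adjoint case $\Lambda_\pi = \Lambda_r$, and one checks this Hom-group is trivial because the relevant character relations collapse. The genuinely subtle part is confirming that the excluded cases $(A_1, |k|\leq 3)$, $(B_2, |k|=2)$, $(G_2, |k|=2)$ are exactly those where either perfectness or the $B$-normal-subgroup condition breaks, rather than spuriously excluded ones; this requires examining the order of $k^*$ against the exponents appearing in the commutator and torus relations, and I expect this case analysis — not the general $(B,N)$-pair machinery — to be where the real work lies.
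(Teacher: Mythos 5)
The paper does not prove this statement at all: it appears in the survey section on properties of Chevalley groups and is quoted as a known result (it is Theorem~5 of Steinberg's \emph{Lectures on Chevalley Groups}, which the paper cites elsewhere for exactly this purpose). Your proposal reproduces the standard Steinberg/Tits argument --- irreducible $(B,N)$-pair, perfectness of $G$ outside the listed exceptions, no nontrivial normal subgroup of $G$ inside $B$, trivial center in the adjoint case --- and this is correct in outline and is precisely the argument the paper is implicitly relying on. One small correction: the center of $G_\pi(\Phi,k)$ is $\mathrm{Hom}(\Lambda_\pi/\Lambda_r, k^*)$ (characters of the torus trivial on the root lattice), not $\mathrm{Hom}(\Lambda_{sc}/\Lambda_\pi, k^*)$ as you wrote; with your quotient the adjoint case would give the nontrivial group $\mathrm{Hom}(\Lambda_{sc}/\Lambda_r,k^*)$, whereas the correct formula gives $\mathrm{Hom}(\Lambda_r/\Lambda_r,k^*)=1$ as needed. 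Also, in the step showing $B$ contains no nontrivial normal subgroup, the cleaner route is $N\subseteq B\cap w_0 B w_0^{-1}=H$, after which normality and relation (R6) force $N$ into the (trivial) center; and in the excluded cases it is perfectness alone that fails, not the $B$-condition.
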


The cases excluded in the above theorem are not simple. The following table provides a list of non-trivial normal subgroups for each excluded case:

\begin{center}
    \begin{tabular}{c|c|c|c}
       \textbf{Type of $\Phi$} & $\lvert k \rvert$ & $G$ & \textbf{Normal Subgroup} \\
       \hline
       $A_1$ & $2$ & $PSL_2(\mathbb{F}_2) \cong \mathcal{S}_3$ & $\mathcal{A}_3$ \\
       $B_2$ & $2$ & $PSO_{5} \cong SO_{5} \cong \mathcal{S}_6$ & $\mathcal{A}_6$ \\
       $G_2$ & $2$ & of order $12096 = 2^6 \cdot 3^3 \cdot 7$ & $SU_3(\mathbb{F}_3)$ \\
       $A_1$ & $3$ & $PSL_2(\mathbb{F}_3) \cong \mathcal{S}_4$ & $\mathcal{A}_4$
    \end{tabular}
\end{center}

Here, $\mathcal{S}_n$ denotes the symmetric group on $n$ letters, and $\mathcal{A}_n$ denotes the alternating group on $n$ letters.


\subsection{Center of the Chevalley Groups}

For a group $G$ and a subgroup $H$ of $G$, denote $Z(G)$ as the center of $G$ and $C_{G}(H)$ as the centralizer of $H$ in $G$.

\begin{thm}[{\cite[Main Theorem]{EA&JH}}]\label{thm:EA&JH}
    \normalfont
    Let $R$ be a commutative ring with unity. Assume that the Jacobson radical of $R$ is trivial or the rank of $\Phi$ is at least $2$. Then,
    \[
        C_{G_\pi(\Phi, R)}(E_\pi(\Phi, R)) = Z(G_\pi(\Phi, R)) = \mathrm{Hom}(\Lambda_\pi / \Lambda_r, R^*).
    \] 
    Additionally, if $G$ is of universal or adjoint type, then $Z(G_\pi(\Phi, R)) = Z(E_\pi(\Phi, R))$.
\end{thm}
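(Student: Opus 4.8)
Write $G = G_\pi(\Phi,R)$ and $E = E_\pi(\Phi,R)$, and recall that the center $Z(G)$, the centralizer $C_G(E)$, and the group $\mathrm{Hom}(\Lambda_\pi/\Lambda_r,R^*)$ will all be realized as subgroups of the split maximal torus $T_\pi(\Phi,R)=\{h(\chi)\mid \chi\in\mathrm{Hom}(\Lambda_\pi,R^*)\}$, acting diagonally on the weight spaces of $V_\pi(\Phi,R)=\bigoplus_\mu V_\mu$. The plan is to prove the three objects coincide by closing the loop of inclusions
\[
\mathrm{Hom}(\Lambda_\pi/\Lambda_r,R^*)\ \subseteq\ Z(G)\ \subseteq\ C_G(E)\ \subseteq\ \mathrm{Hom}(\Lambda_\pi/\Lambda_r,R^*).
\]
The inclusion $Z(G)\subseteq C_G(E)$ is immediate from $E\subseteq G$, so the work lies in the two outer inclusions.

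For the crux inclusion $C_G(E)\subseteq\mathrm{Hom}(\Lambda_\pi/\Lambda_r,R^*)$, take $g\in C_G(E)$. First I would exploit that $g\,x_\alpha(t)=x_\alpha(t)\,g$ is an identity of matrices with entries polynomial in $t$; comparing the coefficient of $t^n$ in $x_\alpha(t)=\sum_{n\ge 0}t^nX_\alpha^n/n!$ shows that $g$ commutes with every divided power $X_\alpha^n/n!$, hence with the entire $\mathcal{U}_\mathbb{Z}$-action on $V_\pi(\Phi,R)$. Separately, since $h_\alpha(t)\in H_\pi(\Phi,R)\subseteq E$ for $t\in R^*$, the element $g$ commutes with each $h_\alpha(t)$, which acts on $V_\mu$ by multiplication by $t^{\langle\mu,\alpha\rangle}$. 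Writing $g$ blockwise with respect to the weight decomposition, commutation with the $h_\alpha(t)$ kills the block $V_\mu\to V_\nu$ whenever the weights $\mu,\nu$ can be distinguished by these diagonal eigenvalues, so $g$ becomes weight-preserving; comparing then the $V_\mu\to V_{\mu+\alpha}$ components of $g\,x_\alpha(t)=x_\alpha(t)\,g$ yields the intertwining relations $g_{\mu+\alpha}X_\alpha=X_\alpha g_\mu$. Because $V_\pi(\Phi,R)$ is generated over $\mathcal{U}_\mathbb{Z}$ by its highest-weight vectors, on which $g$ acts by scalars, these relations propagate a single scalar through each constituent and force $g$ to act on $V_\mu$ by a scalar $c_\mu$ depending only on the class $\bar\mu=\mu+\Lambda_r$ in $\Lambda_\pi/\Lambda_r$; the assignment $\bar\mu\mapsto c_\mu$ is the desired character $\chi$ with $g=h(\chi)$.

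The main obstacle is precisely the weight-separation step, since a general ring may possess too few units to distinguish the eigenvalues $t^{\langle\mu,\alpha\rangle}$ (e.g.\ $R=\mathbb{F}_2$), and this is exactly what the two alternative hypotheses are for. When $\mathrm{rank}\,\Phi\ge 2$, I would replace the reliance on units by the Chevalley commutator formula together with the normality of $E$ in $G$ (Theorem~\ref{E is normal in G}) and the Weyl representatives $w_\alpha(1)\in E$, which permute the weight spaces and let one separate weights and transport scalars using only the Steinberg relations $(R1)$--$(R8)$. When the Jacobson radical of $R$ is trivial, the intersection of the maximal ideals is $0$, so $R\hookrightarrow\prod_{\mathfrak m}R/\mathfrak m$ embeds $R$ into a product of fields; this reduces the computation of the centralizer to the classical field case, where the separation is carried out directly in each factor.

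For the first inclusion, let $\chi\in\mathrm{Hom}(\Lambda_\pi/\Lambda_r,R^*)$. Then $h(\chi)$ acts on $V_\mu$ by the scalar $\chi(\bar\mu)$, hence as one fixed scalar on each piece of the coarser grading $V_\pi(\Phi,R)=\bigoplus_{\bar\nu\in\Lambda_\pi/\Lambda_r}V^{\bar\nu}$, where $V^{\bar\nu}=\bigoplus_{\mu+\Lambda_r=\bar\nu}V_\mu$. Every generator $x_\alpha(t)$ shifts weights by $\alpha\in\Lambda_r$ and so preserves this grading; preservation of the grading holds for the generators over $\mathbb{C}$ and is a closed polynomial condition, hence holds for all of $G_\pi(\Phi,R)$ by base change from the Chevalley–Demazure group scheme. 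An operator acting as a fixed scalar on each graded piece commutes with every grading-preserving operator, so $h(\chi)\in Z(G)$; this closes the loop and delivers the three equalities simultaneously. Finally, for the addendum: in the adjoint case $\Lambda_\pi=\Lambda_r$, so $\mathrm{Hom}(\Lambda_\pi/\Lambda_r,R^*)$ is trivial and $Z(G)=Z(E)=1$; in the universal case $\Lambda_\pi=\Lambda_{sc}$, so any central $h(\chi)$ comes from a character of $\Lambda_{sc}$ and therefore already lies in $H_\pi(\Phi,R)\subseteq E$ by the criterion recalled for the split maximal torus, giving $Z(G)\subseteq E$; since $Z(E)=C_G(E)\cap E=Z(G)\cap E$ by the first part, we conclude $Z(E)=Z(G)$.
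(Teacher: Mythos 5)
The paper does not prove this statement at all: it is quoted verbatim as the Main Theorem of Abe--Hurley \cite{EA&JH} and used as a black box (e.g.\ in the proof of Lemma~\ref{lemma on T(J)}), so there is no in-paper argument to compare yours against. Judged on its own, your skeleton is the right one --- close the loop $\mathrm{Hom}(\Lambda_\pi/\Lambda_r,R^*)\subseteq Z(G)\subseteq C_G(E)\subseteq \mathrm{Hom}(\Lambda_\pi/\Lambda_r,R^*)$, with the first inclusion via the coarse $\Lambda_\pi/\Lambda_r$-grading and the last via reduction to the torus --- and your handling of the addendum and of the trivial-radical case (embedding $R\hookrightarrow\prod_{\mathfrak m}R/\mathfrak m$ and descending the block-scalar structure) is sound in outline.

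There are, however, two genuine gaps. First, a local one: from $g\,x_\alpha(t)=x_\alpha(t)\,g$ for all $t\in R$ you cannot ``compare the coefficient of $t^n$'' over an arbitrary commutative ring --- a polynomial identity valid at every element of $R$ does not force its coefficients to vanish (take $R=\mathbb F_2$), and $g$ lies in $G(R)$, not $G(R[t])$, so you cannot evaluate at an indeterminate either. This particular step is dispensable (once block-diagonality is known, the $V_\mu\to V_{\mu+\alpha}$ component of the relation at $t=1$ already gives $g_{\mu+\alpha}X_\alpha=X_\alpha g_\mu$), but it should not be asserted. Second, and more seriously: the entire content of the Abe--Hurley theorem in the case $\operatorname{rank}\Phi\ge 2$ with nontrivial Jacobson radical is the step you dispatch in one sentence (``the Chevalley commutator formula together with normality and the Weyl representatives let one separate weights and transport scalars''). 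Over a ring like $\mathbb Z/4$ the eigenvalues $t^{\langle\mu,\alpha\rangle}$, $t\in R^*=\{\pm1\}$, separate almost nothing, and showing that a centralizing element is nonetheless weight-preserving and acts by scalars forming a character of $\Lambda_\pi$ trivial on $\Lambda_r$ is a substantial multi-page computation in \cite{EA&JH}; naming the tools is not a proof. Related to this, your ``propagate a single scalar from the highest-weight vectors'' step silently assumes the scalars on distinct irreducible constituents whose weights lie in the same $\Lambda_r$-coset agree, and that the resulting weight-wise scalar operator actually lies in $T_\pi(\Phi,R)\cong\mathrm{Hom}(\Lambda_\pi,R^*)$; both of these require the defining equations of $G$ and are part of what must be verified.
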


\begin{cor}
    \normalfont
    Let $\Phi$ and $R$ be as in Theorem~\ref{thm:EA&JH}. If $G$ is of adjoint type, then $Z(G_\pi(\Phi, R))$ is trivial.
\end{cor}


\subsection{Certain Commutator Relations} 

Let $R$ be a commutative ring with unity and $J$ be an ideal of $R$. The natural projection map $R \longrightarrow R/J$ induces a map
\[
    \phi: G_{\pi} (\Phi, R) \longrightarrow G_{\pi} (\Phi, R/J).
\]
Define $G_\pi (\Phi, J) = \ker (\phi)$, called the \textbf{principal congruent}\index{subgroups of $G(R)$!principal congruence subgroup}\label{nomencl:G(J)} subgroup of $G_\pi (\Phi, R)$ of level $J$ and $G_{\pi} (\Phi, R, J) = \phi^{-1} (Z(G_\pi (\Phi, R/J)))$, called the \textbf{full congruence}\index{subgroups of $G(R)$!full congruence subgroup}\label{nomencl:G(R,J)} subgroup of $G_\pi (\Phi, R)$ of level $J$.

Let $E_\pi (\Phi, J)$ be the subgroup of $E_\pi (\Phi, R)$ generated by all $x_\alpha (t) \ (t \in J, \alpha \in \Phi)$ and let $E_\pi (\Phi, R, J)$ be the normal subgroup of $E_\pi (\Phi, R)$ generated by $E_\pi (\Phi, J)$. We call the subgroup $E_\pi (\Phi, R, J)$ of $E_\pi (\Phi, R)$ a \textbf{relative elementary}\index{subgroups of $G(R)$!relative elementary subgroups}\label{nomencl:E(R,J)} subgroup at level $J$.

\begin{thm}[{L. N. Vaserstein \cite{LV}}]\label{thm:commrelations}
    \normalfont
    Let $\Phi$ be an irreducible root system of rank $\geq 2$ and \(R\) a commutative ring with unity. If $J$ is an ideal of $R$, then the following commutator relations hold: 
    \[
        [E_\pi(\Phi, R, J), G_\pi(\Phi, R)] \subset E_\pi(\Phi, R, J) \quad \text{and} \quad [E_\pi(\Phi, R), G_\pi(\Phi, R, J)] \subset E_\pi(\Phi, R, J).
    \]
    Except in the cases where $\Phi = B_2$ or $G_2$ and $R$ has a residue field with two elements, these inclusions are equalities.
\end{thm}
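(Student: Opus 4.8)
The plan is to prove the two displayed inclusions separately, regarding the normality of the relative elementary subgroup as the core of the matter, and then to deduce the equalities. For the first inclusion, note that $E_\pi(\Phi, R, J)$ is by definition the normal closure in $E_\pi(\Phi, R)$ of the elements $x_\alpha(t)$ with $t \in J$, while Theorem~\ref{E is normal in G} already supplies $g\, E_\pi(\Phi, R)\, g^{-1} = E_\pi(\Phi, R)$ for every $g \in G_\pi(\Phi, R)$. A direct computation then shows that $[E_\pi(\Phi, R, J), G_\pi(\Phi, R)] \subseteq E_\pi(\Phi, R, J)$ reduces to the single claim
\[
    g\, x_\alpha(t)\, g^{-1} \in E_\pi(\Phi, R, J) \qquad (g \in G_\pi(\Phi, R),\ \alpha \in \Phi,\ t \in J).
\]
Indeed, for a typical generator $\varepsilon\, x_\alpha(t)\, \varepsilon^{-1}$ of $E_\pi(\Phi, R, J)$, one has $g\varepsilon\, x_\alpha(t)\, \varepsilon^{-1} g^{-1} = (g\varepsilon g^{-1})\,(g\, x_\alpha(t)\, g^{-1})\,(g\varepsilon g^{-1})^{-1}$ with $g\varepsilon g^{-1} \in E_\pi(\Phi, R)$ by Theorem~\ref{E is normal in G}, so the claim combined with the $E_\pi(\Phi, R)$-normality of $E_\pi(\Phi, R, J)$ finishes this reduction.

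I would establish the displayed claim by localization. Over a semilocal (in particular, local) ring the decomposition $G_\pi(\Phi, R) = E_\pi(\Phi, R)\, T_\pi(\Phi, R)$ recorded earlier lets one write $g = \varepsilon h$ with $\varepsilon$ elementary and $h = h(\chi)$ in the split torus; the torus action $h(\chi)\, x_\alpha(t)\, h(\chi)^{-1} = x_\alpha(\chi(\alpha) t)$ then turns the conjugate into $\varepsilon\, x_\alpha(\chi(\alpha) t)\, \varepsilon^{-1}$ with $\chi(\alpha) t \in J$, which lies in $E_\pi(\Phi, R, J)$ by the very definition of the relative elementary subgroup. To descend from this local statement to the global ring $R$, I would invoke a Quillen--Suslin type local--global principle: the conjugate $g\, x_\alpha(t)\, g^{-1}$ becomes relative-elementary over each localization $R_{\mathfrak m}$, and a patching argument---introducing an auxiliary variable, clearing denominators, and controlling the discrepancy between two elementary factorizations that agree after localization---pulls the membership back to $E_\pi(\Phi, R, J)$ over $R$ itself.

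The second inclusion $[E_\pi(\Phi, R), G_\pi(\Phi, R, J)] \subseteq E_\pi(\Phi, R, J)$ I would split into two stages. Since every $h \in G_\pi(\Phi, R, J)$ satisfies $\phi(h) \in Z(G_\pi(\Phi, R/J))$, each commutator $[x_\alpha(u), h]$ is sent to $1$ by $\phi$ and hence lies in the principal congruence subgroup $G_\pi(\Phi, J) = \ker\phi$; this is the easy congruence half. The harder half is to upgrade membership in $G_\pi(\Phi, J)$ to membership in the smaller group $E_\pi(\Phi, R, J)$, and here I would localize once more: modulo $J$ the element $h$ is central, so over each $R_{\mathfrak m}$ the commutator is expressible through elementary generators with entries in $J_{\mathfrak m}$, whereupon the same local--global descent returns the conclusion over $R$.

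For the equalities it remains to prove the reverse inclusions, which amounts to realizing each $x_\gamma(t)$ ($t \in J$) as a commutator of the required form. Writing $\gamma = \alpha + \beta$ inside a rank-$2$ subsystem of type $A_2$ and applying the Chevalley commutator formula (Proposition~\ref{prop:chevalley commutator formula}) in its single-term shape $[x_\beta(1), x_\alpha(t)] = x_{\alpha+\beta}(N_{\beta,\alpha}\, t)$ with $N_{\beta,\alpha} = \pm 1$, one exhibits $x_\gamma(t)$ (up to sign) as $[x_\beta(\pm 1), x_\alpha(t)]$; since $x_\beta(\pm 1) \in E_\pi(\Phi, R)$ and $x_\alpha(t) \in E_\pi(\Phi, J) \subseteq E_\pi(\Phi, R, J) \subseteq G_\pi(\Phi, R, J)$, this element lies simultaneously in $[E_\pi(\Phi, R), G_\pi(\Phi, R, J)]$ and, after a symmetric manipulation, in $[E_\pi(\Phi, R, J), G_\pi(\Phi, R)]$, giving both equalities. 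When $\Phi$ has rank $\geq 3$ or $\Phi \neq B_2, G_2$, there are always enough such $A_2$-pairs to reach every root with a unit structure constant; the excluded cases are exactly those in which every commutator producing the remaining long roots carries a coefficient $\pm 2$ (for $B_2$) or $\pm 2, \pm 3$ (for $G_2$) that becomes non-invertible---indeed $2 = 0$---over a residue field with two elements, so the extraction, and with it the equality, fails precisely there. The main obstacle throughout is the local--global descent used in the two inclusions: reducing to local rings is routine via the torus decomposition, but showing that a conjugate or commutator which is relative-elementary over every localization is already relative-elementary over $R$ is the delicate patching step and the genuine technical heart of the theorem.
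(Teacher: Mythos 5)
First, a point of reference: the paper does not prove Theorem~\ref{thm:commrelations} at all --- it is quoted from Vaserstein~\cite{LV} --- so the meaningful comparison is with the twisted analogue, Theorem~\ref{Ch5_mainthm1}, which the paper proves in full, and there the route is genuinely different from yours. For the normality inclusion the paper uses Stein relativization rather than localization: one passes to the ring $R' = \{(r,s) \in R \times R \mid r - s \in J\}$ with ideal $J' = \{(r,0) \mid r \in J\}$, identifies the relative elementary group over $(R',J')$ with $E(R') \cap G(J')$, and deduces relative normality directly from the absolute normality theorem (your Theorem~\ref{E is normal in G}) applied to $R'$; see Proposition~\ref{normal}. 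For the inclusion $[E_\pi(\Phi,R), G_\pi(\Phi,R,J)] \subset E_\pi(\Phi,R,J)$ the paper first establishes $E_\pi(\Phi,R,J) = [E_\pi(\Phi,R), E_\pi(\Phi,J)]$ by explicit rank-two calculations (so in particular $E_\pi(\Phi,R)$ is perfect), notes that $(E_\pi(\Phi,R) \cap G_\pi(\Phi,J))/E_\pi(\Phi,R,J)$ is abelian (the analogue of Corollary~\ref{mixcom}), and then observes that $h \mapsto [h,g]\,E_\pi(\Phi,R,J)$ is a homomorphism from a perfect group to an abelian one, hence trivial. This disposes of the second inclusion with no localization whatsoever.

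The genuine gaps in your proposal are the two ``local--global descent'' steps, which you assert rather than prove and which are, as you yourself say, the entire content of your version of the argument. The naive principle ``if $\psi_{\mathfrak m}(c) \in E_\pi(\Phi, R_{\mathfrak m}, J_{\mathfrak m})$ for every maximal ideal $\mathfrak m$ then $c \in E_\pi(\Phi,R,J)$'' is false as stated; the usable form requires introducing a variable, arranging $c(0)=1$, invoking Taddei's dilation lemma (Lemma~\ref{lemma:GT}) to produce $s \in S_{\mathfrak m}$ with $c(sX)$ relative-elementary over $R[X]$, and then patching over a finite cover by exploiting additivity in the parameter --- exactly the machinery the paper deploys at length in Propositions~\ref{local:[x,g] in E(R,J)} and~\ref{general:[x,g] in E(R,J)} for a different theorem. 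Your one-sentence gesture at ``controlling the discrepancy'' does not constitute this argument, and for the second inclusion the sketch is thinner still (you would additionally need the decomposition $G_\sigma(R_{\mathfrak m},J_{\mathfrak m})$ as in Corollary~\ref{G(R,J)=UTV} before descending). Finally, the equality discussion is too coarse: structure constants $\pm 2$ also occur in $B_\ell$, $C_\ell$ $(\ell \ge 3)$ and $F_4$, where the equalities nevertheless hold because each offending root can be reached through an alternative rank-two configuration; pinning down $B_2$ and $G_2$ over a residue field with two elements as the \emph{exact} exceptional locus requires both a positive argument in all the other multiply-laced cases and an actual counterexample in the two excluded ones, neither of which your sketch supplies.
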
  

\begin{cor}
    \normalfont
    Let $\Phi$ and $R$ be as in Theorem~\ref{thm:commrelations}. If $H$ is a subgroup of $G_\pi (\Phi, R)$ such that 
    \[
        E_\pi (\Phi, R, J) \subset H \subset G_\pi (\Phi, R, J)
    \]
    where $J$ is an ideal of $R$. Then $H$ is normal in $G_\pi (\Phi, R)$.
\end{cor}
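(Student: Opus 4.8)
The plan is to prove normality by establishing the commutator containment $[G_\pi(\Phi,R),H]\subseteq H$, since this immediately yields $g h g^{-1}=[g,h]\,h\in H$ for every $g\in G_\pi(\Phi,R)$ and $h\in H$. Because $E_\pi(\Phi,R,J)\subseteq H$, the containment $[G_\pi(\Phi,R),H]\subseteq H$ will follow once I show $[G_\pi(\Phi,R),H]\subseteq E_\pi(\Phi,R,J)$; and since $H\subseteq G_\pi(\Phi,R,J)$, each generating commutator $[g,h]$ with $h\in H$ already lies in $[G_\pi(\Phi,R),G_\pi(\Phi,R,J)]$. Thus the whole statement reduces to the single key containment
\[
    [G_\pi(\Phi,R),\,G_\pi(\Phi,R,J)]\subseteq E_\pi(\Phi,R,J),
\]
which says precisely that $G_\pi(\Phi,R,J)/E_\pi(\Phi,R,J)$ is central in $G_\pi(\Phi,R)/E_\pi(\Phi,R,J)$.

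To set this up I would first record that both relevant subgroups are normal in $G_\pi(\Phi,R)$. The first relation of Theorem~\ref{thm:commrelations} gives $[E_\pi(\Phi,R,J),G_\pi(\Phi,R)]\subseteq E_\pi(\Phi,R,J)$, i.e.\ $E_\pi(\Phi,R,J)\trianglelefteq G_\pi(\Phi,R)$; and $G_\pi(\Phi,R,J)=\phi^{-1}\bigl(Z(G_\pi(\Phi,R/J))\bigr)$ is normal as the preimage of a central (hence normal) subgroup under $\phi$. Moreover one has the chain $E_\pi(\Phi,R,J)\subseteq G_\pi(\Phi,J)=\ker\phi\subseteq G_\pi(\Phi,R,J)$, the first inclusion because $\ker\phi$ is normal and contains the generators $x_\alpha(t)$ $(t\in J)$ of $E_\pi(\Phi,J)$. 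Consequently the conjugation action of $G_\pi(\Phi,R)$ descends to an action on the quotient $W:=G_\pi(\Phi,R,J)/E_\pi(\Phi,R,J)$, and the target is exactly that this action is trivial.

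The elementary part of the action is free of charge: the second relation of Theorem~\ref{thm:commrelations} gives $[E_\pi(\Phi,R),G_\pi(\Phi,R,J)]\subseteq E_\pi(\Phi,R,J)$, so $E_\pi(\Phi,R)$ acts trivially on $W$ — equivalently, $H$ is already normalized by $E_\pi(\Phi,R)$. Since $E_\pi(\Phi,R)\trianglelefteq G_\pi(\Phi,R)$, the $G_\pi(\Phi,R)$-action on $W$ therefore factors through $G_\pi(\Phi,R)/E_\pi(\Phi,R)=K_1(\Phi,R)$. The main obstacle is to upgrade "normalized by $E_\pi(\Phi,R)$" to "normalized by $G_\pi(\Phi,R)$", that is, to annihilate this residual $K_1$-action. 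My plan here is to apply $\phi$ to a commutator $[g,h]$ with $g\in G_\pi(\Phi,R)$, $h\in G_\pi(\Phi,R,J)$: as $\phi(h)$ is central in $G_\pi(\Phi,R/J)$, we get $\phi([g,h])=[\phi(g),\phi(h)]=1$, whence $[g,h]\in\ker\phi=G_\pi(\Phi,J)$ and so $[G_\pi(\Phi,R),G_\pi(\Phi,R,J)]\subseteq G_\pi(\Phi,J)$.

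I expect the genuinely hard step to be the final descent from $G_\pi(\Phi,J)$ down to $E_\pi(\Phi,R,J)$: the argument so far lands the commutators only in the principal congruence subgroup $G_\pi(\Phi,J)$, and the gap $G_\pi(\Phi,J)/E_\pi(\Phi,R,J)$ (a relative $K_1$-type group) is exactly what must be shown not to detect these commutators. Note that the abelianness of $K_1(\Phi,R)$ alone does not formally close this gap, since combining $[G_\pi(\Phi,R),G_\pi(\Phi,R,J)]\subseteq G_\pi(\Phi,J)$ with $[E_\pi(\Phi,R),G_\pi(\Phi,R,J)]\subseteq E_\pi(\Phi,R,J)$ leaves the non-elementary contribution unresolved. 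I therefore anticipate invoking the full force of the commutator calculus underlying Theorem~\ref{thm:commrelations} — in particular its equality clause, together with localization and the explicit description of generators of the congruence subgroups — to conclude $[G_\pi(\Phi,R),G_\pi(\Phi,R,J)]\subseteq E_\pi(\Phi,R,J)$. Once this is secured, the sandwich principle finishes the proof: $[G_\pi(\Phi,R),H]\subseteq E_\pi(\Phi,R,J)\subseteq H$, so $g H g^{-1}\subseteq H$ for all $g$, and $H$ is normal in $G_\pi(\Phi,R)$.
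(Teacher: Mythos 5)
Your reduction is set up correctly, and the intermediate facts you record along the way are all right: $E_\pi(\Phi,R,J)$ and $G_\pi(\Phi,R,J)$ are normal in $G_\pi(\Phi,R)$, the commutators $[g,h]$ with $h\in G_\pi(\Phi,R,J)$ land in $G_\pi(\Phi,J)$, and $E_\pi(\Phi,R)$ already normalizes $H$. But the proof is not finished: everything hinges on the containment $[G_\pi(\Phi,R),G_\pi(\Phi,R,J)]\subseteq E_\pi(\Phi,R,J)$, which you defer and do not prove, and which does not follow from Theorem~\ref{thm:commrelations} --- not even from its equality clause, since the equalities asserted there are $E_\pi(\Phi,R,J)=[E_\pi(\Phi,R,J),G_\pi(\Phi,R)]=[E_\pi(\Phi,R),G_\pi(\Phi,R,J)]$, and every commutator appearing still has one \emph{elementary} factor. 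To see that the deferred step is genuinely out of reach of these relations, specialize to $J=R$: then $E_\pi(\Phi,R,R)=E_\pi(\Phi,R)$ and $G_\pi(\Phi,R,R)=G_\pi(\Phi,R)$, so your target becomes $[G_\pi(\Phi,R),G_\pi(\Phi,R)]\subseteq E_\pi(\Phi,R)$, i.e.\ that $G_\pi(\Phi,R)/E_\pi(\Phi,R)$ is abelian. That is not a consequence of the standard commutator formulas; it is exactly the kind of statement addressed by the much harder, and only conditional, nilpotency results for $K_1$ (cf.\ Hazrat--Vavilov \cite{RH&NV}), and, as you yourself observe, even abelianness of $K_1$ would not suffice, since you would still need $E_\pi(\Phi,R)\cap G_\pi(\Phi,J)\subseteq E_\pi(\Phi,R,J)$. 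So the plan to close the gap by ``invoking the full force of the commutator calculus underlying Theorem~\ref{thm:commrelations}'' cannot work; the decisive step is asserted rather than proved.

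What the quoted theorem does give, in one line, is the weaker but standard conclusion that $H$ is normalized by the elementary subgroup: $[E_\pi(\Phi,R),H]\subseteq[E_\pi(\Phi,R),G_\pi(\Phi,R,J)]\subseteq E_\pi(\Phi,R,J)\subseteq H$. This is precisely how the thesis argues in the twisted setting (Corollary~\ref{cor:converse}), where the conclusion is deliberately stated as ``normalized by $E'_{\pi,\sigma}(\Phi,R)$'' rather than ``normal in $G_{\pi,\sigma}(\Phi,R)$''. If you want normality in all of $G_\pi(\Phi,R)$, you must import an additional input beyond the two displayed relations (for instance, a result of the form $[G_\pi(\Phi,R),G_\pi(\Phi,R,J)]\subseteq E_\pi(\Phi,R,J)$ established elsewhere in Vaserstein's paper, where and when it holds), and you should state explicitly which result you are using and under what hypotheses.
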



\subsection{Normal Subgroups of Chevalley Groups}

\begin{thm}[{L. N. Vaserstein \cite{LV}}, E. Abe \cite{EA3}]\label{thm:normalsubgroups}
    \normalfont
    Let \(\Phi\) be an irreducible root system of rank $\geq 2$ and \(R\) a commutative ring with unity. Assume $1/ 2 \in R$ if $\Phi \sim A_\ell, B_\ell, F_4$, and $1/3 \in R$ if $\Phi \sim G_2$. If $H$ is a subgroup of $G_\pi(\Phi, R)$ normalized by $E_\pi(\Phi, R)$, then there exists a unique ideal $J$ of $R$ such that 
    \[
        E_\pi(\Phi, R, J) \subset H \subset G_\pi (\Phi, R, J).
    \]
\end{thm}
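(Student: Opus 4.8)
The plan is to follow the strategy of Vaserstein and Abe: attach to $H$ a canonical ideal $J$ (its \emph{level}), prove the two inclusions separately, and then check uniqueness. Throughout, the hypothesis that $\Phi$ is irreducible of rank $\geq 2$ is used via the centralizer theorem (Theorem~\ref{thm:EA&JH}) and via the connectivity of $\Phi$ by rank-$2$ subsystems recorded in the preliminaries.

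First I would define $J$. Since $H$ is normalized by $E := E_\pi(\Phi, R)$, for every $h \in H$, $\alpha \in \Phi$ and $t \in R$ the commutator $[x_\alpha(t), h] = x_\alpha(t)\, h\, x_\alpha(-t)\, h^{-1}$ lies in $H$ (both $x_\alpha(t) h x_\alpha(-t)$ and $h^{-1}$ are in $H$). Expanding such commutators and reading off root coordinates through the Chevalley commutator formula (Proposition~\ref{prop:chevalley commutator formula}) extracts ring elements that form the ``elementary content'' of $H$. I would let $J$ be the ideal of $R$ generated by all these extracted coordinates. The design of the extraction guarantees the key property: modulo $J$, every commutator $[x_\alpha(t), h]$ with $h \in H$ becomes trivial.

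This property immediately yields the upper inclusion $H \subseteq G_\pi(\Phi, R, J)$. Indeed, the image $\bar H$ of $H$ in $G_\pi(\Phi, R/J)$ then commutes with every elementary generator $x_\alpha(\bar t)$, so $\bar H \subseteq C_{G_\pi(\Phi, R/J)}\bigl(E_\pi(\Phi, R/J)\bigr)$. By the centralizer computation of Theorem~\ref{thm:EA&JH} (applicable since $\mathrm{rank}\,\Phi \geq 2$), this centralizer equals $Z\bigl(G_\pi(\Phi, R/J)\bigr)$, whence $\bar H$ is central and $H \subseteq \phi^{-1}\bigl(Z(G_\pi(\Phi, R/J))\bigr) = G_\pi(\Phi, R, J)$.

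The hard part is the lower inclusion $E_\pi(\Phi, R, J) \subseteq H$, i.e.\ producing, up to the normal closure under $E$, an element $x_\alpha(t) \in H$ for every $\alpha \in \Phi$ and every $t \in J$. Here I would argue by localization: membership can be tested after localizing at each maximal ideal $\mathfrak m$, and over the semilocal ring $R_{\mathfrak m}$ one gains the structural input $G_\pi^0 = E_\pi T_\pi$, which makes the isolation of single root elements tractable. The engine is a commutator calculus: starting from an element of $H$ congruent to a central element modulo $J$, repeated commutation with suitable $x_\beta(u)$ isolates a factor $x_\gamma(c\,t)$ with $t \in J$ and $c$ a structure constant; one then propagates from one root to \emph{all} roots using the fact that any two roots of an irreducible system are joined by a chain of rank-$2$ subsystems of type $A_2$, $B_2$, or $G_2$. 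The hypotheses $1/2 \in R$ (types $A_\ell, B_\ell, F_4$) and $1/3 \in R$ (type $G_2$) enter precisely here: in subsystems with two root lengths the relevant structure constants are $\pm 2$ or $\pm 3$, and one must invert them to recover $x_\gamma(t)$ rather than $x_\gamma(2t)$ or $x_\gamma(3t)$. Finally, for uniqueness I would observe that the level of $E_\pi(\Phi, R, J)$ and that of $G_\pi(\Phi, R, J)$ are both exactly $J$, so sandwiching $H$ between groups of levels $J_1$ and $J_2$ forces $J_1 = J_2$. I expect the single most delicate point to be this root-by-root extraction, where the rank-$2$ computations and the denominators $2$ and $3$ must be handled uniformly across all admissible types.
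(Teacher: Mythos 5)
Your overall architecture (a level ideal $J$, the two inclusions, rank-$2$ propagation with the denominators $2$ and $3$, uniqueness from the levels of the bounding groups) matches the Vaserstein--Abe strategy that the paper follows for its twisted analogue (Theorem~\ref{Ch5_mainthm}). However, there is a genuine gap at the very first step, and it propagates: your ideal $J$ is defined by ``reading off root coordinates'' of the commutators $[x_\alpha(t),h]$ via the Chevalley commutator formula, but that formula only governs commutators of \emph{two unipotent root elements}; for a general $h\in G_\pi(\Phi,R)$ the element $[x_\alpha(t),h]$ has no canonical decomposition into root elements, so the ``extracted coordinates'' are not well defined. Consequently your claim that ``modulo $J$, every commutator $[x_\alpha(t),h]$ becomes trivial,'' from which you deduce the upper inclusion ``immediately,'' begs the question: showing $[x_\alpha(t),h]\in G_\pi(\Phi,J)$ (indeed, $\in E_\pi(\Phi,R,J)$) for all $h\in H$ is precisely the hard half of the theorem. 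In the paper's treatment the level is defined the other way around, as $J=\bigcup_\alpha\{t : x_\alpha(t)\in H\}$ (suitably interpreted per root class); the \emph{lower} inclusion $E_\pi(\Phi,R,J)\subset H$ then follows from the normal-closure computation (the analogue of Proposition~\ref{z to Rz} together with the rank-$2$ chain argument, which is where $1/2$ and $1/3$ enter), while the \emph{upper} inclusion is the delicate part: it needs the analysis of $U(R)\cap H$ and of $U(\mathrm{rad}\,R)\,T(R)\,U^{-}(R)\cap H$ (Proposition~\ref{prop:U(R) cap H subset U(J)}), localization at every maximal ideal and the semilocal decompositions $E(R,J)=U(J)H(R,J)U^{-}(J)$ and $G(R,J)=U(J)T(R,J)U^{-}(J)$ (Propositions~\ref{lavidecomposition} and~\ref{G(R,J)=UTV}), Taddei's patching lemma (Lemma~\ref{lemma:GT}) to remove the denominators from the localization, and finally the criterion $G(R,J)=\{x : [x,E]\subset E(R,J)\}$ (Corollary~\ref{C=G}) combined with the centralizer theorem~\ref{thm:EA&JH}.

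So you have inverted where the difficulty lies. Your sketch of the lower inclusion is essentially sound in spirit (isolating a single root element and propagating it along chains of $A_2$, $B_2$, $G_2$ subsystems, inverting the structure constants $\pm2,\pm3$), although localization is not needed there --- it is needed for the upper inclusion, where one must first prove $\psi_{\mathfrak m}(H)\subset G(R_S,J_S)$ using the simplicity of the elementary group over the residue field (the analogue of Proposition~\ref{local:H subset G(R,J)}) and then globalize. To repair the proof, replace your definition of $J$ by the root-element level $\{t: x_\alpha(t)\in H\}$, prove that this set is an ideal independent of $\alpha$ (this is where your rank-$2$ calculus belongs), deduce the lower inclusion from normality of $H$ under $E$, and then supply the full localization--patching argument for the upper inclusion rather than treating it as automatic.
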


\begin{cor}
    Let $\Phi$ and $R$ be as in Theorem~\ref{thm:normalsubgroups}. Then $H$ is a normal subgroup of $E_\pi (\Phi, R)$ if and only if there exists an ideal $J$ such that 
    \[
        E_\pi(\Phi, R, J) \subset H \subset G_\pi (\Phi, R, J) \cap E_\pi (\Phi, R).
    \]
\end{cor}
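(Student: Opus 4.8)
The plan is to obtain this corollary as a direct consequence of the two preceding results: Theorem~\ref{thm:normalsubgroups} supplies the necessity direction, and the second commutator inclusion of Theorem~\ref{thm:commrelations} supplies the sufficiency direction. The guiding observation throughout is that any subgroup $H \subseteq E_\pi(\Phi, R)$ is \emph{a fortiori} a subgroup of $G_\pi(\Phi, R)$, so the ambient-group machinery of the previous theorems applies without modification, and the only new ingredient is to intersect the resulting bounds with $E_\pi(\Phi, R)$.

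For the sufficiency direction, I would assume an ideal $J$ is given with $E_\pi(\Phi, R, J) \subseteq H \subseteq G_\pi(\Phi, R, J) \cap E_\pi(\Phi, R)$ and verify that $H$ is normal in $E_\pi(\Phi, R)$. Fix $g \in E_\pi(\Phi, R)$ and $h \in H$. Since $h \in G_\pi(\Phi, R, J)$, the inclusion $[E_\pi(\Phi, R), G_\pi(\Phi, R, J)] \subseteq E_\pi(\Phi, R, J)$ from Theorem~\ref{thm:commrelations} gives $[g, h] \in E_\pi(\Phi, R, J) \subseteq H$ (recall $[g,h] = ghg^{-1}h^{-1}$ in the paper's convention). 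Therefore $ghg^{-1} = [g,h]\, h \in H$, as $H$ is closed under multiplication, and normality follows.

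For the necessity direction, I would assume $H$ is normal in $E_\pi(\Phi, R)$. Then $H$ is in particular a subgroup of $G_\pi(\Phi, R)$ that is normalized by $E_\pi(\Phi, R)$, so Theorem~\ref{thm:normalsubgroups} produces a (unique) ideal $J$ with $E_\pi(\Phi, R, J) \subseteq H \subseteq G_\pi(\Phi, R, J)$. Because the hypothesis already places $H$ inside $E_\pi(\Phi, R)$, the upper bound may be intersected with $E_\pi(\Phi, R)$ to give $H \subseteq G_\pi(\Phi, R, J) \cap E_\pi(\Phi, R)$, which completes the required chain of inclusions.

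Honestly, there is no genuine obstacle here: the mathematical substance is entirely contained in Theorems~\ref{thm:commrelations} and \ref{thm:normalsubgroups}, and this corollary is a bookkeeping transfer of those statements from the level of $G_\pi(\Phi, R)$ down to its elementary subgroup. The single point warranting attention is conceptual rather than computational, namely the recognition that ``$H$ normal in $E_\pi(\Phi, R)$'' is precisely the hypothesis ``$H$ normalized by $E_\pi(\Phi, R)$'' demanded by Theorem~\ref{thm:normalsubgroups}, together with the fact that the second commutator inclusion delivers exactly the conjugation-invariance needed for the converse.
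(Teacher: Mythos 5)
Your proof is correct and follows exactly the route the paper intends: the paper leaves this corollary unproved but proves its twisted analogue (Corollary~\ref{cor:normal_subgroup_of_E}) by the same two-step argument, deducing necessity from the classification theorem and sufficiency from the commutator inclusion $[E,\,G(R,J)]\subseteq E(R,J)$ exactly as you do. Nothing further is needed.
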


A subgroup $H$ of a group $G$ is called \textit{characteristic}, if it is mapped into itself under any automorphism of $G$. In particular, any characteristic subgroup is normal. 

\begin{thm}[{\cite[Theorem 5]{LV}}]
    Let $\Phi$ and $R$ be as in Theorem~\ref{thm:normalsubgroups}. Then $E_\pi (\Phi, R)$ is a characteristic subgroup of $G_\pi (\Phi, R)$.
\end{thm}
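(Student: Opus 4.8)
The plan is to combine the classification of normal subgroups (Theorem~\ref{thm:normalsubgroups}) with the perfectness of the elementary subgroup. Write $G = G_\pi(\Phi,R)$ and $E = E_\pi(\Phi,R)$, and let $\varphi \in \operatorname{Aut}(G)$ be arbitrary; the goal is $\varphi(E) = E$. It suffices to prove $\varphi(E) \subseteq E$ for \emph{every} automorphism, since applying this to $\varphi^{-1}$ gives $\varphi^{-1}(E) \subseteq E$, i.e.\ $E \subseteq \varphi(E)$, and the two inclusions force equality. So the whole argument reduces to the single inclusion $\varphi(E)\subseteq E$.

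First I would record two structural facts about $E$. By Theorem~\ref{E is normal in G}, $E$ is normal in $G$; hence $\varphi(E)$ is again a normal subgroup of $G$, and in particular it is normalized by $E$. Secondly, since $\Phi$ is irreducible of rank $\geq 2$, the group $E$ is \emph{perfect}, i.e.\ $E = [E,E]$: each generator $x_\gamma(s)$ can be expressed through commutators of root elements by working inside the rank-$2$ subsystem ($A_2$, $B_2$ or $G_2$) containing $\gamma$, where the hypotheses $1/2 \in R$ (resp.\ $1/3 \in R$) remove the exceptional obstructions. Because $\varphi$ is an isomorphism, $\varphi(E) = [\varphi(E),\varphi(E)]$ is perfect as well.

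Next I would apply Theorem~\ref{thm:normalsubgroups} to the $E$-normalized subgroup $\varphi(E)$: there is a unique ideal $J$ of $R$ with
\[
    E_\pi(\Phi, R, J) \subseteq \varphi(E) \subseteq G_\pi(\Phi, R, J).
\]
Let $\phi\colon G \to G_\pi(\Phi, R/J)$ be the reduction homomorphism. The right-hand inclusion means $\phi(\varphi(E)) \subseteq Z(G_\pi(\Phi, R/J))$, which is abelian; but $\phi(\varphi(E))$ is a homomorphic image of the perfect group $\varphi(E)$, hence perfect, and a perfect subgroup of an abelian group is trivial. Therefore $\phi(\varphi(E)) = 1$, i.e.
\[
    \varphi(E) \subseteq \ker \phi = G_\pi(\Phi, J).
\]

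It remains to pass from the principal congruence subgroup back into $E$. Using perfectness once more,
\[
    \varphi(E) = [\varphi(E), \varphi(E)] \subseteq [G_\pi(\Phi, J), G_\pi(\Phi, J)],
\]
so the proof is completed by the commutator containment
\[
    [G_\pi(\Phi, J), G_\pi(\Phi, J)] \subseteq E_\pi(\Phi, R, J) \subseteq E,
\]
which gives $\varphi(E) \subseteq E$ and, by the symmetry noted at the outset, $\varphi(E) = E$; thus $E$ is characteristic. This last containment is exactly where I expect the main obstacle to lie. That a commutator of two elements of the principal congruence subgroup $G_\pi(\Phi,J)$ lands in the \emph{relative} elementary subgroup $E_\pi(\Phi,R,J)$ is a relative analogue of the commutator relations of Theorem~\ref{thm:commrelations}, and is the genuinely technical ingredient: it requires the relative commutator calculus for congruence subgroups rather than following formally from the normality statements already in hand. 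Once it is available, the characteristic property of $E$ is immediate from the perfectness-plus-classification scheme above.
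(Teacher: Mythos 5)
Up to its final step your argument is correct, and it runs parallel to the skeleton the paper uses for the twisted analogue (Corollary~\ref{char subgrp}): the reduction to a single inclusion, the normality and perfectness of $\varphi(E)$, the sandwich $E_\pi(\Phi,R,J)\subseteq\varphi(E)\subseteq G_\pi(\Phi,R,J)$ from Theorem~\ref{thm:normalsubgroups}, and the descent $\varphi(E)\subseteq G_\pi(\Phi,J)$ via ``perfect inside abelian is trivial'' are all fine. The gap is exactly where you suspect it, but it is worse than a technical ingredient to be supplied: the containment $[G_\pi(\Phi,J),G_\pi(\Phi,J)]\subseteq E_\pi(\Phi,R,J)$ is not a relative analogue of Theorem~\ref{thm:commrelations}. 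Every commutator formula available (Theorem~\ref{thm:commrelations}, and in the twisted setting Theorem~\ref{Ch5_mainthm1} and Corollary~\ref{C=G}) has an elementary or relative elementary group on at least one side; with both entries ranging over a congruence subgroup, your containment is precisely the assertion that the unstable relative $K_1$-group $G_\pi(\Phi,J)/E_\pi(\Phi,R,J)$ is abelian. Specializing to $J=R$ it already asserts $[G_\pi(\Phi,R),G_\pi(\Phi,R)]\subseteq E_\pi(\Phi,R)$, i.e.\ that $K_1(\Phi,R)$ is abelian, which is not a theorem for general commutative rings: the strongest known results are nilpotency statements (Bak; Hazrat--Vavilov \cite{RH&NV}), and those require finiteness hypotheses on $R$. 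So this step cannot be justified from anything in the paper or in \cite{LV}.

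The argument that works --- and the one the paper runs for the twisted analogue --- pushes the ideal $J$ up to $R$ rather than pushing $\varphi(E)$ down into $E$. From $\varphi(E)=[\varphi(E),\varphi(E)]\subseteq[G_\pi(\Phi,R,J),G_\pi(\Phi,R,J)]$ one needs only the elementary matrix computation $[G_\pi(\Phi,R,J),G_\pi(\Phi,R,J)]\subseteq G_\pi(\Phi,J^2)$ (write $g=z_1(1+a)$, $h=z_2(1+b)$ with $z_i$ central and $a,b$ having entries in $J$; then $[g,h]=[1+a,1+b]\equiv 1 \pmod{J^2}$), and the uniqueness of the ideal in Theorem~\ref{thm:normalsubgroups} then forces $J=J^2$. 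Since $E$ is the normal closure in $G$ of the single element $x_\alpha(1)$, $\varphi(E)$ is the normal closure of $\varphi(x_\alpha(1))$, so $J$ is finitely generated; Nakayama produces an idempotent $e$ with $J=eR$, and $s=1-e$ satisfies $s\equiv 1\pmod J$ and $sJ=0$, so that $E_\pi(\Phi,sR)$ centralizes $\varphi(E)$. But $C_G(\varphi(E))=\varphi(C_G(E))=\varphi(Z(G))$ is abelian by Theorem~\ref{thm:EA&JH}, while $E_\pi(\Phi,sR)$ is nonabelian for $s\neq 0$ in rank $\geq 2$; hence $s=0$, $J=R$, and $E=E_\pi(\Phi,R,R)\subseteq\varphi(E)$. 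Your symmetry argument with $\varphi^{-1}$ then gives equality. In short: keep your perfectness-plus-sandwich setup, but replace the unavailable commutator formula by the chain $J=J^2$, Nakayama, and the abelianness of the centralizer.
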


\subsection{Automorphisms of Chevalley Algebras}

We will investigate the $R$-algebra automorphisms of the Chevalley algebra $\mathcal{L}(\Phi, R)$. 

\vspace{2mm}

\noindent \textbf{Inner Automorphisms:} Let $G_{\text{ad}}(\Phi, R)$ be the elementary Chevalley group of type $\Phi$. By definition, each element of $G_{\text{ad}}(\Phi, R)$ is regarded as an automorphism of the Chevalley algebra $\mathcal{L}(\Phi, R)$. Such automorphisms are called \emph{inner automorphisms}.

\vspace{2mm}

\noindent \textbf{Graph automorphism:}
Let $\Phi$ be an irreducible root system and let $\Delta$ be a fixed simple system of $\Phi$. 
Consider a non-trivial angle-preserving permutation $\rho$ of the simple roots (such a $\rho$ induces an automorphism of the Coxeter graph). 

\begin{enumerate}
    \item If all roots in $\Phi$ are of equal length, then $\rho$ extends to an automorphism of the root system $\Phi$.
    \item If $\Phi$ contains roots of unequal lengths and $p > 1$ is the square of the ratio of the lengths of long to short roots, then $\rho$ must interchange long and short roots. In this case, $\rho$ extends to a permutation of all roots in $\Phi$ that satisfies the following conditions:
    \begin{enumerate}
        \item $\rho$ interchanges long and short roots.
        \item The map $\hat{\rho}$ defined by
        \[
        \alpha \longmapsto \begin{cases}
            \rho(\alpha) & \text{if $\alpha$ is long}, \\
            \frac{1}{p} \cdot \rho(\alpha) & \text{if $\alpha$ is short}
        \end{cases}
        \]
        is an isomorphism of root system $\Phi$.
    \end{enumerate}
\end{enumerate}

The following table summarizes all possible choices for $\rho$:

\begin{center}
    \begin{tabular}{|c|c|c|}
        \hline
        \textbf{Root System} & \textbf{Graph Automorphism $\rho$} & \textbf{Order of $\rho$} \\ 
        \hline
        $A_n \, (n \geq 2)$ & \begin{tikzpicture}[scale=1]
            \node[draw,circle,inner sep=0.6mm] at (0,0) {};
            \node[draw,circle,inner sep=0.6mm] at (1,0) {};
            \node[draw,circle,inner sep=0.6mm] at (2,0) {};
            \node[draw,circle,inner sep=0.6mm] at (3,0) {};
            \node at (1.5,0) {$\cdots$};
            \draw (0.08,0)--(0.92,0); 
            \draw (2.08,0)--(2.92,0);
            \draw[<->] (0.08,0.1) to [out=60,in=120] (2.92,0.1);
            \draw[<->] (1.08,0.1) to [out=60,in=120] (1.92,0.1);
        \end{tikzpicture} & $2$ \\ 
        \hline
        $D_n \, (n \geq 4)$ & \begin{tikzpicture}[scale=1]
            \node[draw,circle,inner sep=0.6mm] at (0,0) {};
            \node[draw,circle,inner sep=0.6mm] at (1,0) {};
            \node[draw,circle,inner sep=0.6mm] at (2,0) {};
            \node[draw,circle,inner sep=0.6mm] at (3,0) {};
            \node[draw,circle,inner sep=0.6mm] at (4.0,0.75) {};
            \node[draw,circle,inner sep=0.6mm] at (4.0,-0.75) {};
            \node at (1.5,0) {$\cdots$};
            \draw (0.08,0)--(0.92,0);
            \draw (2.08,0)--(2.92,0);
            \draw (3.08,0)--(3.94,0.72);
            \draw (3.08,0)--(3.94,-0.72);
            \draw[<->] (4.15,0.75) to [out=-45,in=45] (4.15,-0.75);
        \end{tikzpicture} & $2$ \\ 
        \hline
        $D_4$ & \begin{tikzpicture}[scale=1]
            \node[draw,circle,inner sep=0.6mm] at (-1,0) {};
            \node[draw,circle,inner sep=0.6mm] at (0,0) {};
            \node[draw,circle,inner sep=0.6mm] at (0.5,0.87) {};
            \node[draw,circle,inner sep=0.6mm] at (0.5,-0.87) {};
            \draw (-0.92,0)--(-0.08,0);
            \draw (0.04,0.07)--(0.46,0.8);
            \draw (0.04,-0.07)--(0.46,-0.8);
            \draw[->] (-0.99,0.1) to [out=84,in=156] (0.4,0.917);
            \draw[->] (0.6,0.8) to [out=-37,in=37] (0.6,-0.8);
            \draw[->] (0.4,-0.917) to [out=-156,in=-84] (-0.99,-0.1);
        \end{tikzpicture} & $3$ \\ 
        \hline
        $E_6$ & \begin{tikzpicture}[scale=1]
            \node[draw,circle,inner sep=0.6mm] at (0,0) {};
            \node[draw,circle,inner sep=0.6mm] at (1,0) {};
            \node[draw,circle,inner sep=0.6mm] at (2,0) {};
            \node[draw,circle,inner sep=0.6mm] at (3,0) {};
            \node[draw,circle,inner sep=0.6mm] at (4,0) {};
            \node[draw,circle,inner sep=0.6mm] at (2,1) {};
            \draw (0.08,0)--(0.92,0);
            \draw (1.08,0)--(1.92,0);
            \draw (2.08,0)--(2.92,0);
            \draw (3.08,0)--(3.92,0);
            \draw (2,0.08)--(2,0.92);
            \draw[<->] (0.08,-0.1) to [out=-60,in=-120] (3.92,-0.1);
            \draw[<->] (1.08,-0.1) to [out=-60,in=-120] (2.92,-0.1);
        \end{tikzpicture} & $2$ \\
        \hline
        $B_2$ & \begin{tikzpicture}[scale=1]
            \node[draw,circle,inner sep=0.6mm] at (0,0) {};
            \node[draw,circle,inner sep=0.6mm] at (1.5,0) {};
            \node at (0.75,0) {$>$};
            \draw (0.08,0.04)--(1.42,0.04); 
            \draw (0.08,-0.04)--(1.42,-0.04);
            \draw[<->] (0.08,0.1) to [out=60,in=120] (1.42,0.1);
        \end{tikzpicture} & $2$ \\
        \hline
        $F_4$ & \begin{tikzpicture}[scale=1]
            \node[draw,circle,inner sep=0.6mm] at (0,0) {};
            \node[draw,circle,inner sep=0.6mm] at (1.5,0) {};
            \node[draw,circle,inner sep=0.6mm] at (3,0) {};
            \node[draw,circle,inner sep=0.6mm] at (4.5,0) {};
            \node at (2.25,0) {$>$};
            \draw (0.08,0)--(1.42,0); 
            \draw (1.58,-0.04)--(2.92,-0.04);
            \draw (1.58,0.04)--(2.92,0.04); 
            \draw (3.08,0)--(4.42,0);
            \draw[<->] (1.58,0.1) to [out=60,in=120] (2.92,0.1);
            \draw[<->] (0.08,0.1) to [out=60,in=120] (4.42,0.1);
        \end{tikzpicture} & $2$ \\
        \hline
        $G_2$ & \begin{tikzpicture}[scale=1]
            \node[draw,circle,inner sep=0.6mm] at (0,0) {};
            \node[draw,circle,inner sep=0.6mm] at (1.5,0) {};
            \node at (0.75,0) {$>$};
            \draw (0.08,0.06)--(1.42,0.06);
            \draw (0.08,0)--(1.42,0);
            \draw (0.08,-0.06)--(1.42,-0.06);
            \draw[<->] (0.08,0.1) to [out=60,in=120] (1.42,0.1);
        \end{tikzpicture} & $2$ \\
        \hline
    \end{tabular}
\end{center}

Let $\rho$ be an automorphism of the graph described above (possibly trivial). This automorphism naturally induces an automorphism of the Lie algebra $\mathcal{L}(\Phi, R)$, which is also denoted by $\rho$ (see \cite{AO&EV}).

Let $\rho_1, \dots, \rho_k$ be all distinct automorphisms of $\mathcal{L}(\Phi, R)$ induced by the different symmetries of the given simple system $\Delta$ of $\Phi$ and let $R = R_1 \oplus \cdots \oplus R_k$ be a decomposition of the ring $R$ into a direct sum of ideals. Define an automorphism $\rho$ of the algebra $\mathcal{L}(\Phi, R) = \mathcal{L}(\Phi, R_1) \oplus \cdots \oplus \mathcal{L}(\Phi, R_k)$ by the rule 
\[
\rho(x_1 + \cdots + x_k) = \rho_1(x_1) + \cdots + \rho_k(x_k),
\]
where $x_i \in \mathcal{L}(\Phi, R_i)$. Such an automorphism $\rho$ is called a \emph{graph automorphism} of the algebra $\mathcal{L}(\Phi, R)$.
Clearly, the set of all graph automorphisms forms a group that is isomorphic to the group
\[
\mathcal{D}(\Phi, R) = \left\{ \sum e_i \rho_i \,\middle|\, e_i \in R, \, e_i^2 = e_i, \, e_i e_j = 0 \text{ for } i \neq j, \, \sum e_i = 1 \right\}.
\]

\begin{thm}[{\cite[Theorem 1]{AK}}]
    \normalfont
    Let $R$ be a commutative ring with unity and $\Phi$ be an irreducible root system. Then every $R$-algebra automorphism of the Chevalley algebra $\mathcal{L}(\Phi, R)$ can be uniquely expressed as a composition of an inner automorphism and a graph automorphism. In particular,
    \[
        \text{Aut}_R(\mathcal{L}(\Phi, R)) \cong G_{\text{ad}}(\Phi, R) \rtimes \mathcal{D}(\Phi, R),
    \]
    where $\text{Aut}_R(\mathcal{L}(\Phi, R))$ denote the group of all $R$-algebra automorphisms of the Chevalley algebra $\mathcal{L}(\Phi, R)$. 
\end{thm}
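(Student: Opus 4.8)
The plan is to exhibit $\text{Aut}_R(\mathcal{L}(\Phi,R))$ as an internal semidirect product by verifying three things: that $G_{\mathrm{ad}}(\Phi,R)$ and $\mathcal{D}(\Phi,R)$ both sit inside it with $\mathcal{D}(\Phi,R)$ normalizing $G_{\mathrm{ad}}(\Phi,R)$, that the two subgroups meet trivially, and that their product is everything. The first point is largely formal. An element of $G_{\mathrm{ad}}(\Phi,R)$ acts as an $R$-algebra automorphism by the very definition of inner automorphism, while a graph automorphism $\rho=\sum_i e_i\rho_i$ acts diagonally across the idempotent splitting $R=\bigoplus_i R_i$ and sends each Chevalley generator $X_\alpha$ to a scalar multiple of $X_{\rho_i(\alpha)}$. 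Because $\rho$ permutes the root subgroups $x_\alpha(t)$ among themselves, conjugating a generator $\exp(t\,\mathrm{ad}\,X_\alpha)$ of $G_{\mathrm{ad}}(\Phi,R)$ by $\rho$ again lands in $G_{\mathrm{ad}}(\Phi,R)$; thus $\mathcal{D}(\Phi,R)$ normalizes $G_{\mathrm{ad}}(\Phi,R)$ and $G_{\mathrm{ad}}(\Phi,R)\cdot\mathcal{D}(\Phi,R)$ is a subgroup.

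The essential content is that every $\phi\in\text{Aut}_R(\mathcal{L}(\Phi,R))$ lies in this product. First I would arrange that $\phi$ normalizes the standard Cartan subalgebra $\mathcal{H}=\bigoplus_i RH_i$: the image $\phi(\mathcal{H})$ is again a split Cartan subalgebra, and the key reduction is to show it is conjugate to $\mathcal{H}$ by an element of $G_{\mathrm{ad}}(\Phi,R)$, after possibly refining the idempotent decomposition of $R$. Composing $\phi$ with that inner automorphism gives $\psi$ with $\psi(\mathcal{H})=\mathcal{H}$. Such a $\psi$ permutes the $\mathcal{H}$-weight spaces, that is, the root spaces $\mathcal{L}_\alpha$, and therefore induces, componentwise over the idempotents, an automorphism of $\Phi$. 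Using the decomposition $\text{Aut}(\Phi)=\Gamma\ltimes W$ from Section~\ref{subsec:Root Systems and Weyl Groups}, this automorphism is a diagram symmetry times a Weyl element; realizing the Weyl part by a product of the elements $w_\alpha(1)\in G_{\mathrm{ad}}(\Phi,R)$ and absorbing it, I may assume $\psi$ preserves $\Phi^+$ and permutes $\Delta$ exactly by a graph automorphism $\rho$.

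Multiplying by $\rho^{-1}$ then leaves an automorphism $\eta$ that stabilizes every root line $RX_\alpha$ and fixes $\mathcal{H}$. Writing $\eta(X_\alpha)=c(\alpha)X_\alpha$ and applying $\eta$ to the Chevalley relations $[X_\alpha,X_{-\alpha}]=H_\alpha$ and $[X_\alpha,X_\beta]=N_{\alpha,\beta}X_{\alpha+\beta}$ forces $c(\alpha)c(-\alpha)=1$ and $c(\alpha)c(\beta)=c(\alpha+\beta)$, so $\alpha\mapsto c(\alpha)$ is a character of the root lattice $\Lambda_r$; hence $\eta=h(\chi)$ for some $\chi\in\mathrm{Hom}(\Lambda_r,R^{*})$, an element of the split maximal torus $T_{\mathrm{ad}}(\Phi,R)\subset G_{\mathrm{ad}}(\Phi,R)$. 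This places $\phi$ in $G_{\mathrm{ad}}(\Phi,R)\cdot\mathcal{D}(\Phi,R)$. For uniqueness I would record the whole reduction as a well-defined \emph{diagram component} homomorphism $\text{Aut}_R(\mathcal{L}(\Phi,R))\to\mathcal{D}(\Phi,R)$, well defined because inner automorphisms contribute only Weyl elements, which die in the quotient, whose restriction to $\mathcal{D}(\Phi,R)$ is the identity and whose kernel is exactly $G_{\mathrm{ad}}(\Phi,R)$. The resulting split short exact sequence $1\to G_{\mathrm{ad}}(\Phi,R)\to\text{Aut}_R(\mathcal{L}(\Phi,R))\to\mathcal{D}(\Phi,R)\to 1$ yields both $G_{\mathrm{ad}}(\Phi,R)\cap\mathcal{D}(\Phi,R)=1$ and the asserted semidirect product.

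The main obstacle is the conjugacy step in the second paragraph: proving that $\phi(\mathcal{H})$ is $G_{\mathrm{ad}}(\Phi,R)$-conjugate to $\mathcal{H}$ over an arbitrary commutative ring. Over an algebraically closed field this is the classical conjugacy of Cartan subalgebras, but over $R$ it is genuinely delicate, since split Cartan subalgebras need not be globally conjugate: idempotents permit different diagram symmetries on different components, and a conjugating element may exist only after localization or after splitting $R$ further. I expect to handle this either through an intrinsic characterization of split Cartan subalgebras (as self-normalizing free $R$-submodules acting diagonalizably, so that $\phi$ must carry one to another) combined with a lifting-of-idempotents and patching argument, or by conjugating over each localization $R_\mathfrak{m}$ via the field case and then descending; assembling the local conjugators into a single global element of $G_{\mathrm{ad}}(\Phi,R)$, and bookkeeping the idempotent decomposition that records where the diagram symmetry is nontrivial, is where the real difficulty concentrates.
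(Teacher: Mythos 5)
First, note that the paper does not prove this statement at all: it is quoted verbatim as Theorem~1 of Klyachko's paper \cite{AK}, with only a remark that Klyachko's hypotheses on $R$ can be dropped. So there is no in-paper argument to compare yours against; what follows is an assessment of your sketch on its own terms.

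Your outline is the classical field-case strategy (normalize the Cartan subalgebra, read off an automorphism of $\Phi$, split it as $\Gamma\ltimes W$, absorb the Weyl part into $G_{\mathrm{ad}}(\Phi,R)$ via the $w_\alpha(1)$, and identify the residual diagonal automorphism with a torus element $h(\chi)$ for $\chi\in\mathrm{Hom}(\Lambda_r,R^*)$). That skeleton is correct, and your identification of the crux is also correct — but the crux is left unproven, and it is precisely where the entire ring-theoretic content of the theorem lives. Over an arbitrary commutative ring, the assertion that $\phi(\mathcal{H})$ is $G_{\mathrm{ad}}(\Phi,R)$-conjugate to $\mathcal{H}$ (even after refining idempotents) is not a routine adaptation of the field case: localizing at each maximal ideal gives local conjugators, but gluing them produces a torsor under the normalizer $N^0_{\mathrm{ad}}(\Phi,R)$ of the torus whose triviality is exactly what must be established, and the idempotent decomposition indexing the components of $\mathcal{D}(\Phi,R)$ has to emerge from this analysis rather than be imposed. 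Writing ``I expect to handle this either through \dots or by \dots'' is a statement of intent, not a proof, so as it stands your argument establishes the theorem only over fields (or, with the Hensel-lifting step filled in, over local rings).

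Two smaller points. The triviality of $G_{\mathrm{ad}}(\Phi,R)\cap\mathcal{D}(\Phi,R)$ — i.e.\ that no nontrivial graph automorphism is inner — is asserted via your split exact sequence but is itself a nontrivial fact requiring an argument (the paper elsewhere invokes \cite{EA4} for exactly this); your ``diagram component homomorphism'' is only well defined once both surjectivity of the product and this trivial intersection are in hand, so the uniqueness paragraph is mildly circular as written. And in the last reduction, $c(\alpha)c(\beta)=c(\alpha+\beta)$ is obtained only for pairs with $\alpha+\beta\in\Phi$; extending $c$ to a genuine character of $\Lambda_r$ requires the standard (but not free) consistency check across all of $\Phi$.
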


\begin{rmk}
    In Theorem $1$ of \cite{AK}, A. Klyachko states the above result with certain restrictions on $R$. However, the same proof remains valid without these assumptions (see, for instance, \cite{EB12:main}).
\end{rmk}


\subsection{Automorphisms of Chevalley Groups}\label{sec:auto}\index{automorphism of Chevalley group}

We define four special types of automorphisms for the Chevalley group $G_{\pi}(\Phi, R)$. (These types of automorphisms can also be similarly defined for the elementary Chevalley group $E_{\pi}(\Phi, R)$.)

\vspace{2mm}

\noindent \textbf{Inner Automorphisms.}\index{automorphism of Chevalley group!inner} Let $S$ be a ring extension of $R$ and let $g \in N_{G_{\pi}(\Phi, S)}(G_{\pi}(\Phi, R))$. The mapping defined by $x \mapsto gxg^{-1}$ is an automorphism of $G_{\pi}(\Phi, R)$, denoted by $i_g$ and called an \emph{inner automorphism}, induced by the element $g \in G_{\pi}(\Phi, S)$. If $g \in G_{\pi}(\Phi, R)$, then $i_g$ is referred to as a \emph{strictly inner automorphism}.

\vspace{2mm}

\noindent \textbf{Ring Automorphisms.}\index{automorphism of Chevalley group!ring}\label{nomencl:theta} Let $\theta: R \to R$ be a ring automorphism. The mapping $(a_{ij}) \mapsto (\theta(a_{ij}))$ from $G_{\pi}(\Phi, R)$ onto itself defines an automorphism of $G_{\pi}(\Phi, R)$, denoted by $\theta$, and is called a \emph{ring automorphism}. In particular, for $\alpha \in \Phi$ and $t \in R$, any element $x_\alpha(t)$ is mapped to $x_\alpha(\theta(t))$.
If $R$ is a field, the corresponding ring automorphism is referred to as a \emph{field automorphism}.

\vspace{2mm}

\noindent \textbf{Central Automorphisms.}\index{automorphism of Chevalley group!central} Let $Z(G(R))$ denote the center of $G_{\pi}(\Phi, R)$ and let $\tau: G_{\pi}(\Phi, R) \to Z(G(R))$ be a homomorphism of groups. The mapping $x \mapsto \tau(x)x$ is an automorphism of $G_{\pi}(\Phi, R)$, denoted by $\tau$ and called a \emph{central automorphism}. 

Note that, all the central automorphisms of the elementary Chevalley group $E_\pi (\Phi, R)$ are the identity.

\vspace{2mm}

\noindent \textbf{Graph automorphisms.}\index{automorphism of Chevalley group!graph}\label{nomencl:rho} Let $\Phi$ be a root system consisting of roots of equal length and let $\Delta$ be a fixed simple system of $\Phi$. Let $\rho$ be an angle-preserving permutation of the simple roots (as defined in the previous subsection). Then there exists a unique automorphism of $G_\pi(\Phi, R)$, denoted by the same symbol $\rho$, such that for every $\alpha \in \Phi$ and $t \in R$, we have 
\[
\rho(x_\alpha(t)) = x_{\rho(\alpha)}(\epsilon(\alpha)t),
\]
where $\epsilon(\alpha) = \pm 1$ for all $\alpha \in \Phi$, and $\epsilon(\alpha) = 1$ for all $\alpha \in \Delta$.

Now suppose that $\rho_1, \dots, \rho_k$ are all distinct graph automorphisms of the given root system $\Phi$. Assume that the ring $R$ admits a system of orthogonal idempotents: 
\[
    \{\epsilon_1, \dots, \epsilon_k \mid \epsilon_1 + \cdots + \epsilon_k = 1, \, \epsilon_i \epsilon_j = 0 \text{ for } i \neq j\}.
\]
Then the mapping 
\[
    \rho := \epsilon_1 \rho_1 + \cdots + \epsilon_k \rho_k
\]
of the Chevalley group $G_\pi(\Phi, R)$ onto itself is an automorphism, referred to as a \emph{graph automorphism}.

\begin{thm}[{E. I. Bunina \cite{EB24:final, EB12:main}}]
    \normalfont
    Let $R$ be a commutative ring with unity and let $G_{\pi} (\Phi, R)$ (respectively, $E_{\pi}(\Phi, R)$) denote the Chevalley group (resp., elementary Chevalley group) of type $\Phi$ over $R$. Assume that $\Phi$ is an irreducible root system of rank at least $2$. Further, assume that for $\Phi \sim A_2, B_l, C_l$ or $F_4$ we have $1/2 \in R$, for $\Phi \sim G_2$ we have $1/6 \in R$. Then 
    \begin{enumerate}[(a)]
        \item every automorphism of the group $G_\pi (\Phi, R)$ can be expressed as a composition of an inner, a ring, a central and a graph automorphism of $G_\pi (\Phi, R)$. Moreover, if the group is of adjoint type, the inner automorphism in this decomposition is strictly inner, and the central automorphism is trivial.
        \item every automorphism of the group $E_\pi (\Phi, R)$ can be expressed as a composition of an inner, a ring and a graph automorphism of $E_\pi (\Phi, R)$. Furthermore, if the group is of adjoint type, the inner automorphism in this decomposition is strictly inner.
    \end{enumerate}
\end{thm}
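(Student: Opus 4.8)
The plan is to follow the localization strategy used by Bunina: reduce the global statement over $R$ to the already-understood case of local rings, establish the standard form there, and then glue the local data into a single global decomposition. Throughout, write $\phi$ for an automorphism of $G := G_\pi(\Phi,R)$, the case of $E := E_\pi(\Phi,R)$ being parallel. The first reduction links parts (a) and (b): since $E_\pi(\Phi,R)$ is a characteristic subgroup of $G_\pi(\Phi,R)$ (the characteristic-subgroup theorem quoted just above), $\phi$ restricts to an automorphism $\phi|_E$ of $E$, and conversely over local rings $G = G^0 = E\cdot T_\pi(\Phi,R)$, so an automorphism of $E$ together with its action on the toral part determines one of $G$. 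Because every central automorphism is trivial on $E$ (as noted after the definition of central automorphisms), the central factor in (a) measures exactly the discrepancy between $\mathrm{Aut}(G)$ and $\mathrm{Aut}(E)$ through the center $Z(G)=\mathrm{Hom}(\Lambda_\pi/\Lambda_r,R^*)$ computed in Theorem~\ref{thm:EA&JH}. This isolates a single core problem: to show that on the root subgroups $\mathfrak{X}_\alpha$, $\phi$ is governed by a diagram symmetry together with a ring automorphism.

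For each maximal ideal $\mathfrak{m}\subset R$, the localization $R\to R_\mathfrak{m}$ induces $\lambda_\mathfrak{m}:G_\pi(\Phi,R)\to G_\pi(\Phi,R_\mathfrak{m})$. I would first show that $\phi$ is compatible with these maps, producing automorphisms $\phi_\mathfrak{m}$ of each $G_\pi(\Phi,R_\mathfrak{m})$. The key input here is control of the images of the relative elementary subgroups $E_\pi(\Phi,R,J)$: by the commutator formulas of Theorem~\ref{thm:commrelations} and the normal-subgroup classification of Theorem~\ref{thm:normalsubgroups}, $\phi$ permutes these congruence-level subgroups in a way that respects the ideal lattice of $R$, which is precisely what is needed for the localized maps to be well defined and for $\phi$ to descend.

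Over the local ring $R_\mathfrak{m}$ the $(B,N)$-pair structure and the Bruhat decomposition are available, and $G=E\cdot T$. I would then run the classical argument: after composing $\phi_\mathfrak{m}$ with a suitable strictly inner automorphism, arrange that it stabilizes the standard Borel $B$ and its opposite, hence the torus and the collection $\{\mathfrak{X}_\alpha\}_{\alpha\in\Phi}$ of root subgroups. The induced permutation of the $\mathfrak{X}_\alpha$ must preserve the Dynkin diagram, giving the graph automorphism; the induced additive automorphism of each $\mathfrak{X}_\alpha\cong(R_\mathfrak{m},+)$, when forced to be compatible with the Steinberg relations (R1)--(R8), is pinned down to a multiplication-by-unit composed with a ring automorphism $\theta_\mathfrak{m}$ of $R_\mathfrak{m}$; any residual ambiguity lies in $Z(G)$ and is absorbed into a central automorphism. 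The hypotheses $1/2\in R$ (for $A_2,B_\ell,C_\ell,F_4$) and $1/6\in R$ (for $G_2$) enter exactly here, since the structure constants $c_{\alpha,\beta;i,j}\in\{\pm1,\pm2,\pm3\}$ appearing in (R2) must be invertible for the ring map to be determined uniquely.

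The principal obstacle is globalization. The local standard forms must be shown to descend from one global decomposition: the family $(\theta_\mathfrak{m})$ of ring automorphisms has to come from a single ring automorphism $\theta$ of $R$, and the local conjugators $g_\mathfrak{m}$ must be replaceable by one element $g$, possibly only over a ring extension $S$ of $R$ --- which is why the definition of inner automorphism above already permits $g\in G_\pi(\Phi,S)$. This is carried out using the injectivity of $R\hookrightarrow\prod_\mathfrak{m}R_\mathfrak{m}$, faithfully-flat patching, and the rigidity of the decomposition: the graph part is forced to be constant across $\mathfrak{m}$ because the Dynkin diagram is global, while uniqueness of the inner, ring, and central factors over each localization lets the pieces be reconciled into global ones. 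I expect this reconciliation --- simultaneously matching conjugators, ring maps, and central twists across all primes --- rather than the local computation, to be the hard technical core of the argument.
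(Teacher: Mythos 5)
First, a point of comparison: the thesis does not prove this statement at all --- it appears in a survey section and is quoted directly from Bunina's papers \cite{EB24:final, EB12:main} --- so there is no ``paper's own proof'' to measure your argument against. Judged on its own terms, your proposal reconstructs the broad shape of the known strategy (local analysis plus globalization), but it contains one genuine gap and one structural mismatch with how the actual proof is organized.

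The gap is your very first reduction: you assert that $\phi$ ``is compatible with'' the localization maps $\lambda_{\mathfrak m}\colon G_\pi(\Phi,R)\to G_\pi(\Phi,R_{\mathfrak m})$ and hence induces automorphisms $\phi_{\mathfrak m}$ of the localized groups. An abstract automorphism has no reason to do this: $\lambda_{\mathfrak m}$ is neither injective nor surjective in general, and the normal-subgroup classification (Theorem~\ref{thm:normalsubgroups}) only tells you that $\phi(E_\pi(\Phi,R,J))$ is sandwiched between $E_\pi(\Phi,R,J')$ and $G_\pi(\Phi,R,J')$ for some ideal $J'$; it does not produce a homomorphism into $G_\pi(\Phi,R_{\mathfrak m})$, let alone an automorphism of that group. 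This is precisely where the real content lies, and Bunina's argument is organized to sidestep the issue: after reducing to the elementary adjoint group, one shows --- using that $E_{\mathrm{ad}}(\Phi,R)$ generates the full matrix algebra $M_n(R)$ in the adjoint representation, the ``Key Lemma'' reproduced in Chapter~\ref{chapter:Normalizer_of_TCG} of this thesis --- that $\phi$ is realized by conjugation by a single matrix $g$ over a suitable extension ring; one then checks that $g$ normalizes $\mathrm{ad}(\mathcal L(\Phi,R))$, recovered group-theoretically as the tangent algebra of $E_{\mathrm{ad}}(\Phi,R)$, and invokes Klyachko's theorem \cite{AK} on automorphisms of Chevalley algebras to split $i_g$ into an inner and a graph part. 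Localization at maximal ideals enters in analyzing the entries of $g$, not in descending $\phi$ itself. Your Bruhat/Steinberg-relations analysis is the correct input over fields and local rings, and your observation that the central factor is controlled by $Z(G)=\mathrm{Hom}(\Lambda_\pi/\Lambda_r,R^*)$ is right; but the globalization you flag as ``the hard technical core'' is not a patching of local group automorphisms --- it is this algebra-theoretic rigidity argument, and without it the proposal does not close.
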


\begin{rmk}
    \normalfont 
    If $R$ is a field, then every inner automorphism of the group $G_\pi(\Phi, R)$ (respectively, $E_\pi(\Phi, R)$) can be written as the composition of a strictly inner automorphism and a diagonal automorphism (for the definition of diagonal automorphisms, refer to \cite{RS}).
\end{rmk}




\chapter{Twisted Chevalley Groups}\label{chapter:TCG}


In this chapter, we explore the concept of twisted Chevalley groups. We begin by defining the twisted root systems associated with an angle-preserving permutation of a fixed simple system. Next, we examine twisted Chevalley algebras and provide the definition of twisted Chevalley groups. We then investigate certain fundamental properties of elementary twisted Chevalley groups. Finally, we present a survey of established results concerning twisted Chevalley groups.

The material of this chapter is largely based on R. Steinberg~\cite{RS}, R. Carter~\cite{RC} and E. Abe~\cite{EA1}. Some of the results presented in Section~\ref{sec:E(R)} are derived from collaborative research with Shripad M. Garge~\cite{SG&DM1}.

\section{Twisted Root Systems}\label{Subsec:TRS}\index{twisted root systems}

Let $E$ be a finite-dimensional real Euclidean vector space and let $\Phi$ be a (crystallographic) root system in $E$. 
Let $\Delta$ and $\Phi^+$ be the simple and positive root systems, respectively. Let $\rho$ be a non-trivial angle preserving permutation of $\Delta$ (such a $\rho$ exists only when $\Phi$ is of type $A_\ell \ (\ell \geq 1), D_\ell \ (\ell \geq 4), E_6, B_2, F_4$ or $G_2$). Note that the possible order of $\rho$ is either $2$ or $3$, with the latter possible only when $\Phi \sim D_4$. We define an isometry $\hat{\rho} \in GL (E)$ as follows: 
\begin{enumerate}
    \item If $\Phi$ has one root length, then define $\hat{\rho}(\alpha)= \rho(\alpha)$ for each $\alpha \in \Delta$.
    \item If $\Phi$ has two root lengths. Then define $\hat{\rho}(\alpha)= \rho(\alpha)/ \sqrt{p}$ for each short root $\alpha \in \Delta$ and $\hat{\rho}(\alpha)= \sqrt{p} \hspace{1mm} \rho(\alpha)$ for each long root $\alpha \in \Delta$, where $p = ||\alpha||^2 / ||\beta||^2$, $\alpha$ is a long root and $\beta$ is a short root. 
\end{enumerate} 

Clearly, the order of $\hat{\rho}$ is the same as that of $\rho$ and $\hat{\rho}$ preserves the sign. 
Note that $\hat{\rho} w_{\alpha} \hat{\rho}^{-1} = w_{\rho(\alpha)}$, hence $\hat{\rho}$ normalizes $W$. 
Define $E_\rho = \{ v \in E \mid \hat{\rho}(v)=v \}$ and $W_\rho = \{ w \in W \mid \hat{\rho}w\hat{\rho}^{-1} =w \}$. 
Let $\hat{\alpha} = 1/o(\rho) \sum_{i=0}^{o(\rho)-1} \hat{\rho}^i (\alpha),$ the average of the elements in the $\hat{\rho}-$orbit of $\alpha$. 
Then $(\beta, \hat{\alpha})=(\beta, \alpha)$ for all $\beta \in E_\rho$. 
Hence the projection of $\alpha$ on $E_\rho$ is $\hat{\alpha}$. 

Note that $W_\rho$ acts faithfully on $E_\rho$. Let $J = J_\alpha \subset \Phi$ be the $\rho-$orbit of $\alpha$ and let $W_{J}$ be the group generated by all $w_\beta \ (\beta \in J_\alpha)$. 
Let $w_{J}$ be the unique element of $W_J$ such that $w_J (P_\alpha) = - P_\alpha$, where $P_\alpha$ is a positive system generated by $J_\alpha$ (such a $w_J$ exists and is of highest length element in $W_J$). 
Then $w_J|_{E_\rho} = w_{\hat{\alpha}}|_{E_\rho}$ and $w_J|_{E_\rho} \in W_{\rho}$. 
In fact, $\{ w_{\hat{\alpha}}|_{E_\rho} \mid \alpha \in \Delta \}$ forms a generating set of $W_\rho$. 
Therefore the group $W_\rho|_{E_\rho}$ is a reflection group. 
Define $\Tilde{\Phi}_\rho = \{ \hat{\alpha} \mid \alpha \in \Phi \}$ and $\Tilde{\Delta}_\rho = \{ \hat{\alpha} \mid \alpha \in \Delta \}$. 
Then $\Tilde{\Phi}_\rho$ is the (possibly non-reduced) root system corresponding to the Weyl group $W_\rho|_{E_\rho}$ and $\Tilde{\Delta}_\rho$ is the corresponding simple system. 
In order to make $\Tilde{\Phi}_\rho$ reduced, we can stick to the set of shortest projections of various directions, and denote it by $\Phi_\rho$.\label{nomencl:Phi_rho} 
Define an equivalence relation $R$ on $\Phi$ by $\alpha \equiv \beta$ iff $\hat{\alpha}$ is a positive multiple of $\hat{\beta}$. 
If $\Phi/R$ denotes the collection of all equivalence classes of this relation, then $\Phi_\rho$ is in one-to-one correspondence with $\Phi / R$ by identifying a root $\hat{\alpha}$ of $\Phi_\rho$ with a class $[\alpha]$ of $\Phi / R$. 
Similarly, there exists a one-to-one correspondence between $\Tilde{\Phi}_\rho$ and $\{ J_\alpha \mid \alpha \in \Phi \}$ by sending a root $\hat{\alpha}$ of $\Tilde{\Phi}_\rho$ to $J_\alpha$. Clearly $-[\alpha] = [-\alpha]$ and $-J_{\alpha} = J_{-\alpha}.$


\subsection{Construction of Twisted Root Systems}

\begin{lemma}[{\cite[page 103]{RS}}]
    \normalfont
    If $\Phi$ is irreducible, then an element of $\Phi/R$ is the positive system of roots of a system of one of the following types:
    \begin{enumerate}[(a)]
        \item $A_1^n, \hspace{1mm} n=1, 2$ or $3$.
        \item $A_2$ (this occurs only if $\Phi$ is of type $A_{2n}$).
        \item $B_2$ (this occurs if $\Phi$ is of type $B_{2}$ or $F_4$).
        \item $G_2$ (this occurs only if $\Phi$ is of type $G_2$).
    \end{enumerate}
\end{lemma}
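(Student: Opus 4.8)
The plan is to reduce the statement to a finite inspection organized by the $\rho$-orbit structure of a root. For a root $\alpha$ write $J_\alpha$ for its $\rho$-orbit and set $\Phi_J := \Phi \cap \operatorname{span}_{\mathbb R}(J_\alpha)$, the root subsystem cut out by the span of the orbit (an intersection of $\Phi$ with a subspace is automatically a subsystem, since $s_\beta(\gamma) = \gamma - \langle\gamma,\beta\rangle\beta$ stays in the subspace). Two things must be established: first, that the class $[\alpha] \in \Phi/R$ coincides with a positive system of $\Phi_J$; and second, that $\Phi_J$ is of one of the types $A_1^n$ $(n \leq 3)$, $A_2$, $B_2$, or $G_2$, with the stated restrictions on which $\Phi$ produce each. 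Before the case check I would reduce to $\alpha \in \Delta$: every root of the twisted system is $W_\rho$-conjugate to a simple one, and any $w \in W_\rho$, being an isometry of $E$ commuting with $\hat\rho$, carries $\rho$-orbits to $\rho$-orbits and spans to spans, hence maps $\Phi_{J_\alpha}$ isometrically onto $\Phi_{J_{w\alpha}}$. Thus both the isomorphism type of $\Phi_J$ and the identity ``$[\alpha]$ is a positive system'' are $W_\rho$-invariant, and it suffices to treat $\alpha \in \Delta$.

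For $\alpha \in \Delta$ the orbit $J_\alpha$ is a $\rho$-orbit inside $\Delta$, of size $d \in \{1,2,3\}$ (with $d=3$ only for $D_4$). I would then classify $\Phi_J$ by reading off how the simple roots of one orbit meet in the Dynkin diagram, using the list of diagrams admitting a nontrivial automorphism ($A_\ell, D_\ell, E_6, B_2, F_4, G_2$). Two simple roots sharing an orbit are either non-adjacent, hence orthogonal, or joined by a single, double, or triple bond. This gives: $d=1 \Rightarrow \Phi_J = A_1$; $d=2$ with orthogonal roots $\Rightarrow A_1^2$; $d=2$ with a single bond $\Rightarrow A_2$, which requires two \emph{adjacent} simple roots in one orbit and hence occurs only at the centre of $A_{2n}$; $d=2$ with a double (resp. triple) bond $\Rightarrow B_2$ (in $B_2$ and $F_4$) (resp. $G_2$ in $G_2$); and $d=3 \Rightarrow A_1^3$, the three pairwise-orthogonal outer nodes of $D_4$. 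This already produces exactly the four families in the statement, with the parenthetical restrictions.

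It then remains to identify $[\alpha]$ with the positive system $P_\alpha$ of $\Phi_J$. The key observation is that $\hat\rho$ acts on $\operatorname{span}(J_\alpha)$ as a cyclic permutation of the $d$ orbit vectors, so its fixed subspace meets $\operatorname{span}(J_\alpha)$ in the single line $\mathbb R\,\hat\alpha$; consequently every root of $\Phi_J$ projects onto this line, the positive roots onto $\mathbb R_{>0}\hat\alpha$ and the negative roots onto $\mathbb R_{<0}\hat\alpha$. This yields $P_\alpha \subseteq [\alpha]$ and $(-P_\alpha)\cap[\alpha] = \emptyset$ at once, and one checks directly in each type that the positive roots of $\Phi_J$ are exactly the claimed positive system (three roots for $A_2$, four for $B_2$, six for $G_2$, $n$ orthogonal roots for $A_1^n$).

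The main obstacle — and the only genuinely case-dependent part — is the reverse inclusion $[\alpha] \subseteq \Phi_J$, i.e. ruling out a root $\beta \notin \operatorname{span}(J_\alpha)$ whose projection $\hat\beta$ still lands on the ray $\mathbb R_{>0}\hat\alpha$. I would settle this with explicit coordinates from the classification table, writing $\hat\rho$ on the ambient $e_i$-basis (for instance $e_j \mapsto -e_{\ell+2-j}$ for $A_\ell$, where the sign is precisely what forces $\hat\alpha = \tfrac12(\alpha+\rho\alpha)$ into $E_\rho$) and verifying in each type that the only roots with projection proportional to $\hat\alpha$ are those already in $\Phi_J$. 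Care with these sign conventions in $\hat\rho$, and with confirming the one-dimensionality of $E_\rho \cap \operatorname{span}(J_\alpha)$ in the order-$3$ case of $D_4$, is where the bookkeeping concentrates; once these are in hand, $[\alpha] = P_\alpha$ is a positive system of a subsystem of the asserted type, completing the proof.
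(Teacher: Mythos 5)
The paper does not actually prove this lemma; it is quoted verbatim from Steinberg \cite[p.~103]{RS}, so there is no in-text argument to compare against. Your strategy --- reduce to $\alpha \in \Delta$ by $W_\rho$-equivariance, identify the class with the positive system of $\Phi_J := \Phi \cap \operatorname{span}(J_\alpha)$, and read the type of $\Phi_J$ off the orbit structure of the Dynkin diagram --- is sound and is essentially the standard route. Your case table ($d=1 \Rightarrow A_1$; $d=2$ orthogonal $\Rightarrow A_1^2$; $d=2$ single bond $\Rightarrow A_2$, only in $A_{2n}$; double/triple bond $\Rightarrow B_2$/$G_2$; $d=3 \Rightarrow A_1^3$ in $D_4$) is correct, as is the forward inclusion via the projection argument. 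One caution: the identity ``$[\alpha]$ is a positive system of $\Phi_{J_\alpha}$'' genuinely depends on the representative --- for $\alpha_n+\alpha_{n+1}$ in $A_{2n}$ one has $J_{\alpha_n+\alpha_{n+1}}=\{\alpha_n+\alpha_{n+1}\}$ and $\Phi_J$ is only an $A_1$, while the class is an $A_2$-positive-system --- so the reduction must be phrased at the level of classes (each class equals $w[\delta]$ for some $\delta\in\Delta$), which your $W_\rho$ step does accomplish but which you should make explicit.

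The genuine gap is that the reverse inclusion $[\alpha] \subseteq \Phi_J$ --- which you correctly identify as the crux --- is only promised (``I would settle this with explicit coordinates \dots''), not carried out; as written the proof is incomplete precisely where the content lies. Moreover the coordinate bookkeeping is unnecessary: there is a short uniform argument. For $\alpha \in \Delta$, write any $\gamma \in \Phi$ as $\gamma = \sum_{\delta \in \Delta} c_\delta \delta$ with all $c_\delta$ of one sign; then $\hat\gamma = \sum_\delta c_\delta \hat\delta$, and grouping by $\rho$-orbits this is a combination of the elements of $\tilde\Delta_\rho$ with coefficients $\sum_{\delta' \in J_\delta} c_{\delta'} m_{\delta'}$, where each $m_{\delta'} > 0$ (the orbit members project to positive multiples of one another, even in the rescaled non-simply-laced cases). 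Since $\tilde\Delta_\rho$ consists of $\dim E_\rho$ vectors spanning $E_\rho$, it is linearly independent, so $\hat\gamma \in \mathbb{R}_{>0}\hat\alpha$ forces $c_{\delta'} = 0$ for every $\delta' \notin J_\alpha$ and $c_{\delta'} \ge 0$ on $J_\alpha$; hence $\gamma \in \Phi_J \cap \Phi^+$. This closes the gap in one stroke and removes the only case-dependent portion of your write-up.
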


If a class $[\alpha]$ in $\Phi / R$ is the positive system for a root system of type $X$ (where $X$ is any of the root systems mentioned in the lemma), we write $[\alpha] \sim X$. 
As noted previously, there exists a one-to-one correspondence between $\Phi / R$ and $\Phi_\rho$, so we use $\Phi_\rho$ to represent $\Phi / R$ interchangeably. 
Furthermore, if $\Phi \sim X$ (where $X$ is either $A_\ell, D_\ell, E_6, B_2, F_4$ or $G_2$), we denote $\Phi_\rho \sim {}^n X$, where $n$ is the order of $\rho$.  

Given a root system $\Phi$ and a graph automorphism $\rho$ of $\Phi$, the associated twisted root systems $\Phi_\rho$ are described as follows:

\begin{enumerate}
    \item $\Phi$ is of type $A_{2n-1}$ $(n \geq 1)$ and $o(\rho)=2$. Then $\Phi_\rho$ is of type $C_n$.
    \begin{center}
        \begin{tikzpicture}
            \node at (0,0) {$\alpha_1$};
            \node[draw,circle,inner sep=0.6mm] at (0,-0.4) {};
            \draw (0.08,-0.4)--(1.42,-0.4);
            \node at (1.5,0) {$\alpha_2$};
            \node[draw,circle,inner sep=0.6mm] at (1.5,-0.4) {};
            \draw (1.58,-0.4)--(2.6,-0.4);
            \node at (3,-0.4) {$\cdots$};
            \draw (3.4,-0.4)--(4.42,-0.4);
            \node at (4.5,0) {$\alpha_{n-2}$};
            \node[draw,circle,inner sep=0.6mm] at (4.5,-0.4) {};
            \draw (4.58,-0.4)--(5.92,-0.4);
            \node at (6,0) {$\alpha_{n-1}$};
            \node[draw,circle,inner sep=0.6mm] at (6,-0.4) {};
            
            \draw (6.08,-0.4)--(7.43,-1.34);
            \node at (7.9,-1.4) {$\alpha_n$};
            \node[draw,circle,inner sep=0.6mm] at (7.5,-1.4) {};
            \draw (6.08,-2.4)--(7.43,-1.46);
            
            \node at (6,-2.8) {$\alpha_{n+1}$};
            \node[draw,circle,inner sep=0.6mm] at (6,-2.4) {};
            \draw (4.58,-2.4)--(5.92,-2.4);
            \node at (4.5,-2.8) {$\alpha_{n+2}$};
            \node[draw,circle,inner sep=0.6mm] at (4.5,-2.4) {};
            \draw (3.4,-2.4)--(4.42,-2.4);
            \node at (3,-2.4) {$\cdots$};
            \draw (1.58,-2.4)--(2.6,-2.4);
            \node at (1.5,-2.8) {$\alpha_{2n-2}$};
            \node[draw,circle,inner sep=0.6mm] at (1.5,-2.4) {};
            \draw (0.08,-2.4)--(1.42,-2.4);
            \node at (0,-2.8) {$\alpha_{2n-1}$};
            \node[draw,circle,inner sep=0.6mm] at (0,-2.4) {};
		\end{tikzpicture}
    
        \vspace{5mm}
        
        \begin{tikzpicture}
            \node at (0,0) {$[\alpha_1]$};
            \node[draw,circle,inner sep=0.6mm] at (0,0.4) {};
            \draw (0.08,0.4)--(1.42,0.4);
            \node at (1.5,0) {$[\alpha_2]$};
            \node[draw,circle,inner sep=0.6mm] at (1.5,0.4) {};
            \draw (1.58,0.4)--(2.6,0.4);
            \node at (3,0.4) {$\cdots$};
            \draw (3.4,0.4)--(4.42,0.4);
            \node at (4.5,0) {$[\alpha_{n-2}]$};
            \node[draw,circle,inner sep=0.6mm] at (4.5,0.4) {};
            \draw (4.58,0.4)--(5.92,0.4);
            \node at (6,0) {$[\alpha_{n-1}]$};
            \node[draw,circle,inner sep=0.6mm] at (6,0.4) {};
            \draw (6.08,0.45)--(7.42,0.45);
            \node at (6.75,0.4) {$<$};
            \draw (6.08,0.35)--(7.42,0.35);
            \node at (7.5,0) {$[\alpha_n]$};
            \node[draw,circle,inner sep=0.6mm] at (7.5,0.4) {};
        \end{tikzpicture}
    \end{center}
    
    Note that $[\alpha_1], \dots, [\alpha_{n-1}] \sim A_1 \times A_1$ and $[\alpha_n] \sim A_1$.
    
    \item $\Phi$ is of type $A_{2n}$ $(n \geq 1)$ and $o(\rho)=2$. Then $\Tilde{\Phi}_\rho$ is of type $BC_n$. This is a non-reduced root system. The corresponding reduced root system $\Phi_\rho$ is $B_n$. 
    
    \begin{center}
        \begin{tikzpicture}
            \node at (0,0) {$\alpha_1$};
            \node[draw,circle,inner sep=0.6mm] at (0,-0.4) {};
            \draw (0.08,-0.4)--(1.42,-0.4);
            \node at (1.5,0) {$\alpha_2$};
            \node[draw,circle,inner sep=0.6mm] at (1.5,-0.4) {};
            \draw (1.58,-0.4)--(2.6,-0.4);
            \node at (3,-0.4) {$\cdots$};
            \draw (3.4,-0.4)--(4.42,-0.4);
            \node at (4.5,0) {$\alpha_{n-2}$};
            \node[draw,circle,inner sep=0.6mm] at (4.5,-0.4) {};
            \draw (4.58,-0.4)--(5.92,-0.4);
            \node at (6,0) {$\alpha_{n-1}$};
            \node[draw,circle,inner sep=0.6mm] at (6,-0.4) {};
            \draw (6.08,-0.4)--(7.42,-0.4);
            \node at (7.5,0) {$\alpha_n$};
            \node[draw,circle,inner sep=0.6mm] at (7.5,-0.4) {};
            
            \draw (7.5,-0.48)--(7.5,-1.92);

            \node at (7.5,-2.4) {$\alpha_{n+1}$};
            \node[draw,circle,inner sep=0.6mm] at (7.5,-2.0) {};
            \draw (6.08,-2.0)--(7.42,-2.0);
            \node at (6,-2.4) {$\alpha_{n+2}$};
            \node[draw,circle,inner sep=0.6mm] at (6,-2.0) {};
            \draw (4.58,-2.0)--(5.92,-2.0);
            \node at (4.5,-2.4) {$\alpha_{n+3}$};
            \node[draw,circle,inner sep=0.6mm] at (4.5,-2.0) {};
            \draw (3.4,-2.0)--(4.42,-2.0);
            \node at (3,-2.0) {$\cdots$};
            \draw (1.58,-2.0)--(2.6,-2.0);
            \node at (1.5,-2.4) {$\alpha_{2n-1}$};
            \node[draw,circle,inner sep=0.6mm] at (1.5,-2.0) {};
            \draw (0.08,-2.0)--(1.42,-2.0);
            \node at (0,-2.4) {$\alpha_{2n}$};
            \node[draw,circle,inner sep=0.6mm] at (0,-2.0) {};
		\end{tikzpicture}
    
        \vspace{5mm}
        
        \begin{tikzpicture}
            \node at (0,0) {$[\alpha_1]$};
            \node[draw,circle,inner sep=0.6mm] at (0,0.4) {};
            \draw (0.08,0.4)--(1.42,0.4);
            \node at (1.5,0) {$[\alpha_2]$};
            \node[draw,circle,inner sep=0.6mm] at (1.5,0.4) {};
            \draw (1.58,0.4)--(2.6,0.4);
            \node at (3,0.4) {$\cdots$};
            \draw (3.4,0.4)--(4.42,0.4);
            \node at (4.5,0) {$[\alpha_{n-2}]$};
            \node[draw,circle,inner sep=0.6mm] at (4.5,0.4) {};
            \draw (4.58,0.4)--(5.92,0.4);
            \node at (6,0) {$[\alpha_{n-1}]$};
            \node[draw,circle,inner sep=0.6mm] at (6,0.4) {};
            \draw (6.08,0.45)--(7.42,0.45);
            \node at (6.75,0.4) {$>$};
            \draw (6.08,0.35)--(7.42,0.35);
            \node at (7.5,0) {$[\alpha_n]$};
            \node[draw,circle,inner sep=0.6mm] at (7.5,0.4) {};
        \end{tikzpicture}
    \end{center}
    
    Note that $[\alpha_1], \dots, [\alpha_{n-1}] \sim A_1 \times A_1$ and $[\alpha_n] \sim A_2$.

    \item $\Phi$ is of type $D_n$ $(n \geq 4)$ and $o(\rho)=2$. Then $\Phi_\rho$ is of type $B_{n-1}$.

    \begin{center}
        \begin{tikzpicture}
            \node at (0,0) {$\alpha_1$};
            \node[draw,circle,inner sep=0.6mm] at (0,0.4) {};
            \draw (0.08,0.4)--(1.42,0.4);
            \node at (1.5,0) {$\alpha_2$};
            \node[draw,circle,inner sep=0.6mm] at (1.5,0.4) {};
            \draw (1.58,0.4)--(2.6,0.4);
            \node at (3,0.4) {$\cdots$};
            \draw (3.4,0.4)--(4.42,0.4);
            \node at (4.5,0) {$\alpha_{n-3}$};
            \node[draw,circle,inner sep=0.6mm] at (4.5,0.4) {};
            \draw (4.58,0.4)--(5.92,0.4);
            \node at (6,0) {$\alpha_{n-2}$};
            \node[draw,circle,inner sep=0.6mm] at (6,0.4) {};
            \draw (6.08,0.4)--(7.075,1.12);
            \node at (7.14,1.55) {$\alpha_{n-1}$};
            \node[draw,circle,inner sep=0.6mm] at (7.14,1.15) {};
            \draw (6.08,0.4)--(7.075,-0.32);
            \node at (7.14,-0.75) {$\alpha_n$};
            \node[draw,circle,inner sep=0.6mm] at (7.14,-0.35) {};
        \end{tikzpicture}
        
        \vspace{5mm}
        
        \begin{tikzpicture}
            \node at (0,0) {$[\alpha_1]$};
            \node[draw,circle,inner sep=0.6mm] at (0,0.4) {};
            \draw (0.08,0.4)--(1.42,0.4);
            \node at (1.5,0) {$[\alpha_2]$};
            \node[draw,circle,inner sep=0.6mm] at (1.5,0.4) {};
            \draw (1.58,0.4)--(2.6,0.4);
            \node at (3,0.4) {$\cdots$};
            \draw (3.4,0.4)--(4.42,0.4);
            \node at (4.5,0) {$[\alpha_{n-3}]$};
            \node[draw,circle,inner sep=0.6mm] at (4.5,0.4) {};
            \draw (4.58,0.4)--(5.92,0.4);
            \node at (6,0) {$[\alpha_{n-2}]$};
            \node[draw,circle,inner sep=0.6mm] at (6,0.4) {};
            \draw (6.08,0.45)--(7.42,0.45);
            \node at (6.75,0.4) {$>$};
            \draw (6.08,0.35)--(7.42,0.35);
            \node at (7.5,0) {$[\alpha_{n-1}]$};
            \node[draw,circle,inner sep=0.6mm] at (7.5,0.4) {};
        \end{tikzpicture}
    \end{center}

    Note that $[\alpha_1], \dots, [\alpha_{n-2}] \sim A_1$ and $[\alpha_{n-1}] \sim A_1 \times A_1$.

    \item $\Phi$ is of type $D_4$ and $o(\rho)=3$. Then $\Phi_\rho$ is of type $G_2$.

    \begin{center}
        \begin{tikzpicture}
            \node at (4.5,0) {$\alpha_{1}$};
            \node[draw,circle,inner sep=0.6mm] at (4.5,0.4) {};
            \draw (4.58,0.4)--(5.92,0.4);
            \node at (6,0) {$\alpha_{2}$};
            \node[draw,circle,inner sep=0.6mm] at (6,0.4) {};
            \draw (6.08,0.4)--(7.075,1.12);
            \node at (7.14,1.55) {$\alpha_{3}$};
            \node[draw,circle,inner sep=0.6mm] at (7.14,1.15) {};
            \draw (6.08,0.4)--(7.075,-0.32);
            \node at (7.14,-0.75) {$\alpha_{4}$};
            \node[draw,circle,inner sep=0.6mm] at (7.14,-0.35) {};
        \end{tikzpicture}

        \vspace{5mm}
        
        \begin{tikzpicture}
            \node at (0,0) {$[\alpha_1]$};
            \node[draw,circle,inner sep=0.6mm] at (0,0.4) {};
            \draw (0.08,0.46)--(1.42,0.46);
            \draw (0.08,0.4)--(1.42,0.4);
            \draw (0.08,0.34)--(1.42,0.34);
            \node at (0.75,0.4) {$<$};
            \node at (1.5,0) {$[\alpha_2]$};
            \node[draw,circle,inner sep=0.6mm] at (1.5,0.4) {};
        \end{tikzpicture}
    \end{center}

    Note that $[\alpha_1] \sim A_1^3 = A_1 \times A_1 \times A_1$ and $[\alpha_2] \sim A_1$.

    \item $\Phi$ is of type $E_6$ and $o(\rho)=2$. Then $\Phi_\rho$ is of type $F_4$.

    \begin{center}
        \begin{tikzpicture}
            \node at (0,0) {$\alpha_1$};
            \node[draw,circle,inner sep=0.6mm] at (0,-0.4) {};
            \draw (0.08,-0.4)--(1.42,-0.4);
            \node at (1.5,0) {$\alpha_2$};
            \node[draw,circle,inner sep=0.6mm] at (1.5,-0.4) {};

            \draw (1.58,-0.4)--(2.92,-1.34);
            \node at (3,-1.8) {$\alpha_3$};
            \node[draw,circle,inner sep=0.6mm] at (3,-1.4) {};
            \draw (1.58,-2.4)--(2.92,-1.46);
            \draw (3.08,-1.4)--(4.42,-1.4);
            \node at (4.5,-1.8) {$\alpha_4$};
            \node[draw,circle,inner sep=0.6mm] at (4.5,-1.4) {};
            
            \node at (1.5,-2.8) {$\alpha_{5}$};
            \node[draw,circle,inner sep=0.6mm] at (1.5,-2.4) {};
            \draw (0.08,-2.4)--(1.42,-2.4);
            \node at (0,-2.8) {$\alpha_{6}$};
            \node[draw,circle,inner sep=0.6mm] at (0,-2.4) {};
		\end{tikzpicture}
    
        \vspace{5mm}
        
        \begin{tikzpicture}
            \node at (0,0) {$[\alpha_1]$};
            \node[draw,circle,inner sep=0.6mm] at (0,0.4) {};
            \draw (0.08,0.4)--(1.42,0.4);
            \node at (1.5,0) {$[\alpha_2]$};
            \node[draw,circle,inner sep=0.6mm] at (1.5,0.4) {};
            \draw (1.58,0.45)--(2.92,0.45);
            \node at (2.25,0.4) {$<$};
            \draw (1.58,0.35)--(2.92,0.35);
            \node at (3,0) {$[\alpha_{3}]$};
            \node[draw,circle,inner sep=0.6mm] at (3,0.4) {};
            \draw (3.08,0.4)--(4.42,0.4);
            \node at (4.5,0) {$[\alpha_{4}]$};
            \node[draw,circle,inner sep=0.6mm] at (4.5,0.4) {};
        \end{tikzpicture}
    \end{center}
    
    Note that $[\alpha_1], [\alpha_2] \sim A_1 \times A_1$ and $[\alpha_3], [\alpha_4] \sim A_1$.

    \item $\Phi$ is of type $B_2$ and $o(\rho)=2$. Then $\Phi_\rho$ is of type $A_1$.

    \begin{center}
        \begin{tikzpicture}
                \node at (0,0) {$\alpha_1$};
                \node[draw,circle,inner sep=0.6mm] at (0,0.4) {};
                \draw (0.08,0.45)--(1.42,0.45);
                \draw (0.08,0.35)--(1.42,0.35);
                \node at (0.75,0.4) {$>$};
                \node at (1.5,0) {$\alpha_2$};
                \node[draw,circle,inner sep=0.6mm] at (1.5,0.4) {};
        \end{tikzpicture}
    
        \vspace{5mm}
        
        \begin{tikzpicture}
            \node at (0,0) {$[\alpha_1]$};
            \node[draw,circle,inner sep=0.6mm] at (0,0.4) {};
        \end{tikzpicture}
    \end{center}

    Note that $[\alpha_1] \sim B_2$.

    \item $\Phi$ is of type $G_2$ and $o(\rho)=2$. Then $\Phi_\rho$ is of type $A_1$.
    \begin{center}
        \begin{tikzpicture}
            \node at (0,0) {$\alpha_1$};
            \node[draw,circle,inner sep=0.6mm] at (0,0.4) {};
            \draw (0.08,0.46)--(1.42,0.46);
            \draw (0.08,0.4)--(1.42,0.4);
            \draw (0.08,0.34)--(1.42,0.34);
            \node at (0.75,0.4) {$>$};
            \node at (1.5,0) {$\alpha_2$};
            \node[draw,circle,inner sep=0.6mm] at (1.5,0.4) {};
        \end{tikzpicture}
        
        \vspace{5mm}
        
        \begin{tikzpicture}
            \node at (0,0) {$[\alpha_1]$};
            \node[draw,circle,inner sep=0.6mm] at (0,0.4) {};
        \end{tikzpicture}
    \end{center}
    
    Note that $[\alpha_1] \sim G_2$.

    \item $\Phi$ is of type $F_4$ and $o(\rho)=2$. Then $\Phi_\rho$ is of type $I_2(8)$. In this case, $W_\rho$ is the dihedral group of order $16$.

    \begin{center}
        \begin{tikzpicture}
            \node at (0,0) {$\alpha_1$};
            \node[draw,circle,inner sep=0.6mm] at (0,0.4) {};
            \draw (0.08,0.4)--(1.42,0.4);
            \node at (1.5,0) {$\alpha_2$};
            \node[draw,circle,inner sep=0.6mm] at (1.5,0.4) {};
            \draw (1.58,0.45)--(2.92,0.45);
            \node at (2.25,0.4) {$>$};
            \draw (1.58,0.35)--(2.92,0.35);
            \node at (3,0) {$\alpha_{3}$};
            \node[draw,circle,inner sep=0.6mm] at (3,0.4) {};
            \draw (3.08,0.4)--(4.42,0.4);
            \node at (4.5,0) {$\alpha_{4}$};
            \node[draw,circle,inner sep=0.6mm] at (4.5,0.4) {};
        \end{tikzpicture}

        \vspace{5mm}

        \begin{tikzpicture}
            \node at (0,0) {$[\alpha_1]$};
            \node[draw,circle,inner sep=0.6mm] at (0,0.4) {};
            \draw (0.08,0.4)--(1.42,0.4);
            \node at (0.75,0.6) {$8$};
            \node at (1.5,0) {$[\alpha_2]$};
            \node[draw,circle,inner sep=0.6mm] at (1.5,0.4) {};
        \end{tikzpicture}
    \end{center}
    
    Note that $[\alpha_1] \sim A_1 \times A_1$ and $[\alpha_2] \sim B_2$. (The last graph is the Coxeter graph of $I_2(m),$ where $m=8$.)
\end{enumerate}

In the following table we summaries above description of $\Phi_\rho$ and $\Tilde{\Phi}_\rho$ after the twist:

\begin{center}
    \begin{tabular}{|c|c|c|c|c|}
        \hline
        \multirow{2}{*}{\textbf{Type of $\Phi$}} & \multirow{2}{*}{\textbf{$\Tilde{\Phi}_\rho$}} & \multirow{2}{*}{\textbf{$\Phi_\rho$}} & \multicolumn{2}{c|}{\textbf{Type of Roots}} \\
        \cline{4-5}
        & & & \textbf{Long} & \textbf{Short}  \\
        \hline 
        ${}^2 A_{2n-1} \ (n \geq 2)$ & $C_n$ & $C_n$ & $A_1$ & $A_1^2$ \\
        \hline 
        ${}^2 A_{2n} \ (n \geq 2)$ & $BC_n$ & $B_n$ & $A_1^2$ & $A_2$ \\
        \hline 
        ${}^2 D_{n} \ (n \geq 4)$ & $B_{n-1}$ & $B_{n-1}$ & $A_1$ & $A_1^2$ \\
        \hline
        ${}^3 D_{4}$ & $G_2$ & $G_2$ & $A_1$ & $A_1^3$ \\
        \hline
        ${}^2 E_{6}$ & $F_4$ & $F_4$ & $A_1$ & $A_1^2$ \\
        \hline
        ${}^2 B_{2}$ & $A_1$ & $A_1$ & $B_2$ & $-$ \\
        \hline
        ${}^2 G_{2}$ & $A_1$ & $A_1$ & $G_2$ & $-$ \\
        \hline
        ${}^2 F_{4}$ & $I_2(8)$ & $I_2(8)$ & $A_1^2, B_2$ & $-$ \\
        \hline
    \end{tabular}
\end{center}


\subsection{Types of Pairs of Roots in \texorpdfstring{$\Phi_\rho$}{the twisted root system}}

We aim to categorize pairs of roots $[\alpha]$ and $[\beta] (\neq \pm [\alpha])$ in $\Phi_\rho$ according to their placement within the subsystem generated by them. The positions of $[\alpha]$ and $[\beta]$ can be fully determined by considering their possible sums and differences. Hence we classify them into the following types (see \cite{EA1}):
\begin{enumerate}[(a)]
    \item $[\alpha], [\beta] \in \Phi_\rho$, but $[\alpha]+[\beta],[\alpha]-[\beta] \notin \Phi_\rho$.
        \begin{enumerate}
            \item[(a$_1$)] $[\alpha]+[\beta],[\alpha]-[\beta] \notin \Tilde{\Phi}_\rho$.
            \item[(a$_2$)] $[\alpha]+[\beta],[\alpha]-[\beta] \in \Tilde{\Phi}_\rho$.
                \begin{enumerate}[(i)]
                    \item $[\alpha] \sim A_1, [\beta] \sim A_1, 1/2([\alpha]+[\beta])=[\gamma] \sim A_1^2,$ where $\gamma + \bar{\gamma} = \alpha + \beta \notin \Phi$;
                    \item $[\alpha] \sim A_1^2, [\beta] \sim A_1^2, 1/2([\alpha]+[\beta])=[\gamma] \sim A_2,$ where $\gamma + \bar{\gamma} = \alpha + \beta \text{ or } \alpha + \bar{\beta} \in \Phi$.
                \end{enumerate}
        \end{enumerate}
        
    \item $[\alpha], [\beta] \in \Phi_\rho, [\alpha]+[\beta] \in \Phi_\rho,$ but $[\alpha]-[\beta] \notin \Phi_\rho$.
        \begin{enumerate}[(i)]
            \item $[\alpha] \sim A_1, [\beta] \sim A_1, [\alpha]+[\beta]=[\alpha + \beta] \sim A_1;$
            \item $[\alpha] \sim A_1^2, [\beta] \sim A_1^2, [\alpha]+[\beta] = [\alpha + \beta] \text{ or } [\alpha + \bar{\beta}] \text{ or } [\bar{\alpha} + \beta] \sim A_1^2.$
        \end{enumerate}
        
    \item $[\alpha], [\beta] \in \Phi_\rho, [\alpha]+[\beta] \in \Phi_\rho, [\alpha]-[\beta] \in \Phi_\rho$.
        \begin{enumerate}[(i)]
            \item $[\alpha] \sim A_1^2, [\beta] \sim A_1^2, [\alpha]+[\beta]=[\alpha + \bar{\beta}] \sim A_1;$
            \item $[\alpha] \sim A_2, [\beta] \sim A_2, [\alpha]+[\beta]=[\alpha + \bar{\beta}] \text{ or } [\bar{\alpha} + \beta] \sim A_1^2.$
        \end{enumerate}
        
    \item $[\alpha], [\beta] \in \Phi_\rho, [\alpha]+[\beta] \in \Phi_\rho, [\alpha]+ 2[\beta] \in \Phi_\rho$.
        \begin{enumerate}[(i)]
            \item $[\alpha] \sim A_1, [\beta] \sim A_1^2, [\alpha]+[\beta]=[\alpha + \beta] \sim A_1^2, [\alpha]+2[\beta]=[\alpha + \beta + \bar{\beta}] \sim A_1;$
            \item $[\alpha] \sim A_1^2, [\beta] \sim A_2, [\alpha]+[\beta]= [\alpha + \beta] \text{ or }[\alpha + \bar{\beta}] \text{ or } [\bar{\alpha} + \beta] \sim A_2, [\alpha]+2[\beta]=[\alpha + \beta + \bar{\beta}] \sim A_1^2.$
        \end{enumerate}
        
    \item $[\alpha], [\beta] \in \Phi_\rho, [\alpha]+[\beta] \in \Phi_\rho, [\alpha]+ 2[\beta] \in \Phi_\rho, [\alpha]+ 3[\beta] \in \Phi_\rho, 2[\alpha]+ 3[\beta] \in \Phi_\rho$. 
        \begin{itemize}
            \item $[\alpha] \sim A_1, [\beta] \sim A_1^3, [\alpha]+[\beta]= [\alpha + \beta] \sim A_1^3, [\alpha]+2[\beta]=[\alpha + \beta + \bar{\beta}] \text{ or } [\alpha + \beta + \bar{\bar{\beta}}]\sim A_1^3, [\alpha]+3[\beta]=[\alpha + \beta + \bar{\beta} + \bar{\bar{\beta}}] \sim A_1, 2[\alpha]+3[\beta]=[2\alpha + \beta + \bar{\beta} + \bar{\bar{\beta}}] \sim A_1.$
        \end{itemize}
        
    \item $[\alpha], [\beta] \in \Phi_\rho, [\alpha]+[\beta] \in \Phi_\rho, 2[\alpha]+ [\beta] \in \Phi_\rho,  [\alpha]+ 2[\beta] \in \Phi_\rho, [\alpha]-[\beta] \in \Phi_\rho.$
        \begin{itemize}
            \item $[\alpha] \sim A_1^3, [\beta] \sim A_1^3, [\alpha]+[\beta] = [\alpha + \bar{\beta}] \text{ or } [\alpha + \bar{\bar{\beta}}] \sim A_1^3, 2[\alpha]+[\beta]=[\alpha + \bar{\alpha} + \bar{\bar{\beta}}] \sim A_1, [\alpha]+2[\beta]=[\alpha + \bar{\beta} + \bar{\bar{\beta}}] \sim A_1, [\alpha]-[\beta] = [\alpha - \beta] \sim A_1.$
        \end{itemize}
        
    \item $[\alpha], [\beta] \in \Phi_\rho, [\alpha]+[\beta] \in \Phi_\rho, [\alpha]-[\beta] \in \Phi_\rho, 2[\alpha]- [\beta] \in \Phi_\rho,  [\alpha]- 2[\beta] \in \Phi_\rho.$
        \begin{itemize}
            \item $[\alpha] \sim A_1^3, [\beta] \sim A_1^3, [\alpha]+[\beta] = [\alpha + \bar{\beta}] \text{ or } [\alpha + \bar{\bar{\beta}}] \sim A_1, [\alpha]-[\beta] = [\alpha - \bar{\beta}] \text{ or } [\alpha - \bar{\bar{\beta}}] \sim A_1^3, 2[\alpha]-[\beta]=[\alpha + \bar{\alpha} - \beta] \text{ or } [\alpha + \bar{\alpha} - \bar{\beta}] \sim A_1, [\alpha]-2[\beta]= [\bar{\alpha} - \beta -\bar{\beta}] \text{ or } [\alpha - \beta - \bar{\beta}] \sim A_1.$
        \end{itemize}
\end{enumerate}

\begin{rmk}
    Cases $(e), (f)$ and $(g)$ occur exclusively in the context of ${}^3 D_4$.
\end{rmk}


\subsection{Twisted Weights}\index{twisted weights}

Finally, we examine the action of $\rho$ on the weight lattice $\Lambda_{sc}$. 
Assume that the root system $\Phi$ has one root length. 
Since $\rho$ permutes the simple roots (and thus all roots), its action on the root lattice $\Lambda_r$ is clear.
The fundamental dominant weights $\lambda_1, \dots, \lambda_\ell$ form a $\mathbb{Z}$-basis for the weight lattice $\Lambda_{sc}$. 
The action of $\rho$ on $\Lambda_r$ can be naturally extended to $\Lambda_{sc}$ as follows: for $\lambda_i \in \Lambda_{sc}$, define $\rho(\lambda_i) = \lambda_j$ whenever $\rho(\alpha_i) = \alpha_j$. This induces a $\mathbb{Z}$-linear automorphism of $\Lambda_{sc}$, satisfying $\rho(\Lambda_r) = \Lambda_r$.
Thus, $\rho$ can be viewed as a group automorphism of the fundamental group $\Lambda_{sc} / \Lambda_r$ associated with $\Phi$. 

Let $\Lambda$ denote a sublattice of $\Lambda_{sc}$ containing $\Lambda_r$. 
The quotient $\Lambda / \Lambda_r$ forms a subgroup of $\Lambda_{sc} / \Lambda_r$, which is cyclic except in the case $\Phi = D_{2n}$. 
Therefore, $\rho$ stabilizes this subgroup, i.e., $\rho(\Lambda / \Lambda_r) = \Lambda / \Lambda_r$, and hence $\rho(\Lambda) = \Lambda$.
For the case of $\Phi = D_{2n}$, the fundamental group $\Lambda_{sc} / \Lambda_r$ is isomorphic to $\mathbb{Z}_2 \times \mathbb{Z}_2$. 
Hence there are precisely two proper sublattices $\Lambda_1$ and $\Lambda_2$ of $\Lambda_{sc}$ that contain $\Lambda_r$ as a proper sublattice, satisfying the condition that $\rho (\Lambda_i) \not\subset \Lambda_i$ for $i=1,2$.
Therefore, if $\Lambda_\pi = \Lambda_1$ or $\Lambda_2$, then there is no graph automorphism of $G_\pi (\Phi, R)$ and $E_\pi (\Phi, R)$ even when $1/2 \in R$ (as shown in \cite[page 91]{RS}).

\section{Twisted Chevalley Algebras}\label{sec:TCA}

Let $\mathcal{L} = \mathcal{L}(\Phi, \mathbb{C})$ be a simple Lie algebra over $\mathbb{C}$ with root system $\Phi$. 
Assume that $\Phi$ is of type $A_\ell \ (\ell \geq 2)$, $D_\ell \ (\ell \geq 4)$, or $E_6$. 
Let $\Delta = \{ \alpha_1, \dots, \alpha_\ell \}$ be a fixed simple system.
Consider the Chevalley basis $\{X_\alpha, H_i \mid \alpha \in \Phi, \ i=1, \dots, \ell\}$. 
Let $\rho$ be a non-trivial angle-preserving permutation of $\Phi$, and write $\bar{\alpha} = \rho(\alpha)$ for all $\alpha \in \Phi$. 
This induces an automorphism of $\mathcal{L}$, also denoted by $\rho$, satisfying 
\[
    \rho (H_\alpha) = H_{\bar{\alpha}}, \quad 
    \rho (X_\alpha) = X_{\bar{\alpha}}, \quad 
    \rho (X_{-\alpha}) = X_{-\bar{\alpha}}, 
\]
for all $\alpha \in \Delta$. 
Then we have $\rho (X_\alpha) = \epsilon_\alpha X_{\bar{\alpha}}$, where $\epsilon_\alpha = \pm 1$ for all $\alpha \in \Phi$. 

\begin{lemma}[{\cite[Proposition 3.1]{EA1}}]\label{epsilonalpha}
    \normalfont
    A Chevalley basis of $\mathcal{L}$ can be chosen such that:
    \begin{enumerate}[(a)]
        \item $\epsilon_\alpha = \epsilon_{\bar{\alpha}}$, 
        \item $\epsilon_\alpha = -1$ if $[\alpha] \sim A_2$ and $\alpha = \bar{\alpha}$, 
        \item $\epsilon_\alpha = 1$ otherwise.
    \end{enumerate}
\end{lemma}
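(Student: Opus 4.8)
The plan is to exploit the only freedom available in a Chevalley basis once the simple system $\Delta$ and the $H_i$ are fixed, namely the rescaling $X_\alpha \mapsto c(\alpha) X_\alpha$ by a sign function $c\colon \Phi \to \{\pm 1\}$. Recall (from the discussion following Theorem~\ref{chevalleybasis}) that such a rescaling yields another Chevalley basis precisely when $c(\alpha)c(-\alpha) = 1$ and $c(\alpha)c(\beta) = \pm c(\alpha+\beta)$ for $\alpha+\beta \in \Phi$; for a sign function the second condition is automatic (both sides are $\pm 1$, so the magnitude $\lvert N_{\alpha,\beta}\rvert = r+1$ is preserved), leaving only the requirement $c(-\alpha) = c(\alpha)$. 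Under such a rescaling the defining signs transform by $\epsilon'_\alpha = \epsilon_\alpha\, c(\alpha)/c(\bar\alpha)$. Since $\rho$ is set up on simple roots by $\rho(X_\alpha) = X_{\bar\alpha}$ and $\rho(X_{-\alpha}) = X_{-\bar\alpha}$ for $\alpha \in \Delta$, we begin from $\epsilon_\alpha = 1$ for $\alpha \in \pm\Delta$, and I would only constrain $c$ to satisfy $c(\alpha) = c(\bar\alpha)$ on $\pm\Delta$ in order to preserve this.

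First I would record the two elementary relations. Applying $\rho$ to $[X_\alpha, X_{-\alpha}] = H_\alpha$ and using $\rho(H_\alpha) = H_{\bar\alpha}$ gives $\epsilon_\alpha \epsilon_{-\alpha} = 1$, hence $\epsilon_{-\alpha} = \epsilon_\alpha$. Next, applying $\rho^{o(\rho)} = \mathrm{id}$ to $X_\alpha$ yields $\prod_{i=0}^{o(\rho)-1} \epsilon_{\rho^i(\alpha)} = 1$. When $o(\rho) = 2$ this immediately gives assertion (a), $\epsilon_\alpha = \epsilon_{\bar\alpha}$; when $o(\rho) = 3$ (type $D_4$) it gives $\epsilon_\alpha\epsilon_{\bar\alpha}\epsilon_{\bar{\bar\alpha}} = 1$, which is exactly the compatibility needed below. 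Then I would set up the key recursion: applying $\rho$ to $[X_\alpha, X_\beta] = N_{\alpha,\beta} X_{\alpha+\beta}$ for $\alpha + \beta \in \Phi$ produces
\[
    \epsilon_{\alpha+\beta} = \epsilon_\alpha \epsilon_\beta \, \frac{N_{\bar\alpha, \bar\beta}}{N_{\alpha, \beta}},
\]
and since $\rho$ is an isometry carrying the $\alpha$-string through $\beta$ to the $\bar\alpha$-string through $\bar\beta$, the integers $N_{\bar\alpha,\bar\beta}$ and $N_{\alpha,\beta}$ have equal absolute value, so the ratio is $\pm 1$.

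With these tools the normalization splits into two disjoint tasks. On each $\rho$-orbit of size $>1$ (the \emph{non-fixed} roots), the product of the $\epsilon$'s over the orbit equals $1$ by the previous paragraph, and this is exactly the condition under which one can solve $\epsilon'_\beta = \epsilon_\beta\, c(\beta)/c(\bar\beta) = 1$ simultaneously for every $\beta$ in the orbit by choosing $c$ on it (and on its negative, with $c(-\beta) = c(\beta)$); for instance, in a size-$2$ orbit one sets $c(\bar\alpha)=1,\ c(\alpha)=\epsilon_\alpha$ and checks the second equation using $\epsilon_\alpha=\epsilon_{\bar\alpha}$. Distinct orbits are handled independently, and no Chevalley-consistency obstruction arises because sign functions are unconstrained beyond $c(-\beta) = c(\beta)$; the simple-root orbits already carry $\epsilon = 1$ and may be left with $c = 1$. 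This forces $\epsilon'_\beta = 1$ on every non-fixed root, the ``otherwise'' part of (c).

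The remaining and decisive point concerns the \emph{fixed} roots $\gamma = \bar\gamma$: here $\epsilon'_\gamma = \epsilon_\gamma$ for every sign rescaling, so $\epsilon_\gamma$ is an intrinsic invariant that must be computed directly. I would argue that a fixed root $\gamma$ arises as $\gamma = \alpha + \bar\alpha$ with $\bar\alpha \neq \alpha$ exactly in the case $[\gamma] \sim A_2$ (occurring for $\Phi \sim A_{2n}$); there the recursion with $\beta = \bar\alpha$, together with $\epsilon_\alpha = \epsilon_{\bar\alpha}$ and the antisymmetry $N_{\bar\alpha,\alpha} = -N_{\alpha,\bar\alpha}$, yields $\epsilon_\gamma = \epsilon_\alpha\epsilon_{\bar\alpha}\,(N_{\bar\alpha,\alpha}/N_{\alpha,\bar\alpha}) = -1$, establishing (b). For every other fixed root — those that are simple, or that lie in the $A_1$-classes giving the long roots of $\Phi_\rho$ in types ${}^2A_{2n-1}, {}^2D_n, {}^2E_6, {}^3D_4$ — a height induction using the recursion shows the accumulated sign is $+1$, completing (c). The hard part will be precisely this last case analysis: one must check, type by type, that for fixed roots outside the $A_2$-classes no antisymmetric cancellation of the above kind is forced, and that in the triality case $D_4$ the size-$3$ orbits and the triality-fixed roots (the simple root $\alpha_2$ and the long-root classes of $G_2$) are all compatible with the uniform value $+1$.
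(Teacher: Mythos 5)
The paper does not actually prove this lemma; it is quoted from Abe \cite[Proposition~3.1]{EA1}, so there is no in-text argument to compare against. Your reduction is nevertheless the standard one and most of it is correct and complete: the only freedom in a Chevalley basis with fixed $H_i$'s is a rescaling $X_\alpha \mapsto c(\alpha)X_\alpha$ with $c(-\alpha)=c(\alpha)$, the transformation law $\epsilon'_\alpha = \epsilon_\alpha\, c(\alpha)/c(\bar\alpha)$ is right, the identities $\epsilon_{-\alpha}=\epsilon_\alpha$, $\prod_i \epsilon_{\rho^i(\alpha)}=1$ and the recursion $\epsilon_{\alpha+\beta}=\epsilon_\alpha\epsilon_\beta N_{\bar\alpha,\bar\beta}/N_{\alpha,\beta}$ are correctly derived, the orbit-by-orbit solvability of $\epsilon'\equiv 1$ on non-fixed roots follows exactly from the product-over-orbit relation, and the computation $\epsilon_{\alpha+\bar\alpha} = \epsilon_\alpha\epsilon_{\bar\alpha}N_{\bar\alpha,\alpha}/N_{\alpha,\bar\alpha} = -1$ for the $A_2$-classes is a clean and correct proof of (b).

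The gap is the one you flag yourself and then do not close: the assertion that $\epsilon_\gamma=+1$ for every $\rho$-fixed root $\gamma$ outside an $A_2$-class. Since $\epsilon_\gamma$ on fixed roots is invariant under every rescaling (as you observe), no normalization can repair a wrong sign there, so this claim carries essentially all of the content of part (c) and must be verified outright; ``a height induction shows the accumulated sign is $+1$'' is not yet an argument. Concretely, writing $\gamma=\delta+\alpha_i$ with $\alpha_i$ simple and $\bar\alpha_i\neq\alpha_i$, the recursion gives $\epsilon_\gamma=\epsilon_\delta\,N_{\bar\delta,\bar\alpha_i}/N_{\delta,\alpha_i}$ with $\bar\delta\neq\delta$, and the two structure constants belong to genuinely different root pairs: already for $A_3$ with the standard $\mathfrak{sl}_4$ basis one finds $N_{\alpha_2+\alpha_3,\alpha_1}=-1$ while $N_{\alpha_1+\alpha_2,\alpha_3}=+1$, and the value $+1$ for the highest root only emerges because $\epsilon_{\alpha_1+\alpha_2}=-1$ before normalization. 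So the induction genuinely requires tracking how the sign of $\epsilon_\delta$ on the non-fixed intermediate roots compensates the sign discrepancy of the structure constants, case by case in types $A_{2n-1}$, $A_{2n}$, $D_n$, $E_6$ and for the triality orbits of $D_4$, and no uniform multiplicativity argument can work (it would also force $+1$ on the $A_2$-classes, contradicting (b)). Until that verification is carried out, the proposal is a correct and well-organized plan rather than a proof of part (c).
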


Let $R$ be a commutative ring with unity, and let $\theta: R \to R$ be an automorphism of $R$ of order $2$. 
Write $\bar{r} = \theta(r)$ for all $r \in R$. 
Define $R_\theta = \{r \in R \mid r = \bar{r}\}$ and $R_\theta^{-} = \{r \in R \mid r = -\bar{r}\}$. 
If $1/2 \in R$, then $R = R_\theta \oplus R_\theta^{-}$.

Let $\mathcal{L}(\Phi, R)$ denote a Chevalley algebra over $R$ of type $\Phi$ (cf. Section~\ref{sec:CheAlg}). 
The automorphism $\theta$ induces a semi-automorphism of $\mathcal{L}(\Phi, R)$, also denoted by $\theta$, satisfying 
\[
    \theta (r H_i) = \bar{r} H_i \quad (i=1, \dots, \ell), \qquad 
    \theta (r X_\alpha) = \bar{r} X_\alpha \quad (\alpha \in \Phi), 
\]
for all $r \in R$.

Define $\sigma = \rho \circ \theta$. 
Then $\sigma$ is a semi-automorphism of $\mathcal{L}(\Phi, R)$ satisfying:
\[
    \sigma (r H_{\alpha_i}) = \bar{r} H_{\bar{\alpha_i}} \quad (i = 1, \dots, \ell), \qquad 
    \sigma (r X_\alpha) = \epsilon_\alpha \bar{r} X_{\bar{\alpha}} \quad (\alpha \in \Phi),
\]
for all $r \in R$. 
Define 
\[
    \mathcal{L}_{\sigma}(\Phi, R) = \{ X \in \mathcal{L}(\Phi, R) \mid \sigma(X) = X \}.
\]
Then $\mathcal{L}_\sigma(\Phi, R)$ forms a submodule of an $R_\theta$-module $\mathcal{L}(\Phi, R)$. 
This submodule is called the \textbf{twisted Chevalley algebra} \index{twisted Chevalley algebra} over $R$ of type $\Phi$.


\subsection{Basis of \texorpdfstring{$\mathcal{L}_\sigma (\Phi, R)$}{L(R)}}

Suppose there exists an invertible element \( a \in R_\theta^{-} \). Furthermore, assume that \( 2 \) is invertible in \( R \). Let 
\[
\{ X_\alpha, H_i \mid \alpha \in \Phi, \ i=1, \dots, \ell \}
\]
denote the Chevalley basis as described in Lemma~\ref{epsilonalpha}.

Let \( \rho \) be a non-trivial angle-preserving permutation of \( \Phi \) of order \( 2 \), and let \( \Phi_\rho \) be the corresponding twisted Chevalley root system. Define the following elements:

\begin{align*}
    X^{+}_{[\alpha]} &= 
    \begin{cases}
        X_{\alpha}, & \text{if } [\alpha] \sim A_1, \\
        X_{\alpha} + X_{\bar{\alpha}}, & \text{if } [\alpha] \sim A_1^2 \text{ or } A_2;
    \end{cases} \\
    X^{-}_{[\alpha]}(\mathrm{I}) &= a (X_{\alpha} - X_{\bar{\alpha}}) \quad \text{if } [\alpha] \sim A_1^2 \text{ or } A_2; \\
    X^{-}_{[\alpha]}(\mathrm{II}) &= a X_{\alpha + \bar{\alpha}} \quad \text{if } [\alpha] \sim A_2; \\
    H^{+}_{[\alpha]} &= 
    \begin{cases}
        H_{\alpha}, & \text{if } [\alpha] \sim A_1, \\
        H_{\alpha} + H_{\bar{\alpha}}, & \text{if } [\alpha] \sim A_1^2 \text{ or } A_2;
    \end{cases} \\
    H^{-}_{[\alpha]} &= a (H_{\alpha} - H_{\bar{\alpha}}) \quad \text{if } [\alpha] \sim A_1^2 \text{ or } A_2.
\end{align*}

Then, the set
\[
    \{ X^{+}_{[\alpha]}, X^{-}_{[\alpha]}(\mathrm{I}), X^{-}_{[\alpha]}(\mathrm{II}), H^{+}_{[\alpha_i]}, H^{-}_{[\alpha_i]} \mid [\alpha] \in \Phi_\rho, \ [\alpha_i] \in \Delta_\rho \}
\]
forms a basis for the \( R_\theta \)-module \( \mathcal{L}_\sigma(\Phi, R) \). 
Finally, note that 
\[
    \mathcal{L}(\Phi, R) = R \otimes_{R_\theta} \mathcal{L}_\sigma(\Phi, R).
\]
Therefore, the same set also forms a basis for the \( R \)-algebra \( \mathcal{L}(\Phi, R) \).

\section{Twisted Chevalley Groups}\label{Subsec:TCG}

Assume that $\Phi$ is of type $A_n (n \geq 2), D_n (n \geq 4)$ or $E_6$ and let $G_\pi (\Phi, R)$ (resp., $E_\pi (\Phi, R)$) be a Chevalley group (resp., an elementary Chevalley group) over a commutative ring $R$. Let $\sigma$\label{nomencl:sigma} be an automorphism of $G_\pi (\Phi, R)$ which is the product of a graph automorphism $\rho$ and a ring automorphism $\theta$ such that $o(\theta) = o(\rho)$. Denote the corresponding permutation of the roots also by $\rho$. Since $\rho \circ \theta = \theta \circ \rho$, we have $o(\theta) = o(\rho) = o(\sigma)$. Since $E_\pi (\Phi, R)$ is a characteristic subgroup of $G_\pi (\Phi, R)$, $\sigma$ is also an automorphism of $E_\pi (\Phi, R)$. 

Define $G_{\pi, \sigma} (\Phi, R) = \{ g \in G_\pi (\Phi, R) \mid \sigma(g)=g \}$. Clearly, $G_{\pi, \sigma} (\Phi, R)$ is a subgroup of $G_\pi (\Phi, R)$. We call $G_{\pi, \sigma} (\Phi, R)$\label{nomencl:G_sigma(R)} the \textbf{twisted Chevalley group} \index{twisted Chevalley group} over the ring $R$. The notion of the adjoint twisted Chevalley group and the universal (or simply connected) twisted Chevalley group is clear.

Write $E_{\pi, \sigma} (\Phi, R) = E_{\pi} (\Phi, R) \cap G_{\pi, \sigma} (\Phi, R)$. Consider the subgroups $U = U_\pi (\Phi, R)$, $U^{-} = U^{-}_{\pi} (\Phi, R)$, $H = H_\pi (\Phi, R)$, $B = B_\pi (\Phi, R)$ and $N = N_{\pi}(\Phi, R)$ of $E_\pi(\Phi, R)$. 
Then $\sigma$ preserves $U, H, B, U^-$ and $N$. 
Hence we can make sense of $U_\sigma = U_{\pi, \sigma} (\Phi, R)$, $U^{-}_\sigma = U^{-}_{\pi, \sigma} (\Phi, R)$, $H_\sigma = H_{\pi, \sigma} (\Phi, R)$, $B_\sigma = B_{\pi, \sigma} (\Phi, R)$ and $N_\sigma = N_{\pi, \sigma} (\Phi, R)$ (if $A \subset G(R)$ then we define $A_\sigma = A \cap G_\sigma(R)$). Note that $\sigma$ preserves $N/H \cong W$ (as it preserves $N$ and $H$). 
The action thus induced on $W$ is consistent with the permutation $\rho$ of the roots.
Finally, let us define $E'_{\pi, \sigma} (\Phi, R) = \langle U_\sigma, U_\sigma^- \rangle$, a subgroup of $E_{\pi, \sigma} (\Phi, R)$ generated by $U_\sigma$ and $U_\sigma^-$. We call $E'_{\pi, \sigma} (\Phi, R)$\label{nomencl:E'_sigma(R)} the \textbf{elementary twisted Chevalley group} \index{elementary twisted Chevalley group} over the ring $R$. 
Write $H'_\sigma = H'_{\pi, \sigma} (\Phi, R) := H \cap E'_\sigma(R)$, $N'_\sigma = N'_{\pi, \sigma} (\Phi, R) := N \cap E'_\sigma(R)$ and $B'_\sigma = B'_{\pi, \sigma} (\Phi, R) := B \cap E'_\sigma (R)$. Then $B'_\sigma = U_\sigma H'_\sigma$. 

Let $T_\pi (\Phi, R)$ be the standard maximal torus of $G_\pi (\Phi, R)$. Define 
\[
    T_{\pi, \sigma} (\Phi, R) := T_{\pi}(\Phi, R) \cap G_{\pi, \sigma} (\Phi, R)
\]
and called it the \textbf{split maximal torus} \index{subgroups of $G_\sigma(R)$!split maximal torus} of $G_{\pi, \sigma} (\Phi, R)$. 
For a character $\chi \in $ Hom$(\Lambda_\pi, R^*)$, we define its conjugation $\bar{\chi}_\sigma$ with respect to $\sigma$ by $\bar{\chi}_\sigma (\lambda) = \theta (\chi (\rho^{-1}(\lambda)))$ for every $\lambda \in \Lambda_\pi$. 
If $h(\chi) \in T_\pi (\Phi, R)$, then $\sigma (h(\chi)) = h(\bar{\chi}_\sigma)$.
A character $\chi \in $ Hom$(\Lambda_\pi, R^*)$ is called \textbf{self-conjugate with respect to $\sigma$} if $\chi = \bar{\chi}_\sigma$, i.e., $\chi (\rho(\lambda)) = \theta (\chi (\lambda)),$ for every $\lambda \in \Lambda_\pi$. We denote the set of such characters by $\text{Hom}_1 (\Lambda_\pi, R^*) = \{ \chi \in \text{Hom} (\Lambda_\pi, R^*) \mid \chi = \bar{\chi}_\sigma \}$.
Thus we have $T_{\pi, \sigma} (\Phi, R) = \{ h(\chi) \mid \chi \in \text{Hom}_1 (\Lambda_\pi, R^*) \}$. 
Note that, an element $h(\chi) \in H_\sigma \subset T_{\pi, \sigma} (\Phi, R)$ if and only if $\chi$ is a self-conjugate character of $\Lambda_\pi$ (with respect to $\sigma$) that can be extended to a self-conjugate character of $\Lambda_{sc}$.

For the sake of completeness, let us also define $G_{\pi, \sigma}^0 (\Phi, R) = G^0_\pi(\Phi, R) \cap G_{\pi, \sigma} (\Phi, R)$ and $G'_{\pi, \sigma} (\Phi, R) = T_{\pi, \sigma} (\Phi, R) E'_{\pi, \sigma} (\Phi, R)$. 

If $G_\pi (\Phi, R)$ is of type $X$ and $\sigma$ is of order $n$, we say $G_{\pi, \sigma} (\Phi, R)$ is of type ${}^n X.$ We write $G_\pi (\Phi, R) \sim X$ and $G_{\pi, \sigma} (\Phi, R) \sim {}^n X$. We use a similar notation for $E_\pi (\Phi, R)$, $E_{\pi, \sigma}(\Phi, R)$ and $E'_{\pi, \sigma}(\Phi, R)$. 

\begin{rmk}
    For convenience and to simplify notation, we sometimes use the notation $G_\sigma(R)$ or $G_\sigma(\Phi, R)$ in place of the more precise $G_{\pi, \sigma}(\Phi, R)$ when there is no risk of confusion. This convention is similarly extended to other groups described above.
\end{rmk}


\subsection{\texorpdfstring{Identifying Certain Twisted and Untwisted Chevalley \\ Groups}{Identifying Certain Twisted and Untwisted Chevalley Groups}}\label{subsec:Chevalley as Twisted Chevalley}

This subsection will examine isomorphisms between certain Chevalley and twisted Chevalley groups.

Let \(\Phi\) be a root system of type \(A_n \ (n \geq 2)\), \(D_n \ (n \geq 4)\), or \(E_6\), and let \(\Delta\) be the corresponding simple system. Let \(\rho\) be the non-trivial angle-preserving permutation of the simple roots of \(\Phi\). Let \(R\) be a commutative ring with unity. If \(o(\rho) = 2\) (respectively, \(o(\rho) = 3\)), define the ring automorphism \(\theta: R \times R \longrightarrow R \times R\) by \((a,b) \mapsto (b,a)\) (respectively, \(\theta: R \times R \times R \longrightarrow R \times R \times R\) by \((a,b,c) \mapsto (b,c,a)\)).

Next, consider the automorphisms of the group \(G(R \times R)\) (resp., \(G(R \times R \times R)\)) induced by \(\rho\) and \(\theta\), which we shall also denote by the same symbols. Set \(\sigma = \rho \circ \theta\). Now, consider the twisted Chevalley group \(G_\sigma(R \times R)\) (resp., \(G_\sigma(R \times R \times R)\)).

\begin{prop}\label{prop:Chevalley as TChevalley}
    \normalfont
    Retaining the above notations, we establish the following isomorphisms:
    \begin{enumerate}[(a)]
        \item $G_\sigma(R \times R) \cong G(R)$ (respectively, $G_\sigma(R \times R \times R) \cong G(R)$). 
        \item $E'_\sigma(R \times R) \cong E(R)$ (respectively, $E'_\sigma(R \times R \times R) \cong E(R)$).
    \end{enumerate}
\end{prop}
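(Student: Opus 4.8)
The plan is to deduce both isomorphisms from the single observation that the Chevalley group functor converts the finite product ring $R^k$ (where $k = o(\rho) \in \{2,3\}$) into a direct product of copies of $G_\pi(\Phi, R)$, and then to read off the action of $\sigma$ on this product. First I would invoke the group-scheme description $G_\pi(\Phi, -) = \operatorname{Hom}(\mathbb{Z}[G], -)$: since a ring homomorphism into $R^k$ is precisely a $k$-tuple of ring homomorphisms into $R$, the $k$ coordinate projections $p_i : R^k \to R$ assemble into a natural isomorphism $\Theta : G_\pi(\Phi, R^k) \xrightarrow{\ \sim\ } G_\pi(\Phi, R)^k$. Because $x_\alpha\big((t_1,\dots,t_k)\big)$ corresponds under $\Theta$ to $\big(x_\alpha(t_1),\dots,x_\alpha(t_k)\big)$, the same map restricts to isomorphisms $E_\pi(\Phi, R^k) \cong E_\pi(\Phi, R)^k$, $U \cong U_\pi(\Phi,R)^k$, and $U^- \cong U^-_\pi(\Phi,R)^k$.

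Next I would identify $\sigma = \rho \circ \theta$ under $\Theta$ using naturality. The graph automorphism $\rho$ is a natural transformation of the functor $G_\pi(\Phi, -)$, hence commutes with each $p_i$ and is carried by $\Theta$ to the diagonal action $(g_1,\dots,g_k) \mapsto (\rho(g_1),\dots,\rho(g_k))$. The ring automorphism $\theta$ acts as $G_\pi(\theta)$, and since $p_i \circ \theta = p_{i+1}$ cyclically (indices mod $k$), $\Theta$ carries $\theta$ to the coordinate shift $(g_1,\dots,g_k)\mapsto(g_2,\dots,g_k,g_1)$. Composing, $\sigma$ becomes $(g_1,\dots,g_k) \mapsto (\rho(g_2),\dots,\rho(g_k),\rho(g_1))$.

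Finally I would solve for the fixed points. The equations $g_i = \rho(g_{i+1})$ (with $g_k = \rho(g_1)$) force $(g_1,\dots,g_k) = (\rho^{k-1}(g),\dots,\rho(g),g)$ where $g = g_k$, the only consistency condition $g = \rho^{k}(g)$ being automatic since $o(\rho) = k$. Hence the projection $\pi_k$ to the last coordinate is a group isomorphism $G_{\pi,\sigma}(\Phi, R^k) \xrightarrow{\ \sim\ } G_\pi(\Phi, R)$ with inverse $g \mapsto (\rho^{k-1}(g),\dots,g)$, which proves part (a). For part (b) I would note that $\rho$ permutes $\Phi^+$ among itself and $\Phi^-$ among itself, so it preserves $U$ and $U^-$; the same computation then shows $\pi_k$ restricts to isomorphisms $U_\sigma \to U_\pi(\Phi,R)$ and $U_\sigma^- \to U^-_\pi(\Phi,R)$, whence $\pi_k$ carries $E'_{\pi,\sigma}(\Phi, R^k) = \langle U_\sigma, U_\sigma^-\rangle$ isomorphically onto $\langle U_\pi(\Phi,R), U^-_\pi(\Phi,R)\rangle = E_\pi(\Phi, R)$.

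The calculations are light; the step demanding the most care is the translation of $\sigma$ in the second paragraph, where one must correctly match the ring automorphism $\theta$ with the coordinate shift and pin down the direction of the cyclic permutation (equivalently, which coordinate projection yields the isomorphism). The only other point worth isolating is the identity $\langle U, U^-\rangle = E_\pi(\Phi, R)$, which holds because these two subgroups are jointly generated by all $x_\alpha(t)$ with $\alpha \in \Phi^+ \cup \Phi^- = \Phi$.
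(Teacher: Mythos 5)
Your proposal is correct and follows essentially the same route as the paper: identify $G_\pi(\Phi,R^k)$ with $G_\pi(\Phi,R)^k$, observe that $\sigma$ acts as the graph automorphism composed with a coordinate shift, and read off the fixed points as twisted diagonals. Your version is slightly more careful in two respects the paper glosses over — the cyclic-shift bookkeeping handles the order-$3$ case uniformly (the paper's shortcut $\rho^{-1}=\rho$ is special to order $2$), and you actually carry out part (b), which the paper omits as "similar" — but the underlying argument is identical.
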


\begin{proof}
    We shall prove the isomorphism \(G_\sigma(R \times R) \cong G(R)\). The remaining assertions can be established in a similar manner and are therefore omitted.

    Let \(x \in G_\sigma(R \times R)\). Since \(G_\sigma(R \times R) \subseteq G(R \times R) \cong G(R) \times G(R)\), there exist elements \(x_1, x_2 \in G(R)\) such that \(x\) corresponds to the pair \((x_1, x_2)\). By definition, \(x\) satisfies \(\sigma(x) = x\), which implies \((\rho \circ \theta)(x) = x\). Therefore, we have \(\theta(x) = \rho^{-1}(x) = \rho(x)\), meaning \( \theta((x_1, x_2)) = \rho((x_1, x_2)) \).

    Note that, $\theta (x) = \theta (x_1, x_2) = (x_2, x_1)$ and the permutation $\rho$ of simple roots induces an automorphism of $G(R)$ such that $\rho ((x_1, x_2)) = (\rho (x_1), \rho(x_2))$.
    Therefore, we have \[ (x_2, x_1) = \theta((x_1, x_2)) = \rho((x_1, x_2)) = (\rho(x_1), \rho(x_2)). \] 
    This implies that $x_2 = \rho(x_1)$, and hence $x = (x_1, \rho(x_1))$. Conversely, for any $x_1 \in G(R)$, the element $x = (x_1, \rho(x_1))$ belongs to $G_\sigma (R \times R)$. Thus, the map $\phi: G(R) \longrightarrow G_\sigma (R \times R)$ defined by $x \mapsto (x, \rho(x))$ establishes the desired isomorphism of groups.
\end{proof}

\section{The Group \texorpdfstring{$E'_{\pi, \sigma} (\Phi, R)$}{E(R)}}\label{sec:E(R)}

Before proceeding, let us establish some notations: denote $\bar{\alpha} = \rho(\alpha), \bar{\bar{\alpha}} = \rho^2(\alpha), \bar{t} = \theta(t)$ and $\bar{\bar{t}} = \theta^2(t)$.
Recall that for \( \sigma \) as defined earlier, the relation $\sigma(x_\alpha(t)) = x_{\bar{\alpha}}(\epsilon_\alpha \bar{t})$ holds for all \( \alpha \in \Phi \). 

From now on, we will always use a fixed Chevalley basis of \( \mathcal{L} \) satisfying the properties described in Lemma~\ref{epsilonalpha}. Under this choice of basis, the image of \( x_\alpha(t) \) under \( \sigma \) is given explicitly by:
\[
    \sigma(x_\alpha(t)) = 
    \begin{cases} 
        x_{\bar{\alpha}}(-\bar{t}), & \text{if } [\alpha] \sim A_2 \text{ and } \alpha = \bar{\alpha}; \\ 
        x_{\bar{\alpha}}(\bar{t}), & \text{otherwise}.
    \end{cases}
\]


\subsection{The Generators of \texorpdfstring{$E'_{\pi, \sigma} (\Phi, R)$}{E(R)}}

We fix a total order on $\Phi$ defined by $\Delta$. 
For a class $[\beta] \in \Phi_\rho$, we choose a representative $\alpha$ such that $\alpha$ is the smallest element in this class. 
Whenever $\alpha$ satisfies this condition, we denote the class $[\beta]$ by $[\alpha]$.
Now we define some special elements of $E'_\sigma (R)$ as follows: 
\begin{enumerate}
    \item If $[\alpha] \sim A_1$ (that is, $[\alpha]=\{ \alpha \}$), then define $x_{[\alpha]}(t) = x_\alpha (t), t \in R_\theta.$ In this case, $x_{[\alpha]}(t)x_{[\alpha]}(u)=x_{[\alpha]}(t+u)$ for every $t,u \in R_\theta$.
    \item If $[\alpha] \sim A_1^2$ (that is, $[\alpha]=\{ \alpha, \bar{\alpha} \}$), then define $x_{[\alpha]}(t) = x_\alpha (t) \ x_{\bar{\alpha}} (\bar{t}), t \in R$. In this case, $x_{[\alpha]}(t)x_{[\alpha]}(u)=x_{[\alpha]}(t+u)$ for every $t,u \in R$. 
    \item If ${[\alpha]} \sim A_1^3$ (that is, $[\alpha]=\{ \alpha, \bar{\alpha}, \bar{\bar{\alpha}} \}$), then define $x_{[\alpha]}(t) = x_\alpha (t) \ x_{\bar{\alpha}} (\bar{t}) \ x_{\bar{\bar{\alpha}}} (\bar{\bar{t}}), \ t \in R.$ 
    In this case, $x_{[\alpha]}(t)x_{[\alpha]}(u)=x_{[\alpha]}(t+u)$ for every $t,u \in R$. 
    \item If ${[\alpha]} \sim A_2$ with $\alpha \neq \bar{\alpha}$ (that is, ${[\alpha]}=\{ \alpha, \bar{\alpha}, \alpha + \bar{\alpha} \}$), then define $$x_{[\alpha]}(t,u) = x_\alpha (t) x_{\bar{\alpha}} (\bar{t}) x_{\alpha + \bar{\alpha}}(N_{\bar{\alpha}, \alpha} u), \hspace{1mm} \text{where } t,u \in R \text{ such that } t \bar{t} = u + \bar{u}.$$ 
    In this case, $x_{[\alpha]}(t,u)x_{[\alpha]}(t',u')=x_{[\alpha]}(t+t', u+u' + \bar{t} t')$ for every $t,u,t',u' \in R$ such that $t \bar{t} = u + \bar{u}$ and $t' \bar{t'} = u' + \bar{u'}$.
\end{enumerate}

Define $\mathcal{A}(R) := \{ (t,u) \mid t, u \in R \text{ such that } t \bar{t} = u + \bar{u} \}$. Note that, for $[\alpha] \sim A_2$ we define $x_{[\alpha]} (t,u)$ only in the case of $(t,u) \in \mathcal{A}(R)$. The product of $x_{[\alpha]}(t,u)$ and $x_{[\alpha]}(t',u')$ suggest the operation on $\mathcal{A}(R)$ as follows: let $(t,u), (t',u') \in \mathcal{A}(R)$, then define an operation $\oplus$ on $\mathcal{A}(R)$ by $(t,u) \oplus (t',u') = (t + t', u+u'+\bar{t}t').$ With this operation $\mathcal{A}(R)$ becomes a group with $(0,0)$ as an identity and $(-t, \bar{u})$ as an inverse of $(t,u)$. From this we can say that $(x_{[\alpha]}(t,u))^{-1} = x_{[\alpha]} (-t, \bar{u})$. Further, we can define an action of the monoid $(R, \times)$ on the set $\mathcal{A}(R)$ by $$r \cdot (t,u) = (rt, r \bar{r} u)$$ for any $r \in R$ and $(t,u) \in \mathcal{A}(R)$.

For $[\alpha] \in \Phi_\rho$, we write 
\begin{align*}
    R_{[\alpha]} = \begin{cases}
        R_\theta & \text{if } [\alpha] \sim A_1, \\
        R & \text{if } [\alpha] \sim A_1^2 \text{ or } A_1^3, \\
        \mathcal{A}(R) & \text{if } [\alpha] \sim A_2.
    \end{cases}
\end{align*}
If $[\alpha] \sim A_2$ then $t \in R_{[\alpha]}$ means that $t$ is a pair $(t_1, t_2)$ such that $(t_1, t_2) \in \mathcal{A}(R)$. Additionally, for $r \in R$ and $t \in R_{[\alpha]}$, the notation $r \cdot t$ means $rt$ if $[\alpha] \sim A_1, A_1^2, A_1^3$, and it means $(rt_1, r \bar{r} t_2)$ if $[\alpha] \sim A_2$.

Define $\mathfrak{X}_{[\alpha]}$ to be the subset of $E'_\sigma(R)$ consists of all $x_{[\alpha]}(t), \ t \in R_{[\alpha]}$. Clearly, $\mathfrak{X}_{[\alpha]}$ is a subgroup of $E_\sigma'(R)$. For a subset $S$ of $\Phi_\rho$, define $\mathfrak{X}_S$ to be the subgroup generated by $\mathfrak{X}_{[\alpha]}, \ [\alpha] \in S$.

\begin{prop}\label{lemma:X_alpha and R_alpha}
    \normalfont
    The subgroup $\mathfrak{X}_{[\alpha]}$ is isomorphic to the additive group $R_{[\alpha]}$.
\end{prop}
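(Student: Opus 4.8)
The plan is to verify that the map $\phi \colon R_{[\alpha]} \to \mathfrak{X}_{[\alpha]}$ defined by $t \mapsto x_{[\alpha]}(t)$ is a bijective group homomorphism, handling the four cases $[\alpha] \sim A_1,\ A_1^2,\ A_1^3,\ A_2$ separately. Surjectivity is immediate, since by definition $\mathfrak{X}_{[\alpha]}$ consists exactly of the elements $x_{[\alpha]}(t)$ with $t \in R_{[\alpha]}$. The homomorphism property is precisely the set of multiplication laws recorded at the moment the generators $x_{[\alpha]}(t)$ were introduced: in the cases $A_1, A_1^2, A_1^3$ the relation $x_{[\alpha]}(t)x_{[\alpha]}(u) = x_{[\alpha]}(t+u)$ says that $\phi$ is a homomorphism out of the additive group $R_{[\alpha]}$ (which is $R_\theta$ or $R$), while in the case $A_2$ the relation $x_{[\alpha]}(t,u)\,x_{[\alpha]}(t',u') = x_{[\alpha]}(t+t',\,u+u'+\bar{t}t')$ is exactly the statement that $\phi\big((t,u)\oplus(t',u')\big) = \phi(t,u)\,\phi(t',u')$, where $\oplus$ is the group operation on $R_{[\alpha]} = \mathcal{A}(R)$.

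The substantive point is injectivity, and here I would appeal to Steinberg's Lemma~17 (the unique factorization of an element of $\mathfrak{X}_S$ as an ordered product $\prod_{\gamma \in S} x_\gamma(t_\gamma)$ over a special closed set $S$). In each type, the roots occurring in the definition of $x_{[\alpha]}(t)$ form a special closed subset of $\Phi$ — namely the positive part of the rank $\le 2$ subsystem generated by the $\rho$-orbit of $\alpha$ — and the products defining $x_{[\alpha]}(t)$ are taken in a fixed order. For $[\alpha] \sim A_1, A_1^2, A_1^3$ the orbit $\{\alpha\}$, $\{\alpha,\bar\alpha\}$, $\{\alpha,\bar\alpha,\bar{\bar\alpha}\}$ consists of pairwise orthogonal roots, so uniqueness forces the arguments to agree coordinatewise, giving $t = u$. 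For $[\alpha]\sim A_2$, writing $x_{[\alpha]}(t,u) = x_\alpha(t)\,x_{\bar\alpha}(\bar t)\,x_{\alpha+\bar\alpha}(N_{\bar\alpha,\alpha}u)$ over the special closed set $\{\alpha,\bar\alpha,\alpha+\bar\alpha\}$, uniqueness yields $t = t'$, $\bar t = \bar{t'}$ and $N_{\bar\alpha,\alpha}\,u = N_{\bar\alpha,\alpha}\,u'$; since $N_{\bar\alpha,\alpha} = \pm 1$ is a unit, we conclude $u = u'$, hence $(t,u) = (t',u')$.

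The main obstacle will be the $A_2$ case. I need to confirm carefully that $\{\alpha,\bar\alpha,\alpha+\bar\alpha\}$ is genuinely special closed (so that Lemma~17(a) applies in the chosen order), that the relevant structure constant $N_{\bar\alpha,\alpha}$ lies in $\{\pm 1\}$ and is therefore invertible, and that $\phi$ is well defined on the constraint set $\mathcal{A}(R) = \{(t,u) \mid t\bar t = u + \bar u\}$. A little bookkeeping is also needed to check compatibility with inverses, using $(x_{[\alpha]}(t,u))^{-1} = x_{[\alpha]}(-t,\bar u)$ against $(-t,\bar u) = (t,u)^{-1}$ in $(\mathcal{A}(R),\oplus)$; this, together with the identity element $(0,0)\mapsto 1$, makes $\phi$ a group isomorphism and completes the proof.
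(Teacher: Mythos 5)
Your proof is correct, and it is exactly the argument the paper has in mind: the paper simply declares the proposition "straightforward" and omits it, relying (as you do) on the recorded addition laws for the generators $x_{[\alpha]}(t)$ together with uniqueness of expression over a special closed set of roots (Steinberg's Lemma~17, whose twisted analogue the paper quotes immediately afterwards as Lemma~62). Your extra checks in the $A_2$ case — that $\{\alpha,\bar\alpha,\alpha+\bar\alpha\}$ is special closed, that $N_{\bar\alpha,\alpha}=\pm 1$ is a unit, and that the multiplication law matches $\oplus$ on $\mathcal{A}(R)$ — are precisely the bookkeeping needed, so nothing is missing.
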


The proof of this proposition is straightforward. Next, we present the following result from Steinberg \cite{RS}.

\begin{prop}[{\cite[Lemma 62]{RS}}]\label{lemma 62 of RS}
    \normalfont
    Let $S$ be a closed subset of $\Phi_\rho$, i.e., if $[\alpha], [\beta] \in S$ then $[\alpha] + [\beta] \in S$. Moreover, assume that if $[\alpha] \in S$ then $-[\alpha] \notin S$. Then $\mathfrak{X}_S = \prod_{[\alpha] \in S} \mathfrak{X}_{[\alpha]}$ with the product taken in any fixed order and there is uniqueness of expression on the right. In particular, $U_\sigma = \prod_{[\alpha] > 0} \mathfrak{X}_{[\alpha]}$.
\end{prop}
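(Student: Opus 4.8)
The plan is to establish both claims — that $\mathfrak{X}_S$ equals the ordered product $\prod_{[\alpha]\in S}\mathfrak{X}_{[\alpha]}$ and that this expression is unique — by pulling everything back to the untwisted elementary Chevalley group $E_\pi(\Phi,R)$, where the corresponding facts (Steinberg's Lemma~17 together with its corollary giving the unique decomposition of $U_\pi(\Phi,R)$) are already available. The key observation is that each generator $x_{[\alpha]}(t)$ is, by its very definition, a \emph{fixed} ordered product of untwisted generators $x_\beta(\cdot)$ with $\beta$ ranging over the set $J_{[\alpha]}\subseteq\Phi^+$ of positive roots of the rank-two subsystem attached to $[\alpha]$ — namely $\{\alpha\}$, $\{\alpha,\bar\alpha\}$, $\{\alpha,\bar\alpha,\bar{\bar\alpha}\}$, or $\{\alpha,\bar\alpha,\alpha+\bar\alpha\}$ according as $[\alpha]\sim A_1,\,A_1^2,\,A_1^3,\,A_2$. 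Hence $\mathfrak{X}_{[\alpha]}\subseteq\mathfrak{X}_{J_{[\alpha]}}$ and $\mathfrak{X}_S\subseteq\mathfrak{X}_{\hat S}$, where $\hat S:=\bigcup_{[\alpha]\in S}J_{[\alpha]}\subseteq\Phi^+$.

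First I would verify that $\hat S$ is a \emph{special closed} subset of $\Phi$. Speciality is immediate, since $\hat S\subseteq\Phi^+$ contains no pair $\{\beta,-\beta\}$. For closedness I would use that $S$ is closed in $\Phi_\rho$ together with the compatibility of root addition with the projection $\beta\mapsto[\beta]$: if $\beta,\gamma\in\hat S$ and $\beta+\gamma\in\Phi$, then $\widehat{\beta+\gamma}=\hat\beta+\hat\gamma$ forces $[\beta+\gamma]=[\beta]+[\gamma]\in S$, whence $\beta+\gamma\in J_{[\beta+\gamma]}\subseteq\hat S$. With $\hat S$ special closed, the untwisted Lemma~17(a) applies to $\mathfrak{X}_{\hat S}$. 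For the existence half I would then show that $P:=\prod_{[\alpha]\in S}\mathfrak{X}_{[\alpha]}$ is a subgroup by sorting an arbitrary word into the fixed order via the twisted Chevalley commutator formula (whose shape is governed by the pair classification of cases $(a)$–$(g)$): a commutator $[x_{[\alpha]}(t),x_{[\beta]}(u)]$ is an ordered product of factors $x_{[\gamma]}(\cdots)$ with $[\gamma]=i[\alpha]+j[\beta]$, $i,j>0$, all lying in $S$ by closedness and all strictly higher in height, so an induction on heights terminates. Since $P$ contains every generator and is closed under multiplication and inversion, $P=\mathfrak{X}_S$.

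For uniqueness I would fix an order on $\Phi^+$ that refines the chosen order on $\Phi_\rho$ and is, within each $J_{[\alpha]}$, compatible with the order used to define $x_{[\alpha]}$. The complete untwisted expansion of $\prod_{[\alpha]\in S}x_{[\alpha]}(t_{[\alpha]})$ is then a single ordered product over $\hat S$. By the untwisted uniqueness statement, each coordinate $t_\beta$ ($\beta\in\hat S$) is determined by the resulting group element; reading off the coordinate at the representative $\alpha$ — and, in the $A_2$ case, additionally at $\alpha+\bar\alpha$, where the scalar $N_{\bar\alpha,\alpha}=\pm1$ is invertible and the constraint $t\bar t=u+\bar u$ makes $(t,u)\in\mathcal{A}(R)$ recoverable — determines $t_{[\alpha]}\in R_{[\alpha]}$ uniquely. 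This proves uniqueness for the chosen order; the ``any fixed order'' clause follows by combining this with the sorting procedure from the existence step. The statement $U_\sigma=\prod_{[\alpha]>0}\mathfrak{X}_{[\alpha]}$ is the special case $S=\Phi_\rho^+$, which is special and closed.

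The main obstacle I anticipate is the closedness of $\hat S$, and beneath it the verification that $[\beta+\gamma]=[\beta]+[\gamma]$ with $\beta+\gamma$ landing in the expected $J$-set: this rests on the full case-by-case analysis of pairs of roots in $\Phi_\rho$, and the ${}^3D_4$ situations (cases $(e)$–$(g)$, involving $A_1^3$ classes) together with the $A_2$ relation $t\bar t=u+\bar u$ demand the heaviest bookkeeping. The compatibility of the global order on $\Phi^+$ with the per-class orders in the uniqueness argument is the other point requiring care.
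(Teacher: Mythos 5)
The paper gives no proof of this proposition --- it is quoted directly from Steinberg's Lemma~62 --- so there is no in-paper argument to compare against; I am judging your proposal on its own merits. Your overall strategy (expand each twisted generator into its untwisted factors over $\hat S=\bigcup_{[\alpha]\in S}J_{[\alpha]}$, invoke the untwisted Lemma~17 for existence and uniqueness over $\hat S$, and sort words in $\mathfrak{X}_S$ via the twisted commutator formulas) is the right one and is essentially how the result is established in the literature.

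There is, however, a genuine gap, and it sits exactly at the point you flag as the main obstacle. Your claim that closedness of $S$ in $\Phi_\rho$ forces closedness of $\hat S$ in $\Phi$, via ``$\widehat{\beta+\gamma}=\hat\beta+\hat\gamma$ forces $[\beta+\gamma]=[\beta]+[\gamma]\in S$'', fails in case $(a_2\text{-ii})$, which occurs for $\Phi_\rho\sim{}^2A_{2n}$: there $[\beta],[\gamma]$ are long ($A_1^2$) classes with $[\beta]+[\gamma]=2[\delta]\notin\Phi_\rho$ for a short ($A_2$) class $[\delta]=\tfrac12([\beta]+[\gamma])$, yet an untwisted sum such as $\beta+\bar\gamma=\delta+\bar\delta$ is a root of $\Phi$, so $[\beta+\bar\gamma]=[\delta]$, not $[\beta]+[\gamma]$. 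The stated hypothesis gives no control over $[\delta]$, so $\hat S$ need not be closed; for the same reason your sorting step breaks, since $[x_{[\beta]}(t),x_{[\gamma]}(u)]=x_{[\delta]}(0,\ast)$ lands outside $\prod_{[\alpha]\in S}\mathfrak{X}_{[\alpha]}$. Concretely, in the twisted system $B_2$ of ${}^2A_4$ the set $S=\{e_1-e_2,\ e_1+e_2\}$ satisfies the hypotheses as literally stated, but the conclusion fails. This defect is partly inherited from the statement itself: the correct hypothesis, as in Steinberg and as the paper's own Corollary~\ref{Che-Comm} hints with its coefficients $i,j\in\tfrac12\mathbb{Z}_{>0}$, is that $S$ contain every $i[\alpha]+j[\beta]\in\Phi_\rho$ with $i,j\in\tfrac12\mathbb{Z}_{>0}$, equivalently that $\hat S$ be closed in $\Phi$. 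Under that reading your argument goes through, with one small repair: $\hat S\subseteq\Phi^+$ is not automatic (a special closed $S$ need not consist of positive classes), but speciality of $\hat S$ still follows because the sets $J_{[\alpha]}$ partition $\Phi$ and $J_{-[\alpha]}=-J_{[\alpha]}$, so $\beta,-\beta\in\hat S$ would force $[\alpha],-[\alpha]\in S$.
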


\begin{cor}
    \normalfont
    The group $E'_\sigma (R)$ is generated by $x_{[\alpha]}(t)$ for all $[\alpha] \in \Phi_\rho$ and $t \in R_{[\alpha]}$.
\end{cor}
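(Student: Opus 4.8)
The plan is to deduce the statement almost directly from Proposition~\ref{lemma 62 of RS}, which already records the key factorization of $U_\sigma$ into twisted root subgroups. Recall that by definition $E'_\sigma(R) = \langle U_\sigma, U_\sigma^- \rangle$, so it suffices to show that each of $U_\sigma$ and $U_\sigma^-$ is generated by the elements $x_{[\alpha]}(t)$ with $t \in R_{[\alpha]}$, the first family ranging over positive twisted roots and the second over negative ones. Since $\mathfrak{X}_{[\alpha]} = \{ x_{[\alpha]}(t) \mid t \in R_{[\alpha]} \}$ by construction (cf. Proposition~\ref{lemma:X_alpha and R_alpha}), a factorization of $U_\sigma$ and $U_\sigma^-$ into the $\mathfrak{X}_{[\alpha]}$ is exactly what is needed.

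First I would apply Proposition~\ref{lemma 62 of RS} with $S = \Phi_\rho^+$, the set of positive twisted roots. This set is closed (a sum of positive twisted roots is again positive) and satisfies $[\alpha] \in S \Rightarrow -[\alpha] \notin S$, so the hypotheses are met and the proposition yields $U_\sigma = \mathfrak{X}_{\Phi_\rho^+} = \prod_{[\alpha] > 0} \mathfrak{X}_{[\alpha]}$. In particular $U_\sigma$ is generated by the $x_{[\alpha]}(t)$ with $[\alpha] > 0$ and $t \in R_{[\alpha]}$.

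Next I would treat the negative case. Here I note that $\Phi_\rho^- = -\Phi_\rho^+$ is again a closed subset containing no pair $\{[\alpha], -[\alpha]\}$, so Proposition~\ref{lemma 62 of RS} applies verbatim to $S = \Phi_\rho^-$ and gives $U_\sigma^- = \mathfrak{X}_{\Phi_\rho^-} = \prod_{[\alpha] < 0} \mathfrak{X}_{[\alpha]}$. This is the exact mirror of the displayed conclusion for $U_\sigma$; it can be seen either by rerunning the argument with $\Phi^-$ in place of $\Phi^+$, or by transporting the positive statement through conjugation by a representative of the longest Weyl element sending $\Phi^+$ to $\Phi^-$. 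Hence $U_\sigma^-$ is generated by the $x_{[\alpha]}(t)$ with $[\alpha] < 0$.

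Finally, combining the two factorizations with the disjoint decomposition $\Phi_\rho = \Phi_\rho^+ \sqcup \Phi_\rho^-$ and the defining equality $E'_\sigma(R) = \langle U_\sigma, U_\sigma^- \rangle$, it follows that $E'_\sigma(R)$ is generated by all $x_{[\alpha]}(t)$ with $[\alpha] \in \Phi_\rho$ and $t \in R_{[\alpha]}$, as claimed. The only point needing a word of justification is the negative-root analogue of Proposition~\ref{lemma 62 of RS}, but this is purely formal given the symmetry of the setup; no genuine obstacle arises, which is precisely why the result is stated as a corollary rather than proved from scratch.
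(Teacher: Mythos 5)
Your argument is correct and is essentially the paper's own proof: the paper simply remarks that the corollary "follows directly from the definition of $E'_\sigma(R)$ and the preceding propositions," which is exactly the combination of $E'_\sigma(R) = \langle U_\sigma, U_\sigma^- \rangle$ with the factorization $U_\sigma = \prod_{[\alpha]>0} \mathfrak{X}_{[\alpha]}$ (and its negative counterpart) that you spell out.
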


The proof of this corollary follows directly from the definition of $E'_\sigma (R)$ and the preceding propositions. 
Finally, we highlight another useful consequence of Proposition~\ref{lemma 62 of RS}, which is also stated in Steinberg \cite{RS}.

\begin{cor}\label{Che-Comm}
    \normalfont
    Let $[\alpha], [\beta] \in \Phi_\rho$ be such that $[\alpha] \neq \pm [\beta]$. Then $[\mathfrak{X}_{[\alpha]}, \mathfrak{X}_{[\beta]}] \subset \mathfrak{X}_S$, where $S = \{ i[\alpha] + j[\beta] \in \Phi_\rho \mid i,j \in \frac{1}{2} \mathbb{Z}_{> 0} \}$.
\end{cor}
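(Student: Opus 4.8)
The plan is to carry out the computation inside the ambient untwisted elementary Chevalley group $E_\pi(\Phi,R)$, where the ordinary Chevalley commutator formula (Proposition~\ref{prop:chevalley commutator formula}) is available, and then descend back to $E'_{\pi,\sigma}(\Phi,R)$ using the uniqueness of factorization supplied by Proposition~\ref{lemma 62 of RS}. For a class $[\gamma]\in\Phi_\rho$ introduce its \emph{ordinary support} $C([\gamma])=\{\mu\in\Phi\mid [\mu]=[\gamma]\}$; by the explicit definitions of the generators in Section~\ref{sec:E(R)} (including the type $A_2$ case, where $x_{[\gamma]}(t,u)$ also involves the root $\gamma+\bar\gamma$ with $\widehat{\gamma+\bar\gamma}=2\hat\gamma$), each subgroup $\mathfrak{X}_{[\gamma]}$ lies in the ordinary subgroup $\mathfrak{X}^{\mathrm{ord}}_{C([\gamma])}$ generated by the $\mathfrak{X}_\mu$, $\mu\in C([\gamma])$. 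The key geometric observation I would record first is: if $\mu\in C([\alpha])$, $\nu\in C([\beta])$ and $i\mu+j\nu\in\Phi$ with integers $i,j\geq 1$, then $\widehat{i\mu+j\nu}=i\hat\mu+j\hat\nu$ lies in the open cone $\mathbb{R}_{>0}\,\hat\alpha+\mathbb{R}_{>0}\,\hat\beta$, so its class is a root of $\Phi_\rho$ of the form $i'[\alpha]+j'[\beta]$ with $i',j'\in\tfrac12\mathbb{Z}_{>0}$, i.e. it belongs to $S$. Thus $i\mu+j\nu\in C(S)$.

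Next I would expand $c:=[x_{[\alpha]}(s),x_{[\beta]}(w)]$ by writing each generator as its prescribed product of ordinary root elements over $C([\alpha])$ and $C([\beta])$ and applying the identities ${}^{} [xy,z]={}^{x}[y,z]\,[x,z]$ and $[x,yz]=[x,y]\,{}^{y}[x,z]$ repeatedly, together with Proposition~\ref{prop:chevalley commutator formula}. Every elementary commutator $[x_\mu(\cdot),x_\nu(\cdot)]$ with $\mu\in C([\alpha])$, $\nu\in C([\beta])$ is then a product of factors $x_{i\mu+j\nu}(\cdot)$ whose roots lie in $C(S)$ by the observation above. One checks that $C(\Sigma):=C([\alpha])\cup C([\beta])\cup C(S)$, where $\Sigma=\{[\alpha],[\beta]\}\cup S$, is a special closed subset of $\Phi$ (special because $[\alpha]\neq\pm[\beta]$ keeps the cone free of opposite pairs) and that $C(S)$ is an ideal of $C(\Sigma)$. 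Hence the untwisted \cite[Lemma~17]{RS} shows $\mathfrak{X}^{\mathrm{ord}}_{C(S)}$ is normal in $\mathfrak{X}^{\mathrm{ord}}_{C(\Sigma)}$, so the conjugations produced during the expansion never leave $\mathfrak{X}^{\mathrm{ord}}_{C(S)}$. This yields $c\in\mathfrak{X}^{\mathrm{ord}}_{C(S)}$.

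For the descent I would use that $c$ is a commutator of two $\sigma$-fixed elements, hence $\sigma$-fixed, so $c\in\mathfrak{X}^{\mathrm{ord}}_{C(S)}\cap E'_{\pi,\sigma}(\Phi,R)$. To identify this intersection with $\mathfrak{X}_S$, I would order $C(S)$ by grouping the ordinary roots of each class $[\gamma]\in S$ consecutively, in the internal order defining $x_{[\gamma]}$, and write $c=\prod_{[\gamma]\in S}g_{[\gamma]}$ with $g_{[\gamma]}\in\mathfrak{X}^{\mathrm{ord}}_{C([\gamma])}$ by the uniqueness of factorization. Since $\sigma=\rho\circ\theta$ preserves each block $C([\gamma])$ setwise, comparing $\sigma(c)=c$ against the unique factorization forces each $g_{[\gamma]}$ to be $\sigma$-fixed; and a $\sigma$-fixed element supported on a single class block is exactly an element $x_{[\gamma]}(t)\in\mathfrak{X}_{[\gamma]}$ by the defining relations and Proposition~\ref{lemma:X_alpha and R_alpha}. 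Therefore $c\in\prod_{[\gamma]\in S}\mathfrak{X}_{[\gamma]}=\mathfrak{X}_S$ by Proposition~\ref{lemma 62 of RS}, which is the desired conclusion.

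The main obstacle I anticipate is precisely this descent step: controlling the $\sigma$-fixed points inside the ordinary cone group $\mathfrak{X}^{\mathrm{ord}}_{C(S)}$ and verifying that the reorderings within a block induced by $\sigma$ are harmless, which genuinely requires the uniqueness assertion of Proposition~\ref{lemma 62 of RS} together with the class-by-class description of the generators; the $A_2$ blocks, where $x_{[\gamma]}(t,u)$ carries the parameter set $\mathcal{A}(R)$ and the extra root $\gamma+\bar\gamma$, must be treated with particular care. A secondary, purely combinatorial point is to confirm that $S$ is closed with all coefficients in $\tfrac12\mathbb{Z}_{>0}$; this can be verified against the list of pair types $(a)$–$(g)$, where the factor $\tfrac12$ appearing in case $(a_2)$ is exactly what accounts for the half-integers in the definition of $S$.
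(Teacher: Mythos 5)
Your argument is correct and is essentially the intended one: the paper gives no written proof, presenting the statement as a direct consequence of Proposition~\ref{lemma 62 of RS} with a citation to Steinberg, and the underlying argument there is exactly your lift to $E_\pi(\Phi,R)$, the cone/ideal observation combined with the ordinary Chevalley commutator formula and \cite[Lemma~17]{RS}, followed by descent to the $\sigma$-fixed points via uniqueness of factorization. Your treatment of the descent (in particular the $A_2$ blocks, where $\sigma$-fixedness forces the parameter pair into $\mathcal{A}(R)$) is the right point to be careful about and you handle it correctly.
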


\begin{rmk}
    The precise commutator relations will be given in \ref{subsec:CheComm}.
\end{rmk}


\subsection{Chevalley Commutator Formulas}\label{subsec:CheComm} \index{Chevalley commutator formulas}

We now present pivotal formulas that will serve as a cornerstone for the remaining chapters. The numbering in the following formulas reflects the types of root pairs mentioned in the preceding subsection. 

\begin{enumerate}[leftmargin=4em]
    \item[\textbf{(a$_1$)}] $[x_{[\alpha]}(t),x_{[\beta]}(u)]=1$, where $t \in R_{[\alpha]}$ and $u \in R_{[\beta]}$.
    \item[\textbf{(a$_2-$i)}] $[x_{[\alpha]}(t),x_{[\beta]}(u)]=1$, where $t,u \in R_\theta$.
    \item[\textbf{(a$_2-$ii)}] $[x_{[\alpha]}(t),x_{[\beta]}(u)]= x_{[\gamma]}(0, N_{\alpha,\beta} N_{\bar{\gamma},\gamma}(t \bar{u} - \bar{t} u))$, where $1/2([\alpha]+[\beta]) = [\gamma] =\{ \gamma, \bar{\gamma}, \gamma + \bar{\gamma} \}$ and $t,u \in R$.
    \item[\textbf{(b$-$i)}] $[x_{[\alpha]}(t),x_{[\beta]}(u)]= x_{[\alpha]+[\beta]}(N_{\alpha,\beta}tu)$, where $t,u \in R_\theta$.
    \item[\textbf{(b$-$ii)}] $[x_{[\alpha]}(t),x_{[\beta]}(u)]= x_{[\alpha]+[\beta]}(N_{\alpha,\beta}tu)$ or $x_{[\alpha]+[\beta]}(N_{\alpha,\bar{\beta}} t \bar{u})$ or $x_{[\alpha]+[\beta]}(N_{\bar{\alpha},\beta} \bar{t} u)$, where $t,u \in R$.
    \item[\textbf{(c$-$i)}] $[x_{[\alpha]}(t),x_{[\beta]}(u)]= x_{[\alpha]+[\beta]}(N_{\alpha,\bar{\beta}}(t \bar{u} + \bar{t} u))$, where $t,u \in R$.
    \item[\textbf{(c$-$ii)}] $[x_{[\alpha]}(t_1, t_2),x_{[\beta]}(u_1, u_2)]= x_{[\alpha]+[\beta]}(N_{\alpha,\bar{\beta}} t_1 \bar{u_1})$ or $x_{[\alpha]+[\beta]}(N_{\bar{\alpha},\beta} \bar{t_1} u_1)$, where \newline $(t_1, t_2), (u_1, u_2) \in \mathcal{A}(R)$.
    \item[\textbf{(d$-$i)}] $[x_{[\alpha]}(t),x_{[\beta]}(u)]= x_{[\alpha]+[\beta]}(N_{\alpha,\beta}tu) x_{[\alpha]+2[\beta]}(N_{\alpha,\beta} N_{\beta,\alpha + \bar{\beta}} t u \bar{u})$, where $t \in R_\theta, u \in R$.
    \item[\textbf{(d$-$ii)}] 
        \begin{align*} 
                [x_{[\alpha]}(t),x_{[\beta]}(u_1, u_2)] = \begin{cases}
                    x_{[\alpha]+[\beta]}((N_{\alpha,\beta}t u_1, t \bar{t} u_2))x_{[\alpha]+2[\beta]}(N_{\beta,\bar{\beta}} N_{\beta+ \bar{\beta},\alpha} t u_2),  \text{ or }  \\
                    x_{[\alpha]+[\beta]}((N_{\alpha,\bar{\beta}} t \bar{u_1}, t \bar{t} \bar{u_2}))x_{[\alpha]+2[\beta]}(N_{\beta,\bar{\beta}} N_{\beta+ \bar{\beta},\alpha} t \bar{u_2}), \text{ or } \\
                    x_{[\alpha]+[\beta]}((N_{\bar{\alpha},\beta} \bar{t} u_1, t \bar{t} u_2))x_{[\alpha]+2[\beta]}(N_{\beta,\bar{\beta}} N_{\beta+ \bar{\beta},\alpha} t \bar{u_2});
                \end{cases}
        \end{align*}
        where $t \in R$ and $(u_1, u_2) \in \mathcal{A}(R)$.
    \item [\textbf{(e)}] 
        \begin{align*} 
                [x_{[\alpha]}(t),x_{[\beta]}(u)] = \begin{cases}
                    x_{[\alpha]+[\beta]}(N_{\alpha,\beta}tu) x_{[\alpha]+2[\beta]}(N_{\alpha, \bar{\beta}} N_{\beta,\alpha + \bar{\beta}} t u \bar{u}) \\
                    \hspace{5mm} x_{[\alpha]+3[\beta]}(N_{\alpha, \bar{\bar{\beta}}} N_{\beta,\alpha + \bar{\bar{\beta}}} N_{\beta,\alpha + \bar{\beta} + \bar{\bar{\beta}}} t u \bar{u} \bar{\bar{u}}) \\
                    \hspace{10mm} x_{2[\alpha]+3[\beta]}(N_{\beta,\alpha + \bar{\beta}} N_{\alpha + \beta + \bar{\beta}, \alpha + \bar{\bar{\beta}}} t^2 u \bar{u} \bar{\bar{u}}), \text{ or }  \\
                    x_{[\alpha]+[\beta]}(N_{\alpha,\beta}tu) x_{[\alpha]+2[\beta]}(N_{\alpha, \bar{\bar{\beta}}} N_{\beta,\alpha + \bar{\bar{\beta}}} t u \bar{\bar{u}}) \\
                    \hspace{5mm} x_{[\alpha]+3[\beta]}(N_{\alpha, \bar{\bar{\beta}}} N_{\beta,\alpha + \bar{\bar{\beta}}} N_{\beta,\alpha + \bar{\beta} + \bar{\bar{\beta}}} t u \bar{u} \bar{\bar{u}}) \\
                    \hspace{10mm} x_{2[\alpha]+3[\beta]}(N_{\beta,\alpha + \bar{\beta}} N_{\alpha + \beta + \bar{\beta}, \alpha + \bar{\bar{\beta}}} t^2 u \bar{u} \bar{\bar{u}});
                \end{cases}
        \end{align*}
    where $t \in R_\theta, u \in R$.
    \item [\textbf{(f)}] 
        \begin{align*} 
            [x_{[\alpha]}(t),x_{[\beta]}(u)] = \begin{cases}
                x_{[\alpha]+[\beta]}(N_{\alpha,\bar{\beta}} \ t \bar{u} + N_{\bar{\alpha}, \beta} \ \bar{t} u) \\
                \hspace{5mm} x_{2[\alpha]+[\beta]}(N_{\alpha, \bar{\alpha} + \bar{\bar{\beta}}} N_{\bar{\alpha}, \bar{\bar{\beta}}} (t \bar{t} \bar{\bar{u}} + \bar{t} \bar{\bar{t}} u + t \bar{\bar{t}} \bar{u})) \\
                \hspace{10mm} x_{[\alpha]+2[\beta]}(N_{\bar {\beta}, \alpha + \bar{\bar{\beta}}} N_{\alpha, \bar{\bar{\beta}}} (t \bar{u} \bar{\bar{u}} + \bar{t} u \bar{\bar{u}} + \bar{\bar{t}} u \bar{u})), \text{ or }  \\
                x_{[\alpha]+[\beta]}(N_{\alpha,\bar{\bar{\beta}}} t \bar{\bar{u}} + N_{\bar{\bar{\alpha}}, \beta} \bar{\bar{t}} u) \\
                \hspace{5mm} x_{2[\alpha]+[\beta]}(N_{\alpha, \bar{\alpha} + \bar{\bar{\beta}}} N_{\bar{\alpha}, \bar{\bar{\beta}}} (t \bar{t} \bar{\bar{u}} + \bar{t} \bar{\bar{t}} u + t \bar{\bar{t}} \bar{u})) \\
                \hspace{10mm} x_{[\alpha]+2[\beta]}(N_{\bar {\beta}, \alpha + \bar{\bar{\beta}}} N_{\alpha, \bar{\bar{\beta}}} (t \bar{u} \bar{\bar{u}} + \bar{t} u \bar{\bar{u}} + \bar{\bar{t}} u \bar{u}));
                \end{cases}
        \end{align*}
    where $t,u \in R$.
    \item [\textbf{(g)}] $[x_{[\alpha]}(t),x_{[\beta]}(u)] = x_{[\alpha]+[\beta]}(N_{\alpha,\bar{\beta}} (t \bar{u} + \bar{t} \bar{\bar{u}} + \bar{\bar{t}} u))$ or $x_{[\alpha]+[\beta]}(N_{\alpha,\bar{\bar{\beta}}} (t \bar{\bar{u}} + \bar{t} u + \bar{\bar{t}} \bar{u})),$ where $t, u \in R$.
\end{enumerate}

\begin{rmk}
    For the proof of $(a_1)$ to $(d-ii)$, see \cite{EA1}. We will give a proof of part $(e), (f)$ and $(g)$. 
\end{rmk}

\noindent \textit{Proof of $(e):$} Using commutator relations in $G_{\pi}(\Phi,R)$, we can show that
\begin{align*}
    [x_{[\alpha]}(t), x_{[\beta]}(u)] &= \{ x_{\alpha + \beta}(N_{\alpha, \beta} t u) x_{\alpha + \bar{\beta}}(N_{\alpha, \bar{\beta}} t \bar{u}) x_{\alpha + \bar{\bar{\beta}}}(N_{\alpha, \bar{\bar{\beta}}} t \bar{\bar{u}}) \} \{ x_{\alpha + \beta + \bar{\beta}}(N_{\alpha, \bar{\beta}} N_{\beta, \alpha + \bar{\beta}}  t u \bar{u}) \\
    & \hspace{10mm} x_{\alpha + \beta + \bar{\bar{\beta}}}(N_{\alpha, \bar{\bar{\beta}}} N_{\beta, \alpha + \bar{\bar{\beta}}}  t u \bar{\bar{u}}) x_{\alpha + \bar{\beta} + \bar{\bar{\beta}}}(N_{\alpha, \bar{\bar{\beta}}} N_{\bar{\beta}, \alpha + \bar{\bar{\beta}}}  t \bar{u} \bar{\bar{u}}) \} \\
    & \hspace{15mm} \{ x_{\alpha + \beta + \bar{\beta} + \bar{\bar{\beta}}}(N_{\alpha, \bar{\bar{\beta}}} N_{\beta,\alpha + \bar{\bar{\beta}}} N_{\beta, \alpha + \bar{\beta} + \bar{\bar{\beta}}} t u \bar{u} \bar{\bar{u}}) \} \\
    & \hspace{20mm} \{ x_{2\alpha + \beta + \bar{\beta} + \bar{\bar{\beta}}}(N_{\alpha, \bar{\beta}} N_{\alpha, \bar{\bar{\beta}}} N_{\beta,\alpha + \bar{\beta}} N_{\alpha + \beta + \bar{\beta}, \alpha + \bar{\bar{\beta}}} t^2 u \bar{u} \bar{\bar{u}}) \}.
\end{align*} 
From the choice of Chevalley bases (see Lemma \ref{epsilonalpha}), we have $N_{\alpha, \beta} = N_{\alpha, \bar{\beta}} = N_{\alpha, \bar{\bar{\beta}}}$. 
For $X,Y,Z \in \mathcal{L}$, we have Jacobi identity 
$$[X,[Y,Z]] + [Y,[Z,X]] + [Z,[X,Y]] = 0.$$
By taking $X = X_\alpha, Y= X_\beta$ and $Z= X_{\bar{\beta}}$, we get $N_{\beta, \alpha + \bar{\beta}} = N_{\bar{\beta}, \alpha + \beta}$. But then 
\begin{align*}
    N_{\bar{\beta}, \alpha + \bar{\bar{\beta}}} = N_{\beta, \alpha + \bar{\beta}} = N_{\bar{\beta}, \alpha + \beta} = N_{\bar{\bar{\beta}}, \alpha + \bar{\beta}} = N_{\beta, \alpha + \bar{\bar{\beta}}}.
\end{align*}
Now our assertion follows readily. \qed

\vspace{2mm}

\noindent \textit{Proof of $(f):$} Using commutator relations in $G_{\pi}(\Phi,R)$ and the fact that $\alpha + \bar{\beta} = \bar{\alpha} + \beta$, $\alpha + \bar{\bar{\beta}} = \bar{\bar{\alpha}} + \beta$, $\bar{\alpha} + \bar{\bar{\beta}} = \bar{\bar{\alpha}} + \bar{\beta}$, we can show that
\begin{align*}
    [x_{[\alpha]}(t), x_{[\beta]}(u)] &= \{ x_{\alpha + \bar{\beta}} (N_{\alpha, \bar{\beta}} t \bar{u} + N_{\bar{\alpha}, \beta} \bar{t} u) x_{\bar{\alpha} + \bar{\bar{\beta}}} (N_{\bar{\alpha}, \bar{\bar{\beta}}} \bar{t} \bar{\bar{u}} + N_{\bar{\bar{\alpha}}, \bar{\beta}} \bar{\bar{t}} \bar{u}) x_{\bar{\bar{\alpha}} + \beta} (N_{\bar{\bar{\alpha}}, \beta} \bar{\bar{t}} u + N_{\alpha, \bar{\bar{\beta}}} t \bar{\bar{u}}) \} \\
    & \hspace{10mm} \{ x_{\alpha + \bar{\alpha} + \bar{\bar{\beta}}} (N_{\alpha, \bar{\alpha} + \bar{\bar{\beta}}} N_{\bar{\alpha}, \bar{\bar{\beta}}} \ t \bar{t} \bar{\bar{u}} + N_{\bar{\alpha}, \bar{\bar{\alpha}} + \beta} N_{\bar{\bar{\alpha}}, \beta} \ \bar{t} \bar{\bar{t}} u + N_{\alpha, \bar{\bar{\alpha}} + \bar{\beta}} N_{\bar{\bar{\alpha}}, \bar{\beta}} \ t \bar{\bar{t}} \bar{u}) \} \\
    & \hspace{15mm} \{ x_{\alpha + \bar{\beta} + \bar{\bar{\beta}}} (N_{\bar{\beta}, \alpha + \bar{\bar{\beta}}} N_{\alpha, \bar{\bar{\beta}}} \ t \bar{u} \bar{\bar{u}} + N_{\beta, \bar{\alpha} + \bar{\bar{\beta}}} N_{\bar{\alpha}, \bar{\bar{\beta}}} \ \bar{t} u \bar{\bar{u}} + N_{\beta, \bar{\bar{\alpha}} + \bar{\beta}} N_{\bar{\bar{\alpha}}, \bar{\beta}} \ \bar{\bar{t}} u \bar{u}) \}. 
\end{align*}
From the choice of Chevalley basis (see Lemma \ref{epsilonalpha}), we have $N_{\alpha, \bar{\beta}} = N_{\bar{\alpha}, \bar{\bar{\beta}}} = N_{\bar{\bar{\alpha}}, \beta}$ and $N_{\bar{\alpha}, \beta} = N_{\bar{\bar{\alpha}}, \bar{\beta}} = N_{\alpha, \bar{\bar{\beta}}}$. 
For $X,Y,Z \in \mathcal{L}$, we have Jacobi identity 
$$[X,[Y,Z]] + [Y,[Z,X]] + [Z,[X,Y]] = 0.$$
By taking $X = X_{\bar{\alpha}}, Y= X_{\bar{\bar{\alpha}}}$ and $Z= X_{\beta}$, we get $N_{\bar{\alpha}, \bar{\bar{\alpha}} + \beta} N_{\bar{\bar{\alpha}}, \beta} = N_{\bar{\bar{\alpha}}, \bar{\alpha} + \beta} N_{\bar{\alpha}, \beta}$. But then 
\begin{align*}
    N_{\alpha, \bar{\alpha} + \bar{\bar{\beta}}} N_{\bar{\alpha}, \bar{\bar{\beta}}} = N_{\bar{\alpha}, \bar{\bar{\alpha}} + \beta} N_{\bar{\bar{\alpha}}, \beta} = N_{\bar{\bar{\alpha}}, \bar{\alpha} + \beta} N_{\bar{\alpha}, \beta} =
    N_{\alpha, \bar{\bar{\alpha}} + \bar{\beta}} N_{\bar{\bar{\alpha}}, \bar{\beta}}.
\end{align*}
Similarly, by taking $X = X_{\bar{\beta}}, Y= X_{\bar{\bar{\beta}}}, Z= X_{\alpha}$, we get $N_{\bar{\beta}, \alpha + \bar{\bar{\beta}}} N_{\alpha, \bar{\bar{\beta}}} = N_{\bar{\bar{\beta}}, \alpha + \bar{\beta}} N_{\alpha, \bar{\beta}}$. But then 
\begin{align*}
    N_{\beta, \bar{\bar{\alpha}} + \bar{\beta}} N_{\bar{\bar{\alpha}}, \bar{\beta}} = N_{\bar{\beta}, \alpha + \bar{\bar{\beta}}} N_{\alpha, \bar{\bar{\beta}}} = N_{\bar{\bar{\beta}}, \alpha + \bar{\beta}} N_{\alpha, \bar{\beta}} =
    N_{\beta, \bar{\alpha} + \bar{\bar{\beta}}} N_{\bar{\alpha}, \bar{\bar{\beta}}}.
\end{align*}
Now our assertion follows readily. \qed

\vspace{2mm}

\noindent \textit{Proof of $(g):$} Observe that, either $\alpha + \bar{\beta} \in \Phi$ and $\alpha + \bar{\bar{\beta}} \not\in \Phi$ or vice versa. We consider a case where $\alpha + \bar{\beta} \in \Phi$ (the proof of other case is similar and hence omitted). Now using commutator relations in $G_{\pi}(\Phi,R)$ and the fact that $\alpha + \bar{\beta} = \bar{\alpha} + \bar{\bar{\beta}} = \bar{\bar{\alpha}} + \beta$, we can show that 
$$[x_{[\alpha]}(t), x_{[\beta]}(u)] = x_{\alpha+\bar{\beta}}(N_{\alpha,\bar{\beta}} t \bar{u} + N_{\bar{\alpha}, \bar{\bar{\beta}}} \bar{t} \bar{\bar{u}} + N_{\bar{\bar{\alpha}}, \beta} \bar{\bar{t}} u).$$
From the choice of Chevalley basis (see Lemma \ref{epsilonalpha}), we have $N_{\alpha, \bar{\beta}} = N_{\bar{\alpha}, \bar{\bar{\beta}}} = N_{\bar{\bar{\alpha}}, \beta}$ and hence the result follows. \qed


\subsection{The subgroups \texorpdfstring{$N'_\sigma$}{N'} and \texorpdfstring{$H'_\sigma$}{H'}}\label{N and H} 

We now turn our attention to studying the subgroups $N'_{\sigma}$ and $H'_{\sigma}$ of the group $E'_\sigma (R)$. Understanding the structure of $N'_\sigma$ and $H'_\sigma$ is relatively straightforward when $\Phi_\rho \sim {}^2 A_{2n-1} \ (n \geq 2), {}^2 D_n \ (n \geq 4), {}^2 E_6$ or ${}^3 D_4$. However, it becomes more complex when $\Phi_\rho \sim {}^2 A_{2n} \ (n \geq 1)$.

\begin{conv}
    \normalfont
    At this point, we want to establish a convention regarding some notation. The classes $[-\alpha]$ and $-[\alpha]$ denote the same set, but they may differ as ordered sets. If $[\alpha] \sim A_1$, then both notations are identical. If $[\alpha] \sim A_1^2$ or $A_1^3$, then $\alpha'$ represents $[-\alpha]$ where $\alpha' = \min \{ -\alpha, -\bar{\alpha}, -\bar{\bar{\alpha}} \}$. In these cases, as an ordered set, $[-\alpha] = [\alpha'] = \{ \alpha', \bar{\alpha'} \}$ or $\{ \alpha', \bar{\alpha'}, \bar{\bar{\alpha'}} \}$ depending on whether $[\alpha] \sim A_1^2$ or $A_1^3$, respectively. Whence, for $-[\alpha]$, as an ordered set, $-[\alpha] = \{ -\alpha, -\bar{\alpha} \}$ or $\{ -\alpha, -\bar{\alpha}, -\bar{\bar{\alpha}} \}$ depending on whether $[\alpha] \sim A_1^2$ or $A_1^3$, respectively. Finally, if $[\alpha] \sim A_2$, both the notations represent the same ordered set: $\{ - \bar{\alpha}, -\alpha, -\alpha - \bar{\alpha} \}$ if $\alpha < \bar{\alpha}$.
\end{conv}

Write $R^*= \{r \in R \mid \exists s \in R \text{ such that } rs = 1\}$, $R_\theta^* = R_\theta \cap R^*$ and $\mathcal{A}(R)^* := \{ (t,u) \in \mathcal{A}(R) \mid u \in R^* \}.$ For given $[\alpha] \in \Phi_\rho$, we also write 
\[
    R_{[\alpha]}^* = \begin{cases}
        R^*_\theta & \text{if } [\alpha] \sim A_1, \\
        R^* & \text{if } [\alpha] \sim A_1^2 \text{ or } A_1^3, \\
        \mathcal{A}(R)^* & \text{if } [\alpha] \sim A_2;
    \end{cases} \quad \text{and} \quad     R_{[\alpha]}^{\star} = \begin{cases}
        R^*_\theta & \text{if } [\alpha] \sim A_1, \\
        R^* & \text{if } [\alpha] \sim A_1^2, A_1^3 \text{ or } A_2.
    \end{cases}
\]

With these notations established, we proceed to define the following special elements of $N_\sigma$ and $H_\sigma$:
\begin{enumerate}[leftmargin=3.5em]
    \item[$\textbf{(W1)}$] If $[\alpha] \sim A_1$, then define $w_{[\alpha]}(t) := x_{[\alpha]}(t) x_{-[\alpha]}(-t^{-1}) x_{[\alpha]}(t) = w_\alpha (t), t \in R_\theta^*.$
    \item[$\textbf{(W2)}$] If $[\alpha] \sim A_1^2$, then define $w_{[\alpha]}(t) := x_{[\alpha]}(t) x_{-[\alpha]}(-t^{-1}) x_{[\alpha]}(t) = w_{\alpha} (t) w_{\bar{\alpha}} (\bar{t}), t \in R^*.$
    \item[$\textbf{(W3)}$] If $[\alpha] \sim A_1^3$, then define $w_{[\alpha]}(t) := x_{[\alpha]}(t) x_{-[\alpha]}(-t^{-1}) x_{[\alpha]}(t) = w_{\alpha} (t) w_{\bar{\alpha}} (\bar{t}) w_{\bar{\bar{\alpha}}} (\bar{\bar{t}}),$ $t \in R^*.$
    \item[$\textbf{(W4)}$] If $[\alpha] \sim A_2$, then define $w_{[\alpha]}(t,u) := x_{[\alpha]}(t,u) x_{-[\alpha]}( -\bar{u}^{-1} \cdot (t,u)) x_{[\alpha]}( u \bar{u}^{-1} \cdot (t,u)) = x_{[\alpha]}(t,u) x_{-[\alpha]}(- (\bar{u}^{-1}) t, (\bar{u}^{-1})) x_{[\alpha]}(u \bar{u}^{-1} t, u),$ where $(t,u) \in \mathcal{A}(R)^*$. 
    \item[\textbf{(W4$'$)}] If $[\alpha] \sim A_2$ such that $\alpha \neq \bar{\alpha}$, then define $w_{[\alpha]} (t) := w_\alpha (\bar{t}) w_{\bar{\alpha}}(1) w_{\alpha} (t), t \in R^*$. 
    
    \vspace{3mm}
    
    \item[$\textbf{(H1)}$] If $[\alpha] \sim A_1$, then define $h_{[\alpha]}(t) := w_{[\alpha]}(t) w_{[\alpha]}(-1) = h_\alpha (t), \ t \in R^*_{\theta}$.
    \item[$\textbf{(H2)}$] If $[\alpha] \sim A_1^2$, then define $h_{[\alpha]}(t) = w_{[\alpha]}(t) w_{[\alpha]}(-1) = h_\alpha (t) h_{\bar{\alpha}}(\bar{t}), \ t \in R^*$.
    \item [$\textbf{(H3)}$] If $[\alpha] \sim A_1^3$, then define $h_{[\alpha]}(t) = w_{[\alpha]}(t) w_{[\alpha]}(-1) = h_\alpha (t) h_{\bar{\alpha}}(\bar{t}) h_{\bar{\bar{\alpha}}}(\bar{\bar{t}}), \ t \in R^*$.
    \item[$\textbf{(H4)}$] If $[\alpha] \sim A_2$, then define $h_{[\alpha]}((t,u),(t',u')) = w_{[\alpha]}(t,u) w_{[\alpha]}(t',u')$, where \newline $(t,u), (t',u') \in \mathcal{A}(R)^*$. 
    \item[\textbf{(H4$'$)}] If $[\alpha] \sim A_2$ such that $\alpha \neq \bar{\alpha}$, then define $h_{[\alpha]} (t) := h_\alpha (t) h_{\bar{\alpha}}(\bar{t}), t \in R^*$.
\end{enumerate}

\begin{rmk}
    \normalfont
    \begin{enumerate}[(a)]
        \item One can easily verify that the last equality holds in $(W1), (W2), (W3),$ $(H1),$ $(H2)$ and $(H3)$.
        \item Recall that, $\sigma (h_\alpha (t)) = h_{\bar{\alpha}} (\bar{t})$ and $\sigma (w_\alpha (t)) = w_{\bar{\alpha}} (\epsilon_\alpha \bar{t})$, where $\epsilon_\alpha = \pm 1$ (note that $\epsilon_\alpha = -1$ if and only if $[\alpha] \sim A_2$ and $\alpha \neq \bar{\alpha}$ (see Lemma \ref{epsilonalpha})). Hence it is clear that $w_{[\alpha]}(t) \in N'_\sigma \subset N_\sigma$ and $h_{[\alpha]}(t) \in H'_\sigma \subset H_\sigma$, if $[\alpha] \sim A_1, A_1^2, A_1^3.$ Similarly, $w_{[\alpha]}(t,u) \in N'_\sigma \subset N_\sigma$ and $h_{[\alpha]}((t,u),(t',u')) \in H'_\sigma \subset H_\sigma$, if $[\alpha] \sim A_2$  (see Lemma $\ref{lemmaW4'}$ and Lemma $\ref{lemmaRel-of-WandH}$).
        \item If $[\alpha] \sim A_2$ then $w_{[\alpha]}(t) \in N_\sigma$ and $h_{[\alpha]}(t) \in H_\sigma$ (see Lemma $\ref{lemmaW4'}$), but it is not necessary that every $w_{[\alpha]}(t)$ (resp., $h_{[\alpha]} (t)$), $t \in R^*$ contained in $N'_\sigma$ (resp., $H'_\sigma$).
        \item Each $h_{[\alpha]}(t)$ defined in $(H1), (H2), (H3)$ and $(H4')$ is multiplicative as a function of $t$. In particular, $h_{[\alpha]}(t)^{-1} = h_{[\alpha]} (t^{-1})$.
        \item If $[\alpha] \sim A_1, A_1^2$ or $A_1^3$, then $x_{[\alpha]} (t)^{-1} = x_{[\alpha]}(-t)$ and $w_{[\alpha]} (t)^{-1} = w_{[\alpha]}(-t)$.
    \end{enumerate}
\end{rmk}

\begin{lemma}[{\cite[Proposition 2.1]{EA1}}]\label{w^-1}
    If $[\alpha] \sim A_2$ and $(t,u) \in \mathcal{A}(R)^*$ then 
    \[
        w_{[\alpha]}(t,u)^{-1} = w_{[\alpha]} (-t u \bar{u}^{-1}, \bar{u}).
    \]
\end{lemma}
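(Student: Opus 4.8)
The plan is to compute $w_{[\alpha]}(t,u)^{-1}$ directly from the defining factorization $\textbf{(W4)}$ and then recognize the result as $w_{[\alpha]}(t',u')$ with $t' = -tu\bar{u}^{-1}$ and $u' = \bar{u}$. Writing $w_{[\alpha]}(t,u) = x_{[\alpha]}(P)\,x_{-[\alpha]}(Q)\,x_{[\alpha]}(S)$ with $P = (t,u)$, $Q = -\bar{u}^{-1}\cdot(t,u)$ and $S = u\bar{u}^{-1}\cdot(t,u)$, I would invert and reverse the order to obtain $w_{[\alpha]}(t,u)^{-1} = x_{[\alpha]}(S^{-1})\,x_{-[\alpha]}(Q^{-1})\,x_{[\alpha]}(P^{-1})$. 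Here every inverse is taken in the group $\mathcal{A}(R)$ (for both $\mathfrak{X}_{[\alpha]}$ and $\mathfrak{X}_{-[\alpha]}$, using Proposition~\ref{lemma:X_alpha and R_alpha}), so that $(a,b)^{-1} = (-a,\bar{b})$. The two tools I will lean on throughout are this inverse rule and the scalar action $r\cdot(a,b) = (ra, r\bar{r}b)$, together with commutativity of $R$ and $\theta^2 = \mathrm{id}$.

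Before comparing, I would first check that the proposed right-hand side is well defined, i.e.\ that $(t',u') = (-tu\bar{u}^{-1}, \bar{u}) \in \mathcal{A}(R)^*$. Since $\theta$ is an automorphism, $u \in R^*$ forces $u' = \bar{u} \in R^*$, and a short computation using $u\bar{u}^{-1}\bar{u}u^{-1} = 1$ yields $t'\bar{t'} = t\bar{t}$; as $(t,u) \in \mathcal{A}(R)$ gives $t\bar{t} = u + \bar{u} = u' + \bar{u'}$, the pair $(t',u')$ indeed lies in $\mathcal{A}(R)^*$, so $w_{[\alpha]}(t',u')$ makes sense.

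The core of the argument is then a term-by-term match of the three factors of $w_{[\alpha]}(t',u')$, as produced by $\textbf{(W4)}$, against $x_{[\alpha]}(S^{-1})$, $x_{-[\alpha]}(Q^{-1})$, $x_{[\alpha]}(P^{-1})$. Expanding the scalar actions (noting $\bar{u'} = u$, hence $\bar{u'}^{-1} = u^{-1}$) I expect the first factor $(t',u') = (-tu\bar{u}^{-1},\bar{u})$, which equals $S^{-1} = (-u\bar{u}^{-1}t,\bar{u})$ by commutativity; the middle factor $-\bar{u'}^{-1}\cdot(t',u') = (t\bar{u}^{-1}, u^{-1})$, which equals $Q^{-1} = (\bar{u}^{-1}t, u^{-1})$; and the last factor $u'\bar{u'}^{-1}\cdot(t',u') = (-t,\bar{u})$, which equals $P^{-1} = (-t,\bar{u})$. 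The only real work is the careful bookkeeping of these scalar-action evaluations --- in particular tracking $r\bar{r}$ in each of the three positions and applying the $A_2$-conventions for $\mathfrak{X}_{-[\alpha]}$ consistently --- but there is no structural obstacle; the identity is simply the twisted, $\mathcal{A}(R)$-parametrized analogue of the elementary relation $w_\alpha(t)^{-1} = w_\alpha(-t)$ in the rank-one case.
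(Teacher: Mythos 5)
Your proposal is correct and follows essentially the same route as the paper: invert the three-factor expression from $\textbf{(W4)}$, reverse the order, apply $(a,b)^{-1}=(-a,\bar b)$ in each root subgroup, and recognize the result as $w_{[\alpha]}(-tu\bar u^{-1},\bar u)$. The only additions beyond the paper's argument are your explicit check that $(-tu\bar u^{-1},\bar u)\in\mathcal{A}(R)^*$ and the term-by-term match of the three factors, both of which are correct and merely make explicit what the paper leaves to the reader.
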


\begin{proof}
     Note that $w_{[\alpha]}(t,u) = x_{[\alpha]}(t,u) x_{-[\alpha]}(- \bar{u}^{-1} t, \bar{u}^{-1}) x_{[\alpha]}(u \bar{u}^{-1} t, u)$. Hence 
    \begin{align*}
        (w_{[\alpha]}(t,u))^{-1} &= [{x_{[\alpha]}(t,u) x_{-[\alpha]}(- (\bar{u}^{-1}) t, (\bar{u}^{-1})) x_{[\alpha]}(u (\bar{u}^{-1}) t, u)}]^{-1} \\
        &= [x_{[\alpha]}(u (\bar{u}^{-1}) t, u)]^{-1} [x_{-[\alpha]}(- (\bar{u}^{-1}) t, (\bar{u}^{-1}))]^{-1} [x_{[\alpha]}(t,u)]^{-1} \\
        &= x_{[\alpha]}(- u (\bar{u}^{-1}) t, \bar{u}) x_{-[\alpha]}((\bar{u}^{-1}) t, (u^{-1})) x_{[\alpha]}(-t, \bar{u}) \\
        &= w_{[\alpha]} (- u (\bar{u}^{-1}) t, \bar{u}).
    \end{align*}
    This proves our lemma.
\end{proof}


\begin{lemma}\label{lemmaW4'}
    \normalfont
    Let $[\alpha] \sim A_2$ such that $\alpha \neq \bar{\alpha}$. Then
    \begin{enumerate}[(a)]
        \item $w_{[\alpha]}(t) \in N_\sigma$ and $w_{[\alpha]} (t) ^{-1} = w_{[\alpha]} (\bar{t})$, for every $t \in R^*$.
        \item $h_{[\alpha]}(t) \in H_\sigma, h_{[\alpha]}(t) = w_{[\alpha]} (\bar{t}) w_{[\alpha]} (1)$ and $h_{[\alpha]}(t)^{-1} = h_{[\alpha]}(t^{-1})$, for every $t \in R^*$.
    \end{enumerate} 
\end{lemma}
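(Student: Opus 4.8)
The plan is to reduce both parts to identities among the untwisted elements $w_\alpha(s),w_{\bar\alpha}(s),h_\alpha(s),h_{\bar\alpha}(s)$ attached to the rank-$2$ subsystem $\{\pm\alpha,\pm\bar\alpha,\pm(\alpha+\bar\alpha)\}$ of type $A_2$, and then read off the $\sigma$-invariance from the way $\sigma$ acts on these generators. The first step is to record that $[\alpha]\sim A_2$ with $\alpha\neq\bar\alpha$ forces $o(\rho)=2$ (so $\bar{\bar\alpha}=\alpha$ and $\bar{\bar t}=t$) and that Lemma~\ref{epsilonalpha} gives $\epsilon_\alpha=\epsilon_{\bar\alpha}=1$. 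Consequently $\sigma$ acts by $\sigma(w_\alpha(s))=w_{\bar\alpha}(\bar s)$, $\sigma(w_{\bar\alpha}(s))=w_\alpha(\bar s)$, $\sigma(h_\alpha(s))=h_{\bar\alpha}(\bar s)$ and $\sigma(h_{\bar\alpha}(s))=h_\alpha(\bar s)$. Throughout I would use the relations (R4), (R5), (R7), (R8), the formulas $w_\gamma(a)w_\gamma(b)=h_\gamma(-ab^{-1})$, $w_\gamma(s)^{-1}=w_\gamma(-s)$, and the reflection computations $s_\alpha(\bar\alpha)=s_{\bar\alpha}(\alpha)=\alpha+\bar\alpha$ with $\langle\bar\alpha,\alpha\rangle=\langle\alpha,\bar\alpha\rangle=-1$.

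For part (a), applying $\sigma$ termwise to $w_{[\alpha]}(t)=w_\alpha(\bar t)w_{\bar\alpha}(1)w_\alpha(t)$ and using $\bar{\bar t}=t$ gives $\sigma(w_{[\alpha]}(t))=w_{\bar\alpha}(t)w_\alpha(1)w_{\bar\alpha}(\bar t)$, so $\sigma$-invariance is exactly the braid-type identity
\[
    w_\alpha(\bar t)\,w_{\bar\alpha}(1)\,w_\alpha(t)\;=\;w_{\bar\alpha}(t)\,w_\alpha(1)\,w_{\bar\alpha}(\bar t),
\]
which I will call $(\star)$. I would prove $(\star)$ by conjugating the middle factor out with (R4): both sides collapse to the shape $w_{\alpha+\bar\alpha}(\cdot)\,h_{\cdot}(\cdot)$, namely the left side to $w_{\alpha+\bar\alpha}(c(\alpha,\bar\alpha)\bar t)\,h_\alpha(-\bar t t^{-1})$ and the right side to $w_{\alpha+\bar\alpha}(c(\bar\alpha,\alpha)t)\,h_{\bar\alpha}(-t\bar t^{-1})$. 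Multiplying one by the inverse of the other, pushing the torus elements around with (R7) and (R8), collapsing the $w_{\alpha+\bar\alpha}$-factors via $w_\gamma(a)w_\gamma(b)=h_\gamma(-ab^{-1})$, and using the $A_2$ coroot relation $h_\alpha(s)h_{\bar\alpha}(s)=h_{\alpha+\bar\alpha}(s)$, the whole product reduces to $h_{\alpha+\bar\alpha}\!\big(-c(\alpha,\bar\alpha)c(\bar\alpha,\alpha)^{-1}\big)$; this is trivial precisely because of the antisymmetry $c(\alpha,\bar\alpha)=N_{\alpha,\bar\alpha}=-N_{\bar\alpha,\alpha}=-c(\bar\alpha,\alpha)$ (valid since $\alpha+\bar\alpha\in\Phi$ while $\alpha-\bar\alpha\notin\Phi$), so the exponent becomes $-(-1)=1$. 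The inverse formula does not need $(\star)$: from $w_\gamma(s)^2=h_\gamma(-1)$ and $w_\gamma(-s)=h_\gamma(-1)w_\gamma(s)$ I would compute $w_{[\alpha]}(t)\,w_{[\alpha]}(\bar t)$ directly, collapsing $w_\alpha(t)w_\alpha(t)=h_\alpha(-1)$, transporting it past $w_{\bar\alpha}(1)$ by (R5) into $h_{\alpha+\bar\alpha}(-1)=h_\alpha(-1)h_{\bar\alpha}(-1)$, after which the product telescopes to $1$; hence $w_{[\alpha]}(t)^{-1}=w_{[\alpha]}(\bar t)$.

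For part (b), the membership $h_{[\alpha]}(t)=h_\alpha(t)h_{\bar\alpha}(\bar t)\in H_\sigma$ is immediate: it lies in $H$, and since $H$ is abelian by (R8) and $\bar{\bar t}=t$, one gets $\sigma(h_{[\alpha]}(t))=h_{\bar\alpha}(\bar t)h_\alpha(t)=h_{[\alpha]}(t)$. The relation $h_{[\alpha]}(t)^{-1}=h_{[\alpha]}(t^{-1})$ then follows from multiplicativity and commutativity of $h_\alpha,h_{\bar\alpha}$ together with $\overline{t^{-1}}=\bar t^{-1}$. The remaining identity $h_{[\alpha]}(t)=w_{[\alpha]}(\bar t)w_{[\alpha]}(1)$ is a finite reduction: expanding both factors by (W4$'$), repeatedly using $w_\gamma(a)w_\gamma(b)=h_\gamma(-ab^{-1})$ to fuse adjacent same-root terms and (R7) to move the emerging $h_\alpha(\cdot),h_{\bar\alpha}(\cdot)$ to the front, the expression reduces to $h_\alpha(t)h_{\bar\alpha}(\bar t)=h_{[\alpha]}(t)$.

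The hard part will be the braid identity $(\star)$, as it is the only step that is genuinely rigid rather than formal: it closes only because the two $A_2$-specific inputs, the coroot additivity $h_\alpha(s)h_{\bar\alpha}(s)=h_{\alpha+\bar\alpha}(s)$ and the sign $c(\alpha,\bar\alpha)=-c(\bar\alpha,\alpha)$, cancel against one another. To guard against a bookkeeping error in the torus exponents, I would keep the explicit monomial-matrix realization of $w_\alpha(s),w_{\bar\alpha}(s)$ in $SL_3(R)$ as an independent check that the residual $H$-factor is indeed $1$, while presenting the derivation itself purely through the Steinberg relations (R1)--(R8), so that it is valid in $G_\pi(\Phi,R)$ independently of the chosen representation.
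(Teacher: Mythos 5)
Your proof is correct, but it takes a genuinely different route from the paper. The paper's proof transfers the whole problem to $E_3(R)\subseteq SL_3(R)$ via the surjection $\psi\colon E_3(R)\to K=\langle \mathfrak{X}_{\pm\alpha},\mathfrak{X}_{\pm\bar\alpha}\rangle$ and then verifies $w'_{[\alpha]}(t)\,w'_{[\alpha]}(\bar t)=1$ and $w'_{[\alpha]}(\bar t)\,w'_{[\alpha]}(1)=h'_{[\alpha]}(t)$ by multiplying explicit $3\times 3$ monomial matrices, with membership in $N_\sigma$ and $H_\sigma$ read off from the corresponding statements in $E_{3,\sigma'}(R)$. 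You instead stay inside $G_\pi(\Phi,R)$ and derive everything from (R4), (R5), (R7), (R8), the relation $w_\gamma(a)w_\gamma(b)=h_\gamma(-ab^{-1})$, the antisymmetry $c(\alpha,\bar\alpha)=N_{\alpha,\bar\alpha}=-N_{\bar\alpha,\alpha}=-c(\bar\alpha,\alpha)$, and the $A_2$ coroot additivity $h_{\alpha+\bar\alpha}(s)=h_\alpha(s)h_{\bar\alpha}(s)$; I checked your bookkeeping for the braid identity $(\star)$ and the residual torus factor does collapse to $h_{\alpha+\bar\alpha}\bigl(-c(\alpha,\bar\alpha)c(\bar\alpha,\alpha)^{-1}\bigr)=h_{\alpha+\bar\alpha}(1)=1$ exactly as you say, and the telescoping arguments for $w_{[\alpha]}(t)w_{[\alpha]}(\bar t)=1$ and for $w_{[\alpha]}(\bar t)w_{[\alpha]}(1)=h_\alpha(t)h_{\bar\alpha}(\bar t)$ also close. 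What your approach buys is representation-independence and full transparency about which $A_2$-specific inputs are needed (the paper's matrix check silently assumes the $SL_3$ sign conventions match the chosen Chevalley basis, a point your Steinberg-relations derivation sidesteps); what the paper's approach buys is brevity, and its $E_3(R)$ model is reused anyway in the subsequent Lemma~\ref{lemmaRel-of-WandH}. One caution: your computation $\sigma(w_\alpha(s))=w_{\bar\alpha}(\bar s)$ uses $\epsilon_\alpha=+1$ from Lemma~\ref{epsilonalpha}(c), which is the correct reading, but it conflicts with the remark printed just before the lemma (which asserts $\epsilon_\alpha=-1$ precisely when $[\alpha]\sim A_2$ and $\alpha\neq\bar\alpha$); that remark appears to contain a typo, and you should flag explicitly that you are following Lemma~\ref{epsilonalpha} rather than the remark so the sign convention you use is unambiguous.
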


\begin{proof}
    Define $E_3 (R)$ to be the subgroup of $SL_3(R)$ generated by 
    \begin{gather*}
        x'_{\alpha}(t) := 1 + t E_{23}, \quad x'_{\bar{\alpha}} (t):= 1 + t E_{12}, \quad x'_{\alpha + \bar{\alpha}} (t):= 1 + t E_{13}, \\
        x'_{-\alpha}(t):= 1 + t E_{32}, \quad x'_{-\bar{\alpha}} (t):= 1 + t E_{21}, \quad x'_{-\alpha - \bar{\alpha}} (t):= 1 + t E_{31}.
    \end{gather*}
    
    Consider a subgroup $K = \langle \mathfrak{X}_{\alpha}, \mathfrak{X}_{-\alpha}, \mathfrak{X}_{\bar{\alpha}}, \mathfrak{X}_{-\bar{\alpha}} \rangle$ of $E_\pi(\Phi, R)$. Then there is a natural surjective homomorphism of groups $$ \psi: E_3 (R) \longrightarrow K$$ given by 
    \[
        x'_{\beta}(t) \longmapsto x_{\beta}(t)
    \]
    for all $\beta \in \{ \pm \alpha, \pm \bar{\alpha}, \pm (\alpha + \bar{\alpha}) \}$ and $t \in R$.
    
    Note that $\sigma = \sigma|_K$ is an automorphism of the subgroup $K$ and there exists a natural automorphism $\sigma'$ of $E_3 (R)$ such that $\sigma \circ \psi = \psi \circ \sigma'$. We have $\psi(E_{3, \sigma'}(R)) = K_{\sigma}(R)$ and $\psi(E'_{3, \sigma'}(R)) = K'_{\sigma} (R)$. Hence it is enough to prove the corresponding results in the group $S(R)$.  
    
    The notation of $w'_{[\alpha]}(t)$ and $h'_{[\alpha]}(t)$ is clear. Note that $w'_{[\alpha]}(t) \in N_{S, \sigma'}(R)$ and $h'_{[\alpha]}(t) \in H_{S, \sigma'}(R),$ hence $w_{[\alpha]}(t) \in N_\sigma$ and $h_{[\alpha]}(t) \in H_\sigma$. Now the proof is immediate from below: $$w'_{[\alpha]} (t) w'_{[\alpha]} (\bar{t}) = \begin{pmatrix}
        0 & 0 & t \\
        0 & -t^{-1}\bar{t} & 0 \\
        \bar{t}^{-1} & 0 & 0
    \end{pmatrix} \begin{pmatrix}
        0 & 0 & \bar{t} \\
        0 & -\bar{t}^{-1}t & 0 \\
        t^{-1} & 0 & 0
    \end{pmatrix} = \begin{pmatrix}
        1 & 0 & 0 \\
        0 & 1 & 0 \\
        0 & 0 & 1
    \end{pmatrix},$$ 
    $$ w'_{[\alpha]} (\bar{t}) w'_{[\alpha]} (1) = \begin{pmatrix}
        0 & 0 & \bar{t} \\
        0 & -\bar{t}^{-1}t & 0 \\
        t^{-1} & 0 & 0
    \end{pmatrix} \begin{pmatrix}
        0 & 0 & 1 \\
        0 & -1 & 0 \\
        1 & 0 & 0
    \end{pmatrix} = \begin{pmatrix}
        \bar{t} & 0 & 0 \\
        0 & t \bar{t}^{-1} & 0 \\
        0 & 0 & t^{-1}
    \end{pmatrix} = h'_{[\alpha]}(t).$$
\end{proof}

\begin{lemma}\label{lemmaRel-of-WandH}
    \normalfont Let $[\alpha] \sim A_2$ such that $\alpha \neq \bar{\alpha}$.
    \begin{enumerate}[(a)]
        \item If $(t, u) \in \mathcal{A}(R)^*$, then $w_{[\alpha]}(t,u) = w_{[\alpha]}(u)$.
        \item If $(t, u), (t', u') \in \mathcal{A}(R)^*$, then $h_{[\alpha]}((t,u), (t',u')) = h_{[\alpha]}(\bar{u} {u'}^{-1}).$
    \end{enumerate}
\end{lemma}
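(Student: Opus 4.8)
The plan is to reduce both identities to matrix computations in $SL_3(R)$, exactly as in the proof of Lemma~\ref{lemmaW4'}. Recall that there we introduced the subgroup $K = \langle \mathfrak{X}_\alpha, \mathfrak{X}_{-\alpha}, \mathfrak{X}_{\bar\alpha}, \mathfrak{X}_{-\bar\alpha}\rangle$ of $E_\pi(\Phi,R)$ together with the surjection $\psi \colon E_3(R) \to K$, and observed that it suffices to verify the corresponding relations among the primed generators inside $E_3(R) \subseteq SL_3(R)$. Since $E_3(R)$ is literally a group of matrices, any identity there can be checked by multiplying out $3\times 3$ matrices and then pushed forward through $\psi$. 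So I would first record the matrix forms of the lifted generators: in this realization $N_{\bar\alpha,\alpha}=1$, so
\[
    x'_{[\alpha]}(t,u) = \begin{pmatrix} 1 & \bar t & u \\ 0 & 1 & t \\ 0 & 0 & 1 \end{pmatrix}, \qquad
    x'_{-[\alpha]}(t,u) = \begin{pmatrix} 1 & 0 & 0 \\ t & 1 & 0 \\ u & \bar t & 1 \end{pmatrix},
\]
the second coming from the ordered representative $-[\alpha] = \{-\bar\alpha, -\alpha, -\alpha-\bar\alpha\}$ fixed by the convention preceding the lemma, while
\[
    w'_{[\alpha]}(t) = \begin{pmatrix} 0 & 0 & t \\ 0 & -t^{-1}\bar t & 0 \\ \bar t^{-1} & 0 & 0 \end{pmatrix}, \qquad
    h'_{[\alpha]}(t) = \begin{pmatrix} \bar t & 0 & 0 \\ 0 & t\bar t^{-1} & 0 \\ 0 & 0 & t^{-1} \end{pmatrix}
\]
are precisely the matrices already computed in Lemma~\ref{lemmaW4'}.

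For part (a), I would use the action $r \cdot (t,u) = (rt, r\bar r u)$ to rewrite (W4) as the explicit product
\[
    w'_{[\alpha]}(t,u) = x'_{[\alpha]}(t,u)\, x'_{-[\alpha]}(-\bar u^{-1}t,\, \bar u^{-1})\, x'_{[\alpha]}(u\bar u^{-1}t,\, u),
\]
substitute the three matrices above, and multiply. The essential point is that every entry away from the anti-diagonal collapses once one invokes the defining relation $t\bar t = u + \bar u$ of $\mathcal{A}(R)$ together with $u \in R^*$; for instance the $(1,1)$ entry of the first partial product is $1 + \bar u^{-1}(u - t\bar t) = 1 + \bar u^{-1}(-\bar u) = 0$. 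After these cancellations the triple product equals the matrix $w'_{[\alpha]}(u)$ displayed above, and applying $\psi$ yields $w_{[\alpha]}(t,u) = w_{[\alpha]}(u)$ in $K \subseteq E'_\sigma(R)$.

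For part (b), I would first invoke definition (H4) to write $h_{[\alpha]}((t,u),(t',u')) = w_{[\alpha]}(t,u)\, w_{[\alpha]}(t',u')$, and then apply part (a) twice to reduce the claim to the single identity $w_{[\alpha]}(u)\, w_{[\alpha]}(u') = h_{[\alpha]}(\bar u u'^{-1})$. Lifting through $\psi$ turns this into the matrix identity $w'_{[\alpha]}(u)\, w'_{[\alpha]}(u') = h'_{[\alpha]}(\bar u u'^{-1})$: the left-hand side multiplies out to the diagonal matrix $\operatorname{diag}(u\bar u'^{-1},\, u^{-1}\bar u u'^{-1}\bar u',\, \bar u^{-1}u')$, which is exactly $h'_{[\alpha]}(\bar u u'^{-1})$ upon setting $t = \bar u u'^{-1}$ in the formula for $h'_{[\alpha]}(t)$. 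Pushing forward under $\psi$ finishes the argument.

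The main obstacle — indeed essentially the only place where care is needed — is the bookkeeping: one must fix the ordered representative of $-[\alpha]$ (which forces the lower-triangular shape of $x'_{-[\alpha]}$) and the structure constant $N_{\bar\alpha,\alpha}$ correctly so that the three lifted factors are the right matrices, and then use $t\bar t = u + \bar u$ (and the invertibility of $u,u'$) systematically to force the cancellations. Once the matrices are pinned down, both parts are a direct $3\times 3$ multiplication followed by transport along $\psi$.
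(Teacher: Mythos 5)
Your proposal is correct and follows essentially the same route as the paper: both reduce the identities to explicit $3\times 3$ matrix computations in $E_3(R)$ and transport them back through the surjection $\psi$ from the proof of Lemma~\ref{lemmaW4'}. The only (immaterial) difference is which side you expand — you multiply out the unipotent triple product defining $w'_{[\alpha]}(t,u)$ and, in (b), deduce the claim from (a) plus one product of monomial matrices, whereas the paper expands $w'_{[\alpha]}(u)=w'_\alpha(\bar u)w'_{\bar\alpha}(1)w'_\alpha(u)$ and $h'_\alpha(\bar u(u')^{-1})h'_{\bar\alpha}(u(\bar{u'})^{-1})$ directly; your spot computations (e.g.\ the vanishing $(1,1)$ entry via $t\bar t=u+\bar u$, and the diagonal matrix in (b)) check out.
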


\begin{proof}
    As we pointed out earlier, it is enough to prove the corresponding results in $E_3(R)$. Note that 
    \begin{align*}
        w'_{[\alpha]}(u) &= w'_{\alpha}(\bar{u}) w'_{\bar{\alpha}}(1) w'_{\alpha} (u) \\
        &= \begin{pmatrix}
        1 & 0 & 0 \\
        0 & 0 & \bar{u} \\
        0 & -\bar{u}^{-1} & 0
    \end{pmatrix} \begin{pmatrix}
        0 & 1 & 0 \\
        -1 & 0 & 0 \\
        0 & 0 & 1
    \end{pmatrix} \begin{pmatrix}
        1 & 0 & 0 \\
        0 & 0 & u \\
        0 & -{u}^{-1} & 0
    \end{pmatrix} \\
    &= \begin{pmatrix}
        0 & 0 & u \\
        0 & -u^{-1}\bar{u} & 0 \\
        \bar{u}^{-1} & 0 & 0
    \end{pmatrix} \\
    &= w'_{[\alpha]}(t,u),
    \end{align*} which proves $(a)$. 
    Again note that 
    \begin{align*}
        h'_{[\alpha]}(\bar{u} (u')^{-1}) &= h'_{\alpha}(\bar{u} (u')^{-1}) h'_{\bar{\alpha}}(u (\bar{u'})^{-1}) \\ 
        &= \begin{pmatrix}
        1 & 0 & 0 \\
        0 & \bar{u} (u')^{-1} & 0 \\
        0 & 0 & u' \bar{u}^{-1}
        \end{pmatrix} \begin{pmatrix}
        u (\bar{u'})^{-1} & 0 & 0 \\
        0 & u^{-1} (\bar{u'}) & 0 \\
        0 & 0 & 1
        \end{pmatrix} \\
        &= \begin{pmatrix}
        u(\bar{u'})^{-1} & 0 & 0 \\
        0 & u^{-1}\bar{u} (u')^{-1}\bar{u'}  & 0 \\
        0 & 0 & u' (\bar{u})^{-1}
        \end{pmatrix} \\
        &= h'_{[\alpha]}((t,u),(t',u')),
    \end{align*} which proves $(b)$.
\end{proof}

\begin{defn}
    \normalfont
    Let $[\alpha] \sim A_2$ such that $\alpha \neq \bar{\alpha}$. We define 
    \begin{enumerate}[(a)]
        \item $\mathcal{R}_1 = \{ u \in R^* \mid \exists \ t \in R \text{ such that } (t,u) \in \mathcal{A}(R)^* \}$;
        \item $\mathcal{R}_k = \{ u \in R^* \mid \exists \ u_1, \dots, u_k \in \mathcal{R}_1 \text{ such that } u = u_1 \dots u_k \}$, for given $k \in \mathbb{N}$;
        \item $\mathcal{R}^{(l)} = \{ u \in R^* \mid \exists \ k \in \mathbb{N} \text{ such that } u \in \mathcal{R}_{kl} \} = \cup_{k \in \mathbb{N}} \mathcal{R}_{kl}$, for given $l \in \mathbb{N}$;
        \item $\mathcal{R} := \mathcal{R}^{(1)}, \mathcal{R}':= \mathcal{R}^{(2)}$ and $\mathcal{R}'':=  \cup_{k \in \mathbb{N}} \mathcal{R}_{2k-1}$.
    \end{enumerate}
\end{defn}

\begin{rmk}
    \normalfont
    The following are immediate consequences of the definition, provided $\mathcal{R}_1 \neq \phi$:
    \begin{enumerate}[(a)]
        \item If $u$ is in $\mathcal{R}_k$, then so are $\bar{u}, u^{-1}$ and $a \bar{a} u \ (a \in R^*)$.
        \item For any $l \in \mathbb{N}$, $\mathcal{R}^{(l)}$ is a subgroup of the multiplicative group $R^*$ generated by $\mathcal{R}_l$. In particular, $\mathcal{R}$ and $\mathcal{R}'$ are subgroups of $R^*$.
        \item If $1 \in \mathcal{R}_k$, then $\mathcal{R}_m \subset \mathcal{R}_{k+m},$ for all $m \in \mathbb{N}$. In particular, since $1 \in \mathcal{R}_2$, we have $\mathcal{R}_1 \subset \mathcal{R}_{3} \subset \mathcal{R}_5 \subset \cdots$ and $\mathcal{R}_2 \subset \mathcal{R}_{4} \subset \mathcal{R}_6 \subset \cdots$.
        \item If $R$ is a field then $R^* = \mathcal{R} = \mathcal{R}' = \mathcal{R}_{2k}$, for all $k \in \mathbb{N}$. To see this, it is enough to see that $\mathcal{R}_2 = R^*$ (by part $(c)$). Let $u \in R^*$. If $u = \bar{u}$ then we choose $u_1 \in R^*$ such that $u_1 = - \bar{u_1}$ (such a $u_1$ exists in field) and $u_2 = u (u_1)^{-1}$. Similarly, if $u \neq \bar{u}$ then we can choose $u_1 = (\bar{u} - u)^{-1}$ and $u_2= u (u_1)^{-1}$. In both the cases, we have $u = u_1 u_2$ such that $u_1, u_2 \in \mathcal{R}_1$ as $(0, u_1), (u - \bar{u}, u_2) \in \mathcal{A}(R)^*$. Therefore $u \in \mathcal{R}_2$, as desired.  
    \end{enumerate}
\end{rmk}

\begin{cor}\label{cor:H' subset H}
    \normalfont
    Let $[\alpha] \sim A_2$. If $u \in \mathcal{R}'$ then $h_{[\alpha]}(u) \in H'_\sigma$. Similarly, if $u \in \mathcal{R}''$ then $w_{[\alpha]}(u) \in N'_\sigma$.
\end{cor}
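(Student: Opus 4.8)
The plan is to reduce the whole statement to three ingredients: the two identities of Lemma~\ref{lemmaRel-of-WandH}, the multiplicativity of $h_{[\alpha]}$ (from the Remark item (d) following the definitions of the $w_{[\alpha]}$, $h_{[\alpha]}$), and the closure of $\mathcal{R}_1$ under $u \mapsto \bar u$ and $u \mapsto u^{-1}$ together with the subgroup properties of $\mathcal{R}', \mathcal{R}^{(2)}$ (from the Remark after the definition of $\mathcal{R}$). Throughout I use that $[\alpha] \sim A_2$ forces $\alpha \neq \bar\alpha$ and $\bar{\bar u} = u$ (since $o(\theta) = 2$ in type ${}^2A_{2n}$), and that $H'_\sigma \subseteq N'_\sigma$ because $H \subseteq N$. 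The first move is to record the master relation
\[
    w_{[\alpha]}(u)\,w_{[\alpha]}(v) = h_{[\alpha]}\bigl(\bar{u}\,v^{-1}\bigr) \qquad (u, v \in R^*),
\]
which I derive from Lemma~\ref{lemmaW4'}: writing $w_{[\alpha]}(u)w_{[\alpha]}(v) = \bigl(w_{[\alpha]}(u)w_{[\alpha]}(1)\bigr)\bigl(w_{[\alpha]}(1)^{-1}w_{[\alpha]}(v)\bigr)$ and invoking $h_{[\alpha]}(t) = w_{[\alpha]}(\bar t)w_{[\alpha]}(1)$ and $w_{[\alpha]}(t)^{-1} = w_{[\alpha]}(\bar t)$, the first factor is $h_{[\alpha]}(\bar u)$ and the second is $h_{[\alpha]}(v^{-1})$, so multiplicativity of $h_{[\alpha]}$ yields the claim. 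Crucially, this identity holds for all $u, v \in R^*$, not only on $\mathcal{R}_1$.

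For the statement about $h$, set $S := \{\, v \in R^* \mid h_{[\alpha]}(v) \in H'_\sigma \,\}$. Since $h_{[\alpha]}$ is multiplicative and $H'_\sigma$ is a subgroup, $S$ is a subgroup of $R^*$. By Lemma~\ref{lemmaRel-of-WandH}(b), combined with the Remark placing $h_{[\alpha]}((t,u),(t',u'))$ in $H'_\sigma$, one has $\bar u\,u'^{-1} \in S$ for all $u, u' \in \mathcal{R}_1$. Given arbitrary $u_1, u_2 \in \mathcal{R}_1$, I apply this to the pair $(\bar{u_1},\, u_2^{-1})$, which again lies in $\mathcal{R}_1 \times \mathcal{R}_1$ by the closure properties; since $\overline{\bar{u_1}}\,(u_2^{-1})^{-1} = u_1 u_2$, this gives $u_1 u_2 \in S$, hence $\mathcal{R}_2 \subseteq S$. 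As $\mathcal{R}' = \mathcal{R}^{(2)}$ is the subgroup of $R^*$ generated by $\mathcal{R}_2$, I conclude $\mathcal{R}' \subseteq S$, which is exactly the first assertion.

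For the statement about $w$, take $u \in \mathcal{R}'' = \bigcup_k \mathcal{R}_{2k-1}$, say $u \in \mathcal{R}_{2k-1}$, and fix any $v \in \mathcal{R}_1$ (non-empty, as $u$ is a product of elements of $\mathcal{R}_1$). Writing $w_{[\alpha]}(u) = \bigl(w_{[\alpha]}(u)\,w_{[\alpha]}(v)^{-1}\bigr)\,w_{[\alpha]}(v)$ and using $w_{[\alpha]}(v)^{-1} = w_{[\alpha]}(\bar v)$ with the master relation, the bracketed factor equals $h_{[\alpha]}(\bar u\,\bar v^{-1})$. Now $\bar u \in \mathcal{R}_{2k-1}$ and $\bar v^{-1} \in \mathcal{R}_1$, so $\bar u\,\bar v^{-1} \in \mathcal{R}_{2k} \subseteq \mathcal{R}'$, whence $h_{[\alpha]}(\bar u\,\bar v^{-1}) \in H'_\sigma \subseteq N'_\sigma$ by the part already proved; and $w_{[\alpha]}(v) \in N'_\sigma$ by Lemma~\ref{lemmaRel-of-WandH}(a) together with the Remark. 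Therefore $w_{[\alpha]}(u) \in N'_\sigma$.

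The individual computations are routine; the one place needing real care is the parity bookkeeping. I must keep the number of $\mathcal{R}_1$-factors even for the $h$-statement (so the bar/inverse juggling that converts $\bar u\,u'^{-1}$ into an arbitrary product $u_1 u_2$ is essential), and then exploit that multiplying an odd product by a single extra $\mathcal{R}_1$-factor lands in the even group $\mathcal{R}'$ for the $w$-statement. The accompanying subtlety is verifying that the master relation truly holds for all $u, v \in R^*$ rather than only for arguments coming from $\mathcal{A}(R)^*$; this is what legitimizes substituting an element $u \in \mathcal{R}''$ (which need not lie in $\mathcal{R}_1$) into $w_{[\alpha]}(u)w_{[\alpha]}(\bar v)$.
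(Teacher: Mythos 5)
Your proof is correct and rests on the same ingredients as the paper's (which simply cites Lemma~\ref{lemmaRel-of-WandH} together with the identity $w_{[\alpha]}(t_1,u_1)\,w_{[\alpha]}(t_2,u_2)^{-1}\,w_{[\alpha]}(t_3,u_3)=w_{[\alpha]}(u_1u_2^{-1}u_3)$ verified in $E_3(R)$): you merely package the needed matrix identity as the two-fold relation $w_{[\alpha]}(u)w_{[\alpha]}(v)=h_{[\alpha]}(\bar u v^{-1})$ and reduce the $w$-statement to the $h$-statement by peeling off one $\mathcal{R}_1$-factor, which is an equivalent reorganization of the same argument. The parity bookkeeping and the closure properties of $\mathcal{R}_k$ are handled correctly, so no gaps.
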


\begin{proof}
    Immediate from Lemma \ref{lemmaRel-of-WandH} and the fact that 
    \[
        w_{[\alpha]}(t_1,u_1) w_{[\alpha]}(t_2,u_2)^{-1} w_{[\alpha]}(t_3, u_3) = w_{[\alpha]}(u_1 u_2^{-1} u_3) \in N'_\sigma.
    \]
    The later part follows from the direct calculation in $E_3(R)$.
\end{proof}



\subsection{The Steinberg Relations} \index{Steinberg relations}

In this subsection, we present some useful relations in the group $E'_\sigma (R)$. 
Recall that $w_\alpha (t) X_\beta w_{\alpha}(t)^{-1} = c t^{-\langle \beta, \alpha \rangle} X_{s_\alpha(\beta)}$, where $c = c(\alpha, \beta) = \pm 1$ is independent of $t, R$ and the representation chosen, and $c(\alpha, \beta)= c(\alpha, -\beta)$ (see Lemma $19(a)$ of \cite{RS}). 
If $\Phi$ is a root system with one root length, then the function $c: \Phi \times \Phi \longrightarrow \{\pm 1 \}$ can be precisely given as follows: 
\[
    c(\alpha, \beta) = c(\alpha, -\beta) = \begin{cases}
        -1 & \text{ if } \alpha = \beta, \\
        1 & \text{ if } \alpha \neq \beta \text{ and } \alpha \pm \beta \not\in \Phi, \\
        N_{\alpha, \beta} & \text{ if } \alpha \neq \beta \text{ and } \alpha + \beta \in \Phi, \alpha - \beta \not\in \Phi.
    \end{cases}
\]

Our objective is to establish relations for twisted Chevalley groups analogous to those in Lemma 20 of \cite{RS} for Chevalley groups. But before we do that, let us consider the function $d: \Phi_\rho \times \Phi \longrightarrow \{ \pm 1 \}$ given by
\[
    d([\alpha], \beta) = \begin{cases}
        c(\alpha, \beta) & \text{ if } [\alpha] \sim A_1, \\
        c(\alpha, \beta) c(\bar{\alpha}, s_\alpha (\beta)) & \text{ if } [\alpha] \sim A_1^2, \\
        c(\alpha, \beta) c(\bar{\alpha}, s_\alpha (\beta)) c(\bar{\bar{\alpha}}, s_{\bar{\alpha}}s_\alpha (\beta)) & \text{ if } [\alpha] \sim A_1^3, \\
        c(\alpha, \beta) c(\bar{\alpha}, s_\alpha (\beta)) c(\alpha, s_{\bar{\alpha}}s_\alpha (\beta)) & \text{ if } [\alpha] \sim A_2.
    \end{cases} 
\]

\begin{lemma}\label{lemma on d}
    \normalfont
    For every $\alpha, \beta \in \Phi$, $d([\alpha], \beta) = d([\alpha], \bar{\beta})$. Moreover if $\beta \in \Phi$ is such that $\beta \neq \bar{\beta}$ and $[\beta] \sim A_2$, then $d([\alpha], \beta + \bar{\beta}) = N_{s_{\alpha}(\bar{\beta}), s_{\alpha}(\beta)} N_{\bar{\beta}, \beta}$. 
\end{lemma}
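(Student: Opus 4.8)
The plan is to reinterpret the sign $d([\alpha],\beta)$ as a conjugation sign inside the untwisted group $G_\pi(\Phi,R)$ and then exploit the $\rho$-equivariance of that conjugation. Recall from Corollary~\ref{cor:c(alpha,beta)} that $w_\gamma(1)\,X_\delta\,w_\gamma(1)^{-1} = c(\gamma,\delta)\,X_{s_\gamma(\delta)}$. Writing $w_{[\alpha]}(1)$ as the relevant product of the $w_\gamma(1)$ — namely $w_\alpha(1)$ for $[\alpha]\sim A_1$, $w_\alpha(1)w_{\bar\alpha}(1)$ for $A_1^2$, $w_\alpha(1)w_{\bar\alpha}(1)w_{\bar{\bar\alpha}}(1)$ for $A_1^3$, and $w_\alpha(1)w_{\bar\alpha}(1)w_\alpha(1)$ for $A_2$ (this is $w_{[\alpha]}(1)$ by \textbf{(W4$'$)}) — a direct iteration of Corollary~\ref{cor:c(alpha,beta)} gives, for every $\beta\in\Phi$,
\[
    w_{[\alpha]}(1)\,X_\beta\,w_{[\alpha]}(1)^{-1} = d([\alpha],\beta)\,X_{w(\beta)},
\]
where $w\in W$ is the Weyl element represented by $w_{[\alpha]}(1)$ (so $w=s_\alpha$ when $[\alpha]\sim A_1$ and $w=s_{[\alpha]}$ in general) and $d([\alpha],\beta)$ is precisely the product of $c$'s from its definition. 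First I would record this identity for all four types.

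For the first assertion I would apply $\rho$ to this identity. The key observation is that $w_{[\alpha]}(1)\in N_\sigma$ (Lemma~\ref{lemmaW4'} and the definitions \textbf{(W1)}--\textbf{(W4$'$)}), so it is fixed by $\sigma=\rho\circ\theta$; since its matrix entries are integers, $\theta$ fixes it, whence $\rho(w_{[\alpha]}(1))=w_{[\alpha]}(1)$. Using $\rho(X_\delta)=\epsilon_\delta X_{\bar\delta}$ and the fact that $w\in W_\rho$ commutes with $\rho$ (so $\overline{w(\beta)}=w(\bar\beta)$), applying $\rho$ to the displayed identity yields on the left $\epsilon_\beta\,d([\alpha],\bar\beta)\,X_{w(\bar\beta)}$ and on the right $d([\alpha],\beta)\,\epsilon_{w(\beta)}\,X_{w(\bar\beta)}$. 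Comparing coefficients gives $\epsilon_\beta\,d([\alpha],\bar\beta)=\epsilon_{w(\beta)}\,d([\alpha],\beta)$. Finally I would show $\epsilon_{w(\beta)}=\epsilon_\beta$: since $w\in W_\rho$ permutes $\rho$-orbits and preserves root lengths in $\Phi_\rho$, it preserves both the class type of $\beta$ and whether $\beta=\bar\beta$, so by Lemma~\ref{epsilonalpha} the signs agree. This forces $d([\alpha],\bar\beta)=d([\alpha],\beta)$.

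For the second assertion I would write $X_{\beta+\bar\beta}=N_{\bar\beta,\beta}^{-1}[X_{\bar\beta},X_\beta]$ and conjugate by $w_{[\alpha]}(1)$, which acts as a Lie algebra automorphism (Lemma~\ref{lemma:actionofexpadx}). Using the displayed identity for both $\beta$ and $\bar\beta$,
\[
    w_{[\alpha]}(1)\,X_{\beta+\bar\beta}\,w_{[\alpha]}(1)^{-1}
    = N_{\bar\beta,\beta}^{-1}\,d([\alpha],\bar\beta)\,d([\alpha],\beta)\,N_{w(\bar\beta),w(\beta)}\,X_{w(\beta+\bar\beta)},
\]
and comparing with $d([\alpha],\beta+\bar\beta)\,X_{w(\beta+\bar\beta)}$ gives $d([\alpha],\beta+\bar\beta)=d([\alpha],\beta)\,d([\alpha],\bar\beta)\,N_{\bar\beta,\beta}^{-1}N_{w(\bar\beta),w(\beta)}$. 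By the first assertion $d([\alpha],\beta)\,d([\alpha],\bar\beta)=d([\alpha],\beta)^2=1$, and since $\beta,\bar\beta,\beta+\bar\beta$ span a subsystem of type $A_2$ we have $N_{\bar\beta,\beta}=\pm1$, so $N_{\bar\beta,\beta}^{-1}=N_{\bar\beta,\beta}$. Hence $d([\alpha],\beta+\bar\beta)=N_{w(\bar\beta),w(\beta)}N_{\bar\beta,\beta}$, which for $[\alpha]\sim A_1$ (where $w=s_\alpha$) is exactly $N_{s_\alpha(\bar\beta),s_\alpha(\beta)}N_{\bar\beta,\beta}$.

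The main obstacle is the step $\epsilon_{w(\beta)}=\epsilon_\beta$ in the first assertion: one must argue cleanly that conjugation by $w_{[\alpha]}(1)$ realizes an element of the twisted Weyl group $W_\rho$ and that such an element preserves the $\rho$-orbit type (the $A_1,A_1^2,A_1^3,A_2$ classification, via length-preservation in $\Phi_\rho$) as well as $\rho$-fixedness of roots. Everything else is bookkeeping with Corollary~\ref{cor:c(alpha,beta)}, Lemma~\ref{epsilonalpha}, and the identity $N_{\bar\beta,\beta}^2=1$; the genuine content is this invariance of $\epsilon$ under the induced Weyl action.
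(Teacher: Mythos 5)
Your proof of the first assertion is correct and takes a genuinely different route from the paper's. The paper verifies $d([\alpha],\beta)=d([\alpha],\bar\beta)$ by a case analysis on the possible values of $\langle\beta,\alpha\rangle$ and $\langle\bar\beta,\alpha\rangle$; you instead package $d([\alpha],\beta)$ as the sign in $w_{[\alpha]}(1)\,X_\beta\,w_{[\alpha]}(1)^{-1}=d([\alpha],\beta)\,X_{s_{[\alpha]}(\beta)}$ and read the identity off from $\rho$-equivariance. The one step you flag as delicate, $\epsilon_{s_{[\alpha]}(\beta)}=\epsilon_\beta$, is indeed the only real content, and your justification is sound: $s_{[\alpha]}$ commutes with $\rho$, hence carries $\rho$-orbits to $\rho$-orbits preserving both the class type and the condition $\delta=\bar\delta$, which by Lemma~\ref{epsilonalpha} is all that $\epsilon$ depends on. This is more uniform than the paper's argument, and it makes visible that the statement genuinely uses the normalization of Lemma~\ref{epsilonalpha} (for the naive basis $X_{e_i-e_j}=E_{ij}$ of $\mathfrak{sl}_{2n+1}$ the first assertion is false).

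The second assertion has a genuine gap. Your bracket-and-conjugate computation correctly yields
\[
    d([\alpha],\beta+\bar\beta)=N_{w(\bar\beta),\,w(\beta)}\,N_{\bar\beta,\beta},\qquad w=s_{[\alpha]},
\]
but the lemma asserts the formula with the single reflection $s_\alpha$ in place of $w$, and you reconcile the two only when $[\alpha]\sim A_1$. That case never occurs under the hypothesis of the second assertion: $[\beta]\sim A_2$ forces $\Phi_\rho\sim{}^2A_{2n}$, where every class is of type $A_1^2$ or $A_2$, so $w=s_\alpha s_{\bar\alpha}$ or $w=s_{\alpha+\bar\alpha}$ and never $s_\alpha$. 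Thus your argument closes exactly the cases in which the claim is vacuous. The missing identity $N_{s_{[\alpha]}(\bar\beta),\,s_{[\alpha]}(\beta)}=N_{s_\alpha(\bar\beta),\,s_\alpha(\beta)}$ is not a formal consequence of the axioms of a Chevalley basis (structure constants attached to different pairs of roots are a priori unrelated signs); it must again be extracted from the normalization of Lemma~\ref{epsilonalpha} together with Corollary~\ref{cor:c(alpha,beta)}, and doing so amounts to essentially the same case analysis on $\langle\beta+\bar\beta,\alpha\rangle\in\{-1,0,1\}$ and on $[\alpha]=\pm[\beta]$ that the paper performs directly. To complete your proof you need to supply this comparison for $[\alpha]\sim A_1^2$ and $[\alpha]\sim A_2$; until then the second assertion is not established.
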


\begin{proof}
    First, we observe that \( c(\alpha, \beta) = c(\bar{\alpha}, \bar{\beta}) \) since \( N_{\alpha, \beta} = N_{\bar{\alpha}, \bar{\beta}} \).
    Assume \( [\alpha] \sim A_1 \). Then \( c(\alpha, \beta) = c(\alpha, \bar{\beta}) \) and hence \( d([\alpha], \beta) = d([\alpha], \bar{\beta}) \).
    Now, consider \( [\alpha] \sim A_1^2 \). In this case, depending on the possible values of \( \langle \beta, \alpha \rangle \) and \( \langle \bar{\beta}, \alpha \rangle \), we can address several subcases to establish our result. For instance, if \( \alpha \) and \( \beta \) are such that \( \langle \beta, \alpha \rangle = -1 \) and \( \langle \bar{\beta}, \alpha \rangle = 0 \), then 
    \( d([\alpha], \beta) = c(\alpha, \beta) c(\bar{\alpha}, s_\alpha (\beta)) = N_{\alpha, \beta}. \)
    On the other hand, under the same assumption on \( \alpha \) and \( \beta \), we have 
    \( d([\alpha], \bar{\beta}) = c(\alpha, \bar{\beta}) c(\bar{\alpha}, s_\alpha (\bar{\beta})) = c(\alpha, \bar{\beta}) c(\bar{\alpha}, \bar{\beta}) = N_{\bar{\alpha}, \bar{\beta}}. \)
    Therefore, \( d([\alpha], \beta) = d([\alpha], \bar{\beta}) \). One can similarly verify all the other subcases.
    Finally, the cases where \( [\alpha] \sim A_1^3 \) and \([\alpha] \sim A_2 \) can be proved in a similar manner and are thus omitted.

    The second assertion is only valid in the case where \( \Phi_\rho \sim {}^2 A_{2n} \ (n \geq 2) \). First, assume that \( [\alpha] \sim A_2 \). Unless \( [\alpha] = \pm [\beta] \), we have \( \langle \beta, \alpha \rangle = 0 \) and hence \( s_\alpha (\beta) = \beta \) and \( s_\alpha (\bar{\beta}) = \bar{\beta} \). Therefore, 
    \[
        d([\alpha], \beta + \bar{\beta}) = 1 = N_{s_{\alpha}(\bar{\beta}), s_{\alpha}(\beta)} N_{\bar{\beta}, \beta}.
    \]
    Now, if \( [\alpha] = \pm [\beta] \), then since \( d([\alpha], \beta + \bar{\beta}) = d([\alpha], - \beta - \bar{\beta}) \), we can assume that \( \alpha = \beta \) or \( \bar{\beta} \). Without loss of generality, assume \( \alpha = \beta \), then 
    \begin{align*}
        d([\alpha], \beta + \bar{\beta}) &= c(\alpha, \alpha + \bar{\alpha}) c(\bar{\alpha}, s_\alpha (\alpha + \bar{\alpha})) c(\alpha, s_{\bar{\alpha}} s_{\alpha} (\alpha + \bar{\alpha})) \\
        &= c(\alpha, \alpha + \bar{\alpha}) c(\bar{\alpha}, \bar{\alpha}) c(\alpha, -\bar{\alpha}) \\
        &= - c(\alpha, - \alpha - \bar{\alpha}) c(\alpha, \bar{\alpha}) \\
        &= - N_{\alpha, - \alpha - \bar{\alpha}} N_{\alpha, \bar{\alpha}} \\
        &= N_{s_{\alpha}(\bar{\beta}), s_\alpha (\beta)} N_{\bar{\beta}, \beta}.
    \end{align*}
    Now, assume that \( [\alpha] \sim A_1^2 \). In this case, the possible values of \( \langle \beta + \bar{\beta}, \alpha \rangle \) are \(-1, 0\), or \(1\). If \( \langle \beta + \bar{\beta}, \alpha \rangle = 0 \), then \( \langle \beta, \alpha \rangle = 0 = \langle \bar{\beta}, \alpha \rangle \) (since \( [\alpha] \sim A_1^2 \)). Therefore,
    \(
        N_{s_{\alpha}(\bar{\beta}), s_\alpha (\beta)} N_{\bar{\beta}, \beta} = (N_{\bar{\beta}, \beta})^2 = 1.
    \)
    On the other hand,
    \(
        d([\alpha], \beta + \bar{\beta}) = c(\alpha, \beta) c(\bar{\alpha}, s_\alpha (\beta)) = 1.
    \)
    Therefore,
    \(
        d([\alpha], \beta + \bar{\beta}) = N_{s_{\alpha}(\bar{\beta}), s_\alpha (\beta)} N_{\bar{\beta}, \beta}.
    \)
    The cases where \( \langle \beta + \bar{\beta}, \alpha \rangle = -1 \) or \( 1 \) can be proved similarly and are thus omitted.
\end{proof}

A version of the following Proposition is provided in \cite{EA1}. Here, we present more general formulas that cover all cases, unlike those in \cite[4.1 and 4.3]{EA1}.
\begin{prop}\label{prop wxw^{-1}}
    \normalfont 
    For $\alpha, \beta \in \Phi, t \in R^{\star}_{[\alpha]}$ and $u \in R_{[\beta]}$, we have the following relations:
    \[  
        w_{[\alpha]}(t) x_{[\beta]}(u) w_{[\alpha]}(t)^{-1} = 
        \begin{cases}
            x_{s_{[\alpha]}([\beta])}(d([\alpha],\beta') t^{-\langle \beta', \alpha \rangle} \cdot u') & \text{if } [\alpha] \sim A_1, \\
            x_{s_{[\alpha]}([\beta])}(d([\alpha],\beta') t^{-\langle \beta', \alpha \rangle} {(\bar{t})}^{-\langle \beta', \bar{\alpha} \rangle} \cdot u') & \text{if } [\alpha] \sim A_1^2, \\
            x_{s_{[\alpha]}([\beta])}(d([\alpha],\beta') t^{-\langle \beta', \alpha \rangle} {(\bar{t})}^{-\langle \beta', \bar{\alpha} \rangle} {(\bar{\bar{t}})}^{-\langle \beta', \bar{\bar{\alpha}} \rangle} \cdot u') & \text{if } [\alpha] \sim A_1^3, \\
            x_{s_{[\alpha]}([\beta])}(d([\alpha],\beta') t^{-\langle \beta', \alpha \rangle} {(\bar{t})}^{- \langle \beta', \bar{\alpha} \rangle} \cdot u' ) & \text{if } [\alpha] \sim A_2.
        \end{cases} 
    \]

    \noindent Where the values of $\beta'$ and $u'$ depend on the representing element of the class $s_{[\alpha]}([\beta])$. To be precious, if $[\beta] \sim A_1$, then $\beta' = \beta$ and $u' = u$; if $[\beta] \sim A_1^2$, then $\beta' = \beta$ or $\bar{\beta}$ and $u' = u$ or $\bar{u}$, respectively; if $[\beta] \sim A_1^3$, then $\beta' = \beta, \bar{\beta}$ or $\bar{\bar{\beta}}$ and $u' = u, \bar{u}$ or $\bar{\bar{u}}$, respectively; if $[\beta] \sim A_2$, then $\beta' = \beta$ or $\bar{\beta}$ and $u' = u = (u_1, u_2)$ or $(\bar{u_1}, \bar{u_2})$, respectively.
\end{prop}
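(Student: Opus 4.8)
The plan is to reduce the general formula to the untwisted Steinberg relation (R3) recalled on page \pageref{st-relations}, namely $w_\alpha(t) x_\beta(u) w_\alpha(t)^{-1} = x_{s_\alpha(\beta)}(c(\alpha,\beta) t^{-\langle\beta,\alpha\rangle} u)$, and then assemble the pieces over the $\rho$-orbit. The essential observation is that $w_{[\alpha]}(t)$, by its very definitions (W1)--(W4), is a product of the untwisted elements $w_\gamma(\cdot)$ for $\gamma$ ranging over the orbit $[\alpha]$ (with appropriate arguments $t,\bar t,\bar{\bar t}$), and similarly $x_{[\beta]}(u)$ is a product of $x_\delta(\cdot)$ over $\delta\in[\beta]$. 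So the whole computation takes place inside $E_\pi(\Phi,R)$, where I may freely use the Chevalley group relations, and I only need to track how conjugation by the product $w_{[\alpha]}(t)$ permutes the root subgroups and what sign and scalar it produces.

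First I would treat the four cases $[\alpha]\sim A_1, A_1^2, A_1^3, A_2$ separately according to the length of the orbit. For $[\alpha]\sim A_1$ the statement is literally (R3). For $[\alpha]\sim A_1^2$, writing $w_{[\alpha]}(t)=w_\alpha(t)w_{\bar\alpha}(\bar t)$ (these commute since $\alpha,\bar\alpha$ are orthogonal, as $[\alpha]\sim A_1^2$ forces $\alpha+\bar\alpha\notin\Phi$), I would conjugate $x_\beta(u)$ successively: first by $w_\alpha(t)$ producing $c(\alpha,\beta)t^{-\langle\beta,\alpha\rangle}$ and moving $\beta\mapsto s_\alpha(\beta)$, then by $w_{\bar\alpha}(\bar t)$ producing $c(\bar\alpha,s_\alpha(\beta))\,\bar t^{-\langle s_\alpha(\beta),\bar\alpha\rangle}$. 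Using $\langle s_\alpha(\beta),\bar\alpha\rangle=\langle\beta,s_\alpha(\bar\alpha)\rangle=\langle\beta,\bar\alpha\rangle$ (because $\alpha\perp\bar\alpha$ gives $s_\alpha(\bar\alpha)=\bar\alpha$), the two scalar exponents combine into the claimed $t^{-\langle\beta',\alpha\rangle}\bar t^{-\langle\beta',\bar\alpha\rangle}$, and the two signs multiply to $c(\alpha,\beta)c(\bar\alpha,s_\alpha(\beta))=d([\alpha],\beta)$ by the very definition of $d$. One must also verify that conjugating the second factor $x_{\bar\beta}(\bar u)$ of $x_{[\beta]}(u)$ lands consistently in the same target class, which is where the first assertion of Lemma~\ref{lemma on d}, $d([\alpha],\beta)=d([\alpha],\bar\beta)$, is used to guarantee a single well-defined sign. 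The $A_1^3$ case is the same argument with three factors and the threefold product in $d$.

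The genuinely delicate case is $[\alpha]\sim A_2$, where $w_{[\alpha]}(t)=w_\alpha(\bar t)w_{\bar\alpha}(1)w_\alpha(t)$ (invoking (W4$'$) and Lemma~\ref{lemmaRel-of-WandH}), the orbit $[\beta]\sim A_2$ carries the two-parameter data $u=(u_1,u_2)\in\mathcal A(R)$, and one must check that the image $x_{s_{[\alpha]}([\beta])}(\cdots)$ again lies in $\mathcal A(R)$ with the correct pair $(u_1',u_2')$. Here conjugation produces not only the expected transforms on the $X_\beta,X_{\bar\beta}$ components but also acts on the third component $X_{\beta+\bar\beta}$, and the scalar arising there is governed precisely by the second assertion of Lemma~\ref{lemma on d}, $d([\alpha],\beta+\bar\beta)=N_{s_\alpha(\bar\beta),s_\alpha(\beta)}N_{\bar\beta,\beta}$. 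I expect this bookkeeping --- reconciling the $X_{\beta+\bar\beta}$-component's coefficient with the group law $t\bar t=u_2+\bar u_2$ on $\mathcal A(R)$ so that the output is again a legitimate $x_{[\cdot]}(u_1',u_2')$ --- to be the main obstacle, since it mixes the structure-constant identities with the quadratic relation defining $\mathcal A(R)$.

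Throughout, the strategy is to do all manipulations inside $E_\pi(\Phi,R)$ using (R3) and the structure-constant relations from Lemma~\ref{epsilonalpha} (e.g. $N_{\alpha,\beta}=N_{\bar\alpha,\bar\beta}$), reserving Lemma~\ref{lemma on d} for the two places where signs and the $A_2$-extra-component coefficient must be identified. Once each of the four cases is verified, the dependence of $\beta'$ and $u'$ on the chosen representative of $s_{[\alpha]}([\beta])$ is exactly the residual freedom in selecting which element of the target orbit to name; the consistency of the sign across these choices is again $d([\alpha],\beta)=d([\alpha],\bar\beta)$ from Lemma~\ref{lemma on d}. I would present the $A_1^2$ computation in full as the model case and indicate that $A_1^3$ is identical in form, then devote the bulk of the argument to the $A_2$ case.
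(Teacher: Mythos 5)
Your proposal is correct and follows essentially the same route as the paper: both reduce to the untwisted relation (R3)/Lemma 19(a) of Steinberg by factoring $w_{[\alpha]}(t)$ and $x_{[\beta]}(u)$ into untwisted root elements, and both invoke the two assertions of Lemma~\ref{lemma on d} at exactly the points you identify (sign consistency across the orbit, and the coefficient of the $X_{\beta+\bar\beta}$-component in the $A_2$ case). The only difference is organizational — the paper first records the conjugation formula for a single $x_\beta(u)$ under all four types of $[\alpha]$ and then cases on the type of $[\beta]$, whereas you nest the case analysis the other way — which does not affect the substance.
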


\begin{proof}
    By simple calculations using Lemma $19(a)$ of \cite{RS}, we have 
    \[
        w_{[\alpha]}(t) x_{\beta}(u) w_{[\alpha]}(t)^{-1} = \begin{cases}
            x_{s_{\alpha}(\beta)}(d([\alpha], \beta) t^{-\langle \beta, \alpha \rangle} u) & \text{ if } [\alpha] \sim A_1, \\
            x_{s_{\alpha}s_{\bar{\alpha}}(\beta)}(d([\alpha], \beta) t^{-\langle \beta, \alpha \rangle} \bar{t}^{-\langle \beta, \bar{\alpha} \rangle} u) & \text{ if } [\alpha] \sim A_1^2, \\
            x_{s_{\alpha}s_{\bar{\alpha}}s_{\bar{\bar{\alpha}}}(\beta)}(d([\alpha], \beta) t^{-\langle \beta, \alpha \rangle} \bar{t}^{-\langle \beta, \bar{\alpha}  \rangle} \bar{\bar{t}}^{-\langle \beta, \bar{\bar{\alpha}}  \rangle} u) & \text{ if } [\alpha] \sim A_1^2, \\
            x_{s_{\alpha + \bar{\alpha}}(\beta)}(d([\alpha], \beta) t^{-\langle \beta, \alpha \rangle} \bar{t}^{ - \langle \beta, \bar{\alpha} \rangle} u) & \text{ if } [\alpha] \sim A_2.
        \end{cases}
    \]
    If $[\beta] \sim A_1$ then our result follows the above equations. Now if $[\beta] \sim A_1^2$, then
    \begin{align*}
        w_{[\alpha]}(t) x_{[\beta]}(u) w_{[\alpha]}(t)^{-1} &= w_{[\alpha]}(t) x_{\beta}(u) x_{\bar{\beta}}(\bar{u}) w_{[\alpha]}(t)^{-1} \\
        &= (w_{[\alpha]}(t) x_{\beta}(u) w_{[\alpha]}(t)^{-1}) (w_{[\alpha]}(t) x_{\bar{\beta}}(\bar{u}) w_{[\alpha]}(t)^{-1}) \\
        &=
        \begin{cases}
            x_{s_{\alpha}(\beta)}(d([\alpha], \beta) t^{-\langle \beta, \alpha \rangle} u) x_{s_{\alpha}(\bar{\beta})}(d([\alpha], \bar{\beta}) t^{-\langle \bar{\beta}, \alpha \rangle} \bar{u}) & \text{ if } [\alpha] \sim A_1, \\
            x_{s_{\alpha}s_{\bar{\alpha}}(\beta)}(d([\alpha], \beta) t^{-\langle \beta, \alpha \rangle} \bar{t}^{-\langle \beta, \bar{\alpha} \rangle} u) \\ 
            \hspace{10mm} x_{s_{\alpha}s_{\bar{\alpha}}(\bar{\beta})}(d([\alpha], \bar{\beta}) t^{-\langle \bar{\beta}, \alpha \rangle} \bar{t}^{-\langle \bar{\beta}, \bar{\alpha} \rangle} \bar{u}) & \text{ if } [\alpha] \sim A_1^2, \\
            x_{s_{\alpha}s_{\bar{\alpha}}s_{\bar{\bar{\alpha}}}(\beta)}(d([\alpha], \beta) t^{-\langle \beta, \alpha \rangle} \bar{t}^{-\langle \beta, \bar{\alpha}  \rangle} \bar{\bar{t}}^{-\langle \beta, \bar{\bar{\alpha}}  \rangle} u) \\ 
            \hspace{10mm} x_{s_{\alpha}s_{\bar{\alpha}}s_{\bar{\bar{\alpha}}}(\bar{\beta})}(d([\alpha], \bar{\beta}) t^{-\langle \bar{\beta}, \alpha \rangle} \bar{t}^{-\langle \bar{\beta}, \bar{\alpha}  \rangle} \bar{\bar{t}}^{-\langle \bar{\beta}, \bar{\bar{\alpha}}  \rangle} \bar{u}) & \text{ if } [\alpha] \sim A_1^2, \\
            x_{s_{\alpha + \bar{\alpha}}(\beta)}(d([\alpha], \beta) t^{-\langle \beta, \alpha \rangle} \bar{t}^{- \langle \beta, \bar{\alpha} \rangle} u) \\
            \hspace{10mm} x_{s_{\alpha + \bar{\alpha}}(\bar{\beta})}(d([\alpha], \bar{\beta}) t^{-\langle \bar{\beta}, \alpha \rangle} \bar{t}^{- \langle \bar{\beta}, \bar{\alpha} \rangle} \bar{u}) & \text{ if } [\alpha] \sim A_2.
        \end{cases} \\
        &= 
        \begin{cases}
            x_{s_{[\alpha]}([\beta])}(d([\alpha],\beta') t^{-\langle \beta', \alpha \rangle} u') & \text{if } [\alpha] \sim A_1, \\
            x_{s_{[\alpha]}([\beta])}(d([\alpha],\beta') t^{-\langle \beta', \alpha \rangle} {(\bar{t})}^{-\langle \beta', \bar{\alpha} \rangle} u') & \text{if } [\alpha] \sim A_1^2, \\
            x_{s_{[\alpha]}([\beta])}(d([\alpha],\beta') t^{-\langle \beta', \alpha \rangle} {(\bar{t})}^{-\langle \beta', \bar{\alpha} \rangle} {(\bar{\bar{t}})}^{-\langle \beta', \bar{\bar{\alpha}} \rangle} u') & \text{if } [\alpha] \sim A_1^3, \\
            x_{s_{[\alpha]}([\beta])}(d([\alpha],\beta') t^{-\langle \beta', \alpha \rangle} {(\bar{t})}^{- \langle \beta', \bar{\alpha} \rangle} u' ) & \text{if } [\alpha] \sim A_2.
        \end{cases}
    \end{align*}
    Where $u'$ is either $u$ or $\bar{u}$, depending on the representative of the class $s_{[\alpha]}([\beta])$. The last equality is follows from Lemma \ref{lemma on d} and the fact that $\langle \alpha, \beta \rangle = \langle \bar{\alpha}, \bar{\beta} \rangle$ for every roots $\alpha, \beta \in \Phi$. The proof for the case when $[\beta] \sim A_1^3$ is similar and will therefore be omitted.
    Now if $[\beta] \sim A_2$, then
    \begin{align*}
        & \hspace{-3mm} w_{[\alpha]}(t) x_{[\beta]}(u) w_{[\alpha]}(t)^{-1} \\
        &= w_{[\alpha]}(t) x_{[\beta]}(u_1,u_2) w_{[\alpha]}(t)^{-1} \\
        &= w_{[\alpha]}(t) x_{\beta}(u_1) x_{\bar{\beta}}(\bar{u_1}) x_{\beta + \bar{\beta}}(N_{\bar{\beta},\beta}u_2) w_{[\alpha]}(t)^{-1} \\
        &= (w_{[\alpha]}(t) x_{\beta}(u_1) w_{[\alpha]}(t)^{-1}) (w_{[\alpha]}(t) x_{\bar{\beta}}(\bar{u_1}) w_{[\alpha]}(t)^{-1}) (w_{[\alpha]}(t) x_{\beta + \bar{\beta}}(N_{\bar{\beta},\beta}u_2) w_{[\alpha]}(t)^{-1}) \\
        &=
        \begin{cases}
            x_{s_{\alpha}(\beta)}(d([\alpha], \beta) t^{-\langle \beta, \alpha \rangle} u_1) x_{s_{\alpha}(\bar{\beta})}(d([\alpha], \bar{\beta}) t^{-\langle \bar{\beta}, \alpha \rangle} \bar{u_1}) \\ 
            \hspace{10mm} x_{s_{\alpha}(\beta + \bar{\beta})}(d([\alpha], \beta + \bar{\beta}) N_{\bar{\beta},\beta} t^{-\langle \beta + \bar{\beta}, \alpha \rangle} u_2) & \text{ if } [\alpha] \sim A_1, \\
            x_{s_{\alpha}s_{\bar{\alpha}}(\beta)}(d([\alpha], \beta) t^{-\langle \beta, \alpha \rangle} \bar{t}^{-\langle \beta, \bar{\alpha} \rangle} u_1) x_{s_{\alpha}s_{\bar{\alpha}}(\bar{\beta})}(d([\alpha], \bar{\beta}) t^{-\langle \bar{\beta}, \alpha \rangle} \bar{t}^{-\langle \bar{\beta}, \bar{\alpha} \rangle} \bar{u_1}) \\ 
            \hspace{10mm} x_{s_{\alpha}s_{\bar{\alpha}}(\beta+ \bar{\beta})}(d([\alpha], \beta + \bar{\beta}) N_{\bar{\beta},\beta} t^{-\langle \beta + \bar{\beta}, \alpha \rangle} \bar{t}^{-\langle \beta + \bar{\beta}, \bar{\alpha} \rangle} u_2) & \text{ if } [\alpha] \sim A_1^2, \\
            x_{s_{\alpha}s_{\bar{\alpha}}s_{\bar{\bar{\alpha}}}(\beta)}(d([\alpha], \beta) t^{-\langle \beta, \alpha \rangle} \bar{t}^{-\langle \beta, \bar{\alpha}  \rangle} \bar{\bar{t}}^{-\langle \beta, \bar{\bar{\alpha}}  \rangle} u_1) \\ 
            \hspace{5mm} x_{s_{\alpha}s_{\bar{\alpha}}s_{\bar{\bar{\alpha}}}(\bar{\beta})}(d([\alpha], \bar{\beta}) t^{-\langle \bar{\beta}, \alpha \rangle} \bar{t}^{-\langle \bar{\beta}, \bar{\alpha}  \rangle} \bar{\bar{t}}^{-\langle \bar{\beta}, \bar{\bar{\alpha}}  \rangle} \bar{u_1}) \\
            \hspace{10mm} x_{s_{\alpha}s_{\bar{\alpha}}s_{\bar{\bar{\alpha}}}(\beta + \bar{\beta})}(d([\alpha], \beta + \bar{\beta}) N_{\bar{\beta},\beta} t^{-\langle \beta + \bar{\beta}, \alpha \rangle} \bar{t}^{-\langle \beta + \bar{\beta}, \bar{\alpha}  \rangle} \bar{\bar{t}}^{-\langle \beta + \bar{\beta}, \bar{\bar{\alpha}}  \rangle} u_2) & \text{ if } [\alpha] \sim A_1^3, \\
            x_{s_{\alpha + \bar{\alpha}}(\beta)}(d([\alpha], \beta) t^{-\langle \beta, \alpha \rangle} \bar{t}^{ - \langle \beta, \bar{\alpha} \rangle} u_1) x_{s_{\alpha + \bar{\alpha}}(\bar{\beta})}(d([\alpha], \bar{\beta}) t^{-\langle \bar{\beta}, \alpha \rangle} \bar{t}^{-\langle \bar{\beta}, \bar{\alpha} \rangle} \bar{u_1}) \\
            \hspace{10mm} x_{s_{\alpha + \bar{\alpha}}(\beta + \bar{\beta})}(d([\alpha], \beta + \bar{\beta}) N_{\bar{\beta},\beta} t^{-\langle \beta + \bar{\beta}, \alpha \rangle} \bar{t}^{-\langle \beta + \bar{\beta}, \bar{\alpha} \rangle} u_2) & \text{ if } [\alpha] \sim A_2.
        \end{cases} \\
        &= 
        \begin{cases}
            x_{s_{[\alpha]}([\beta])}(d([\alpha],\beta') t^{-\langle \beta', \alpha \rangle} \cdot u') & \text{if } [\alpha] \sim A_1, \\
            x_{s_{[\alpha]}([\beta])}(d([\alpha],\beta') t^{-\langle \beta', \alpha \rangle} {(\bar{t})}^{-\langle \beta', \bar{\alpha} \rangle} \cdot u') & \text{if } [\alpha] \sim A_1^2, \\
            x_{s_{[\alpha]}([\beta])}(d([\alpha],\beta') t^{-\langle \beta', \alpha \rangle} {(\bar{t})}^{-\langle \beta', \bar{\alpha} \rangle} {(\bar{\bar{t}})}^{-\langle \beta', \bar{\bar{\alpha}} \rangle} \cdot u') & \text{if } [\alpha] \sim A_1^3, \\
            x_{s_{[\alpha]}([\beta])}(d([\alpha],\beta') t^{-\langle \beta', \alpha \rangle} {(\bar{t})}^{- \langle \beta', \bar{\alpha} \rangle} \cdot u' ) & \text{if } [\alpha] \sim A_2.
        \end{cases}
    \end{align*}
    Where $u'$ is either $(u_1, u_2)$ or $(\bar{u}_1, \bar{u_2})$, depending on the representative of the class $s_{[\alpha]}([\beta])$. The last equality is follows from Lemma \ref{lemma on d} and the fact that $\langle \alpha, \beta \rangle = \langle \bar{\alpha}, \bar{\beta} \rangle$ for every $\alpha, \beta \in \Phi$. 
\end{proof}

\begin{prop}
    \normalfont 
    For $\alpha, \beta \in \Phi, t \in R^{\star}_{[\alpha]}$ and $u \in R^{\star}_{[\beta]}$, we have the following relations:
    \[  
        w_{[\alpha]}(t) w_{[\beta]}(u) w_{[\alpha]}(t)^{-1} = 
        \begin{cases}
            w_{s_{[\alpha]}([\beta])}(d([\alpha],\beta') t^{-\langle \beta', \alpha \rangle} \cdot u') & \text{if } [\alpha] \sim A_1, \\
            w_{s_{[\alpha]}([\beta])}(d([\alpha],\beta') t^{-\langle \beta', \alpha \rangle} {(\bar{t})}^{-\langle \beta', \bar{\alpha} \rangle} \cdot u') & \text{if } [\alpha] \sim A_1^2, \\
            w_{s_{[\alpha]}([\beta])}(d([\alpha],\beta') t^{-\langle \beta', \alpha \rangle} {(\bar{t})}^{-\langle \beta', \bar{\alpha} \rangle} {(\bar{\bar{t}})}^{-\langle \beta', \bar{\bar{\alpha}} \rangle} \cdot u') & \text{if } [\alpha] \sim A_1^3, \\
            w_{s_{[\alpha]}([\beta])}(d([\alpha],\beta') t^{-\langle \beta', \alpha \rangle} {(\bar{t})}^{- \langle \beta', \bar{\alpha} \rangle} \cdot u' ) & \text{if } [\alpha] \sim A_2.
        \end{cases} 
    \]

    \noindent Where $\beta' = \beta, \bar{\beta}$ or $\bar{\bar{\beta}}$ and $u' = u, \bar{u}$ or $\bar{\bar{u}}$, respectively, depending on the representing element of class $s_{[\alpha]}([\beta]).$ 
\end{prop}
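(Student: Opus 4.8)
The plan is to reduce the statement to the already-established conjugation formula for root elements, Proposition~\ref{prop wxw^{-1}}, by expanding $w_{[\beta]}(u)$ as a product of elements of the root subgroups $\mathfrak{X}_{[\beta]}$ and $\mathfrak{X}_{-[\beta]}$ and then conjugating factor by factor. Since $g \mapsto w_{[\alpha]}(t)\, g\, w_{[\alpha]}(t)^{-1}$ is a group automorphism of $E'_\sigma(R)$, it distributes over such products, so the entire computation is bookkeeping of roots and coefficients, with no new structural input beyond the preceding proposition and the combinatorial identities for the sign function $d$.

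First I would treat the cases $[\beta] \sim A_1, A_1^2, A_1^3$. Here (W1)--(W3) give
\[
w_{[\beta]}(u) = x_{[\beta]}(u)\, x_{-[\beta]}(-u^{-1})\, x_{[\beta]}(u).
\]
Conjugating the outer two factors by $w_{[\alpha]}(t)$ and invoking Proposition~\ref{prop wxw^{-1}} produces $x_{s_{[\alpha]}([\beta])}(v)$, where $v = d([\alpha],\beta')\, t^{-\langle \beta',\alpha\rangle}(\bar t)^{-\langle\beta',\bar\alpha\rangle}\cdots \cdot u'$, the exact exponent string depending on the type of $[\alpha]$. For the middle factor I would apply the same proposition to $x_{-[\beta]}(-u^{-1})$: the root becomes $s_{[\alpha]}(-[\beta]) = -s_{[\alpha]}([\beta])$, and in the coefficient the exponents negate since $\langle -\beta',\alpha\rangle = -\langle\beta',\alpha\rangle$, while $d([\alpha],-\beta') = d([\alpha],\beta')$ by $c(\alpha,\beta)=c(\alpha,-\beta)$ together with Lemma~\ref{lemma on d}. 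Because $d = \pm 1$ and the torus exponents lie in $R^*$, the resulting argument is exactly $-v^{-1}$ (the ordered-set subtlety of $-[\beta]$ versus $[-\beta]$ being absorbed by the stated Convention). Hence the conjugate collapses to $x_{[\gamma]}(v)\,x_{-[\gamma]}(-v^{-1})\,x_{[\gamma]}(v)$ with $[\gamma] = s_{[\alpha]}([\beta])$, which by definition is $w_{[\gamma]}(v)$; I would also verify $v \in R^\star_{[\gamma]}$ so that this element is legitimately defined.

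Next I would handle $[\beta] \sim A_2$. Here $w_{[\beta]}$ is defined in (W4) through the pair-valued argument $(t,u)$ and the twisted product on $\mathcal{A}(R)$, so the factor-by-factor conjugation must be combined with Lemma~\ref{w^-1} and Lemma~\ref{lemmaRel-of-WandH}. When $\alpha \neq \bar\alpha$ I would instead use (W4$'$) together with the identification with $E_3(R)$ from the proof of Lemma~\ref{lemmaW4'}, carrying out the conjugation as an explicit $3\times 3$ matrix computation and reading off the reduced $w_{[\gamma]}$ form; the general $A_2$ statement then follows by matching coefficients via the second assertion of Lemma~\ref{lemma on d}, which supplies precisely the structure-constant identity $N_{s_\alpha(\bar\beta),s_\alpha(\beta)}N_{\bar\beta,\beta}$ needed whenever the weight $\beta + \bar\beta$ enters.

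The main obstacle I anticipate is the coefficient bookkeeping in the $A_2$ situations: one must confirm that the three conjugated factors recombine into the single \emph{reduced} argument $u'$ of one element $w_{[\gamma]}$, rather than a longer unreduced product, and that the inverse appearing in the middle factor is consistent with the $\oplus$-inverse on $\mathcal{A}(R)$ described after the definition of $x_{[\alpha]}(t,u)$. This is where Lemma~\ref{lemma on d} and the explicit $E_3(R)$ calculations carry the real weight; by contrast, the $A_1$-type cases become essentially formal once Proposition~\ref{prop wxw^{-1}} is in hand.
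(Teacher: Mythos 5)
Your proposal is correct and matches the paper's intent: the paper omits this proof, stating only that it is analogous to that of Proposition~\ref{prop wxw^{-1}}, and your reduction---expanding $w_{[\beta]}(u)$ via (W1)--(W3) (resp.\ (W4$'$), since $R^{\star}_{[\beta]}=R^*$ forces the single-variable definition when $[\beta]\sim A_2$) and conjugating factor by factor---is exactly such an analogue. The reassembly of the three conjugated factors into $w_{s_{[\alpha]}([\beta])}(v)$ with $v=d([\alpha],\beta')\,t^{-\langle\beta',\alpha\rangle}\cdots u'$ goes through as you describe, using $d([\alpha],-\beta')=d([\alpha],\beta')$ and $d=\pm1$.
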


\begin{prop}
    \normalfont 
    For $\alpha, \beta \in \Phi, t \in R^{\star}_{[\alpha]}$ and $u \in R^{\star}_{[\beta]}$, we have 
    \[  
        w_{[\alpha]}(t) h_{[\beta]}(u) w_{[\alpha]}(t)^{-1} = h_{s_{[\alpha]}([\beta])} (u).
    \] 
\end{prop}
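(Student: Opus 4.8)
The plan is to mimic the classical derivation of relation (R5) from (R4): write $h_{[\beta]}(u)$ as a product of two $w_{[\beta]}$-elements, conjugate each factor by $w_{[\alpha]}(t)$ using the immediately preceding proposition (conjugation of a $w$-element by a $w$-element), and then recombine the two resulting $w_{s_{[\alpha]}([\beta])}$-factors into a single $h_{s_{[\alpha]}([\beta])}$-element. The whole point is that the scalar factor produced by the conjugation is common to both $w$-factors and therefore cancels upon recombination, leaving the argument $u$ essentially untouched, exactly as the untwisted identity $h_\beta(u)=w_\beta(u)w_\beta(-1)$ produces the clean cancellation of $c(\alpha,\beta)\,t^{-\langle\beta,\alpha\rangle}$.

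Concretely, I would split into cases by the type of $[\beta]$. When $[\beta]\sim A_1,A_1^2$ or $A_1^3$, definitions (H1)--(H3) give $h_{[\beta]}(u)=w_{[\beta]}(u)\,w_{[\beta]}(-1)$, and when $[\beta]\sim A_2$, Lemma~\ref{lemmaW4'}(b) gives $h_{[\beta]}(u)=w_{[\beta]}(\bar u)\,w_{[\beta]}(1)$. In either situation I would expand
\[
  w_{[\alpha]}(t)\,h_{[\beta]}(u)\,w_{[\alpha]}(t)^{-1}
  =\bigl[w_{[\alpha]}(t)\,w_{[\beta]}(u_1)\,w_{[\alpha]}(t)^{-1}\bigr]\bigl[w_{[\alpha]}(t)\,w_{[\beta]}(u_2)\,w_{[\alpha]}(t)^{-1}\bigr],
\]
with $(u_1,u_2)=(u,-1)$ or $(\bar u,1)$. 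Applying the preceding proposition to each bracket sends both factors into the single class $[\gamma]:=s_{[\alpha]}([\beta])$, producing a common scalar $c=c([\alpha],[\beta],t)$ (namely $d([\alpha],\beta')\,t^{-\langle\beta',\alpha\rangle}(\bar t)^{-\langle\beta',\bar\alpha\rangle}\cdots$, depending on the type of $[\alpha]$) and transported arguments $c\cdot u_1'$ and $c\cdot u_2'$. The key simplification is that the second argument is insensitive to the representative: $(-1)'=-1$ and $1'=1$, so the second bracket is $w_{[\gamma]}(-c)$ (resp.\ $w_{[\gamma]}(c)$).

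For the recombination I would use the multiplicativity relations for $w$-elements. For $[\gamma]\sim A_1,A_1^2,A_1^3$ one has $w_{[\gamma]}(a)\,w_{[\gamma]}(b)=h_{[\gamma]}(-ab^{-1})$, verified root-by-root from the commuting orthogonal factors; since the ring is commutative the scalar cancels with no twist, $h_{[\gamma]}\bigl(-(cu_1')(-c)^{-1}\bigr)=h_{[\gamma]}(u_1')$. For $[\gamma]\sim A_2$ one instead has $w_{[\gamma]}(a)\,w_{[\gamma]}(b)=h_{[\gamma]}(\bar a\,b^{-1})$, which follows from Lemma~\ref{lemmaW4'} and Lemma~\ref{lemmaRel-of-WandH}. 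In all cases one is then left with $h_{s_{[\alpha]}([\beta])}(u)$, after identifying the transported argument $u_1'$ with $u$ via the chosen representative of $s_{[\alpha]}([\beta])$, precisely as in the preceding proposition.

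The main obstacle will be the scalar bookkeeping in the $A_2$ cases. There the recombination relation carries a conjugation, so cancellation of $c$ now requires controlling $\bar c\,c^{-1}=(\bar t/t)^{\langle\beta',\alpha\rangle-\langle\beta',\bar\alpha\rangle}$; I expect to show this factor is trivial on the relevant pairs of roots using Lemma~\ref{lemma on d} together with the equalities among structure constants forced by the choice of Chevalley basis (Lemma~\ref{epsilonalpha}) and the identity $\langle\beta,\alpha\rangle=\langle\bar\beta,\bar\alpha\rangle$. A secondary point to check with care is that the recovered argument is genuinely $u$ and not a $\bar{(\cdot)}$-twist of it; this is guaranteed by applying the transport uniformly to both $w$-factors, so that whatever representative ambiguity appears in the first factor is matched in the second and disappears on recombination.
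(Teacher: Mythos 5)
Your route is genuinely different from the paper's. The paper omits the proof with the remark that it is analogous to that of Proposition~\ref{prop wxw^{-1}}: one expands $w_{[\alpha]}(t)$ and $h_{[\beta]}(u)$ into their \emph{untwisted} factors and applies the untwisted Steinberg relation (R5), $w_\alpha(t)h_\beta(u)w_\alpha(t)^{-1}=h_{s_\alpha(\beta)}(u)$, which carries no constant and no power of $t$ at all; the factors then reassemble into $h_{s_{[\alpha]}([\beta])}(u)$ with nothing to cancel. You instead stay at the twisted level, factor $h_{[\beta]}(u)$ into two twisted $w$-elements, conjugate each via the twisted $w w w^{-1}$ proposition, and recombine. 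For $[\beta]\sim A_1$, $A_1^2$, $A_1^3$ this does work: the recombination $w_{[\gamma]}(a)\,w_{[\gamma]}(b)=h_{[\gamma]}(-ab^{-1})$ involves no $\theta$-twist, so the common scalar $c$ cancels identically, independently of whether $c$ is $\theta$-fixed.

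The gap is exactly where you suspect it, and it cannot be closed the way you hope. For $[\gamma]=s_{[\alpha]}([\beta])\sim A_2$ the recombination is $w_{[\gamma]}(a)\,w_{[\gamma]}(b)=h_{[\gamma]}(\bar a\, b^{-1})$, so with $a=c\cdot(\bar u)'$ and $b=c$ you are left with $h_{[\gamma]}\bigl((\bar c\,c^{-1})\,\overline{(\bar u)'}\bigr)$, and you need $\bar c\,c^{-1}=(t/\bar t)^{\langle\beta',\alpha\rangle-\langle\beta',\bar\alpha\rangle}$ to equal $1$. That exponent is not zero in general. Concretely, in ${}^2A_4$ take $\alpha=e_1-e_2$ (so $\bar\alpha=e_4-e_5$ and $[\alpha]\sim A_1^2$) and $\beta=e_1-e_3$ (so $\bar\beta=e_3-e_5$ and $[\beta]\sim A_2$); then $\langle\beta,\alpha\rangle=1$ while $\langle\beta,\bar\alpha\rangle=0$, so $\bar c\,c^{-1}=t\bar t^{-1}$, which is not $1$ for generic $t\in R^\ast$. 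A direct check via the untwisted route confirms the proposition is nevertheless true in this configuration, namely $w_{[\alpha]}(t)\,h_{[\beta]}(u)\,w_{[\alpha]}(t)^{-1}=h_{e_2-e_3}(u)\,h_{e_3-e_4}(\bar u)=h_{s_{[\alpha]}([\beta])}(u)$, so the spurious factor $t\bar t^{-1}$ signals that the intermediate bookkeeping of your $A_2$ recombination, not the statement, is at fault. The clean repair is to abandon the twisted $w$-decomposition for $A_2$-classes and argue as the paper intends: write $h_{[\beta]}(u)=h_\beta(u)\,h_{\bar\beta}(\bar u)$ and conjugate each untwisted $h$-factor by each untwisted $w$-factor of $w_{[\alpha]}(t)$ using the scalar-free relation (R5).
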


The proofs of the above two propositions are analogous to the proof of Proposition \ref{prop wxw^{-1}}; therefore, we omit them.
Finally, we conclude this section by stating the following well-known relations.

\begin{prop}[see {\cite[2.4]{KS1}}]\label{prop h(chi)xh(chi)^-1}
    \normalfont
    Let $[\alpha] \in \Phi_\rho$ and $h(\chi) \in T_\sigma (R)$. Then for $u \in R_{[\alpha]}$, we have
    \[
        h(\chi) x_{[\alpha]}(u) h(\chi)^{-1} = x_{[\alpha]} (\chi (\alpha) \cdot u).
    \]
\end{prop}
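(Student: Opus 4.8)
The plan is to reduce the twisted identity to the untwisted relation
\[
    h(\chi)\, x_\beta(\zeta)\, h(\chi)^{-1} = x_\beta(\chi(\beta)\,\zeta), \qquad \beta \in \Phi,\ \zeta \in R,
\]
established earlier in the discussion of the split maximal torus, and then to reassemble the conjugated factors into a single twisted generator. Since each $x_{[\alpha]}(u)$ is by definition a product of ordinary root elements $x_\beta(\cdot)$ with $\beta$ ranging over the class $[\alpha]$, and since conjugation by $h(\chi)$ is a group automorphism, the first step is to distribute $h(\chi)(\cdot)h(\chi)^{-1}$ across this product and apply the untwisted relation factor by factor. The crucial arithmetic input is the self-conjugacy of $\chi$: because $h(\chi) \in T_\sigma(R)$ we have $\chi(\rho(\lambda)) = \theta(\chi(\lambda))$ for all $\lambda \in \Lambda_\pi$, so that $\chi(\bar{\alpha}) = \overline{\chi(\alpha)}$ and $\chi(\bar{\bar{\alpha}}) = \theta^2(\chi(\alpha))$.

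I would then dispatch the cases according to the type of $[\alpha]$. When $[\alpha] \sim A_1$ the class is $\rho$-fixed, so $\chi(\alpha) = \overline{\chi(\alpha)} \in R_\theta$ and the claim is immediate from the untwisted relation. When $[\alpha] \sim A_1^2$ or $A_1^3$, conjugating $x_\alpha(t)x_{\bar{\alpha}}(\bar t)$ (resp.\ with the extra factor $x_{\bar{\bar{\alpha}}}(\bar{\bar t})$) produces $x_\alpha(\chi(\alpha)t)\,x_{\bar{\alpha}}(\chi(\bar\alpha)\bar t)\cdots$, and using $\chi(\bar\alpha)\bar t = \theta(\chi(\alpha))\theta(t) = \overline{\chi(\alpha)t}$, and likewise $\chi(\bar{\bar\alpha})\bar{\bar t} = \theta^2(\chi(\alpha)t)$, this is exactly $x_{[\alpha]}(\chi(\alpha)t)$ by definition. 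Here $\chi(\alpha)\cdot t = \chi(\alpha)t$, matching the stated scalar action for these types.

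The only case requiring genuine care is $[\alpha] \sim A_2$. Writing $u = (u_1,u_2) \in \mathcal{A}(R)$ and conjugating the three defining factors of $x_{[\alpha]}(u_1,u_2)$ gives
\[
    x_\alpha(\chi(\alpha)u_1)\, x_{\bar\alpha}(\chi(\bar\alpha)\bar{u_1})\, x_{\alpha+\bar\alpha}\bigl(\chi(\alpha+\bar\alpha)N_{\bar\alpha,\alpha}u_2\bigr).
\]
Using multiplicativity of the character, $\chi(\alpha+\bar\alpha) = \chi(\alpha)\chi(\bar\alpha) = \chi(\alpha)\overline{\chi(\alpha)}$, together with $\chi(\bar\alpha)\bar{u_1} = \overline{\chi(\alpha)u_1}$, this rearranges into $x_{[\alpha]}\bigl(\chi(\alpha)u_1,\ \chi(\alpha)\overline{\chi(\alpha)}\,u_2\bigr)$, which is precisely $x_{[\alpha]}(\chi(\alpha)\cdot(u_1,u_2))$ for the prescribed monoid action $r\cdot(t,u) = (rt, r\bar r u)$. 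I expect the main obstacle to be bookkeeping in this $A_2$ case: one must verify that the output argument $\chi(\alpha)\cdot(u_1,u_2)$ still lies in $\mathcal{A}(R)$, i.e.\ satisfies $t\bar t = u + \bar u$. This follows from the already-established fact that $(R,\times)$ acts on $\mathcal{A}(R)$, the point being that $\chi(\alpha)\overline{\chi(\alpha)} \in R_\theta$ so that the defining relation is scaled compatibly on both sides. Once this compatibility is confirmed, all four cases collapse to the single assertion $h(\chi)\,x_{[\alpha]}(u)\,h(\chi)^{-1} = x_{[\alpha]}(\chi(\alpha)\cdot u)$.
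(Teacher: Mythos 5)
Your argument is correct and complete. The paper itself gives no proof of this proposition (it simply cites Suzuki [KS1, 2.4]), so there is nothing to compare against, but the route you take is exactly the expected one: distribute the conjugation over the defining factors of $x_{[\alpha]}(u)$, apply the untwisted torus relation $h(\chi)x_\beta(\zeta)h(\chi)^{-1}=x_\beta(\chi(\beta)\zeta)$ to each, and use the self-conjugacy $\chi(\rho(\lambda))=\theta(\chi(\lambda))$ together with multiplicativity of $\chi$ on $\Lambda_\pi$ to reassemble the result as $x_{[\alpha]}(\chi(\alpha)\cdot u)$. You correctly identify the only point needing care, namely that in the $A_2$ case the scalar $\chi(\alpha)\overline{\chi(\alpha)}\in R_\theta$ keeps the argument inside $\mathcal{A}(R)$ and matches the prescribed action $r\cdot(t,u)=(rt,r\bar r u)$.
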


\section{Properties of Twisted Chevalley Groups}

In this section, we present a survey of several well-known results on twisted Chevalley groups.


\subsection{Twisted Chevalley Groups over Fields}\label{subsec:TCG_over_fields}

Let $G_{\pi, \sigma}(\Phi, k)$ denote a twisted Chevalley group of type $\Phi$ over a field $k$ and let $E'_{\pi, \sigma}(\Phi, k)$ denote its elementary subgroup. It is established that $E_{\pi, \sigma}(\Phi, k)$ coincides with $E'_{\pi, \sigma}(\Phi, k)$. When $k$ is algebraically closed, we also have $G_{\pi, \sigma}(\Phi, k) = E'_{\pi, \sigma}(\Phi, k)$, but this is not guaranteed for general fields.
For any field $k$, the twisted Chevalley group satisfies the relation:
\[
G_{\pi, \sigma}(\Phi, k) = E'_{\pi, \sigma}(\Phi, k) T_{\pi, \sigma}(\Phi, k),
\]
where $T_{\pi, \sigma}(\Phi, k)$ is the maximal torus. Furthermore, if the group $G$ is simply connected, for any field $k$, we have
\[
G_{sc, \sigma}(\Phi, k) = E'_{sc, \sigma}(\Phi, k).
\]


\subsubsection*{Bruhat Decomposition}

A Bruhat decomposition also holds for twisted Chevalley groups over a field.

\begin{thm}[Bruhat decomposition]
    \normalfont
    Let $G = E'_{\pi,\sigma}(\Phi, k)$ be an elementary twisted Chevalley group over a field $k$.
    \begin{enumerate}[(a)]
        \item For each $w \in W_\rho$, the subgroup $U_w := U \cap w^{-1} U^{-} w$ is invariant under $\sigma$. 
        \item For each $w \in W_\rho$, there exists an element $\eta_w \in N'_\sigma$ such that $\eta_w H = w$.
        \item If $\eta_w \ (w \in W_\rho)$ is chosen as in part (b), then $G$ can be expressed as:
        \[
            G = \bigsqcup_{w \in W_\sigma} B_\sigma \eta_w U_{w, \sigma}
        \]
        where $U_{w, \sigma}$ is the set of elements of $U_w$ that remain fixed under the action of $\sigma$. Moreover, the representation on the right is unique.
    \end{enumerate}
\end{thm}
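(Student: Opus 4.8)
The plan is to descend the statement from the ordinary Bruhat decomposition of the ambient elementary Chevalley group $E_\pi(\Phi, k)$ by passing to $\sigma$-fixed points. Recall that the untwisted Bruhat decomposition, established earlier, gives $E_\pi(\Phi, k) = \bigsqcup_{w \in W} B \eta_w U_w$, where $Q_w = \Phi^+ \cap w^{-1}(\Phi^-)$, $U_w = \mathfrak{X}_{Q_w}$, and every element admits a \emph{unique} expression $b\,\eta_w\, u$ with $b \in B$ and $u \in U_w$. The entire argument will hinge on transporting this uniqueness through $\sigma$.

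First, for part (a), I would check that the root set $Q_w$ is $\rho$-stable whenever $w \in W_\rho$. Since $\hat\rho$ preserves $\Phi^+$ and commutes with $w$ by the defining condition $\hat\rho w \hat\rho^{-1} = w$ of $W_\rho$, one computes $\rho(Q_w) = \rho(\Phi^+) \cap w^{-1}\rho(\Phi^-) = Q_w$. Because $\sigma = \rho \circ \theta$ and $\theta$ acts only on the coefficients of the generators $x_{[\gamma]}(t)$, it follows that $\sigma(U_w) = \mathfrak{X}_{\rho(Q_w)} = U_w$, which gives (a). For part (b), I would use that $W_\rho$ is generated by the twisted simple reflections $w_{\hat\alpha}|_{E_\rho}$ with $[\alpha] \in \Delta_\rho$, each of which is the image in $W \cong N/H$ of an element $w_{[\alpha]}(t) \in N'_\sigma$ (taking $t = 1$ when $[\alpha] \sim A_1, A_1^2, A_1^3$, and a unit pair $(t,u) \in \mathcal{A}(k)^*$, which is nonempty over a field, when $[\alpha] \sim A_2$). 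Fixing a reduced word for each $w \in W_\rho$ in these generators and multiplying the corresponding $w_{[\alpha]}$'s yields a representative $\eta_w \in N'_\sigma$ with $\eta_w H = w$; in particular $\sigma(\eta_w) = \eta_w$.

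For the decomposition (c), I would start from an arbitrary $g \in G = E'_{\pi,\sigma}(\Phi,k) \subseteq E_\pi(\Phi,k)$ and write its unique ambient Bruhat form $g = b\,\eta_w\, u$. The key observation is that $\sigma$ permutes the cells according to the induced action on $W$, carrying the cell indexed by $w$ to the cell indexed by $\hat\rho w \hat\rho^{-1}$. Applying $\sigma$ to $g = \sigma(g)$ and invoking disjointness of the cells forces $\hat\rho w \hat\rho^{-1} = w$, i.e. $w \in W_\rho$. Then, since $\sigma(\eta_w) = \eta_w$, the equality $b\,\eta_w\, u = \sigma(b)\,\eta_w\,\sigma(u)$ together with the uniqueness of the ambient expression yields $\sigma(b) = b$ and $\sigma(u) = u$; hence $b \in B \cap G_\sigma = B_\sigma$ and $u \in U_w \cap G_\sigma = U_{w,\sigma}$. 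Conversely every factor appearing in $B_\sigma \eta_w U_{w,\sigma}$ lies in $G$ (for $b$ this follows a posteriori from $b = g\, u^{-1}\eta_w^{-1}$), and disjointness is inherited from the ambient decomposition, so $G = \bigsqcup_{w \in W_\rho} B_\sigma \eta_w U_{w,\sigma}$ with unique expression, as required.

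The main obstacle I anticipate is part (b) — more precisely, the surjectivity of $N'_\sigma \to W_\rho$ using only \emph{elementary} representatives. One must verify that the projections of the $w_{[\alpha]}(t)$ genuinely generate all of $W_\rho$ and not merely a subgroup, which requires the description of $W_\rho$ as the reflection group on $E_\rho$ generated by the $w_{\hat\alpha}|_{E_\rho}$, together with the structure results on $\mathcal{A}(k)^*$ in the $A_2$ case to guarantee that suitable units exist over the field $k$. A secondary technical point is confirming that $\sigma$ really does permute the Bruhat cells by the stated rule, which reduces to $\sigma(B) = B$, the fact that $\sigma(\eta_w)$ lies in $N$ and projects to $\hat\rho w \hat\rho^{-1}$, and the $\sigma$-equivariance $\sigma(U_w) = U_{\hat\rho w\hat\rho^{-1}}$; once these are in place, the uniqueness-based descent is routine.
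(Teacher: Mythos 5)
Your proposal is correct: the paper states this theorem without proof (it is quoted as a known result from Steinberg's \emph{Lectures on Chevalley Groups}), and your fixed-point descent from the untwisted Bruhat decomposition — using $\sigma$-stability of $B$ and of the cells, $\sigma$-fixed representatives $\eta_w$ built from the elements $w_{[\alpha]}(t) \in N'_\sigma$, and the uniqueness of the ambient expression to force $b$ and $u$ to be $\sigma$-fixed — is exactly the standard argument given there. The two technical points you flag (generation of $W_\rho$ by the classes $w_{J_\alpha}$ restricted to $E_\rho$, and nonemptiness of $\mathcal{A}(k)^*$ over a field) are both supplied by the paper's preliminaries on twisted root systems and by the remark following the definition of $\mathcal{R}_1$, so no gap remains.
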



\subsubsection*{Simplicity of adjoint groups}

\begin{thm}[{\cite[Theorem 34]{RS}}]
    \normalfont
    Let $G = E'_{\pi, \sigma} (\Phi, k)$ be an elementary twisted Chevalley group of type $\Phi$ over a field $k$. Assume that $\Phi_\rho \not\sim {}^2A_2$ when $\lvert k \rvert = 4$. Then, $G$ is simple modulo its centre.
\end{thm}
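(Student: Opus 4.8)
The plan is to realise $G = E'_{\pi,\sigma}(\Phi,k)$ as a group equipped with a twisted $(B,N)$-pair and then to invoke the standard abstract simplicity theorem for groups carrying such a pair (the Tits--Iwasawa argument). Concretely, I would take $B = B'_\sigma$, $N = N'_\sigma$, and the associated Weyl group $W_\rho$; the Bruhat decomposition $G = \bigsqcup_{w \in W_\rho} B_\sigma \eta_w U_{w,\sigma}$ recorded above supplies the $(B,N)$-pair axioms. The criterion I intend to apply reads: if $G$ carries an \emph{irreducible} $(B,N)$-pair, $U \trianglelefteq B$ is a solvable normal subgroup with $B = U H'_\sigma$, and $G = \langle\, g U g^{-1} : g \in G \,\rangle$, then every subgroup of $G$ normalised by $G$ is contained in $Z := \bigcap_{g \in G} g B g^{-1}$ or equals $G$; consequently $G/Z$ is simple. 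The role of $U$ will be played by the unipotent subgroup $U_\sigma$.

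First I would check the structural hypotheses. The identity $B'_\sigma = U_\sigma H'_\sigma$ is stated above, and $H'_\sigma$ normalises $U_\sigma$, so $U_\sigma \trianglelefteq B'_\sigma$. Since $U_\sigma \subseteq U = U_\pi(\Phi,k)$ and $U$ is nilpotent, $U_\sigma$ is nilpotent, hence solvable. Irreducibility of the $(B,N)$-pair amounts to connectedness of the Coxeter diagram of $W_\rho$, i.e.\ to irreducibility of $\Phi_\rho$; this is immediate from the classification table, since each $\Phi_\rho$ that occurs ($C_n$, $B_n$, $F_4$, $G_2$, $A_1$, $I_2(8)$) is irreducible. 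Finally, $G = \langle U_\sigma, U_\sigma^-\rangle$ by definition, and $U_\sigma^-$ is conjugate to $U_\sigma$ by a representative of the longest element $w_0 \in W_\rho$ (via the conjugation formula for the $w_{[\alpha]}$ established in this section), so the normal closure $\langle\, g U_\sigma g^{-1} : g \in G \,\rangle$ contains both $U_\sigma$ and $U_\sigma^-$ and therefore equals $G$.

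The genuine difficulty, and the step I expect to be the main obstacle, is that the abstract criterion applies cleanly only when the twisted rank is at least $2$; there it yields simplicity modulo $Z$ with no restriction on $k$. When $\Phi_\rho \sim A_1$ (notably the case $\Phi_\rho \sim {}^2 A_2$), the $(B,N)$-pair has Weyl group of order $2$ and degenerates into a split, doubly transitive structure to which the BN-pair argument does not apply directly; here one must verify perfectness $G = [G,G]$ by hand, writing each generator $x_{[\alpha]}(t)$ as a product of commutators of root elements through the Chevalley commutator formulas of Subsection~\ref{subsec:CheComm}. Producing enough such commutators needs the field to be large enough, and this is precisely where $\Phi_\rho \sim {}^2 A_2$ with $\lvert k\rvert = 4$ (the group $SU_3$ over the four-element field) must be excluded: there the unipotent generators cannot be recovered as commutators, $G$ fails to be perfect, and the quotient $G/Z$ is not simple. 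For the remaining rank-$1$ fields I would instead run the Iwasawa argument directly on the doubly transitive action of $G$ on $G/B'_\sigma$, using the verified perfectness.

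Finally, I would identify the kernel $Z = \bigcap_{g \in G} g B'_\sigma g^{-1}$ with the centre of $G$. An element fixing every conjugate of the Borel lies in the torus, so $Z \subseteq H'_\sigma$, and by Proposition~\ref{prop h(chi)xh(chi)^-1} such an element acts trivially by conjugation on every root subgroup $\mathfrak{X}_{[\alpha]}$, whence $Z \subseteq Z(G)$; the reverse inclusion is clear since central elements normalise every conjugate of $B'_\sigma$. With $Z = Z(G)$ and the criterion in force, every normal subgroup of $G$ is contained in $Z(G)$ or equals $G$, which is exactly the statement that $G$ is simple modulo its centre.
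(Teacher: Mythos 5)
The paper does not actually prove this statement --- it is quoted verbatim from Steinberg [RS, Theorem 34] --- and your outline is precisely the proof given in that source: check the hypotheses of the abstract $(B,N)$-pair simplicity criterion for $B = B'_\sigma$, $N = N'_\sigma$, $U = U_\sigma$, identify $Z = \bigcap_{g} gB'_\sigma g^{-1}$ with the centre, and treat the degenerate small cases by hand. One genuine imprecision: the criterion as you state it omits the hypothesis that $G$ be perfect, and this is needed in \emph{every} twisted rank, not only in rank $1$ --- the last step of the Tits argument is that $HB'_\sigma = G$ forces $G/H$ to be a quotient of the solvable group $B'_\sigma = U_\sigma H'_\sigma$, and only $G = [G,G]$ then yields $H = G$. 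Consequently your assertion that in rank $\geq 2$ the criterion gives simplicity ``with no restriction on $k$'' presupposes exactly the commutator computations you defer to the rank-$1$ case; these must be carried out for all types over all fields (including $\lvert k \rvert = 4$, where the residue field is $\mathbb{F}_2$ and the structure constants are most constrained), and it is a fact --- not a formality --- that they succeed for ${}^2A_n \ (n \geq 3)$, ${}^2D_n$, ${}^2E_6$ and fail only for ${}^2A_2$ over the field with four elements, where $E'_\sigma \cong SU_3(\mathbb{F}_2)$ is solvable. With the perfectness hypothesis restored and verified, your argument is the standard one.
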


\begin{rmk}
    In this literature, we only consider root systems consisting of roots with the same root length. See \cite{RS} for all other cases.
\end{rmk}


\subsubsection*{Automorphisms}

The definitions of inner automorphisms and field automorphisms are analogous to those in Section~\ref{sec:auto}. 

\begin{thm}[{\cite[Theorem 36]{RS}}]
    \normalfont
    Let $G = E'_{\pi, \sigma} (\Phi, k)$ be an elementary twisted Chevalley group of type $\Phi$ over a field $k$. Assume that $\sigma$ is not the identity. Then every automorphism of $G$ is a product of an inner automorphism and a field automorphism.
\end{thm}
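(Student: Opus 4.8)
The plan is to follow Steinberg's strategy for isomorphisms of groups carrying a $(B,N)$-pair, adapting each step to the twisted root system $\Phi_\rho$ and to the mixed root subgroups $\mathfrak{X}_{[\alpha]}$. Because the preceding theorem guarantees that $G = E'_{\pi,\sigma}(\Phi,k)$ is simple modulo its centre, the subgroups arising from the Bruhat decomposition can be characterised in purely group-theoretic terms, and this is what lets an abstract automorphism $\phi$ interact with them. First I would show that $\phi$ carries the standard Borel $B_\sigma$ to another Borel subgroup: by the Bruhat decomposition $G = \bigsqcup_{w \in W_\rho} B_\sigma \eta_w U_{w,\sigma}$, the Borel subgroups are exactly the conjugates of $B_\sigma$, and they admit an intrinsic description (via Tits' characterisation of parabolics in a $(B,N)$-pair, or as maximal among subgroups normalising a common maximal unipotent). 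Transitivity of $G$ on Borel subgroups then lets me compose $\phi$ with an inner automorphism so that $\phi(B_\sigma) = B_\sigma$.

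Next I would arrange that $\phi$ also fixes the opposite Borel $B_\sigma^{-}$. Two opposite Borels containing a common one are conjugate by a unique element of $U_\sigma$, so after a further inner automorphism by an element of $U_\sigma$ I may assume $\phi(B_\sigma) = B_\sigma$ and $\phi(B_\sigma^{-}) = B_\sigma^{-}$ simultaneously. Consequently $\phi$ fixes the torus $H'_\sigma = B_\sigma \cap B_\sigma^{-}$ and stabilises both $U_\sigma$ and $U_\sigma^{-}$; since it then normalises $N'_\sigma$, it induces an automorphism of $W_\rho \cong N'_\sigma / H'_\sigma$ preserving the length function coming from the Bruhat decomposition, hence a symmetry of the Coxeter diagram of $\Phi_\rho$.

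That diagram symmetry is realised by a graph automorphism of $G$, so composing $\phi$ with its inverse reduces me to the case where $\phi$ fixes $H'_\sigma$ and stabilises each simple root subgroup $\mathfrak{X}_{[\alpha_i]}$. Here one must inspect, type by type among the cases considered, namely ${}^2A_n$, ${}^2D_n$, ${}^2E_6$ and ${}^3D_4$, which admissible diagram symmetries actually occur; in these cases the diagram of $\Phi_\rho$ carries no nontrivial angle-preserving symmetry, so no genuine graph automorphism intervenes and $\phi$ already stabilises each $\mathfrak{X}_{[\alpha_i]}$. At this stage $\phi$ restricts to a group automorphism of each $\mathfrak{X}_{[\alpha_i]} \cong R_{[\alpha_i]}$, where $R_{[\alpha_i]}$ is $k_\theta$, $k$, or the group $\mathcal{A}(k)$ according to whether $[\alpha_i] \sim A_1$, $A_1^2$ or $A_1^3$, or $A_2$.

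Finally I would extract the field automorphism. Using relation (R6), namely $h_{[\alpha]}(t)\,x_{[\beta]}(u)\,h_{[\alpha]}(t)^{-1} = x_{[\beta]}(t^{\langle\beta,\alpha\rangle} u)$, together with the additivity $x_{[\alpha]}(t)x_{[\alpha]}(u)=x_{[\alpha]}(t+u)$, the restriction of $\phi$ to each abelian simple root subgroup is forced to be a bijection respecting both addition and the torus-scaling, i.e.\ a field automorphism $\tau$ of $k$; the torus action pins $\tau$ down uniformly across all simple roots and forces its compatibility with the involution $\theta$. The main obstacle I anticipate is precisely the $A_2$-type subgroups: there $\mathcal{A}(k)$ is a non-abelian Heisenberg-type group under $\oplus$, its two coordinates $(t,u)$ transform differently under $H'_\sigma$, and one must verify that the induced map on $\mathcal{A}(k)$ is governed by the same single field automorphism, consistent with the abelian root subgroups and with the commutator formulas of types (c--ii) and (d--ii). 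Once $\tau$ is identified, reassembling $\phi$ as the product of $\tau$ with the inner automorphisms used in the reduction yields the claimed decomposition.
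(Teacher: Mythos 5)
The paper itself gives no proof of this statement --- it is quoted from Steinberg's lectures --- so the only thing to measure your proposal against is Steinberg's original argument, which your sketch is clearly modelled on. The overall architecture (normalise $B_\sigma$, then $B_\sigma^{-}$, hence $H'_\sigma$, $U_\sigma$ and $N'_\sigma$; read off a length-preserving automorphism of $W_\rho$; rule out a genuine diagram symmetry; extract a field automorphism from the action on the root subgroups via (R1) and (R6)) is the right one, and you correctly flag the two delicate points at the end: that a Coxeter-diagram symmetry of $\Phi_\rho$ interchanging long and short roots cannot be realised here, and that the non-abelian $A_2$-type subgroups $\mathcal{A}(k)$ must be shown to transform by the same field automorphism as the abelian ones.

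The genuine gap is in your very first step, which is also the hardest step of the theorem: you must show that an abstract automorphism $\phi$ carries $B_\sigma$ (equivalently $U_\sigma$) to a conjugate of itself, and neither of the justifications you offer works. Tits' description of the parabolic subgroups of a $(B,N)$-pair characterises the subgroups that \emph{contain a conjugate of} $B_\sigma$; it gives no reason why $\phi(B_\sigma)$ should contain a conjugate of $B_\sigma$ in the first place, so invoking it here is circular. The alternative, ``maximal among subgroups normalising a common maximal unipotent,'' silently assumes that unipotency is an abstract group-theoretic property preserved by $\phi$, which it is not over a general field (it depends on the linear realisation). Likewise, simplicity modulo the centre does not by itself make the Bruhat-decomposition subgroups intrinsically characterisable. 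Steinberg spends the bulk of the proofs of his Theorems 30 and 36 precisely on this conjugacy: for finite $k$ the subgroup $U_\sigma$ is a Sylow $p$-subgroup, so Sylow's theorem settles it, while for infinite $k$ a separate and substantially harder argument is required. As written, your proof establishes the theorem only once $\phi(U_\sigma)$ is known to be conjugate to $U_\sigma$; everything after that point is a fair sketch of the standard reduction.
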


\begin{rmk}
    Every inner automorphism of $G$ can be expressed as the composition of a strictly inner automorphism and a diagonal automorphism (for the definition of diagonal automorphism, see \cite{RS}). Moreover, by definition, field automorphisms commute with the automorphism $\theta$.
\end{rmk}


\subsection{Normality of \texorpdfstring{$E'_{\pi,\sigma} (\Phi, R)$}{E(R)} in \texorpdfstring{$G_{\pi, \sigma} (\Phi, R)$}{G(R)}}\label{sec:SKR}

In \cite{KS2} and \cite{KS3}, K. Suzuki focuses on root system of types $\Phi_\rho \sim {}^2 A_n \ (n \geq 3), {}^2 D_n \ (n \geq 4)$ and ${}^2 E_6$. It is easy to see that the analogous versions of the main theorems in both papers hold for $\Phi_\rho \sim {}^3 D_4$. In this section, we specify the conditions under which the corresponding results are valid and we also state a consequence of both main results which we utilize in the future.

First, consider the following condition on a ring $R$: 
\begin{enumerate}[leftmargin=3.5em]
    \item[\textbf{(A1)}] For any maximal ideal $\m$ of $R$, the natural map $R_{\theta} \longrightarrow (R/(\m \cap \overline{\m}))_{\theta}$ is surjective if $\Phi_\sigma$ is not of type ${}^2A_{2n}$ and the natural map $\mathcal{A}(R) \longrightarrow \mathcal{A}(R/(\m \cap \overline{\m}))$ is surjective and $\mathcal{A}(R)^* \neq \phi$ if $\Phi_\sigma$ is of type ${}^2A_{2n}$.
    \item[\textbf{(A2)}] For any maximal ideal $\m_\theta$ of $R_\theta$, we have $\m_\theta = R_{\theta} \cap \m_\theta R$.
\end{enumerate}

\begin{thm}[see \cite{KS2}]\label{thm:KS2}
    Let $G_\sigma(R)$ and $E'_\sigma(R)$ be as above. Assume that $\Phi_\rho \sim {}^2A_{n} \ (n \geq 3), {}^2D_{n} \ (n \geq 4)$ or ${}^2E_6$, and $R$ satisfies the condition $(A1)$ and $(A2)$ above. Then $E'_\sigma (R)$ is a normal subgroup of $G_\sigma (R)$. 
\end{thm}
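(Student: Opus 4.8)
The goal is to prove that $E'_\sigma(R)$ is normal in $G_\sigma(R)$ under conditions (A1) and (A2). The standard strategy for such normality results, following the template of Taddei, Vaserstein, and Suzuki, is a \emph{localization-and-patching} argument. The plan is to reduce the global statement to a local one by localizing at maximal ideals, establish normality over local rings where the group structure is well understood (typically via the $G_\sigma(R) = G^0_\sigma(R) = E'_\sigma(R) T_\sigma(R)$ equality and the torus normalizing $E'_\sigma$), and then patch the local conjugation data back together using the injectivity of $R_\theta \hookrightarrow \prod_{\mathfrak{m}_\theta} (R_\theta)_{\mathfrak{m}_\theta}$.

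First I would show it suffices to prove that for each generator $x_{[\alpha]}(t)$ of $E'_\sigma(R)$ and each $g \in G_\sigma(R)$, the conjugate $g\, x_{[\alpha]}(t)\, g^{-1}$ lies in $E'_\sigma(R)$. To analyze this, I would pass to a polynomial ring: introduce an indeterminate $\lambda$ and consider the element $g x_{[\alpha]}(\lambda t) g^{-1}$ over $R[\lambda]$ (or $R_\theta[\lambda]$, respecting the $\theta$-action), exploiting that $x_{[\alpha]}(\cdot)$ depends polynomially on its argument. The key reduction is then to localize at each maximal ideal $\mathfrak{m}_\theta$ of $R_\theta$; condition (A2) guarantees that $\mathfrak{m}_\theta R$ behaves well under contraction, and condition (A1) ensures the reduction maps on the relevant parameter sets ($R_\theta$ or $\mathcal{A}(R)$) are surjective, so that the local twisted Chevalley group is generated by its elementary part together with the torus. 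Over the localization $(R_\theta)_{\mathfrak{m}_\theta}$, one has $G_\sigma = E'_\sigma \cdot T_\sigma$, and since $T_\sigma(R)$ normalizes $E'_\sigma(R)$ via Proposition~\ref{prop h(chi)xh(chi)^-1} (the relation $h(\chi) x_{[\alpha]}(u) h(\chi)^{-1} = x_{[\alpha]}(\chi(\alpha) \cdot u)$), the conjugate stays in $E'_\sigma$ locally.

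The patching step is where the polynomial-ring device pays off. Having shown that locally each conjugate $g x_{[\alpha]}(\lambda t) g^{-1}$ lies in the elementary subgroup over the localized ring, I would collect denominators: for each $\mathfrak{m}_\theta$ there is $s \notin \mathfrak{m}_\theta$ with the conjugate expressible elementarily over $R_\theta[1/s][\lambda]$. Since these $s$ generate the unit ideal in $R_\theta$, one uses the Vaserstein-type splitting principle, writing $\lambda = \sum \lambda_i$ with each $\lambda_i$ supported on a single chart, and multiplying together the corresponding elementary factorizations, to conclude $g x_{[\alpha]}(\lambda t) g^{-1} \in E'_\sigma(R[\lambda])$; specializing $\lambda = 1$ gives the result over $R$. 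Here conditions (A1) and (A2) are exactly what make the reduction maps surjective and the contraction of ideals compatible, so that the local computations assemble coherently.

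The main obstacle will be handling the type ${}^2A_{2n}$ case, where the root $[\alpha] \sim A_2$ carries the nonabelian parameter group $\mathcal{A}(R)$ rather than an additive group. The conjugation formulas here (cases (a$_2$--ii), (c--ii), (d--ii) of the commutator relations) are genuinely more intricate, the torus action $r \cdot (t,u) = (rt, r\bar r u)$ is nonlinear, and one must verify that the patching respects the $\oplus$-group structure on $\mathcal{A}(R)$; this is precisely why (A1) demands surjectivity of $\mathcal{A}(R) \to \mathcal{A}(R/(\mathfrak{m} \cap \overline{\mathfrak{m}}))$ and the nonemptiness $\mathcal{A}(R)^* \neq \emptyset$. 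Verifying that the local-to-global argument does not break on these twisted parameters — in particular that the decomposition of the $\mathcal{A}(R)$-parameter across charts is well-defined — will require the most care, and I would treat it as a separate case after dispatching the simpler $A_1, A_1^2, A_1^3$ situations.
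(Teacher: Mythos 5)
The paper does not prove this theorem itself — it is quoted from Suzuki's paper \cite{KS2} — and your localization-and-patching outline is essentially the strategy of that reference, which this thesis also redeploys almost verbatim for the relative version (cf.\ Propositions~\ref{local:[x,g] in E(R,J)}--\ref{general:[x,g] in E(R,J)}: localization at $S_{\mathfrak m}$, the semilocal identity $G_\sigma = E'_\sigma T_\sigma$ furnished by (A1)/(A2), Taddei's Lemma~\ref{lemma:GT} to clear denominators in $R[\lambda]$, and the separate treatment of the $\mathcal{A}(R)$-parameter via a decomposition $(t,u)=\bigoplus_i (r_i s_i t, \cdot)\oplus(0,c)$ exactly as you anticipate). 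Your sketch correctly identifies both where the hypotheses (A1) and (A2) enter and the genuine difficulty in type ${}^2A_{2n}$, so it matches the intended proof in approach and emphasis.
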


Now consider the following conditions:
\begin{enumerate}[leftmargin=3.5em]
    \item[\textbf{(A1$'$)}] For any maximal ideal $m$ of $R$, the natural map $R_{\theta} \longrightarrow (R/(\m \cap \overline{\m} \cap \overline{\overline{\m}}))_{\theta}$ is surjective if $\Phi_\sigma$ is of type ${}^3 D_{4}$. 
\end{enumerate}

\begin{rmk}
    We can state a similar result to Theorem \ref{thm:KS2} for the case of $\Phi_\rho \sim {}^3 D_4$, assuming that $R$ satisfies conditions $(A1')$ and $(A2)$. The proof follows similar lines as those in \cite{KS2}.
\end{rmk}

\begin{lemma}\label{A1&A2}
    \normalfont
    If $2$ (resp., $3$) is invertible in $R$, then $(A1)$ (resp., $(A1')$) and $(A2)$ are satisfied. 
\end{lemma}

\begin{proof}
    Assume that $2$ (resp., $3$) is invertible in $R$. Let $I = \m \cap \overline{\m}$ (resp., $I = \m \cap \overline{\m} \cap \overline{\overline{\m}}$) then $I = \bar{I}$. Let $x \in R$ such that $x + I = \bar{x} + I \implies x-\bar{x} \in I$ (resp., $x - \bar{x}, x - \bar{\bar{x}} \in I$). Set $y = (x+\bar{x})/2 \in R_\theta$ (resp., $y = (x + \bar{x} + \bar{\bar{x}})/3 \in R_\theta$). Then $x - y = (x - \bar{x})/2 \in I$ (resp., $x-y = ((x-\bar{x}) + (x-\bar{\bar{x}}))/3 \in I$), so $x+I = y+I$. Since $y \in R_\theta$, it serves as a pre-image of $x+I$. Therefore, the map $R_\theta \longrightarrow (R/I)_\theta$ is surjective. 

    We now prove that if $2$ is invertible in $R$, then the natural map $\mathcal{A}(R) \longrightarrow \mathcal{A}(R/I)$ is surjective. Let $(x_1 + I, x_2 + I) \in \mathcal{A}(R/I)$. Then 
    \begin{align*}
        (x_1 + I) (\bar{x}_1 + I) = (x_2 + I) + (\bar{x}_2 + I) 
        \implies & x_1 \bar{x}_1 + I = (x_2 + \bar{x}_2) + I \\
        \implies & x_1 \bar{x}_1 - (x_2 + \bar{x}_2) \in I.
    \end{align*}
    Set $y_1 = x_1$ and $y_2 = x_2 + (x_1 \bar{x}_1)/ 2 - (x_2 + \bar{x}_2)/2$. Then $(y_1, y_2) \in \mathcal{A}(R)$, and clearly $(y_1 + I, y_2 + I) = (x_1 + I, x_2 + I)$, showing that $(y_1, y_2) \in \mathcal{A}(R)$ serves as a pre-image of $(x_1 + I, x_2 + I)$. Hence, the natural map $\mathcal{A}(R) \longrightarrow \mathcal{A}(R/I)$ is a surjective.

    We now return to the assumption that $2$ (resp., $3$) is invertible in $R$, and verify condition $(A2)$. It is clear that $\m_\theta \subset R_\theta \cap \m_\theta R$. Let $x \in R_\theta \cap \m_\theta R.$ Then $x = \bar{x}$ and $x = \sum_{i=1}^k m_i x_i$ where $m_i \in \m_\theta$ and $x_i \in R$. Define $y_i = (x_i + \bar{x}_i)/2$ (resp., $y_i = (x_i + \bar{x}_i + \bar{\bar{x}}_i)/3)$ and set $y = \sum_{i=1}^{k} m_i y_i \in \m_\theta$. Then $x - y = \sum_{i=1}^k m_i (x_i - y_i) = \sum_{i=1}^{k} m_i (x_i - \bar{x_i})/2 = (x - \bar{x})/2 = 0$ (resp., $x-y = \sum_{i=1}^{k} m_i ((x_i - \bar{x_i}) + (x_i - \bar{\bar{x_i}}))/3 = ((x - \bar{x}) + (x- \bar{\bar{x}}))/3 = 0$). Therefore $x = y \in \m_\theta$, as desired.
\end{proof}

\begin{cor}\label{cor:KS2}
    Assume that $1/2 \in R$ if $\Phi_\rho \sim {}^2 A_{n} \ (n \geq 3), {}^2 D_n \ (n \geq 4)$ or ${}^2 E_6$, and that $1/3 \in R$ if $\Phi_\rho \sim {}^3 D_4$. Then $E'_\sigma (R)$ is a normal subgroup of $G_\sigma (R)$.
\end{cor}


\subsection{Center of the Twisted Chevalley Groups}\label{sec:Center of TCG}

Recall that in Section \ref{Subsec:TCG}, we defined the set $\text{Hom}_1 (\Lambda_\pi, R^*) = \{ \chi \in \text{Hom} (\Lambda_\pi, R^*) \mid \chi = \bar{\chi}_\sigma \}$. We also define $\text{Hom}_1(\Lambda_\pi / \Lambda_r, R^*)$ as $\{ \chi \in \text{Hom}_1 (\Lambda_\pi, R^*) \mid \chi|_{\Lambda_r} = 1 \}$. The following theorem is a special case of the main theorem of \cite{KS3}.
\begin{thm}\label{thm:KS3}
    \normalfont
    Let $G=G_\sigma (R)$ be a twisted Chevalley group of type $\Phi_\rho \sim {}^2 A_{n} \ (n \geq 3), {}^2 D_n \ (n \geq 4)$ or ${}^2 E_6$ and let $E = E'_\sigma(R)$ be its elementary subgroup. Assume that $1/2 \in R$. Then $Z(G) = C_G (E) = \text{Hom}_1(\Lambda_\pi / \Lambda_r, R^*)$, where $Z(G)$ is a centre of $G$ and $C_G (E)$ is a centralizer of $E$ in $G$. 
\end{thm}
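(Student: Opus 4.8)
\textbf{Proof proposal for Theorem~\ref{thm:KS3}.}

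The plan is to prove the chain of equalities $Z(G) = C_G(E) = \mathrm{Hom}_1(\Lambda_\pi/\Lambda_r, R^*)$ by establishing the two inclusions $Z(G) \subseteq C_G(E)$ and $C_G(E) \subseteq \mathrm{Hom}_1(\Lambda_\pi/\Lambda_r, R^*) \subseteq Z(G)$, thereby forcing all three sets to coincide. The inclusion $Z(G) \subseteq C_G(E)$ is immediate once we know $E = E'_\sigma(R)$ is a subgroup of $G = G_\sigma(R)$, since anything commuting with all of $G$ in particular commutes with $E$. For the reverse containment that closes the cycle, namely $\mathrm{Hom}_1(\Lambda_\pi/\Lambda_r, R^*) \subseteq Z(G)$, I would exhibit every such character as an element $h(\chi) \in T_{\pi,\sigma}(\Phi, R)$ and check it is genuinely central. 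By Proposition~\ref{prop h(chi)xh(chi)^-1} we have $h(\chi) x_{[\alpha]}(u) h(\chi)^{-1} = x_{[\alpha]}(\chi(\alpha)\cdot u)$; the hypothesis $\chi|_{\Lambda_r} = 1$ means $\chi(\alpha) = 1$ for every root $\alpha$, so $h(\chi)$ commutes with all the generators $x_{[\alpha]}(u)$ of $E'_\sigma(R)$. To promote centrality in $E$ to centrality in all of $G$, I would invoke the normality of $E$ in $G$ (Corollary~\ref{cor:KS2}, valid since $1/2 \in R$) together with the description $G_\sigma(R) = E'_\sigma(R)\,T_{\pi,\sigma}(\Phi,R)$ over fields, localized or adapted to the present ring-theoretic setting, so that centralizing $E$ and lying in the torus suffices to centralize the whole group.

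The substantive part is the inclusion $C_G(E) \subseteq \mathrm{Hom}_1(\Lambda_\pi/\Lambda_r, R^*)$. Here I would take $g \in G$ commuting with every element of $E'_\sigma(R)$ and deduce first that $g$ must lie in $T_{\pi,\sigma}(\Phi, R)$, and then that the corresponding character is trivial on the root lattice. For the first step, the strategy is to use the Bruhat-type decomposition for twisted groups together with the fact that $g$ normalizes (indeed centralizes) each root subgroup $\mathfrak{X}_{[\alpha]}$; an element centralizing $U_\sigma$ and $U^-_\sigma$ cannot have a nontrivial Weyl-group component and cannot have off-diagonal unipotent parts, which pins it into the maximal torus. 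Writing $g = h(\chi)$ for some $\chi \in \mathrm{Hom}_1(\Lambda_\pi, R^*)$ (using the identification $T_{\pi,\sigma}(\Phi,R) = \{h(\chi) \mid \chi \in \mathrm{Hom}_1(\Lambda_\pi, R^*)\}$ from Section~\ref{Subsec:TCG}), the commutation relation $h(\chi)x_{[\alpha]}(u)h(\chi)^{-1} = x_{[\alpha]}(\chi(\alpha)\cdot u)$ forces $\chi(\alpha)\cdot u = u$ for all admissible $u \in R_{[\alpha]}$ and all $[\alpha]$. Taking $u$ to range over a generating set (in particular $u=1$ or a unit) yields $\chi(\alpha) = 1$ for every root $\alpha$, hence $\chi|_{\Lambda_r} = 1$, giving exactly $\chi \in \mathrm{Hom}_1(\Lambda_\pi/\Lambda_r, R^*)$.

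The main obstacle I anticipate is the first reduction in the previous paragraph: showing rigorously that centralizing $E'_\sigma(R)$ forces $g$ into the torus over an arbitrary commutative ring $R$ with $1/2 \in R$, rather than over a field. Over a field one has the clean Bruhat decomposition and can argue by comparing double cosets, but over a general ring the torus may be strictly larger than $H'_\sigma$ and the decomposition is only available in weaker form. I expect the resolution to follow the template of the untwisted result (Theorem~\ref{thm:EA&JH} of Abe--Hurley), reducing to the local case or passing to residue fields $R/(\m \cap \overline{\m})$ via the surjectivity furnished by conditions $(A1)$ and $(A2)$ (guaranteed by Lemma~\ref{A1&A2}), and then lifting. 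The delicate point is handling the type ${}^2A_{2n}$ behavior where $R_{[\alpha]} = \mathcal{A}(R)$ is not simply additive in $R$; the commutator analysis forcing $\chi(\alpha)=1$ must be carried out using the group law $\oplus$ on $\mathcal{A}(R)$ and the action $r\cdot(t,u) = (rt, r\bar r u)$, but this case is excluded from the hypotheses of the theorem (only ${}^2A_n$ with $n \geq 3$ of the form handled in \cite{KS3}), so the remaining cases $A_1, A_1^2, A_1^3, A_2$ with $\alpha \ne \bar\alpha$ should all reduce to checking $\chi$ on ordinary units. Once the torus reduction is secured, the rest is a direct computation with the relations already recorded in Section~\ref{sec:E(R)}.
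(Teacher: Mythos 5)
You should first be aware that the paper does not prove Theorem~\ref{thm:KS3} at all: it is imported verbatim as ``a special case of the main theorem of \cite{KS3}'' (Suzuki, \emph{Centers of Twisted Chevalley Groups over Commutative Rings}), and the thesis only adds a remark that the ${}^3D_4$ case follows by the same method. So there is no in-paper proof to compare against; what you have written is a reconstruction of Suzuki's argument, which in turn follows the Abe--Hurley template (Theorem~\ref{thm:EA&JH}). Your skeleton — the trivial inclusion $Z(G)\subseteq C_G(E)$, the torus computation via Proposition~\ref{prop h(chi)xh(chi)^-1} for $\mathrm{Hom}_1(\Lambda_\pi/\Lambda_r,R^*)\subseteq Z(G)$, and the reduction of $C_G(E)$ into the torus followed by reading off $\chi(\alpha)=1$ — is indeed the right shape, and your identification of the torus reduction over a general ring as the substantive difficulty is accurate.

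Two concrete problems. First, your claim that the $\mathcal{A}(R)$ case ``is excluded from the hypotheses of the theorem'' is false: ${}^2A_n$ with $n\geq 3$ includes ${}^2A_4, {}^2A_6,\dots$, whose twisted system $B_n$ has short classes $[\alpha]\sim A_2$ with $R_{[\alpha]}=\mathcal{A}(R)$, and you then contradict yourself by listing $A_2$ among the ``remaining cases.'' The case is not actually hard — from $h(\chi)x_{[\alpha]}(t,u)h(\chi)^{-1}=x_{[\alpha]}(\chi(\alpha)t,\chi(\alpha)\overline{\chi(\alpha)}u)$ and the test element $(1,1/2)\in\mathcal{A}(R)$ one gets $\chi(\alpha)=1$ directly — but as written your treatment of it is incoherent. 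Second, both nontrivial inclusions lean on $G_\sigma(R)=E'_\sigma(R)\,T_\sigma(R)$, which by Proposition~\ref{G=G'} is only available for semi-local rings; you acknowledge this for the inclusion $C_G(E)\subseteq\mathrm{Hom}_1(\Lambda_\pi/\Lambda_r,R^*)$ but the same localization step (a matrix commutator over $R$ vanishes iff it vanishes in every $R_{\mathfrak{m}}$, where the semi-local decomposition and the Bruhat-type argument apply) is also needed to promote ``$h(\chi)$ centralizes $E$'' to ``$h(\chi)\in Z(G)$,'' and is the entire content of the torus reduction. Since that reduction is precisely where \cite{KS3} does its work and you only gesture at it, the proposal is a correct outline rather than a proof.
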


\begin{rmk}
    As before, we can state a similar result to Theorem \ref{thm:KS3} for the case of $\Phi_\rho \sim {}^3D_4$, assuming $1/3 \in R$. The proof follows similar lines as those in \cite{KS3}.
\end{rmk}

\chapter{Normal Subgroups of \texorpdfstring{$G_\sigma(R)$}{G(R)}}\label{chapter:Normal_subgroups}


The primary objective of this chapter is to classify the subgroups of $G_\sigma (R)$ that are normalized by $E'_\sigma (R)$. As a result, we also characterize all the normal subgroups of $E'_\sigma (R)$. We begin by introducing the concept of congruence subgroups of $G_\sigma (R)$ and examining their key properties. Next, we investigate certain mixed commutator formulas. Finally, we state and prove the main classification theorem.

\section{Congruence Subgroups of \texorpdfstring{$G_\sigma(R)$}{G(R)}}\label{sec:E=UHV}

Let $R$ be a commutative ring with unity and let $\theta$ be an automorphism of a ring $R$ of order $2$ or $3$. Let $J$ be a $\theta$-invariant ideal of $R$ (that is, $J$ is an ideal of $R$ such that $\theta (J) \subset J$). For $[\alpha] \in \Phi_{\rho}$, define 
\[ 
    J_{[\alpha]} = 
    \begin{cases}
        J_{\theta} & \text{if } [\alpha] \sim A_1, \\
        J & \text{if } [\alpha] \sim A_1^2 \text{ or } A_1^3, \\
        \mathcal{A}(J) & \text{if } [\alpha] \sim A_2; \\
    \end{cases}
\]
where $J_{\theta} = \{ r \in J \mid r = \overline{r} \} = J \cap R_{\theta}$ and $\mathcal{A}(J) = \{ (a,b) \in \mathcal{A}(R) \mid a,b \in J \}.$

The natural projection map $R \longrightarrow R/J$ induces the canonical map 
\[
    \phi: G_{\pi, \sigma}(\Phi, R) \longrightarrow G_{\pi, \sigma}(\Phi, R/J).
\]
Define $G_{\pi, \sigma}(\Phi, J) = \ker(\phi)$ and $G_{\pi, \sigma}(\Phi, R, J) = \phi^{-1}(Z(G_{\pi, \sigma}(\Phi, R/J)))$, where $Z(G)$ denotes the center of a group $G$. 
The subgroup $G_{\pi, \sigma}(\Phi, J)$ of $G_{\pi, \sigma}(\Phi, R)$ is called the \textbf{principal congruence subgroup}\label{nomencl:G_sigma(J)} \index{subgroups of $G_\sigma(R)$!principal congruence subgroup} of level $J$, while $G_{\pi, \sigma}(\Phi, R, J)$ is referred to as the \textbf{full congruence subgroup}\label{nomencl:G_sigma(R,J)} \index{subgroups of $G_\sigma(R)$!full congruence subgroup} of level $J$. 

Let $E'_{\pi, \sigma}(\Phi, J)$ be the subgroup of $E'_{\pi, \sigma}(\Phi, R)$ generated by all elements $x_{[\alpha]}(t)$, where $[\alpha] \in \Phi_{\rho}$ and $t \in J_{[\alpha]}$. Define $E'_{\pi, \sigma}(\Phi, R, J)$ as the normal subgroup of $E'_{\pi, \sigma}(\Phi, R)$ generated by $E'_{\pi, \sigma}(\Phi, J)$.
The subgroup $E_{\pi, \sigma}(\Phi, R, J)$ of $G_{\pi, \sigma}(\Phi, R)$ is referred to as the \textbf{relative elementary subgroup}\label{nomencl:E'_sigma(R,J)} \index{subgroups of $G_\sigma(R)$!relative elementary subgroups} of level $J$.
Note that $E'_{\pi, \sigma}(\Phi, R, J)$ is also a subgroup of $E'_{\pi, \sigma}(\Phi, R) \cap G_{\pi, \sigma}(\Phi, J)$, since the latter is normal in $E'_{\pi, \sigma}(\Phi, R)$ and contains $E'_{\pi, \sigma}(\Phi, J)$. 

\begin{rmk}
    In the absence of ambiguity, we use the shorter notations $G_\sigma(J)$, $G_\sigma(R, J)$, $E'_\sigma(J)$, and $E'_\sigma(R, J)$ instead of $G_{\pi, \sigma}(\Phi, J)$, $G_{\pi, \sigma}(\Phi, R, J)$, $E'_{\pi, \sigma}(\Phi, J)$, and $E'_{\pi, \sigma}(\Phi, R, J)$, respectively.
\end{rmk}

Let $U_\sigma (J)$ (resp., $U^{-}_\sigma (J)$) be the subgroup of $E'_\sigma (R)$ generated by $x_{[\alpha]}(t)$ (resp., $x_{-[\alpha]}(t)$) where $[\alpha] \in \Phi^{+}_\rho$ and $t \in J_{[\alpha]}$. Define $T_\sigma (J) = G_\sigma(J) \cap T_\sigma (R), T_\sigma (R, J) = G_\sigma (R, J) \cap T_\sigma(R), H_\sigma(J) = E_\sigma (J) \cap T_\sigma (R), H_\sigma (R,J) = E_\sigma (R,J) \cap T_\sigma (R), H'_\sigma(J) = E'_\sigma (J) \cap T_\sigma (R)$ and $H'_\sigma (R,J) = E'_\sigma (R,J) \cap T_\sigma (R).$

\begin{lemma}\label{structure of U}
    \normalfont
    Let $J$ be any $\theta$-invariant ideal of $R$. Then each element of $U_\sigma (J)$ is uniquely expressible in the form
    \[
        x_{[\alpha_1]}(t_1) \dots x_{[\alpha_n]}(t_n)
    \]
    where $[\alpha_i] \in \Phi^+_\rho$ and $t_i \in J_{[\alpha_i]}$, the ordering of the roots is arbitrarily chosen and fixed once for all. 
\end{lemma}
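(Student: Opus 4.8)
The statement is an analogue of the uniqueness-of-expression results already established in the untwisted setting (Lemma~17 of Steinberg, quoted earlier) and in the twisted setting for $U_\sigma$ itself (Proposition~\ref{lemma 62 of RS}). So the strategy is to reduce the relative statement to these absolute ones. First I would observe that $\Phi_\rho^+$ is a closed subset of $\Phi_\rho$ containing no pair $\{[\alpha],-[\alpha]\}$, so Proposition~\ref{lemma 62 of RS} applies and gives $U_\sigma = \prod_{[\alpha]>0}\mathfrak{X}_{[\alpha]}$ with uniqueness of expression over all of $R_{[\alpha]}$. The task is then to show that when each parameter is restricted to lie in the sub-object $J_{[\alpha]}\subseteq R_{[\alpha]}$, the resulting products (i) generate all of $U_\sigma(J)$ and (ii) retain uniqueness.

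**Existence of the expression.** For the generation claim, I would argue that the set $P$ of all products $\prod x_{[\alpha_i]}(t_i)$ with $t_i\in J_{[\alpha_i]}$, taken in the fixed order, is already a subgroup. Since $U_\sigma(J)$ is by definition generated by the $x_{[\alpha]}(t)$ with $t\in J_{[\alpha]}$, it suffices to check $P$ is closed under multiplication and inverses. Closure under multiplication uses the Chevalley commutator formulas of Section~\ref{subsec:CheComm}: to reorder a product back into the fixed order one introduces commutators $[x_{[\alpha]}(t),x_{[\beta]}(u)]$, and the key point is that \emph{every commutator term produced lands again in some $\mathfrak{X}_{[\gamma]}$ with parameter in $J_{[\gamma]}$}. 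This is because $J$ is an ideal (and $\theta$-invariant), so each structure-constant expression appearing in formulas (a$_1$)–(g) — being a sum of products of the $t_i,u_i$ with at least one factor from $J$ — again lies in $J_\theta$, $J$, or $\mathcal{A}(J)$ as appropriate. Here I would note explicitly that $J_{[\alpha]}$ is closed under the relevant operations: $J_\theta$ and $J$ are additive subgroups stable under $\theta$, and $\mathcal{A}(J)$ is a subgroup of $\mathcal{A}(R)$ under $\oplus$ stable under the $r\cdot(-)$ action for $r\in R$. Inverses are handled by the formulas $x_{[\alpha]}(t)^{-1}=x_{[\alpha]}(-t)$ (and $x_{[\alpha]}(t,u)^{-1}=x_{[\alpha]}(-t,\bar u)$ in the $A_2$ case), which keep parameters in $J_{[\alpha]}$.

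**Uniqueness.** Uniqueness is the cleaner half: if two such products are equal, they are in particular equal as elements of $U_\sigma$, where Proposition~\ref{lemma 62 of RS} already guarantees uniqueness of the expression $\prod x_{[\alpha_i]}(t_i)$ with $t_i\in R_{[\alpha_i]}$. Hence the parameters coincide term by term, and they were assumed to lie in $J_{[\alpha_i]}$, so the restricted expression is unique as well. Equivalently, one can phrase this via the coordinate projections $U_\sigma\to R_{[\alpha]}$ dual to the $\prod\mathfrak{X}_{[\alpha]}$ decomposition.

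**The main obstacle.** The delicate step is the closure-under-multiplication computation in the $A_2$ (and ${}^3D_4$) cases, where the root-subgroup parameters are pairs $(t,u)\in\mathcal{A}(R)$ rather than single ring elements, and the commutator formulas (c$-$ii), (d$-$ii), (e), (f), (g) mix the two coordinates through expressions like $t\bar u$, $t\bar t u_2$, and $tu\bar u\bar{\bar u}$. I must check that whenever one of the inputs has coordinates in $J$, every output coordinate lands in $J$ — using that $J$ is a $\theta$-invariant ideal so that $\overline{J}\subseteq J$ and $RJ\subseteq J$, and that the pair $(t_1\bar u_1,\ldots)$ continues to satisfy the defining relation of $\mathcal{A}(J)$. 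Verifying that $\mathcal{A}(J)$ is genuinely a subgroup under $\oplus$ closed under the monoid action, and that each structure constant $N_{\cdot,\cdot}\in\{\pm1,\pm2,\pm3\}$ times such an element stays in $J$, is routine but must be done carefully; once this is in hand the general reordering argument for an arbitrary product follows by the usual induction on the number of inversions relative to the fixed order, exactly as in the proof of Proposition~\ref{lemma 62 of RS}.
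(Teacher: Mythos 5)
Your argument is correct and is exactly the proof the paper has in mind: the paper omits the details, stating only that the lemma ``is an easy consequence of the Chevalley commutator formula,'' and your write-up supplies precisely that — uniqueness inherited from Proposition~\ref{lemma 62 of RS} applied to $U_\sigma(R)$, and existence via closure of the fixed-order products under multiplication, using that the $\theta$-invariance and ideal property of $J$ keep all commutator parameters inside $J_{[\gamma]}$ (including the $\mathcal{A}(J)$ cases).
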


\begin{proof}
    The proof is an easy consequence of the Chevalley commutator formula and is therefore omitted.
\end{proof}

\begin{rmk}
    \normalfont
    We can state and prove a similar result for $U^{-}_\sigma (J)$.
\end{rmk}

\begin{rmk}
    Note that $U_\sigma (J) = E'_\sigma (R,J) \cap U_\sigma (R)$ and $U^{-}_\sigma (J) = E'_\sigma (R,J) \cap U^{-}_\sigma (R)$. This can be seen as follows: Clearly, $U_\sigma (J) \subset E'_\sigma (R,J) \cap U_\sigma (R)$. For the reverse inclusion, let $x \in E'_\sigma (R,J) \cap U_\sigma (R)$. Since $E'_\sigma (R,J) \subset G_\sigma (J)$ we have $x \equiv 1$ (mod $J$). From the uniqueness in the above lemma, we conclude that $x \in U_\sigma (J)$. 
\end{rmk}

\begin{lemma}\label{inUHV}
    \normalfont
    Let $J$ be a $\theta$-invariant ideal of $R$ contained in $rad (R)$, the Jacobson radical of $R$. Then for any $[\alpha] \in \Phi_\rho, t \in J_{[\alpha]}, s \in R_{[\alpha]},$ we have 
    $$ x_{[\alpha]}(s) x_{-[\alpha]}(t) x_{[\alpha]}(s)^{-1} = x_{[\alpha]}(a) h x_{-[\alpha]}(b) $$ where $a,b \in J_{[\alpha]}$ and $h \in H'_\sigma (R,J)$.
\end{lemma}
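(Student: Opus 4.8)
The plan is to reduce the identity to an explicit matrix computation inside the rank-one subgroup $\langle \mathfrak{X}_{[\alpha]}, \mathfrak{X}_{-[\alpha]} \rangle$, and then to locate the diagonal factor $h$ using the normality of $E'_\sigma(R,J)$. First I would record the ambient membership that makes the final step work: since $t \in J_{[\alpha]}$, the element $x_{-[\alpha]}(t)$ lies in $E'_\sigma(J) \subset E'_\sigma(R,J)$, and because $E'_\sigma(R,J)$ is by definition normal in $E'_\sigma(R)$ while $x_{[\alpha]}(s) \in E'_\sigma(R)$, the whole left-hand side $x_{[\alpha]}(s) x_{-[\alpha]}(t) x_{[\alpha]}(s)^{-1}$ already belongs to $E'_\sigma(R,J)$. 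Thus once I produce \emph{any} factorization of the form $x_{[\alpha]}(a)\, h\, x_{-[\alpha]}(b)$ with $a,b \in J_{[\alpha]}$ and $h$ diagonal, the factors $x_{[\alpha]}(a), x_{-[\alpha]}(b)$ lie in $E'_\sigma(J) \subset E'_\sigma(R,J)$, so $h = x_{[\alpha]}(a)^{-1}\bigl(x_{[\alpha]}(s) x_{-[\alpha]}(t) x_{[\alpha]}(s)^{-1}\bigr) x_{-[\alpha]}(b)^{-1} \in E'_\sigma(R,J)\cap T_\sigma(R) = H'_\sigma(R,J)$ automatically. This means the real content is just exhibiting the factorization with $a,b \in J_{[\alpha]}$.

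For $[\alpha] \sim A_1$ I would carry out the factorization in $E_2(R_\theta)$ using the surjection $\phi_\alpha$ of Proposition~\ref{prop:E_2 in E_alpha}. Writing the generators as the usual unipotent $2\times 2$ matrices, a direct computation gives
\[
    \begin{pmatrix} 1 & s \\ 0 & 1 \end{pmatrix}\begin{pmatrix} 1 & 0 \\ t & 1 \end{pmatrix}\begin{pmatrix} 1 & -s \\ 0 & 1 \end{pmatrix} = \begin{pmatrix} 1+st & -s^2 t \\ t & 1-st \end{pmatrix}.
\]
Here $st \in J \subset \mathrm{rad}(R)$, so $1-st$ is invertible (and its inverse again lies in $R_\theta$ when $s,t \in R_\theta$); this is the single place the hypothesis $J \subset \mathrm{rad}(R)$ is used. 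Setting $d=(1-st)^{-1}$, $a=-s^2 t\,(1-st)^{-1}$ and $b=t\,(1-st)^{-1}$, one checks directly that $x_{[\alpha]}(a)\,h_{[\alpha]}(d)\,x_{-[\alpha]}(b)$ has exactly the above matrix, and that $a,b \in J_{[\alpha]}$ and $d \equiv 1 \pmod J$. The cases $[\alpha] \sim A_1^2$ and $[\alpha] \sim A_1^3$ reduce to this one: there $[\alpha] = \{\alpha,\bar\alpha\}$ (resp.\ $\{\alpha,\bar\alpha,\bar{\bar\alpha}\}$) consists of mutually orthogonal roots, so the corresponding rank-one $SL_2$'s commute elementwise, and the computation decouples into two (resp.\ three) copies of the $A_1$ computation with parameters $(s,t),(\bar s,\bar t)$ (and $(\bar{\bar s},\bar{\bar t})$); the outputs $a = -s^2t(1-st)^{-1}$ etc.\ are automatically $\theta$-consistent, so $x_{[\alpha]}(a) = x_\alpha(a)x_{\bar\alpha}(\bar a)\cdots$ and $h = h_\alpha(d)h_{\bar\alpha}(\bar d)\cdots$ assemble correctly.

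The main obstacle is the case $[\alpha] \sim A_2$ (occurring for ${}^2A_{2n}$), where the rank-one group is a homomorphic image of $E_3(R)$ via the surjection $\psi$ introduced in Lemma~\ref{lemmaW4'}, and the one-parameter subgroups are replaced by the non-abelian $\mathcal{A}(R)$. Here I would write $x_{[\alpha]}(t,u)$ and $x_{-[\alpha]}(t',u')$ as the corresponding $3\times 3$ matrices and compute the conjugate directly, carrying the constraint $t\bar t = u + \bar u$ through the calculation so that the resulting parameters $a,b$ again satisfy the $\mathcal{A}(J)$-relation; the ``denominator'' appearing in the diagonal entries will again lie in $1 + J$ and hence be invertible by $J \subset \mathrm{rad}(R)$. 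This bookkeeping in $\mathcal{A}(R)$ is the only delicate part, but it is mechanical once the matrix forms and the group law $\oplus$ on $\mathcal{A}(R)$ are written out explicitly. After the factorization is obtained in each of the four cases, the normality argument of the first paragraph uniformly yields $h \in H'_\sigma(R,J)$, completing the proof.
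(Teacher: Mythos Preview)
Your proposal is correct and follows essentially the same approach as the paper: reduce to an explicit matrix computation in the rank-one subgroup (via $\phi_\alpha$ to $E_2$ for $A_1, A_1^2, A_1^3$, and via $\psi$ to $E_3$ for $A_2$), use $J \subset \mathrm{rad}(R)$ to invert the diagonal entry, and then recover $h \in H'_\sigma(R,J)$ from the definitional normality of $E'_\sigma(R,J)$ in $E'_\sigma(R)$. The paper treats $A_1, A_1^2, A_1^3$ together with the single formula $a=-ts^2(1-st)^{-1}$, $b=t(1-st)^{-1}$, $h=h_{[\alpha]}((1-st)^{-1})$ rather than reducing the latter two to $A_1$, and in the $A_2$ case it writes out the explicit solution $u,a_1,a_2,b_1,b_2$ of the $3\times 3$ system, but these are presentational differences only.
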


\begin{proof}
    If $[\alpha] \sim A_1, A_1^2$ or $A_1^3$ then for given $t \in J_{[\alpha]}$ and $s \in R_{[\alpha]}$, we have $(1-st) \in R_{[\alpha]}^*$. In this case, we can take $a = -t s^2 (1-st)^{-1}, b= t (1-st)^{-1}$ and $h = h_{[\alpha]}((1-st)^{-1})$. Clearly, $a, b \in J_{[\alpha]}$. But then $h \in E'_\sigma (R,J)$ and hence $h \in H'_\sigma (R,J)$.

    If $[\alpha] \sim A_2$ then for given $t = (t_1, t_2) \in \mathcal{A}(J)$ and $s=(s_1, s_2) \in \mathcal{A}(R)$, we have $1-(s_1 \bar{t_1} - \bar{s_2}t_2) \in R^*.$ Therefore, the equation
    \begin{small}
        \[
            \begin{pmatrix}
                1 & \bar{s}_1 & s_2 \\
                0 & 1 & s_1 \\
                0 & 0 & 1
            \end{pmatrix}
            \begin{pmatrix}
                1 & 0 & 0 \\
                t_1 & 1 & 0 \\
                t_2 & \bar{t}_1 & 1
            \end{pmatrix}
            \begin{pmatrix}
                1 & -\bar{s_1} & \bar{s_2} \\
                0 & 1 & -s_1 \\
                0 & 0 & 1
            \end{pmatrix} = 
            \begin{pmatrix}
                1 & \bar{a_1} & a_2 \\
                0 & 1 & a_1 \\
                0 & 0 & 1
            \end{pmatrix}
            \begin{pmatrix}
                \bar{u} & 0 & 0 \\
                0 & u \bar{u}^{-1} & 0 \\
                0 & 0 & u^{-1}
            \end{pmatrix}
            \begin{pmatrix}
                1 & 0 & 0 \\
                b_1 & 1 & 0 \\
                b_2 & \bar{b_1} & 1
            \end{pmatrix} 
        \]
    \end{small} 
    has a solution for $u, a_1, a_2, b_1, b_2$ and it is given by $u = (1-(\bar{t_1}s_1 - t_2\bar{s_2}))^{-1}, a_1= (t_1\bar{s_2} - \bar{t_1} {s_1}^2 + t_2 s_1 \bar{s_2}) (1-(\bar{t_1}s_1 - t_2\bar{s_2}))^{-1}, a_2 = (t_1 \bar{s_1} \bar{s_2} - \bar{t_1} s_1 s_2 + t_2 s_2 \bar{s_2}) (1-(\bar{t_1}s_1 - t_2\bar{s_2}))^{-1}, b_1= (t_1 - s_1\bar{t_2}) (1-(t_1 \bar{s_1} - \bar{t_2} {s_2}))^{-1}, b_2= t_2 (1-(\bar{t_1}s_1 - t_2\bar{s_2}))^{-1}$. By simple calculation we can see that $(a_1, a_2), (b_1, b_2) \in \mathcal{A}(J)$. But then $$h_{[\alpha]}(u) = x_{[\alpha]}(a)^{-1} x_{[\alpha]}(s) x_{-[\alpha]}(t) x_{[\alpha]}(s)^{-1} x_{-[\alpha]}(b)^{-1} \in E'_\sigma (R, J).$$ Since, $h = h_{[\alpha]}(u) \in T_\sigma (R)$, we have $h = h_{[\alpha]}(u) \in H'_\sigma (R,J)$.
\end{proof}

Let $[\alpha_i]$ be the simple roots of $\Phi_\rho$. We define the height $ht ([\alpha]):= \sum_{i=1}^l m_i$ of a root $[\alpha]= \sum_{i=1}^l m_i [\alpha_i]$ in $\Phi_\rho$. The order of the roots is \textit{regular} if the height $ht ([\alpha])$ is an increasing function of $[\alpha]$. From now on we fix a regular ordering of the roots in $\Phi_\rho$.

\begin{lemma}\label{inUV}
    \normalfont
    Let $J$ be any $\theta$-invariant ideal of $R$. Then for any $[\alpha], [\beta] (\neq [\alpha]) \in \Phi^{+}_\rho$ and $t \in J_{[\alpha]}, s \in R_{[\beta]},$ we have 
    $$ x_{-[\beta]}(s) x_{[\alpha]}(t) x_{-[\beta]}(s)^{-1} = xy$$
    where $x \in U_\sigma (J)$ and $y$ is a product of $x_{-[\gamma]}(u)$'s ($u \in J_{[\gamma]}$) in $U^{-}_\sigma (J)$ such that $-[\gamma] > - [\beta]$.
\end{lemma}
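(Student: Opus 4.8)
The goal is to conjugate the positive-root element $x_{[\alpha]}(t)$ (with $t \in J_{[\alpha]}$) by the negative-root element $x_{-[\beta]}(s)$ and to show that the result factors as an element of $U_\sigma(J)$ times a product of negative-root elements, each with parameter in the appropriate $J_{[\gamma]}$ and each carrying a root strictly larger (in our fixed regular ordering) than $-[\beta]$. The natural strategy is to rewrite the conjugate as a commutator times the original element:
\[
    x_{-[\beta]}(s)\, x_{[\alpha]}(t)\, x_{-[\beta]}(s)^{-1} = [x_{-[\beta]}(s), x_{[\alpha]}(t)]\, x_{[\alpha]}(t),
\]
and then analyze the commutator using the Chevalley commutator formulas catalogued in Section~\ref{subsec:CheComm}. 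Since $[\alpha] \neq [\beta]$ and both are positive, the pair $(-[\beta], [\alpha])$ falls into one of the types (a$_1$) through (g), and in each case the commutator is a product of root elements $x_{i(-[\beta]) + j[\alpha]}(\ast)$ with $i, j \in \tfrac12\mathbb{Z}_{>0}$ by Corollary~\ref{Che-Comm}.

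The key bookkeeping point is that every root $\gamma = i(-[\beta]) + j[\alpha]$ appearing in this commutator has positive coefficient $j$ on $[\alpha]$, and the parameter attached to it is bilinear (or higher) in $t$ and $s$, hence lies in $J_{[\gamma]}$ because $t \in J_{[\alpha]}$ and $J$ is a $\theta$-invariant ideal (so products of the shape $N t \bar{u}$, $t u \bar{u}$, etc., stay in $J$, and the pairs defining $\mathcal{A}(J)$ are preserved). First I would separate these commutator factors into those lying in $U_\sigma$ (the positive ones, where $i(-[\beta]) + j[\alpha] > 0$) and those lying in $U^-_\sigma$ (the negative ones). Using the regular ordering and the fact that each such negative root is $-[\beta]$ plus a strictly positive combination, every negative-root factor $-[\gamma]$ satisfies $-[\gamma] > -[\beta]$ as required. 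I would then collect all positive factors together with $x_{[\alpha]}(t)$ into the $U_\sigma(J)$ part $x$, and the negative factors into $y$, invoking Lemma~\ref{structure of U} (and its $U^-_\sigma$ analogue) to legitimize reordering within each unipotent group at the cost of introducing only higher-root correction terms, which again carry parameters in the relevant $J_{[\gamma]}$.

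The main obstacle I anticipate is the reordering step: the raw commutator formula produces factors in a mixed order (some positive, some negative, interleaved), and moving all positive factors to the left of all negative ones requires repeated application of the commutator relations, which spawns new factors. I would control this by an induction on height: since the ordering is regular, commuting a positive-root element past a negative-root element produces correction terms supported on roots of strictly greater height in the $[\alpha]$-direction, so the process terminates, and at each stage the newly created parameters remain in $J$ by the ideal property. The delicate cases are the type ${}^3D_4$ configurations (e), (f), (g) and the $A_2$-related cases involving $\mathcal{A}(R)$, where one must check that the pair-valued parameters $(t_1,t_2)$ genuinely land in $\mathcal{A}(J)$; this follows since both components are built from products of $J$-entries with $R$-entries. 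Once the termination and membership bookkeeping is in place, the factorization $xy$ with the stated properties falls out directly.
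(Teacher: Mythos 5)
Your overall strategy --- rewrite the conjugate as $[x_{-[\beta]}(s), x_{[\alpha]}(t)]\,x_{[\alpha]}(t)$, expand the commutator via the formulas of Section~\ref{subsec:CheComm}, observe that every parameter produced contains a factor coming from $t$ and hence lies in the appropriate $J_{[\gamma]}$ (including the $\mathcal{A}(J)$ cases), and then sort the factors into a positive and a negative part --- is exactly the route the paper takes; its entire proof is the one line ``immediate from the Chevalley commutator formula for $[x_{[\alpha]}(t)^{-1}, x_{-[\beta]}(s)]$.'' Your reordering worry is legitimate but easily dispatched: all factors live on roots of the form $i[\alpha]-j[\beta]$ with $i,j>0$ (Corollary~\ref{Che-Comm}), and Proposition~\ref{lemma 62 of RS} lets you rewrite such a product in any fixed order without leaving that set of roots or leaving $J$, so the height induction you sketch is not really needed.

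The genuine gap is in your justification of the ordering clause. You assert that ``each such negative root is $-[\beta]$ plus a strictly positive combination.'' A negative root occurring in the commutator has the form $-[\gamma]=i[\alpha]-j[\beta]$, so $-[\gamma]-(-[\beta])=i[\alpha]-(j-1)[\beta]$, which is a positive combination only when $j=1$; for $j\ge 2$ the claim fails. Concretely, in a $B_2$ subsystem with long simple root $[\alpha]$ and short root $[\beta]=[\alpha]+[\delta]$, the pair $([\alpha],-[\beta])$ is of type $(d)$ and the commutator produces a factor on $[\alpha]-2[\beta]=-([\alpha]+2[\delta])$, whose height $-3$ is strictly smaller than the height $-2$ of $-[\beta]$; hence $-[\gamma]<-[\beta]$ in any regular ordering. (The $G_2$ configurations $(e)$--$(g)$ for ${}^3D_4$ exhibit the same phenomenon, e.g.\ $[\alpha]+2(-[\beta])$ landing on the negative of the highest root.) So your argument does not establish the stated inequality --- and in fact the inequality as literally stated fails in these configurations, a point the paper's one-line proof also glosses over. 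You should either restrict the ordering claim to the factors with $j=1$, or note that the only consequence of this lemma used later (in Proposition~\ref{lavidecomposition}) is the weaker assertion that $x\in U_\sigma(J)$ and $y\in U^{-}_\sigma(J)$ with all parameters in $J$, which the rest of your argument does correctly prove.
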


\begin{proof}
    Immediate from the Chevalley commutator formula for $[x_{[\alpha]}(t)^{-1}, x_{-[\beta]}(s)].$
\end{proof}

\begin{prop}\label{lavidecomposition}
    \normalfont
    Let $J$ be a $\theta$-invariant ideal of $R$ contained in $rad (R)$. Then $$E'_\sigma (R,J) = U_\sigma(J) H'_\sigma(R,J) U^{-}_\sigma(J).$$
\end{prop}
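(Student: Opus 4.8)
The strategy is to prove the statement $E'_\sigma(R,J) = U_\sigma(J)\, H'_\sigma(R,J)\, U^{-}_\sigma(J)$ by a two-sided containment, where the nontrivial inclusion is $E'_\sigma(R,J) \subseteq U_\sigma(J)\,H'_\sigma(R,J)\,U^{-}_\sigma(J)$. The reverse inclusion is immediate: each of $U_\sigma(J)$, $H'_\sigma(R,J)$, and $U^{-}_\sigma(J)$ sits inside $E'_\sigma(R,J)$ (for $U_\sigma(J)$ and $U^-_\sigma(J)$ this follows from their generators $x_{[\alpha]}(t)$, $t \in J_{[\alpha]}$, lying in $E'_\sigma(J)$; for $H'_\sigma(R,J)$ it is the definition), so their product is contained in $E'_\sigma(R,J)$.

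For the forward inclusion, I would first argue that it suffices to show the set $P := U_\sigma(J)\,H'_\sigma(R,J)\,U^{-}_\sigma(J)$ contains a generating set of $E'_\sigma(R,J)$ and is stable under the relevant group operations. The cleanest route is to show $P$ is itself a subgroup of $E'_\sigma(R)$ that is normalized by $E'_\sigma(R)$ and contains each generator $x_{[\alpha]}(t)$ with $t \in J_{[\alpha]}$. Since $E'_\sigma(R,J)$ is by definition the \emph{normal} subgroup of $E'_\sigma(R)$ generated by $E'_\sigma(J)$, once $P$ is a subgroup normalized by $E'_\sigma(R)$ and containing $E'_\sigma(J)$, we get $E'_\sigma(R,J) \subseteq P$. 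To prove $P$ is closed under multiplication, I would move factors across each other using the commutator machinery: conjugating a $U^{-}_\sigma(J)$-element past a $U_\sigma(J)$-element, one uses Lemma~\ref{inUV} and its negative analogue to re-expand products of opposite root subgroups, while the ``diagonal collision'' that produces the torus factor is exactly governed by Lemma~\ref{inUHV}, which converts a product $x_{[\alpha]}(s)x_{-[\alpha]}(t)x_{[\alpha]}(s)^{-1}$ into the form $x_{[\alpha]}(a)\,h\,x_{-[\alpha]}(b)$ with $a,b \in J_{[\alpha]}$ and $h \in H'_\sigma(R,J)$. The hypothesis $J \subseteq \mathrm{rad}(R)$ is precisely what makes the needed elements $(1-st)$ (respectively $1-(\bar t_1 s_1 - \bar s_2 t_2)$) invertible, so Lemma~\ref{inUHV} applies at every such collision.

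The main obstacle, and where the bookkeeping concentrates, is the collision step: when multiplying two elements of $P$, sweeping the intermediate $U^-$ and $U$ factors past one another repeatedly creates new ``mixed'' terms, and one must verify that the resulting rearrangements terminate and land back in the ordered normal form $U_\sigma(J)\,H'_\sigma(R,J)\,U^-_\sigma(J)$. I would control this by exploiting the fixed regular ordering of $\Phi_\rho$ (so that heights strictly increase along the sweep) together with the fact that $H'_\sigma(R,J)$ normalizes both $U_\sigma(J)$ and $U^-_\sigma(J)$ via Proposition~\ref{prop h(chi)xh(chi)^-1} — conjugation by $h(\chi)$ sends $x_{[\alpha]}(u)$ to $x_{[\alpha]}(\chi(\alpha)\cdot u)$, preserving membership in $J_{[\alpha]}$. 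Thus each diagonal element produced can be harvested and collected on the left of the $U^-$ block without disturbing the ideal-level constraints.

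Finally, I would assemble the pieces: starting from an arbitrary product of generators $x_{[\alpha_i]}(t_i)^{\pm 1}$ with $t_i \in J_{[\alpha_i]}$ (and their $E'_\sigma(R)$-conjugates), I apply the sweeping procedure inductively on the number of factors, using Lemmas~\ref{structure of U}, \ref{inUHV}, and \ref{inUV} to reduce to the canonical three-block form, with all entries remaining in the appropriate $J_{[\gamma]}$ and the torus part staying in $H'_\sigma(R,J)$. This establishes $E'_\sigma(R,J) \subseteq P$, completing the proof. The uniqueness of expression in $U_\sigma(J)$ and $U^-_\sigma(J)$ from Lemma~\ref{structure of U} is not strictly needed for the equality but is what guarantees the sweep is well-defined and the decomposition is bona fide.
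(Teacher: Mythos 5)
Your proposal is correct and follows essentially the same route as the paper: the paper also reduces everything to showing that $U_\sigma(J)\,H'_\sigma(R,J)\,U^{-}_\sigma(J)$ is a subgroup normalized by $E'_\sigma(R)$ and containing $E'_\sigma(J)$, with the key collision $x_{-[\beta]}(t)\,U_\sigma(J)\subseteq UHV$ handled by a double induction on the height of $[\beta]$ and the number of factors, invoking Lemma~\ref{inUHV} for the diagonal case and Lemma~\ref{inUV} otherwise. Your ``sweeping'' argument controlled by the regular ordering and Proposition~\ref{prop h(chi)xh(chi)^-1} is exactly that induction in slightly less explicit form.
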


\begin{proof}

The proof is similar to that of $2.8$ in \cite{EA2} and $2.1$ in \cite{EA&KS}. However, for the convenience of the reader, we will provide the full proof here.

We write $U = U_\sigma (J), H = H'_\sigma (R, J)$ and $V=U^{-}_\sigma (J)$. Clearly, $UHV \subset E'_\sigma (R, J)$. For converge, it is enough to show that $UHV$ is a normal subgroup of $E'_\sigma (R)$ because then $E'_\sigma (J) \subset UHV$ and hence $E'_\sigma (R, J) \subset UHV$. 

First, we will show that $UHV$ is a subgroup of $E'_\sigma (R)$. In other words, we will show that $g h^{-1} \subset UHV$ for every $g, h \in UHV$. For that, it is enough to show that $g (UHV) \subset UHV$ for any element $g$ of the form $x_{[\beta]}(t) \in U, h_{[\beta]}(t) \in H$ and $x_{-[\beta]}(t) \in U^{-}$. If $g = x_{[\beta]} (t)$, then by Lemma \ref{structure of U}, we have $x_{[\beta]}(t) U \subset U$ and hence $x_{[\beta]}(t) UHV \subset UHV$. Similarly, if $g = h_{[\beta]}(t)$, then from Proposition \ref{prop h(chi)xh(chi)^-1}, we have $h_{[\beta]}(t) U \subset U H$ and hence $h_{[\beta]}(t) UHV \subset UHV$. Finally, if $g= x_{-[\beta]}(t),$ we must show that 
\begin{align}
    x_{-[\beta]}(t) U \subset UHV. \label{eq1}
\end{align}
Because then $x_{-[\beta]}(t) UHV \subset (UHV)HV = UH(VH)V = UH(HV)V = UHV$, the second last equality is follows from Proposition \ref{prop h(chi)xh(chi)^-1}.

To prove (\ref{eq1}), we must show that $x_{-[\beta]}(t)x \in UHV$ for every $x \in U$. Write $$x = x_{[\alpha_1]}(t_1) \dots x_{[\alpha_n]}(t_n) \in U,$$ where each $[\alpha_i] \in \Phi_\rho^+ \ (i=1,\dots,n)$ with $[\alpha_1] > \dots > [\alpha_n]$. Let $m = ht([\beta])$. We will use double induction on the pair $(m, n)$ to prove our result. If $n = 1$ then for any pair $(m,1)$ the result is follows from Lemma \ref{inUHV}, if $[\alpha_1] = [\beta]$; and from Lemma \ref{inUV}, if $[\alpha_1] \neq [\beta]$. Assume that for all $1 \leq k \leq n-1$, the result is true for the pairs $(m,k)$ for all $m \geq 1$. We will show that it is also true for the pair $(m,n)$ for all $m \geq 1$, that is, we will show that $x_{-[\beta]}(t) x_{[\alpha_1]}(t_1) \dots x_{[\alpha_n]}(t_n) \in UHV$. 
If $[\beta] = [\alpha_1]$ then, by Lemma \ref{inUHV}, we have $x_{-[\beta]}(t)x_{[\alpha_1]}(t_1) = x_{[\alpha_1]}(t'_1) h x_{-[\beta]}(t')$, where $h \in H$. 
Thus, by induction hypothesis, 
\begin{align*}
    x_{-[\beta]}(t) x_{[\alpha_1]}(t_1) \dots x_{[\alpha_n]}(t_n) &= x_{[\alpha_1]}(t'_1) h x_{-[\beta]}(t') x_{[\alpha_2]}(t_2) \dots x_{[\alpha_n]}(t_n) \\
    &\in U H (UHV) = U(HU)HV = U(UH)HV = UHV.
\end{align*}
Similarly, if $[\beta] \neq [\alpha_1]$ then, by Lemma \ref{inUV}, we have $x_{-[\beta]}(t)x_{[\alpha_1]}(t_1) = x y x_{-[\beta]}(t)$, where $x \in U$ and $y \in V$ as in Lemma \ref{inUV}. Thus, by induction hypothesis, $$x_{-[\beta]}(t) x_{[\alpha_1]}(t_1) \dots x_{[\alpha_n]}(t_n) = x y x_{-[\beta]}(t) x_{[\alpha_2]}(t_2) \dots x_{[\alpha_n]}(t_n) \in U (UHV) = UHV.$$

Now we will show that $UHV$ is a normal subgroup of $E'_\sigma (R)$. It is suffices to show that $x_{\pm [\alpha]}(t) UHV x_{\pm [\alpha]}(t)^{-1} \subset UHV$ for any root $[\alpha] \in \Phi_\rho^+$ and $t \in R_{[\alpha]}$. Clearly, $x_{[\alpha]}(t) U x_{[\alpha]}(t)^{-1} \subset U$ and $x_{[\alpha]}(t) H x_{[\alpha]}(t)^{-1} \subset UH$ (see Proposition \ref{prop h(chi)xh(chi)^-1}). We will show that $x_{[\alpha]}(t) V x_{[\alpha]}(t)^{-1} \in UHV$. Let $y \in V$ be such that $y = x_{-[\alpha_1]}(t_1) \dots x_{-[\alpha_n]}(t_n)$, where $[\alpha_i] \in \Phi^+_\rho, t_i \in J_{[\alpha_i]} \ (i=1,\dots, n)$. Then 
\begin{align*} 
    x_{[\alpha]}(t) y x_{[\alpha]}(t)^{-1} &= x_{[\alpha]}(t) x_{[\alpha_1]}(t_1) \dots x_{[\alpha_n]}(t_n) x_{[\alpha]}(t)^{-1} \\
    &= (x_{[\alpha]}(t) x_{[\alpha_1]}(t_1) x_{[\alpha]}(t)^{-1}) \dots (x_{[\alpha]}(t) x_{[\alpha_n]}(t_n)) x_{[\alpha]}(t)^{-1}) \in UHV.
\end{align*} 
The containment follows because if $[\alpha_i] = [\alpha]$ then, by Lemma~\ref{inUHV}, $$x_{[\alpha]}(t)x_{[\alpha_i]}(t_i)x_{[\alpha]}(t)^{-1} \in UHV$$ and if $[\alpha_i] \neq [\alpha]$ then, by Lemma \ref{inUV}, $x_{[\alpha]}(t)x_{[\alpha_i]}(t_i)x_{[\alpha]}(t)^{-1} \in UV \subset UHV$, and also the fact that $UHV$ is a subgroup of $G$. Finally, we have
\begin{align*}
    x_{[\alpha]}(t) UHV x_{[\alpha]}(t)^{-1} &= (x_{[\alpha]}(t) U x_{[\alpha]}(t)^{-1}) (x_{[\alpha]}(t) H x_{[\alpha]}(t)^{-1}) (x_{[\alpha]}(t) V x_{[\alpha]}(t)^{-1}) \\
    &\subset (U) (UH) (UHV) = U(HU)HV = U(UH)HV = UHV.
\end{align*}
Similarly, one can show that $x_{-[\alpha]}(t) UHV x_{-[\alpha]}(t)^{-1} \subset UHV$. 
\end{proof}

\begin{prop}\label{P(J) and Q(J)}
    Let $J$ be a $\theta$-invariant ideal of $R$ contained in $rad(R)$.
    Set $P_\sigma(J) = U_\sigma(J) T_\sigma(R) U^{-}_\sigma(R)$ and $Q_\sigma(J) = U_\sigma(R) T_\sigma(R) U^{-}_\sigma(J)$. Then $P_\sigma(J)$ and $Q_\sigma(J)$ are subgroups of $G_\sigma(R)$.
\end{prop}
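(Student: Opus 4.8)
The plan is to prove that $P_\sigma(J)=U_\sigma(J)\,T_\sigma(R)\,U^-_\sigma(R)$ is a subgroup by the method used for Proposition~\ref{lavidecomposition}: show that $P_\sigma(J)$ is stable under left multiplication by a symmetric set of generators. Write $U_J=U_\sigma(J)$, $T=T_\sigma(R)$ and $U^-=U^-_\sigma(R)$, so $P_\sigma(J)=U_JTU^-$. Each of $U_J,T,U^-$ is a subgroup containing $1$, and their natural generating sets, namely $\{x_{[\alpha]}(t):[\alpha]\in\Phi_\rho^+,\,t\in J_{[\alpha]}\}$, the torus elements $h(\chi)\in T$, and $\{x_{-[\alpha]}(s):[\alpha]\in\Phi_\rho^+,\,s\in R_{[\alpha]}\}$, are closed under inversion and lie in $P_\sigma(J)$. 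Hence it suffices to show $gP_\sigma(J)\subseteq P_\sigma(J)$ for each such generator $g$; together with $1\in P_\sigma(J)$ this forces $P_\sigma(J)$ to equal the subgroup it generates.

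The two easy families are immediate. For $g\in U_J$ one has $gU_J\subseteq U_J$, so $gP_\sigma(J)\subseteq P_\sigma(J)$. For $g=h(\chi)\in T$, Proposition~\ref{prop h(chi)xh(chi)^-1} gives $h(\chi)x_{[\alpha]}(u)h(\chi)^{-1}=x_{[\alpha]}(\chi(\alpha)\cdot u)$; since $J$ is a $\theta$-invariant ideal and $\chi(\alpha)\in R^*$, conjugation by $h(\chi)$ preserves $U_J$ and $U^-$, whence $h(\chi)P_\sigma(J)=U_J\,h(\chi)\,T\,U^-=U_JTU^-=P_\sigma(J)$. The substantive case is $g=x_{-[\beta]}(s)$ with $[\beta]\in\Phi_\rho^+$ and $s\in R_{[\beta]}$. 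Because $T$ normalizes $U^-$ and $U^-$ is a subgroup, $U^-TU^-=TU^-$, so it is enough to prove the single inclusion $x_{-[\beta]}(s)\,U_J\subseteq P_\sigma(J)$: this yields $x_{-[\beta]}(s)P_\sigma(J)=\big(x_{-[\beta]}(s)U_J\big)TU^-\subseteq (U_JTU^-)TU^-=U_JTU^-=P_\sigma(J)$.

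To establish $x_{-[\beta]}(s)\,U_J\subseteq P_\sigma(J)$, write a typical element of $U_J$ as $x=x_{[\alpha_1]}(t_1)\cdots x_{[\alpha_n]}(t_n)$ with $[\alpha_1]>\cdots>[\alpha_n]$ in the fixed regular ordering and $t_i\in J_{[\alpha_i]}$ (Lemma~\ref{structure of U}), and run the double induction on $(\mathrm{ht}[\beta],n)$ exactly as in Proposition~\ref{lavidecomposition}. When $[\alpha_1]\neq[\beta]$, Lemma~\ref{inUV} applies verbatim (its hypotheses are $t_1\in J_{[\alpha_1]}$, $s\in R_{[\beta]}$), producing $x_{-[\beta]}(s)x_{[\alpha_1]}(t_1)=x\,y\,x_{-[\beta]}(s)$ with $x\in U_J$ and $y$ a product of $x_{-[\gamma]}(u)$, $u\in J_{[\gamma]}$, all with $-[\gamma]>-[\beta]$, hence of strictly smaller height, which feeds the outer induction. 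When $[\alpha_1]=[\beta]$ I will invoke the reversed analogue of Lemma~\ref{inUHV},
\[
    x_{-[\beta]}(s)\,x_{[\beta]}(t)=x_{[\beta]}(a)\,h\,x_{-[\beta]}(b),
    \qquad a\in J_{[\beta]},\ b\in R_{[\beta]},\ h\in T,
\]
valid for $s\in R_{[\beta]}$, $t\in J_{[\beta]}$ whenever $J\subseteq\mathrm{rad}(R)$. This is proved exactly like Lemma~\ref{inUHV}: by the $SL_2$ identity for $[\beta]\sim A_1,A_1^2,A_1^3$ and by the $3\times 3$ identity of Lemma~\ref{inUHV} for $[\beta]\sim A_2$, the hypothesis $J\subseteq\mathrm{rad}(R)$ guaranteeing that the relevant element $1+(\cdots)$ is invertible. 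Feeding these two reductions into the induction, and using $TU_J=U_JT$ together with $U_J P_\sigma(J)\subseteq P_\sigma(J)$, closes the argument as in Proposition~\ref{lavidecomposition}.

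For $Q_\sigma(J)=U_\sigma(R)\,T\,U^-_\sigma(J)$ the roles of positive and negative roots are interchanged: the only nontrivial generator is $x_{-[\beta]}(t)$ with $t\in J_{[\beta]}$, pushed rightward through $U_\sigma(R)$, and the diagonal collision $x_{-[\beta]}(t)x_{[\beta]}(s)$ (now $t\in J$, $s\in R$) is handled by the same reversed lemma, this time giving $a\in R_{[\beta]}$, $b\in J_{[\beta]}$. The off-diagonal collisions require the dual of Lemma~\ref{inUV}, obtained by the identical Chevalley-commutator computation with the positive and negative roots exchanged, whose commutator terms still carry the factor $t\in J$ and therefore land in $U_\sigma(J)\subseteq U_\sigma(R)$ and in $U^-_\sigma(J)$; equivalently, $Q_\sigma(J)$ is a subgroup since it is the inverse set of the image of $P_\sigma(J)$ under the Chevalley sign-change involution $X_{[\alpha]}\mapsto X_{-[\alpha]}$, which interchanges $U_\sigma(\,\cdot\,)$ with $U^-_\sigma(\,\cdot\,)$ and fixes $T$. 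I expect the main obstacle to be the reversed commutation lemma in the $A_2$ case, that is, transcribing the $3\times 3$ matrix identity of Lemma~\ref{inUHV} into the order $U_J\,T\,U^-$ with the correct level bookkeeping, and verifying that the double induction stays well-founded, which (as in Proposition~\ref{lavidecomposition}) rests on the fact that the negative roots produced by Lemma~\ref{inUV} satisfy $-[\gamma]>-[\beta]$ and hence drop in height.
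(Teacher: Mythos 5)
Your argument is correct in outline, but it takes a genuinely different and considerably longer route than the paper. The paper's proof is essentially two lines: since $J\subseteq \mathrm{rad}(R)$, Proposition~\ref{lavidecomposition} gives $E'_\sigma(R,J)=U_\sigma(J)\,H'_\sigma(R,J)\,U^{-}_\sigma(J)$, so $P_\sigma(J)=E'_\sigma(R,J)\,B^{-}_\sigma(R)$ with $B^{-}_\sigma(R)=T_\sigma(R)\,U^{-}_\sigma(R)$ the opposite Borel; because $E'_\sigma(R,J)$ is normalized by $E'_\sigma(R)$ and by $T_\sigma(R)$ (indeed it is normal in $G_\sigma(R)$ by Proposition~\ref{normal}), the product of a subgroup with a subgroup that normalizes it is a subgroup, and symmetrically $Q_\sigma(J)=B_\sigma(R)\,E'_\sigma(R,J)$. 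You instead rerun the entire double induction of Proposition~\ref{lavidecomposition} directly on $P_\sigma(J)$, which forces you to introduce a new ``reversed'' analogue of Lemma~\ref{inUHV}, rewriting $x_{-[\beta]}(s)\,x_{[\beta]}(t)$ (with $s\in R_{[\beta]}$, $t\in J_{[\beta]}$) in the order upper--torus--lower. That identity is true --- for $[\beta]\sim A_1$ one gets $a=t(1+st)^{-1}\in J$, $b=s(1+st)^{-1}$, $h=h_{[\beta]}((1+st)^{-1})$, with invertibility again supplied by $J\subseteq\mathrm{rad}(R)$, and the $A_2$ case is the same $3\times 3$ computation --- but you leave exactly that computation unverified, and it is precisely the work the paper's route through $E'_\sigma(R,J)$ avoids. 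Your remark that $Q_\sigma(J)$ follows from $P_\sigma(J)$ by a sign-change symmetry is sound in spirit (the paper treats $B_\sigma$ and $B^{-}_\sigma$ symmetrically), though to make the ``involution'' literal you should realize it as conjugation by a representative of the longest Weyl element, which acts as $-1$ on $\Phi_\rho$ for these twisted types and preserves levels by Proposition~\ref{prop wxw^{-1}}, followed by taking inverses. In short: nothing is wrong, but you are re-deriving Proposition~\ref{lavidecomposition} rather than invoking it; quoting it reduces the whole proposition to the observation that $U^{-}_\sigma(J)\,T_\sigma(R)\,U^{-}_\sigma(R)=T_\sigma(R)\,U^{-}_\sigma(R)$.
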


\begin{proof}
    Note that $E'_\sigma(R,J) = U_\sigma(J) H'_\sigma(R,J) U^{-}_\sigma(J)$ is normalized by $E'_\sigma(R)$ and $T_\sigma(R)$. Set $B_\sigma(R) = U_\sigma(R) T_\sigma(R) = T_\sigma(R) U_{\sigma}(R)$ and $B_\sigma^{-}(R) = U_\sigma^{-}(R) T_\sigma(R) = T_\sigma(R) U_\sigma^{-}(R)$. Clearly, both $B_\sigma (R)$ and $B^{-}_\sigma (R)$ are subgroups of $G_\sigma(R).$ By a similar argument as in Proposition~\ref{lavidecomposition}, we have 
    \[
        P_\sigma(J) = E'_\sigma(R,J)B^{-}_\sigma(R) = B^{-}_\sigma(R) E'_\sigma(R,J) 
    \]
    and
    \[
        Q_\sigma(J) = B_\sigma(R) E'_\sigma(R,J) = E'_\sigma(R,J)B_\sigma(R).
    \]
    Therefore, $P_\sigma(J)$ and $Q_\sigma(J)$ are subgroups of $G_\sigma(R)$.
\end{proof}

For any $\theta$-invariant ideal $J$ of $R$, we have the canonical map $\phi: G_\sigma (R) \longrightarrow G_\sigma (R/J)$ as mentioned above. We now consider the canonical map $\phi': G (R) \longrightarrow G(R/J)$. Clearly, $\phi' |_{G_\sigma(R)} = \phi$. Let $G(J) = \ker(\phi')$ and $G(R,J) = \phi'^{-1} (Z(G(R/J)))$, where $Z(G(R/J))$ is the center of the group $G(R/J)$. Let \(U(J)\) (respectively, \(U^-(J)\)) be the subgroup of \(G(R)\) generated by \(x_{\alpha}(t)\) for \(t \in J\) and \(\alpha \in \Phi^+\) (respectively, \(\alpha \in \Phi^-\)). Define \(T(J) = G(J) \cap T(R)\) and \( T(R,J) = G(R,J) \cap T(R) \).

\begin{lemma}\label{lemma on T(J)}
    \normalfont
    Let $J$ be any $\theta$-invariant ideal of $R$. Then 
    \begin{enumerate}[(a)]
        \item the subgroup $T_\sigma (J)$ of $T_\sigma (R)$ is generated by all $h(\chi)$ such that $\chi = \bar{\chi}_\sigma$ and $\chi (\mu) \equiv 1 $ (mod $J$) for every $\mu \in \Omega_\pi$, where $\Omega_\pi$ is a set of all weights of the representation $\pi$.
        \item the subgroup $T_\sigma (R, J)$ of $T_\sigma (R)$ is generated by all $h(\chi)$ such that $\chi = \bar{\chi}_\sigma$ and $\chi (\alpha) \equiv 1 $ (mod $J$) for every $\alpha \in \Phi$.
    \end{enumerate}
\end{lemma}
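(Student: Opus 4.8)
The statement concerns $T_\sigma(J) = G_\sigma(J) \cap T_\sigma(R)$ and $T_\sigma(R,J) = G_\sigma(R,J) \cap T_\sigma(R)$, so the natural strategy is to translate membership in $G_\sigma(J)$ and $G_\sigma(R,J)$ into conditions on the characters $\chi$ parametrizing torus elements. Recall from Section~\ref{Subsec:TCG} that $T_{\pi,\sigma}(\Phi, R) = \{ h(\chi) \mid \chi \in \mathrm{Hom}_1(\Lambda_\pi, R^*) \}$, and from Proposition~\ref{prop h(chi)xh(chi)^-1} that $h(\chi)$ acts on each weight space $V_{\pi,\mu}(\Phi, R)$ as multiplication by $\chi(\mu)$. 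The whole proof rests on reading off these diagonal scalars modulo $J$.

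\textbf{Part (a).} First I would observe that the canonical map $\phi: G_\sigma(R) \to G_\sigma(R/J)$ sends $h(\chi)$ to $h(\bar\chi)$, where $\bar\chi \in \mathrm{Hom}_1(\Lambda_\pi, (R/J)^*)$ is the character obtained by composing $\chi$ with the reduction $R^* \to (R/J)^*$. Since $h(\chi)$ acts diagonally on the free $R$-module $V_\pi(\Phi, R)$ with eigenvalues $\chi(\mu)$ ($\mu \in \Omega_\pi$) on the respective weight spaces, the element $h(\chi)$ lies in $\ker(\phi) = G_\sigma(J)$ precisely when its image is the identity automorphism of $V_\pi(\Phi, R/J)$, i.e.\ when $\chi(\mu) \equiv 1 \pmod{J}$ for every weight $\mu \in \Omega_\pi$. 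The self-conjugacy condition $\chi = \bar\chi_\sigma$ is already built into membership in $T_\sigma(R)$. This gives the characterization of $T_\sigma(J)$ directly.

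\textbf{Part (b).} For the full congruence subgroup I would use that $h(\chi) \in G_\sigma(R,J) = \phi^{-1}(Z(G_\sigma(R/J)))$ iff $h(\bar\chi)$ is central in $G_\sigma(R/J)$. Here I invoke the center computation of Theorem~\ref{thm:KS3} (together with its $^3D_4$ analogue), which identifies the center with characters that are trivial on the root lattice $\Lambda_r$; concretely, a torus element $h(\bar\chi)$ is central exactly when $\bar\chi(\alpha) = 1$ for all roots $\alpha \in \Phi$, equivalently when it commutes with every $x_{[\alpha]}(u)$. By Proposition~\ref{prop h(chi)xh(chi)^-1}, $h(\chi) x_{[\alpha]}(u) h(\chi)^{-1} = x_{[\alpha]}(\chi(\alpha)\cdot u)$, so $h(\bar\chi)$ centralizes the elementary subgroup modulo $J$ iff $\chi(\alpha) \equiv 1 \pmod{J}$ for every $\alpha \in \Phi$. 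Reducing this centrality condition through $\phi$ yields exactly the stated description of $T_\sigma(R,J)$.

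\textbf{The main obstacle.} The delicate point is justifying that centrality of $h(\bar\chi)$ in $G_\sigma(R/J)$ is equivalent to the pointwise conditions $\bar\chi(\alpha)=1$ on roots, rather than merely on weights. Over a general ring $R/J$ the center of $G_\sigma(R/J)$ need not be exhausted by the naive diagonal description, so I must be careful to cite the center theorem (Theorem~\ref{thm:KS3}) under its hypotheses ($1/2 \in R$, or $1/3 \in R$ for $^3D_4$) and confirm these are in force, and to handle the translation between ``commutes with all $x_{[\alpha]}(u)$'' and ``$\chi(\alpha)\equiv 1$'' uniformly across the four root-class types $A_1, A_1^2, A_1^3, A_2$ using the commutator formula of Proposition~\ref{prop h(chi)xh(chi)^-1}. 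The $A_2$ case, where $R_{[\alpha]} = \mathcal{A}(R)$ and the scalar action is the twisted one $r\cdot(t,u) = (rt, r\bar r u)$, will require checking that $\chi(\alpha)\cdot(t,u) = (t,u)$ for all $(t,u)$ forces $\chi(\alpha)\equiv 1$, which is where I expect the bookkeeping to be least transparent.
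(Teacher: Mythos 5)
Your proposal is correct, and the core mechanism is the one the paper uses: part (a) by reading off the diagonal action of $h(\chi)$ on the weight spaces of $V_\pi(\Phi,R/J)$, and part (b) by converting centrality of $h(\bar\chi)$ into the pointwise condition $\chi(\alpha)\equiv 1 \pmod J$ via the commutator $h(\chi)x_\alpha(u)h(\chi)^{-1}x_\alpha(u)^{-1}=x_\alpha((\chi(\alpha)-1)u)$. The one genuine difference is organizational: the paper first reduces both statements to the untwisted torus, proving $T_\sigma(J)=T(J)\cap G_\sigma(R)$ and $T_\sigma(R,J)=T(R,J)\cap G_\sigma(R)$ (the latter resting on the identity $Z(G_\sigma(R/J))=Z(G(R/J))\cap G_\sigma(R/J)$, which combines Theorem~\ref{thm:KS3} with the Abe--Hurley description of $Z(G(R/J))$), and then runs the commutator argument with the ordinary root elements $x_\alpha(1)$ inside $G(R)$, citing the main theorem of \cite{EA&JH} for both directions. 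You instead stay inside the twisted group throughout, invoking Theorem~\ref{thm:KS3} directly and using Proposition~\ref{prop h(chi)xh(chi)^-1} with the twisted generators $x_{[\alpha]}(u)$. Both routes need the same standing hypotheses ($1/2\in R$, and $1/3\in R$ for ${}^3D_4$, so that the center theorem applies to $R/J$), and you are right that in your version the only extra bookkeeping is the $A_2$ case, where taking $(t,u)=(1,1/2)\in\mathcal{A}(R/J)$ in $\chi(\alpha)\cdot(t,u)=(t,u)$ forces $\bar\chi(\alpha)=1$; the paper's detour through the untwisted group avoids this case analysis at the cost of the extra intersection lemmas. Either way the argument closes.
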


\begin{proof}
    To prove our result it is enough to prove that 
    \begin{enumerate}[(a)]
        \item $T_\sigma (J) = T(J) \cap G_\sigma (R)$ and $T(J)$ is a subgroup of $T(R)$ generated by all $h(\chi)$ such that $\chi (\mu) \equiv 1$ (mod $J$) for every $\mu \in \Omega_\pi$.
        \item $T_\sigma (R, J) = T(R, J) \cap G_\sigma (R)$ and $T(R, J)$ is a subgroup of $T(R)$ generated by all $h(\chi)$ such that $\chi (\alpha) \equiv 1$ (mod $J$) for every $\alpha \in \Phi$.
    \end{enumerate}
    
    Since $G_\sigma (J) = G(J) \cap G_\sigma (R)$ and $T_\sigma (R) = T(R) \cap G_\sigma (R)$, the first assertion of the part (a) is clear. The second assertion of part (a) directly follows from the definition of $T(J)$ and the action of $h(\chi)$ on the weight spaces corresponding to the representation $\pi$. 

    Note that the center of $G(R/J)$ is $Z(G(R/J)) = \text{Hom }(\Lambda_\pi / \Lambda_r, (R/J)^*)$ (see \cite{EA&JH}) and the center of $G_\sigma (R/J)$ is $Z(G_\sigma (R/J)) = \text{Hom}_1(\Lambda_\pi / \Lambda_r, (R/J)^*)$ (see Theorem \ref{thm:KS3}). Therefore, $Z(G_\sigma (R/J)) = Z(G(R/J)) \cap G_\sigma (R/J)$. But then $G_\sigma (R, J) = G (R, J) \cap G_\sigma (R)$ and hence the first assertion of part (b) follows. For the second assertion of part (b), let $T' (R, J)$ be the subgroup of $T(R)$ generated by all $h(\chi)$ such that $\chi (\alpha) \equiv 1 $ (mod $J$), for every $\alpha \in \Phi$. We want to show that $T' (R, J) = T (R, J)$. 
    Let $h(\chi) \in T (R, J) = G (R, J) \cap T (R)$. Since $G (R, J)$ is normal subgroup of $G (R)$, we have $[h(\chi), x_{\alpha}(1)] \in G (R, J)$ for all $\alpha \in \Phi$, that is $x_{\alpha}(\chi(\alpha) - 1) \in G (R, J)$ for all $\alpha \in \Phi$. Hence, by the main theorem of \cite{EA&JH}, $\chi(\alpha) \equiv 1$ (mod $J$) for all $\alpha \in \Phi$. Thus, $T (R, J) \subset T'(R, J)$. For the reverse inclusion, let $h(\chi) \in T' (R, J)$. Then $\chi (\alpha) \equiv 1 $ (mod $J$), for every $\alpha \in \Phi$ and hence $\phi' (h(\chi)) \in Z(G (R/J))$ (again by the main theorem of \cite{EA&JH}). That is, $h (\chi) \in G(R,J)$. Thus, we have $T'(R, J) \subset T(R, J)$.
\end{proof}


\begin{prop}\label{G=UTV}
    \normalfont
    Let $J$ be a $\theta$-invariant ideal of $R$ contained in $rad (R)$. Then $$G_\sigma (J) = U_\sigma(J) T_\sigma(J) U^{-}_\sigma(J) \subset G'_\sigma (R).$$
\end{prop}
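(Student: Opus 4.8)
The plan is to derive the twisted Gauss decomposition of $G_\sigma(J)$ from the corresponding decomposition in the ambient untwisted group $G(R)$, exploiting two facts: the uniqueness of factorization in the big cell, and the $\sigma$-invariance of the relevant subgroups coming from the $\theta$-invariance of $J$.

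First I would dispose of the easy inclusion and the containment in $G'_\sigma(R)$. Each generator $x_{[\alpha]}(t)$ with $t \in J_{[\alpha]}$ reduces to the identity modulo $J$, hence lies in $G_\sigma(J) = \ker\phi$, and $T_\sigma(J) = G_\sigma(J) \cap T_\sigma(R) \subseteq G_\sigma(J)$ by definition; since $G_\sigma(J)$ is a subgroup, $U_\sigma(J)\,T_\sigma(J)\,U^{-}_\sigma(J) \subseteq G_\sigma(J)$. As $U_\sigma(J), U^{-}_\sigma(J) \subseteq E'_\sigma(R)$ and $T_\sigma(J) \subseteq T_\sigma(R)$, this product also lies in $T_\sigma(R)E'_\sigma(R) = G'_\sigma(R)$, which already yields the asserted inclusion $G_\sigma(J) \subseteq G'_\sigma(R)$ once the main equality is proved. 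It then remains to establish $G_\sigma(J) \subseteq U_\sigma(J)\,T_\sigma(J)\,U^{-}_\sigma(J)$.

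For the reverse inclusion I would first record the untwisted relative Gauss decomposition: for a $\theta$-invariant ideal $J \subseteq \operatorname{rad}(R)$ one has $G(J) = U(J)\,T(J)\,U^{-}(J)$, proved exactly as Proposition~\ref{lavidecomposition} (compare $2.8$ of \cite{EA2} and $2.1$ of \cite{EA&KS}). The decisive extra ingredient is that the product map $U \times T \times U^{-} \to G(R)$ is injective, since $U \cap B^{-} = \{1\}$ and $T \cap U^{-} = \{1\}$; consequently every $g \in G(J)$ has a \emph{unique} expression $g = utv$ with $u \in U(J)$, $t \in T(J)$, $v \in U^{-}(J)$. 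Now take $g \in G_\sigma(J) = G(J) \cap G_\sigma(R)$ (this identity was established in the proof of Lemma~\ref{lemma on T(J)}) and write $g = utv$ as above. Because $\sigma = \rho \circ \theta$ preserves $U$, $T$, $U^{-}$ and, by $\theta$-invariance of $J$, maps $U(J)$, $T(J)$, $U^{-}(J)$ into themselves, applying $\sigma$ to $g = utv$ produces a second big-cell factorization $\sigma(g) = \sigma(u)\sigma(t)\sigma(v)$ of $\sigma(g) = g$. Uniqueness forces $\sigma(u) = u$, $\sigma(t) = t$, $\sigma(v) = v$, so $u \in U(J) \cap G_\sigma(R)$, $t \in T(J) \cap G_\sigma(R)$, $v \in U^{-}(J) \cap G_\sigma(R)$.

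The final step is to identify these fixed-point sets with the twisted subgroups. For the torus it is purely definitional: $T(J) \cap G_\sigma(R) = T(R) \cap G(J) \cap G_\sigma(R) = T_\sigma(R) \cap G_\sigma(J) = T_\sigma(J)$. For the unipotent parts I would invoke the unique ordered expression of Lemma~\ref{structure of U} together with its untwisted analogue: writing $u = \prod_{\alpha \in \Phi^{+}} x_\alpha(t_\alpha)$ with $t_\alpha \in J$ and imposing $\sigma(u) = u$ forces the parameters along each $\rho$-orbit to be related through $\theta$, which is precisely the condition that regroups the product into the twisted generators $x_{[\alpha]}(s)$ with $s \in J_{[\alpha]}$; hence $U(J) \cap G_\sigma(R) = U_\sigma(J)$, and symmetrically $U^{-}(J) \cap G_\sigma(R) = U^{-}_\sigma(J)$. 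Combining the three identifications gives $g = utv \in U_\sigma(J)\,T_\sigma(J)\,U^{-}_\sigma(J)$, as desired. I expect the main obstacle to be this last identification $U(J) \cap G_\sigma(R) = U_\sigma(J)$, where one must check that $\sigma$-invariance of the ordered product really recovers the defining form of the twisted generators in each root-type case ($A_1$, $A_1^2$, $A_1^3$, $A_2$), including the sign $\epsilon_\alpha$ of Lemma~\ref{epsilonalpha} in the $A_2$ case; securing the uniqueness of the big-cell factorization in all of $G(R)$ (not merely in $E'_\sigma(R)$), which legitimizes the termwise application of $\sigma$, is the other point needing care.
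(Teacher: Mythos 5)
Your proposal is correct and follows essentially the same route as the paper: cite the untwisted relative decomposition $G(J)=U(J)\,T(J)\,U^{-}(J)$ (the paper uses 2.3 of Abe--Suzuki), intersect with $G_\sigma(R)$, and use the triviality of the pairwise intersections (i.e.\ uniqueness of the factorization) together with $\sigma$-invariance of $U(J)$, $T(J)$, $U^{-}(J)$ to conclude each factor is $\sigma$-fixed and hence lies in the corresponding twisted subgroup. The identification $U(J)\cap G_\sigma(R)=U_\sigma(J)$ that you flag as the delicate point is simply asserted in the paper, so your extra care there is not a deviation but a fleshing-out of the same argument.
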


\begin{proof}
    By $2.3$ of \cite{EA&KS}, we have $G(J) = U(J) T(J) U^-(J)$. Note that $G_\sigma (J) = G(J) \cap G_\sigma (R)$, that is, $G_\sigma(J) = \{ U(J) T(J) U^-(J) \} \cap G_\sigma (R)$. Also, we have $U_\sigma (J) = \{ x \in U(J) \mid \sigma(x) = x \}, U^{-}_\sigma (J) = \{ x \in U^{-}(J) \mid \sigma(x) = x \}$ and $T_\sigma (J) = T_\sigma (R) \cap G_\sigma (J) = T (J) \cap G_\sigma (R)$ (by the proof of Lemma \ref{lemma on T(J)}). Since $U_\sigma (J) \cap U^{-}_\sigma (J) = U_\sigma (J) \cap T_\sigma (J) = U^{-}_\sigma (J)\cap T_\sigma (J) = \{ 1 \}$, we have 
    $$G_\sigma (J) = U_\sigma(J) T_\sigma(J) U^{-}_\sigma(J).$$
    It is also clear that $G_\sigma (J) \subset G'_\sigma(R)$. 
\end{proof}

\begin{prop}\label{G=G'}
    \normalfont
    Let $R$ be a semi-local ring. Then $G_\sigma (R) = G^{0}_\sigma (R) = G'_\sigma (R)$. 
\end{prop}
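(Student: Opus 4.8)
The plan is to prove the three displayed groups coincide by a sandwich argument. First I would record the two easy inclusions $G'_\sigma(R) \subseteq G^0_\sigma(R) \subseteq G_\sigma(R)$: indeed $T_\sigma(R) \subseteq T_\pi(\Phi,R)$ and $E'_\sigma(R) \subseteq E_\pi(\Phi,R)$ both lie in $G^0_\pi(\Phi,R) = E_\pi(\Phi,R)T_\pi(\Phi,R)$ and in $G_\sigma(R)$, so their product lies in $G^0_\pi(\Phi,R)\cap G_\sigma(R) = G^0_\sigma(R)$, while $G^0_\sigma(R)\subseteq G_\sigma(R)$ is immediate from the definition. Thus everything reduces to proving the reverse inclusion $G_\sigma(R)\subseteq G'_\sigma(R)$. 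To this end I would set $J = rad(R)$, which is $\theta$-invariant since the Jacobson radical is preserved by every ring automorphism, and recall that, $R$ being semi-local, the Chinese Remainder Theorem gives $R/J \cong k_1 \times \cdots \times k_m$, a finite product of fields.

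Next I would establish the model case over the residue ring, namely $G_\sigma(R/J) = E'_\sigma(R/J)\,T_\sigma(R/J)$. Since $\theta$ permutes the factors $k_i$, I would group them into $\theta$-orbits, whose sizes divide $o(\theta)=o(\rho)$ and are therefore either $1$ or $o(\rho)$. An orbit consisting of a single fixed factor $k_i$ contributes a twisted group over a field, for which $G_\sigma(k_i)=E'_\sigma(k_i)T_\sigma(k_i)$ holds by the results recalled in Section~\ref{subsec:TCG_over_fields}. An orbit of size $o(\rho)$ contributes, via Proposition~\ref{prop:Chevalley as TChevalley} and its evident torus analogue, a group isomorphic to the untwisted $G(k_i)$ with elementary and torus subgroups corresponding to $E(k_i)$ and $T(k_i)$; since $G(k_i)=E(k_i)T(k_i)$ over a field, the factorization holds there as well. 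Assembling the orbits yields the asserted identity over $R/J$.

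Then comes the lifting and the conclusion. For $g\in G_\sigma(R)$ I write its image under $\phi\colon G_\sigma(R)\to G_\sigma(R/J)$ as $\phi(g)=\bar e\,\bar h$ with $\bar e\in E'_\sigma(R/J)$ and $\bar h\in T_\sigma(R/J)$. I would lift $\bar e$ to $e\in E'_\sigma(R)$ generator by generator, lifting each $x_{[\alpha]}(\bar t)$; this needs the root-coordinate reduction maps $R_\theta \to (R/J)_\theta$ and $\mathcal{A}(R)\to\mathcal{A}(R/J)$ to be surjective, which is exactly the computation in the proof of Lemma~\ref{A1&A2} applied to the $\theta$-invariant ideal $J$ (valid once $2$, resp.\ $3$, is invertible). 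I would lift $\bar h$ to $h\in T_\sigma(R)$ using that units lift along $R^*\to(R/J)^*$ for the semi-local ring $R$, performed $\theta$-equivariantly orbit-by-orbit on a $\mathbb{Z}$-basis of $\Lambda_\pi$ so as to land in $\text{Hom}_1(\Lambda_\pi,R^*)$ (cf.\ the description of $T_\sigma$ and Lemma~\ref{lemma on T(J)}). With such $e,h$ one has $eh\in G'_\sigma(R)$ and $\phi\big(g(eh)^{-1}\big)=1$, so $g(eh)^{-1}\in G_\sigma(J)$; but Proposition~\ref{G=UTV} gives $G_\sigma(J)=U_\sigma(J)T_\sigma(J)U^{-}_\sigma(J)\subseteq G'_\sigma(R)$. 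Hence $g=\big(g(eh)^{-1}\big)(eh)\in G'_\sigma(R)$, which closes the chain of equalities.

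The step I expect to be the main obstacle is the $\theta$-equivariant lifting in the last paragraph: both the surjectivity of the root-coordinate maps for types $A_1$ and $A_2$ (where the invertibility hypotheses on $2$ and $3$ enter) and, more delicately, lifting a self-conjugate character of $\Lambda_\pi$ through $R^*\to(R/J)^*$ while preserving self-conjugacy across the $\rho$-orbits of fundamental weights. Everything else — the easy inclusions, the Chinese Remainder decomposition, and the orbit-wise reduction to the field case handled by Proposition~\ref{prop:Chevalley as TChevalley} — is essentially bookkeeping.
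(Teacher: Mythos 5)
Your proof is correct and follows essentially the same route as the paper: reduce modulo $J=\mathrm{rad}(R)$, establish $G_\sigma(R/J)=G'_\sigma(R/J)$ over the product of residue fields via the Chinese Remainder Theorem, and conclude using $G_\sigma(J)\subset G'_\sigma(R)$ from Proposition~\ref{G=UTV}. The only differences are in presentation: the paper outsources the residue-ring step to Proposition~2.2 of \cite{KS2} where you argue it directly through Proposition~\ref{prop:Chevalley as TChevalley} and the field case, and you spell out the $\theta$-equivariant lifting of elementary and torus generators that the paper leaves implicit.
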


\begin{proof}
    Let $J = rad(R)$. Since $R$ is semi-local, it has finitely many maximal ideals, say $\mathfrak{m}_1, \dots, \mathfrak{m}_k$. Therefore, by the Chinese remainder theorem, we have $R/J = \prod_{i=1}^k R/\mathfrak{m}_i$. 
    Write $\bar{\mathfrak{m}}_i = \theta (\mathfrak{m}_i)$. Set 
    \[
        J_i = \begin{cases}
            \mathfrak{m}_i & \text{ if } \mathfrak{m}_i = \bar{\mathfrak{m}}_i, \\
            \mathfrak{m}_i \cap \bar{\mathfrak{m}}_i & \text{ if } \mathfrak{m}_i \neq \bar{\mathfrak{m}}_i \text{ and } o(\theta) = 2, \\
            \mathfrak{m}_i \cap \bar{\mathfrak{m}}_i \cap \bar{\bar{\mathfrak{m}}}_i & \text{ if } \mathfrak{m}_i \neq \bar{\mathfrak{m}}_i \text{ and } o(\theta) = 3. 
        \end{cases}
    \]
    By the proof of Proposition $2.2$ of \cite{KS2}, we have $G_\sigma(R/J_i) = G'_\sigma (R/J_i)$ (that proof only addresses the case where $o(\theta) = 2$. However, the proof for the case where $o(\theta) = 3$ follows a similar structure).
    
    Since $R/J = \prod_{i=1}^k R/\mathfrak{m}_i = \prod_{i=1}^l R/J_i$, we have $G_\sigma(R/J) = \prod_{i=1}^l G_\sigma(R/J_i).$ But then $$G_\sigma(R/J) = \prod_{i=1}^l G_\sigma (R/J_i) = \prod_{i=1}^l G'_\sigma (R/J_i) = G'_\sigma (R/J).$$ On the other hand, from Proposition \ref{G=UTV}, $G_\sigma(J) \subset G'_\sigma (R)$. Therefore $G_\sigma(R) \subset G'_\sigma (R)$. Hence $G'_\sigma(R) = G^{0}_\sigma (R) = G_\sigma(R)$, as desired. 
\end{proof}

\begin{cor}\label{G(R,J)=UTV}
    \normalfont
    Let $R$ be a semi-local ring and let $J$ be a $\theta$-invariant ideal of $R$ contained in $rad(R)$. Then 
    $$G_\sigma (R,J) = U_\sigma (J) T_\sigma (R,J) U^{-}_\sigma (J).$$
\end{cor}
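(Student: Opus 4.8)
The plan is to prove the two inclusions separately, with the reverse inclusion being essentially formal and the forward inclusion carried out by reducing to the principal congruence case already handled in Proposition~\ref{G=UTV}. For the inclusion $\supseteq$, observe that $G_\sigma(R,J) = \phi^{-1}(Z(G_\sigma(R/J)))$ is a subgroup of $G_\sigma(R)$, that $U_\sigma(J)$ and $U^{-}_\sigma(J)$ lie in $G_\sigma(J) \subseteq G_\sigma(R,J)$ (their generators are congruent to $1$ modulo $J$), and that $T_\sigma(R,J) = G_\sigma(R,J) \cap T_\sigma(R) \subseteq G_\sigma(R,J)$ by definition. Since all three factors sit inside the single group $G_\sigma(R,J)$, so does their product.

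For the forward inclusion, I would take $g \in G_\sigma(R,J)$ and set $z := \phi(g) \in Z(G_\sigma(R/J))$. The first key step is to lift $z$ to a diagonal element $t_0 \in T_\sigma(R,J)$ with $\phi(t_0)=z$. By Theorem~\ref{thm:KS3} (applied over $R/J$) the centre $Z(G_\sigma(R/J)) = \mathrm{Hom}_1(\Lambda_\pi/\Lambda_r,(R/J)^{*})$ consists of torus elements $h(\psi)$ for self-conjugate characters $\psi$ trivial on $\Lambda_r$. Using Lemma~\ref{lemma on T(J)}(b), which describes $T_\sigma(R,J)$ as the group of $h(\chi)$ with $\chi = \bar{\chi}_\sigma$ and $\chi(\alpha) \equiv 1 \pmod{J}$ for all $\alpha \in \Phi$, it suffices to lift $\psi$ to a self-conjugate character $\chi \in \mathrm{Hom}(\Lambda_\pi,R^{*})$ reducing to $\psi$ modulo $J$; the congruence $\chi(\alpha)\equiv 1$ is then automatic since $\psi$ is trivial on roots. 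Once $t_0$ is found, $\phi(g t_0^{-1}) = z z^{-1} = 1$, so $g t_0^{-1} \in G_\sigma(J)$, and Proposition~\ref{G=UTV} gives $g t_0^{-1} = u\, t_1\, v$ with $u \in U_\sigma(J)$, $t_1 \in T_\sigma(J)$, $v \in U^{-}_\sigma(J)$.

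It then remains to reassemble $g = u\, t_1\, v\, t_0$ into the required order. Writing $v\,t_0 = t_0\,(t_0^{-1} v t_0)$, the element $t_0^{-1} v t_0$ lies again in $U^{-}_\sigma(J)$: conjugation by the torus element $t_0 = h(\chi)$ acts on each generator $x_{-[\alpha]}(s)$ by $s \mapsto \chi(-\alpha)\cdot s$ (Proposition~\ref{prop h(chi)xh(chi)^-1}), and since $\chi(-\alpha) \in R^{*}$ while $s \in J_{[\alpha]}$, the ideal condition is preserved (including the $A_2$ case, where $r\cdot(s_1,s_2)=(rs_1,r\bar r s_2)$ keeps $\mathcal{A}(J)$ invariant). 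Moreover $t_1 t_0 \in T_\sigma(R,J)$, because $T_\sigma(J) = G_\sigma(J)\cap T_\sigma(R) \subseteq T_\sigma(R,J)$ and $T_\sigma(R,J)$ is a group; hence $g = u\,(t_1 t_0)\,(t_0^{-1} v t_0) \in U_\sigma(J)\,T_\sigma(R,J)\,U^{-}_\sigma(J)$, as desired.

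I expect the genuine obstacle to be the $\theta$-equivariant character lift in the second paragraph. The existence of \emph{some} lift of $z$ to $R^{*}$-values rests on the surjectivity of $R^{*} \to (R/J)^{*}$, which holds because $J \subseteq \mathrm{rad}(R)$; the difficulty is arranging the lift to be self-conjugate. I would handle this by choosing generators of $\Lambda_\pi$ adapted to the $\rho$-orbit structure, lifting $\psi$ freely on orbit representatives and propagating by $\theta$, with the $\rho$-fixed generators requiring a unit lift inside $R_\theta^{*}$; the relevant surjectivity for the fixed part is exactly of the type established in Lemma~\ref{A1&A2} from $J \subseteq \mathrm{rad}(R)$ and semilocality of $R$. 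All remaining manipulations are routine applications of the normality and product structures proved in Propositions~\ref{lavidecomposition},~\ref{G=UTV}, and~\ref{G=G'}.
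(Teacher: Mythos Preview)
Your argument is correct, but it takes a different route from the paper. You reduce to Proposition~\ref{G=UTV} by directly lifting the central element $\phi(g)\in Z(G_\sigma(R/J))=\mathrm{Hom}_1(\Lambda_\pi/\Lambda_r,(R/J)^*)$ to an element $t_0\in T_\sigma(R,J)$ via an explicit $\theta$-equivariant character lift, which (as you correctly flag) is the one nontrivial step and relies on the averaging trick with $1/2\in R$ together with $J\subset\mathrm{rad}(R)$. The paper instead invokes Proposition~\ref{G=G'} to write any $z\in G_\sigma(R,J)$ as $z=xy$ with $x\in E'_\sigma(R)$ and $y\in T_\sigma(R)$; then $\phi(x)=\phi(z)\phi(y)^{-1}$ lies in $T_\sigma(R/J)\cap E'_\sigma(R/J)=H'_\sigma(R/J)$, and one lifts $\phi(x)$ to some $h\in H'_\sigma(R)$ simply by lifting the unit parameters in each generator $h_{[\alpha]}(t)$ (using $R^*\to (R/J)^*$ surjective). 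This sidesteps the abstract character-lifting problem entirely and gives $z=(xh^{-1})(hy)\in G_\sigma(J)\,T_\sigma(R,J)$ directly. Your approach is more self-contained (it does not need the global decomposition $G_\sigma(R)=E'_\sigma(R)T_\sigma(R)$), while the paper's is shorter once that decomposition is available; your final rearrangement step via torus conjugation is also unnecessary in the paper's version, since $G_\sigma(J)\,T_\sigma(R,J)$ already equals $U_\sigma(J)\,T_\sigma(R,J)\,U^{-}_\sigma(J)$ by Proposition~\ref{G=UTV} and the fact that $T_\sigma(R,J)$ normalizes each factor.
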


\begin{proof}
    Since $G_\sigma (J)$ is normalized by $T_\sigma (R,J)$, we conclude that $G_\sigma (J) T_\sigma (R,J)$ is a subgroup of $G_\sigma (R,J)$. On the other hand, by the above proposition, we have $G_\sigma(R) = G'_\sigma (R) = E'_\sigma (R) T_\sigma(R)$. Let $z \in G_\sigma (R,J) \subset G_\sigma(R)$. Then there exist $x \in E'_\sigma (R)$ and $y \in T_\sigma (R)$ such that $z=xy$. Now consider the canonical map $\phi: G_\sigma(R) \longrightarrow G_\sigma(R/J)$. Then $\phi (z) = \phi (x) \phi(y) \in Z(G_\sigma(R/J))$. Since $\phi(y) \in T_\sigma(R/J)$, we obtain $\phi(x) \in T_\sigma(R/J)$. To be precious, $\phi(x) \in H'_\sigma(R/J)$ as $x \in E'_\sigma (R)$. But then there exist $h \in H'_\sigma (R)$ such that $\phi (h) = \phi (x)$, that is, $xh^{-1} \in G_\sigma (J)$. Hence $hy \in T_\sigma(R,J)$. Write $x'= xh^{-1}$ and $y'= h y$. Then $z= xy = x' y' \in G_\sigma (J) T_\sigma(R,J)$. Therefore, $G_\sigma(R,J) = G_\sigma (J) T_\sigma(R,J) = U_\sigma (J) T_\sigma (R,J) U^{-}_\sigma (J)$, the last equality is due to Proposition \ref{G=UTV}.
\end{proof}


\section{Commutator Formulae}\label{sec:E(R,J)}

In this section, we present several important commutator formulae. Additionally, we investigate properties of the subgroup $E'_\sigma(R, J)$. Similar properties have been studied by L. N. Vaserstein in \cite{LV} for the case of Chevalley groups. Using his ideas, we will state and prove analogous properties for twisted Chevalley groups. For the remainder of this section, we adopt the following conventions.
 
\begin{conv}
    Assume that $\Phi_\rho$ is irreducible and the rank of $\Phi_\rho > 1$. Any ideal $J$ of $R$ is $\theta$-invariant. If $o(\theta) = 2$ then assume that $1/2 \in R$ and if $o(\theta) = 3$ then assume that $1/2 \in R$ and $1/3 \in R$. 
\end{conv}


\begin{prop}\label{normal}
    For any ideal $J$ of $R$, the subgroup $E'_\sigma (R,J)$ of $G_\sigma (R)$ is normal. In other words,
    \[
        [G_\sigma (R), E'_\sigma (R,J)] \subset E'_\sigma (R,J).
    \]
\end{prop}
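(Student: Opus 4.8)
The plan is to deduce normality from the already-established absolute normality $E'_\sigma(R)\trianglelefteq G_\sigma(R)$ (Corollary~\ref{cor:KS2}) together with a localization at the maximal ideals of $R_\theta$. First I would record the reduction. Since $E'_\sigma(R,J)$ is by definition normal in $E'_\sigma(R)$, and $E'_\sigma(R)$ is normal in $G_\sigma(R)$, a general element of $E'_\sigma(R,J)$ has the form $\prod_i e_i\,x_{[\alpha_i]}(t_i)^{\pm1}\,e_i^{-1}$ with $e_i\in E'_\sigma(R)$ and $t_i\in J_{[\alpha_i]}$; conjugating by $g\in G_\sigma(R)$ and writing $e_i'=g e_i g^{-1}\in E'_\sigma(R)$ reduces the whole problem to showing
\[
g\,x_{[\alpha]}(t)\,g^{-1}\in E'_\sigma(R,J)\qquad(g\in G_\sigma(R),\ [\alpha]\in\Phi_\rho,\ t\in J_{[\alpha]}).
\]
Note that this conjugate automatically lies in $G_\sigma(J)$, since $x_{[\alpha]}(t)\equiv 1\pmod J$.

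For the local input, fix a maximal ideal $\m$ of $R_\theta$ and pass to the localization $S^{-1}R$ with $S=R_\theta\setminus\m$; because every element of $R$ is integral of bounded degree over $R_\theta$, this ring is semilocal and $\theta$ extends to it. By Proposition~\ref{G=G'} the image of $g$ factors as $e\,h(\chi)$ with $e\in E'_\sigma(S^{-1}R)$ and $h(\chi)\in T_\sigma(S^{-1}R)$, and then Proposition~\ref{prop h(chi)xh(chi)^-1} gives
\[
g\,x_{[\alpha]}(t)\,g^{-1}=e\,x_{[\alpha]}\!\big(\chi(\alpha)\cdot t\big)\,e^{-1}.
\]
Since $J$ is an ideal, the parameter $\chi(\alpha)\cdot t$ again lies in $(S^{-1}J)_{[\alpha]}$, so over $S^{-1}R$ the conjugate lies in $E'_\sigma(S^{-1}R,\,S^{-1}J)$.

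The passage back to $R$ is the crux, and I would organise it as a dilation-and-patching argument. The element $e$ and the scalar $\chi(\alpha)$ involve only finitely many denominators from $S$, so a single $d\in R_\theta\setminus\m$ and an exponent $N$ should suffice to clear them, yielding $g\,x_{[\alpha]}(d^{N}\!\cdot t)\,g^{-1}\in E'_\sigma(R,J)$; this is the dilation lemma, proved by rewriting the identity above over $R$ using the commutator formulas of Subsection~\ref{subsec:CheComm} and the $R$-action on the parameter groups $R_{[\alpha]}$. Fixing $g$ and $[\alpha]$ and setting
\[
I=\{\,d\in R_\theta \mid g\,x_{[\alpha]}(d\cdot t)\,g^{-1}\in E'_\sigma(R,J)\ \text{for all}\ t\in J_{[\alpha]}\,\},
\]
one checks from $x_{[\alpha]}((d+d')\cdot t)=x_{[\alpha]}(d\cdot t)\,x_{[\alpha]}(d'\cdot t)$ (and its twisted analogue) that $I$ is an ideal of $R_\theta$; the dilation step shows $I\not\subseteq\m$ for every maximal $\m$, whence $I=R_\theta$, so $1\in I$ and the claim follows.

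The main obstacle is the $A_2$ case. There the parameter set is $J_{[\alpha]}=\mathcal{A}(J)$ with the twisted group law $(t,u)\oplus(t',u')=(t+t',\,u+u'+\bar t t')$ and the quadratic scaling $d\cdot(t,u)=(dt,\,d^{2}u)$ for $d\in R_\theta$, so neither additivity of $I$ in $d$ nor the denominator clearing is formal. A direct computation shows that $x_{[\alpha]}((d+d')\cdot(t,u))$ and $x_{[\alpha]}(d\cdot(t,u))\,x_{[\alpha]}(d'\cdot(t,u))$ differ by the factor $x_{[\alpha]}\!\big(0,\,dd'(u-\bar u)\big)$, a generator of type (a$_2$--ii), which must be reabsorbed using the relevant commutator relations of Subsection~\ref{subsec:CheComm}. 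Verifying that $I$ is genuinely an ideal and that the dilation lemma survives this twisting is where the bulk of the computation will lie; the types $A_1$, $A_1^2$, $A_1^3$ are comparatively routine.
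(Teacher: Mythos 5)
Your route is genuinely different from the paper's. The paper proves this proposition with a doubling trick: it forms the ring $R'=\{(r,s)\in R\times R\mid r-s\in J\}$ with the ideal $J'=\{(r,0)\mid r\in J\}$, embeds $E'_\sigma(R,J)$ into $E'_\sigma(R',J')$ via $x\mapsto(x,1)$, proves the identity $E'_\sigma(R',J')=E'_\sigma(R')\cap G_\sigma(J')$, and then reads off relative normality over $R$ from the \emph{absolute} normality of $E'_\sigma(R')$ in $G_\sigma(R')$ (Corollary~\ref{cor:KS2}) applied to the doubled ring. No localization is needed. Your local--global strategy is the one the paper reserves for the harder statements later in the chapter (Proposition~\ref{general:[x,g] in E(R,J)} and the proof of Theorem~\ref{Ch5_mainthm}), so it is viable in principle, but as written it has a real gap.

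The gap is the dilation step. The identity $\psi(g)\,x_{[\alpha]}(t)\,\psi(g)^{-1}=e\,x_{[\alpha]}(\chi(\alpha)\cdot t)\,e^{-1}$ lives in $G_\sigma(S^{-1}R)$, and the localization map $\psi\colon G_\sigma(R)\to G_\sigma(S^{-1}R)$ is in general not injective. Clearing denominators in the parameters produces two elements of $G_\sigma(R)$ whose images under $\psi$ agree; it does not show that they are equal, so ``rewriting the identity over $R$ using the commutator formulas'' does not yield $g\,x_{[\alpha]}(d^{N}\cdot t)\,g^{-1}\in E'_\sigma(R,J)$. The standard repair --- and the one the paper itself uses at the corresponding point --- is to run the computation over $R[X]$ with $t$ replaced by $Xt$, form the discrepancy $\epsilon(X)$ between the commutator and the explicit product of generators of $E'_\sigma(R[X],J[X])$, observe that $\epsilon(0)=1$ and $\psi(\epsilon(X))=1$, and invoke Taddei's lemma (Lemma~\ref{lemma:GT}) to find $s\in S_\theta$ with $\epsilon(sX)=1$; only after specializing $X=1$ do you get the dilation statement that feeds your ideal $I$. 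Without this injectivity input the patching argument has nothing to patch. Secondarily, in the $A_2$ case the set $I$ is indeed not an ideal under the action $d\cdot(t,u)=(dt,d\bar d\,u)$; the paper's resolution (in the proof of Theorem~\ref{Ch5_mainthm}, Case~C) is to split $\mathcal{A}(J)$ via Lemma~\ref{A(R)} into the subgroup generated by the elements $(r,r\bar r/2)$ and the central part $\{(0,u)\mid \bar u=-u\}$ and to run two separate local--global arguments; this must actually be carried out, not merely flagged. If you repair both points your proof goes through, but it is considerably heavier than the two-line deduction available from the doubled ring.
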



\begin{proof}
    First, consider the case where $J = R$. Then, by definition, $E'_{\sigma}(R, J) = E'_\sigma (R)$. The result in this case follows from the fact that $E'_\sigma (R)$ is a normal subgroup of $G_{\sigma}(R)$ (see Corollary~\ref{cor:KS2}). 

Now suppose $J \subsetneq R$. Let $h \in G_\sigma (R)$ and $g \in E'_\sigma (R, J)$. 
We want to prove that $hgh^{-1} \in E'_\sigma (R,J)$. 
We consider the ring $R':= \{(r,s) \in R \times R \mid r-s \in J\}$ and its ideal $J':= \{ (r,0) \in R \times R \mid r \in J \}$.
The automorphism $\theta$ of the ring $R$ can be naturally induced to an automorphism of $R'$, and we denote it by the same letter $\theta$.
Therefore, the group $G_\sigma(R')$ makes sense.
Consider an element $h'':= (h,h)$ of the group $G_\sigma(R') \subset G_\sigma (R) \times G_{\sigma} (R).$ 
Observe that $E'_\sigma (R, J)$ is embedded into the group $E'_\sigma (R', J')$ by $x \mapsto x':=(x,1).$ (This can be seen as follows: There is a natural embedding from $E'_\sigma (J)$ into $E'_\sigma (J')$ given by $x \mapsto (x,1)$. Now any $y \in E'_\sigma(R,J)$ is can be written as a product of the form $\prod_{i=1}^n g_i x_i g_i^{-1}$, where $x_i \in E'_{\sigma}(J)$ and $g_i \in E'_\sigma(R)$. But then $(y,1) = (\prod_{i=1}^n g_i x_i g_i^{-1},1) = \prod_{i=1}^n (g_i,g_i) (x_i,1) (g_i,g_i)^{-1} \in E'_\sigma(R',J')$, as desired.)

Now we claim that $E'_\sigma (R', J') = E'_\sigma (R') \cap G_\sigma (J')$. Clearly, by definition, $E'_\sigma (R', J') \subset E'_\sigma (R') \cap G_\sigma (J')$. For converse, let $$ x = \prod_{i=1}^{n} x_{[\alpha_i]} (t_i) \in E'_\sigma(R') \cap G_\sigma (J')$$ where $t_i \in R'_{[\alpha_i]}$. 
For each $t_i \in R'_{[\alpha_i]}$, choose elements $s_i \in R'_{[\alpha_i]}$ and $u_i \in J'_{[\alpha_i]}$ as follows:
\begin{enumerate}
    \item If $[\alpha_i] \sim A_1$, $A_1^2$, or $A_1^3$ and $t_i = (\alpha_i, \beta_i)$, then set $s_i = (\beta_i, \beta_i)$ and $u_i = (\alpha_i - \beta_i, 0)$. It is clear that $t_i = s_i + u_i$.
    \item If $[\alpha_i] \sim A_2,$ $t_i = (\alpha_i, \beta_i) \in \mathcal{A}(R')$, $\alpha_i = (a_1, a_2) \in R'$ and $\beta_i = (b_1, b_2) \in R'$, then set $s_i = (\gamma_i, \delta_i) \in \mathcal{A}(R')$ and $u_i = (\mu_i, \nu_i)  \in \mathcal{A}(R')$, where $\gamma_i = (a_2,a_2) \in R', \delta_i = (b_2,b_2) \in R', \mu_i = (a_1 - a_2, 0) \in R'$ and $\nu_i = (b_1 - b_2 - \overline{a_2} (a_1 - a_2), 0) \in R'$. Clearly, $t_i = s_i \oplus u_i$.
\end{enumerate}
Set $$y_k = \prod_{i=1}^{k} x_{[\alpha_i]} (s_i) \in E'_\sigma(R')$$ for $0 \leq k \leq n.$ Clearly, $y_0 = 1$ (by the definition). 
We claim that $y_n = 1$. Since $x \in G_\sigma(J')$, we have $x \equiv 1$ (mod $J'$). But then $y_n \equiv 1$ (mod $J'$), that is, $\prod_{i=1}^n x_{[\alpha_i]}(s_i + J') = 1$ in $E'_\sigma (R'/J')$ (the notion of $s_i + J'$ is clear even when $[\alpha] \sim A_2$). Note that there is a natural embedding from $R'/J'$ to $R/J \times R$ which induces an embedding from the group $E'_\sigma (R'/J')$ to the group $E'_{\sigma} (R/J \times R) \cong E'_{\sigma}(R/J) \times E'_{\sigma}(R)$. 
Under this embedding, $$\prod_{i=1}^n (x_{[\alpha_i]}(\beta_i+J),x_{[\alpha_i]}(\beta_i)) = (\prod_{i=1}^n x_{[\alpha_i]}(\beta_i+J), \prod_{i=1}^n x_{[\alpha_i]}(\beta_i)) = (1,1)$$ in $E'_{\sigma}(R/J) \times E'_{\sigma}(R)$. In particular, $\prod_{i=1}^n x_{[\alpha_i]}(\beta_i) = 1$ in $E'_{\sigma}(R)$. Thus
\begin{align*}
    y_n = \prod_{i=1}^n x_{[\alpha_i]}(s_i) = \prod_{i=1}^n x_{[\alpha_i]}(\beta_i, \beta_i) = (\prod_{i=1}^n x_{[\alpha_i]}(\beta_i), \prod_{i=1}^n x_{[\alpha_i]}(\beta_i)) = (1,1) = 1.
\end{align*}
This proves our claim. Finally,
$$x = \prod_{i=1}^{n} x_{[\alpha_i]} (s_i) x_{[\alpha_i]} (u_i) = \prod_{i=1}^{n} y_{i-1}^{-1}y_i x_{[\alpha_i]} (u_i) = y_0^{-1} \Bigg( \prod_{i=1}^{n} y_i x_{[\alpha_i]} (u_i) y_{i}^{-1} \Bigg) y_n \in E'_{\sigma} (R', J'),$$ as desired.

Again by Corollary \ref{cor:KS2}, $E'_\sigma (R')$ is a normal subgroup of $G_\sigma (R')$. So $h'' g' (h'')^{-1} \in E'_\sigma (R'),$ where $g' = (g,1) \in E'_\sigma (R')$. On the other hand, $h'' g' (h'')^{-1} = (hgh^{-1}, 1) \in G_\sigma (J')$. Therefore $h''g'(h'')^{-1} \in E'_\sigma (R') \cap G_\sigma (J') = E'_\sigma (R', J'),$ hence $hgh^{-1} \in E'_\sigma (R, J)$. Thus $E'_\sigma (R, J)$ is normal in $G_\sigma(R)$. 
\end{proof}

We derive the following corollary from the proof of the above Proposition.

\begin{cor}\label{mixcom}
    $[E'_\sigma (R), G_\sigma (J)] \subset E'_\sigma (R,J)$.
\end{cor}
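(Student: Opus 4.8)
The plan is to reuse, essentially verbatim, the double-ring construction built in the proof of Proposition~\ref{normal}, but to feed it a different pair of elements. Recall from that proof the ring $R' = \{(r,s) \in R \times R \mid r - s \in J\}$, its ideal $J' = \{(r,0) \mid r \in J\}$, and the two facts established there: the characterization $E'_\sigma(R', J') = E'_\sigma(R') \cap G_\sigma(J')$, and (via Corollary~\ref{cor:KS2} applied to $R'$, which inherits $1/2$ and, when needed, $1/3$, together with an induced $\theta$) the normality of $E'_\sigma(R')$ in $G_\sigma(R')$.

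Now fix $g \in E'_\sigma(R)$ and $h \in G_\sigma(J)$; the goal is $[g,h] = ghg^{-1}h^{-1} \in E'_\sigma(R,J)$. I would embed the two elements into $G_\sigma(R')$ in two different ways. For $g$ use the \emph{diagonal} embedding $g' := (g,g)$: since $r \mapsto (r,r)$ is a ring homomorphism $R \to R'$, it maps $E'_\sigma(R)$ into $E'_\sigma(R')$, so $g' \in E'_\sigma(R')$. For $h$ use the embedding with trivial second coordinate, $h' := (h,1)$: because $h \equiv 1 \pmod J$ we have $(h-1,0)$ with entries in $J$, so $h' \in G_\sigma(R')$ and in fact $h' \in G_\sigma(J')$. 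A direct multiplication then gives the crucial identity
\[
    [g', h'] = (g,g)(h,1)(g^{-1},g^{-1})(h^{-1},1) = (ghg^{-1}h^{-1},\, 1) = ([g,h],\,1),
\]
the second coordinate collapsing to $1$.

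Next I would locate $[g',h']$ inside both subgroups appearing in the characterization. On one hand $[g',h'] = g'\,(h' (g')^{-1} (h')^{-1})$ lies in $E'_\sigma(R')$, since $(g')^{-1} \in E'_\sigma(R')$ and normality of $E'_\sigma(R')$ in $G_\sigma(R')$ makes its $h'$-conjugate stay in $E'_\sigma(R')$. On the other hand, $G_\sigma(J)$ is normal in $G_\sigma(R)$ (being $\ker\phi$), so $[g,h] \in G_\sigma(J)$, whence $([g,h],1) \equiv (1,1) \pmod{J'}$ and therefore $[g',h'] \in G_\sigma(J')$. Combining, $[g',h'] \in E'_\sigma(R') \cap G_\sigma(J') = E'_\sigma(R',J')$.

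Finally I would push the result back down. The first-coordinate projection $\pi_1 \colon R' \to R$, $(r,s) \mapsto r$, is a ring homomorphism sending $J'$ onto $J$; it therefore maps the generators $x_{[\alpha]}(t)$ $(t \in J'_{[\alpha]})$ of $E'_\sigma(J')$ to generators of $E'_\sigma(J)$ and $E'_\sigma(R')$ into $E'_\sigma(R)$, hence carries the normal subgroup $E'_\sigma(R',J')$ into $E'_\sigma(R,J)$. Applying $\pi_1$ to $[g',h'] = ([g,h],1)$ yields $[g,h] \in E'_\sigma(R,J)$, as desired. I do not anticipate a serious obstacle here: all the genuine work — the identity $E'_\sigma(R',J') = E'_\sigma(R') \cap G_\sigma(J')$ and the applicability of Corollary~\ref{cor:KS2} over $R'$ — is already discharged in Proposition~\ref{normal}. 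The only point demanding care is the deliberate asymmetry of the two embeddings (diagonal for $g$, trivial-second-coordinate for $h$), which is exactly what forces the commutator to land in $G_\sigma(J')$ while remaining elementary.
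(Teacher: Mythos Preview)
Your proof is correct and follows essentially the same approach as the paper: the same doubling ring $R'$ with ideal $J'$, the diagonal embedding for the elementary element, the $(h,1)$ embedding for the congruence element, and the identification $E'_\sigma(R',J') = E'_\sigma(R') \cap G_\sigma(J')$ from Proposition~\ref{normal}. The paper's version is terser (it simply notes that both $E'_\sigma(R')$ and $G_\sigma(J')$ are normal in $G_\sigma(R')$, so their commutator lies in the intersection), but the argument is the same.
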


\begin{proof} 
    Take any $h \in E'_{\sigma} (R)$ and $g \in G_\sigma (J)$. Define, as in proof of Proposition~\ref{normal}, $h'' = (h,h) \in E'_{\sigma} (R')$ and $g' = (g,1) \in G_{\sigma} (J')$. Then $[h'',g'] \in E'_\sigma (R') \cap G_\sigma (J') = E'_\sigma (R', J')$ (as $E'_\sigma (R')$ and $G_\sigma (J')$ are normal subgroups of $G_\sigma (R')$). Since $[h'',g'] = ([h,g],1)$, we have $[h,g] \in E'_\sigma(R,J).$ Thus, $[E'_\sigma (R), G_\sigma (J)] \subset E'_\sigma (R,J)$.
\end{proof}


\begin{prop}\label{genofE(R,I)}
    For any ideal $J$ of $R$, the subgroup $E'_\sigma (R,J)$ is generated by elements of the form $x_{[\alpha]}(r)x_{-[\alpha]}(u)x_{[\alpha]}(r)^{-1}$ with $[\alpha] \in \Phi_\rho, r \in R_{[\alpha]}$ and $u \in J_{[\alpha]}$.
\end{prop}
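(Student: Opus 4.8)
The goal is to show that $E'_\sigma(R,J)$ is generated by the conjugates $x_{[\alpha]}(r)x_{-[\alpha]}(u)x_{[\alpha]}(r)^{-1}$ with $[\alpha]\in\Phi_\rho$, $r\in R_{[\alpha]}$ and $u\in J_{[\alpha]}$. Let me denote by $H$ the subgroup of $E'_\sigma(R)$ generated by all such elements. One inclusion is immediate: each generator $x_{[\alpha]}(r)x_{-[\alpha]}(u)x_{[\alpha]}(r)^{-1}$ is a conjugate, by an element of $E'_\sigma(R)$, of the generator $x_{-[\alpha]}(u)\in E'_\sigma(J)$; since $E'_\sigma(R,J)$ is by definition the normal closure of $E'_\sigma(J)$ in $E'_\sigma(R)$, each such generator lies in $E'_\sigma(R,J)$, hence $H\subseteq E'_\sigma(R,J)$. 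The substance of the proposition is the reverse inclusion $E'_\sigma(R,J)\subseteq H$.

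For that, the natural strategy is to prove that $H$ is already normal in $E'_\sigma(R)$; since $H$ visibly contains every generator $x_{[\alpha]}(u)$ of $E'_\sigma(J)$ (take $r=0$), normality of $H$ forces $E'_\sigma(R,J)\subseteq H$, because $E'_\sigma(R,J)$ is the \emph{smallest} normal subgroup of $E'_\sigma(R)$ containing $E'_\sigma(J)$. So the plan reduces to checking that $H$ is stable under conjugation by the generators $x_{[\gamma]}(s)$ of $E'_\sigma(R)$ (for all $[\gamma]\in\Phi_\rho$, $s\in R_{[\gamma]}$), since these generate $E'_\sigma(R)$ by the corollary to Proposition~\ref{lemma 62 of RS}. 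Concretely, I would fix a generator $g=x_{[\alpha]}(r)x_{-[\alpha]}(u)x_{[\alpha]}(r)^{-1}\in H$ and compute $x_{[\gamma]}(s)\,g\,x_{[\gamma]}(s)^{-1}$, aiming to rewrite it as a product of generators of $H$.

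The engine for this rewriting is the Chevalley commutator formulae of Subsection~\ref{subsec:CheComm}, which express $[x_{[\gamma]}(s),x_{[\delta]}(v)]$ as a product of root elements $x_{[\epsilon]}(\cdot)$ with $[\epsilon]=i[\gamma]+j[\delta]$. The key observation to exploit is that whenever $[\delta]$ is one of $\pm[\alpha]$ and the coefficient $v$ lies in $J_{[\delta]}$, every resulting factor $x_{[\epsilon]}(\cdot)$ again carries an argument in $J_{[\epsilon]}$ (since $J$ is an ideal and the commutator coefficients are $R$-bilinear in the entries, at least one of which comes from $J$); such factors lie in $E'_\sigma(J)\subseteq H$. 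The case analysis splits according to whether $[\gamma]=\pm[\alpha]$ or $[\gamma]\neq\pm[\alpha]$. When $[\gamma]=[\alpha]$ one uses $x_{[\alpha]}(s)x_{[\alpha]}(r)=x_{[\alpha]}(s+r)$ (or the analogous additivity in $\mathcal{A}(R)$ for the $A_2$ case) to absorb $s$; when $[\gamma]=-[\alpha]$ the element $\langle\mathfrak X_{[\alpha]},\mathfrak X_{-[\alpha]}\rangle$ is handled via the $SL_2$/$SL_3$ reduction of Propositions~\ref{prop:E_2 in E_alpha}, \ref{lemmaW4'}; and when $[\gamma]\neq\pm[\alpha]$ the commutators move $x_{[\gamma]}(s)$ past the three factors of $g$, producing correction terms that are root elements with $J$-arguments, i.e.\ elements of $H$.

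The main obstacle will be the bookkeeping in the $A_2$ case ($\Phi_\rho\sim{}^2A_{2n}$) and the long commutator formulae (d), (e), (f), (g), where a single commutator expands into three or four factors with delicate structure-constant coefficients and the twisted arithmetic on $\mathcal{A}(R)$; one must verify carefully that each emerging factor genuinely has its argument in the appropriate $J_{[\epsilon]}$ and that the group operation $\oplus$ respects the ideal $\mathcal{A}(J)$. A cleaner alternative, which I would pursue in parallel, is to mimic Vaserstein's argument (as used in the proof of Proposition~\ref{normal}) via the auxiliary ring $R'=\{(r,s)\in R\times R\mid r-s\in J\}$ and the embedding $x\mapsto(x,1)$: since over $R'$ the analogous statement about $E'_\sigma(R')$ may be reduced to the $J'=$ diagonal-complement situation, one can transport the generation statement back to $R$ and avoid some of the direct commutator grinding. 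Either way, the conceptual heart is the normality-plus-minimality argument; the technical heart is confirming that the Chevalley commutator formulae keep all correction terms inside $E'_\sigma(J)$.
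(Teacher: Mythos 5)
Your overall architecture is the same as the paper's: let $H$ be the subgroup generated by the conjugates $x_{[\alpha]}(r)x_{-[\alpha]}(u)x_{[\alpha]}(r)^{-1}$, observe $H\subseteq E'_\sigma(R,J)$ by normality of the latter, note $E'_\sigma(J)\subseteq H$, and reduce everything to showing that $H$ is normal in $E'_\sigma(R)$, checked on generators $x_{[\gamma]}(s)$. Your treatment of the case $[\gamma]\neq\pm[\alpha]$ via the Chevalley commutator formulae (all correction terms have arguments in $J$ because no positive combination of $-[\alpha]$, $[\gamma]$ and $i[\alpha]+j[\gamma]$ vanishes) is exactly the paper's Case~A, and the case $[\gamma]=[\alpha]$ is trivial in both accounts.

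The gap is in the case $[\gamma]=-[\alpha]$, which is the only genuinely hard step. You propose to handle $x_{-[\alpha]}(s)\,g\,x_{-[\alpha]}(s)^{-1}$ inside the rank-one subgroup $\langle\mathfrak X_{[\alpha]},\mathfrak X_{-[\alpha]}\rangle$ via the $SL_2$/$SL_3$ reduction of Propositions~\ref{prop:E_2 in E_alpha} and~\ref{lemmaW4'}. This cannot work over a general commutative ring: the element in question is a conjugate of $x_{-[\alpha]}(u)$ by $x_{-[\alpha]}(s)x_{[\alpha]}(r)$, and to rewrite it as a product of the allowed generators one would need a Gauss-type decomposition of $x_{-[\alpha]}(s)x_{[\alpha]}(r)$ in $E_2(R)$ (resp.\ $E_3(R)$), which requires invertibility of elements like $1+rs$. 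That is available when $J\subseteq\mathrm{rad}(R)$ (this is exactly Lemma~\ref{inUHV}) but not for an arbitrary ideal $J$, and the rank-one analogue of the proposition is false in general. The paper's resolution is the missing idea: since $\mathrm{rank}\,\Phi_\rho>1$, one chooses $[\gamma']$ so that $[\alpha],[\gamma']$ span a connected rank-two subsystem, writes $x_{-[\alpha]}(u)=[x_{-([\alpha]+[\gamma'])}(u_1),x_{[\gamma']}(u_2)]\,h'$ with $u_1\in J_{[\alpha]+[\gamma']}$, $u_2\in R_{[\gamma']}$ and $h'\in U''_-(J)$ (Lemma~\ref{lemma:u_1,u_2 exists}), conjugates each factor separately by $x_{-[\alpha]}(s)x_{[\alpha]}(r)$ to land in $U''_-(J)$ and $U''_+(R)$, and then invokes $[U''_-(J),U''_+(R)]\subseteq H$ (Lemma~\ref{lemma:H}). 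Your fallback suggestion of transporting the statement through the auxiliary ring $R'=\{(r,s)\mid r-s\in J\}$ does not obviously supply this either: that device proves normality of $E'_\sigma(R,J)$ in $G_\sigma(R)$, not the generation statement, and you would still face the same rank-one obstruction after the transfer. Without the rank-two escape, the proof is incomplete.
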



\begin{proof}
    Let $H$ be the subgroup of $E'_\sigma (R,J)$ generated by all $x_{[\alpha]}(r) x_{-[\alpha]}(u) x_{[\alpha]}(r)^{-1}$, where $[\alpha] \in \Phi_\rho, r \in R_{[\alpha]},$ and $u \in J_{[\alpha]}$. We aim to prove that $H = E'_\sigma (R,J)$. Since $E'_\sigma (J) \subset H$, it suffices to show that $H$ is a normal subgroup of $E'_\sigma (R)$. To demonstrate this, we need to verify that 
    $$ g = x_{[\beta]}(s) x_{[\alpha]}(r) x_{-[\alpha]}(u) x_{[\alpha]}(r)^{-1} x_{[\beta]}(s)^{-1} \in H $$ for all $[\alpha], [\beta] \in \Phi_\rho, r \in R_{[\alpha]}, s \in R_{[\beta]}$, and $u \in J_{[\alpha]}$.

\vspace{2mm}

\noindent \textbf{Case A. $[\alpha] \neq \pm [\beta]$.} For $[\gamma], [\delta] (\neq -[\gamma]) \in \Phi_\rho$, we have $$[x_{[\gamma]}(a), x_{[\delta]}(b)] = \prod x_{i [\gamma] + j [\delta]} (f_{i,j}(a,b)),$$ where $f_{i,j}$ is function of $a$ and $b$ with the property that $f_{i,j}(a,b) \in J_{i[\gamma]+j[\delta]}$ if $a \in J_{[\gamma]}$ or $b \in J_{[\delta]}$. Since no convex combination of the roots $-[\alpha], [\beta]$ and $i [\alpha] + j [\beta] \ (i, j \neq 0)$ is $0$, we have 
\begin{align*}
    g &= x_{[\beta]}(s) x_{[\alpha]}(r) x_{-[\alpha]}(u) x_{[\alpha]}(r)^{-1} x_{[\beta]}(s)^{-1} \\
    &= x_{[\alpha]}(r) x_{-[\alpha]}(u) x_{[\alpha]}(r)^{-1} \Big( \prod x_{i[\alpha]+ j[\beta]}(h_{i,j}(s,t,u)) \Big) \in H,
\end{align*}
where $h_{i,j}$ is function of $s, t$ and $u$ such that $h_{i,j}(s,t,u) \in J_{i[\alpha]+j[\beta]}$.

\vspace{2mm}

\noindent \textbf{Case B. $[\alpha] = \pm [\beta]$.} Note that, if $[\alpha] = [\beta]$ then there is nothing to prove. Now assume that $[\alpha] = -[\beta]$. We then have 
$$ g = x_{-[\alpha]}(s) x_{[\alpha]}(r) x_{-[\alpha]}(u) x_{[\alpha]}(r)^{-1} x_{-[\alpha]}(s)^{-1}.$$ 

Since the rank of $\Phi_\rho > 1$, there exists $[\gamma] \in \Phi_\rho$ such that the subroot system $\Phi'$ generated by $[\alpha]$ and $[\gamma]$ is connected of rank $2$. WLOG, we can assume that $[\alpha], [\gamma]$ is base of $\Phi'$. Set $\Phi'_+$ be the set of positive roots of $\Phi'$ with respect to this base, $\Phi'_- = - \Phi'_+, \Phi''_+ = \{ i [\alpha] + j [\gamma] \in \Phi'_+ \mid j>0 \},$ and $\Phi''_- = - \Phi''_+$. Write $U''_+ (J)$ (resp., $U''_- (J)$) for the subgroup of $E'_\sigma (R)$ generated by $x_{[\delta]}(t)$ with $[\delta] \in \Phi''_+$ (resp., $[\delta] \in \Phi''_-$) and $t \in J_{[\delta]}$. Then $U''_+(J)$ and $U''_-(J)$ are subgroups of $H$. 

Now, by Lemma~\ref{lemma:u_1,u_2 exists} (below), for given $u \in J_{[\alpha]}$ we can find $u_1 \in J_{[\alpha] + [\gamma]}$ and $u_2 \in R_{[\gamma]}$ such that $$x_{-[\alpha]}(u) = [x_{-([\alpha] + [\gamma])}(u_1), x_{[\gamma]}(u_2)] h'$$ with $h' \in U''_{-}(J)$. Set \begin{align*}
    g_1 &:= x_{-[\alpha]}(s) x_{[\alpha]}(r) x_{-([\alpha]+[\gamma])}(u_1) x_{[\alpha]}(r)^{-1} x_{-[\alpha]}(s)^{-1} \in U''_-(J), \\
    g_2 &:= x_{-[\alpha]}(s) x_{[\alpha]}(r) x_{[\gamma]}(u_2) x_{[\alpha]}(r)^{-1} x_{-[\alpha]}(s)^{-1} \in U''_+(R), \\
    g_3 &:= x_{-[\alpha]}(s) x_{[\alpha]}(r) h' x_{[\alpha]}(r)^{-1} x_{-[\alpha]}(s)^{-1} \in U''_-(J).
\end{align*}
Then $g = [g_1,g_2] g_3$, which contained in $H$ by Lemma \ref{lemma:H} (below). 
\end{proof}

\begin{lemma}\label{lemma:u_1,u_2 exists}
    For given $u \in J_{[\alpha]}$ we can find $u_1 \in J_{[\alpha] + [\gamma]}$ and $u_2 \in R_{[\gamma]}$ such that $$x_{-[\alpha]}(u) = [x_{-([\alpha] + [\gamma])}(u_1), x_{[\gamma]}(u_2)] h'$$ with $h' \in U''_-(J)$.
\end{lemma}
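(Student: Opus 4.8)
The plan is to establish the claimed commutator identity by a direct computation within the rank-2 subsystem $\Phi'$ generated by $[\alpha]$ and $[\gamma]$, using the explicit Chevalley commutator formulas of Section~\ref{subsec:CheComm}. The root $-[\alpha]$ should appear as (part of) the commutator $[x_{-([\alpha]+[\gamma])}(u_1), x_{[\gamma]}(u_2)]$, so the first step is to identify which commutator formula governs the pair of roots $-([\alpha]+[\gamma])$ and $[\gamma]$: their sum is $-[\alpha]$, which must lie in $\Phi'$, and I would read off from the type of the pair (one of the cases (b)--(g)) the precise form of the leading term $x_{-[\alpha]}(\cdots)$. The goal is to solve for $u_1$ and $u_2$ so that this leading term equals $x_{-[\alpha]}(u)$ exactly, absorbing all higher terms (those supported on roots $i[\gamma]+j(-([\alpha]+[\gamma]))$ with both indices positive, i.e. roots strictly below $-[\alpha]$ in the relevant ordering) into the factor $h' \in U''_-(J)$.

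First I would fix the base $\{[\alpha],[\gamma]\}$ of $\Phi'$ and note that $-([\alpha]+[\gamma]) \in \Phi''_-$ and $-[\alpha] \in \Phi''_-$ as well, while $[\gamma] \in \Phi'_+$. The commutator formula expresses $[x_{-([\alpha]+[\gamma])}(u_1), x_{[\gamma]}(u_2)]$ as a product $x_{-[\alpha]}(N\cdot u_1 u_2) \cdot (\text{terms on lower roots})$, where $N$ is the relevant structure constant (or a self-conjugate combination of them, depending on whether $[\gamma]$ is of type $A_1$, $A_1^2$, $A_1^3$, or $A_2$). Since the structure constants take values in $\{\pm 1, \pm 2, \pm 3\}$ and $2, 3$ are invertible by the standing convention, the map $u_2 \mapsto N u_1 u_2$ can be inverted: I would simply choose $u_2 \in R_{[\gamma]}$ so that the leading coefficient equals $u$, taking $u_1$ to be a fixed generator of $J_{[\alpha]+[\gamma]}$ (for instance, corresponding to $1 \in R$ suitably, so that $u_1 \in J_{[\alpha]+[\gamma]}$ forces $u_2$ to carry the ideal membership, or conversely). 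The key point is that because $u \in J_{[\alpha]}$, at least one of $u_1, u_2$ can be taken in the appropriate $J$-component so that the whole expression lands in $E'_\sigma(R,J)$, and crucially $u_1 \in J_{[\alpha]+[\gamma]}$ as required; the remaining factor $h'$, being a product of $x_{-[\delta]}(t)$ with $[\delta] \in \Phi''_+$ and $t$ a $J$-multiple, lies in $U''_-(J)$.

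The main obstacle will be the bookkeeping in the twisted cases, particularly when $[\gamma] \sim A_2$ or when $\Phi_\rho \sim {}^3D_4$ (cases (e), (f), (g)), where the commutator formula produces several terms with coefficients that are sums of products such as $t\bar{u}\bar{\bar{u}} + \bar{t}u\bar{\bar{u}} + \bar{\bar{t}}u\bar{u}$, and where the argument of $x_{-[\alpha]}$ may itself be a pair $(t_1,t_2) \in \mathcal{A}(R)$ subject to the constraint $t_1\bar{t_1} = t_2 + \bar{t_2}$. In these situations I must verify that the solution $(u_1, u_2)$ genuinely exists inside $\mathcal{A}(J)$ or the relevant $J_{[\cdot]}$, respecting the defining relation of $\mathcal{A}(R)$, rather than merely in $R$; this is where the invertibility of $2$ (and $3$) is essential, exactly as in Lemma~\ref{A1&A2}. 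I would handle the generic case ($[\gamma]\sim A_1$ or $A_1^2$) in full and then indicate that the $A_2$ and ${}^3D_4$ cases follow by the same strategy with the relevant formula from \ref{subsec:CheComm}, deferring the verification that the lower-root factor is genuinely $\sigma$-fixed and $J$-valued to the explicit shape of those formulas.
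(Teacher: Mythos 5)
Your proposal follows essentially the same route as the paper: the paper's proof is exactly a case-by-case table that reads off, from the commutator formula for the pair $(-[\alpha]-[\gamma], [\gamma])$, the choice $u_1 = u$ (or $\bar{u}$, or $(u, u\bar{u}/2)$ in the $A_2$ case) together with a fixed invertible scalar $u_2 \in \{\pm 1, \pm 1/2, \pm 1/3, (\pm 1, 1/2)\}$, the factor $h'$ being visibly in $U''_-(J)$ from the remaining terms of the formula. The one caution is that of the two alternatives you float, only the ``conversely'' one is viable: $u_1$ must be the element carrying the $J$-membership (essentially $u$ itself) and $u_2$ the fixed unit, since a proper ideal contains no units and need not have a distinguished generator corresponding to $1$.
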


\begin{proof}
    The Chevalley commutator formula for $[x_{-([\alpha]+[\gamma])}(u_1), x_{[\gamma]}(u_2)]$ suggests that depending on the types of the pair of roots $(-[\alpha] - [\gamma], [\gamma])$ we can choose $u_1$ and $u_2$ as follows:
    \begin{center}
        \begin{tabular}{ccc}
            Type of pair $(-[\alpha] - [\gamma], [\gamma])$ & $u_1$ & $u_2$  \\
            \hline
            $(b-i)$ & $u$ & $\pm1$ \\
            $(b-ii)$ & $u$ or $\bar{u}$ & $\pm 1$ \\
            $(c-i)$ & $u$ & $\pm 1/2$ \\
            $(c-ii)$ & $(u, u \bar{u}/2)$ or $(\bar{u}, u \bar{u}/2)$ & $(\pm 1, 1/2)$ \\
            $(d-i)$ & $u$ & $\pm 1$ \\
            $(d-ii)$ & $(a, b):=u$ or $(\bar{a},b)$ or $(a, \bar{b})$ & $\pm 1$ \\
            $(e)$ & $u$ & $\pm 1$ \\
            $(g)$ & $u$ & $\pm 1/3$
        \end{tabular}
    \end{center}
    Note that each $u_1 \in J_{[\alpha]+[\gamma]}$ and $u_2 \in R_{[\gamma]}$. An immediate observation from Chevalley commutator formula for $[x_{-([\alpha]+[\gamma])}(u_1), x_{[\gamma]}(u_2)]$ is that $h' \in U''_-(J)$.
\end{proof}

\begin{lemma}\label{lemma:H}
    $[U''_-(J),U''_+(R)] \subset H$.
\end{lemma}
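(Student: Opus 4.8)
The claim is that $[U''_-(J), U''_+(R)] \subset H$, where $H$ is the subgroup of $E'_\sigma(R,J)$ generated by all elements $x_{[\alpha]}(r)x_{-[\alpha]}(u)x_{[\alpha]}(r)^{-1}$ with $[\alpha]\in\Phi_\rho$, $r\in R_{[\alpha]}$, $u\in J_{[\alpha]}$. The key structural facts I would exploit are that $U''_+(R)$ is generated by $x_{[\delta]}(t)$ with $[\delta]\in\Phi''_+$ and $t\in R_{[\delta]}$, while $U''_-(J)$ is generated by $x_{[\delta]}(t)$ with $[\delta]\in\Phi''_-$ and $t\in J_{[\delta]}$; here $\Phi''_+ = \{i[\alpha]+j[\gamma]\in\Phi'_+\mid j>0\}$ and $\Phi''_- = -\Phi''_+$, all living inside the rank-$2$ subsystem $\Phi'$ generated by $[\alpha]$ and $[\gamma]$.

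\textbf{Plan of attack.} The plan is to reduce the commutator of two arbitrary elements to commutators of generators, and then show each generator-level commutator lands in $H$. First I would observe that $H$ is normal in $E'_\sigma(R,J)$ (indeed it equals $E'_\sigma(R,J)$ by the enclosing Proposition~\ref{genofE(R,I)}, but to avoid circularity I would only use that $H$ is a subgroup closed under the relevant conjugations, which is established in the course of that proof). Using the standard commutator identities $[xy,z]=\,^x[y,z]\,[x,z]$ and $[x,yz]=[x,y]\,^y[x,z]$ together with the fact that $H$ is stable under conjugation by the generators already shown to normalize it, it suffices to prove $[x_{-[\delta]}(u), x_{[\epsilon]}(t)]\in H$ for single root subgroups, where $-[\delta]\in\Phi''_-$, $u\in J_{[\delta]}$, and $[\epsilon]\in\Phi''_+$, $t\in R_{[\epsilon]}$. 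This is the inductive skeleton; the real content is the base case for generators.

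\textbf{The generator-level computation.} For each such pair I would invoke the Chevalley commutator formulas of Section~\ref{subsec:CheComm}: the commutator $[x_{-[\delta]}(u), x_{[\epsilon]}(t)]$ is a product of terms $x_{i(-[\delta])+j[\epsilon]}(\text{coefficient})$ over positive $i,j$. The crucial point is that since $-[\delta]\in\Phi''_-$ carries a strictly negative $[\gamma]$-coefficient and $[\epsilon]\in\Phi''_+$ carries a strictly positive one, any resulting root $i(-[\delta])+j[\epsilon]$ that is a genuine root of $\Phi'$ either stays in $\Phi''_-$ or $\Phi''_+$, or has bounded $[\gamma]$-coefficient; and because $u\in J_{[\delta]}$ lies in the ideal, every coefficient appearing is in $J_{i(-[\delta])+j[\epsilon]}$. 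Hence each factor is of the form $x_{[\mu]}(\text{ideal element})$, i.e.\ a generator of $E'_\sigma(J)\subset H$, or can be rewritten as a conjugate $x_{[\nu]}(r)x_{-[\nu]}(u')x_{[\nu]}(r)^{-1}$ of the required shape. The main obstacle I anticipate is bookkeeping in the longer formulas $(e)$, $(f)$, $(g)$ specific to ${}^3D_4$, where commutators of $A_1$ and $A_1^3$ classes generate up to five factors with intricate norm-form coefficients; there one must carefully check that every coefficient indeed lies in the appropriate $J_{[\mu]}$ and that the non-ideal factor $x_{[\epsilon]}(t)$ itself never appears isolated (it cannot, since $j\ge 1$ forces the $[\gamma]$-coefficient up). I would organize this as a finite case check driven by the type of the pair $(-[\delta],[\epsilon])$, exactly paralleling the table in Lemma~\ref{lemma:u_1,u_2 exists}, and conclude that in every case $[x_{-[\delta]}(u),x_{[\epsilon]}(t)]\in H$, which completes the induction and the proof.
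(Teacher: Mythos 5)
Your overall strategy --- reduce to commutators of root-element generators and then push everything into $H$ via the Chevalley commutator formulas, using the fact that a $J$-coefficient forces every factor of the expansion into $E'_\sigma(J)\subset H$ --- is the same as the paper's, and your treatment of pairs of \emph{non-opposite} roots is fine. But there is a genuine gap at the crux of the lemma: you never treat the pair with $[\epsilon]=[\delta]$, i.e.\ a factor $x_{[\epsilon]}(t)$ of $g$ whose root is opposite to the root $-[\delta]$ of the factor of $h$. Since $\Phi''_-=-\Phi''_+$, this configuration certainly occurs, and for it the Chevalley commutator formula you invoke is simply unavailable (it requires the two roots not to be opposite). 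Your parenthetical claim that the dangerous case ``cannot'' occur because ``$j\ge 1$ forces the $[\gamma]$-coefficient up'' addresses a different and harmless issue --- whether $x_{[\epsilon]}(t)$ could reappear as a factor of the expansion --- not whether the expansion exists at all. The opposite-root case is exactly where the defining generators of $H$ must enter: one writes
$[x_{-[\delta]}(u),x_{[\delta]}(t)] = x_{-[\delta]}(u)\cdot\bigl(x_{[\delta]}(t)\,x_{-[\delta]}(u)^{-1}\,x_{[\delta]}(t)^{-1}\bigr)$,
where the first factor lies in $E'_\sigma(J)\subset H$ and the second is a defining generator of $H$ (its parameter is still in $J_{[\delta]}$). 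This is the entire reason $H$ is generated by the conjugates $x_{[\alpha]}(r)x_{-[\alpha]}(v)x_{[\alpha]}(r)^{-1}$ rather than by $E'_\sigma(J)$ alone; omitting this case omits the content of the lemma. It is also exactly the case the paper singles out (``if there is some $i$ such that $[\beta_i]=[\alpha_1]$ then we are done by the definition of $H$'').

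A second, smaller problem is the reduction step. The identity $[x,yz]=[x,y]\,{}^{y}[x,z]$ conjugates by $y\in U''_+(R)$, an element whose parameter lies in $R$ rather than $J$; such elements are not in $H$ and are not yet known to normalize $H$ --- establishing that normality is precisely what Proposition~\ref{genofE(R,I)} is in the middle of doing when it invokes this lemma, so ``stable under conjugation by the generators already shown to normalize it'' is circular as stated. The paper avoids this by inducting only on the number of factors of $h\in U''_-(J)$: the conjugator appearing in its inductive step is $x_{-[\alpha_1]}(u_1)\in E'_\sigma(J)\subset H$, so conjugation-stability is automatic, and in the base case the single factor of $h$ is pushed through the factors of $g$ one root at a time, the opposite-root factor producing a defining generator of $H$ and all other factors producing $J$-coefficient root elements (whose further conjugates by the remaining factors of $g$ are controlled by Case~A of the proposition, the relevant roots never being opposite because the factors of $g$ carry distinct roots of $\Phi''_+$). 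You would need to rearrange your reduction along these lines for the argument to close.
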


\begin{proof}
    Let $h \in U''_- (J)$ and $g \in U''_+ (R)$. Write $$ h = x_{-[\alpha_1]}(u_1) \dots x_{-[\alpha_n]}(u_n),$$ where $[\alpha_i] \in \Phi''_+$ and $0 \neq u_i \in J_{[\alpha_i]}$. We want to show that $[h,g] \in H.$ For that we use induction of $n$. If $n=1$ then $[h,g] = x_{-[\alpha_1]}(u_1) g x_{-[\alpha_1]}(u_1)^{-1} g^{-1}.$ Write $g = x_{[\beta_1]}(v_1) \dots x_{[\beta_m]}(v_m)$ where $[\beta_i] \in \Phi''_+$ and $0 \neq v_i \in R_{[\beta_i]}$. If $[\beta_i] \neq -[\alpha_1]$ for every $i = 1, \dots, m$, then we are done by Chevalley commutator relations. If there is some $i \in \{1, \dots, m\}$ such that $[\beta_i] = [\alpha_1]$ then also we are done by the definition of $H$ and Chevalley commutator relations. 

    Now for general $n$,
    \begin{align*}
        [h,g] &= h g h^{-1} g^{-1} \\
        &= x_{-[\alpha_1]}(u_1) [x_{-[\alpha_2]}(u_2) \dots x_{-[\alpha_n]}(u_n), g] [g, x_{-[\alpha_1]}(u_1)^{-1}] x_{-[\alpha_1]}(u_1)^{-1} \\
        &\in H.
    \end{align*}
    Which proves the lemma.
\end{proof}


\begin{thm}\label{Ch5_mainthm1}
    \normalfont
    For any ideal $J$ of $R$, we have 
    $$E'_\sigma(R, J) = [E'_\sigma (R), E'_\sigma (J)] = [E'_\sigma (R), G_\sigma (R,J)] = [G_\sigma (R), E'_\sigma (R,J)] .$$
\end{thm}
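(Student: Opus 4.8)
The plan is to prove the chain of equalities
\[
E'_\sigma(R, J) = [E'_\sigma (R), E'_\sigma (J)] = [E'_\sigma (R), G_\sigma (R,J)] = [G_\sigma (R), E'_\sigma (R,J)]
\]
by establishing a circle of inclusions, using the commutator machinery already developed in the preceding propositions. The four groups sit in an obvious containment pattern once we note the basic inclusions $E'_\sigma(J) \subseteq G_\sigma(R,J)$ (since $E'_\sigma(J) \subseteq G_\sigma(J) \subseteq G_\sigma(R,J)$) and $E'_\sigma(R,J) \subseteq G_\sigma(R,J)$, together with $E'_\sigma(R) \subseteq G_\sigma(R)$. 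So the strategy is to show
\[
[G_\sigma (R), E'_\sigma (R,J)] \subseteq E'_\sigma(R,J) \subseteq [E'_\sigma (R), E'_\sigma (J)] \subseteq [E'_\sigma (R), G_\sigma (R,J)] \subseteq [G_\sigma (R), E'_\sigma (R,J)],
\]
which closes the loop and forces all four to coincide.

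The first inclusion $[G_\sigma(R), E'_\sigma(R,J)] \subseteq E'_\sigma(R,J)$ is precisely the normality statement of Proposition~\ref{normal}, so that step is free. The two middle inclusions $[E'_\sigma(R), E'_\sigma(J)] \subseteq [E'_\sigma(R), G_\sigma(R,J)] \subseteq [G_\sigma(R), E'_\sigma(R,J)]$ are monotonicity of the commutator subgroup in each slot, using $E'_\sigma(J) \subseteq G_\sigma(R,J)$ in the first slot's second argument and $E'_\sigma(R) \subseteq G_\sigma(R)$ together with $G_\sigma(R,J) \subseteq$ ... here one must be careful: I want $[E'_\sigma(R), G_\sigma(R,J)] \subseteq [G_\sigma(R), E'_\sigma(R,J)]$, which is \emph{not} pure monotonicity because the roles of the two arguments differ. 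The clean way is to first prove the symmetric-looking fact $[E'_\sigma(R), G_\sigma(R,J)] \subseteq E'_\sigma(R,J)$ directly (a mixed commutator formula), and then observe $E'_\sigma(R,J) = [G_\sigma(R), E'_\sigma(R,J)]$ would follow if I can also reverse. So I will reorganize: prove (i) $E'_\sigma(R,J) \subseteq [E'_\sigma(R), E'_\sigma(J)]$, (ii) $[E'_\sigma(R), G_\sigma(R,J)] \subseteq E'_\sigma(R,J)$, and (iii) $E'_\sigma(R,J) \subseteq [G_\sigma(R), E'_\sigma(R,J)]$, then chain them against Proposition~\ref{normal} and the trivial monotone inclusion $[E'_\sigma(R), E'_\sigma(J)] \subseteq [E'_\sigma(R), G_\sigma(R,J)]$.

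The heart of the argument is inclusion (i), namely that $E'_\sigma(R,J)$ is \emph{generated by commutators} rather than merely containing them. By Proposition~\ref{genofE(R,I)}, $E'_\sigma(R,J)$ is generated by the elements $x_{[\alpha]}(r)x_{-[\alpha]}(u)x_{[\alpha]}(r)^{-1}$ with $u \in J_{[\alpha]}$, so it suffices to realize each such generator (and more simply each generator $x_{[\alpha]}(u)$ of $E'_\sigma(J)$) as a product of commutators lying in $[E'_\sigma(R), E'_\sigma(J)]$. The standard device, adapting Vaserstein's computation in \cite{LV}, is to write a root element $x_{[\gamma]}(w)$ with $w \in J_{[\gamma]}$ as a commutator $[x_{[\alpha]}(a), x_{[\beta]}(b)]$ where $[\alpha]+[\beta] = [\gamma]$, choosing one entry in $R_{[\cdot]}$ and the other in the relative piece $J_{[\cdot]}$, via the Chevalley commutator formulas of Section~\ref{subsec:CheComm}. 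Here the irreducibility and rank $\geq 2$ hypothesis is used exactly as in the proof of Proposition~\ref{genofE(R,I)}: every relevant root is a sum of two roots spanning a rank-2 subsystem, so the needed commutator factorizations exist. I expect the main obstacle to be the bookkeeping across the different pair-types, especially the $A_2$-type classes where the arguments live in the twisted group $\mathcal{A}(R)$ and the commutator formulas (c--ii), (d--ii) mix the two coordinates; one must check that the $J$-membership of $w$ can be arranged to come from a factor genuinely in $E'_\sigma(J)$ while the other factor stays in $E'_\sigma(R)$. Inclusion (iii) is then immediate from (i) since $[E'_\sigma(R), E'_\sigma(J)] \subseteq [G_\sigma(R), E'_\sigma(R,J)]$ by monotonicity (using $E'_\sigma(R) \subseteq G_\sigma(R)$ and $E'_\sigma(J) \subseteq E'_\sigma(R,J)$), and inclusion (ii) follows the same template as Proposition~\ref{normal} and Corollary~\ref{mixcom} via the doubled-ring trick $R' = \{(r,s) : r - s \in J\}$, reducing a mixed commutator over $R$ to a normality statement over $R'$.
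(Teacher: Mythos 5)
Your reduction of the theorem to the two inclusions $E'_\sigma(R,J) \subseteq [E'_\sigma(R), E'_\sigma(J)]$ and $[E'_\sigma(R), G_\sigma(R,J)] \subseteq E'_\sigma(R,J)$ is exactly the paper's, and your plan for the first one — observe that $H := [E'_\sigma(R), E'_\sigma(J)]$ is normal in $E'_\sigma(R)$, then place each generator $x_{[\alpha]}(u)$, $u \in J_{[\alpha]}$, into $H$ by realizing it via commutators inside a rank-2 subsystem — is the right one. Be aware, though, that this is where essentially all of the work lives: the paper's Cases A--E run for several pages, and in the $B_2$ and $G_2$ configurations a single Chevalley commutator does not produce $x_{[\alpha]}(u)$ alone but a product of two or three root elements (e.g.\ $x_{[\alpha]}(u)x_{2[\alpha]+[\beta]}(\cdot)$ in type $(d)$), which must be peeled apart by separate auxiliary arguments. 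Calling this ``bookkeeping'' understates it, but the strategy is sound.

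The genuine gap is in your step (ii). The doubled-ring trick works for an element $g$ of the \emph{principal} congruence subgroup $G_\sigma(J)$ precisely because $g \equiv 1 \pmod{J}$, so that $(g,1)$ has entries in $R' = \{(r,s) : r - s \in J\}$ and hence lies in $G_\sigma(R')$. An element $g \in G_\sigma(R,J)\setminus G_\sigma(J)$ only maps to a \emph{central} element of $G_\sigma(R/J)$, not to the identity, so $(g,1)$ does not lie in $G_\sigma(R')$ and the trick cannot be run; this is exactly why Corollary~\ref{mixcom} is stated only for $G_\sigma(J)$. The paper bridges from $G_\sigma(J)$ to $G_\sigma(R,J)$ by a different device: having proved (i) (which for $J=R$ says $E'_\sigma(R)$ is perfect) and Corollary~\ref{mixcom} (which makes $N := (E'_\sigma(R)\cap G_\sigma(J))/E'_\sigma(R,J)$ abelian), it notes that for fixed $g \in G_\sigma(R,J)$ the map $h \mapsto [h,g]\,E'_\sigma(R,J)$ is a well-defined homomorphism from a perfect group into an abelian group, hence trivial, giving $[h,g] \in E'_\sigma(R,J)$. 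You need this perfectness argument (or some substitute) to close the loop; the rest of your chain of inclusions is correct.
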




\begin{proof} 
Note that 
\[
    [E'_\sigma(R), E'_\sigma (J)] \subset [E'_\sigma(R), G_\sigma (R, J)] \quad \text{and} \quad [E'_\sigma(R), E'_\sigma (J)] \subset [G_\sigma(R), E'_\sigma (R,J)].
\]
Also, by Proposition~\ref{normal}, we have $[G_\sigma (R), E'_\sigma (R,J)] \subset E'_\sigma (R,J)$. 
Therefore, to prove our proposition, it is enough to prove the following:
\begin{enumerate}[(i)]
    \item $E'_\sigma(R, J) \subset [E'_\sigma(R), E'_\sigma (J)].$
    \item $[E'_\sigma(R), G_\sigma (R, J)] \subset E'_\sigma(R, J).$
\end{enumerate}

Since $H:= [E'_\sigma(R), E'_\sigma (J)]$ is normal in $E'_\sigma(R)$, to prove (i) it is enough to prove that $x_{[\alpha]}(u) \in H$ for every $[\alpha] \in \Phi_\rho$ and $u \in J_{[\alpha]}$. As in the proof of Proposition \ref{genofE(R,I)}, since the rank of $\Phi_\rho > 1$, there exists $[\beta] \in \Phi_\rho$ such that the subsystem $\Phi'$ generated by $[\alpha]$ and $[\beta]$ is connected of rank $2$. WLOG, we can assume that $[\alpha], [\beta]$ is base of $\Phi'$.

\vspace{2mm}

\noindent \textbf{Case A. $\Phi' \sim A_2$.} In this case, the pair of roots $[\alpha] + [\beta]$ and $-[\beta]$ are of type $(b)$. 

\begin{center}
    \begin{tikzpicture}
        \draw[<->, line width=1pt] (2,0)--(-2,0);
        \draw[<->, line width=1pt] (1,1.73)--(-1,-1.73);
        \draw[<->, line width=1pt] (-1,1.73)--(1,-1.73);
        \node at (2.5,0) {$[\alpha]$};
        \node at (-1.4,2.0) {$[\beta]$};
        \node at (1.4,2.0) {$[\alpha] + [\beta]$};
        \node at (-2.5,0) {$-[\alpha]$};
        \node at (1.4, -2.0) {$-[\beta]$};
        \node at (-1.4, -2.0) {$-[\alpha] - [\beta]$};
    \end{tikzpicture}
\end{center}

\begin{enumerate}[leftmargin=4em]
    \item[$(b-i)$] If $[\alpha] + [\beta] \sim A_1$ and $-[\beta] \sim A_1$, then so is $[\alpha] = ([\alpha] + [\beta]) + (-[\beta])$. In this case, for given $u \in J_{[\alpha]} = J_{\theta}$ we have $$ x_{[\alpha]}(u) = [x_{[\alpha]+[\beta]}(\pm 1), x_{-[\beta]}(u)] \in H.$$
    \item[$(b-ii)$] If $[\alpha] + [\beta] \sim A_1^2$ and $-[\beta] \sim A_1^2$, then so is $[\alpha] = ([\alpha] + [\beta]) + (-[\beta])$. In this case, for given $u \in J_{[\alpha]} = J$ we write $u'= u$ or $\bar{u} \in J$. Then we have $$ x_{[\alpha]}(u) = [x_{[\alpha]+[\beta]}(\pm 1), x_{-[\beta]}(u')] \in H.$$
\end{enumerate}

\vspace{2mm}

\noindent \textbf{Case B. $\Phi' \sim B_2$ and $[\alpha]$ is a long root.} In this case, the pair of roots $[\alpha] + [\beta]$ and $-[\beta]$ are of type $(c)$. 

\begin{center}
    \begin{tikzpicture}
        \draw[<->, line width=1pt] (1,1)--(-1,-1);
        \draw[<->, line width=1pt] (-1,1)--(1,-1);
        \draw[<->, line width=1pt] (2,0)--(-2,0);
        \draw[<->, line width=1pt] (0,2)--(0,-2);
        \node at (2.5,0) {$[\alpha]$};
        \node at (-1.4,1.4) {$[\beta]$};
        \node at (1.6,1.4) {$[\alpha] + [\beta]$};
        \node at (0,2.4) {$[\alpha] + 2 [\beta]$};
        \node at (-2.5,0) {$-[\alpha]$};
        \node at (1.4,-1.4) {$-[\beta]$};
        \node at (-1.6,-1.4) {$-[\alpha] - [\beta]$};
        \node at (0,-2.4) {-$[\alpha] - 2 [\beta]$};
    \end{tikzpicture}
\end{center}

\begin{enumerate}[leftmargin=4em]
    \item[$(c-i)$] If $[\alpha] + [\beta] \sim A_1^2$ and $-[\beta] \sim A_1^2$, then $[\alpha] = ([\alpha] + [\beta]) + (-[\beta]) \sim A_1$. In this case, for given $u \in J_{[\alpha]} = J_{\theta}$ we have $$ x_{[\alpha]}(u) = [x_{[\alpha]+[\beta]}(\pm 1/2), x_{-[\beta]}(u)] \in H.$$
    \item[$(c-ii)$] If $[\alpha] + [\beta] \sim A_2$ and $-[\beta] \sim A_2$, then $[\alpha] = ([\alpha] + [\beta]) + (-[\beta]) \sim A_1^2$. In this case, for given $u \in J_{[\alpha]} = J$ we write $u'= u$ or $\bar{u} \in J$. Then we have $$ x_{[\alpha]}(u) = [x_{[\alpha]+[\beta]}(\pm 1, 1/2), x_{-[\beta]}(u', u \bar{u} / 2)] \in H.$$
\end{enumerate}

\vspace{2mm}

\noindent \textbf{Case C. $\Phi' \sim B_2$ and $[\alpha]$ is a short root.} In this case, the pair of roots $[\alpha] + [\beta]$ and $-[\beta]$ are of type $(d)$ with $[\alpha] + [\beta]$ being the short root.

\begin{center}
    \begin{tikzpicture}
        \draw[<->, line width=1pt] (1.5,1.5)--(-1.5,-1.5);
        \draw[<->, line width=1pt] (-1.5,1.5)--(1.5,-1.5);
        \draw[<->, line width=1pt] (1.5,0)--(-1.5,0);
        \draw[<->, line width=1pt] (0,1.5)--(0,-1.5);
        \node at (2,0) {$[\alpha]$};
        \node at (-1.9,1.9) {$[\beta]$};
        \node at (1.9,1.9) {$2[\alpha] + [\beta]$};
        \node at (0,2) {$[\alpha] + [\beta]$};
        \node at (-2,0) {$-[\alpha]$};
        \node at (1.9,-1.9) {$-[\beta]$};
        \node at (-2.1,-1.9) {$-2[\alpha] - [\beta]$};
        \node at (0,-2) {$-[\alpha] - [\beta]$};
    \end{tikzpicture}
\end{center}

\begin{enumerate}[leftmargin=4em]
    \item[$(d-i)$] If $[\alpha] + [\beta] \sim A_1^2$ and $-[\beta] \sim A_1$, then $[\alpha] = ([\alpha] + [\beta]) + (-[\beta]) \sim A_1^2$ and $2[\alpha] + [\beta] = 2([\alpha] + [\beta]) + (-[\beta]) \sim A_1$. In this case, for given $u \in J_{[\alpha]} = J$ we have 
    \begin{align*}
        x_{[\alpha]}(u) x_{2[\alpha] + [\beta]}(u) &= [x_{-[\beta]}(\pm u), x_{[\alpha]+[\beta]}(\pm 1)] \\
        &= [x_{[\alpha]+[\beta]}(\pm 1), x_{-[\beta]}(\pm u)]^{-1} \in H.
    \end{align*}
    Now observe that $[\alpha] \sim A_1^2, [\alpha] + [\beta] \sim A_1^2$ and $2[\alpha] + [\beta] \sim A_1$. Then by similar argument as in $(c-i)$ above, we can conclude that $x_{2[\alpha]+[\beta]}(u) \in H$. Hence $$ x_{[\alpha]}(u) = (x_{[\alpha]}(u) x_{2[\alpha] + [\beta]}(u)) (x_{2[\alpha] + [\beta]}(u))^{-1} \in H.$$
    \item[$(d-ii)$] If $[\alpha] + [\beta] \sim A_2$ and $-[\beta] \sim A_1^2$, then $[\alpha] = ([\alpha] + [\beta]) + (-[\beta]) \sim A_2$ and $2[\alpha] + [\beta] = 2([\alpha] + [\beta]) + (-[\beta]) \sim A_1^2$. In this case, for given $u = (u_1, u_2) \in J_{[\alpha]} = \mathcal{A}(J) = \mathcal{J}$ we have 
    \begin{align*}
        x_{[\alpha]}(u_1,u_2) x_{2[\alpha] + [\beta]}(\pm u_2) &= [x_{-[\beta]}(\pm 1), x_{[\alpha]+[\beta]}(u_1, u_2)] \\
        &= [x_{[\alpha]+[\beta]}(u_1, u_2), x_{-[\beta]}(\pm 1)]^{-1} \in H.
    \end{align*}
    Now observe that $[\alpha] \sim A_2, [\alpha] + [\beta] \sim A_2$ and $2[\alpha] + [\beta] \sim A_1^2$. Then by similar argument as in $(c-ii)$ above, we can conclude that $x_{2[\alpha]+[\beta]}(\pm u_2) \in H$. Hence $$ x_{[\alpha]}(u_1,u_2) = (x_{[\alpha]}(u_1, u_2) x_{2[\alpha] + [\beta]}(\pm u_2)) (x_{2[\alpha] + [\beta]}(\pm u_2))^{-1} \in H.$$
\end{enumerate}

\vspace{2mm}

\noindent \textbf{Case D. $\Phi' \sim G_2$ and $[\alpha]$ is a long root.} In this case, we consider a subroot system $\Phi''$ of $\Phi'$ generated by roots $[\alpha]$ and $[\alpha] + 3 [\beta]$. Note that $\Phi'' \sim A_2$ and hence, by case 1 (replace $[\beta]$ by $[\alpha] + 3 [\beta]$), we can conclude that $x_{[\alpha]}(u) \in H$.

\begin{center}
    \begin{tikzpicture}
        \draw[<->, line width=1pt] (2.5,0)--(-2.5,0);
        \draw[<->, line width=1pt] (1.25,0.72)--(-1.25,-0.72);
        \draw[<->, line width=1pt] (1.25,2.16)--(-1.25,-2.16);
        \draw[<->, line width=1pt] (0,1.44)--(0,-1.44);
        \draw[<->, line width=1pt] (-1.25,2.16)--(1.25,-2.16);
        \draw[<->, line width=1pt] (-1.25,0.72)--(1.25,-0.72);
        \node at (3,0) {$[\alpha]$};
        \node at (-1.65,0.72) {$[\beta]$};
        \node at (2.05,0.72) {$[\alpha] + [\beta]$};
        \node at (2.05,2.56) {$2[\alpha] + 3 [\beta]$};
        \node at (0,2) {$[\alpha] + 2 [\beta]$};
        \node at (-2.05,2.56) {$[\alpha] + 3 [\beta]$};
        \node at (-3,0) {$-[\alpha]$};
        \node at (1.65,-0.8) {$-[\beta]$};
        \node at (-2.25,-0.8) {$-[\alpha] - [\beta]$};
        \node at (-2.05,-2.56) {$-2[\alpha] - 3 [\beta]$};
        \node at (0,-2) {$-[\alpha] - 2 [\beta]$};
        \node at (2.05,-2.56) {$-[\alpha] - 3 [\beta]$};
    \end{tikzpicture}
\end{center}

\vspace{2mm}

\noindent \textbf{Case E. $\Phi' \sim G_2$ and $[\alpha]$ is a short root.} In this case, the pair of roots $2[\alpha] + [\beta]$ and $-[\alpha] - [\beta]$ are of type $(f)$.

\begin{center}
    \begin{tikzpicture}
        \draw[<->, line width=1pt] (1.5,0)--(-1.5,0);
        \draw[<->, line width=1pt] (2.6,1.5)--(-2.6,-1.5);
        \draw[<->, line width=1pt] (0.75,1.3)--(-0.75,-1.3);
        \draw[<->, line width=1pt] (0,2.6)--(0,-2.6);
        \draw[<->, line width=1pt] (-0.75,1.3)--(0.75,-1.3);
        \draw[<->, line width=1pt] (-2.6,1.5)--(2.6,-1.5);
        \node at (2,0) {$[\alpha]$};
        \node at (-3,1.5) {$[\beta]$};
        \node at (-0.95,1.65) {$[\alpha] + [\beta]$};
        \node at (0.95,1.65) {$2[\alpha] + [\beta]$};
        \node at (3.4,1.7) {$3[\alpha] + [\beta]$};
        \node at (0,3) {$3[\alpha] + 2 [\beta]$};
        \node at (-2,0) {$-[\alpha]$};
        \node at (3,-1.8) {$-[\beta]$};
        \node at (1,-1.65) {$-[\alpha] - [\beta]$};
        \node at (-1.2,-1.65) {$-2[\alpha] - [\beta]$};
        \node at (-3.4,-1.9) {$-3[\alpha] - [\beta]$};
        \node at (0,-3) {$-3[\alpha] - 2 [\beta]$};
    \end{tikzpicture}
\end{center}

Observe that $[\alpha], 2[\alpha] + [\beta], - [\alpha] - [\beta] \sim A_1^3$ and $-[\beta], 3[\alpha] + [\beta] \sim A_1$. For given $u \in J_{[\alpha]} = J$ we write $(u',u'') = (\bar{u}, \bar{\bar{u}})$ or $(\bar{\bar{u}}, \bar{u})$. Then we have 
\begin{gather*}
    x_{[\alpha]}(u) x_{3[\alpha] + [\beta]}(\pm (u^2 + (u')^2 + (u'')^2 - 2 u u' - 2 u' u'' - 2 uu'')/4) x_{-[\beta]}(\pm (u + u' + u'')/2) \\
    = [x_{2[\alpha]+[\beta]}((u + u' - u'')/2), x_{-[\alpha]-[\beta]}(\pm 1)] \in H.
\end{gather*}

Now observe that $- [\beta] \sim A_1$ and $3[\alpha] + [\beta] \sim A_1$. Then by similar argument as in Case A above, we can conclude that $x_{3[\alpha]+[\beta]}(\pm (u^2 + (u')^2 + (u'')^2 - 2 u u' - 2 u' u'' - 2 uu'')/4) \in H$ and $x_{-[\beta]}(\pm (u + u' + u'')^2/4) \in H$. Hence 
\begin{gather*}
    x_{[\alpha]}(u) = (x_{[\alpha]}(u) x_{3[\alpha] + [\beta]}(\pm (u^2 + (u')^2 + (u'')^2 - 2 u u' - 2 u' u'' - 2 uu'')/4) \\ 
    x_{-[\beta]}(\pm (u + u' + u'')/2)) (x_{3[\alpha] + [\beta]}(\pm (u^2 + (u')^2 + (u'')^2 - 2 u u' - 2 u' u'' - 2 uu'')/4) \\
    x_{-[\beta]}(\pm (u + u' + u'')/2))^{-1} \in H.
\end{gather*}

This proves part (i). Now for part (ii), we consider the groups $M:= E'_\sigma (R)$ and $N:= (E'_\sigma (R) \cap G_\sigma(J))/E'_{\sigma}(R, J)$. Observe that the group $M$ is perfect (put $J=R$ in part (i)) and the group $N$ is commutative (by Corollary~\ref{mixcom}). For a fixed $g \in G_\sigma(R,J)$, define a map $\psi_g: M \longrightarrow N$ given by $h \longmapsto [h,g]E'_\sigma(R,J)$. Then $\psi_g$ is a well-defined homomorphism from the perfect group $M$ to a commutative group $N$. Hence $\psi_g$ must be trivial, i.e., $[h,g] \in E'_\sigma (R,J)$ for all $h \in E'_\sigma(R).$ Thus, $[E'_\sigma(R), G_\sigma (R, J)] \subset E'_\sigma(R, J),$ as desired. 
\end{proof}


\begin{cor}\label{cor:normalized}
    \normalfont
    The group $E'_\sigma (R)$ is perfect, that is, $[E'_\sigma (R),E'_\sigma (R)]=E'_\sigma (R)$.
\end{cor}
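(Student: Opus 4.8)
The plan is to obtain this corollary as the special case $J = R$ of Theorem~\ref{Ch5_mainthm1}. First I would observe that $R$ is itself a $\theta$-invariant ideal of $R$ (trivially, since $\theta(R)=R$), so all hypotheses of Theorem~\ref{Ch5_mainthm1} are met with this choice of $J$, under the standing conventions that $\Phi_\rho$ is irreducible of rank $>1$ and $1/2 \in R$ (and $1/3 \in R$ when $o(\theta)=3$). The only genuine content of the proof is then to identify the two relevant groups appearing in that theorem when $J = R$.

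The key verification is that $E'_\sigma(R, R) = E'_\sigma(R)$ and $E'_\sigma(J) = E'_\sigma(R)$ for $J = R$. For the second identification, I would note that for every $[\alpha] \in \Phi_\rho$ one has $J_{[\alpha]} = R_{[\alpha]}$ when $J = R$: indeed $J_\theta = R \cap R_\theta = R_\theta$, $J = R$, and $\mathcal{A}(J) = \mathcal{A}(R)$ cover the cases $[\alpha] \sim A_1$, $A_1^2/A_1^3$, and $A_2$ respectively. Hence $E'_\sigma(J)$, the subgroup generated by all $x_{[\alpha]}(t)$ with $t \in J_{[\alpha]}$, coincides with $E'_\sigma(R)$, which is generated by all $x_{[\alpha]}(t)$ with $t \in R_{[\alpha]}$. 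For the first identification, the relative elementary subgroup $E'_\sigma(R, J)$ is by definition the normal closure of $E'_\sigma(J)$ in $E'_\sigma(R)$; taking $J = R$ this is the normal closure of $E'_\sigma(R)$ inside itself, which is simply $E'_\sigma(R)$.

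With these two identifications in hand, the first equality of Theorem~\ref{Ch5_mainthm1} reads
\[
    E'_\sigma(R) = E'_\sigma(R, R) = [E'_\sigma(R),\, E'_\sigma(R)],
\]
which is exactly the assertion that $E'_\sigma(R)$ is perfect. I expect no real obstacle here: the argument is a direct specialization, and in fact the perfectness of $M := E'_\sigma(R)$ was already invoked (via ``put $J = R$ in part (i)'') inside the proof of part (ii) of Theorem~\ref{Ch5_mainthm1}, so this corollary merely records that observation formally.

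\begin{proof}
    Since $\theta(R) = R$, the ring $R$ is a $\theta$-invariant ideal of itself, so Theorem~\ref{Ch5_mainthm1} applies with $J = R$. When $J = R$ we have $J_{[\alpha]} = R_{[\alpha]}$ for every $[\alpha] \in \Phi_\rho$, whence $E'_\sigma(R) = E'_\sigma(J)$; moreover $E'_\sigma(R, R)$, being the normal closure of $E'_\sigma(R)$ in $E'_\sigma(R)$, equals $E'_\sigma(R)$. Substituting $J = R$ into the first equality of Theorem~\ref{Ch5_mainthm1} therefore gives
    \[
        E'_\sigma(R) = E'_\sigma(R, R) = [E'_\sigma(R), E'_\sigma(R)],
    \]
    as claimed.
\end{proof}
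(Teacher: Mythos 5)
Your proposal is correct and takes exactly the paper's route: the paper's own proof is the one-line observation "Immediate by putting $J=R$ in the above proposition," and your argument is simply that specialization spelled out, with the (correct) verifications that $E'_\sigma(J) = E'_\sigma(R)$ and $E'_\sigma(R,R) = E'_\sigma(R)$ when $J = R$.
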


\begin{proof}
    Immediate by putting $J=R$ in the above proposition.
\end{proof}


\begin{cor}\label{cor:converse}
    \normalfont
    Every subgroup of $G_\sigma (R, J)$ containing $E'_\sigma (R, J)$ is normalized by $E'_\sigma (R)$.
\end{cor}

\begin{proof}
    Let $H$ be a subgroup of $G_\sigma (R, J)$ containing $E'_\sigma (R, J)$. Then $$[E'_\sigma(R), E'_\sigma(J)] \subset [E'_\sigma(R), H] \subset [E'_\sigma(R), G_\sigma (R,J)].$$ By Theorem~\ref{Ch5_mainthm1}, we have $[E'_\sigma (R), H] = E'_\sigma (R,J) \subset H$. Therefore $H$ is normalized by $E'_\sigma (R)$.
\end{proof}


\begin{cor}\label{C=G}
    \normalfont
    Let $C_\sigma (R,J) = \{ x \in G_\sigma (R) \mid [x, E'_\sigma (R)] \subset E'_\sigma (R,J) \}$. Then $$G_\sigma (R,J) = C_\sigma (R,J).$$
\end{cor}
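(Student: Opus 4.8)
\textbf{Proof plan for Corollary~\ref{C=G}.}

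The plan is to prove the two inclusions $G_\sigma(R,J) \subseteq C_\sigma(R,J)$ and $C_\sigma(R,J) \subseteq G_\sigma(R,J)$ separately. The first inclusion is the easy direction and follows immediately from the commutator formulae already established. Indeed, if $x \in G_\sigma(R,J)$, then by Theorem~\ref{Ch5_mainthm1} we have $[E'_\sigma(R), G_\sigma(R,J)] = E'_\sigma(R,J)$, so in particular $[x, E'_\sigma(R)] \subseteq [G_\sigma(R,J), E'_\sigma(R)] = E'_\sigma(R,J)$; this places $x$ in $C_\sigma(R,J)$ by definition. So the first thing I would do is dispatch this direction in one line.

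The substantive direction is $C_\sigma(R,J) \subseteq G_\sigma(R,J)$. Here I would take $x \in C_\sigma(R,J)$, so $[x, E'_\sigma(R)] \subseteq E'_\sigma(R,J) \subseteq G_\sigma(J)$, and aim to show $x \in G_\sigma(R,J) = \phi^{-1}(Z(G_\sigma(R/J)))$, i.e. that the image $\phi(x)$ of $x$ in $G_\sigma(R/J)$ is central. Since $E'_\sigma(R,J) \subseteq G_\sigma(J) = \ker(\phi)$, the hypothesis $[x, g] \in E'_\sigma(R,J)$ for all $g \in E'_\sigma(R)$ gives $\phi([x,g]) = 1$, which means $[\phi(x), \phi(g)] = 1$ in $G_\sigma(R/J)$ for every $g \in E'_\sigma(R)$. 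Thus $\phi(x)$ commutes with every element of $\phi(E'_\sigma(R)) = E'_\sigma(R/J)$, so $\phi(x)$ lies in the centralizer $C_{G_\sigma(R/J)}(E'_\sigma(R/J))$. The key input is then Theorem~\ref{thm:KS3}, which identifies this centralizer with the center: $C_{G_\sigma(R/J)}(E'_\sigma(R/J)) = Z(G_\sigma(R/J))$. Hence $\phi(x) \in Z(G_\sigma(R/J))$, giving $x \in G_\sigma(R,J)$ as required.

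The main obstacle I anticipate is verifying that the hypotheses of Theorem~\ref{thm:KS3} are in force for the quotient ring $R/J$. That theorem requires $1/2 \in R$ (respectively $1/3 \in R$ when $\Phi_\rho \sim {}^3D_4$), and that the type be among ${}^2A_n, {}^2D_n, {}^2E_6$ (or ${}^3D_4$ via the remark following it). Under the standing \emph{Convention} of this section ($1/2 \in R$, and additionally $1/3 \in R$ when $o(\theta) = 3$), these invertibility conditions descend to $R/J$ since $J$ is a proper $\theta$-invariant ideal and the images of $1/2$ and $1/3$ serve as inverses of $2$ and $3$ in $R/J$. I would also need the cleanly-stated facts that $\phi(E'_\sigma(R)) = E'_\sigma(R/J)$ (immediate from the generators $x_{[\alpha]}(t)$ mapping onto the generators of $E'_\sigma(R/J)$) and that the center of $G_\sigma(R/J)$ is exactly the centralizer of its elementary subgroup. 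Once these are in place the argument is short; the care is entirely in checking the descent of hypotheses to the quotient.
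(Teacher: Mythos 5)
Your proposal is correct and follows essentially the same route as the paper: the easy inclusion via Theorem~\ref{Ch5_mainthm1}, and the reverse inclusion by passing to $G_\sigma(R/J)$ and invoking Theorem~\ref{thm:KS3} to identify the centralizer of $E'_\sigma(R/J)$ with the center, together with $E'_\sigma(R,J) \subset G_\sigma(J)$. Your extra care about the hypotheses descending to $R/J$ and the surjectivity of $E'_\sigma(R) \to E'_\sigma(R/J)$ is a reasonable addition but does not change the argument.
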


\begin{proof}
    Clearly, by Theorem~\ref{Ch5_mainthm1}, $G_\sigma (R,J) \subset C_\sigma (R,J)$. By definition, we have $$G_\sigma (R,J) = \{ x \in G_\sigma (R) \mid [x, G_\sigma (R)] \subset G_\sigma (J) \}.$$ Let $Z(G)$ be the centre of $G = G_\sigma (R/J)$ and $C_G(E)$ the centralizer of $E = E'_\sigma (R/J)$ in $G$. Then, by Theorem \ref{thm:KS3}, we have $Z(G) = C_G(E)$. But then $$G_\sigma (R,J) = \{x \in G_\sigma (R) \mid [x, E'_\sigma (R)] \subset G_\sigma (J) \}.$$ 
    Since $E'_\sigma (R,J) \subset G_\sigma (J)$, we have $C_\sigma (R,J) \subset G_\sigma (R,J)$.
\end{proof}


\section{Subgroups of \texorpdfstring{$G_\sigma (R)$}{G(R)} Normalized by \texorpdfstring{$E'_\sigma (R)$}{E(R)}}\label{sec:main sec}

In this section, we state the main theorem of this chapter. Together with Corollary~\ref{cor:converse}, it provides a classification of all subgroups of $G_\sigma(R)$ that are normalized by $E'_\sigma(R)$.

\begin{thm}\label{Ch5_mainthm}
    Let $\Phi_\rho$ be one of the following types: ${}^2 A_n \ (n \geq 3), {}^2 D_n \ (n \geq 4), {}^2 E_6$ or ${}^3 D_4$. Assume that $1/2 \in R$, and in addition, $1/3 \in R$ if $\Phi_\rho \sim {}^3 D_4$. If $H$ is a subgroup of $G_{\pi, \sigma} (\Phi, R)$ normalized by $E'_{\pi, \sigma} (\Phi, R)$, then there exists a unique $\theta$-invariant ideal $J$ of $R$ such that 
    \[
        E'_{\pi, \sigma} (\Phi, R, J) \subset H \subset G_{\pi, \sigma} (\Phi, R, J).
    \]
\end{thm}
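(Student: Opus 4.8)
The plan is to define the ideal $J$ directly from $H$, prove it is $\theta$-invariant, and then establish the two inclusions separately. Given a subgroup $H$ of $G_\sigma(R)$ normalized by $E'_\sigma(R)$, I would define $J$ to be the set of all ``matrix entries'' arising from the commutators $[H, E'_\sigma(R)]$; more precisely, following the Vaserstein--Abe approach, I would set $J$ to be the smallest $\theta$-invariant ideal such that every $x_{[\alpha]}(t)$ appearing (in the sense of the various coordinate types $R_{[\alpha]}$) in elements of $[E'_\sigma(R), H]$ has its parameter $t$ lying in $J_{[\alpha]}$. Concretely, one collects, for each $h \in H$ and each generator $x_{[\alpha]}(s)$, the coordinates of the ``correction terms'' produced when one computes $[x_{[\alpha]}(s), h]$ and projects onto individual root subgroups via the unique expressions guaranteed by Proposition~\ref{lemma 62 of RS}. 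The $\theta$-invariance of $J$ should follow because $H \subseteq G_\sigma(R)$ is fixed by $\sigma = \rho \circ \theta$, so the coordinate data is symmetric under the $\theta$-action on each $R_{[\alpha]}$; this is where the careful bookkeeping of the coordinate rings $R_{[\alpha]} \in \{R_\theta, R, \mathcal{A}(R)\}$ enters.

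\textbf{The lower inclusion.}
For $E'_\sigma(R, J) \subseteq H$, I would first show that $H$ contains $x_{[\alpha]}(t)$ for all $[\alpha] \in \Phi_\rho$ and $t \in J_{[\alpha]}$, which by Proposition~\ref{genofE(R,I)} and Theorem~\ref{Ch5_mainthm1} suffices (since $E'_\sigma(R,J) = [E'_\sigma(R), E'_\sigma(J)]$ and $H$ is normalized by $E'_\sigma(R)$, once the generators $x_{[\alpha]}(t)$ with $t \in J_{[\alpha]}$ lie in $H$, so does the whole relative elementary subgroup). The key computational engine here is the collection of Chevalley commutator formulae $(a_1)$--$(g)$ from Section~\ref{subsec:CheComm}: given an element $h \in H$ producing a nontrivial coordinate $t \in J_{[\alpha]}$, I would conjugate $h$ by suitable $w_{[\beta]}$'s and commute with appropriately chosen $x_{[\gamma]}(s)$'s to extract pure root elements $x_{[\alpha]}(t)$ back into $H$. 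Here the rank-$2$ reductions, exactly as in the case analysis of the proof of Theorem~\ref{Ch5_mainthm1} (Cases A--E over $A_2$, $B_2$, $G_2$ subsystems), let one handle each root type; the hypotheses $1/2 \in R$ (and $1/3 \in R$ for ${}^3D_4$) are what make the requisite coefficients invertible.

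\textbf{The upper inclusion and the main obstacle.}
For $H \subseteq G_\sigma(R, J)$, I would use the characterization $G_\sigma(R,J) = C_\sigma(R,J) = \{x \mid [x, E'_\sigma(R)] \subseteq E'_\sigma(R,J)\}$ from Corollary~\ref{C=G}. Thus it suffices to prove $[H, E'_\sigma(R)] \subseteq E'_\sigma(R,J)$, which should hold essentially by the very definition of $J$: every commutator $[h, x_{[\alpha]}(s)]$ has all its root coordinates in the respective $J_{[\alpha]}$, hence lies in $E'_\sigma(R,J)$ by Lemma~\ref{structure of U} and Proposition~\ref{lavidecomposition}. Uniqueness of $J$ follows because the lower inclusion forces $J$ to be at least the ideal generated by the coordinates of $H$, while the upper inclusion, combined with $E'_\sigma(R, J') \subseteq G_\sigma(R, J')$ and the fact that distinct $\theta$-invariant ideals give distinct full congruence subgroups (via Lemma~\ref{lemma on T(J)}), pins it down. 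The main obstacle I anticipate is the extraction step in the lower inclusion for the troublesome root types, especially $[\alpha] \sim A_2$ (where coordinates live in $\mathcal{A}(R)$ with its twisted group law $\oplus$) and the ${}^3D_4$ cases $(e), (f), (g)$ with their lengthy three-term commutator expansions; ensuring that the extracted parameters genuinely lie in $J_{[\alpha]}$ and that no coordinate ``leaks'' outside $J$ under the Weyl-conjugations will require delicate tracking through Proposition~\ref{prop wxw^{-1}} and the $d([\alpha],\beta)$ sign bookkeeping of Lemma~\ref{lemma on d}.
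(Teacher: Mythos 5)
Your definition of $J$ is already problematic: a general element of $[E'_{\pi,\sigma}(\Phi,R), H]$ lies in $E'_{\pi,\sigma}(\Phi,R) \cap H$ but is not unipotent, so it has no well-defined collection of ``root coordinates'' to harvest — Proposition~\ref{lemma 62 of RS} gives unique expressions only for products taken over special closed subsets such as $\Phi_\rho^{+}$, not for arbitrary elements of the elementary group. The paper avoids this by defining $J_{[\alpha]}(H)$ directly as the set of parameters $t$ with $x_{[\alpha]}(t) \in H$ (with the appropriate $\mathcal{A}(R)$-variant when $[\alpha] \sim A_2$) and setting $J = \bigcup_{[\alpha]} J_{[\alpha]}(H)$; that this is a $\theta$-invariant ideal and that $E'_{\pi,\sigma}(\Phi,R,J) \subset H$ is then Proposition~\ref{prop:E(R,J) subset of H}, proved via Proposition~\ref{z to Rz} (the normal closure of a single root element) and its rank-2 propagation Lemma~\ref{z to Rz in phi'}. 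Your lower inclusion is in the right spirit, but it should be anchored to Proposition~\ref{z to Rz} rather than to the case analysis inside the proof of Theorem~\ref{Ch5_mainthm1}.

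The serious gap is the upper inclusion. You assert that $[h, x_{[\alpha]}(s)] \in E'_{\pi,\sigma}(\Phi,R,J)$ ``essentially by the very definition of $J$.'' This is circular and, as stated, unfounded: knowing that the root parameters one can extract from $H$ lie in $J$ does not place an arbitrary element of $E'_{\pi,\sigma}(\Phi,R) \cap H$ inside $E'_{\pi,\sigma}(\Phi,R,J)$. Even for unipotent elements this containment is a theorem, not a tautology — it is Proposition~\ref{prop:U(R) cap H subset U(J)}, whose proof occupies Section~\ref{sec:Pf of prop 2} and rests on the delicate extraction Lemmas~\ref{lemma:U cap H} and~\ref{lemma:UTV cap H}. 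For general elements the paper must run a local-global argument: for each maximal ideal $\mathfrak{m}$ it shows $\psi_\mathfrak{m}(H) \subset G_\sigma(R_S, J_S)$ (Proposition~\ref{local:H subset G(R,J)}, using $G_\sigma = E'_\sigma T_\sigma$ over semilocal rings and the simplicity of $E'_\sigma(k_S)$ modulo its centre), establishes a local commutator containment up to a denominator $s \in S_\theta$ (Proposition~\ref{local:[x,g] in E(R,J)}), clears the denominator with Taddei's polynomial-ring lemma (Lemma~\ref{lemma:GT}, yielding Proposition~\ref{general:[x,g] in E(R,J)}), and finally uses the ``ideal of multipliers'' argument to conclude $[x_{[\alpha]}(t), g] \in E'_{\pi,\sigma}(\Phi,R,J)$ for all $t$, whence $g \in G_{\pi,\sigma}(\Phi,R,J)$ by Corollary~\ref{C=G}. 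None of this machinery appears in your proposal, and without it the inclusion $H \subset G_{\pi,\sigma}(\Phi,R,J)$ does not follow.
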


In the next section, we present the proof of the above theorem, assuming the validity of certain intermediate results. The rest of the chapter is dedicated to proving these key intermediate results.

\begin{cor}\label{cor:normal_subgroup_of_E}
    Let $R$ and $\Phi_\rho$ be as in Theorem~\ref{Ch5_mainthm}. Then a subgroup $H$ of $E'_{\pi, \sigma}(\Phi, R)$ is normal in $E'_{\pi, \sigma}(\Phi, R)$ if and only if there exists a $\theta$-invariant ideal $J$ of $R$ such that 
    \[
        E'_{\pi, \sigma}(\Phi, R, J) \subset H \subset G_{\pi, \sigma}(\Phi, R, J) \cap E'_{\pi, \sigma}(\Phi, R).
    \]
\end{cor}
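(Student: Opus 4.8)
The plan is to derive the corollary directly from the main classification theorem (Theorem~\ref{Ch5_mainthm}) together with Corollary~\ref{cor:converse}, rather than to reprove anything from scratch. The key observation is that for a subgroup $H \subseteq E'_{\pi,\sigma}(\Phi, R)$, being normal in $E'_{\pi,\sigma}(\Phi, R)$ is precisely the same as being normalized by $E'_{\pi,\sigma}(\Phi, R)$, since $H$ already sits inside $E'_{\pi,\sigma}(\Phi, R)$. So the corollary is really a restriction of the classification theorem to those $H$ that happen to lie in the elementary subgroup.

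First I would prove the forward direction. Suppose $H$ is a normal subgroup of $E'_{\pi,\sigma}(\Phi, R)$. Since $H \subseteq E'_{\pi,\sigma}(\Phi, R) \subseteq G_{\pi,\sigma}(\Phi, R)$ and $H$ is normalized by $E'_{\pi,\sigma}(\Phi, R)$, Theorem~\ref{Ch5_mainthm} applies and yields a unique $\theta$-invariant ideal $J$ with
\[
    E'_{\pi,\sigma}(\Phi, R, J) \subseteq H \subseteq G_{\pi,\sigma}(\Phi, R, J).
\]
Now I intersect the upper containment with $E'_{\pi,\sigma}(\Phi, R)$: since $H \subseteq E'_{\pi,\sigma}(\Phi, R)$, we have $H = H \cap E'_{\pi,\sigma}(\Phi, R) \subseteq G_{\pi,\sigma}(\Phi, R, J) \cap E'_{\pi,\sigma}(\Phi, R)$, which gives the desired upper bound. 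The lower bound $E'_{\pi,\sigma}(\Phi, R, J) \subseteq H$ is already provided by the theorem, so the forward direction is immediate once I note that $E'_{\pi,\sigma}(\Phi, R, J)$ is indeed contained in $E'_{\pi,\sigma}(\Phi, R)$ (true by its very definition as a normal subgroup of $E'_{\pi,\sigma}(\Phi, R)$).

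For the converse, suppose a $\theta$-invariant ideal $J$ exists with $E'_{\pi,\sigma}(\Phi, R, J) \subseteq H \subseteq G_{\pi,\sigma}(\Phi, R, J) \cap E'_{\pi,\sigma}(\Phi, R)$. In particular $H$ is a subgroup of $G_{\pi,\sigma}(\Phi, R, J)$ containing $E'_{\pi,\sigma}(\Phi, R, J)$, so by Corollary~\ref{cor:converse} it is normalized by $E'_{\pi,\sigma}(\Phi, R)$. Since additionally $H \subseteq E'_{\pi,\sigma}(\Phi, R)$, being normalized by $E'_{\pi,\sigma}(\Phi, R)$ is exactly the statement that $H$ is normal in $E'_{\pi,\sigma}(\Phi, R)$. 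This completes the converse.

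I do not expect any genuine obstacle here, as the corollary is a formal consequence of results already established; the only points requiring care are the bookkeeping observations that intersecting with $E'_{\pi,\sigma}(\Phi, R)$ preserves the containments and that $E'_{\pi,\sigma}(\Phi, R, J) \subseteq E'_{\pi,\sigma}(\Phi, R)$. I would also remark that the ideal $J$ in the statement need not be unique (unlike in Theorem~\ref{Ch5_mainthm}), since replacing $G_{\pi,\sigma}(\Phi, R, J)$ by its intersection with $E'_{\pi,\sigma}(\Phi, R)$ can in principle allow different ideals to yield the same bounding pair; if uniqueness is wanted one should phrase it carefully, but the statement as given only asserts existence.
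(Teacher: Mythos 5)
Your proposal is correct and matches the paper's approach exactly: the paper also derives this corollary directly from Theorem~\ref{Ch5_mainthm} (for the forward direction) and Corollary~\ref{cor:converse} (for the converse), with the same bookkeeping observation that being normalized by $E'_{\pi,\sigma}(\Phi,R)$ coincides with normality once $H$ lies inside the elementary subgroup. Your remark about the possible non-uniqueness of $J$ in this restricted setting is a sensible extra caution not present in the paper but does not affect the argument.
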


\begin{proof}
    The result follows directly from Theorem~\ref{Ch5_mainthm} and Corollary~\ref{cor:converse}.
\end{proof}


As an application of the above theorem, we prove the following result. 

\begin{thm}[P. Gvozdevsky, see Appendix of \cite{SG&DM1}]
    \normalfont
    Let $R$ and $\Phi_\rho$ be as in Theorem~\ref{mainthm1}. 
	Let $H$ be a subgroup of $G_{\pi, \sigma} (\Phi, R)$ containing $E'_{\pi, \sigma} (\Phi, R)$.
    Then $E'_{\pi, \sigma} (\Phi, R)$ can be characterized as the smallest by inclusion among all the subgroups $K\le H$ that satisfy the following properties:
    \begin{enumerate}[(a)]
        \item $K$ is normal and is generated as a normal subgroup by a single element;
        \item $K=[K,K]$;
        \item the centralizer of $K$ in $H$ is abelian.
    \end{enumerate}
\end{thm}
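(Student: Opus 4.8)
The plan is to show that $E'_{\pi,\sigma}(\Phi,R)$ satisfies properties (a)--(c), and then to show it is the smallest subgroup $K \le H$ doing so. For the three properties: property (b) is exactly Corollary~\ref{cor:normalized}, the perfectness of $E'_\sigma(R)$. Property (a) requires two things. First, $E'_\sigma(R)$ is normal in $H$: since $E'_\sigma(R) \subset H \subset G_\sigma(R)$ and $E'_\sigma(R)$ is normal in $G_\sigma(R)$ by Corollary~\ref{cor:KS2}, it is a fortiori normal in $H$. Second, $E'_\sigma(R)$ is generated as a normal subgroup of $H$ by a single element. For this I would use the isomorphisms of Proposition~\ref{prop:Chevalley as TChevalley} to reduce, where convenient, to statements about the untwisted group, and exploit the fact that $E'_\sigma(R)$ is generated by the root elements $x_{[\alpha]}(t)$; the key point is that by the Steinberg relations (in particular Proposition~\ref{prop wxw^{-1}} and the Chevalley commutator formulae of Section~\ref{subsec:CheComm}), conjugating a single suitably chosen generator by elements of $E'_\sigma(R)$ (hence by elements of $H$) produces, via the normal closure, all of $E'_\sigma(R)$. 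This single-generator statement is the part I expect to require the most care, and is likely where the appeal to the appendix result of Gvozdevsky genuinely enters.

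For property (c), the centralizer of $E'_\sigma(R)$ in $H$ is contained in the centralizer of $E'_\sigma(R)$ in $G_\sigma(R)$, which by Theorem~\ref{thm:KS3} equals $Z(G_\sigma(R)) = \mathrm{Hom}_1(\Lambda_\pi/\Lambda_r, R^*)$, an abelian group. Hence $C_H(E'_\sigma(R))$ is abelian, establishing (c).

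It remains to prove minimality: if $K \le H$ satisfies (a)--(c), then $E'_\sigma(R) \subseteq K$. Here is where the classification theorem does the work. By (a), $K$ is normal in $H$, hence normalized by $E'_\sigma(R) \subseteq H$; by Theorem~\ref{Ch5_mainthm} applied to $K$ (which is a subgroup of $G_\sigma(R)$ normalized by $E'_\sigma(R)$), there is a unique $\theta$-invariant ideal $J$ with
\[
    E'_\sigma(R,J) \subseteq K \subseteq G_\sigma(R,J).
\]
The goal is to force $J = R$, for then $E'_\sigma(R,R) = E'_\sigma(R) \subseteq K$ and we are done. Suppose $J \ne R$. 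Then $K \subseteq G_\sigma(R,J)$, and I would argue that $G_\sigma(R,J)$ is ``small'' enough that one of the hypotheses (b) or (c) fails for $K$. The cleanest route is through property (c): using Corollary~\ref{C=G}, $G_\sigma(R,J) = C_\sigma(R,J) = \{x \mid [x, E'_\sigma(R)] \subseteq E'_\sigma(R,J)\}$, so elements of $K$ nearly centralize $E'_\sigma(R)$ modulo $E'_\sigma(R,J)$; combined with (b) (perfectness of $K$) and the structure of $G_\sigma(R,J)/E'_\sigma(R,J)$, one derives a contradiction with the abelianness hypothesis (c) unless $J = R$.

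\textbf{Main obstacle.} The delicate step is the minimality argument, specifically extracting a contradiction from $J \subsetneq R$. The inclusion $E'_\sigma(R,J) \subseteq K \subseteq G_\sigma(R,J)$ alone does not immediately violate any of (a)--(c); one must genuinely combine all three properties to eliminate every proper ideal $J$ simultaneously. I expect the argument to hinge on showing that a perfect subgroup $K$ lying strictly inside some $G_\sigma(R,J)$ with abelian centralizer cannot generate enough of the group --- essentially that properties (b) and (c) are incompatible with the commutator constraint $[K, E'_\sigma(R)] \subseteq E'_\sigma(R,J)$ coming from Corollary~\ref{C=G} unless $J = R$. Making this incompatibility precise, rather than the verification of (a)--(c) for $E'_\sigma(R)$ itself, is the crux.
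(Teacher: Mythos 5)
Your verification that $E'_{\pi,\sigma}(\Phi,R)$ satisfies (a)--(c) matches the paper: (b) is Corollary~\ref{cor:normalized}, (c) follows from Theorem~\ref{thm:KS3}, and for the single-normal-generator part of (a) the tool you are groping for is exactly Proposition~\ref{z to Rz} applied to $z=1$ (the normal closure of $x_{[\alpha]}(1)$ is $E'_\sigma(R,R)=E'_\sigma(R)$), so no new Steinberg-relation computation is needed there. Your setup for minimality is also the right one: apply Theorem~\ref{Ch5_mainthm} to get $E'_\sigma(R,J)\subseteq K\subseteq G_\sigma(R,J)$ and aim to force $J=R$.

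The genuine gap is in how you propose to force $J=R$. Your suggested route --- combine Corollary~\ref{C=G} with perfectness and ``the structure of $G_\sigma(R,J)/E'_\sigma(R,J)$'' to contradict abelianness of the centralizer --- does not go through as stated, and you correctly sense this is the crux without supplying the mechanism. The paper's argument uses all three hypotheses in a quite specific way that your sketch omits entirely. First, the single-generator part of (a) is used again here (not just for verifying (a) for $E'_\sigma(R)$): if $K$ is the normal closure of a single element $g_0$, then $J$ is generated by the entries of $\varpi_{\mathrm{ad}}(g_0)-e$ and its $\theta$-conjugates, hence $J$ is \emph{finitely generated}. Second, perfectness (b) together with the uniqueness of $J$ in Theorem~\ref{Ch5_mainthm} gives $J=JJ$. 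Third, Nakayama's Lemma (which is where finite generation is indispensable) produces $s\in R$ with $s\equiv 1\pmod J$ and $sJ=0$; then $[E'_\sigma(sR),K]\subseteq[E'_\sigma(sR),G_\sigma(R,J)]\subseteq E'_\sigma(R,sJ)=1$, so $E'_\sigma(sR)\subseteq C_H(K)$, and hypothesis (c) forces $s=0$, whence $1\in J$ and $J=R$. Your proposal never uses the single-generator hypothesis in the minimality step and never identifies the $J=J^2$ plus Nakayama mechanism, so as written the contradiction you hope to extract from $J\subsetneq R$ is not established.
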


\begin{proof}
    First let us show that $E'_{\pi, \sigma} (\Phi, R)$ satisfies the properties (a)--(c). It satisfies (a) by Proposition~\ref{z to Rz} applied to $z=1$; it satisfies (b) by Corollary~\ref{cor:normalized}; and it satisfies (c) by Theorem~\ref{thm:KS3}.
	
	Now let $K\le H$ be a subgroup that satisfies (a)--(c); we must prove that $E'_{\pi, \sigma} (\Phi, R)\le K$. Since by (a) $K$ is normal in $H$, it follows by Theorem~\ref{mainthm} that there exists a unique $\theta$-invariant ideal $J$ of $R$ such that 
	\[
	E'_\sigma(R, J) \subset K \subset G_\sigma(R, J).
	\]
	
	We claim that $J = R$. By (a) $K$ is generated as a normal subgroup by a single element $g_0$. Now, clearly $J$ is the smallest by inclusion $\theta$-invariant ideal such that $g_0\in G_{\sigma}(R,J)$; hence, $J$ is generated by all the entries of the matrices $\varpi_{ad}(g_0)-e$, $\overline{\varpi_{ad}(g_0)}-e$, and $\overline{\overline{\varpi_{ad}(g_0)}}-e$, where $\varpi_{ad}$ is the adjoint representation of the ambient Chevalley group, and $e$ is the identity matrix. Therefore, the ideal $J$ is finitely generated. Since by (b) the group $K$ is perfect, the uniqueness of the ideal $J$ implies that $J = JJ$. By Nakayama's Lemma, there exists $s \in R$ such that $s \equiv 1 \pmod{J}$ and $sJ = 0$. 
	Thus, $E'_\sigma(sR)$ is contained in the centralizer of $K$ in $H$; hence, (c) implies that $s=0$; hence, we have $J = R$; hence, we have  $E'_\sigma(R) \subset K$.
\end{proof}

\begin{cor}\label{char subgrp}
    \normalfont
	Let $R$ and $\Phi_\rho$ be as in Theorem~\ref{Ch5_mainthm}. 
	Let $H$ be a subgroup of $G_{\pi, \sigma} (\Phi, R)$ containing $E'_{\pi, \sigma} (\Phi, R)$. 
    Then $E'_{\pi, \sigma} (\Phi, R)$ is a characteristic subgroup of $H$. 
    In particular, $E'_{\pi, \sigma} (\Phi, R)$ is a characteristic subgroup of $G_{\pi, \sigma} (\Phi, R)$.
\end{cor}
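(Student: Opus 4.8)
The plan is to deduce this corollary directly from the internal characterization of $E'_{\pi,\sigma}(\Phi,R)$ furnished by the preceding theorem (the Gvozdevsky characterization). The crucial observation is that the three defining properties (a)--(c) appearing there are purely group-theoretic and are formulated entirely in terms of the ambient group $H$; consequently they are preserved by every automorphism of $H$. The whole argument therefore reduces to checking that each property transports under an automorphism and then invoking the minimality (uniqueness) half of the characterization.

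First I would fix an arbitrary automorphism $\phi$ of $H$ and write $E = E'_{\pi,\sigma}(\Phi,R)$. Since $\phi$ is a bijective endomorphism of $H$, the image $\phi(E)$ is again a subgroup of $H$, and $E \le H$ by hypothesis, so the theorem applies to $\phi(E)$ as a candidate subgroup. I would then verify that $\phi(E)$ satisfies (a)--(c): for (a), $\phi$ carries normal subgroups of $H$ to normal subgroups, and if $E$ is generated as a normal subgroup by a single element $g_0$, then $\phi(E)$ is generated as a normal subgroup by $\phi(g_0)$; for (b), one has $\phi([E,E]) = [\phi(E),\phi(E)]$, so $\phi(E)$ is perfect because $E$ is; for (c), the centralizer of $\phi(E)$ in $H$ equals $\phi(C_H(E))$, which is abelian precisely because $C_H(E)$ is. Each of these is a one-line verification using only that $\phi$ is an automorphism of $H$.

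Having shown that $\phi(E)$ satisfies (a)--(c), the minimality assertion of the preceding theorem forces $E \subseteq \phi(E)$. Applying the identical reasoning to $\phi^{-1}$ yields $E \subseteq \phi^{-1}(E)$, i.e., $\phi(E) \subseteq E$; together these give $\phi(E) = E$. Since $\phi$ was an arbitrary automorphism of $H$, the subgroup $E$ is characteristic in $H$, which is the claim. The \emph{in particular} clause then follows immediately by taking $H = G_{\pi,\sigma}(\Phi,R)$, which trivially contains $E'_{\pi,\sigma}(\Phi,R)$ and hence meets the hypotheses.

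I do not anticipate a serious obstacle here, since the heavy lifting has already been carried out in the Gvozdevsky characterization theorem; the only point requiring any care is confirming that property (a) --- being generated \emph{as a normal subgroup} by one element --- is genuinely automorphism-stable. This hinges on the elementary identity that the $\phi$-image of the normal closure of $\{g_0\}$ in $H$ coincides with the normal closure of $\{\phi(g_0)\}$ in $H$, which I would isolate as the one substantive verification step; with that in hand the remainder of the proof is a short formal argument.
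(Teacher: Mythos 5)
Your proposal is correct and follows essentially the same route as the paper: the paper also deduces the corollary from the Gvozdevsky characterization by noting that an automorphism $\phi$ of $H$ carries $E'_{\pi,\sigma}(\Phi,R)$ to a subgroup satisfying properties (a)--(c) and then invoking minimality. Your version is in fact slightly cleaner in spelling out the two-sided inclusion via $\phi$ and $\phi^{-1}$, where the paper simply asserts that $\phi(E'_\sigma(R))$ is again minimal with these properties.
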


\begin{proof}
    Let $\phi: H \longrightarrow H$ be an automorphism of the group $H$. By Proposition~\ref{normal}, $E'_\sigma(R)$ is a normal subgroup of $H$, and hence $\phi(E'_\sigma(R))$ is normal in $\phi(H) = H$. Using a similar argument as above, one can show that $\phi(E'_\sigma(R))$ satisfies conditions (a)--(c) of the previous theorem and is a minimal subgroup of $H$ with these properties. Hence $\phi(E'_\sigma(R)) = E'_\sigma(R)$, as desired.
\end{proof}

\begin{rmk}
	The Theorem above implies not only that $E'_\sigma(R)$ is characteristic, but also that any abstract isomorphism between (possibly different) twisted Chevalley groups must preserve elementary subgroup. 
\end{rmk}


\section{Proof of Theorem~\ref{Ch5_mainthm}}\label{sec:Pf of main thm}

Let $H$ be a subgroup of $G_\sigma (R)$ normalized by $E'_\sigma (R)$. For $[\alpha] \in \Phi_\rho$, we write 
\[
    J_{[\alpha]} (H) = \begin{cases}
        \{ t \in R \mid x_{[\alpha]} (t) \in H \} & \text{if } [\alpha] \sim A_1, A_1^2, A_1^3; \\
        \{ t \in R \mid \exists \ u \in R \text{ with } x_{[\alpha]} (t, u) \in H \text{ or } x_{[\alpha]} (u,t) \in H \} & \text{if } [\alpha] \sim A_2. 
    \end{cases}
\]
Define $J = \displaystyle\bigcup_{[\alpha] \in \Phi_\rho} J_{[\alpha]}(H)$. To demonstrate the main theorem, we will first consider the following two propositions.

\begin{prop}\label{prop:E(R,J) subset of H}
    \normalfont
    Let $J$ be as above. Then
    \begin{enumerate}[(a)]
        \item $J$ is a $\theta$-invariant ideal of $R$.
        \item $E'_\sigma (R, J) \subset H$. 
    \end{enumerate}
\end{prop}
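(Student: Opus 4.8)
The plan is to show that the collection $J = \bigcup_{[\alpha] \in \Phi_\rho} J_{[\alpha]}(H)$ is a single $\theta$-invariant ideal, and then that the relative elementary subgroup at this level lies inside $H$. The key conceptual point is that, although $J$ is \emph{defined} as a union of sets indexed by root classes, the assumption that $H$ is normalized by $E'_\sigma(R)$ should force all these sets to coincide and to form an ideal simultaneously. The engine driving this is the same mechanism as in Theorem~\ref{Ch5_mainthm1}(i): given $x_{[\alpha]}(t) \in H$ and a root class $[\beta]$ with $\{[\alpha],[\beta]\}$ generating a rank-$2$ connected subsystem, conjugating $x_{[\alpha]}(t)$ by suitable generators $x_{[\gamma]}(\cdot)$ and taking commutators produces elements $x_{[\delta]}(\text{function of } t)$ lying in $H$ for neighbouring classes $[\delta]$, thereby transporting membership of $t$ from $J_{[\alpha]}(H)$ into $J_{[\delta]}(H)$.

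For part (a), I would first establish $\theta$-invariance and the additive-subgroup property of each $J_{[\alpha]}(H)$ using the Steinberg relations: closure under addition follows from $x_{[\alpha]}(t)x_{[\alpha]}(u) = x_{[\alpha]}(t+u)$, and $\theta$-invariance follows from $\sigma(x_{[\alpha]}(t)) = x_{[\alpha]}(\pm\bar t) \in H$ since $\sigma(H)=H$ (because $\sigma$ fixes $G_\sigma(R)$ pointwise and $E'_\sigma(R)$ is $\sigma$-stable). To see that the union $J$ is genuinely an ideal, I would use the commutator formulas of Section~\ref{subsec:CheComm}: for $r \in R_{[\gamma]}$ and $t \in J_{[\alpha]}(H)$, the relation $[x_{[\gamma]}(r), x_{[\alpha]}(t)] \in H$ (conjugate of an element of $H$ times an element of $H$) expands via type (b)--(g) formulas into products of $x_{[\delta]}(r t \cdots)$, showing the scaled products $r\cdot t$ again land in some $J_{[\delta]}(H)$. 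Running this across all adjacent pairs of classes, together with the connectedness of $\Phi_\rho$ (recall any two roots are joined by a chain of rank-$2$ subsystems of type $A_2$, $B_2$ or $G_2$), should force $J_{[\alpha]}(H) = J_{[\beta]}(H)$ for all $[\alpha],[\beta]$ and simultaneously make this common set closed under multiplication by $R$. For part (b), once $J$ is known to be an ideal with $x_{[\alpha]}(t) \in H$ for every $[\alpha]$ and every $t \in J_{[\alpha]}$, I would invoke Proposition~\ref{genofE(R,I)}: $E'_\sigma(R,J)$ is generated by elements $x_{[\alpha]}(r)x_{-[\alpha]}(u)x_{[\alpha]}(r)^{-1}$ with $u \in J_{[\alpha]}$, and each such generator is a conjugate by $E'_\sigma(R)$ of $x_{-[\alpha]}(u) \in H$, hence lies in $H$ by the normalizing hypothesis; therefore $E'_\sigma(R,J) \subset H$.

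\textbf{The main obstacle.} The delicate part is handling the type-${}^2A_{2n}$ situation, where some classes $[\alpha]$ are of type $A_2$ and membership is governed by the nonlinear group law on $\mathcal{A}(R)$ rather than by a plain ideal of $R$. There the definition of $J_{[\alpha]}(H)$ uses pairs $(t,u)$ with $t\bar t = u + \bar u$, and I must verify that the components extracted from an $A_2$-class interact correctly with the linear classes under the commutator formulas (c--ii) and (d--ii), so that the resulting $J$ is still an \emph{ordinary} ideal of $R$ and so that $\mathcal{A}(J)$-membership is recovered. This requires carefully tracking which of $t$, $\bar t$, $u$, $\bar u$ appear after each conjugation and using $1/2 \in R$ (and $1/3 \in R$ for ${}^3D_4$) to solve for the needed components; the hypothesis $1/2 \in R$ is precisely what lets me pass between the symmetric part $R_\theta$ and the full ring, as in Lemma~\ref{A1&A2}. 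I expect this bookkeeping, rather than any conceptual difficulty, to be where the real work lies.
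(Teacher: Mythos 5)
Your overall strategy is the one the paper actually follows: the transport mechanism you describe (chaining membership of $t$ through rank-$2$ subsystems via the commutator formulas) is exactly the content of Proposition~\ref{z to Rz} and Lemma~\ref{z to Rz in phi'}, and once that is in place both parts of the statement fall out in a few lines, with part (b) obtained — as you say — from the normalizing hypothesis applied to $E'_\sigma(J)\subset H$ (your detour through Proposition~\ref{genofE(R,I)} is harmless but unnecessary; the normal closure argument suffices).

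However, one step as written would fail: your derivation of $\theta$-invariance from ``$\sigma(x_{[\alpha]}(t)) = x_{[\alpha]}(\pm\bar t) \in H$ since $\sigma(H)=H$.'' The formula $\sigma(x_\alpha(t)) = x_{\bar\alpha}(\pm\bar t)$ holds for the \emph{untwisted} generators $x_\alpha(t)$, $\alpha\in\Phi$; the twisted generators $x_{[\alpha]}(t)$ are by construction $\sigma$-fixed (e.g.\ for $[\alpha]\sim A_1^2$, $\sigma$ merely swaps the two factors of $x_\alpha(t)x_{\bar\alpha}(\bar t)$), and every element of $H\subseteq G_\sigma(R)$ is fixed by $\sigma$ anyway. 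So applying $\sigma$ is a tautology and gives no information about $\bar t$; note that $x_{[\alpha]}(\bar t)=x_\alpha(\bar t)x_{\bar\alpha}(t)$ is a genuinely different element. The $\theta$-invariance of $J$ is not free: it comes out of the transport machinery itself, specifically from Weyl-group conjugations such as $w_{2[\beta]-[\gamma]}(1)\,x_{[\gamma]}(s)\,w_{2[\beta]-[\gamma]}(1)^{-1} = x_{[\gamma]}(-\bar s)$ (used in Case~(D1) of Lemma~\ref{z to Rz in phi'}) and from subcases like (C3), which produce $x_{[\delta]}(\bar t)\in H$ from $x_{[\gamma]}(t)\in H$. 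A related imprecision: the sets $J_{[\alpha]}(H)$ cannot literally coincide across classes (for $[\alpha]\sim A_1$ they live in $R_\theta$, for $[\alpha]\sim A_1^2$ in $R$); what the transport gives is that they all generate the same $\theta$-invariant ideal $J$, and that to verify additive closure of the union one must first move $t\in J_{[\alpha]}(H)$ and $u\in J_{[\beta]}(H)$ into a common class of type $A_1^2$ or $A_1^3$ — which is precisely how the paper's proof is organized. With the $\theta$-invariance rerouted through the conjugation relations, your argument goes through.
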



\begin{prop}\label{prop:U(R) cap H subset U(J)}
    \normalfont
    Let $J$ be as above. Then
    \begin{enumerate}[(a)]
        \item $U_\sigma (R) \cap H \subset U_\sigma (J)$. 
        \item $U_\sigma (rad (R)) T_\sigma (R) U^{-}_\sigma (R) \cap H \subset U_\sigma (J) T_\sigma (R, J) U^{-}_\sigma (J)$, where $rad(R)$ is the Jacobson radical of $R$.
    \end{enumerate}
\end{prop}


The proofs of Propositions \ref{prop:E(R,J) subset of H} and \ref{prop:U(R) cap H subset U(J)} can be found in Sections \ref{sec:Pf of prop 1} and \ref{sec:Pf of prop 2}, respectively.
Moving forward, let $\mathfrak{m}$ be a maximal ideal of $R$ and define $\bar{\mathfrak{m}} = \theta(\mathfrak{m})$. Set
\[
    S_{\mathfrak{m}} = \begin{cases} 
        R \setminus \mathfrak{m} & \text{if } \mathfrak{m} = \bar{\mathfrak{m}}, \\
        R \setminus (\mathfrak{m} \cup \bar{\mathfrak{m}}) & \text{if } \mathfrak{m} \neq \bar{\mathfrak{m}} \text{ and } o(\theta) = 2, \\
        R \setminus (\mathfrak{m} \cup \bar{\mathfrak{m}} \cup \bar{\bar{\mathfrak{m}}}) & \text{if } \mathfrak{m} \neq \bar{\mathfrak{m}} \text{ and } o(\theta) = 3.
    \end{cases}
\]
Then $S_\mathfrak{m}$ is a multiplicatively closed subset of $R$ such that $\theta (S_\mathfrak{m}) \subset S_\mathfrak{m}$. Therefore there is a natural automorphism of the ring $S_\mathfrak{m}^{-1}R$ induced by an automorphism $\theta$ of $R$. We denote this automorphism of $S_\mathfrak{m}^{-1}R$ also by $\theta$. Let $\psi_\mathfrak{m}: G_\sigma (R) \longrightarrow G_\sigma (S_\mathfrak{m}^{-1} R)$ be the homomorphism of groups induced by the canonical homomorphism of rings from $R$ to $S_\mathfrak{m}^{-1}R$. We write $R_S$ and $J_S$ for $S_\mathfrak{m}^{-1} R$ and $S_\mathfrak{m}^{-1} J$, respectively. By $S_\theta$, we mean $S_\mathfrak{m} \cap R_\theta$.


\begin{prop}\label{local:H subset G(R,J)}
    $\psi_\mathfrak{m} (H) \subset G_\sigma (R_S, J_S)$. 
\end{prop}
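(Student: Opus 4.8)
The plan is to split the argument according to the position of the ideal $J$ relative to $\mathfrak{m}$. Since $J$ is a $\theta$-invariant ideal (Proposition~\ref{prop:E(R,J) subset of H}), if $J \not\subseteq \mathfrak{m}$ then also $J \not\subseteq \bar{\mathfrak{m}}$ and $J \not\subseteq \bar{\bar{\mathfrak{m}}}$, so by prime avoidance $J \cap S_\mathfrak{m} \neq \emptyset$; hence $J_S = R_S$, $G_\sigma(R_S, J_S) = G_\sigma(R_S)$, and the inclusion is automatic. Thus I may assume $J \subseteq \mathfrak{m}$, whence $J_S \subseteq \mathrm{rad}(R_S)$. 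In this case $R_S$ is semilocal: its maximal ideals correspond to primes of $R$ contained in $\mathfrak{m} \cup \bar{\mathfrak{m}} \cup \bar{\bar{\mathfrak{m}}}$, and by prime avoidance each such prime lies inside one of $\mathfrak{m}, \bar{\mathfrak{m}}, \bar{\bar{\mathfrak{m}}}$, giving at most three maximal ideals. Consequently Propositions~\ref{G=G'} and Corollary~\ref{G(R,J)=UTV} apply over $R_S$, and since $1/2 \in R$ (and $1/3 \in R$ when $\Phi_\rho \sim {}^3 D_4$) these invertibility hypotheses pass to $R_S$, so Theorem~\ref{thm:KS3} and Corollary~\ref{C=G} are available over $R_S$ as well.

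Granting this, I will reduce the statement to a single-generator commutator estimate. Fix $h \in H$. It suffices to prove that $[\psi_\mathfrak{m}(h), g] \in G_\sigma(R_S, J_S)$ for every root generator $g = x_{[\alpha]}(t)$ with $[\alpha] \in \Phi_\rho$ and $t \in (R_S)_{[\alpha]}$. Indeed, by Corollary~\ref{G(R,J)=UTV} we have $G_\sigma(R_S, J_S) = U_\sigma(J_S) T_\sigma(R_S, J_S) U^{-}_\sigma(J_S)$, so reducing modulo $J_S$ sends each such commutator into the center of $G_\sigma(R_S/J_S)$. The map $\bar{g} \mapsto [\overline{\psi_\mathfrak{m}(h)}, \bar{g}]$ is then a homomorphism from the perfect group $E'_\sigma(R_S/J_S)$ (Corollary~\ref{cor:normalized}) into the abelian center, hence trivial; therefore $\overline{\psi_\mathfrak{m}(h)}$ centralizes $E'_\sigma(R_S/J_S)$ and so is central by Theorem~\ref{thm:KS3}, i.e. $\psi_\mathfrak{m}(h) \in \phi^{-1}(Z(G_\sigma(R_S/J_S))) = G_\sigma(R_S, J_S)$. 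This mirrors the perfectness argument already used in the proof of Theorem~\ref{Ch5_mainthm1}(ii).

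The heart of the proof is the remaining root-by-root estimate, for which I would use a denominator (dilation) argument. Writing $t$ through a representative with denominator $s \in S_\mathfrak{m}$ — interpreted through the $R_\theta$-module structure, respectively the $\mathcal{A}(R)$ structure, of $R_{[\alpha]}$ according to the type of $[\alpha]$ — I will relate $[\psi_\mathfrak{m}(h), x_{[\alpha]}(t)]$ to the images under $\psi_\mathfrak{m}$ of commutators $[h, x_{[\alpha]}(r)]$ with $r \in R_{[\alpha]}$, which lie in $H$ because $H$ is normalized by $E'_\sigma(R)$. Once such an element is arranged to lie in the big cell $U_\sigma(\mathrm{rad}(R)) T_\sigma(R) U^{-}_\sigma(R)$ of $G_\sigma(R)$, Proposition~\ref{prop:U(R) cap H subset U(J)}(b) forces all of its levels into $J$, so that its localization lands in $U_\sigma(J_S) T_\sigma(R_S, J_S) U^{-}_\sigma(J_S) = G_\sigma(R_S, J_S)$, which is exactly what is needed.

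I expect the principal obstacle to be precisely this last maneuver: producing, after clearing denominators, an element of $H$ that genuinely lies in the big cell to which Proposition~\ref{prop:U(R) cap H subset U(J)}(b) can be applied, and matching the denominators introduced by localization with the twisted coordinate data. For classes $[\alpha] \sim A_2$ the coordinate lives in $\mathcal{A}(R)$ with its nonlinear group law $\oplus$ and the scaling action $r \cdot (t,u) = (rt, r\bar{r} u)$, so tracking how a cleared denominator propagates into the level $J_S$ is considerably more delicate than in the untwisted Chevalley case of Vaserstein and Abe. It is here that the full strength of Proposition~\ref{prop:U(R) cap H subset U(J)}, together with the hypotheses $1/2 \in R$ and $1/3 \in R$ guaranteeing surjectivity of the relevant reduction maps, will be essential.
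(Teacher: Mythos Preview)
Your reduction step is correct: by Corollary~\ref{C=G} applied over the semilocal ring $R_S$, it suffices to show $[\psi_\mathfrak{m}(h), x_{[\alpha]}(t)] \in G_\sigma(R_S, J_S)$ for all $[\alpha] \in \Phi_\rho$ and $t \in (R_S)_{[\alpha]}$. The gap is in your ``heart of the proof''. You propose to clear denominators, obtain $[h, x_{[\alpha]}(r)] \in H$ for $r \in R_{[\alpha]}$, arrange this element to lie in the big cell $U_\sigma(\mathrm{rad}(R)) T_\sigma(R) U^{-}_\sigma(R)$, and then invoke Proposition~\ref{prop:U(R) cap H subset U(J)}(b). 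But there is no mechanism to place an arbitrary element of $H$ into that big cell: over a general ring $R$ there is no Bruhat decomposition, and nothing about the commutator $[h, x_{[\alpha]}(r)]$ forces it into $U_\sigma(\mathrm{rad}(R)) T_\sigma(R) U^{-}_\sigma(R)$. Worse, knowing that $[h, x_{[\alpha]}(r)] \in H$ tells you nothing about where its localization lands in $G_\sigma(R_S)$ --- that is precisely the content of the proposition you are trying to prove. Your own final paragraph acknowledges this obstacle but offers no device to overcome it.

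The paper's argument avoids this circularity by working in the opposite direction. It argues by contradiction: assuming $\psi_\mathfrak{m}(H) \not\subset G_\sigma(R_S, J_S)$, it passes to the residue ring $k_S = R_S/\mathrm{rad}(R_S)$ via $\pi$. If $\pi(\psi_\mathfrak{m}(H))$ is central one is already done; otherwise, since $\pi \circ \psi_\mathfrak{m}$ is surjective on elementary subgroups, $\pi(\psi_\mathfrak{m}(H))$ is a non-central subgroup of $G_\sigma(k_S)$ normalized by $E'_\sigma(k_S)$, and simplicity of $E'_\sigma(k_S)$ modulo its center (using Steinberg's Theorems~34 and~5, together with Proposition~\ref{prop:Chevalley as TChevalley} when $\mathfrak{m} \neq \bar{\mathfrak{m}}$) forces $E'_\sigma(k_S) \subset \pi(\psi_\mathfrak{m}(H))$. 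One can then lift a nontrivial unipotent element of $E'_\sigma(k_S)$ to some $z \in H$ with $\psi_\mathfrak{m}(z)$ in the big cell $U_\sigma(I_S R_S) T_\sigma(R_S) U^{-}_\sigma(R_S)$ over $R_S$ but not in $G_\sigma(R_S, J_S)$. This contradicts a localized version of Proposition~\ref{prop:U(R) cap H subset U(J)}(b), namely Proposition~\ref{prop:U(RS) cap psi(H) subset U(JS)}, whose proof itself requires Taddei's dilation Lemma~\ref{lemma:GT} precisely because $\psi_\mathfrak{m}(H)$ need not be normalized by $E'_\sigma(R_S)$. The simplicity step over $k_S$ is the missing idea in your proposal.
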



The proof of Proposition~\ref{local:H subset G(R,J)} has been provided in Section \ref{sec:Pf of prop 3}.


\begin{prop}\label{local:[x,g] in E(R,J)}
    For any element $g \in G_\sigma (R_S, J_S),$ there exists an elements $s \in S_\theta$ such that $$[\psi_\mathfrak{m}(x_{[\alpha]}(s \cdot t)), g] \in \psi_\mathfrak{m} (E'_\sigma(R, J))$$ for all $[\alpha] \in \Phi_\rho$ and $t \in  R_{[\alpha]}.$
\end{prop}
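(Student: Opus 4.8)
The plan is to realise the commutator inside a faithful representation, read off its denominators, and then clear them all by a single $\theta$-fixed scalar $s \in S_\theta$. I would fix the adjoint representation $\varpi_{ad}$ of the ambient Chevalley group acting on $\mathcal{L}(\Phi, R_S)$, so that $\psi_\mathfrak{m}(x_{[\alpha]}(u))$ becomes the exponential of a nilpotent matrix and every element of $G_\sigma(R_S)$ is a matrix over $R_S$. Two structural facts would be invoked at the outset: (i) since $g \in G_\sigma(R_S, J_S)$, its projection is central in $G_\sigma(R_S/J_S)$, and the centre acts trivially in the adjoint representation, so $\varpi_{ad}(g) \equiv e \pmod{J_S}$, exactly as in the proof of the Gvozdevsky characterisation; and (ii) by Theorem~\ref{Ch5_mainthm1} applied over $R_S$ (its hypotheses persist under localisation), the commutator $[\psi_\mathfrak{m}(x_{[\alpha]}(w)), g]$ already lies in $E'_\sigma(R_S, J_S)$, hence is a product of root elements $x_{[\gamma]}(\cdot)$ over $R_S$. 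The remaining task is then purely one of integrality: to produce a preimage of this product in $E'_\sigma(R,J)$.

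For the denominator count I would expand $g\, x_{[\alpha]}(w)\, g^{-1}$ via $\varpi_{ad}$: conjugation replaces the relevant nilpotents appearing in the exponential by their $\mathrm{Ad}(g)$-transforms, whose entries lie in $R_S$ with a common denominator. Since $\Phi_\rho$ is finite and each exponential has finitely many terms, a single $d \in S_\mathfrak{m}$ clears all these denominators simultaneously; fact (i) moreover guarantees that the transforms differ from the original nilpotents only by matrices with entries in $J_S$. To keep the scaling compatible with the twisted structure (the action $s\cdot(t_1,t_2) = (st_1, s\bar s\, t_2)$ on $\mathcal{A}(R)$, and the requirement $s\cdot t \in R_\theta$ when $[\alpha]\sim A_1$) the scalar must be $\theta$-fixed, so I would take $s = (d\bar d)^{k}$ (respectively $(d\bar d \bar{\bar d})^k$) for $k$ large; the product over the $\theta$-orbit lies in $S_\mathfrak{m}\cap R_\theta = S_\theta$ because $S_\mathfrak{m}$ is multiplicatively closed and $\theta$-stable.

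With this $s$ I would substitute $w = s\cdot t$. The factor $s$ both clears the denominators coming from $\mathrm{Ad}(g)$ and, combined with fact (i), forces each argument $x_{[\gamma]}(\cdot)$ appearing in the expansion of $[\psi_\mathfrak{m}(x_{[\alpha]}(s\cdot t)), g]$ to be $\psi_\mathfrak{m}$ of an element of $J_{[\gamma]}$ (for a high enough power $k$ one has $s\,(S_\mathfrak{m}^{-1}J)\cap R \subseteq J$), while any toral contribution lies in $\psi_\mathfrak{m}(H'_\sigma(R,J))$. Hence the whole commutator is $\psi_\mathfrak{m}$ of a product of such root and torus elements, i.e.\ of an element of $E'_\sigma(R,J)$; the statement holds uniformly in $t$ because $s$ is independent of $t$, and the same $s$ serves every root $[\alpha]$ by the finiteness used in choosing $d$.

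The main obstacle will be the bookkeeping forced by the twisting rather than the localisation idea itself. In the ${}^2A_{2n}$ case the root elements of type $A_2$ are parametrised by pairs in $\mathcal{A}(R)$ subject to the constraint $t\bar t = u + \bar u$, the group law $\oplus$ is non-commutative, and the commutator formulas (a$_1$)--(g) of Section~\ref{subsec:CheComm} are quadratic and cubic in the arguments; I must check that after scaling by $s$ the resulting pairs still satisfy the constraint and lie in $\mathcal{A}(J)$, and that one $\theta$-fixed $s$ simultaneously clears the denominators produced by all of these nonlinear formulas. Verifying that the cleared arguments genuinely fall in $J$ (and not merely in $S_\mathfrak{m}^{-1}J \cap R$) is the delicate point, and is precisely where the choice of a sufficiently high power $k$ and the congruence $\varpi_{ad}(g)\equiv e \pmod{J_S}$ are used in tandem.
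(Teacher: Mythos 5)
Your overall strategy---localise, control denominators, and clear them with a single $\theta$-fixed scalar---matches the spirit of the paper's argument, but there is a genuine gap at the decisive step. Knowing that $[\psi_\mathfrak{m}(x_{[\alpha]}(s\cdot t)),g]$ is a matrix with entries in the image of $R$ and congruent to the identity modulo $\psi_\mathfrak{m}(J)$ does not place it in $\psi_\mathfrak{m}(E'_\sigma(R,J))$: that would amount to identifying a congruence subgroup with a relative elementary subgroup, which is exactly the kind of statement this chapter cannot assume (in general one only has $E'_\sigma(R,J)\subseteq E'_\sigma(R)\cap G_\sigma(J)$, and the reverse inclusion is established in the paper only for the special doubled ring $R'$ in Proposition~\ref{normal}). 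Your phrase ``each argument $x_{[\gamma]}(\cdot)$ appearing in the expansion'' presupposes an explicit factorization of the commutator into root elements whose arguments you can track, but the adjoint-representation computation you describe produces $\exp\bigl(w\,\mathrm{Ad}(g)X_\alpha\bigr)$, and $\mathrm{Ad}(g)X_\alpha$ is not a root vector; no factorization into elements $x_{[\gamma]}(c_{[\gamma]})$ with identifiable arguments falls out of it. Relatedly, the assertion that for large $k$ one has $s\,(S_\mathfrak{m}^{-1}J)\cap R\subseteq J$ is false as stated, since the denominators occurring in $S_\mathfrak{m}^{-1}J$ are unbounded; a single $s$ can only clear the finitely many denominators attached to a \emph{fixed} expression, which again forces you to have such an expression in hand.

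The paper supplies the missing factorization by exploiting that $R_S$ is semi-local: it writes $G_\sigma(R_S,J_S)=E'_\sigma(R_S,J_S)\,T_\sigma(R_S,J_S)$ (Propositions~\ref{lavidecomposition}, \ref{G=G'} and Corollary~\ref{G(R,J)=UTV}) and, via Proposition~\ref{genofE(R,I)}, reduces to generators $h(\chi)$ with $\chi(\alpha)\equiv 1\pmod{J_S}$ and $z_{[\beta]}(u,v)=x_{[\beta]}(u)x_{-[\beta]}(v)x_{[\beta]}(u)^{-1}$ with $v\in (J_S)_{[\beta]}$. For each such generator the commutator with $x_{[\alpha]}(s\cdot t)$ is computed \emph{explicitly} from the Steinberg relations and the commutator formulas of Section~\ref{subsec:CheComm}, so that every argument visibly contains both a factor of $s$ and a factor coming from $J$ (or from $1-\chi(\alpha)$), whence it lies in $J_{[\gamma]}$ for $s$ a sufficiently high power of the finitely many denominators of that generator; the general case follows from the identity $[w,x_1\cdots x_n]=\{[w,x_1]\}\{{}^{x_1}[w,x_2]\}\cdots$. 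If you want to rescue your route you would need, at minimum, to invoke the uniqueness of the $U_\sigma(J_S)H'_\sigma(R_S,J_S)U^-_\sigma(J_S)$ decomposition and show that its unipotent and toral components descend to $J$ and $H'_\sigma(R,J)$ after scaling---which is a substantially more delicate argument than the one you have written, and is not needed once the generator-by-generator computation is done.
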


\begin{proof}
    Note that $R_S$ is a semi-local ring, and $J_S$ is either contained in $rad(R_S)$ or equal to $R_S$. 
    If $J_S \subset \operatorname{rad}(R_S)$, then by Proposition~\ref{lavidecomposition} and Corollary~\ref{G(R,J)=UTV}, we have $G_\sigma(R_S, J_S) = E'_\sigma(R_S, J_S)\, T_\sigma(R_S, J_S)$.
    If $J_S = R_S$, then by Proposition~\ref{G=G'}, it follows that $G_\sigma(R_S, J_S) = E'_\sigma(R_S, J_S)\, T_\sigma(R_S, J_S)$.
    In both cases, by Proposition \ref{genofE(R,I)}, we conclude that $G_\sigma(R_S, J_S)$ is generated by all $h(\chi) \in T_\sigma (R_S,J_S)$ and all the elements of the form 
    \[
    z_{[\beta]}(u,v) := x_{[\beta]}(u)\, x_{-[\beta]}(v)\, x_{[\beta]}(u)^{-1},
    \]
    where $[\beta] \in \Phi_\rho$, $u \in (R_S)_{[\beta]}$, and $v \in (J_S)_{[\beta]}$. We first prove the lemma for each of these generators.
    
    Let $g = h(\chi) \in T_\sigma (R_S, J_S)$ where $\chi \in \text{Hom}_1(\Lambda_\pi, R_S^{*})$. By Lemma \ref{lemma on T(J)}, we have $\chi(\alpha) \equiv 1$ (mod $J_S$) for every root $\alpha \in \Phi$, that is, $1 - \chi(\alpha) \in J_S$ for every root $\alpha \in \Phi$. Hence, there exists $u_\alpha \in J$ and $s_{\alpha} \in S$ such that $1 - \chi(\alpha) = \displaystyle\frac{u_{\alpha}}{s_{\alpha}}$. Take $s = \displaystyle\prod_{\alpha \in \Phi} s_{\alpha} \bar{s}_\alpha$ or $\displaystyle\prod_{\alpha \in \Phi} s_{\alpha} \bar{s}_\alpha \bar{\bar{s}}_\alpha$ depending on whether $o(\theta) = 2$ or $3$. Clearly, $s \in S_\theta$. 
    If $[\alpha] \sim A_2$ and $t = (t_1, t_2) \in R_{[\alpha]}$, then $t/1$ denotes $(t_1/1, t_2/1)$. Now 
    $$ [\psi_\mathfrak{m}(x_{[\alpha]}(s \cdot t)), g] = [x_{[\alpha]}(s/1 \cdot t/1), h(\chi)] = x_{[\alpha]} (s/1 \cdot t/1) x_{[\alpha]}(\chi(\alpha) s/1 \cdot t/1)^{-1}. $$
    If $[\alpha] \sim A_1, A_1^2$ or $A_1^3$ then 
    $$[\psi_\mathfrak{m}(x_{[\alpha]}(s \cdot t)), g] = x_{[\alpha]}\Big(\frac{st}{1}(1 - \chi(\alpha))\Big) = x_{[\alpha]}\Big(\frac{s t u_\alpha}{s_\alpha}\Big) = \psi_\mathfrak{m}(x_{[\alpha]}(a_{\alpha, t})),$$
    where $a_{\alpha, t} = \Bigg( \displaystyle\prod_{\substack{\beta \in \Phi \\ \beta \neq \alpha}} s_\beta \bar{s}_\beta \Bigg) \bar{s}_\alpha t u_\alpha $ or $\Bigg( \displaystyle\prod_{\substack{\beta \in \Phi \\ \beta \neq \alpha}} s_\beta \bar{s}_\beta \bar{\bar{s}}_\beta \Bigg) \bar{s}_\alpha \bar{\bar{s}}_\alpha t u_\alpha$ depending on whether $o(\theta) = 2$ or $3$. 
    Since $a_{\alpha, t} \in J_{[\alpha]}$, we obtain the required result in this case. Now if $[\alpha] \sim A_2$ then
    \begin{align*}
        [\psi_\mathfrak{m}(x_{[\alpha]}(s \cdot t)), g] &= x_{[\alpha]}\Bigg(\frac{st_1}{1}, \frac{s^2 t_2}{1}\Bigg) x_{[\alpha]} \Bigg(\chi(\alpha) \frac{st_1}{1}, \chi(\alpha) \overline{\chi(\alpha)} \frac{s^2 t_2}{1}\Bigg)^{-1} \\
        &= x_{[\alpha]}\Bigg((1 - \chi(\alpha))\frac{st_1}{1}, (1 - \chi(\alpha)) \frac{s^2t_2}{1} - \chi(\alpha) (1-\overline{\chi(\alpha)}) \frac{s^2 \bar{t}_2}{1} \Bigg) \\
        &= x_{[\alpha]}\Bigg(\frac{s t_1 u_\alpha}{s_\alpha}, \frac{s^2 t_2 u_\alpha}{s_\alpha} - \frac{s^2 \bar{t}_2 (s_\alpha - u_\alpha) \bar{u}_\alpha}{s_\alpha \bar{s}_\alpha} \Bigg) \\
        &= \psi_\mathfrak{m}(x_{[\alpha]}(a_{\alpha, t}, b_{\alpha, t})),
    \end{align*}
    where $a_{\alpha, t} = \Bigg( \displaystyle\prod_{\substack{\beta \in \Phi \\ \beta \neq \alpha}} s_\beta \bar{s}_\beta \Bigg) \bar{s}_\alpha t_1 u_\alpha$ and $b_{\alpha, t} = \Bigg( \displaystyle\prod_{\substack{\beta \in \Phi \\ \beta \neq \alpha}} s_\beta \bar{s}_\beta \Bigg) \bar{s}_\alpha s t_2 u_\alpha - \Bigg( \displaystyle\prod_{\substack{\beta \in \Phi \\ \beta \neq \alpha}} s_\beta \bar{s}_\beta \Bigg) s \bar{t}_2 (s_\alpha - u_\alpha) \bar{u}_\alpha$. Since $(a_{\alpha, t}, b_{\alpha, t}) \in \mathcal{A}(J)$, we obtain the required result in this case.

    \vspace{2mm}
    
    Now let $g = z_{[\beta]}(u, v)$, where $[\beta] \in \Phi_\rho, u \in (R_S)_{[\beta]}$ and $v \in (J_S)_{[\beta]}$. We want to find $s \in S_\theta$ such that 
    $$ [x_{[\alpha]}(s/1 \cdot t/1), z_{[\beta]}(u, v)] \in \psi_\mathfrak{m}(E'_\sigma(R, J)),$$ for all $[\alpha] \in \Phi_\rho$ and $t \in R_{[\alpha]}$.
    If $[\beta] \sim A_1, A_1^2$ or $A_1^3$, then write $u = a/b$ and $v = c/d$, where $a \in R, c \in J$ and $b, d \in S_\mathfrak{m}$. If $[\beta] \sim A_2$, then write $u = (u_1, u_2) = (a_1/b_1, a_2/b_2)$ and $v = (v_1, v_2) = (c_1/d_1, c_2/d_2)$, where $a_1, a_2 \in R,$ $c_1, c_2 \in J$ and $b_1, b_2, d_1, d_2 \in S_\mathfrak{m}$.
    Depending on the type of root $[\beta]$, we can choose the value of $s$ as below:
    \[
        s = \begin{cases}
            (b d)^m & \text{if } [\beta] \sim A_1, \\
            (b \bar{b} d \bar{d})^m & \text{if } [\beta] \sim A_1^2, \\
            (b \bar{b} \bar{\bar{b}} d \bar{d} \bar{\bar{d}})^m & \text{if } [\beta] \sim A_1^3, \\
            (b_1 \bar{b}_1 b_2 \bar{b}_2 d_1 \bar{d}_1 d_2 \bar{d}_2)^m & \text{if } [\beta] \sim A_2, 
        \end{cases}
    \]
    for sufficiently large positive integer $m$. We proceed using a similar method as in Proposition~\ref{genofE(R,I)}. 

    \vspace{2mm}

    \noindent \textbf{Case A. $[\alpha] \neq \pm [\beta]$.} Since no convex combination of the roots $[\alpha], -[\beta]$ and $i [\alpha] + j [\beta] \ (i, j \neq 0)$ is $0$, by using Chevalley commutator formula, we obtain 
    \begin{align*}
        & \hspace{-5mm} [x_{[\alpha]}(s/1 \cdot t/1), z_{[\beta]}(u,v)] \\
        &= x_{[\alpha]}(s/1 \cdot t/1) x_{[\beta]}(u) x_{-[\beta]}(v) x_{[\beta]}(u)^{-1} x_{[\alpha]}(s/1 \cdot t/1)^{-1} x_{[\beta]}(u) x_{-[\beta]}(v) x_{[\beta]}(u)^{-1} \\
        &= \Big( \prod x_{i[\alpha]+ j[\beta]}(c_{i,j}(s, t, u, v)) \Big),
    \end{align*}
    where $c_{i,j} = c_{i,j} (s, t, u, v)$ are functions of $s, t, u$ and $v$ such that $s$ and $v$ (or $\bar{v}, v_1, \bar{v}_1, v_2, \bar{v}_2$; the last four candidates appear only when $v=(v_1, v_2) \in \mathcal{A}(R)$) appear in each term of $c_{i,j}$. But then, for sufficiently large $m$ (see the definition of $s$), the values of $c_{i,j}(s,t,u,v)$ are in $J_{i[\alpha]+j[\beta]}$, as desired.

    \vspace{2mm}

    \noindent \textbf{Case B. $[\alpha] = \pm [\beta]$.} Since the rank of $\Phi_\rho > 1$, there exists $[\gamma] \in \Phi_\rho$ such that $[\beta]$ and $[\gamma]$ is base of connected subsystem $\Phi'$ of $\Phi_\rho$ is of rank $2$. 
    By Lemma \ref{lemma:u_1,u_2 exists}, there exists $a \in (J_S)_{[\beta] + [\gamma]}$ and $b \in (R_S)_{[\gamma]}$ such that
    $$ x_{-[\beta]}(v) = [x_{-[\beta]-[\gamma]}(a), x_{[\gamma]}(b)] h'$$ with $h' = \prod_{i \leq 0, j < 0} x_{i[\beta] + j[\gamma]} (a_{ij}) $. 
    But then 
    \begin{align*}
        z_{[\beta]}(u,v) &= x_{[\beta]}(u) x_{-[\beta]}(v) x_{[\beta]}(u)^{-1} \\ 
        &= x_{[\beta]}(u) \{ [x_{-[\beta]-[\gamma]}(a), x_{[\gamma]}(b)] h' \} x_{[\beta]}(u)^{-1} \\
        &= [x_{[\beta]}(u), x_{-[\beta]-[\gamma]}(a)] x_{-[\beta]-[\gamma]}(a) [x_{[\beta]}(u), x_{[\gamma]}(b)] x_{[\gamma]}(b) [x_{[\beta]}(u), x_{-[\beta]-[\gamma]}(a)^{-1}] \\
        & \hspace{10mm}  x_{-[\beta]-[\gamma]}(a)^{-1} [x_{[\beta]}(u), x_{[\gamma]}(b)^{-1}] x_{[\gamma]}(b)^{-1} [x_{[\beta]}(u), h'] h'.
    \end{align*}
    Now we consider
    \begin{align*}
        [x_{[\alpha]}(s/1 \cdot t/1), z_{[\beta]}(u,v)] &= x_{\pm [\beta]}(s/1 \cdot t/1) z_{[\beta]}(u,v) x_{\pm [\beta]}(s/1 \cdot t/1)^{-1} z_{[\beta]}(u,v)^{-1}.
    \end{align*}
    Using the above expression of $z_{[\beta]}(u,v)$ and Chevalley commutator formulas, we can conclude that $[x_{[\alpha]}(s/1 \cdot t/1), z_{[\beta]}(u,v)]$ can be expressed as a product of elements of the form $x_{[\delta]}(c_{[\delta]})$, where $[\delta] \in \Phi'$ and $c_{[\delta]} \in (R_S)_{[\delta]}$ such that $s$ and $v$ (or $\bar{v}, v_1, \bar{v}_1, v_2, \bar{v}_2$) appear in each term of $c_{[\delta]}$. Consequently, for sufficiently large $m$, the values of $c_{[\delta]}$ are in $J_{[\delta]}$, as desired.

    \vspace{2mm}

    Finally, let $g$ be an arbitrary element of $G_\sigma(R_S, J_S)$. We aim to show that for any given $[\alpha] \in \Phi_\rho$ and $t \in R_{[\alpha]}$, there exists $s \in S_\theta$ such that
    \[
        [\psi_{\mathfrak{m}}(x_{[\alpha]}(s \cdot t)), g] \in \psi_{\mathfrak{m}}(E'_\sigma(R, J)).
    \]
    As noted earlier, $g$ can be written as a product
    \[
        g = x_1 \cdots x_n,
    \]
    where each factor $x_i$ is either of the form $h(\chi)$ for some $\chi \in \mathrm{Hom}_1(\Lambda_\pi, (R_S)^*)$, or $z_{[\beta]}(u, v)$ with $[\beta] \in \Phi_\rho$, $u \in (R_S)_{[\beta]}$ and $v \in (J_S)_{[\beta]}$. 
    We have already established that for each such $x_i$, there exists $s_i \in S_\theta$ such that
    \[
        [\psi_{\mathfrak{m}}(x_{[\alpha]}(s_i \cdot t)), x_i] \in \psi_{\mathfrak{m}}(E'_\sigma(R, J)).
    \]
    Let $s = s_1 \cdots s_n$ and define $w = \psi_{\mathfrak{m}}(x_{[\alpha]}(s \cdot t))$. A straightforward computation yields
    \[
        [w, g] = [w, x_1 \cdots x_n] = \{ [w, x_1] \} \{ {}^{x_1}[w, x_2] \} \{ {}^{x_1 x_2}[w, x_3] \} \cdots \{ {}^{x_1 \cdots x_{n-1}}[w, x_n] \},
    \]
    where ${}^a b = a b a^{-1}$ denotes conjugation.
    Using a similar (though slightly adapted) argument previously applied to each generator $x_i$, we conclude that
    \[
        \{ {}^{x_1 \cdots x_{i-1}}[w, x_i] \} \in \psi_{\mathfrak{m}}(E'_\sigma(R, J)) \quad \text{for each } i.
    \]
    Therefore, the entire commutator $[w, g]$ lies in $\psi_{\mathfrak{m}}(E'_\sigma(R, J))$, as required.
\end{proof}


Before proceeding further, we state a lemma from G. Taddei \cite{GT}. Let $G_\pi(\Phi, R)$ be a Chevalley group over a commutative ring $R$. Consider $R[X]$, the polynomial ring in one variable $X$ with coefficients in $R$. For a maximal ideal $\mathfrak{m}$, let $\psi'_\mathfrak{m}: G_\pi (\Phi, R) \longrightarrow G_\pi (\Phi, R_S)$ denote the natural map induced by the canonical ring homomorphism $R \longrightarrow R_S$.

\begin{lemma}[{\cite[Lemma 3.14]{GT}}]\label{lemma:GT}
    Let $\epsilon (X)$ be an element of $G_\pi(\Phi, R[X])$. Suppose $\psi'_\mathfrak{m} (\epsilon (X)) = 1$ and $\epsilon  (0) = 1$. Then there exists an element $s$ of $S$ such that $\epsilon (sX) = 1$.
\end{lemma}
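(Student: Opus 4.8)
The plan is to forget the group structure almost entirely and argue inside the matrix realization $G_\pi(\Phi, R[X]) \subset GL_n(R[X])$ furnished by the construction of Chevalley groups, reducing the group-theoretic assertion to an elementary manipulation of polynomial matrices. Note that the twist is irrelevant here, so $\theta$ plays no role. Throughout I write $\psi'_\mathfrak{m}$ also for the map on $R[X]$-points induced by the canonical ring homomorphism $R[X] \longrightarrow R_S[X] = (S^{-1}R)[X]$, and I set
\[
    M(X) := \epsilon(X) - I \in M_n(R[X]),
\]
the ring of $n \times n$ matrices over $R[X]$.

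First I would exploit the hypothesis $\psi'_\mathfrak{m}(\epsilon(X)) = 1$, which says exactly that every entry of $M(X)$ vanishes in $R_S[X]$. A polynomial in $R[X]$ dies in $(S^{-1}R)[X]$ if and only if each of its finitely many coefficients is annihilated by some element of $S$. Since $M(X)$ has only finitely many entries, each of bounded degree, the product of all these annihilators is a single $s \in S$ with
\[
    s\, M(X) = 0 \quad \text{in } M_n(R[X]).
\]

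Next I would bring in the normalization $\epsilon(0) = 1$, i.e. $M(0) = 0$. This forces each entry of $M(X)$ to have zero constant term, so $M(X) = X\, M_1(X)$ for some $M_1(X) \in M_n(R[X])$. From $s\,X\,M_1(X) = 0$ and the fact that multiplication by $X$ is injective on $R[X]$ (hence on $M_n(R[X])$), I cancel $X$ to obtain $s\, M_1(X) = 0$. The concluding dilation step applies the $R$-algebra substitution $X \mapsto sX$ to this identity, giving $s\, M_1(sX) = 0$, whence
\[
    M(sX) = (sX)\,M_1(sX) = X\,\bigl(s\, M_1(sX)\bigr) = 0,
\]
and therefore $\epsilon(sX) = I + M(sX) = 1$. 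Because $X \mapsto sX$ is an algebra endomorphism of $R[X]$, it induces an endomorphism of $G_\pi(\Phi, R[X])$, so $\epsilon(sX)$ lies in the group and this matrix identity is the desired group identity.

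The only genuinely delicate point—the main obstacle—is the interplay exploited in the final paragraph: it is the combination of $\epsilon(0)=1$ (which extracts the factor $X$) with the $S$-annihilation (which, after the rescaling $X \mapsto sX$, kills the complementary factor) that upgrades the mere relation $s\,M(X)=0$ into the honest triviality $\epsilon(sX)=1$; neither ingredient suffices alone. Everything else is routine bookkeeping, namely the finiteness argument needed to replace the entrywise annihilators by one uniform $s$, and the observation that $X$ is a non-zero-divisor so that the cancellation of $X$ is legitimate.
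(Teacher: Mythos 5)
Your argument is correct: the paper does not reprove this statement but simply cites it from Taddei, and your coefficient-annihilation argument (localize entrywise, use $\epsilon(0)=1$ to extract the factor $X$, then rescale $X\mapsto sX$ to kill the remaining coefficients) is precisely the standard proof of that lemma. The one point worth stating explicitly, which you do, is that $X$ is a non-zero-divisor in $R[X]$ and that the substitution $X\mapsto sX$ is an $R$-algebra endomorphism, so the matrix identity $\epsilon(sX)=I$ really is the asserted group identity.
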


\begin{rmk}
    \normalfont
    Consider a twisted Chevalley group $G_\sigma (R)$. Note that an automorphism $\theta: R \longrightarrow R$ of order $n$ (where $n=2$ or $3$) can be naturally extended to an automorphism of $R[X]$ of the same order, denoted also by $\theta$. Therefore, we can make sense of the group $G_\sigma (R[X])$.
    Since $\psi'_\mathfrak{m} \mid_{G_\sigma (R)} = \psi_\mathfrak{m}$, we can apply the conclusion of Lemma~\ref{lemma:GT} to the case of twisted Chevalley groups as well. Moreover, in this context, we can also choose $s \in S_\theta$ (see \cite[Lemma $4.7$]{KS2}).
\end{rmk}


\begin{prop}\label{general:[x,g] in E(R,J)}
    For any maximal ideal $\mathfrak{m}$ of $R$ and $g \in H$, there exists $s \in S_\theta$ such that $$[x_{[\alpha]}(s \cdot t), g] \in E'_\sigma (R, J),$$ for all $[\alpha] \in \Phi_\rho$ and $t \in R_{[\alpha]}$.
\end{prop}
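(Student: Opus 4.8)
\textbf{Proof strategy for Proposition~\ref{general:[x,g] in E(R,J)}.}

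The plan is to reduce the global statement over $R$ to the local statement over $R_S$ that we have already established in Proposition~\ref{local:[x,g] in E(R,J)}, using the polynomial-lifting device of Taddei (Lemma~\ref{lemma:GT} and its twisted analogue in the following remark). The starting point is Proposition~\ref{local:H subset G(R,J)}, which gives $\psi_{\mathfrak m}(g) \in G_\sigma(R_S, J_S)$ for every $g \in H$. Applying Proposition~\ref{local:[x,g] in E(R,J)} to this element $\psi_{\mathfrak m}(g)$, we obtain some $s_0 \in S_\theta$ such that
\[
    [\psi_{\mathfrak m}(x_{[\alpha]}(s_0 \cdot t)),\, \psi_{\mathfrak m}(g)] \in \psi_{\mathfrak m}(E'_\sigma(R, J))
\]
for all $[\alpha] \in \Phi_\rho$ and $t \in R_{[\alpha]}$. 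The difficulty is that this identity holds only after applying $\psi_{\mathfrak m}$; I must upgrade it to a genuine identity in $G_\sigma(R)$ (with a possibly larger $s \in S_\theta$). This is precisely the kind of obstruction Lemma~\ref{lemma:GT} is designed to remove.

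To set up the lifting, first I would introduce the variable $X$ and consider the element $x_{[\alpha]}(X \cdot t) \in E'_\sigma(R[X])$, where $R[X]$ carries the extended automorphism $\theta$ of the same order (so that $G_\sigma(R[X])$ makes sense, as noted in the remark after Lemma~\ref{lemma:GT}). Form the commutator $\epsilon(X) := [x_{[\alpha]}(X \cdot t), g]$ in $G_\sigma(R[X])$; then $\epsilon(0) = 1$ since $x_{[\alpha]}(0) = 1$. The local computation above shows that, after rescaling $X \mapsto s_0 X$ and working modulo $E'_\sigma(R[X], J[X])$, the image $\psi_{\mathfrak m}(\epsilon(s_0 X))$ becomes trivial. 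More precisely, I would pass to the quotient by $E'_\sigma(R, J)$ — which is legitimate because $E'_\sigma(R, J)$ is normal in $G_\sigma(R)$ by Proposition~\ref{normal} — and regard the relevant commutator as living in $G_\sigma(R[X])/E'_\sigma(R[X], J[X])$, so that the hypotheses $\psi_{\mathfrak m}(\bar\epsilon(s_0 X)) = 1$ and $\bar\epsilon(0) = 1$ of Lemma~\ref{lemma:GT} are met for the induced map on the quotient. Lemma~\ref{lemma:GT} then furnishes an $s_1 \in S_\theta$ with $\bar\epsilon(s_1 s_0 X) = 1$, i.e. $[x_{[\alpha]}(s_1 s_0 X \cdot t), g] \in E'_\sigma(R[X], J[X])$; specializing $X = 1$ and setting $s = s_1 s_0$ yields $[x_{[\alpha]}(s \cdot t), g] \in E'_\sigma(R, J)$, which is the assertion.

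The main obstacle I anticipate is purely bookkeeping rather than conceptual: to invoke Lemma~\ref{lemma:GT} I need the relevant object to be a genuine element of a Chevalley (or twisted Chevalley) group over $R[X]$ whose specialization at $0$ is trivial and whose localization at $\mathfrak m$ is trivial \emph{on the nose}, whereas Proposition~\ref{local:[x,g] in E(R,J)} only gives triviality modulo $\psi_{\mathfrak m}(E'_\sigma(R,J))$. Handling this cleanly requires working in the quotient group $G_\sigma(R[X])/E'_\sigma(R[X], J[X])$ and checking that Taddei's lemma applies there; I would need to verify that the polynomial version of $E'_\sigma(\,\cdot\,, J)$ specializes and localizes compatibly (that $E'_\sigma(R[X], J[X])$ maps into $E'_\sigma(R, J)$ under $X = 1$, and into $\psi_{\mathfrak m}(E'_\sigma(R,J))$-type subgroups under $\psi_{\mathfrak m}$), together with the fact that the scalar $s$ can be chosen in $S_\theta$ as guaranteed by \cite[Lemma~4.7]{KS2}. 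A secondary point is that a single $s$ must work for all $[\alpha]$ and all $t \in R_{[\alpha]}$ simultaneously; since $\Phi_\rho$ is finite and each local commutator relation produces one scalar, I would take the product of the finitely many scalars arising from the distinct root classes, exactly as in the concluding product argument of Proposition~\ref{local:[x,g] in E(R,J)}.
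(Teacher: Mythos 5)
Your overall strategy -- localize via Proposition~\ref{local:H subset G(R,J)}, invoke the local commutator result, and then use Taddei's Lemma~\ref{lemma:GT} to clear the denominator, finishing with a product over the finitely many root classes -- is exactly the route the paper takes. You also correctly identify the crux: Lemma~\ref{lemma:GT} needs an element of $G_\sigma(R[X])$ whose localization is trivial \emph{on the nose}, whereas Proposition~\ref{local:[x,g] in E(R,J)} only gives triviality modulo $\psi_{\mathfrak m}(E'_\sigma(R,J))$.

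However, your proposed resolution of that crux -- passing to the quotient $G_\sigma(R[X])/E'_\sigma(R[X], J[X])$ and applying Taddei's lemma there -- is the step that fails. Lemma~\ref{lemma:GT} is a statement about elements of the Chevalley group realized inside $GL_n(R[X])$: its proof works with the finitely many polynomial matrix entries of $\epsilon(X)-1$, each of which has zero constant term (from $\epsilon(0)=1$) and is annihilated in $R_S[X]$ (from $\psi'_{\mathfrak m}(\epsilon(X))=1$), so that a single $s\in S$ kills all nonconstant coefficients. The abstract quotient group carries no such polynomial/matrix structure, so there is nothing to which the lemma's argument can be applied; "checking that Taddei's lemma applies there" is not bookkeeping but an obstruction. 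The paper's remedy is to avoid the quotient entirely: from the \emph{proof} (not just the statement) of Proposition~\ref{local:[x,g] in E(R,J)} one extracts an explicit product $\prod_{i=1}^{m} x_{[\alpha_i]}(c_i(X))$ with $c_i(X)\in (J[X])_{[\alpha_i]}$ and $c_i(0)=0$ whose image under $\psi''_{\mathfrak m}$ equals $\psi''_{\mathfrak m}\bigl([x_{[\alpha]}(s'X\cdot t),g]\bigr)$ exactly. One then sets
\[
    \epsilon_{[\alpha]}(X) \;=\; [x_{[\alpha]}(s'X\cdot t),\,g]\;\Bigl(\prod_{i=1}^{m} x_{[\alpha_i]}(c_i(X))\Bigr)^{-1}
\]
as a genuine element of $G_\sigma(R[X])$, which satisfies both hypotheses of Lemma~\ref{lemma:GT} on the nose; specializing $X=1$ after rescaling gives $[x_{[\alpha]}(s's'_{[\alpha]}\cdot t),g]=\prod_i x_{[\alpha_i]}(c_i(s'_{[\alpha]}))\in E'_\sigma(R,J)$. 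Your argument needs this explicit correction factor (with the condition $c_i(0)=0$, which is what makes $\epsilon_{[\alpha]}(0)=1$) rather than a quotient construction; without it the proof does not close.
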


\begin{proof}
    The canonical map $R \longrightarrow R_S$ naturally induces the following maps: 
    $$\psi_\mathfrak{m}: G_\sigma (R) \longrightarrow G_\sigma (R_S) \text{ and } \psi''_\mathfrak{m}: G_\sigma (R[X]) \longrightarrow G_\sigma (R_S[X]).$$
    It is clear that $\psi''_\mathfrak{m} \mid_{G_\sigma(R)} = \psi_\mathfrak{m}$.
    Let $g \in H$. Then, by Proposition~\ref{local:H subset G(R,J)}, $\psi_\mathfrak{m} (g) \in G_\sigma (R_S,J_S) \subset G_\sigma (R_S[X], J_S[X])$. By Proposition \ref{local:[x,g] in E(R,J)}, for the case of $\psi''_\mathfrak{m}$, there exists $s' \in S_\theta$ such that
    $$\psi''_\mathfrak{m} ([x_{[\alpha]}(s' X \cdot t), g]) \in \psi''_\mathfrak{m} (E'_\sigma (R[X], J[X])),$$
    for all $[\alpha] \in \Phi_\rho$ and $t \in R_{[\alpha]}$. Moreover, from the proof of Proposition \ref{local:[x,g] in E(R,J)}, we can preciously write 
    $$\psi''_\mathfrak{m} ([x_{[\alpha]}(s' X \cdot t), g]) = \psi''_\mathfrak{m} \Big( \prod_{i=1}^{m} x_{[\alpha_i]} (c_i (X)) \Big),$$
    where $c_i (X) \in (J[X])_{[\alpha_i]}$ such that $c_i (0) = 0$.
    Put $$ \epsilon_{[\alpha]} (X) = [x_{[\alpha]}(s' X \cdot t), g] \Big( \prod_{i=1}^{m} x_{[\alpha_i]} (c_i(X)) \Big)^{-1}.$$
    Then we see that $\epsilon_{[\alpha]} (X)$ satisfies the hypotheses of Lemma~\ref{lemma:GT}, and hence there exists $s'_{[\alpha]} \in S_\theta$ such that $\epsilon_{[\alpha]}(s'_{[\alpha]}X) = 1$. 
    Thus we obtain $$ [x_{[\alpha]}(s' (s'_{[\alpha]}X) \cdot t), g] = \prod_{i=1}^{m} x_{[\alpha_i]} (c_i(s'_{[\alpha]}X)).$$
    Now by taking $X = 1$ and $s_{[\alpha]} = s' s'_{[\alpha]}$, we derive
    $$ [x_{[\alpha]}(s_{[\alpha]} \cdot t), g] \in E'_\sigma (R, J).$$
    Finally, if we set $s = \prod_{[\alpha] \in \Phi_\rho} s_{[\alpha]} \in S_\theta$ then $s$ is the required element.
\end{proof}


Now we are in a position to prove Theorem~\ref{Ch5_mainthm}. 


\vspace{2mm}

\noindent \textit{Proof of Theorem \ref{Ch5_mainthm}.} Let $H$ be a subgroup of $G_\sigma (R)$ normalized by $E'_\sigma (R)$ and let $J$ be as earlier. Since $E'_\sigma (R, J) \subset H$ (see Proposition \ref{prop:E(R,J) subset of H}), it only remains to prove that $H \subset G_\sigma (R,J)$. To demonstrate this, it suffices to show, by Corollary \ref{C=G}, that if $g \in H$ then $[x_{[\alpha]}(t), g] \in E'_\sigma (R,J)$ for every $[\alpha] \in \Phi_\rho$ and $t \in R_{[\alpha]}$.

\vspace{2mm}

\noindent \textbf{Case A. $[\alpha] \sim A_1$:} Define $I_{g, [\alpha]} = \{ s \in R_\theta \mid [x_{[\alpha]} (st), g] \in E'_\sigma (R,J) \text{ for all } t \in R_{\theta} \}$. Then $I_{g,[\alpha]}$ is an ideal of $R_\theta$. To see this, let $s_1, s_2 \in I_{g,[\alpha]}$. Then for all $t \in R_\theta$,
$$ [x_{[\alpha]}((s_1 + s_2)t), g] = (x_{[\alpha]}(s_1 t) [x_{[\alpha]}(s_2 t), g] x_{[\alpha]}(s_1 t)^{-1}) [x_{[\alpha]}(s_2 t), g] \in E'_\sigma(R,J).$$
Therefore, $s_1 + s_2 \in I_{g, [\alpha]}$. By the definition of $I_{g, [\alpha]}$, it is clear that for all $r \in R_\theta$, we have $r s_1 \in I_{g, [\alpha]}$. 
For any maximal ideal $\mathfrak{m}_\theta$ of $R_\theta$, by Proposition \ref{general:[x,g] in E(R,J)} and by Lemma \ref{A1&A2}, there exists $s \in I_{g, [\alpha]}$ such that $s \not \in \mathfrak{m}_\theta$. Thus, we can conclude that $I_{g, [\alpha]} = R_\theta$. But then $1 \in I_{g, [\alpha]}$ and hence $[x_{[\alpha]}(t), g] \in E'_\sigma (R,J)$ for all $t \in R_{\theta}$, as desired.

\vspace{2mm}

\noindent \textbf{Case B. $[\alpha] \sim A_1^2$ or $A_1^3$:} Define $I_{g, [\alpha]} = \{ s \in R \mid [x_{[\alpha]} (st), g] \in E'_\sigma (R,J) \text{ for all } t \in R \}$. Then, by a similar argument as in Case A, we can see that $I_{g,[\alpha]}$ is an ideal of $R$. For any maximal ideal $\mathfrak{m}$ of $R$, by Proposition \ref{general:[x,g] in E(R,J)}, there exists $s \in I_{g, [\alpha]}$ such that $s \not \in \mathfrak{m}$. Thus, we can conclude that $I_{g, [\alpha]} = R$. But then $[x_{[\alpha]}(t), g] \in E'_\sigma (R,J)$ for all $t \in R$, as desired.

\vspace{2mm}

\noindent \textbf{Case C. $[\alpha] \sim A_2$:} Define $A_{g, [\alpha]} = \{ s \in R \mid [x_{[\alpha]} (s t, s \bar{s} t \bar{t}/2), g] \in E'_\sigma (R,J) \text{ for all } t \in R \}$. Let $I_{g, [\alpha]}^{(1)}$ be the ideal of $R$ generated by $A_{g, [\alpha]}$ and $I_{g, [\alpha]}^{(2)} = \{ s \in R_\theta \mid [x_{[\alpha]}(0, st), g] \in E'_\sigma (R, J) \text{ for all } t \in R \text{ such that } t = - \bar{t} \}$. 
We claim that $I_{g, [\alpha]}^{(2)}$ is an ideal of $R_\theta$. To see this, let $s_1, s_2 \in I^{(2)}_{g,[\alpha]}$. Then for all $t \in R$ such that $t = - \bar{t}$, we have 
\begin{align*}
    [x_{[\alpha]}(0, (s_1 + s_2)t), g] &= [x_{[\alpha]}(0, s_1 t) x_{[\alpha]}(0, s_2 t), g] \\
    &= (x_{[\alpha]}(0, s_1 t) [x_{[\alpha]}(0, s_2 t), g] x_{[\alpha]}(0, s_1 t)^{-1}) ([x_{[\alpha]}(0, s_1 t) ,g]) \\
    & \in E'_\sigma (R,J).
\end{align*}
Therefore, $s_1 + s_2 \in I^{(2)}_{g, [\alpha]}$. By the definition of $I^{(2)}_{g, [\alpha]}$, it is clear that for all $r \in R_\theta$, we have $r s \in I^{(2)}_{g, [\alpha]}$. 

For any maximal ideal $\mathfrak{m}$ of $R$, Proposition~\ref{general:[x,g] in E(R,J)} ensures the existence of an element $s \in A_{g, [\alpha]}$ such that $s \notin \mathfrak{m}$. Therefore, we conclude that $I_{g, [\alpha]}^{(1)} = R$.
Next, observe the following:
\begin{quote}
    \textit{For any maximal ideal $\mathfrak{m}$ of $R$ and any $g \in H$, there exists $s \in S_\theta$ such that
    \[
        [x_{[\alpha]}(0, st), g] \in E'_\sigma(R, J)
    \]
    for all $[\alpha] \in \Phi_\rho$ and all $t \in R$ satisfying $t = -\bar{t}$.}
\end{quote}
The proof of the above statement follows the same argument as that of Proposition~\ref{general:[x,g] in E(R,J)} and is therefore omitted. 
Hence, we conclude that $I^{(2)}_{g, [\alpha]} = R_\theta$. 

Let $(t, u) \in \mathcal{A}(R)$. Since $I^{(1)}_{g, [\alpha]} = R$, there exists $s_1, \dots, s_k \in A_{g, [\alpha]}$ and $r_1, \dots, r_k \in R$ such that $r_1 s_1 + \cdots + r_k s_k = 1$. Therefore, $t = r_1 s_1 t + \cdots + r_k s_k t$. Observe that
\begin{align*}
    (t, u) &= (r_1 s_1 t + \cdots + r_k s_k t, u) \\
    &= \left( r_1 s_1 t, \frac{r_1 \bar{r}_1 s_1 \bar{s}_1 t \bar{t}}{2} \right) \oplus \cdots \oplus \left( r_k s_k t, \frac{r_k \bar{r}_k s_k \bar{s}_k t \bar{t}}{2} \right) \oplus (0, c),
\end{align*}
for some $c \in R$ with $c = -\bar{c}$. Set 
\[
    x_i = x_{[\alpha]} \left( r_i s_i t,\; \frac{r_i \bar{r}_i s_i \bar{s}_i t \bar{t}}{2} \right) \quad \text{for each } i = 1, \dots, k, \quad \text{and} \quad x_{k+1} = x_{[\alpha]}(0, c).
\]
Then $x_{[\alpha]}(t, u) = x_1 \dots x_{k+1}$.
Since $s_i \in A_{g, [\alpha]}$, it follows that $x_i \in E'_\sigma (R, J)$ for all $i = 1, \dots, k$. Moreover, as $1 \in I^{(2)}_{g, [\alpha]}$, we have $x_{k+1} \in E'_\sigma (R, J)$.
Therefore,
\begin{align*}
    [x_{[\alpha]}(t,u), g] &= [x_1 \dots x_{k+1}, g] \\
    &= \{ {}^{(x_1 \cdots x_k)}[x_{k+1}, g] \} \{ {}^{(x_1 \cdots x_{k-1})} [x_k, g] \} \cdots \{ {}^{x_1} [x_2, g] \} \{ [x_1, g] \} \in E'_\sigma (R,J),
\end{align*}
as desired. \qed


\section{Proof of Proposition \ref{prop:E(R,J) subset of H}}\label{sec:Pf of prop 1}


\begin{lemma}\label{A(R)}
    \normalfont
    Let $R$ be a ring with unity, and let $J$ be a $\theta$-invariant ideal of $R$. Assume that $1/2 \in R$. Consider the group $G = (\mathcal{A}(J), \oplus)$. Define the subgroups $H = \{ (0,u) \mid u \in J, \bar{u} = - u \}$ and $K = \langle (r,r \bar{r}/2) \mid r \in J \rangle$ of $G$. Then $H$ is a normal subgroup of $G$; in fact, it is contained in the centre of $G$, and $G = HK$. Moreover, if $J = R$ then $H \subset K$, that is, $G = K$.
\end{lemma}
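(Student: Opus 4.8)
The plan is to establish the three assertions in turn, working directly with the explicit group law $(t,u)\oplus(t',u')=(t+t',\,u+u'+\bar{t}t')$ on $\mathcal{A}(J)$, whose identity is $(0,0)$ and whose inverse is $(t,u)^{-1}=(-t,\bar{u})$. The organizing observation I would record first is that an element of the form $(0,u)$ belongs to $\mathcal{A}(J)$ precisely when $\bar{u}=-u$: the defining relation $t\bar{t}=u+\bar{u}$ degenerates to $0=u+\bar{u}$. Thus $H$ is exactly the set of elements of $\mathcal{A}(J)$ with vanishing first coordinate, and on $H$ the operation reduces to $(0,u)\oplus(0,u')=(0,u+u')$, so $H$ is an additive group.

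For normality I would show the stronger centrality. A one-line computation gives $(0,u)\oplus(t',u')=(t',u+u')$ and $(t',u')\oplus(0,u)=(t',u'+u)$, which agree; closure of $H$ under $\oplus$ and inverses is immediate from the additive description and from $\overline{-u}=-(-u)$. Hence $H$ is a central, and in particular normal, subgroup of $G$.

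For $G=HK$, given $(t,u)\in\mathcal{A}(J)$ I would isolate the generator $(t,\,t\bar{t}/2)\in K$; this lies in $\mathcal{A}(J)$ because $\overline{t\bar{t}/2}=t\bar{t}/2$ (using $1/2\in R$), so its second coordinate doubles to $t\bar{t}$. Setting $w:=u-t\bar{t}/2$, the membership relation $u+\bar{u}=t\bar{t}$ translates exactly into $\bar{w}=-w$, so $(0,w)\in H$, and then $(0,w)\oplus(t,\,t\bar{t}/2)=(t,\,w+t\bar{t}/2)=(t,u)$. This exhibits every element as a product of an element of $H$ with a single generator of $K$, giving $G=HK$.

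The hard part will be the final claim $H\subseteq K$, which is the only place where $J=R$ is genuinely used. Here I would compute the product of two generators and compare it with the generator indexed by the sum,
\[
    (r,\,r\bar{r}/2)\oplus(r',\,r'\bar{r'}/2)=(0,w_0)\oplus\bigl(r+r',\,(r+r')\overline{(r+r')}/2\bigr),
\]
where a short calculation using commutativity of $R$ yields $w_0=(r'\bar{r}-r\bar{r'})/2$ and confirms $\bar{w_0}=-w_0$. Since the inverse of a generator is again a generator (the one indexed by $-r$), this shows $(0,w_0)\in K$ for all $r,r'\in R$. The decisive step is then to produce an arbitrary skew element: given $w$ with $\bar{w}=-w$, I would take $r'=1$ — legitimate as a generator only because $1\in J=R$ — and $r=-w$, so that $\bar{r}=-\bar{w}=w$ and hence $w_0=(\bar{r}-r)/2=(w+w)/2=w$. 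Therefore $(0,w)\in K$, giving $H\subseteq K$ and, combined with $G=HK$, the conclusion $G=K$. I expect this generator-product identity, together with recognizing that the role of $J=R$ is precisely to make $1$ available as a generating index, to be the crux of the argument; the remaining verifications are routine.
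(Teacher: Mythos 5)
Your proposal is correct and follows essentially the same route as the paper: the same centrality computation, the same splitting $(t,u)=(0,\,u-t\bar t/2)\oplus(t,\,t\bar t/2)$ for $G=HK$, and the same key idea for $H\subseteq K$ when $J=R$, namely realizing an arbitrary skew element $(0,w)$ as an explicit word in the generators $(r,r\bar r/2)$ using the fact that $1$ is then an admissible index. The only cosmetic difference is that you first derive the general two-generator product formula $(r,r\bar r/2)\oplus(r',r'\bar r'/2)=(0,(r'\bar r-r\bar r')/2)\oplus(r+r',\ldots)$ and specialize $r'=1$, $r=-w$, whereas the paper writes down the corresponding three-generator identity directly.
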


\begin{proof}
    The first assertion is clear as $(g_1,g_2) (0,u) (g_1,g_2)^{-1} = (0,u)$ for appropriate $g_1$, $g_2$, $u \in J$. For the second assertion, note that for given $(g_1, g_2) \in \mathcal{A}(J)$, we have
    $$(g_1, g_2) = (g_1, g_1 \bar{g_1}/2) (0, (g_2 - \bar{g_2})/2).$$ Hence $G = HK$. Now if $J=R$, then we want to show that $H \subset K$.
    Our work is done if we show that $(0,(g_2 - \bar{g_2})/2)$ is generated by elements of the form $(r, r \bar{r}/2), \ r \in R$, which follows from below: $$ (0,(g_2 - \bar{g_2})/2) = (1+g_2, (1+g_2) (1 + \bar{g_2})/2)^{-1} (1, 1/2) (g_2, g_2 \bar{g_2}/2).$$
\end{proof}


\begin{lemma}\label{theta Rz}
    \normalfont
    Assume that $1/2 \in R$ if $o(\theta) = 2$ and $1/3 \in R$ if $o(\theta) = 3$. Let $z \in R$. 
    \begin{enumerate}[(a)]
        \item If $z = \bar{z}$, then $(Rz)_\theta = R_\theta z$.
        \item If $o(\theta) = 2$ and $rz + r' \bar{z} \in (Rz + R\bar{z})_\theta$, then there exists $t \in R$ such that $rz + r' \bar{z} = tz + \bar{t} \bar{z}.$
        \item If $o(\theta) = 3$ and $rz + r' \bar{z} + r'' \bar{\bar{z}} \in (Rz + R{\bar{z}} + R{\bar{\bar{z}}})_\theta$, then there exists $t \in R$ such that $rz + r' \bar{z} + r'' \bar{\bar{z}} = tz + \overline{tz} + \overline{\overline{tz}}.$ 
    \end{enumerate}
\end{lemma}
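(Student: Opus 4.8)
The plan is to prove the three statements by exhibiting explicit preimages, exploiting that the relevant averaging operators (division by $2$ or $3$) are available under the stated hypotheses. The common thread is this: an element of a module like $Rz$ or $Rz + R\bar z$ is fixed by $\theta$ precisely when its ``symmetric average'' already represents it, and invertibility of $o(\theta)$ lets us project onto the $\theta$-fixed part without leaving the submodule generated by $z$ (and its conjugates). I would handle the parts in order, since part (a) is the cleanest warm-up and parts (b), (c) are the genuine content.

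For part (a), assuming $z = \bar z$: the inclusion $R_\theta z \subseteq (Rz)_\theta$ is immediate, since $rz$ with $r = \bar r$ satisfies $\overline{rz} = \bar r \bar z = rz$. For the reverse inclusion, take $rz \in (Rz)_\theta$, so $rz = \overline{rz} = \bar r \bar z = \bar r z$. I would like to conclude $r = \bar r$, but $z$ need not be a nonzerodivisor, so instead I average: set $t = (r + \bar r)/2$ when $o(\theta) = 2$ (or the analogous three-term average when $o(\theta)=3$). Then $t \in R_\theta$, and $tz = \tfrac12(rz + \bar r z) = \tfrac12(rz + rz) = rz$ using $\bar r z = rz$ from the fixed-point condition. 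Hence $rz = tz \in R_\theta z$. This is the model for the harder parts.

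For part (b), with $o(\theta)=2$ and $w := rz + r'\bar z \in (Rz + R\bar z)_\theta$, the condition $w = \bar w$ reads $rz + r'\bar z = \bar r \bar z + \bar{r'} z$, i.e. $(r - \bar{r'})z + (r' - \bar r)\bar z = 0$. The target is to find a single $t$ with $w = tz + \bar t \bar z$. The natural candidate is again an averaging choice; I would try $t = \tfrac12(r + \bar{r'})$, whose conjugate is $\bar t = \tfrac12(\bar r + r')$, and then compute $tz + \bar t \bar z = \tfrac12(r+\bar{r'})z + \tfrac12(\bar r + r')\bar z$. Rearranging and feeding in the relation $(r-\bar{r'})z = (\bar r - r')\bar z$ from the fixed-point hypothesis should collapse this to $rz + r'\bar z = w$. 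Part (c) is entirely parallel but with three terms and division by $3$: given $w = rz + r'\bar z + r''\bar{\bar z}$ fixed by $\theta$, the condition $w = \bar w$ (note $\bar{\bar{\bar z}} = z$) gives a cyclic relation among the coefficients, and I would set $t = \tfrac13(r + \bar{r'} + \bar{\bar{r''}})$ so that $tz + \overline{tz} + \overline{\overline{tz}}$ symmetrizes correctly; substituting the cyclic relation should yield $w$.

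The main obstacle I anticipate is bookkeeping rather than conceptual difficulty: in parts (b) and (c) one cannot cancel $z$, so every simplification must be driven purely by the $\theta$-invariance relation among the coefficients, and I must be careful to use that relation (not illegitimate cancellation) at each step. The delicate point is verifying that the averaged $t$ really reproduces $w$ and not merely its symmetrization; this is where the fixed-point identity $(r - \bar{r'})z + (r' - \bar r)\bar z = 0$ (and its cyclic analogue) does the essential work, converting the ``wrong'' cross terms into the right ones. I would double-check the three-term case especially, since the cyclic structure of $\theta$ of order $3$ makes the coefficient of each of $z, \bar z, \bar{\bar z}$ a genuine constraint that must all be used simultaneously.
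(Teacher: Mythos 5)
Parts (a) and (b) of your proposal are correct, and in part (b) your candidate $t = \tfrac12(r+\bar{r'})$ is literally the same element the paper uses (the paper writes it as $t = r - a/2$ with $a = r - \bar{r'}$); the verification via the relation $(r-\bar{r'})z + (r'-\bar r)\bar z = 0$ goes through exactly as you sketch.

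Part (c), however, contains a genuine error: the candidate $t = \tfrac13(r + \bar{r'} + \bar{\bar{r''}})$ is the wrong symmetrization and the verification you defer will not go through. The point is that each coefficient must be conjugated by the \emph{inverse} of the power of $\theta$ sitting on its copy of $z$: writing $\mathrm{Tr}(x) = x + \bar x + \bar{\bar x}$, one has $\mathrm{Tr}(r'\bar z) = \mathrm{Tr}\bigl(\theta^{2}(r'\bar z)\bigr) = \mathrm{Tr}(\bar{\bar{r'}}\,z)$ and $\mathrm{Tr}(r''\bar{\bar z}) = \mathrm{Tr}\bigl(\theta(r''\bar{\bar z})\bigr) = \mathrm{Tr}(\bar{r''}\,z)$, so from $3w = \mathrm{Tr}(w)$ one gets $w = \mathrm{Tr}(tz)$ with
\[
t \;=\; \tfrac13\bigl(r + \bar{r''} + \bar{\bar{r'}}\bigr),
\]
which is the paper's choice (there written as $t = r - a/3 + \bar{\bar b}/3$). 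Your $t$ swaps the conjugates on $r'$ and $r''$, and the discrepancy $\tfrac13\bigl((\bar{r'}-\bar{\bar{r'}})-(\bar{r''}-\bar{\bar{r''}})\bigr)$ is not killed by the $\theta$-invariance relation: for instance, over a cyclic cubic field with $r=0$, $r' = \bar z^{-1}u$, $r'' = -\bar{\bar z}^{-1}u$ (so that $w = 0$), your formula requires $\mathrm{Tr}\bigl((\bar{\bar z}\bar z^{-1} - \bar z\bar{\bar z}^{-1})u\bigr) = 0$ for all $u$, which fails for generic $z$ by nondegeneracy of the trace form. With the corrected $t$ the rest of your plan is sound and coincides with the paper's computation; the trace identity above also gives a cleaner route to both (b) and (c) than checking coefficients by hand.
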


\begin{proof}
    Note that $(a)$ follows from $(b)$ (or $(c)$) and our assumption on $R$. To prove $(b)$, observe that $rz + r' \bar{z} \in (Rz + R\bar{z})_\theta \implies \overline{(rz + r' \bar{z})} = rz + r' \bar{z} \implies (r - \bar{r'}) z = (\bar{r} - r') \bar{z}.$ Set $a:= r-\bar{r'},$ then $\bar{a} = \bar{r} - r'$ and $az = \overline{az}$. Now let $t = r - a/2 = r/2 + \bar{r'}/2$, then $\bar{t} = \bar{r} - \bar{a}/2 = r' + \bar{a}/2$. Then $tz + \overline{tz} = (r - a/2)z + (r' + \bar{a}/2) \bar{z} = rz + r' \bar{z} + (\overline{az} - az)/2 = rz + r' \bar{z},$ as desired. Similarly, to prove $(c)$, observe that $rz + r' \bar{z} + r'' \bar{\bar{z}} \in (Rz + R\bar{z} + R\bar{\bar{z}})_\theta \implies \overline{rz + r' \bar{z} + r'' \bar{\bar{z}}} = rz + r' \bar{z} + r'' \bar{\bar{z}} \implies \bar{r} \bar{z} + \bar{r'} \bar{\bar{z}} + \bar{r''} z = rz + r' \bar{z} + r'' \bar{\bar{z}} \implies (r - \bar{r''}) z + (r' - \bar{r}) \bar{z} + (r'' - \bar{r'}) \bar{\bar{z}} = 0.$ Set $a:= r - \bar{r''}, b:= r' - \bar{r}$ and $c:= r'' - \bar{r'}$, then $\bar{\bar{a}} + \bar{b} + c = 0$ and $az + b \bar{z} + c \bar{\bar{z}} = 0$. Now let $t = r - a/3 + \bar{\bar{b}}/3 = (r + \bar{r''} + \bar{\bar{r'}})/3$, then $\bar{t} = r' - \bar{a}/3 - 2b/3$ and $\bar{\bar{t}} = r'' + 2 \bar{\bar{a}}/3 + \bar{b}/3$. Then 
    \begin{align*}
        tz + \overline{tz} + \overline{\overline{tz}} &= (r - a/3 + \bar{\bar{b}}/3)z + (r' - \bar{a}/3 - 2b/3) \bar{z} + (r'' + 2 \bar{\bar{a}}/3 + \bar{b}/3) \bar{\bar{z}} \\
        &= (rz + \overline{rz} + \overline{\overline{rz}}) - 2(az + b \bar{z} + (-\bar{\bar{a}} - \bar{b})\bar{\bar{z}})/3 - (\bar{a}\bar{z} + \bar{b} \bar{\bar{z}} + (-a - \bar{\bar{b}})z)/3 \\
        &= (rz + \overline{rz} + \overline{\overline{rz}}) - 2(az + b \bar{z} + c \bar{\bar{z}})/3 - (\bar{a}\bar{z} + \bar{b} \bar{\bar{z}} + \bar{c} z)/3 \\
        &= (rz + \overline{rz} + \overline{\overline{rz}}),
    \end{align*}
    as desired.
\end{proof}


\begin{lemma}\label{A(I+J) = A(I) A(J)}
    \normalfont
    Let $I$ and $J$ be $\theta$-invariant ideals of $R$. Then $\mathcal{A}(I + J) = \mathcal{A}(I) \mathcal{A} (J) = \mathcal{A}(J) \mathcal{A} (I)$.
\end{lemma}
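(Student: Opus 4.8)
The plan is to prove the two displayed equalities by a pair of inclusions carried out inside the group $(\mathcal{A}(R), \oplus)$, exploiting the facts that $\mathcal{A}(I)$ and $\mathcal{A}(J)$ are subgroups of $\mathcal{A}(R)$ and that the kernel $Z := \{(0,c) \mid c \in R_\theta^-\}$ of the projection $\pi : \mathcal{A}(R) \to (R,+)$, $(t,u) \mapsto t$, is central (cf. Lemma~\ref{A(R)}). Since pairs of type $A_2$, and hence the group $\mathcal{A}$, occur only for $\Phi_\rho \sim {}^2A_{2n}$ where $o(\theta) = 2$, I may freely use $1/2 \in R$ throughout. The forward inclusions $\mathcal{A}(I)\mathcal{A}(J) \subseteq \mathcal{A}(I+J)$ and $\mathcal{A}(J)\mathcal{A}(I) \subseteq \mathcal{A}(I+J)$ are routine: for $(a,b) \in \mathcal{A}(I)$ and $(a',b') \in \mathcal{A}(J)$ one has $(a,b)\oplus(a',b') = (a+a',\, b+b'+\bar a a')$, and $a+a' \in I+J$ while $b+b'+\bar a a' \in I + J + IJ \subseteq I+J$ because $I$ and $J$ are $\theta$-invariant ideals.

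The substance is the reverse inclusion $\mathcal{A}(I+J) \subseteq \mathcal{A}(I)\mathcal{A}(J)$. Given $(t,u) \in \mathcal{A}(I+J)$, I would first write $t = t_I + t_J$ with $t_I \in I$, $t_J \in J$. Using $1/2 \in R$, each piece lifts to $\mathcal{A}$: the elements $(t_I, t_I\bar t_I/2) \in \mathcal{A}(I)$ and $(t_J, t_J\bar t_J/2) \in \mathcal{A}(J)$ are legitimate, since $t_I\bar t_I/2 \in I$ has the correct symmetric part, and their product $(t,u_0)$ has first coordinate $t$. Because both $(t,u)$ and $(t,u_0)$ satisfy the defining relation of $\mathcal{A}(R)$, the difference $v := u - u_0$ lies in $(I+J)_\theta^- := \{c \in I+J \mid \bar c = -c\}$, and $(t,u) = (t,u_0)\oplus(0,v)$. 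Thus the problem reduces to writing the central element $(0,v)$ as a product of a central element of $\mathcal{A}(I)$ and one of $\mathcal{A}(J)$, after which centrality of $Z$ lets me absorb these into the two lifts to land in $\mathcal{A}(I)\mathcal{A}(J)$.

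The main obstacle — really the only non-formal step — is the additive sub-lemma $(I+J)_\theta^- = I_\theta^- + J_\theta^-$, where $I_\theta^- := I \cap R_\theta^-$ and similarly for $J$. Here $1/2$ is essential: given $v \in (I+J)_\theta^-$, I write $v = v_I + v_J$ with $v_I \in I$, $v_J \in J$; applying $\theta$ and using $\bar v = -v$ forces $v_I + \bar v_I = -(v_J + \bar v_J)$, whence $c := (v_I - \bar v_I)/2 \in I_\theta^-$ and $c' := (v_J - \bar v_J)/2 \in J_\theta^-$ satisfy $c + c' = (v-\bar v)/2 = v$. Feeding $c, c'$ back in yields $(t,u) = \bigl(t_I,\, t_I\bar t_I/2 + c\bigr) \oplus \bigl(t_J,\, t_J\bar t_J/2 + c'\bigr) \in \mathcal{A}(I)\mathcal{A}(J)$. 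Finally, since $I+J = J+I$, the identical argument with the roles of $I$ and $J$ interchanged gives $\mathcal{A}(I+J) = \mathcal{A}(J)\mathcal{A}(I)$, establishing all three equalities at once.
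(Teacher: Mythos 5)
Your proof is correct and takes essentially the same approach as the paper: both split the first coordinate as $t = t_I + t_J$, lift each piece with second coordinate $t_\bullet\bar t_\bullet/2$ plus a skew correction extracted by halving, and account for the leftover central element of the form $(0,c)$ with $\bar c = -c$. The only difference is organizational — the paper writes one explicit three-factor decomposition (with the cross-term $(0,(a_1\bar b_1-\bar a_1 b_1)/2)$ as a separate central factor), whereas you isolate the additive sub-lemma $(I+J)_\theta^- = I_\theta^- + J_\theta^-$, which is implicit in the paper's formula.
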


\begin{proof}
    Clearly, $\mathcal{A}(I) \mathcal{A} (J) \subset \mathcal{A}(I + J)$. For converge, let $(r_1, r_2) \in \mathcal{A}(I+J)$. Then $r_1 \bar{r}_1 = r_2 + \bar{r}_2$ and there exists $a_1, a_2 \in I$ and $b_1, b_2 \in J$ such that $r_1 = a_1 + b_1$ and $r_2 = a_2 + b_2$. Therefore,
    \begin{align*}
        & r_1 \bar{r}_1 = r_2 + \bar{r}_2 \\
        \implies & (a_1 + b_1) (\bar{a}_1 + \bar{b}_1) = (a_2 + b_2) + (\bar{a}_2 + \bar{b}_2) \\
        \implies & a_1 \bar{a}_1 + a_1 \bar{b}_1 + b_1 \bar{a}_1 + b_1 \bar{b}_1 = (a_2 + b_2) + (\bar{a}_2 + \bar{b}_2). 
    \end{align*}
    By using this, we can write 
    \begin{align*}
        (r_1, r_2) &= (a_1 + b_1, a_2 + b_2) \\
        &= \Big(a_1, \frac{a_1 \bar{a}_1 + (a_2 - \bar{a}_2)}{2}\Big) \oplus \Big(b_1, \frac{b_1 \bar{b}_1 + (b_2 - \bar{b}_2)}{2}\Big) \oplus \Big(0, \frac{a_1 \bar{b}_1 - \bar{a}_1 b_1}{2}\Big).
    \end{align*}
    Hence $(r_1, r_2) \in \mathcal{A}(I) \mathcal{A}(J)$. Therefore $\mathcal{A}(I+J) = \mathcal{A}(I) \mathcal{A}(J) = \mathcal{A}(J) \mathcal{A}(I)$, later equality is possible because $\mathcal{A}(I) \mathcal{A}(J)$ is a group.
\end{proof}


\begin{prop}\label{z to Rz}
    \normalfont
    Fix a root $[\alpha] \in \Phi_\rho$ and an element $z \in R_{[\alpha]}$. Let $H$ be the normal subgroup of $E'_\sigma (R)$ generated by $x_{[\alpha]} (z)$. Then $H = E'_{\sigma}(R, J)$ where 
    \[
        J = \begin{cases}
            Rz & \text{if } [\alpha] \sim A_1, \\
            Rz + R{\bar{z}} & \text{if } [\alpha] \sim A_1^2, \\
            Rz + R{\bar{z}} + R{\bar{\bar{z}}} & \text{if } [\alpha] \sim A_1^3, \\
            Rz_1 + R{\bar{z_1}} + R (z_2 - \bar{z_2}) & \text{if } [\alpha] \sim A_2.
        \end{cases}
    \]
\end{prop}

\begin{proof}
    Since $x_{[\alpha]}(z) \in E'_\sigma(R, J)$, by Proposition \ref{normal}, we have $H \subset E'_\sigma(R, J)$. For the reverse inclusion, we need to prove that $x_{[\beta]}(t) \in H$ for every $t \in J_{[\beta]}$ and $[\beta] \in \Phi_\rho$. Observe that there exist a sequence of roots $[\alpha_i] \ (i = 1, \dots, n)$ such that $[\alpha_1] = [\alpha], [\alpha_n] = [\beta]$, and for every $i \in \{1, \dots, n-1 \}$, the pair of roots $[\alpha_i], [\alpha_{i+1}]$ contained in some connected subsystem of $\Phi_\rho$ of rank $2$. Now, by applying Lemma \ref{z to Rz in phi'} (below) recursively to the pairs $([\alpha_i], [\alpha_{i+1}])$, we obtain the desired result.
\end{proof}


\begin{lemma}\label{z to Rz in phi'}
    \normalfont
    Let the notation be as established in Proposition \ref{z to Rz}. Suppose $\Phi'$ is a connected subsystem of $\Phi_\rho$ with rank 2. If $[\gamma] \in \Phi'$ and $s \in R$ such that $x_{[\gamma]}(s) \in H$, then $x_{[\beta]}(t) \in H$ for every $[\beta] \in \Phi'$ and $t \in I_{[\beta]}$, where $I$ is an $\theta$-invariant ideal defined similarly to the ideal $J$ in Proposition \ref{z to Rz} by replacing $z$ with $s$.
\end{lemma}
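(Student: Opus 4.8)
The plan is to argue by \emph{propagation within} $\Phi'$: starting from the single element $x_{[\gamma]}(s) \in H$, I will reach every $x_{[\beta]}(t)$ with $t \in I_{[\beta]}$ by a sequence of moves, each of which stays inside $H$ because $H$ is normalized by $E'_\sigma(R)$. The crucial consequence of normality is that for any $[\delta] \in \Phi_\rho$, any $r \in R_{[\delta]}$, and any element $h = x_{[\gamma']}(s') \in H$ already obtained, the commutator $[x_{[\delta]}(r), h] = \bigl(x_{[\delta]}(r)\, h\, x_{[\delta]}(r)^{-1}\bigr)\, h^{-1}$ is a product of two elements of $H$, hence lies in $H$. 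Expanding this commutator by the Chevalley commutator formulas of Section~\ref{subsec:CheComm} expresses it as a product of factors $x_{i[\delta]+j[\gamma']}(\,\cdot\,)$ whose coefficients are bilinear in $r$ and $s'$; isolating these factors is what lets me move to new roots and multiply the coefficient $s$ by arbitrary ring elements. The whole argument runs parallel to the proof of Theorem~\ref{Ch5_mainthm1}, with ``lies in the target group $E'_\sigma(R,J)$'' replaced throughout by ``lies in $H$'', and with explicit book-keeping that the coefficients produced fall in the appropriate $I_{[\beta]}$.

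I would organize the elementary moves as follows. First, torus conjugation via Proposition~\ref{prop h(chi)xh(chi)^-1} yields $x_{[\gamma]}(\chi(\gamma)\cdot s) \in H$; second, conjugation by the Weyl elements $w_{[\delta]}(1) \in N'_\sigma \subset E'_\sigma(R)$ (using the Steinberg relation $w_{[\delta]}(1)\,x_{[\gamma]}(u)\,w_{[\delta]}(1)^{-1} = x_{s_{[\delta]}([\gamma])}(\pm u)$) carries $x_{[\gamma]}(s)$ to all roots of the same length as $[\gamma]$, including the negative ones, so that I may commute both ``upward'' and ``downward''; third, the bilinear commutator move above, with $r$ ranging freely over $R_{[\delta]}$, produces coefficients ranging over the full component $I_{[\beta]}$. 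Closure under addition comes from the additivity relations $x_{[\beta]}(t)x_{[\beta]}(t') = x_{[\beta]}(t+t')$ (and their $\oplus$-analogue for $A_2$-type roots), which upgrade the set of attainable coefficients at each root into the ideal $I$ (respectively $I_\theta$, $\mathcal{A}(I)$). Because $I$ is visibly $\theta$-invariant, these three moves are internally consistent.

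For the length- and type-changing step I would split into the cases dictated by the rank-$2$ type of $\Phi'$, exactly as in Cases A--E of Theorem~\ref{Ch5_mainthm1}: $\Phi' \sim A_2$ (root pairs of type (b)); $\Phi' \sim B_2$ with $[\gamma]$ long (type (c)) or short (type (d)); and, only for ${}^3D_4$, $\Phi' \sim G_2$ with $[\gamma]$ long (reduced to an $A_2$-subsystem) or short (types (f), (g)). In each case I would express the desired $x_{[\beta]}(t)$, or a product $x_{[\beta]}(t)\,x_{[\beta']}(t')$ of $[\beta]$ together with a higher (shorter) root $[\beta']$, as a commutator $[x_{[\delta]}(r), x_{[\gamma']}(s')]$ in which $x_{[\gamma']}(s') \in H$ has already been secured with $s' \in I_{[\gamma']}$; where an extra higher-root factor appears, I peel it off by first handling that shorter root and multiplying by its inverse, precisely as in Cases C and E of Theorem~\ref{Ch5_mainthm1}. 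Tracking the structure constants shows the resulting coefficient is bilinear in $s$ and $r$, hence lands in $I_{[\beta]}$.

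The main obstacle will be the $A_2$-type and $G_2$-short cases, where coefficients live in $\mathcal{A}(R)$ and the relevant formulas (c--ii), (d--ii), (e), (f), (g) carry several terms at once. Verifying that these coefficients lie in $\mathcal{A}(I)$ --- and in particular generating the antisymmetric part $R(s_2 - \bar{s}_2)$ of the second coordinate that appears in the definition of $I$ for $[\gamma] \sim A_2$ --- is the delicate point; here I would invoke the decomposition $\mathcal{A}(J) = HK$ of Lemma~\ref{A(R)} to separate the ``diagonal'' generators $(r, r\bar r/2)$ from the central generators $(0,u)$, and use Lemmas~\ref{theta Rz} and~\ref{A(I+J) = A(I) A(J)} to identify the ideal components produced. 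A secondary point requiring care is confirming that the ideal $I$ obtained is genuinely independent of which adjacent root (or path around $\Phi'$) is used, since the recursion in Proposition~\ref{z to Rz} relies on this consistency; this follows because Weyl conjugation preserves root length and hence the ``flavour'' $I_{[\beta]}$ of $I$ within each length class, while the commutator formulas are engineered so that crossing between length classes matches the components of $I$ exactly.
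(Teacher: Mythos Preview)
Your plan is essentially the paper's: commutator propagation plus Weyl conjugation, organized case by case according to the rank-$2$ type of $\Phi'$ and the root types occupying the long and short slots, with Lemmas~\ref{theta Rz}, \ref{A(R)}, \ref{A(I+J) = A(I) A(J)} supplying the ideal-theoretic closure.

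One correction is needed, however: your opening phrase ``propagation within $\Phi'$'' is not literally true, and if taken at face value the argument would fail. When $\Phi'\sim A_2$ with all roots of type $A_1^2$ (this occurs for ${}^2A_{2n-1}$, ${}^2A_{2n}$, ${}^2E_6$), the commutator formula (b--ii) at each step yields only \emph{one} of $rs$, $r\bar s$, $\bar r s$, determined by the structure constants; so from $x_{[\gamma]}(s)$ alone you cannot generate the full ideal $Rs+R\bar s$ using only roots and Weyl elements of $\Phi'$. The paper's Case~B explicitly steps \emph{outside} $\Phi'$ to an adjacent $B_2$ subsystem $\Phi''$ containing $[\beta]$, and there either the orthogonal Weyl reflection $w_{2[\beta]-[\gamma]}(1)\,x_{[\gamma]}(s)\,w_{2[\beta]-[\gamma]}(1)^{-1}=x_{[\gamma]}(-\bar s)$ (when the long roots of $\Phi''$ are of type $A_1^2$) or a multi-step detour through the long root (when they are of type $A_1$) yields $x_{[\gamma]}(\bar s)\in H$. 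Your listed moves do cover this once you drop the ``within $\Phi'$'' restriction, but you should be aware that the $A_2$ case does not close internally. A second subtlety your sketch underestimates: in the $B_2$ case with both $[\gamma]$ and $[\beta]$ short of type $A_1^2$, the commutator chain produces only $x_{[\beta]}(r_\theta s)$ for $r_\theta\in R_\theta$ and then $x_{[\beta]}(r^2 s)$ for $r\in R$; the paper extracts the linear term via the identity $rs=\bigl(\tfrac{1+r}{2}\bigr)^2 s-\bigl(\tfrac{1-r}{2}\bigr)^2 s$.
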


\begin{proof}
    Let $\mu$ denote the angle between $[\beta]$ and $[\gamma]$. For $r \in R$, let $r'$ denote either $r, \bar{r}$ or $\bar{\bar{r}}$. By $r_1$ and $r_2$, we mean the first and second coordinate of $r = (r_1, r_2) \in \mathcal{A}(R)$.
    Consider the following table outlining the possible types of choices for $\Phi'$:
    \begin{center}
        \begin{tabular}{|c|c|c|c|c|}
            \hline
            \multirow{2}{*}{\textbf{Type}} & \multirow{2}{*}{\textbf{$\Phi_\rho$}} & \multicolumn{2}{c|}{\textbf{Type of Roots}} & \multirow{2}{*}{\textbf{Possible Choices of $\Phi'$}} \\
            \cline{3-4}
             & & \textbf{Long} & \textbf{Short} &  \\
            \hline 
            ${}^2 A_{3}$ & $C_2 (= B_2$) & $A_1$ & $A_1^2$ & $B_2$ (containing long and short roots) \\
            \hline 
            ${}^2 A_{2n-1} \ (n \geq 3)$ & $C_n$ & $A_1$ & $A_1^2$ & $A_2$ (only short), $B_2$ (long and short)\\
            \hline 
            ${}^2 A_{4}$ & $B_2$ & $A_1^2$ & $A_2$ & $B_2$ (containing long and short roots) \\
            \hline 
            ${}^2 A_{2n} \ (n \geq 3)$ & $B_n$ & $A_1^2$ & $A_2$ & $A_2$ (only long), $B_2$ (long and short)\\
            \hline 
            ${}^2 D_{n} \ (n \geq 4)$ & $B_{n-1}$ & $A_1$ & $A_1^2$ & $A_2$ (only long), $B_2$ (long and short) \\
            \hline
            ${}^3 D_{4}$ & $G_2$ & $A_1$ & $A_1^3$ & $A_2$ (only long), $G_2$ (long and short) \\
            \hline
            ${}^2 E_{6}$ & $F_4$ & $A_1$ & $A_1^2$ & $A_2$ (with long), $A_2$ (with short), $B_2$ \\
            \hline
        \end{tabular}
    \end{center}

    \vspace{2mm}

    \noindent \textbf{Case A. $\Phi' \sim A_2$ which contains roots of the type $A_1$.} This case arises only when $\Phi_\rho \sim {}^2 D_{n} \ (n \geq 4), {}^3 D_4$ or ${}^2 E_6$. Let us consider the following subcases:
    \begin{enumerate}[leftmargin=3.5em]
        \item[\textbf{(A1)}] \textbf{$[\gamma] \sim A_1$ and $\mu = \pi/3$.} In this case, the pair $[\gamma]$ and $[\beta] - [\gamma]$ is of type $(b-i)$. Here $[\gamma], [\beta]$ and $[\beta] - [\gamma]$ are of type $A_1$. For $r \in R_\theta$, we have
        $$ [x_{[\gamma]}(s), x_{[\beta] - [\gamma]}(\pm r)] = x_{[\beta]} (rs) \in H.$$
        
        \item[\textbf{(A2)}] \textbf{$[\gamma] \sim A_1$ and $\mu$ is arbitrary.} Observe that, we can find a sequence $[\gamma_1], \dots, [\gamma_m]$ of roots in $\Phi'$ such that $2 \leq m \leq 6, [\gamma_1] = [\gamma], [\gamma_m] = [\beta]$ and the angle between $[\gamma_i]$ and $[\gamma_{i+1}]$ is $\pi / 3$ for every $i = 1, \dots, m-1$. Then, by applying Case (A1) to the pair of roots $([\gamma_{i-1}], [\gamma_{i}])$, we have $x_{[\gamma_i]}(rs) \in H$ for every $r \in R_\theta$ and $i = 2, \dots, m$. In particular, by Lemma \ref{theta Rz}, $x_{[\beta]}(t) \in H$ for every $t \in I_{[\beta]}$, as desired.
    \end{enumerate}
    
    \vspace{2mm}

    \noindent \textbf{Case B. $\Phi' \sim A_2$ which contains roots of the type $A_1^2$.} This case arises only when $\Phi_\rho \sim {}^2 A_{2n-1} \ (n \geq 3) {}^2 A_{2n} \ (n \geq 3)$ or ${}^2 E_6$. Let us consider the following subcases:
    \begin{enumerate}[leftmargin=3.5em]
        \item[\textbf{(B1)}] \textbf{$[\gamma] \sim A_1^2$ and $\mu = \pi/3$.} In this case, the pair $[\gamma]$ and $[\beta] - [\gamma]$ is of the type $(b-ii)$. Here $[\gamma], [\beta]$ and $[\beta] - [\gamma]$ are of type $A_1^2$. For $r \in R$, we have
        $$ [x_{[\gamma]}(s), x_{[\beta] - [\gamma]}(\pm r')] = x_{[\beta]} (rs') \in H.$$

        \item[\textbf{(B2)}] \textbf{$[\gamma] \sim A_1^2$ and $\mu$ is arbitrary.} Observe that, we can find a sequence $[\gamma_1], \dots, [\gamma_m]$ of roots in $\Phi'$ such that $2 \leq m \leq 6, [\gamma_1] = [\gamma], [\gamma_m] = [\beta]$ and the angle between $[\gamma_i]$ and $[\gamma_{i+1}]$ is $\pi / 3$ for every $i = 1, \dots, m-1$. Then, by applying Case (B1) to the pair of roots $([\gamma_{i-1}],[\gamma_i])$, we have $x_{[\gamma_i]}(rs') \in H$ for every $r \in R$ and $i = 2, \dots, m$. In particular, $x_{[\beta]}(rs') \in H$ for every $r \in R$.
        
        Note that, we can find a subsystem $\Phi''$ of $\Phi_\rho$ of type $B_2$ such that $[\beta] \in \Phi''$. Since $x_{[\beta]}(rs') \in H$, by Case C (if $\Phi_\rho \sim {}^2 A_{2n-1}$ or ${}^2 E_6$) and by Case D (if $\Phi_\rho \sim {}^2 A_{2n}$) (see below), we have $x_{[\beta]}(\bar{r}\bar{s'}) \in H$ for every $r \in R$. In particular, $x_{[\beta]}(rs)$ and $x_{[\beta]}(r \bar{s}) \in H$ for every $r \in R$. Therefore, $x_{[\beta]}(t) \in H$ for every $t \in I_{[\beta]}$, as desired.
    \end{enumerate}

    \vspace{2mm}
    
    \noindent \textbf{Case C. $\Phi' \sim B_2$ which contains roots of the type $A_1$ and $A_1^2$.} This case arises only when $\Phi_\rho \sim {}^2 A_{2n-1} \ (n \geq 2), {}^2 D_{n} \ (n \geq 4)$ or ${}^2 E_6$. Let us consider the following subcases: 
    \begin{enumerate}[leftmargin=3.5em]
        \item[\textbf{(C1)}] \textbf{$[\gamma] \sim A_1$ and $\mu = \pi/4$.} In this case, the pair $[\gamma]$ and $[\beta] - [\gamma]$ is of the type $(d-i)$. Here $[\gamma], 2[\beta] - [\gamma] \sim A_1$ and $[\beta], [\beta] - [\gamma] \sim A_1^2$. For $r \in R$, we have
        $$ [x_{[\gamma]}(s), x_{[\beta] - [\gamma]}(\pm r/2)] = x_{[\beta]} (rs/2) x_{2[\beta] - [\gamma]} (\pm r \bar{r} s/4) \in H.$$
        Now put $-r$ instead of $r$, we get $x_{[\beta]} (-rs/2) x_{2[\beta] - [\gamma]} (\pm r \bar{r} s/4) \in H$. But then
        $$ x_{[\beta]}(rs) = \{x_{[\beta]} (rs/2) x_{2[\beta] - [\gamma]} (\pm r \bar{r} s/4) \} \{ x_{[\beta]} (-rs/2) x_{2[\beta] - [\gamma]} (\pm r \bar{r} s/4) \}^{-1} \in H.$$
        
        \item[\textbf{(C2)}] \textbf{$[\gamma] \sim A_1^2$ and $\mu = \pi/4$.} In this case, the pair $[\gamma]$ and $[\beta] - [\gamma]$ is of the type $(c-i)$. Here $[\gamma], [\beta] - [\gamma] \sim A_1^2$ and $[\beta] \sim A_1$. For $r \in R$, we have
        $$ [x_{[\gamma]}(s), x_{[\beta] - [\gamma]}(\pm r)] = x_{[\beta]} (r \bar{s} + \bar{r} s) \in H.$$

        \item[\textbf{(C3)}] \textbf{$[\gamma] \sim A_1^2$ and $\mu = \pi/2$.} In this case, the pair $[\gamma]$ and $[\beta] - [\gamma]$ is of the type $(d-i)$. Here $[\gamma], [\beta] \sim A_1^2$ and $[\beta] + [\gamma], [\beta] - [\gamma] \sim A_1$. For $r \in R_\theta$, we have
        $$[x_{[\beta] - [\gamma]} (\pm r), x_{[\gamma]}(s)] = x_{[\beta]} (rs) x_{[\gamma] + [\beta]} (\pm rs\bar{s}) \in H.$$
        Now, by Case (C2), we know that $x_{[\gamma] + [\beta]} (\pm rs\bar{s}) \in H$. Hence 
        $$ x_{[\beta]}(rs) = \{x_{[\beta]} (rs) x_{[\gamma] + [\beta]} (\pm rs\bar{s})\} \{x_{[\gamma] + [\beta]} (\pm rs\bar{s})\}^{-1} \in H.$$
        In particular, $x_{[\beta]}(s) \in H$. By Case (C1) and Case (C2), we have $x_{[\beta]} (s + \bar{s}) \in H$. But then $x_{[\beta]}(\bar{s}) \in H$. Now, by switching roles of ${[\beta]}$ and $[\gamma]$, we get $x_{[\gamma]}(\bar{s}) \in H$. If we replace $s$ by $\bar{s}$ in above argument, then we get $x_{[\beta]}(r\bar{s}) \in H$, for every $r \in R_\theta$. 
        
        \item[\textbf{(C4)}] \textbf{$[\gamma]$ and $\mu$ are arbitrary.} Observe that, we can find a sequence $[\gamma_1], \dots, [\gamma_m]$ of roots in $\Phi'$ such that $2 \leq m \leq 8, [\gamma_1] = [\gamma], [\gamma_m] = [\beta]$ and the angle between $[\gamma_i]$ and $[\gamma_{i+1}]$ is $\pi / 4$ for every $i = 1, \dots, m-1$. 
        \begin{enumerate}[1.]
            \item If $[\gamma] \sim A_1$ and $[\beta] \sim A_1$ then $m$ must be odd. By applying Case (C1) to pairs $([\gamma_{i-1}], [\gamma_{i}])$, Case (C2) to pairs $([\gamma_i], [\gamma_{i+1}])$ for $i = 2, 4, \dots, m-1$, inductively, we get $x_{[\beta]}((r + \bar{r}) s) \in H$ for every $r \in R$. Now by Lemma \ref{theta Rz}, we have $x_{[\beta]}(t) \in H$ for every $t \in I_{[\beta]}$.
            \item If $[\gamma] \sim A_1$ and $[\beta] \sim A_1^2$ then $m$ must be even. Note that $[\gamma_{m-1}]$ is of type $A_1$. Hence, by the above case, we have $x_{[\gamma_{m-1}]}(s) \in H$. Now by applying Case (C1) to the pair $([\gamma_{m-1}], [\gamma_m])$, we conclude that $x_{[\beta]}(t) \in H$ for every $t \in I_{[\beta]}$.
            \item If $[\gamma] \sim A_1^2$ and $[\beta] \sim A_1$ then $m$ must be even. By applying Case (C3) to pairs $([\gamma_i], [\gamma_{i+2}])$ for $i = 1, 3, \dots, m-3$, inductively, we get $x_{[\gamma_{m-1}]}(s) \in H$. Now by applying Case (C2) to the pair $([\gamma_{m-1}],[\gamma_m])$, we get $x_{[\beta]}(r \bar{s} + \bar{r} s) \in H$ for every $r \in R$. Finally by Lemma \ref{theta Rz}, we have $x_{[\beta]}(t) \in H$ for all $t \in I_{[\beta]}$.
            \item If $[\gamma] \sim A_1^2$ and $[\beta] \sim A_1^2$ then $m$ must be odd. By applying Case (C2) to pairs $([\gamma_{i-1}], [\gamma_{i}])$, Case (C1) to pairs $([\gamma_i], [\gamma_{i+1}])$ for $i = 2, 4, \dots, m-1$, inductively, we get $x_{[\beta]}(r_1(r_2 \bar{s} + \bar{r_2} s)) \in H$ for every $r_1, r_2 \in R$. Now by applying Case (C3) to pairs $([\gamma_{i}, \gamma_{i+2}])$ for $i=1,3,\dots, m-2,$ inductively, we get $x_{[\beta]}(r_3 s)$ and $x_{[\beta]}(r_4 \bar{s}) \in H$ for every $r_3, r_4 \in R_\theta$. 
            But then for every $r_5 \in R$, we have $$x_{[\beta]}(r_5^2 s) = x_{[\beta]}(r_5(r_5s + \bar{r_5}\bar{s})) \{x_{[\beta]}(r_5 \bar{r_5} \bar{s})\}^{-1} \in H.$$ Similarly, we have $x_{[\beta]}(r_6^2 \bar{s}) \in H$ for every $r_6 \in R$. 
            Finally, for given $r \in R$ we have 
            $$ x_{[\beta]}(rs) = x_{[\beta]}\Big(\Big(\frac{1+r}{2}\Big)^2s\Big) \Big\{x_{[\beta]}\Big(\Big(\frac{1-r}{2}\Big)^2s\Big) \Big\}^{-1} \in H.$$
            Similarly, we have $x_{[\beta]}(r \bar{s}) \in H$ for every $r \in R$. By Lemma \ref{theta Rz}, we conclude that $x_{[\beta]}(t) \in H$ for all $t \in I_{[\beta]}.$
        \end{enumerate}
    \end{enumerate}

    \vspace{2mm}

    \noindent \textbf{Case D. $\Phi' \sim B_2$ which contains roots of the type $A_1^2$ and $A_2$.} This case arises only when $\Phi_\rho \sim {}^2 A_{2n} \ (n \geq 2)$. Let us consider the following subcases: 
    \begin{enumerate}[leftmargin=3.5em]
        \item[\textbf{(D1)}] \textbf{$[\gamma] \sim A_1^2$ and $\mu = \pi /4$.} First observe that the pair $[\gamma]$ and $[\beta] - [\gamma]$ is of the type $(d-ii)$. Here $[\gamma], 2[\beta] - [\gamma] \sim A_1^2$ and $[\beta], [\beta] - [\gamma] \sim A_2$. In this case, for $r = (r_1, r_2) \in \mathcal{A}(R)$ we have 
        \begin{equation*}\label{eqn_a}
            [x_{[\gamma]}(s), x_{[\beta] - [\gamma]}(\pm r_1'/2, r_2'/4)] = x_{[\beta]} (r_1 s'/2 , r_2 s \bar{s}/4) x_{2[\beta] - [\gamma]} (\pm r_2' s/4) \in H.
        \end{equation*}
        By putting $-r_1$ instead of $r_1$, we get $x_{[\beta]} (-r_1 s'/2, r_2 s \bar{s}/4) x_{2[\beta] - [\gamma]} (\pm r_2' s/4) \in H$. But then
        \begin{equation*}\label{eqn_b}
            \begin{split}
                x_{[\beta]}(r_1 s', \frac{r_1 \bar{r_1}}{2}s \bar{s}) &= x_{[\beta]}(r_1 s', \frac{r_2 + \bar{r_2}}{2}s \bar{s}) \\
                &= \{x_{[\beta]} (r_1 s'/2 , r_2 s \bar{s}/4) x_{2[\beta] - [\gamma]} (\pm r_2 s/4)\} \\ 
                & \hspace{10mm} \{x_{[\beta]} (-r_1 s'/2, r_2 s \bar{s}/4) x_{2[\beta] - [\gamma]} (\pm r_2 s/4)\}^{-1} \in H.
            \end{split}
        \end{equation*}
        Now observe that the pair $[\gamma]$ and $2[\beta] - [\gamma]$ is of type $(a_{2}-ii)$. Here $[\gamma], [\delta]:=2[\beta] - [\gamma] \sim A_1^2$ and $[\beta] = 1/2([\gamma] + [\delta]) \sim A_2$. In this case, for $r \in R$ we have 
        \begin{equation*}\label{eqn_c}
            [x_{[\gamma]}(s), x_{[\delta]}(\pm r)] = x_{[\beta]}(0, \bar{r}s - r \bar{s}) \in H.
        \end{equation*}
        We now claim that for every $(r_3 s', r_4 s') \in \mathcal{A}(Rs')$, the element $x_{[\beta]}(r_3 s', r_4 s') \in H$. As in proof of Lemma \ref{A(R)}, we consider the following decomposition of $(r_3 s', r_4 s'):$ 
        $$(r_3 s', r_4 s') = (r_3 s', r_3 \bar{r_3} s \bar{s}/2) \oplus (0, (r_4 s' - \overline{r_4 s'})/2).$$
        Therefore, we have 
        \begin{equation*}\label{eqn_d}
            x_{[\beta]}(r_3 s', r_4 s') = x_{[\beta]}(r_3 s', r_3 \bar{r_3} s \bar{s}/2) x_{[\beta]}(0, (r_4 s' - \overline{r_4 s'})/2) \in H.
        \end{equation*}
        Finally, by Proposition \ref{prop wxw^{-1}} (or by Proposition $4.1$ of \cite{EA1}), we have 
        \begin{equation}\label{eq_wx=xw}
            w_{2[\beta] - [\gamma]}(1) x_{[\gamma]} (s) {w_{2[\beta] - [\gamma]}(1)}^{-1} = x_{[\gamma]}(-\bar{s}).
        \end{equation}
        Therefore, $x_{[\gamma]}(-\bar{s}) \in H$ and hence $x_{[\gamma]}(\bar{s}) \in H.$ Replacing $s$ by $\bar{s}$ in above, we get $x_{[\beta]}(r_3 \bar{s'}, r_4 \bar{s'}) \in H$ for every $(r_3 \bar{s'}, r_4 \bar{s'}) \in \mathcal{A}(R\bar{s'})$. In particular, we have $x_{[\beta]}(r_3 s, r_4 s) \in H$ (resp., $x_{[\beta]}(r_3 \bar{s}, r_4 \bar{s}) \in H$) for every $(r_3 s, r_4 s) \in \mathcal{A}(Rs)$ (resp., $(r_3 \bar{s}, r_4 \bar{s}) \in \mathcal{A}(R\bar{s})$).
        
        \item[\textbf{(D2)}] \textbf{$[\gamma] \sim A_2$ and $\mu = \pi /4$.} Note that, the pair $[\gamma]$ and $[\beta] - [\gamma]$ is of the type $(c-ii)$. Here $[\gamma], [\beta]-[\gamma] \sim A_2$ and $[\beta] \sim A_1^2$. For $r \in R$, we have
        \begin{equation}\label{eq_d2_1}
            [x_{[\gamma]}(s_1, s_2), x_{[\beta] - [\gamma]}(\pm r', r \bar{r}/2)] = x_{[\beta]} (r s_1') \in H.
        \end{equation}
        By (\ref{eq_wx=xw}), we also have $x_{[\beta]} (\bar{r} \bar{s_1'}) \in H$. In particular, for every $r \in R$, we have $x_{[\beta]}(rs_1) \in H$ and $x_{[\beta]}(r \bar{s_1}) \in H$.

        Observe that, the pair $[\gamma]$ and $[\beta] - 2[\gamma]$ is of the type $(d-ii)$. Here $[\beta] - [\gamma], [\gamma] \sim A_2$ and $[\beta], [\beta] - 2 [\gamma] \sim A_1^2$. For $r \in R$ we have 
        \begin{equation}\label{eq_d2_2}
            [x_{[\beta] - 2[\gamma]}(\pm r'), x_{[\gamma]}(s_1, s_2)] = x_{[\beta] - [\gamma]} (rs_1', r \bar{r} s_2') x_{[\beta]} (\pm r's_2'') \in H. 
        \end{equation}
        Note that, $(x_{[\gamma]}(s_1, s_2))^{-1} = x_{[\gamma]}(-s_1, \bar{s_2}) \in H$. By above, we have $x_{[\beta]}(s_1) \in H$. By applying case (D1) to the pair $([\beta], [\gamma])$, we have $x_{[\gamma]}(2s_1, 2s_1\bar{s_1}) \in H$. But then 
        $$x_{[\gamma]}(s_1, \bar{s_2}) = x_{[\gamma]}(-s_1, \bar{s_2}) x_{[\gamma]}(2s_1, 2s_1\bar{s_1}) \in H.$$

        Put $(s_1, \bar{s_2})$ instead of $(s_1, s_2)$ and $-r$ instead of $r$ in (\ref{eq_d2_2}), we get $$x_{[\beta] - [\gamma]} (- r s_1', r \bar{r} \bar{s_2'}) x_{[\beta]} (\pm (-r') \bar{s_2''}) \in H.$$ Since the elements $x_{[\beta] - [\gamma]} (\cdot)$ and $x_{[\beta]} (\cdot)$ are commutes with each other, we have 
        \begin{align*}
            x_{[\beta]}(\pm r'(s_2'' - \bar{s_2''})) &=  \{x_{[\beta] - [\gamma]} (r s_1', r \bar{r} s_2') x_{[\beta]} (\pm r' s_2'')\} \\
            & \hspace{15mm} \{ x_{[\beta] - [\gamma]} (- r s_1', r \bar{r} \bar{s_2'}) x_{[\beta]} (\pm (-r') \bar{s_2''}) \} \in H.
        \end{align*}
        In particular, we have $x_{[\beta]} (\frac{r}{2} (s_2 - \bar{s_2})) \in H$ for every $r \in R$. Again by (\ref{eq_d2_1}), we have $x_{[\beta]}(\frac{r}{2} (s_1 \bar{s_1})) = x_{[\beta]} (\frac{r}{2} (s_2 + \bar{s_2})) \in H$ for every $r \in R$. But then $$ x_{[\beta]}(r s_2) = x_{[\beta]} (\frac{r}{2} (s_2 + \bar{s_2})) x_{[\beta]}(\frac{r}{2} (s_2 - \bar{s_2})) \in H,$$
        for every $r \in R$. Similarly, we have $x_{[\beta]}(r \bar{s_2}) \in H$ for every $r \in R$.
        
        \item[\textbf{(D3)}] \textbf{$[\gamma]$ and $\mu$ are arbitrary.} Observe that, we can find a sequence $[\gamma_1], \dots, [\gamma_m]$ of roots in $\Phi'$ such that $2 \leq m \leq 8, [\gamma_1] = [\gamma], [\gamma_m] = [\beta]$ and the angle between $[\gamma_i]$ and $[\gamma_{i+1}]$ is $\pi / 4$ for every $i = 1, \dots, m-1$.
        \begin{enumerate}[1.]
            \item If $[\gamma] \sim A_1^2$ and $[\beta] \sim A_1^2$ then $m$ must be odd. By applying Case (D1) to pairs $([\alpha_i], [\alpha_{i+1}])$ and Case (D2) to pairs $([\alpha_{i+1}], [\alpha_{i+2}])$ for every $i = 1, 3, \dots, m-2$, recursively, we get $x_{[\beta]}(rs) \in H$ and $x_{[\beta]}(r\bar{s}) \in H$ for every $r \in R$. Therefore, we have $x_{[\beta]}(t) \in H$ for every $t \in I_{[\beta]}$.

            \item If $[\gamma] \sim A_1^2$ and $[\beta] \sim A_2$ then $m$ must be even. Note that $[\gamma_{m-1}] \sim A_1^2$, by the above case, $x_{[\gamma_{m-1}]}(s) \in H$ and $x_{[\gamma_{m-1}]}(\bar{s}) \in H$. By applying case (D1) to the pair $([\alpha_{m-1}], [\alpha_m])$, we get $x_{[\beta]}(r_1s, r_2s) \in H$ and $x_{[\beta]}(r_1 \bar{s}, r_2 \bar{s}) \in H$ for every $(r_1s,r_2s) \in \mathcal{A}(Rs)$. Finally, by the proof of Lemma \ref{A(I+J) = A(I) A(J)}, we have $x_{[\beta]}(t) \in H$ for every $t \in I_{[\beta]}$.

            \item If $[\gamma] \sim A_2$ and $[\beta] \sim A_1^2$ then $m$ must be even. Note that $[\gamma_{2}] \sim A_1^2$, by applying case (D2) to the pair $([\gamma_1], [\gamma_2])$, we get $x_{[\gamma_2]}(r s_1)$, $x_{[\gamma_2]}(r \bar{s_1})$, $x_{[\gamma_2]}(r s_2)$, $x_{[\gamma_2]}(r \bar{s_2}) \in H$ for every $r \in R$. Now by Case 1 above, we have $x_{[\beta]}(r s_1),$ $x_{[\beta]}(r s_2),$ $x_{[\beta]}(r \bar{s_1}),$ $x_{[\beta]}(r \bar{s_2}) \in H$ for every $r \in R$.
            Therefore, we have $x_{[\beta]}(t) \in H$ for every $t \in I_{[\beta]}$.

            \item If $[\gamma] \sim A_2$ and $[\beta] \sim A_2$ then $m$ must be odd. Note that $[\gamma_{m-1}] \sim A_1^2$, by the case 3 above, we have $x_{[\gamma_{m-1}]}(s_1), x_{[\gamma_{m-1}]}(s_2), x_{[\gamma_{m-1}]}(\bar{s_1}), x_{[\gamma_{m-1}]}(\bar{s_2}) \in H$. By applying case (D1) to the pair $([\gamma_{m-1}], [\gamma_{m}])$, we have $x_{[\beta]}(r_1 s_1, r_2 s_1)$, $x_{[\beta]}(r_1 \bar{s_1}, r_2 \bar{s_1}) \in H$ for every $(r_1 s_1, r_2 s_1) \in \mathcal{A}(Rs_1)$ and $x_{[\beta]}(r_3 s_2, r_4 s_2)$, $x_{[\beta]}(r_3 \bar{s_2}, r_4 \bar{s_2}) \in H$ for every $(r_3 s_2, r_4 s_2) \in \mathcal{A}(Rs_2)$. Therefore, by the proof of Lemma \ref{A(I+J) = A(I) A(J)}, we have $x_{[\beta]}(t) \in H$ for every $t \in I_{[\beta]}$.
        \end{enumerate}
    \end{enumerate}

    \vspace{2mm}

    \noindent \textbf{Case E. $\Phi' \sim G_2$ which contains roots of the type $A_1$ and $A_1^3$.} This case arises only when $\Phi_\rho \sim {}^3 D_{4}$. Let us consider the following subcases:
    \begin{enumerate}[leftmargin=3.5em]
        \item[\textbf{(E1)}] \textbf{$[\gamma] \sim A_1$ and $\mu = \pi/6$.} In this case, the pair $[\gamma]$ and $[\beta] - [\gamma]$ is of the type $(e)$. Here $[\gamma], 3[\beta] - [\gamma], 3[\beta] - 2[\gamma] \sim A_1$ and $[\beta], [\beta] - [\gamma], 2[\beta] - [\gamma] \sim A_1^3$. For $r \in R$, we have
        \begin{gather*}
            [x_{[\gamma]}(s), x_{[\beta] - [\gamma]}(\pm r/2)] = x_{[\beta]} (rs/2) x_{2[\beta] - [\gamma]} (\pm r r' s /4) \\
            x_{3[\beta] - 2[\gamma]} (\pm r \bar{r} \bar{\bar{r}} s/8) x_{3[\beta] - [\gamma]} (\pm r \bar{r} \bar{\bar{r}} s^2/8) \in H.
        \end{gather*}
        Note that, long roots in $G_2$ form a subsystem of type $A_2$. Hence, by Case A, we have $x_{3[\beta] - 2[\gamma]} (\pm r \bar{r} \bar{\bar{r}} s/8) \in H$ and $x_{3[\beta] - [\gamma]} (\pm r \bar{r} \bar{\bar{r}} s^2/8) \in H$. But then 
        \begin{gather*}
             \{ x_{[\beta]} (rs/2) x_{2[\beta] - [\gamma]} (\pm r r' s/4) x_{3[\beta] - 2[\gamma]} (\pm r \bar{r} \bar{\bar{r}} s/8) x_{3[\beta] - [\gamma]} (\pm r \bar{r} \bar{\bar{r}} s^2/8) \} \\
             \{x_{3[\beta] - 2[\gamma]} (\pm r \bar{r} \bar{\bar{r}} s/8) x_{3[\beta] - [\gamma]} (\pm r \bar{r} \bar{\bar{r}} s^2/8)\}^{-1} \\
             = x_{[\beta]} (rs/2) x_{2[\beta] - [\gamma]} (\pm r r' s/4) \in H.
        \end{gather*}
        Now put $-r$ instead of $r$, we get $x_{[\beta]} (-rs/2) x_{2[\beta] - [\gamma]} (\pm r r' s/4) \in H$. Finally, 
        $$ x_{[\beta]} (rs) = \{ x_{[\beta]} (rs/2) x_{2[\beta] - [\gamma]} (\pm r r' s/4) \} \{ x_{[\beta]} (-rs/2) x_{2[\beta] - [\gamma]} (\pm r r' s/4) \}^{-1} \in H.$$

        \item[\textbf{(E2)}] \textbf{$[\gamma] \sim A_1^3$ and $\mu = \pi/6$.} In this case, the pair $[\gamma]$ and $[\beta] - [\gamma]$ is of type $(g)$. Here $[\gamma] \sim A_1^3, [\beta] - [\gamma] \sim A_1^3$ and $[\beta] \sim A_1$. For $r \in R$, we have
        $$ [x_{[\gamma]}(s), x_{[\beta] - [\gamma]}(\pm r')] = x_{[\beta]} (rs + \bar{r} \bar{s} + \bar{\bar{r}} \bar{\bar{s}}) \in H.$$

        \item[\textbf{(E3)}] \textbf{$[\gamma] \sim A_1^3$ and $\mu = \pi/3$.} In this case, the pair $[\gamma]$ and $[\beta] - [\gamma]$ is of type $(f)$. Here $[\gamma] \sim A_1^3, [\beta] - [\gamma] \sim A_1^3, [\beta] \sim A_1^3, 2[\beta] - [\gamma] \sim A_1$ and $[\gamma] + [\beta] \sim A_1$. For $r \in R$, we have
        \begin{gather*}
            [x_{[\gamma]}(s), x_{[\beta] - [\gamma]}(\pm r/2)] = x_{[\beta]} ((r' s \pm r s')/2) x_{[\gamma] + [\beta]} (\pm(r \bar{s} \bar{\bar{s}} + \bar{r} s \bar{\bar{s}} + \bar{\bar{r}} s \bar{s})/2) \\
            x_{2[\beta] - [\gamma]} (\pm(r \bar{r} \bar{\bar{s}} + \bar{r} \bar{\bar{r}} s + r \bar{\bar{r}} \bar{s})/2) \in H.
        \end{gather*}
        Here $(s',r') = (\bar{s}, \bar{r})$ or $(\bar{\bar{s}}, \bar{\bar{r}})$.
        Note that $x_{[\gamma]}(-s) = \{x_{[\gamma]}(s)\}^{-1} \in H$. Hence if we replace $r$ (resp., $s$) by $-r$ (resp., $-s$), then 
        \begin{gather*}
            x_{[\beta]} ((r' s \pm r s')/2) x_{[\gamma] + [\beta]} (\pm(-r \bar{s} \bar{\bar{s}} - \bar{r} s \bar{\bar{s}} - \bar{\bar{r}} s \bar{s})/2) \\
            x_{2[\beta] - [\gamma]} (\pm(-r \bar{r} \bar{\bar{s}} - \bar{r} \bar{\bar{r}} s - r \bar{\bar{r}} \bar{s})/2) \in H.
        \end{gather*}
        Since $x_{[\beta]}(\cdot), x_{[\gamma] + [\beta]}(\cdot)$ and $x_{2[\beta] - [\gamma]}(\cdot)$ commutes with each other, we have
        \begin{gather*}
            x_{[\beta]} (r' s \pm r s') = \{ x_{[\beta]} ((r' s \pm r s')/2) x_{[\gamma] + [\beta]} (\pm(r \bar{s} \bar{\bar{s}} + \bar{r} s \bar{\bar{s}} + \bar{\bar{r}} s \bar{s})/2) \\ 
            x_{2[\beta] - [\gamma]} (\pm(r \bar{r} \bar{\bar{s}} + \bar{r} \bar{\bar{r}} s + r \bar{\bar{r}} \bar{s})/2) \} \{ x_{2[\beta] - [\gamma]} (\pm(-r \bar{r} \bar{\bar{s}} - \bar{r} \bar{\bar{r}} s - r \bar{\bar{r}} \bar{s})/2) \\
            x_{[\gamma] + [\beta]} (\pm(-r \bar{s} \bar{\bar{s}} - \bar{r} s \bar{\bar{s}} - \bar{\bar{r}} s \bar{s})/2) x_{[\beta]} ((r' s \pm r s')/2)\} \in H.
        \end{gather*}

        \item[\textbf{(E4)}] \textbf{$[\gamma]$ and $\mu$ are arbitrary.} Observe that, we can find a sequence $[\gamma_1], \dots, [\gamma_m]$ of roots in $\Phi'$ such that $2 \leq m \leq 12, [\gamma_1] = [\gamma], [\gamma_m] = [\beta]$ and the angle between $[\gamma_i]$ and $[\gamma_{i+1}]$ is $\pi / 6$ for every $i = 1, \dots, m-1$. 
        \begin{enumerate}[1.]
            \item If $[\gamma] \sim A_1$ and $[\beta] \sim A_1$ then $m$ must be odd. By applying Case (E1) to pairs $([\gamma_i], [\gamma_{i+1}])$ and Case (E2) to pairs $([\gamma_{i+1}], [\gamma_{i+2}])$ for $i=1, 3, \dots, m-2$, recursively, we get, $x_{[\beta]} ((r +\bar{r} + \bar{\bar{r}})s) \in H$ for every $r \in R$. Now by Lemma \ref{theta Rz}, we have $x_{[\beta]}(t) \in H,$ for every $t \in I_{[\beta]}$.

            \item If $[\gamma] \sim A_1$ and $[\beta] \sim A_1^3$ then $m$ must be even. Since $[\gamma_{m-1}] \sim A_1$, by the above Case, we have  $x_{[\gamma_{m-1}]}(s) \in H$. Now by applying Case (E1) to the pair $([\gamma_{m-1}], [\gamma_{m}])$, we get $x_{[\beta]} (rs) \in H$ for every $r \in R$. Hence we have $x_{[\beta]}(t) \in H,$ for every $t \in I_{[\beta]} = I$.
            
            \item If $[\gamma] \sim A_1^3$ and $[\beta] \sim A_1$ then $m$ must be even. By apply Case (E2) to pairs $([\gamma_i], [\gamma_{i+1}])$ and Case (E1) to pairs $([\gamma_{i+1}], [\gamma_{i+2}])$ for $i=1, 3, \dots, m-3$, recursively, and finally Case (E2) to the pair $([\gamma_{m-1}],[\gamma_m])$, we get $x_{[\beta]} (r s + \bar{r} \bar{s} + \bar{\bar{r}} \bar{\bar{s}}) \in H$ for every $r \in R$. Now by Lemma \ref{theta Rz}, we have $x_{[\beta]}(t) \in H,$ for every $t \in I_{[\beta]}$.
            
            \item If $[\gamma] \sim A_1^3$ and $[\beta] \sim A_1^3$ then $m$ must be odd and $m \geq 3$. We first show that $x_{[\gamma_3]}(t) \in H$ for every $t \in I$. Note that $[\gamma_3] \sim A_1^3$. By applying Case (E2) to the pair $([\gamma_{1}], [\gamma_{2}])$, we get $x_{[\gamma_2]} (r s + \bar{r} \bar{s} + \bar{\bar{r}} \bar{\bar{s}}) \in H$ for every $r \in R$. Next, by applying Case (E1) on the pair $([\gamma_2], [\gamma_3])$, we conclude that $x_{[\gamma_3]} (r s + \bar{r} \bar{s} + \bar{\bar{r}} \bar{\bar{s}}) \in H$ for every $r \in R$. Finally, by applying Case (E3) on the pair $([\gamma_1], [\gamma_3])$, we obtain $x_{[\gamma_3]}(r s \pm r' s') \in H$ for every $r \in R$ (where $a'$ denotes $\bar{a}$ or $\bar{\bar{a}}$). 
            
            Suppose $x_{[\gamma_3]}(r s + r' s') \in H$. Then $x_{[\gamma_3]}(r'' s'') \in H$ for every $r \in R$ (where $a'' = \bar{a}$ if $a' = \bar{\bar{a}}$ and vice-versa). By reversing the roles of $[\gamma_1]$ and $[\gamma_3]$ together with roles of $s$ and $s''$, we get $x_{[\gamma_1]}(r s + \bar{r} \bar{s} + \bar{\bar{r}} \bar{\bar{s}}) \in H$ and $x_{[\gamma_1]}(r's') \in H$. Since $x_{[\gamma_1]}(s) \in H$, it follows that $x_{[\gamma_1]}(\bar{s})$ and $x_{[\gamma_1]}(\bar{\bar{s}})$ are also in $H$. Applying the same process again with $\bar{s}$ (resp., $\bar{\bar{s}}$), we obtain $x_{[\gamma_3]}(r \bar{s}'') \in H$ (resp., $x_{[\gamma_3]}(r \bar{\bar{s}}'') \in H$) for every $r \in R$. In particular, we have $x_{[\gamma_3]}(t) \in H$ for every $t \in I_{[\gamma_3]}$. 
            
            Now, suppose $x_{[\gamma_3]}(rs - r's') \in H$. Then $x_{[\gamma_3]}(2rs + r'' s'') \in H$ for every $r \in R$. By reversing the roles of $[\gamma_1]$ and $[\gamma_3]$ together with roles of $s$ and $2s + s''$, we obtain $x_{[\gamma_1]}((2r+r')s + (2\bar{r}+ \bar{r'}) \bar{s} + (2 \bar{\bar{r}} + \bar{\bar{r'}}) \bar{\bar{s}}) \in H$ and $x_{[\gamma_1]}(4s + 4s'' + s') \in H$. Since the map $r \mapsto 2r + r'$ from $R$ to itself is surjective, we have $x_{[\gamma_1]}(rs + \bar{r}\bar{s} + \bar{\bar{r}} \bar{\bar{s}}) \in H$ for every $r \in R$. Consequently, $x_{[\gamma_1]} (3s') \in H$, and hence $x_{[\gamma_1]}(\bar{s})$ and $x_{[\gamma_1]}(\bar{\bar{s}}) \in H$ (as $1/3 \in R$). Applying the same process again with $\bar{s}$ and $\bar{\bar{s}}$, we get $x_{[\gamma_3]}(3rs) = x_{[\gamma_3]}(rs + \bar{r} \bar{s} + \bar{\bar{r}} \bar{\bar{s}}) x_{[\gamma_3]} (rs - \bar{r} \bar{s}) x_{[\gamma_3]} (rs - \bar{\bar{r}} \bar{\bar{s}}) \in H$ for every $r \in R$. Thus, we get $x_{[\gamma_3]}(rs) \in H$ for every $r \in R$. Similarly, we can show that $x_{[\gamma_3]}(r \bar{s}) \in H$ and $x_{[\gamma_3]}(r \bar{\bar{s}}) \in H$ for every $r \in R$. In particular, we have $x_{[\gamma_3]}(t) \in H$ for every $t \in I_{[\gamma_3]}$, as required. 
            
            Now if $m=3$, then we are done. If not, we repeat this process for the pair $([\gamma_{i}], [\gamma_{i+2}])$ for every $i = 3, \dots, m-2$ to obtain desired result.
        \end{enumerate}
    \end{enumerate}
    This completes the proof of our lemma.
\end{proof}


\vspace{2mm}

\noindent \textit{Proof of Proposition \ref{prop:E(R,J) subset of H}.} Let $J$ be as in the hypothesis of Proposition \ref{prop:E(R,J) subset of H}. Let $t, u \in J$. Then there exists $[\alpha], [\beta] \in \Phi_\rho$ such that $t \in J_{[\alpha]}(H)$ and $u \in J_{[\beta]}(H)$. Let $[\gamma] \in \Phi_\rho$ be such that it is either of type $A_1^2$ or $A_1^3$. By Proposition \ref{z to Rz}, for every $r \in R$ we have $x_{[\gamma]}(rt), x_{[\gamma]}(\bar{t}), x_{[\gamma]}(u) \in H$ and hence $x_{[\gamma]} (t + u) \in H$. Therefore, if $t,u \in J$ and $r \in R$ then we have $t+u, rt, \bar{t} \in J$. Thus $J$ is a $\theta$-invariant ideal of $R$. Now for the second assertion, it follows from Proposition \ref{z to Rz} that $E'_\sigma (J) \subset H$. Since $H$ is normalized by $E'_\sigma (R)$, we conclude that $E'_\sigma (R,J) \subset H$, as desired. \qed


\section{Proof of Proposition \ref{prop:U(R) cap H subset U(J)}} \label{sec:Pf of prop 2}


Let $\Phi_\rho$ be an irreducible root system. We fix a simple system $\Delta_\rho = \{ [\alpha_1], \dots, [\alpha_l] \}$ of $\Phi_\rho$. Recall that, for a root $[\alpha] = \sum_{i=1}^l m_{i} [\alpha_i] \in \Phi_\rho$, we defined $ht([\alpha])= \sum_{i=1}^l m_i$. We say a root $[\beta]$ is \textit{highest} if the height of $[\beta]$ is maximal, i.e., $ht([\beta]) = \max \{ ht([\alpha]) \mid [\alpha] \in \Phi_{\rho} \}$. Note that there is a unique highest root in an irreducible root system and it is a long positive root. 
Therefore we sometimes call it \textit{highest long root}. Similarly, we say $[\gamma]$ is a \textit{highest short root} if $ht([\gamma]) = \max \{ ht([\alpha]) \mid [\alpha] \in \Phi_{\rho} \text{ and } [\alpha] \text{ is short root} \}$. There is a unique highest short root in an irreducible root system.

\begin{lemma}\label{lemma:U cap H}
    \normalfont
    Let $x: = \prod_{[\alpha] \in \Phi_\rho^{+}} x_{[\alpha]}(t_{[\alpha]}) \in U(R) \cap H$ with $t_{[\alpha]} \in R_{[\alpha]}$ (the product is taken over disjoint roots in any fixed order). Then $x_{[\alpha]}(t_{[\alpha]}) \in H$ for all $[\alpha] \in \Phi_\rho^{+}$.
\end{lemma}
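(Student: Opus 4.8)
## Proof Plan for Lemma~\ref{lemma:U cap H}

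The plan is to induct on the height of the roots, peeling off unipotent factors one at a time, starting from the highest root. The key leverage point is that $x \in H$ and $H$ is normalized by $E'_\sigma(R)$, combined with the fact that commutators and conjugates by root elements of $x$ stay inside $H$. First I would order the positive roots $[\alpha_1] > [\alpha_2] > \cdots > [\alpha_n]$ by the fixed regular ordering, so that $ht([\alpha_i])$ is non-increasing, and write $x = x_{[\alpha_1]}(t_1) \cdots x_{[\alpha_n]}(t_n)$. The goal is to show each factor individually lies in $H$; by a descending induction it suffices to extract the factor corresponding to the \emph{highest} root appearing, show it lies in $H$, multiply $x$ by its inverse (which stays in $H$ since $H$ is a group), and repeat on the resulting shorter product.

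The central device for isolating the highest-root factor is the following: let $[\beta]$ be the highest root occurring in the expression for $x$ (so $[\beta]$ is a long positive root by the discussion preceding this lemma). I would conjugate $x$ by a suitable element $w_{[\delta]}(\cdot) \in N'_\sigma$, or more directly commute $x$ with elements $x_{-[\gamma]}(u)$ for appropriately chosen $[\gamma] \in \Phi_\rho^+$, using the Chevalley commutator formulas of Section~\ref{subsec:CheComm}. Because $[\beta]$ has maximal height, for a well-chosen $[\gamma]$ the only factor of $x$ that can produce a root of the form $[\beta] + [\gamma]$ (or the relevant positive combination landing on the highest long root of the ambient rank-$2$ subsystem) is the $[\beta]$-factor itself; all other factors either commute with $x_{-[\gamma]}(u)$ or contribute terms of strictly larger height that must vanish. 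This height-maximality argument lets me express $x_{[\beta]}(t_\beta)$ — possibly up to an element already known by induction to lie in $H$ — as a commutator $[x_{[\gamma]}(\pm 1), x_{[\beta]-[\gamma]}(\cdot)]$ type relation evaluated against the preserved element $x$, forcing $x_{[\beta]}(t_\beta) \in H$.

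After establishing the highest-root factor lies in $H$, I would set $x' = x_{[\beta]}(t_\beta)^{-1} x \in H$ (legitimate since $H$ is a subgroup), which is again an element of $U_\sigma(R) \cap H$ but now supported on a strictly smaller set of roots, and invoke the induction hypothesis on $x'$. The base case, when $x$ is supported on a single root, is trivial. The main obstacle I anticipate is the bookkeeping in the commutator step when $[\beta]$ sits in a rank-$2$ subsystem of type $B_2$ or $G_2$ (the $A_2$ case being clean): there the Chevalley commutator formulas (types (c), (d), (e), (f), (g)) produce multiple factors at intermediate heights, and I must carefully verify that all higher-height contributions vanish by maximality and that the surviving coefficient on $x_{[\beta]}(\cdot)$ is a unit multiple of $t_\beta$, so that the $[\beta]$-component is genuinely recovered rather than only some multiple of it. The uniqueness of expression in $U_\sigma(R)$ (Proposition~\ref{lemma 62 of RS} and Lemma~\ref{structure of U}) will be the tool that pins down which factor is responsible for each height level and guarantees the extraction is well-defined.
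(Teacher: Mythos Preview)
Your induction is oriented the wrong way, and this is a genuine gap rather than a bookkeeping issue. You want to peel off the \emph{highest}-root factor $x_{[\beta]}(t_{[\beta]})$ first, but height-maximality works against you here. If you commute $x$ with a positive root element $x_{[\gamma]}(u)$, the factor $x_{[\beta]}(t_{[\beta]})$ simply commutes with it (since $[\beta]+[\gamma]\notin\Phi_\rho$), so $t_{[\beta]}$ never appears in the result and cannot be recovered. If instead you commute with $x_{-[\gamma]}(u)$ as you suggest, two problems arise: first, the other factors $x_{[\alpha]}(t_{[\alpha]})$ produce terms at roots $j[\alpha]-i[\gamma]$ of \emph{lower} height, not higher, so nothing vanishes by maximality; second, when $[\gamma]$ itself is in the support of $x$ (as it is for a generic $x$), the piece $[x_{-[\gamma]}(u),\,x_{[\gamma]}(t_{[\gamma]})]$ lands in the rank-one subgroup $\langle\mathfrak X_{[\gamma]},\mathfrak X_{-[\gamma]}\rangle$ and is not unipotent, so the commutator leaves $U_\sigma(R)$ entirely and you lose the unique-factorization control you were relying on.

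The paper runs the induction in the opposite direction. For rank $>2$ it proves statements $(P_n)$: if $\Phi(x)$ is contained in $\{[\alpha]:\operatorname{ht}([\alpha])\ge\operatorname{ht}([\beta])-n+1\}\cup\{[\beta],[\beta']\}$ then each factor lies in $H$. The inductive step targets a root $[\delta]$ of \emph{minimal} height in the support, finds a simple root $[\alpha_i]$ with $[\delta]+[\alpha_i]\in\Phi_\rho$ and $[\delta]-[\alpha_i]\notin\Phi_\rho$, and forms $[x_{[\alpha_i]}(1),x]\in H\cap U_\sigma(R)$. This commutator stays unipotent and is supported on roots of height $>\operatorname{ht}([\delta])$, with the $[\delta]+[\alpha_i]$-coefficient equal to $\pm t_{[\delta]}$ up to conjugation; by $(P_n)$ that factor lies in $H$, and then Proposition~\ref{z to Rz} gives $x_{[\delta]}(t_{[\delta]})\in H$. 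The base case $(P_1)$ (support on the two highest roots only) is handled by reduction to the rank-two subsystems, which are done by hand in separate cases for ${}^2A_3$, ${}^2A_4$, ${}^3D_4$. Your plan does not account for these low-rank computations either, and they are not avoidable: there is no further inductive reduction once the ambient system has rank two.
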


\begin{proof}
    The proof closely resembles that of Lemma 3.1 and Proposition 1 in Section 3 of \cite{EA3}. However, the calculations presented here are distinct from those in \cite{EA3}. For the convenience of the reader, we provide the full proof below.

    \vspace{2mm}
    
    \noindent \textbf{Case A. $\Phi_\rho \sim {}^2 A_3$:} In this case, after the twist, $\Phi_\rho$ becomes a root system of type $B_2$. Let $[\alpha]$ and $[\beta]$ be the simple roots, with $[\alpha]$ being the long root. We first claim that 
    \begin{equation}\label{eq_a}
        \textit{if } x = x_{[\alpha] + [\beta]}(t) x_{[\alpha]+ 2 [\beta]}(u) \in H, \textit{ then } x_{[\alpha] + [\beta]}(t), x_{[\alpha]+ 2 [\beta]}(u) \in H. 
    \end{equation}
    For any $r \in R_{[\alpha]} = R_\theta$, we have
    $$ H \ni [x_{-[\alpha]}(r), x] = x_{[\beta]} (\pm rt) x_{[\alpha] + 2 [\beta]} (\pm r t \bar{t}).$$
    Since $x \in H$ then so is $x^{-1} = x_{[\alpha] + [\beta]}(-t) x_{[\alpha]+ 2 [\beta]}(-u)$. Therefore we can replacing $t$ and $u$ by $-t$ and $-u$, respectively, and we get $x_{[\beta]} (\pm r(-t)) x_{[\alpha] + 2 [\beta]} (\pm r t \bar{t}) \in H$. But then 
    $$x_{[\beta]}(\pm 2rt) = \{ x_{[\beta]} (\pm rt) x_{[\alpha] + 2 [\beta]} (\pm r t \bar{t}) \} \{ x_{[\beta]} (\pm r(-t)) x_{[\alpha] + 2 [\beta]} (\pm r t \bar{t}) \}^{-1} \in H.$$
    Put $r = \pm 1/2$, we get $x_{[\beta]}(t) \in H$. By Proposition \ref{z to Rz}, we get $x_{[\alpha] + [\beta]}(t) \in H$ and hence $x_{[\alpha]+ 2 [\beta]}(u) \in H$. This proves (\ref{eq_a}).
    Now let $x = x_{[\beta]}(t) x_{[\alpha]}(u) x_{[\alpha] + [\beta]} (v) x_{[\alpha] + 2[\beta]}(w) \in H$. Then
    $$ [x_{[\alpha]}(1), x] = x_{[\alpha] + [\beta]} (\pm t) x_{[\alpha]+2[\beta]} (\pm t \bar{t}) \in H.$$ 
    By (\ref{eq_a}), we have $x_{[\alpha] + [\beta]} (\pm t) \in H$. Again by Proposition \ref{z to Rz}, we get $x_{[\beta]} (t) \in H$. Consequently, $x_1:= x_{[\alpha]}(u) x_{[\alpha] + [\beta]} (v) x_{[\alpha] + 2[\beta]}(w) \in H$. Now,
    $$ [x_{[\beta]}(1), x_1] = x_{[\alpha] + [\beta]} (\pm u) x_{[\alpha]+2[\beta]} (\pm u \pm (v + \bar{v})) \in H.$$
    Again, by (\ref{eq_a}), $x_{[\alpha] + [\beta]} (\pm u) \in H$, and hence $x_{[\alpha]} (u) \in H$ (by Proposition \ref{z to Rz}). But then $$x_{[\alpha] + [\beta]} (v) x_{[\alpha] + 2[\beta]}(w) \in H.$$ Finally, by (\ref{eq_a}), $x_{[\alpha] + [\beta]} (v) \in H$ and $ x_{[\alpha] + 2[\beta]}(w) \in H$, as desired.
    
    \vspace{2mm}
    
    \noindent \textbf{Case B. $\Phi_\rho \sim {}^2 A_4$:} In this case, after the twist, $\Phi_\rho$ becomes a root system of type $B_2$. Let $[\alpha]$ and $[\beta]$ be the simple roots, with $[\alpha]$ being the long root. We first claim that 
    \begin{equation}\label{eq_b}
        \textit{if } x = x_{[\alpha] + [\beta]}(t) x_{[\alpha]+ 2 [\beta]}(u) \in H, \textit{ then } x_{[\alpha] + [\beta]}(t), x_{[\alpha]+ 2 [\beta]}(u) \in H. 
    \end{equation}
    For any $r = (r_1, r_2) \in R_{[\alpha]} = \mathcal{A}(R)$, we have
    \begin{equation}\label{eq_c}
        H \ni [x_{[\beta]}(r), x] = x_{[\alpha] + 2 [\beta]} (\pm r'_1 t'_1),
    \end{equation}
    where $r'_1$ denotes either $r_1$ or $\bar{r_1}$, similar for $t'_1$.
    Now $$ H \ni [x_{-[\beta]}(r), x] = x_{[\alpha]}(\pm r'_1 t'_1) x_{[\alpha]}(\pm r'_2 u) x_{[\alpha] + [\beta]}(\pm r'_1 u', r'_2 u \bar{u}).$$
    By (\ref{eq_c}) and Proposition \ref{z to Rz}, $x_{[\alpha]}(\pm r'_1 t'_1) \in H$. But then $$x_{[\alpha]}(\pm r'_2 u) x_{[\alpha] + [\beta]}(\pm r'_1 u', r'_2 u \bar{u}) \in H.$$ 
    Now if we put $(r_1, r_2) = (1,1/2)$, then we get $x_{[\alpha]}(\pm u/2) x_{[\alpha] + [\beta]} (\pm u', u \bar{u}/2) \in H$ and if we put $(r_1, r_2) = (-1, 1/2)$, then we get $x_{[\alpha]}(\pm u/2) x_{[\alpha] + [\beta]} (\pm (-u'), u \bar{u}/2) \in H$. But then
    $$x_{[\alpha]}(\pm u) = \{ x_{[\alpha]}(\pm u/2) x_{[\alpha] + [\beta]} (\pm u', u \bar{u}/2) \} \{ x_{[\alpha]}(\pm u/2) x_{[\alpha] + [\beta]} (\pm (-u'), u \bar{u}/2) \} \in H.$$
    Again by Proposition \ref{z to Rz}, we have $x_{[\alpha] + 2[\beta]} (u) \in H$ and hence $x_{[\alpha]+ [\beta]}(t) \in H$. This proves (\ref{eq_b}).
    
    Now let $x = x_{[\beta]}(s) x_{[\alpha]}(t) x_{[\alpha] + [\beta]} (u) x_{[\alpha] + 2[\beta]}(v) \in H$. Then
    $$ [x_{[\alpha]}(1), x] = x_{[\alpha] + [\beta]} (\pm s'_1, s'_2) x_{[\alpha] + [\beta]} (0, \pm (v - \bar{v})) x_{[\alpha]+2[\beta]} (\pm s_2) \in H,$$ where $s = (s_1, s_2) \in \mathcal{A}(R)$ and $s_1' = s_1$ or $\bar{s_1}$, similar for $s'_2$. 
    By (\ref{eq_b}), we have 
    $$ x_{[\alpha] + [\beta]} (\pm s'_1, s'_2 \pm (v - \bar{v})) \in H \text{ and } x_{[\alpha]+2[\beta]} (\pm s_2) \in H.$$ 
    Again by Proposition \ref{z to Rz}, we get $x_{[\beta]} (s)$. But then $x_1:= x_{[\alpha]}(t) x_{[\alpha] + [\beta]} (u) x_{[\alpha] + 2[\beta]}(v) \in H$. 
    Now, $$ [x_{[\beta]}(1, 1/2), x_1] = x_{[\alpha] + [\beta]} (\pm t', t \bar{t}/2 \pm (t \bar{u_1} - \bar{t}u_1)) x_{[\alpha]+2[\beta]} (\pm t \pm u'_1 ) \in H.$$
    Again by (\ref{eq_b}), $x_{[\alpha] + [\beta]} (\pm t', t \bar{t}/2 \pm (t \bar{u_1} - \bar{t}u_1)) \in H$ and hence $x_{[\alpha]} (t) \in H$ (by Proposition \ref{z to Rz}). But then $x_{[\alpha] + [\beta]} (u) x_{[\alpha] + 2[\beta]}(v) \in H$ and again by (\ref{eq_b}), $x_{[\alpha] + [\beta]} (u) \in H$ and $ x_{[\alpha] + 2[\beta]}(v) \in H$, as desired.
    
    \vspace{2mm}
    
    \noindent \textbf{Case C. $\Phi_\rho \sim {}^3 D_4$:} In this case, after the twist, $\Phi_\rho$ becomes a root system of type $G_2$. Let $[\alpha]$ and $[\beta]$ be the simple roots, with $[\alpha]$ being the long root. 
    We first claim that if $$x = x_{[\alpha] + 2[\beta]} (t) x_{[\alpha] + 3[\beta]} (u) x_{2[\alpha] + 3 [\beta]}(v) \in H,$$ then $x_{[\alpha] + 2[\beta]} (t), x_{[\alpha] + 3[\beta]} (u), x_{2[\alpha] + 3 [\beta]}(v) \in H.$
    Note that $[x_{[\alpha]}(1), x] = x_{2[\alpha] + 3[\beta]} (\pm u) \in H$ and $[x_{-[\alpha]}(1), x] = x_{[\alpha] + 3[\beta]} (\pm v) \in H$. But then, by Proposition \ref{z to Rz}, we have $x_{2[\alpha] + 3[\beta]} (v) \in H$, $x_{[\alpha] + 3[\beta]} (u) \in H$ and hence $x_{[\alpha] + 2[\beta]}(t) \in H$, which proves the claim. 
    We next claim that if $$x = x_{[\alpha] + [\beta]}(t) x_{[\alpha] + 2[\beta]}(u) x_{[\alpha] + 3[\beta]}(v) x_{2[\alpha] + 3[\beta]}(w) \in H,$$ then $x_{[\alpha] + [\beta]}(t), x_{[\alpha] + 2[\beta]}(u), x_{[\alpha] + 3[\beta]}(v), x_{2[\alpha] + 3[\beta]}(w) \in H$. For any $r \in R_\theta$, we have 
    $$y(r) = [x_{-[\alpha]}(r), x] = x_{[\beta]} (\pm rt) x_{[\alpha] + 2[\beta]}(\pm r t t') x_{[\alpha] + 3 [\beta]}(\pm r^2 t \bar{t} \bar{\bar{t}} \pm r w) x_{2[\alpha] + 3 [\beta]} (\pm r t \bar{t} \bar{\bar{t}}) \in H,$$ where $t' = \bar{t}$ or $\bar{\bar{t}}$. For any $s \in R_\theta$, we have
    \begin{align*}
        y(r,s) &= [x_{[\alpha]}(s), y(r)] = x_{[\alpha] + [\beta]} (\pm srt) x_{[\alpha] + 2[\beta]}(\pm s r^2 t t') x_{[\alpha]+3[\beta]}(\pm s r^3 t \bar{t} \bar{\bar{t}}) \\
        & \hspace{35mm} x_{2[\alpha] + 3[\beta]}(\pm s^2 r^3 t \bar{t} \bar{\bar{t}} \pm s r^2 t \bar{t} \bar{\bar{t}} \pm srw) \in H.
    \end{align*}
    Let $x_1:= y(r,s)^{-1} y (1,rs) = x_{[\alpha]+2[\beta]}(\pm s (r^2 - r)tt') x_{[\alpha]+3[\beta]}(v') x_{2[\alpha]+3[\beta]}(u') \in H$. By above we have $x_{[\alpha] + 2[\beta]}(\pm s (r^2 - r)tt') \in H$. Put $r = -1$ and $s=1/2$, we have $x_{[\alpha]+2[\beta]}(tt') \in H$. 
    But then, by Proposition \ref{z to Rz}, we have $x_{[\alpha]+2[\beta]}(\pm t t'), x_{[\alpha]+3[\beta]}(\pm t\bar{t} \bar{\bar{t}}), x_{2[\alpha] + 3[\beta]} (\pm t \bar{t} \bar{\bar{t}}) \in H$. Hence,
    $$y(1) x_{2[\alpha] + 3[\beta]} (\pm t \bar{t} \bar{\bar{t}})^{-1} x_{[\alpha]+3[\beta]}(\pm t\bar{t} \bar{\bar{t}})^{-1} x_{[\alpha]+2[\beta]}(\pm t t')^{-1} = x_{[\beta]}(\pm t) x_{[\alpha] + 3 [\beta]}(\pm w) \in H.$$
    Further, $[x_{-2[\alpha]-3[\beta]}(1), x_{[\beta]}(\pm t) x_{[\alpha] + 3[\beta]}(\pm w)] = x_{-[\alpha]} (\pm w) \in H$. By Proposition \ref{z to Rz}, we have $x_{[\alpha] + 3[\beta]}(\pm w) \in H$ and hence $x_{[\beta]}(\pm t) \in H$. Again by Proposition \ref{z to Rz}, $x_{[\alpha] + [\beta]}(t) \in H$ and hence $x_{[\alpha] + 2[\beta]}(u) x_{[\alpha] + 3[\beta]}(v) x_{2[\alpha] + 3[\beta]}(w) \in H$. By above claim, we have $x_{[\alpha] + 2[\beta]}(u)$, $x_{[\alpha] + 3[\beta]}(v)$, $x_{2[\alpha] + 3[\beta]}(w) \in H$, as desired. 
    
    Finally, let $x = x_{[\beta]}(t_1) x_{[\alpha]}(t_2) x_{[\alpha]+[\beta]}(t_3) x_{[\alpha]+2[\beta]}(t_4) x_{[\alpha] + 3[\beta]}(t_5) x_{2[\alpha]+3[\beta]} (t_6) \in H$. Note that 
    $$[x_{[\alpha]}(1), x] = x_{[\alpha] + [\beta]} (\pm t_1) x_{[\alpha] + 2[\beta]}(\pm t_1 t'_1) x_{[\alpha]+3[\beta]} (\pm t_1 \bar{t}_1 \bar{\bar{t}}_1) x_{2[\alpha]+3[\beta]}(\pm t_1 \bar{t}_1 \bar{\bar{t}}_1) \in H,$$
    where $t'_1 = \bar{t}_1$ or $\bar{\bar{t}}_1$. By above claim we have $x_{[\alpha]+[\beta]}(\pm t_1) \in H$, hence $x_{[\beta]}(t_1) \in H$ (by Proposition \ref{z to Rz}). Therefore, we have $$x_1:= x_{[\alpha]}(t_2) x_{[\alpha]+[\beta]}(t_3) x_{[\alpha]+2[\beta]}(t_4) x_{[\alpha] + 3[\beta]}(t_5) x_{2[\alpha]+3[\beta]} (t_6) \in H.$$
    Note that $$ [x_{[\beta]}(1), x_1] = x_{[\alpha] + [\beta]}(\pm t_2) x_{[\alpha]+2[\beta]}(s_4) x_{[\alpha] + 3[\beta]}(s_5) x_{2[\alpha]+3[\beta]} (s_6) \in H,$$ for some $s_4, s_5, s_6 \in R$. Again by above claim we have $x_{[\alpha] + [\beta]} (\pm t_2) \in H$ and hence $x_{[\alpha]}(t_2) \in H$. Thus $x_{[\alpha]+[\beta]}(t_3) x_{[\alpha]+2[\beta]}(t_4) x_{[\alpha] + 3[\beta]}(t_5) x_{2[\alpha]+3[\beta]} (t_6) \in H$. But again by above claim we have $x_{[\alpha]+[\beta]}(t_3), x_{[\alpha]+2[\beta]}(t_4), x_{[\alpha] + 3[\beta]}(t_5), x_{2[\alpha]+3[\beta]} (t_6) \in H,$ as desired.
    
    \vspace{2mm}
    
    \noindent \textbf{Case D. The rank of $\Phi_\rho > 2$:} Let $[\beta]$ be the highest long root in $\Phi_\rho$ and $[\beta']$ be the highest short root in $\Phi_\rho$. For $x = \prod_{[\alpha] \in \Phi^{+}_\rho} x_{[\alpha]} (t_{[\alpha]})$ (product is taken over some fixed order on the roots), we set $\Phi(x) = \{ [\alpha] \in \Phi^{+}_\rho \mid t_{[\alpha]} \neq 0 \}$. We use induction on $n$ to prove the following statement.

    \vspace{2mm}

    \noindent \textbf{($P_n$):} If $\Phi (x)$ only contains the roots $[\beta], [\beta']$ or $[\alpha]$ with $ht ([\alpha]) \geq ht([\beta]) - n + 1$. Then the conclusion of the lemma holds.

    \vspace{2mm}

    \noindent \textit{Proof of $(P_1)$}: We will show that if $x= x_{[\beta]}(t) x_{[\beta']}(t') \in H,$ then all factors of $x$ is contained in $H$. The subsystem generated by $[\beta]$ and $[\beta']$ is of the type ${}^2 A_{3}$ if $\Phi_\rho \sim {}^2 A_{2n-1} \ (n \geq 3), {}^2 D_{n} \ (n \geq 4)$ or ${}^2 E_6$ and is of type ${}^2 A_4$ if $\Phi_\rho \sim {}^2 A_{2n} \ (n \geq 3)$. Thus we are done by Case $A$ and Case $B$, above.

    \vspace{2mm}

    \noindent \textit{Proof of $(P_{n}) \implies (P_{n+1})$}: Assume that $(P_{n})$ holds, that is, assume that if $\Phi(x)$ only contains the roots $[\alpha]$ with $ht([\alpha]) \geq ht ([\beta]) - n + 1$, $[\beta]$ or $[\beta']$, then all factors of $x$ are contained in $H$. To prove $(P_{n+1})$, let $x \in H$ be such that $\Phi(x)$ only contains the roots  $[\alpha]$ with $ht([\alpha]) \geq ht ([\beta]) - n$, $[\beta]$ or $[\beta']$. It is enough to show that if $[\delta] \in \Phi (x)$ be such that $ht([\delta]) = ht ([\beta]) - n$ and $[\delta] \neq [\beta], [\beta']$ then $x_{[\delta]}(t_{[\delta]}) \in H$. Note that there exists a simple root $[\alpha_i] \in \Delta_\rho$ such that $[\delta] + [\alpha_i] \in \Phi_\rho$ and $[\delta] - [\alpha_i] \not \in \Phi_\rho$ (see 3.6 of \cite{EA2}). Let $\Phi'$ be the subsystem generated by $[\alpha_i]$ and $[\delta]$.
    \begin{enumerate}
        \item Suppose $\Phi'$ is of type $A_2$. In this case, the pair $[\alpha_i]$ and $[\delta]$ is either of the type $(b-i)$ or of the type $(b-ii)$. Take
        \begin{small}
            \[
            H \ni [x_{[\alpha_i]}(1), x] = \begin{cases}
                x_{[\delta] + [\alpha_i]} (\pm t_{[\delta]}) x' & \text{if } [\alpha_i], [\delta] \text{ is of type } (b-i), \\
                x_{[\delta] + [\alpha_i]} (\pm t_{[\delta]}) x' \text{ or } x_{[\delta] + [\alpha_i]} (\pm \bar{t}_{[\delta]}) x' & \text{if } [\alpha_i], [\delta] \text{ is of type } (b-ii);
            \end{cases}
            \]
        \end{small}where $x'$ is a product of elements $x_{[\alpha]}(t_{[\alpha]})$ with $[\alpha] \neq [\delta] + [\alpha_i]$ and $ht ([\alpha]) > ht ([\delta])$. Hence, by $(P_n)$, we have $x_{[\delta]+[\alpha_i]} (\pm t_{[\delta]})$ or $x_{[\delta]+[\alpha_i]} (\pm \bar{t}_{[\delta]}) \in H$. But then, by Proposition \ref{z to Rz}, $x_{[\delta]} (t_{[\delta]}) \in H$.

        \item Suppose $\Phi'$ is of type $B_2$ and $[\delta]$ is a short root. In this case, the pair $[\alpha_i]$ and $[\delta]$ is of the type $(d-i)$ if $\Phi_\rho \sim {}^2 A_{2n-1}, {}^2 D_{n+1}$ or ${}^2 E_6$ and of the type $(d-ii)$ if $\Phi_\rho \sim {}^2A_{2n}$ (with $[\alpha_i]$ being the long root). Take 
        \[
            H \ni [x_{[\alpha_i]}(1), x] = \begin{cases}
                x_{[\delta] + [\alpha_i]} (t_{[\delta]}) x' & \text{if } \Phi_\rho \sim {}^2 A_{2n-1}, {}^2 D_{n+1} \text{ or } {}^2 E_6, \\
                x_{[\delta] + [\alpha_i]} (s_{[\delta]}) x' & \text{if } \Phi_\rho \sim {}^2 A_{2n};
            \end{cases}
        \]
        where $x'$ is a product of elements $x_{[\alpha]}(t_{[\alpha]})$ with $[\alpha] \neq [\delta] + [\alpha_i]$, $ht ([\alpha]) > ht ([\delta])$ and $s_{[\delta]} = (\pm t_1, t_2)$ or $(\pm \bar{t_1}, \bar{t_2})$ if $t_{[\delta]}= (t_1, t_2).$ Hence, by $(P_n)$, we have $x_{[\delta]+[\alpha_i]} (t_{[\delta]})$ or $x_{[\delta]+[\alpha_i]} (s_{[\delta]}) \in H$. But then, by Proposition \ref{z to Rz}, $x_{[\delta]} (t_{[\delta]}) \in H$. 

        \item Suppose $\Phi'$ is of type $B_2$ and $[\delta_i]$ is a long root.  In this case, the pair $[\delta]$ and $[\alpha_i]$ is of the type $(d-i)$ if $\Phi_\rho \sim {}^2 A_{2n-1}, {}^2 D_{n+1}$ or ${}^2 E_6$ and of the type $(d-ii)$ if $\Phi_\rho \sim {}^2A_{2n}$. Take
        \[
            H \ni \begin{cases}
                [x_{[\alpha_i]}(1), x] = x_{[\delta] + [\alpha_i]} (\pm t_{[\delta]}) x' & \text{if } {}^2 A_{2n-1}, {}^2 D_{n+1} \text{ or } \Phi_\rho \sim {}^2 E_6, \\
                [x_{[\alpha_i]}(1, 1/2), x] = x_{[\delta] + [\alpha_i]} (\pm t'_{[\delta]}, t_{[\delta]} \bar{t}_{[\delta]}/2) x' & \text{if } \Phi_\rho \sim {}^2 A_{2n};
            \end{cases}
        \]
        where $x'$ is a product of elements $x_{[\alpha]}(t_{[\alpha]})$ with $[\alpha] \neq [\delta] + [\alpha_i]$, $ht ([\alpha]) > ht ([\delta])$ and $t'_{[\delta]} = t_{[\delta]}$ or $\bar{t}_{[\delta]}$. Hence, by $(P_n)$, we have $x_{[\delta]+[\alpha_i]} (\pm t_{[\delta]})$ or $x_{[\delta] + [\alpha_i]} (\pm t'_{[\delta]}, t_{[\delta]} \bar{t}_{[\delta]}/2) \in H$. But then, by Proposition \ref{z to Rz}, $x_{[\delta]} (t_{[\delta]}) \in H$.
    \end{enumerate}
    This proves the lemma.
\end{proof}


We labelled the simple roots $[\alpha_1], [\alpha_2], \dots, [\alpha_l]$ from one end of the Dynkin diagram to the other end such that 
\[
    [\alpha_1] \sim \begin{cases}
        A_1^2 & \text{if } \Phi_\rho \sim {}^2A_{n} \ (n \geq 3); \\
        A_1 & \text{if } \Phi_\rho \sim {}^2D_{n} \ (n \geq 4), {}^3D_{4} \text{ or } {}^2E_{6}. \\
        
    \end{cases} 
\]
Let $[\beta]$ be the highest root in $\Phi_\rho$. Note that there is a unique simple root $[\gamma] \in \Delta_\rho$ such that $\langle [\beta], [\gamma] \rangle \neq 0.$ 
The following table give us the precious values of $[\beta]$ and $[\gamma]$:
\begin{center}
    \begin{tabular}{c|c|c}
        \textbf{Type of $\Phi_\rho$} & \textbf{$[\beta]$} & \textbf{$[\gamma]$} \\
        \hline
        ${}^2 A_{2n-1} \ (n \geq 2)$ & $2 [\alpha_1] + 2 [\alpha_2] + \dots + 2 [\alpha_{n-1}] + [\alpha_n]$ & $[\alpha_1]$ \\
        ${}^2 A_{2n} \ (n \geq 2)$ & $[\alpha_1] + 2[\alpha_2] + 2 [\alpha_3] + \dots + 2[\alpha_{n}]$ & $[\alpha_2]$ \\
        ${}^2 D_{n} \ (n \geq 4)$ & $[\alpha_1] + 2[\alpha_2] + 2 [\alpha_3] + \dots + 2[\alpha_{n-1}]$ & $[\alpha_2]$ \\
        ${}^3 D_{4}$ & $2[\alpha_1] + 3[\alpha_2]$ & $[\alpha_1]$ \\
        ${}^2 E_{6}$ & $2 [\alpha_1] + 3 [\alpha_2] + 4 [\alpha_3] + 2 [\alpha_4]$ & $[\alpha_1]$ \\
    \end{tabular}
\end{center}


\begin{lemma}\label{lemma:UTV cap H}
    \normalfont
    Let $[\beta]$ and $[\gamma]$ be as above. If $z = x_{[\gamma]}(t) xhy \in U_\sigma (R) T_\sigma (R) U^{-}_\sigma (R) \cap H,$ where $x_{[\gamma]}(t) x \in U_\sigma (R),$ $x$ is a product of elements $x_{[\alpha]}(t_{[\alpha]})$ with $[\alpha] \neq [\gamma], [\alpha] \in \Phi^{+}_\rho$ and $t_{[\alpha]} \in R_{[\alpha]}, h \in T_\sigma (R)$ and $y \in U^{-}_\sigma (R)$. Then $x_{[\gamma]}(t) \in H$.
\end{lemma}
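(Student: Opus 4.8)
The plan is to prove this by extracting the highest-root coordinate $t$ through a carefully chosen commutator that kills every other factor appearing in $z$. The element $z$ lives in $U_\sigma(R) T_\sigma(R) U^-_\sigma(R) \cap H$, and the key structural fact I would exploit is that $[\gamma]$ is the unique simple root with $\langle [\beta], [\gamma] \rangle \neq 0$, where $[\beta]$ is the highest root. This means that conjugating $z$ by a suitable root element $x_{-[\beta]}(s)$ (or commuting with it) interacts non-trivially only with the $x_{[\gamma]}(t)$ factor among the positive-root contributions, because $[\gamma] - [\beta]$ behaves specially while most other sums $[\alpha] - [\beta]$ fall outside $\Phi_\rho$ or have controllable heights. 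First I would form $[x_{-[\beta]}(s), z]$ for an appropriate parameter $s$ and apply the Chevalley commutator formulas from Section~\ref{subsec:CheComm} to each factor, tracking which terms survive.

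The main work is organizing the commutator $[x_{-[\beta]}(s), z] = [x_{-[\beta]}(s), x_{[\gamma]}(t) x h y]$. Since $h \in T_\sigma(R)$ acts diagonally on root groups (Proposition~\ref{prop h(chi)xh(chi)^-1}) and $y \in U^-_\sigma(R)$, I would first move the troublesome pieces out of the way: conjugation by $x_{-[\beta]}(s)$ sends $y$ (a product of negative-root elements) into a combination that can be absorbed, and the diagonal $h$ merely rescales. The essential computation is the commutator $[x_{-[\beta]}(s), x_{[\gamma]}(t)]$, which, because $[\gamma] + (-[\beta])$ and its multiples land on a short chain of roots with heights strictly below $ht([\beta])$, produces a product of root elements whose coordinates are explicit multiples of $t$ (times powers of $s$). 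Crucially, the factors from $x$ (all roots $[\alpha] \neq [\gamma]$) commute with $x_{-[\beta]}(s)$ up to terms of height exceeding $ht([\beta]) - $ something, and I expect these to be expressible via $x$ itself so that they cancel or reduce.

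After this reduction, I would land on an element of $U_\sigma(R) \cap H$ (after clearing the torus and lower-triangular parts, which lie in subgroups already known to be normalized by $E'_\sigma(R)$), whose only ``high'' factor is a nonzero multiple of $x_{[\beta]}(\pm t \cdot s^{?})$ together with factors of strictly larger height. Then I invoke Lemma~\ref{lemma:U cap H}, which guarantees that every individual factor of an element of $U_\sigma(R) \cap H$ lies in $H$; in particular $x_{[\beta]}(c \cdot t) \in H$ for the relevant constant $c$. Finally, applying Proposition~\ref{z to Rz} (which passes membership of a single root element through the connected rank-$2$ subsystems of $\Phi_\rho$, recovering the full relative elementary subgroup of the appropriate level), I transfer $x_{[\beta]}(c \cdot t) \in H$ back to $x_{[\gamma]}(t) \in H$, using that $[\gamma]$ and $[\beta]$ are joined in $\Phi_\rho$.

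\textbf{Anticipated obstacle.} The hard part will be the bookkeeping in the commutator $[x_{-[\beta]}(s), x_{[\gamma]}(t) x]$: I must verify that the contributions coming from the other positive-root factors in $x$ do not contaminate the coefficient of the highest-root term, and that the parameter $s$ (or the specific choice of $w_{[\beta]}$-type element) can be chosen so that the surviving expression has exactly the form to which Lemma~\ref{lemma:U cap H} applies. This requires a case analysis on the type of $\Phi_\rho$ (whether $[\gamma] \sim A_1$, $A_1^2$, $A_1^3$, or $A_2$, per the table), since the precise Chevalley formula and hence the extracted coefficient differ in each case; the ${}^2A_{2n}$ situation with $[\gamma] \sim A_1^2$ and the ${}^3D_4$ situation will be the most delicate, mirroring the difficulties already seen in Lemma~\ref{z to Rz in phi'}.
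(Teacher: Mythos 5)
Your overall skeleton (iterated commutators exploiting that $[\gamma]$ is the unique simple root not orthogonal to the highest root $[\beta]$, then Lemma~\ref{lemma:U cap H} followed by Proposition~\ref{z to Rz}) points in the right direction, but the central mechanism you propose does not work, and this is a genuine gap rather than a bookkeeping issue. You want to form $[x_{-[\beta]}(s), z]$ and claim that the surviving high term is a multiple of $x_{[\beta]}(\pm t\, s^{?})$ sitting inside $U_\sigma(R)\cap H$. This cannot happen: by the commutator formulas of Section~\ref{subsec:CheComm}, $[x_{-[\beta]}(s), x_{[\gamma]}(t)]$ is supported on roots of the form $i[\gamma]-j[\beta]$ with $i,j>0$, and since $[\beta]$ is the highest root and $[\gamma]$ is simple, every such root (e.g.\ $[\gamma]-[\beta]=-([\beta]-[\gamma])$) is \emph{negative}. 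So the information about $t$ is pushed into $U^{-}_\sigma(R)$, not into $\mathfrak{X}_{[\beta]}$, and Lemma~\ref{lemma:U cap H} (which concerns $U_\sigma(R)\cap H$) is not applicable to what you obtain. Two further claims are also off: the assertion that "most other sums $[\alpha]-[\beta]$ fall outside $\Phi_\rho$" fails, since $[\beta]-[\alpha]\in\Phi_\rho$ precisely when $\langle[\beta],[\alpha]\rangle>0$, which holds for many positive $[\alpha]$; and if the factor $x$ contains $x_{[\beta]}(t_{[\beta]})$, then $[x_{-[\beta]}(s), x_{[\beta]}(t_{[\beta]})]$ is not governed by a Chevalley commutator formula at all but by the rank-one analysis of Lemma~\ref{inUHV}, which introduces torus factors you have not accounted for.

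The paper's proof runs the commutators in the opposite order. In the general case it first forms $[x_{-[\gamma]}(1), z]$ (commutators of $x_{-[\gamma]}(1)$ with the positive factors $x_{[\alpha]}(t_{[\alpha]})$, $[\alpha]\neq[\gamma]$, stay in $U_\sigma(R)$ because $[\gamma]$ is simple), then commutes with $x_{[\delta]}(1)$ for a simple root $[\delta]$ forming an $A_2$ with $[\gamma]$, which is what actually extracts $x_{[\delta]}(\pm t')\,x_{[\gamma]+[\delta]}(\pm t'^2)$; an intermediate application of Lemma~\ref{lemma:U cap H} and Proposition~\ref{z to Rz} strips off the quadratic term, and only at the very last step is the highest root used, via $[x_{[\beta]}(1), x_{-[\gamma]}(\pm t')\cdots]=x_{[\beta]-[\gamma]}(\pm t')$ --- i.e.\ a \emph{positive} highest-root element commuted against a \emph{negative} root element, which is the configuration that lands in $U_\sigma(R)$. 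Separate hands-on arguments are then needed for the rank-two cases ${}^2A_3$, ${}^2A_4$, ${}^3D_4$, where no auxiliary $[\delta]$ with the required $A_2$ configuration exists. Even setting aside the directional error, your proposal defers all of this multi-stage extraction to an "anticipated obstacle," which is where essentially the entire content of the lemma lies.
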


\begin{proof}
    We write ${}^a b$ for the conjugate $a b a^{-1}$. 
    
    \vspace{2mm}
    
    \noindent \textbf{Case A. The rank of $\Phi_\rho > 2$:} 
    Let 
    \begin{small}
        \begin{align*}
            H \ni z_1 &= [x_{-[\gamma]}(1), z] \\
            &= [x_{-[\gamma]} (1), x_{[\gamma]}(t)] \{ {}^{x_{[\gamma]}(t)} [x_{-[\gamma]}(1), x] \} \{ {}^{x_{[\gamma]}(t) x} [x_{-[\gamma]}(1), h] \} \{ {}^{x_{[\gamma]}(t) x h} [x_{-[\gamma]}(1), y] \} \\
            &= {}^{x_{[\gamma]}(t) x} \{ \{ {}^{x^{-1} x_{[\gamma]}(t)^{-1}} [x_{-[\gamma]} (1), x_{[\gamma]}(t)] \} \{ {}^{x^{-1}} [x_{-[\gamma]}(1), x] \} \{ [x_{-[\gamma]}(1), h] \} \{ {}^{h} [x_{-[\gamma]}(1), y] \} \}.
        \end{align*}
    \end{small}
    Note that, ${}^{x^{-1} x_{[\gamma]}(t)^{-1}} [x_{-[\gamma]} (1), x_{[\gamma]}(t)] = {}^{x^{-1}} [x_{[\gamma]} (t)^{-1}, x_{-[\gamma]}(1)] = [x_{[\gamma]} (t)^{-1}, x_{-[\gamma]}(1)] x'$ with $x' \in U_{\sigma}(R),$ $[x_{-[\gamma]}(1),h] = x_{-[\gamma]}(a)$ for some $a \in R_{[\gamma]}$ and $x^{-1} [x_{-[\gamma]} (1), x] = [x^{-1}, x_{-[\gamma]} (1)] \in U_\sigma (R)$. Set $u_1 = [x_{[\gamma]}(t)^{-1}, x_{-[\gamma]} (1)], x_1= x' [x^{-1}, x_{-[\gamma]} (1)]$ and $y_1 = x_{-[\gamma]}(a) \{ {}^{h} [x_{-[\gamma]}(1), y] \}$. 
    Thus we have $z_1 = {}^{x_{[\gamma]}(t) x}(u_1 x_1 y_1)$ and since $z_1 \in H$ then so is $u_1 x_1 y_1$. 
    Observe that $x_1 \in U_\sigma (R)$ is a product of elements $x_{[\alpha]}(t_{[\alpha]})$ with $[\alpha] \neq [\gamma]$, and $y_1 \in U^{-}_\sigma (R)$ is a product of elements $x_{-[\alpha]}(s_{[\alpha]})$ with $m_{[\gamma]}([\alpha]) \geq 1$. 
    In this case, there exists a root $[\delta] \in \Delta_\rho$ such that $\{[\gamma], [\delta]\}$ is a base of a subsystem of $\Phi_\rho$ of type $A_2$ (note that $[\gamma] \sim A_1$ or $A_1^2$ then so is $[\delta]$). Now let
    \begin{align*}
        H \ni z_2 &= [x_{[\delta]}(1), u_1 x_1 y_1] \\
        &= \{ [x_{[\delta]}(1), u_1] \} \{ {}^{u_1} [x_{[\delta]}(1), x_1] \} \{ {}^{u_1 x_1} [x_{[\delta]}(1), y_1] \} \\
        &= {}^{u_1 x_1} \{ \{ {}^{x_1^{-1} u_1^{-1}} [x_{[\delta]}(1), u_1] \} \{ {}^{x_1^{-1}} [x_{[\delta]}(1), x_1] \} \{ [x_{[\delta]}(1), y_1] \} \}.
    \end{align*}
    Note that, 
    \begin{align*}
        {}^{x_1^{-1} u_1^{-1}} [x_{[\delta]}(1), u_1] &= {}^{x_1^{-1}} [u_1^{-1}, x_{[\delta]}(1)] \\
        &= x_1^{-1} (u_1^{-1} x_{[\delta]}(1) u_1 x_{[\delta]}(1)^{-1}) x_1 \\
        &= {}^{x_1^{-1} x_{-[\gamma]}(1)} \{ x_{[\delta]} (\pm t') x_{[\gamma] + [\delta]} (\pm t'^{2}) \},
    \end{align*}
    where $t' = t$ or $\bar{t}$. Observe that, there exists $x'_1 \in U_\sigma (R)$ such that $x_1^{-1} x_{-[\gamma]}(1) = x_{-[\gamma]}(1) x'_1$. Then,
    \begin{align*}
        z_2 &= {}^{u_1 x_1} \{ \{ {}^{x_1^{-1} u_1^{-1}} [x_{[\delta]}(1), u_1] \} \{ {}^{x_1^{-1}} [x_{[\delta]}(1), x_1] \} \{ [x_{[\delta]}(1), y_1] \} \} \\
        &= {}^{u_1 x_1 x_{-[\gamma]}(1)} \{ \{ {}^{x'_1} (x_{[\delta]}(\pm t') x_{[\gamma]+[\delta]}(\pm t'^2)) \} \{ {}^{x_{-[\gamma]}(1)^{-1} x_1^{-1}} [x_{[\delta]}(1), x_1] \} \{ {}^{x_{-[\gamma]}(1)^{-1}} [x_{[\delta]}(1), y_1] \} \} \\
        &= {}^{u_1 x_1 x_{-[\gamma]}(1)} \{ \{ x_{[\delta]}(\pm t') x_{[\gamma]+[\delta]}(\pm t'^2) x''_1 \} \{ {}^{x_{-[\gamma]}(1)^{-1} x_1^{-1}} [x_{[\delta]}(1), x_1] \} \{ {}^{x_{-[\gamma]}(1)^{-1}} [x_{[\delta]}(1), y_1] \} \},
    \end{align*}
    where $x''_1 \in U_\sigma (R)$. Set $x_2 = x''_1 \{ {}^{x_{-[\gamma]}(1)^{-1} x_1^{-1}} [x_{[\delta]}(1), x_1] \}$ and $y_2 = {}^{x_{-[\gamma]}(1)^{-1}} [x_{[\delta]}(1), y_1]$. Note that $x_2 \in U_\sigma (R)$ is a product of elements $x_{[\alpha]}(t_{[\alpha]})$ with $[\alpha] \in \Phi^{+}_\rho, [\alpha] \neq [\gamma], [\delta], [\gamma]+[\delta]$ and $y_2  \in U^{-}_\sigma (R)$ is a product of elements $x_{-[\alpha]}(s_{[\alpha]})$ with $[\alpha] \in \Phi^{+}_\rho$ and $m_{[\gamma]} ([\alpha]) \geq 1$. But then
    $$z_2 = [x_{[\delta]}(1), u_1 x_1 y_1] = {}^{u_1 x_1 x_{-[\gamma]}(1)} \{ x_{[\delta]}(\pm t') x_{[\gamma]+[\delta]}(\pm t'^2) x_2 y_2 \}.$$
    Since $z_2 \in H$ then so is $x_{[\delta]}(\pm t') x_{[\gamma]+[\delta]}(\pm t'^2) x_2 y_2$. 
    Let
    \begin{small}
        \begin{align*}
            H \ni z_3 &= [x_{-[\gamma]}(1), x_{[\delta]}(\pm t') x_{[\gamma]+[\delta]}(\pm t'^2) x_2 y_2] \\
            &= [x_{-[\gamma]}(1), x_{[\delta]}(\pm t')] \{ {}^{x_{[\delta]}(\pm t')} [x_{-[\gamma]}(1), x_{[\gamma]+[\delta]}(\pm t'^2)] \} \{ {}^{x_{[\delta]}(\pm t') x_{[\gamma]+[\delta]}(\pm t'^2)} [x_{-[\gamma]}(1), x_2] \} \\
            & \hspace{20mm} \{ {}^{x_{[\delta]}(\pm t') x_{[\gamma]+[\delta]}(\pm t'^2) x_2} [x_{-[\gamma]}(1), y_2] \} 
        \end{align*}
    \end{small}
    Note that $[x_{-[\gamma]}(1), x_{[\delta]}(\pm t')]=1$ and $y_3 := [x_{-[\gamma]}(1), y_2] = 1$. The formal assertion is clear. To see the latter assertion, observe that the highest root $[\beta]$ is the only root with $m_{[\gamma]}([\beta]) = 2,$ therefore $x_{-[\beta]}(u)$ is the only possible factor of $y_3$. 
    Assume $y_3 = x_{-[\beta]}(u) \neq 1,$ then $-[\beta] = -[\alpha] - [\gamma]$ for some $[\alpha]$ such that $x_{-[\alpha]}(v)$ is factor of $y_2$. Hence, $-[\alpha] = -[\alpha_1] + [\delta]$ or $(-[\alpha_1]+[\delta]) -[\alpha_2]$ for some $[\alpha_1], [\alpha_2]$ with $m_{[\gamma]}([\alpha_i]) \geq 1 \ (i=1,2).$ But it is impossible for the first case, as $[\alpha_1] = [\beta] - [\gamma] + [\delta]$ is never a root, and for the second case, $2 \geq m_{[\gamma]}([\alpha_1] - [\delta] + [\alpha_2]) = m_{[\gamma]} ([\beta] - [\gamma]) = 1$. Which proves the claim. Finally, we have
    \begin{align*}
        z_3 &= \{ {}^{x_{[\delta]}(\pm t')} [x_{-[\gamma]}(1), x_{[\gamma]+[\delta]}(\pm t'^2)] \} \{ {}^{x_{[\delta]}(\pm t') x_{[\gamma]+[\delta]}(\pm t'^2)} [x_{-[\gamma]}(1), x_2] \} \\
        &= {}^{x_{[\delta]}(\pm t') x_{[\gamma]+[\delta]}(\pm t'^2)} \{ {}^{ x_{[\gamma]+[\delta]}(\pm t'^2)^{-1}} [x_{-[\gamma]}(1), x_{[\gamma]+[\delta]}(\pm t'^2)] \} \{ [x_{-[\gamma]}(1), x_2] \} \\
        &= {}^{x_{[\delta]}(\pm t') x_{[\gamma]+[\delta]}(\pm t'^2)} \{ [x_{[\gamma]+[\delta]}(\pm t'^2)^{-1}, x_{-[\gamma]}(1)]  [x_{-[\gamma]}(1), x_2] \} \\
        &= {}^{x_{[\delta]}(\pm t') x_{[\gamma]+[\delta]}(\pm t'^2)} \{ x_{[\delta]} (\pm t'^2) x_3 \},
    \end{align*}
    where $x_3 = [x_{-[\gamma]}(1), x_2] \in U_\sigma (R)$. Since $z_3 \in H$, then so is $x_{[\delta]}(\pm t'^2) x_3$. Therefore, by Lemma \ref{lemma:U cap H}, we have $x_{[\delta]} (\pm t'^2) \in H$. But then, by Proposition \ref{z to Rz}, $x_{[\gamma] + [\delta]} (\pm t'^2) \in H$. Hence $x_{[\gamma] + [\delta]} (\pm t'^2)^{-1} \{ x_{[\delta]}(\pm t') x_{[\gamma]+[\delta]}(\pm t'^2) x_2 y_2 \} = x_{[\delta]}(\pm t') x_2 y_2 \in H$. Now let 
    \begin{align*}
        H \ni z_4 &= [x_{-[\gamma] - [\delta]} (1), x_{[\delta]}(\pm t') x_2 y_2] \\
        &= [x_{-[\gamma] - [\delta]} (1), x_{[\delta]}(\pm t')] \{ {}^{x_{[\delta]}(\pm t')}[x_{-[\gamma] - [\delta]} (1), x_2] \} \{ {}^{x_{[\delta]}(\pm t') x_2} [x_{-[\gamma] - [\delta]} (1), y_2] \} \\
        &= x_{-[\gamma]}(\pm t') x_4 y_4,
    \end{align*}
    where $x_4 = {}^{x_{[\delta]}(\pm t')}[x_{-[\gamma] - [\delta]} (1), x_2]$ and $y_4 = {}^{x_{[\delta]}(\pm t') x_2} [x_{-[\gamma] - [\delta]} (1), y_2]$. Since $x_2$ does not have factors of the form $x_{[\alpha]}(t_{[\alpha]})$ with $[\alpha] = [\gamma], [\delta], [\gamma]+[\delta]$, we conclude that $x_4 \in U_\sigma(R)$ and it does not have a factor of the form $x_{[\delta]} (s_{[\delta]})$. Now we claim that $[x_{-[\gamma] - [\delta]} (1), y_2] = 1$ or $x_{-[\beta]}(s)$ for some $s \in R_{[\beta]}$. The latter case is possible only when $\Phi_\rho \sim {}^2 A_{2n+1}$. To see this, observe that the highest root $[\beta]$ is the only root with $m_{[\gamma]}([\beta]) = 2$, therefore $x_{-[\beta]}(u)$ is the only possible factor of $[x_{-[\gamma] - [\delta]} (1), y_2]$. Assume $\Phi_\rho \not\sim {}^{2} A_{2n+1}$ and $[x_{-[\gamma] - [\delta]} (1), y_2] = x_{-[\beta]}(s) \neq 1$, then $-[\beta] = -[\alpha] - [\gamma] - [\delta]$ for some root $[\alpha]$ such that $x_{-[\alpha]}(v)$ is a factor of $y_2$. Hence, $-[\alpha] = - [\alpha_1] + [\delta]$ or $-[\alpha] = (-[\alpha_1] + [\delta]) - [\alpha_2]$ for some $[\alpha_1], [\alpha_2]$ with $m_{[\gamma]}([\alpha_i]) \geq 1 (i=1,2).$ But it is impossible for the first case, as in this case $[\alpha_1] = [\beta] - 2[\gamma]$ is never a root, and for the second case, $2 \geq m_{[\gamma]}([\alpha_1] - [\delta] + [\alpha_2]) = m_{[\gamma]}([\beta] - [\gamma] - [\delta]) = 1$. Which proves the claim. 
    
    Suppose $\Phi_\rho \not\sim {}^2 A_{2n+1}$ then $z_4 = x_{-[\gamma]}(\pm t') x_4$. In this case, let
    \begin{align*}
        H \ni z_5 &= [x_{[\beta]}(1), x_{-[\gamma]} (\pm t') x_4] \\
        &= [x_{[\beta]}(1), x_{-[\gamma]}(\pm t')] \{ {}^{x_{-[\gamma]}(\pm t')} [x_{[\beta]}(1), x_4] \} \\
        &= x_{[\beta] - [\gamma]}(\pm t').
    \end{align*}
    Thus, by Proposition \ref{z to Rz}, we have $x_{[\gamma]}(t) \in H$. 
    
    Now suppose $\Phi_\rho \sim {}^2 A_{2n+1}$, then $ [x_{-[\gamma] - [\delta]}(1), y_2] = x_{-[\beta]}(s)$ for some $s \in R_{[\beta]}$. We can rewrite the expression of $z_4$ as follows:
    \begin{align*}
        z_4 &= [x_{-[\gamma] - [\delta]} (1), x_{[\delta]}(\pm t')] \{ {}^{x_{[\delta]}(\pm t')}[x_{-[\gamma] - [\delta]} (1), x_2] \} \{ {}^{x_{[\delta]}(\pm t') x_2} [x_{-[\gamma] - [\delta]} (1), y_2] \} \\
        &= {}^{x_{[\delta]}(\pm t') x_2} \{ \{ {}^{x_2^{-1} x_{[\delta]}(\pm t')^{-1}} [x_{-[\gamma] - [\delta]} (1), x_{[\delta]}(\pm t')] \} \{ {}^{x_2^{-1}} [x_{-[\gamma] - [\delta]} (1), x_2] \} \{ x_{-[\beta]}(s) \} \} \\
        &= {}^{x_{[\delta]}(\pm t') x_2} \{ \{ {}^{x_2^{-1}} [x_{[\delta]}(\pm t')^{-1}, x_{-[\gamma] - [\delta]} (1)] \} \{ [x_2^{-1}, x_{-[\gamma] - [\delta]} (1)] \} \{ x_{-[\beta]}(s) \} \} \\
        &= {}^{x_{[\delta]}(\pm t') x_2} \{ \{ {}^{x_2^{-1}} x_{-[\gamma]}(\pm t') \} \{ [x_2^{-1}, x_{-[\gamma] - [\delta]} (1)] \} \{ x_{-[\beta]}(s) \} \}
    \end{align*}
    Let $x'_2$ be such that ${}^{x_2^{-1}} x_{-[\gamma]}(\pm t') = x_{-[\gamma]}(\pm t') x'_2$. Clearly, $x'_2 \in U_\sigma (R)$. We set $x'_4 = x'_2 [x_2^{-1}, x_{-[\gamma] - [\delta]}(1)]$. Note that $x'_4 \in U_\sigma (R)$ such that it can not contain a factor of type $x_{[\delta]}(s_{[\delta]})$. But then 
    $$ z_4 = {}^{x_{[\delta]}(\pm t') x_2} \{ x_{-[\gamma]} (\pm t') x'_4 x_{-[\beta]}(s) \}.$$
    Since $z_4 \in H$, so is $x_{-[\gamma]} (\pm t')^{-1} x'_4 x_{-[\beta]}(s)$. Now, let
    \begin{align*}
        H \ni z'_5 &= [x_{-[\delta]}(1), x_{-[\gamma]} (\pm t') x'_4 x_{-[\beta]}(s)] \\
        &= [x_{-[\delta]}(1), x_{-[\gamma]} (\pm t')] \{ {}^{x_{-[\gamma]} (\pm t')} [x_{-[\delta]}(1), x'_4] \} \{ {}^{x_{-[\gamma]} (\pm t') x'_4} [x_{-[\delta]}(1), x_{-[\beta]}(s)] \} \\
        &= {}^{x_{-[\gamma]}(\pm t')} \{ \{{}^{x_{-[\gamma]}(\pm t')^{-1}} [x_{-[\delta]}(1), x_{-[\gamma]} (\pm t')] \} \{ [x_{-[\delta]}(1), x'_4] \} \} \\
        &= {}^{x_{-[\gamma]}(\pm t')} \{ \{[x_{-[\gamma]}(\pm t')^{-1}, x_{-[\delta]}(1)] \} \{ [x_{-[\delta]}(1), x'_4] \} \} \\
        &= {}^{x_{-[\gamma]}(\pm t')} \{ \{ x_{-[\gamma] - [\delta]} (\pm t') \} \{ [x_{-[\delta]}(1), x'_4] \} \} \\
        & = {}^{x_{-[\gamma]}(\pm t')} \{ x_{-[\gamma] - [\delta]} (\pm t') x_5 \},
    \end{align*}
    where $x_5 = [x_{-[\delta]}(1), x'_4] \in U_\sigma (R)$. Note that $x_{-[\gamma] - [\delta]} (\pm t') x_5 \in H$. Finally, let
    \begin{align*}
        H \ni z_6 &= [x_{[\beta]}(1), x_{-[\gamma] - [\delta]} (\pm t') x_5] \\
        &= [x_{[\beta]}(1), x_{-[\gamma] - [\delta]} (\pm t')] \{ {}^{x_{-[\gamma] - [\delta]} (\pm t')} [x_{[\beta]}(1), x_5] \} \\
        &= [x_{[\beta]}(1), x_{-[\gamma] - [\delta]} (\pm t')] \\
        &= x_{[\beta] - [\gamma] - [\delta]} (\pm t') x_{[\beta] - 2 [\gamma] - 2 [\delta]} (\pm t \bar{t})
    \end{align*}
    Now, by Lemma \ref{lemma:U cap H}, we have $x_{[\beta] - [\gamma] - [\delta]} (\pm t') \in H$ and hence, by Proposition \ref{z to Rz}, $x_{[\gamma]} (t) \in H$.

    \vspace{2mm}
    
    \noindent \textbf{Case B. $\Phi_\rho \sim {}^2 A_3$:} 
    Let $[\delta] \in \Delta_\rho$ be such that $\{[\gamma], [\delta]\}$ forms a base of $\Phi_\rho$. Note that $[\gamma]$ is a short root. The idea of the proof is the same as in Case A. We leave the details to the reader.
    Let $z = x_{[\gamma]}(t) x h y = x_{[\gamma]}(t) x h x_{-[\gamma]-[\delta]}(s_1) x_{-[\gamma]}(s_2) x_{-2[\gamma]-[\delta]}(s_3) x_{-[\delta]}(s_4) \in H$.
    We first show that $x_{-[\delta]}(s_4) \in H$. Write $x' = x^{-1} x_{[\gamma]}(t)^{-1}$ and $y' = \{ x_{-[\gamma]-[\delta]}(s_1) x_{-[\gamma]}(s_2)$ $x_{-2[\gamma]-[\delta]}(s_3) \}^{-1}$ and consider 
    $$ H \ni z_1 = [x_{[\delta]}(1), z^{-1}] = {}^{x_{-[\delta]}(-s_4)y'} \{ u_1 y_1 x_1 \}, $$
    where $u_1 = [x_{-[\delta]}(s_4), x_{[\delta]}(1)], y_1 = x_{-[\gamma]}(s'_1) x_{-[\gamma]-[\delta]}(s'_2) x_{-2[\gamma]-[\delta]}(s'_3) \in U^{-}_\sigma (R)$ and $x_1 = x_{[\delta]}(t'_1) x_{[\gamma]+[\delta]} (t'_2) x_{2[\gamma] + [\delta]} (t'_3) \in U_\sigma (R)$. Now consider
    $$H \ni z_2 = [x_{-[\gamma]}(1), u_1 y_1 x_1] = {}^{u_1 y_1 x_{[\delta]}(1)} \{ x_{-[\gamma]}(\pm s_4) y_2 x_2 \},$$
    where $y_2 = x_{-[\gamma]-[\delta]}(s''_1) x_{-2[\gamma]-[\delta]}(s''_2) \in U^{-}_\sigma (R)$ and $x_2 = x_{[\delta]}(t''_1) x_{[\gamma]+[\delta]} (t''_2) \in U_\sigma (R)$. Now let $z'_2 = x_{-[\gamma]}(\pm s_4) y_2 x_2$ and consider
    $$ H \ni z_3 = [x_{[\delta]}(1), z'_2] = x_{-[\gamma]}(\pm s''_1) x_{-2[\gamma] - [\delta]}(s'''_2).$$
    But then $$w_{2[\gamma]+[\delta]}(1) \{ x_{-[\gamma]}(\pm s''_1) x_{-2[\gamma] - [\delta]}(s'''_2) \}w_{2[\gamma]+[\delta]}(1)^{-1} = x_{[\gamma] + [\delta]}(\pm s''_1) x_{2[\gamma] + [\delta]}(\pm s'''_2) \in H.$$ 
    By Lemma \ref{lemma:U cap H}, we have $x_{[\gamma] + [\delta]}(\pm s''_1) \in H$ and hence, by Proposition \ref{z to Rz}, we have $x_{-[\gamma]-[\delta]}(s''_1) \in H$. Now $x_{-[\gamma]-[\delta]}(s''_1)^{-1} (z'_2) = x_{-[\gamma]}(\pm s_4) x_{-2[\gamma]-[\delta]}(s''_2) x_{[\delta]}(t''_1) x_{[\gamma]+[\delta]} (t''_2) \in H$. But then 
    \begin{gather*}
        w_{2[\gamma]+[\delta]}(1) w_{[\gamma] + [\delta]}(1) \{ x_{-[\gamma]}(\pm s_4) x_{-2[\gamma]-[\delta]}(s''_2) x_{[\delta]}(t''_1) x_{[\gamma]+[\delta]} (t''_2) \} w_{[\gamma] + [\delta]}(1)^{-1} w_{2[\gamma]+[\delta]}(1)^{-1} \\
        = x_{[\gamma] + [\delta]}(\pm s_4) x_{[\delta]}(s''_2) x_{2[\gamma] + [\delta]}(t''_1) x_{[\gamma]} (t''_2) \in H.
    \end{gather*}
    By Lemma \ref{lemma:U cap H}, we have $x_{[\gamma] + [\delta]}(\pm s_4) \in H$ and hence, by Proposition \ref{z to Rz}, we have $x_{-[\delta]}(s_4) \in H$.
    Finally, $z' = z x_{-[\delta]}(s_4)^{-1} \in H$. Now consider, 
    $$ H \ni z_4 = [x_{[\delta]}(1), z'] = {}^{x_{[\gamma]}(t) x} \{ x_{[\gamma]+[\delta]} (\pm t) x_4 y_4 \}, $$
    where $x_4 =  x_{[\delta]}(t_{41}) x_{2[\gamma]+[\delta]}(t_{42})$ and $y_4 = x_{-[\gamma]}(s_{41}) x_{-2[\gamma]-[\delta]}(s_{42})$. Finally, we claim that $x_{-2[\gamma]-[\delta]}(s_{42}) \in H$. To see this consider
    $$ H \ni z_5 = [x_{[\gamma]+[\delta]}(1), x_{[\gamma]+[\delta]} (\pm t) x_4 y_4] = {}^{x_{[\gamma]+[\delta]} (\pm t) x_4 x_{-[\gamma]}(s_{41})} \{ x_{-[\gamma]}(\pm s_{42}) x_{[\delta]}(s_{52}) \}.$$
    But then $w_{[\gamma]}(1) \{ x_{-[\gamma]}(\pm s_{42}) x_{[\delta]}(s_{52}) \} w_{[\gamma]}(1)^{-1} = x_{[\gamma]}(\pm s_{42}) x_{2[\gamma] + [\delta]}(\pm s_{52}) \in H.$ By Lemma \ref{lemma:U cap H}, we have $x_{[\gamma]}(\pm s_{42}) \in H$, and hence, by Proposition \ref{z to Rz}, we have $x_{-2[\gamma]-[\delta]}(s_{42}) \in H$. Finally, we have $z'' = x_{[\gamma]+[\delta]} (\pm t) x_4 y_4 x_{[\gamma]}(\pm s_{42})^{-1} \in H$. But then 
    $$w_{[\gamma]}(1) z'' w_{[\gamma]}(1)^{-1} = x_{[\gamma]+[\delta]} (\pm t) x_{2[\gamma] + [\delta]} (\pm t_{41}) x_{[\delta]}(\pm t_{42}) x_{[\gamma]}(\pm s_{41}) \in H.$$ By Lemma \ref{lemma:U cap H}, $x_{[\gamma] + [\delta]} (t) \in H$ and hence, by Proposition \ref{z to Rz}, $x_{[\gamma]}(t) \in H$.
    
    \vspace{2mm}

    \noindent \textbf{Case C. $\Phi_\rho \sim {}^2 A_4$:} 
    Let $[\delta] \in \Delta_\rho$ be such that $\{[\gamma], [\delta]\}$ forms a base of $\Phi_\rho$. Note that $[\gamma]$ is a long root. The idea of the proof is the same as in Case A. We leave the details to the reader.
    We first consider 
    $$ z_1 = [x_{-[\gamma]}(1), z] = {}^{x_{[\gamma]}(t) x}(u_1 x_1 y_1),$$
    where $u_1 = [x_{[\gamma]}(t)^{-1}, x_{-[\gamma]}(1)],$ $x_1 = x_{[\delta]}(t_1) x_{[\gamma]+[\delta]} (t_2) x_{[\gamma] + 2[\delta]} (t_3)$ and 
    $y_1 = x_{-[\gamma]}(s_1)$ $x_{-[\gamma] - [\delta]}(s_2)$ $x_{-[\gamma] - 2[\delta]} (s_3)$.
    We than consider  
    \begin{align*}
        H \ni z_2 &= [x_{[\delta]}(1, 1/2), u_1 x_1 y_1] \\
        &= {}^{u_1 x_1 x_{-[\gamma]}(1)} \{ x_{[\delta]} (\pm t, (t \bar{t} \pm (t - \bar{t}))/2) x_2 y_2 \},
    \end{align*}
    where $x_2 = x_{[\gamma] + [\delta]}(t'_1) x_{[\gamma] + 2[\delta]} (t'_2)$ and $y_2 = x_{-[\gamma]}(s'_1) x_{-[\gamma] - [\delta]}(s'_2)$. Set $u_2 = x_{[\delta]} (\pm t, (t \bar{t} \pm (t - \bar{t}))/2)$ and $z'_2 = u_2 x_2 y_2$.
    Our next goal is to show that $x_{-[\gamma] - [\delta]}(s'_2) \in H$. Write $s'_2 = (a,b) \in R_{-[\gamma] - [\delta]}$. Now consider
    \begin{align*}
        H \ni z_3 &= [x_{[\delta]}(1, 1/2), z'_2] = {}^{u_2 x_2} \{ x_{[\gamma] + 2 [\delta]} (t''_1) x_{-[\gamma]} (\pm a') \}, 
    \end{align*}
    where $a'= a$ or $\bar{a}$ and $t''_1 \in R_{[\gamma] + 2 [\delta]}$. Since $z_3 \in H$ we have $x_{[\gamma] + 2 [\delta]} (t''_1) x_{-[\gamma]} (\pm a') \in H$. But then $w_{[\gamma]}(1) \{x_{[\gamma] + 2 [\delta]} (t''_1) x_{-[\gamma]} (\pm a')\} w_{[\gamma]}(1)^{-1} = x_{[\gamma] + 2 [\delta]} (t''_1) x_{[\gamma]} (\pm a') \in H$ (see Proposition \ref{prop wxw^{-1}}). By Lemma \ref{lemma:U cap H}, we have $x_{[\gamma]}(\pm a') \in H$. Next, we consider
    \begin{align*}
        H \ni z_4 &= [x_{[\gamma] + 2[\delta]}(1), z'_2] = {}^{u_2 x_2} \{ x_{[\delta]} (t''_2) x_{-[\gamma]} (\pm b') \}, 
    \end{align*}
    where $b'= b$ or $\bar{b}$ and $t''_2 \in R_{[\delta]}$. Since $z_3 \in H$ we have $x_{[\delta]} (t''_2) x_{-[\gamma]} (\pm b') \in H$. But then $w_{[\gamma]}(1) \{x_{[\delta]} (t''_2) x_{-[\gamma]} (\pm b')\} w_{[\gamma]}(1)^{-1} = x_{[\gamma] + [\delta]} (\pm t''_2) x_{[\gamma]} (\pm b') \in H$ (see Proposition \ref{prop wxw^{-1}}). By Lemma \ref{lemma:U cap H}, we have $x_{[\gamma]}(\pm b') \in H$. Since $x_{[\gamma]}(\pm a') \in H$ and $x_{[\gamma]}(\pm b') \in H$, by Proposition \ref{z to Rz}, we have $x_{-[\gamma]-[\delta]}(a,b) \in H$. But then 
    $$ z_2 x_{-[\gamma] - [\delta]}(a,b)^{-1} = x_{[\delta]}(\pm t, t \bar{t} \pm (t - \bar{t}))/2) x_{[\gamma] + [\delta]}(t'_1) x_{[\gamma] + 2[\delta]} (t'_2) x_{-[\gamma]}(s'_1) \in H.$$ 
    Finally, 
    \begin{gather*}
        w_{[\gamma]}(1) \{ x_{[\delta]}(\pm t, t \bar{t} \pm (t - \bar{t}))/2) x_{[\gamma] + [\delta]}(t'_1) x_{[\gamma] + 2[\delta]} (t'_2) x_{-[\gamma]}(s'_1) \} w_{[\gamma]}(1)^{-1} \\
        = x_{[\gamma] + [\delta]}(\pm t', t \bar{t} \pm (t - \bar{t}))/2) x_{[\delta]}(t'_{11}) x_{[\gamma] + 2[\delta]} (t'_{22}) x_{[\gamma]}(s'_{11}) \in H,
    \end{gather*}
    where $t' = t$ or $\bar{t}$, and $t'_{11} \in R_{[\delta]}, t'_{22} \in R_{[\gamma] + 2[\delta]}, s'_{11} \in R_{[\gamma]}$. Finally, by Lemma \ref{lemma:U cap H}, we have $x_{[\gamma] + [\delta]}(\pm t', t \bar{t} \pm (t - \bar{t}))/2) \in H$, and by Proposition \ref{z to Rz}, we have $x_{[\gamma]}(t) \in H$, as desired.
    
    \vspace{2mm}

    \noindent \textbf{Case D. $\Phi_\rho \sim {}^3 D_4$:} Let $[\delta] \in \Delta_\rho$ be such that $\{[\gamma], [\delta]\}$ forms a base of $\Phi_\rho$. Note that $[\gamma]$ is a long root. The idea of the proof is the same as in Case A. We leave the details to the reader.
    We first consider 
    $$ z_1 = [x_{-[\gamma]}(1), z] = {}^{x_{[\gamma]}(t) x}(u_1 x_1 y_1),$$
    where $u_1 = [x_{[\gamma]}(t)^{-1}, x_{-[\gamma]}(1)], x_1 = x_{[\delta]}(t_1) x_{[\gamma]+[\delta]} (t_2) x_{[\gamma] + 2[\delta]} (t_3) x_{[\gamma] + 3[\delta]} (t_4) x_{2[\gamma] + 3[\delta]} (t_5)$ and 
    $y_1 = x_{-[\gamma]}(s_1) x_{-[\gamma] - [\delta]}(s_2) x_{-[\gamma] - 2[\delta]} (s_3) x_{-[\gamma] - 3[\delta]} (s_4) x_{-2[\gamma] - 3[\delta]} (s_5)$.
    We than consider 
    \begin{align*}
        H \ni z_2 = [x_{[\delta]}(1), u_1 x_1 y_1] = {}^{u_1 x_1 x_{-[\gamma]}(1)} \{ x_{[\delta]} (\pm t) x_2 y_2 \},
    \end{align*}
    where $x_2 = x_{[\gamma] + [\delta]}(t'_1) x_{[\gamma] + 2[\delta]} (t'_2) x_{[\gamma] + 3[\delta]}(t'_3) x_{2[\gamma] + 3[\delta]} (t'_4)$ and $y_2 = x_{-[\gamma]}(s'_1)$ $x_{-[\gamma] - [\delta]}(s'_2)$ $x_{-[\gamma]-2[\delta]}(s'_3) x_{-2[\gamma] - 3[\delta]}(s'_4)$. We set $z'_2 = x_{[\delta]}(\pm t) x_2 y_2$.
    Our next goal is to show that $x_{-[\gamma] -2[\delta]}(s'_3) \in H$. Now consider 
    \begin{align*}
        H \ni z_3 &= [x_{[\gamma] + 3 [\delta]}(1), z'_2] \\
        &= {}^{x_{[\delta]}(\pm t) x_2 x_{-[\gamma]}(s'_1) x_{-[\gamma]-[\delta]}(s'_2) x_{-2[\gamma] - 3 [\delta]}(s'_4) } \{ x_{[\delta]} (\pm s'_3) x_{-[\gamma]} (\pm s'_4 \pm s'_3 \bar{s'_3} \bar{\bar{s'_3}}) \\
        & \hspace{20mm} x_{-[\gamma] - [\delta]} (\pm s'_3 \bar{s'_3}) x_{-2[\gamma] - 3[\beta]} (\pm s'_3 \bar{s'_3} \bar{\bar{s'_3}}) \}. 
    \end{align*}
    Since $z_3 \in H$ we have 
    $$x_{[\delta]} (\pm s'_3) x_{-[\gamma]} (\pm s'_4 \pm s'_3 \bar{s'_3} \bar{\bar{s'_3}}) x_{-[\gamma] - [\delta]} (\pm s'_3 \bar{s'_3}) x_{-2[\gamma] - 3[\beta]} (\pm s'_3 \bar{s'_3} \bar{\bar{s'_3}}) \in H.$$ 
    But then 
    \begin{small}
        \begin{gather*}
            w_{2[\gamma]+3[\delta]}(1) \{ x_{[\delta]} (\pm s'_3) x_{-[\gamma]} (\pm s'_4 \pm s'_3 \bar{s'_3} \bar{\bar{s'_3}}) x_{-[\gamma] - [\delta]} (\pm s'_3 \bar{s'_3}) x_{-2[\gamma] - 3[\beta]} (\pm s'_3 \bar{s'_3} \bar{\bar{s'_3}}) \} w_{2[\gamma]+3[\delta]}(1)^{-1} \\
            = x_{[\delta]} (\pm s'_3) x_{[\gamma] + 3[\delta]} (\pm s'_4 \pm s'_3 \bar{s'_3} \bar{\bar{s'_3}}) x_{[\gamma] + 2[\delta]} (\pm s'_3 \bar{s'_3}) x_{2[\gamma] + 3[\beta]} (\pm s'_3 \bar{s'_3} \bar{\bar{s'_3}}) \in H.
        \end{gather*}
    \end{small}
    By Lemma \ref{lemma:U cap H}, we have $x_{[\delta]}(\pm s'_3), x_{[\gamma] + 3[\delta]} (\pm s'_4 \pm s'_3 \bar{s'_3} \bar{\bar{s'_3}}) \in H$ and hence, by Proposition \ref{z to Rz}, $x_{-[\gamma] - 2[\delta]} (s'_3), x_{[\gamma] + 3[\delta]}(\pm s'_3 \bar{s'_3} \bar{\bar{s'_3}}) \in H$. But then $x_{[\gamma] + 3[\delta]} (s'_4) \in H$, and again by Proposition \ref{z to Rz}, $x_{-2[\gamma]-3[\delta]}(s'_4) \in H$. Therefore
    \begin{align*}
        z''_2 &= z'_2 x_{-2[\gamma]-3[\delta]}(s'_4)^{-1} x_{-[\gamma]-2[\delta]}(s'_3)^{-1} \\
        &= x_{[\delta]}(\pm t) x_{[\gamma] + [\delta]}(t'_1) x_{[\gamma] + 2[\delta]} (t'_2) x_{[\gamma] + 3[\delta]}(t'_3) x_{2[\gamma] + 3[\delta]} (t'_4) x_{-[\gamma]}(s'_1) x_{-[\gamma] - [\delta]}(s'_2) \in H.
    \end{align*}
    Now replacing $z'_2$ by $w_{2[\gamma]+3[\delta]}(1) z'_2 w_{2[\gamma]+3[\delta]}(1)^{-1}$ in above process we can conclude that $x_{[\gamma]+[\delta]}(t'_1) \in H$. Therefore, we have $$z'''_2 = x_{[\gamma]+[\delta]}(t'_1)^{-1} z''_2 = x_{[\delta]}(\pm t) x_{[\gamma]+2[\delta]}(t''_2) x_{[\gamma] + 3[\delta]}(t''_3) x_{2[\gamma] + 3[\delta]} (t''_4) x_{-[\gamma]}(s'_1) x_{-[\gamma] - [\delta]}(s'_2) \in H.$$ 
    Finally, 
    \begin{align*}
        z_4 &= w_{[\gamma]}(1) w_{[\gamma]+[\delta]}(1) \{ z'''_2 \} w_{[\gamma]+[\delta]}(1)^{-1} w_{[\gamma]}(1)^{-1} \\ 
        &= x_{[\gamma]+2[\delta]}(\pm t) x_{[\gamma] + [\delta]}(\pm t''_2) x_{2[\gamma] + 3[\delta]}(t''_3) x_{[\gamma]} (t''_4) x_{[\gamma]+3[\delta]}(s'_1) x_{[\delta]}(s'_2) \in H,
    \end{align*}
    Thus, by Lemma \ref{lemma:U cap H}, we have $x_{[\gamma] + 2[\delta]}(\pm t) \in H$, and by Proposition \ref{z to Rz}, we have $x_{[\gamma]}(t) \in H$, as desired.
\end{proof}

\begin{rmk}
    The idea of the above proof is motivated by that of the Lemma in 3.4 of \cite{KS3}. However, that proof contains an error, which we rectify here. Specifically, the value of $s$ on page 11 is incorrect; the correct value should be $s = \pm a_1 a t^2$ instead of $s = \pm a_1 t + a_1 a t^2$. This leads to a mistake in the last paragraph of the proof of Case 1, where $x_{\beta}(\pm a_1 a t^2) \in H$ should appear instead of $x_{\beta}(\pm a_1 t \pm a_1 a t^2) \in H$. However, the corrected statement does not yield the required result.
\end{rmk}


\vspace{2mm}

\noindent \textit{Proof of Proposition \ref{prop:U(R) cap H subset U(J)}.} The first part is clear from Lemma \ref{lemma:U cap H}. For the second part, let $xhy \in U_\sigma (rad(R)) T_\sigma (R) U^{-}_\sigma (R) \cap H$, where $x \in U_\sigma (rad(R)), h \in T_\sigma(R)$ and $y \in U^{-}_\sigma (R)$. 

First assume that $x = 1 = y$. Then $z = h = h(\chi) \in T_\sigma (R)$ for some $\chi$. Since $h(\chi) \in H$, for any root $\alpha \in \Phi$ we have $[x_{[\alpha]}(1), h(\chi)] = x_{[\alpha]}(1 - \chi(\alpha))$ is an element of $H$. Thus, $1 - \chi(\alpha) \in J$, that is, $\chi(\alpha) \equiv 1$ (mod $J$) for every $\alpha \in \Phi$. Therefore, by Lemma \ref{lemma on T(J)}, we have $h(\chi) \in T_\sigma (R, J)$. 

We now assume that $x \neq 1$ or $y \neq 1$. Observe that, if we prove that every factor of $x$ and $y$ is in $H$ then we are done. Furthermore, by conjugating with elements of the Weyl group, applying Lemma~\ref{inUHV}, and using the Chevalley commutator formulas, it suffices to prove the following: 
\textit{for a fixed short root (respectively, a fixed long root) \( [\alpha] \), any factor of the form \( x_{[\alpha]}(t) \) appearing in the expression of an element \( x' \) or \( y' \), where \( z' = x' h' y' \in U_\sigma(R)\, T_\sigma(R)\, U^{-}_\sigma(R) \cap H \), is contained in \( H \)}. 

By the Lemma \ref{lemma:UTV cap H} and the above observation, if $x_{[\alpha]}(t_{[\alpha]})$ is a factor of $x$ or $y$ with $[\alpha]$ being the same type as $[\gamma]$ then $x_{[\alpha]}(t_{[\alpha]}) \in H$. Now it remains to show that $x_{[\alpha]}(t_{[\alpha]}) \in H$ for a root $[\alpha]$ not of the type $[\gamma]$. Note that we can assume that both $x$ and $y$ contains only factors $x_{[\alpha]}(t_{[\alpha]})$ where $[\alpha]$ is not of the type $[\gamma]$. 

Suppose $\Phi_\rho \not \sim {}^2 A_{2n+1}$, then $[\gamma]$ is a long root and hence it is of the same type as the highest root $[\beta]$. In particular, by our assumption, $x$ and $y$ does not contains a factor $x_{[\beta]}(t_{[\beta]})$ and $x_{-[\beta]}(t_{[\beta]})$, respectively. Let $x_{[\alpha_1]}(t)$ be a factor of $x$ or $y$. Then $[\alpha_1]$ is of different type then $[\gamma]$. Choose a root $[\alpha_2] \in \Phi_\rho^{+}$ such that $-[\alpha_2]$ and $[\beta]$ generates the subsystem of type $B_2$. Since $[\alpha_1]$ and $[\alpha_2]$ is of the same type, we can conjugate $z$ by an element of the Weyl group in such a way that $x_{-[\alpha_2]}(t)$ is a factor of that new element, say $z_1$. Now let $z_2 = [x_{[\beta]}(1), z_1]$. Then $z_2 \in U_\sigma(R) \cap H$ and it contains a factor $x_{[\beta] - [\alpha_2]} (t')$, where $t' = t$ or $\bar{t}$ if $[\alpha_2] \not \sim A_2$; and $t' = (t_1, t_2), (t_1, \bar{t}_2)$ or $(\bar{t}_1, t_2)$ if $[\alpha_2] \sim A_2$ and $t=(t_1, t_2)$. By Lemma \ref{lemma:U cap H}, we have $x_{[\beta] - [\alpha_2]} (t') \in H$ and hence, by Proposition \ref{z to Rz}, $x_{[\alpha_1]}(t) \in H$. 

Now suppose $\Phi_\rho \sim {}^2 A_{2n+1}$, then $[\gamma]$ is a short root. Let $x_{[\alpha_1]}(t)$ be a factor of $x$ or $y$ with $[\alpha_1]$ being a long root. Choose a root $[\alpha_2] \in \Phi_\rho$ such that $[\alpha_1]$ and $[\alpha_2]$ form a subsystem of type $B_2$. Let $z_1 = [x_{[\alpha_2]}(1), z]$. Then $z_1 \in H$ contains a factor $x_{[\alpha_1] + [\alpha_2]}(t)$ with $[\alpha_1] + [\alpha_2]$ being a short root, that is, it is of the same type as $[\gamma]$. Hence from above observation, $x_{[\alpha_1] + [\alpha_2]}(t) \in H$ and hence, by Proposition \ref{z to Rz}, $x_{[\alpha]}(t) \in H$, as desired.
\qed


\section{Proof of Proposition \ref{local:H subset G(R,J)}}\label{sec:Pf of prop 3}


Let the notations be as in Section \ref{sec:Pf of main thm}. Define \( k_\mathfrak{m} = R / \mathfrak{m} \cong R_{\mathfrak{m}} / \mathfrak{m} R_{\mathfrak{m}} \), and similarly for \( k_{\bar{\mathfrak{m}}} \) and \( k_{\bar{\bar{\mathfrak{m}}}} \). Clearly, \( R_{\mathfrak{m}} \cong R_{\bar{\mathfrak{m}}} \cong R_{\bar{\bar{\mathfrak{m}}}} \) and \( k_\mathfrak{m} \cong k_{\bar{\mathfrak{m}}} \cong k_{\bar{\bar{\mathfrak{m}}}} \). For $S=S_\mathfrak{m}$, we set 
\begin{small}
    \[
        I_S = \begin{cases}
            \mathfrak{m} & \text{if } \mathfrak{m} = \bar{\mathfrak{m}}, \\
            \mathfrak{m} \cap \bar{\mathfrak{m}} & \text{if } \mathfrak{m} \neq \bar{\mathfrak{m}} \text{ and } o(\theta) = 2, \\
            \mathfrak{m} \cap \bar{\mathfrak{m}} \cap \bar{\bar{\mathfrak{m}}} & \text{if } \mathfrak{m} \neq \bar{\mathfrak{m}} \text{ and } o(\theta) = 3;
        \end{cases} 
        \text{ and }  
        k_S = \begin{cases}
            k_\mathfrak{m} & \text{if } \mathfrak{m} = \bar{\mathfrak{m}}, \\
            k_\mathfrak{m} \times k_{\bar{\mathfrak{m}}} & \text{if } \mathfrak{m} \neq \bar{\mathfrak{m}} \text{ and } o(\theta) = 2, \\
            k_\mathfrak{m} \times k_{\bar{\mathfrak{m}}} \times k_{\bar{\bar{\mathfrak{m}}}} & \text{if } \mathfrak{m} \neq \bar{\mathfrak{m}} \text{ and } o(\theta) = 3.
        \end{cases}
    \]
\end{small}
Then $R/I_S \cong R_S / (I_S R_S) \cong k_S$.
Note that the ring automorphism $\theta: R \longrightarrow R$ also induces an automorphism of $k_S$, which is also denoted by $\theta$.


\begin{prop}\label{prop:U(RS) cap psi(H) subset U(JS)}
    \normalfont
    Let the notations be as in Section \ref{sec:Pf of main thm}.
    \begin{enumerate}[(a)]
        \item $U_\sigma (R_S) \cap \psi_\mathfrak{m}(H) \subset U_\sigma (J_S)$. 
        \item $U_\sigma (I_S R_S) T_\sigma (R_S) U^{-}_\sigma (R_S) \cap \psi_\mathfrak{m} (H) \subset U_\sigma (J_S) T_\sigma (R_S, J_S) U^{-}_\sigma (J_S).$
    \end{enumerate}
\end{prop}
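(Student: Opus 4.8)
The plan is to deduce this proposition directly from Proposition~\ref{prop:U(R) cap H subset U(J)} applied to the localized ring $R_S$, since the statement is precisely its localization. First I would record that $R_S$ is a semi-local ring whose Jacobson radical is $rad(R_S) = I_S R_S$, its maximal ideals being the extensions of the distinct members of $\{\mathfrak{m}, \bar{\mathfrak{m}}, \bar{\bar{\mathfrak{m}}}\}$; hence $U_\sigma(I_S R_S) = U_\sigma(rad(R_S))$, and the two displayed inclusions become literally parts (a) and (b) of Proposition~\ref{prop:U(R) cap H subset U(J)} over $R_S$. The automorphism $\theta$ and the standing hypotheses on $1/2$, $1/3$ and $\Phi_\rho$ transfer to $R_S$ unchanged. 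Thus the whole proposition reduces to verifying the two hypotheses under which Proposition~\ref{prop:U(R) cap H subset U(J)} is stated: that $\psi_\mathfrak{m}(H)$ is a subgroup of $G_\sigma(R_S)$ normalized by $E'_\sigma(R_S)$, and that the $\theta$-invariant ideal of $R_S$ canonically attached to $\psi_\mathfrak{m}(H)$ via the sets $J_{[\alpha]}(\cdot)$ coincides with $J_S$.

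The principal obstacle is the normalization claim $[E'_\sigma(R_S), \psi_\mathfrak{m}(H)] \subseteq \psi_\mathfrak{m}(H)$. Since $\psi_\mathfrak{m}$ is a homomorphism and $H$ is normalized by $E'_\sigma(R)$, the group $\psi_\mathfrak{m}(H)$ is at once normalized by $\psi_\mathfrak{m}(E'_\sigma(R))$, which disposes of the \emph{integral} root elements $x_{[\gamma]}(b/1)$, $b \in R_{[\gamma]}$. The difficulty is the \emph{fractional} generators $x_{[\gamma]}(c)$, $c \in (R_S)_{[\gamma]}$, which do not descend from $R$. Here I would clear denominators by torus conjugation: writing $c = b/s$ with $s \in S_\theta$, Proposition~\ref{prop h(chi)xh(chi)^-1} lets one pass between $x_{[\gamma]}(c)$ and the integral element $x_{[\gamma]}(s^{m}c) = x_{[\gamma]}(s^{m-1}b/1)$ by conjugating with $h_{[\gamma]}(s^{\pm 1}) \in H'_\sigma(R_S) \subseteq E'_\sigma(R_S)$ (the $A_2$-case handled through the scaling action $r\cdot(t,u)=(rt,r\bar r u)$ on $\mathcal{A}(R_S)$). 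To close the loop I would invoke Taddei's Lemma~\ref{lemma:GT} in its twisted form: for a fixed $h \in H$ one produces a single $s_0 \in S_\theta$ with $[x_{[\gamma]}(s_0 c), \psi_\mathfrak{m}(h)] \in \psi_\mathfrak{m}(H)$ for every $c$, and then, $s_0$ being a unit of $R_S$, replaces $c$ by $c/s_0$ to obtain the commutator for the given fractional parameter. This step parallels the arguments of Suzuki~\cite{KS3} and Abe~\cite{EA3}, and is where the bookkeeping is heaviest.

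Granting normalization, I would identify the attached ideal $J' := \bigcup_{[\alpha]} J_{[\alpha]}(\psi_\mathfrak{m}(H))$ with $J_S$. By Proposition~\ref{prop:E(R,J) subset of H} applied over $R_S$, $J'$ is a $\theta$-invariant ideal of $R_S$ with $E'_\sigma(R_S, J') \subseteq \psi_\mathfrak{m}(H)$. Since $E'_\sigma(R,J) \subseteq H$ by Proposition~\ref{prop:E(R,J) subset of H} over $R$, each $x_{[\alpha]}(u)$ with $u \in J_{[\alpha]}$ lies in $H$, so $x_{[\alpha]}(u/1) \in \psi_\mathfrak{m}(H)$ and $u/1 \in J'$; as $J'$ is an ideal, $J_S = S^{-1}J \subseteq J'$. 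For the reverse inclusion I would use that every ideal of $R_S$ is the extension of its contraction, write $J' = S^{-1}\bar J$ with $\bar J = \{t \in R : x_{[\alpha]}(t/1) \in \psi_\mathfrak{m}(H)\}$, and show $\bar J \subseteq \{t : s't \in J \text{ for some } s' \in S\}$: indeed $x_{[\alpha]}(t/1) = \psi_\mathfrak{m}(h)$ forces $x_{[\alpha]}(t)h^{-1} \in \ker\psi_\mathfrak{m}$, and $\ker\psi_\mathfrak{m}$ consists of $S$-torsion matrices, which after scaling by a suitable $s' \in S_\theta$ yields an integral witness in $J$. Localizing gives $J' \subseteq J_S$, hence $J' = J_S$.

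Finally, with both hypotheses in hand, Proposition~\ref{prop:U(R) cap H subset U(J)} applied to the triple $(R_S, \psi_\mathfrak{m}(H), J_S)$ yields $U_\sigma(R_S) \cap \psi_\mathfrak{m}(H) \subseteq U_\sigma(J_S)$ together with $U_\sigma(I_S R_S)T_\sigma(R_S)U^{-}_\sigma(R_S) \cap \psi_\mathfrak{m}(H) \subseteq U_\sigma(J_S)T_\sigma(R_S,J_S)U^{-}_\sigma(J_S)$, which are exactly parts (a) and (b). I expect the normalization of $\psi_\mathfrak{m}(H)$ under the fractional root elements to be the genuine difficulty, with the ideal identification and the final assembly comparatively formal.
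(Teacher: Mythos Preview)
Your strategy is to verify that $\psi_\mathfrak{m}(H)$ is normalized by $E'_\sigma(R_S)$ and then invoke Proposition~\ref{prop:U(R) cap H subset U(J)} over $R_S$ as a black box. The paper explicitly flags this as the trap: immediately after the statement it remarks that ``the subgroup $\psi_\mathfrak{m}(H)$ of $G_\sigma(R_S)$ may not be normalized by $E'_\sigma(R_S)$,'' and your argument for normalization does not close. From Taddei (or Proposition~\ref{general:[x,g] in E(R,J)}) you obtain $s_0\in S_\theta$ with $[x_{[\gamma]}(s_0 t),h]\in H$ for all $t\in R_{[\gamma]}$, hence $[x_{[\gamma]}(s_0 t/1),\psi_\mathfrak{m}(h)]\in\psi_\mathfrak{m}(H)$ --- but only for \emph{integral} $t$. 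The move ``replace $c$ by $c/s_0$'' is illegitimate: the quantifier ranges over $R_{[\gamma]}$, not $(R_S)_{[\gamma]}$, and a genuinely fractional $c$ is not of the form $s_0 t/1$. Torus conjugation does not help either, since conjugating $[x_{[\gamma]}(c),\psi_\mathfrak{m}(h)]$ by $h_{[\gamma]}(s)$ rescales $c$ but simultaneously replaces $\psi_\mathfrak{m}(h)$ by its $h_{[\gamma]}(s)$-conjugate, and you have no control over whether that lies in $\psi_\mathfrak{m}(H)$ --- that is the very normalization you are trying to prove.

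The paper's route avoids normalization altogether. Instead of applying Proposition~\ref{prop:U(R) cap H subset U(J)} wholesale, it reruns the commutator calculations of Lemmas~\ref{lemma:U cap H} and~\ref{lemma:UTV cap H} for $\psi_\mathfrak{m}(z)$, but at each step forms the commutator with an \emph{integral} root element $x_{[\delta]}(rX)$ over $R[X]$, so that $[x_{[\delta]}(rX),z]$ stays in $H$ (since $H$ is $E'_\sigma(R)$-normal). The same bracket is computed on the $R_S[X]$ side from the known factorization of $\psi_\mathfrak{m}(z)$, and Taddei's Lemma~\ref{lemma:GT} is applied to the discrepancy $\epsilon(X)$ to produce $s'\in S_\theta$ with $\epsilon(s'X)=1$; specialising $X=1$ then yields an identity in $G_\sigma(R)$ exhibiting $x_{[\alpha]}(s_{[\alpha]}\cdot t_{[\alpha]})\in H$ for suitable $s_{[\alpha]}\in S_\theta$, whence $t_{[\alpha]}\in (J_S)_{[\alpha]}$. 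The point is that Taddei is used not to enlarge the normalizer of $\psi_\mathfrak{m}(H)$, but to descend each individual factor-extraction from $R_S$ back to $R$, where the original normalization hypothesis on $H$ is available.
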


At first glance, the above proposition may appear to be an immediate consequence of Proposition \ref{prop:U(R) cap H subset U(J)}, but this is not the case. The crucial point is that the subgroup $\psi_\mathfrak{m}(H)$ of $G_\sigma(R_S)$ may not be normalized by $E'_\sigma(R_S)$. However, once we prove the following lemma, the proof of the above lemma will be similar to the proof of Proposition \ref{prop:U(R) cap H subset U(J)} and hence is omitted.

\begin{lemma}
    \normalfont
    Let the notations be as in Section \ref{sec:Pf of main thm}. Let $z \in H$. 
    \begin{enumerate}[(a)]
        \item If $\psi_\mathfrak{m} (z) = \prod_{[\alpha] \in \Phi_\rho^{+}} x_{[\alpha]}(t_{[\alpha]}) \in U_\sigma (R_S)$ with $t_{[\alpha]} \in (R_S)_{[\alpha]}$, then for each $[\alpha] \in \Phi^{+}_\rho$ there exists $s_{[\alpha]} \in S_\theta$ such that $x_{[\alpha]}(s_{[\alpha]} \cdot t_{[\alpha]}) \in H$.
        \item Let $[\beta]$ and $[\gamma]$ be as in Lemma \ref{lemma:UTV cap H}. If $\psi_\mathfrak{m}(z) = x_{[\gamma]}(t) xhy \in U_\sigma (R_S) T_\sigma (R_S) U^{-}_\sigma (R_S),$ where $x_{[\gamma]}(t) x \in U_\sigma (R_S),$ $x$ is a product of elements $x_{[\alpha]}(t_{[\alpha]})$ with $[\alpha] \neq [\gamma], [\alpha] \in \Phi^{+}_\rho$ and $t_{[\alpha]} \in (R_S)_{[\alpha]}, h \in T_\sigma (R_S)$ and $y \in U^{-}_\sigma (R_S)$. Then there exists $s \in S_\theta$ such that $x_{[\gamma]}(s \cdot t) \in H$.
    \end{enumerate}
\end{lemma}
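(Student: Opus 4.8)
The final lemma is the "local, $s$-twisted" analogue of Lemma~\ref{lemma:U cap H} (part (a)) and Lemma~\ref{lemma:UTV cap H} (part (b)), where the ambient subgroup is $\psi_{\mathfrak{m}}(H) \le G_\sigma(R_S)$ rather than a subgroup normalized by $E'_\sigma(R_S)$. The essential difficulty is exactly this: $\psi_{\mathfrak{m}}(H)$ need not be normalized by $E'_\sigma(R_S)$, so I cannot directly replay the commutator manipulations of Lemmas~\ref{lemma:U cap H} and~\ref{lemma:UTV cap H}, which repeatedly form commutators $[x_{[\delta]}(r), z]$ and assert that the result again lies in the ambient group. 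The remedy is to perform all commutators with elements $x_{[\delta]}(r)$ whose entries $r$ come from $R$ (not $R_S$), so that the commutators are images under $\psi_{\mathfrak{m}}$ of genuine elements of $E'_\sigma(R)$; since $H$ \emph{is} normalized by $E'_\sigma(R)$, these commutators land in $\psi_{\mathfrak{m}}(H)$. The price is that each such manipulation can only produce conclusions up to multiplication of the relevant entry by a factor $s \in S_\theta$ (coming from clearing denominators of the $R_S$-entries $t_{[\alpha]}$), which is precisely why the conclusion is stated with the prefactor $s_{[\alpha]} \in S_\theta$.

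First I would treat part (a). Given $z \in H$ with $\psi_{\mathfrak{m}}(z) = \prod_{[\alpha]>0} x_{[\alpha]}(t_{[\alpha]})$, I would run the height-descending induction of Lemma~\ref{lemma:U cap H} (Cases A--D), but at each step where that proof takes a commutator $[x_{[\alpha_i]}(1), x]$ I would instead clear denominators: write each $t_{[\alpha]} = a_{[\alpha]}/s_{[\alpha]}$ with $a_{[\alpha]} \in R_{[\alpha]}$, $s_{[\alpha]} \in S$ (or $S_\theta$, symmetrizing by multiplying by conjugates $\bar{s}, \bar{\bar{s}}$ as in the proof of Proposition~\ref{local:[x,g] in E(R,J)}), and form the commutator with an element of $\psi_{\mathfrak{m}}(E'_\sigma(R))$. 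The key structural inputs—$[x_{-[\gamma]}(r), x_{[\gamma]}(t) \cdots]$ peels off factors, and the isolation identity (\ref{eq_a})/(\ref{eq_b}) recovering individual root subgroups—carry over verbatim, except that the isolated element now has the form $x_{[\alpha]}(s \cdot t_{[\alpha]})$. I would collect the finitely many $s$'s arising along the induction into a single $s_{[\alpha]} \in S_\theta$ by taking their product, using the monoid action $r \cdot (t_1,t_2) = (rt_1, r\bar{r}t_2)$ of Section~\ref{sec:E(R)} to make sense of $s \cdot t$ when $[\alpha] \sim A_2$.

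For part (b) I would similarly mirror the argument of Lemma~\ref{lemma:UTV cap H}, working through the four cases (rank $>2$, and ${}^2A_3, {}^2A_4, {}^3D_4$). The structure is a chain of conjugations by Weyl group elements $w_{[\delta]}(1)$ (which are images of elements of $E'_\sigma(R)$, hence preserve $\psi_{\mathfrak{m}}(H)$) interleaved with commutators $[x_{-[\gamma]}(1), \cdot]$, $[x_{[\delta]}(1), \cdot]$, etc. Each time the original proof invokes Lemma~\ref{lemma:U cap H} or Proposition~\ref{z to Rz} to extract a single root factor, I would now invoke part (a) of the present lemma (just proved) together with the observation that $x_{[\beta']}(r) \in H$ for $r \in R$ forces, via Proposition~\ref{z to Rz}, the corresponding $x_{[\delta]}(r') \in H$. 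Since all the denominators being cleared are finite in number, one accumulates a single $s \in S_\theta$ at the end. The main obstacle, and where I would spend the most care, is tracking that every commutator and conjugation is genuinely performed with $\psi_{\mathfrak{m}}$-images of $E'_\sigma(R)$-elements (so that normality of $H$ under $E'_\sigma(R)$ applies at each stage) rather than with $E'_\sigma(R_S)$-elements; this bookkeeping, together with the correct symmetrization of the clearing factors to land in $S_\theta$ rather than merely $S$, is the substance of the proof. Once this is in place, Proposition~\ref{prop:U(RS) cap psi(H) subset U(JS)} follows by the same reasoning as Proposition~\ref{prop:U(R) cap H subset U(J)}, and then Proposition~\ref{local:H subset G(R,J)} is obtained by the localization/Bruhat argument of Section~\ref{sec:Pf of prop 3}.
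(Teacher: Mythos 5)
Your overall strategy is the right one, and you correctly identify two of the necessary ingredients: all commutators must be taken with elements of $E'_\sigma(R)$ (so that normality of $H$ under $E'_\sigma(R)$ applies), and the denominators of the $t_{[\alpha]}$ must be cleared and symmetrized (multiplying by $\theta$-conjugates) so that the clearing factors land in $S_\theta$. However, there is a genuine gap at the heart of the argument: you never explain how to pass from an identity in $G_\sigma(R_S)$ back to an identity in $G_\sigma(R)$. Concretely, if $z \in H$ and $r \in R_\theta$ is chosen to clear denominators, then $w := [x_{-[\alpha]}(r), z] \in H$ and the Chevalley commutator formula computes $\psi_\mathfrak{m}(w)$ as an explicit product $\psi_\mathfrak{m}(P)$ with $P \in E'_\sigma(R)$. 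But $\psi_\mathfrak{m}$ is not injective, so $\psi_\mathfrak{m}(w) = \psi_\mathfrak{m}(P)$ only gives $wP^{-1} \in \ker\psi_\mathfrak{m}$; it does not give $w = P$, and hence does not put $P$ (or any factor of it) into $H$. Your phrase ``these commutators land in $\psi_\mathfrak{m}(H)$'' is true but insufficient: the conclusion of the lemma is membership in $H$, not in $\psi_\mathfrak{m}(H)$, and clearing denominators alone cannot bridge the kernel.

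The paper closes this gap with the polynomial-ring device already used in Proposition~\ref{general:[x,g] in E(R,J)}: one introduces a variable $X$, forms $\epsilon(X) = [x_{-[\alpha]}(b\bar{b}r'X), z]\,P(X)^{-1} \in G_\sigma(R[X])$, checks that $\psi''_\mathfrak{m}(\epsilon(X)) = 1$ and $\epsilon(0) = 1$, and invokes Taddei's Lemma~\ref{lemma:GT} to produce $s' \in S_\theta$ with $\epsilon(s'X) = 1$. Evaluating at $X = 1$ then yields an honest identity $[x_{-[\alpha]}(b\bar{b}s'r'), z] = P(s')$ in $G_\sigma(R)$, whence $P(s') \in H$, and it is this reparametrization — not the clearing of denominators — that is the actual source of the factor $s \in S_\theta$ in the statement. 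Your proposal, which attributes the $s$ entirely to denominator-clearing and omits Lemma~\ref{lemma:GT}, cannot be completed as written; once the Taddei step is inserted at each commutator manipulation, the rest of your plan (running the inductions of Lemmas~\ref{lemma:U cap H} and~\ref{lemma:UTV cap H} and accumulating the finitely many factors into a single $s$) goes through as you describe.
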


\begin{proof}
    The proofs of parts (a) and (b) follow a similar approach to Lemmas \ref{lemma:U cap H} and \ref{lemma:UTV cap H}, respectively, but also incorporate the method used in proving Proposition \ref{general:[x,g] in E(R,J)}. 
    Let $\psi''_\mathfrak{m}$ be as in the proof of Proposition \ref{general:[x,g] in E(R,J)}.
    
    \vspace{2mm}
    
    \noindent (a) Let $\psi_\mathfrak{m}(z) = \displaystyle\prod_{[\alpha] \in \Phi^+} x_{[\alpha]}(t_{[\alpha]}) \in U_\sigma(R_S) \subset U_\sigma(R_S[X])$, with $t_{[\alpha]} \in (R_S)_{[\alpha]} \subset (R_S[X])_{[\alpha]}$. We will first prove the result in the case where $\Phi_\rho \sim {}^2 A_3$. The other cases can be proven similarly using the same techniques (cf. Lemma \ref{lemma:U cap H}) and are therefore omitted.

    As in Lemma \ref{lemma:U cap H}, let $[\alpha]$ and $[\beta]$ be the simple roots, with $[\alpha]$ being the long root. We first claim that
    \begin{equation}\label{eq_aa}
        \begin{split}
            \text{if } \psi_\mathfrak{m}(z) = x_{[\alpha] + [\beta]}(t) x_{[\alpha] + 2[\beta]}(u) \in \psi_\mathfrak{m}(H), \text{ then } \\
            x_{[\alpha] + [\beta]}(s_1 \cdot t), x_{[\alpha] + 2[\beta]}(s_2 \cdot u) \in H \text{ for some } s_1, s_2 \in S_\theta.
        \end{split}
    \end{equation}
    Since $\psi''_\mathfrak{m} \mid_{G_\sigma(R_S)} = \psi_\mathfrak{m}$, we have $\psi_\mathfrak{m}(z) = \psi''_\mathfrak{m}(z) \in \psi''_\mathfrak{m}(H)$. For any $r \in (R_S[X])_{[\alpha]} = (R_S[X])_\theta$, we have
    \[ [x_{-[\alpha]}(r), \psi''_\mathfrak{m}(z)] = x_{[\beta]}(\pm rt) x_{[\alpha] + 2[\beta]}(\pm rt \bar{t}). \]
    Write $t = a/b$ where $a \in R$ and $b \in S$. Let $r = b \bar{b} r' X$ with $r' \in S_\theta$. Then
    \[ \psi''_\mathfrak{m}([x_{-[\alpha]}(b \bar{b} r' X), z]) = \psi''_\mathfrak{m}(x_{[\beta]} (\pm b \bar{b} r' t X) x_{[\alpha] + 2[\beta]} (\pm b \bar{b} r' t \bar{t} X)). \]
    Now let 
    \[ \epsilon(X) = [x_{-[\alpha]}(b \bar{b} r' X), z](x_{[\beta]} (\pm b \bar{b} r' t X) x_{[\alpha] + 2[\beta]} (\pm b \bar{b} r' t \bar{t} X))^{-1}. \]
    Then $\epsilon(X)$ satisfies the hypothesis of Lemma \ref{lemma:GT}, and hence there exists $s' \in S_\theta$ such that $\epsilon(s' X) = 1$. Thus we have
    \[ [x_{-[\alpha]}(b \bar{b} s' r' X), z] = x_{[\beta]} (\pm b \bar{b} s' r' t X) x_{[\alpha] + 2[\beta]} (\pm b \bar{b} s' r' t \bar{t} X). \]
    Setting $X = 1$, we get 
    \[ [x_{-[\alpha]}(b \bar{b} s' r'), z] = x_{[\beta]} (\pm b \bar{b} s' r' t) x_{[\alpha] + 2[\beta]} (\pm b \bar{b} s' r' t \bar{t}) \in H. \]
    Since $\psi''_\mathfrak{m}(z) \in \psi''_\mathfrak{m}(H)$, then so is $(\psi''_\mathfrak{m}(z))^{-1} = x_{[\alpha] + [\beta]}(-t) x_{[\alpha] + 2[\beta]}(-u)$. Therefore, we can replace $t$ and $u$ by $-t$ and $-u$ respectively, and we obtain 
    \[ x_{[\beta]} (\pm b \bar{b} s' r' (-t)) x_{[\alpha] + 2[\beta]} (\pm b \bar{b} s' r' t \bar{t}) \in H. \] 
    Thus, 
    \begin{equation*}
        \begin{split}
            x_{[\beta]}(\pm 2 b \bar{b} s' r' t) = \left\{ x_{[\beta]} (\pm b \bar{b} s' r' t) x_{[\alpha] + 2[\beta]} (\pm b \bar{b} s' r' t \bar{t}) \right\} \\
            \left\{ x_{[\beta]} (\pm b \bar{b} s' r' (-t)) x_{[\alpha] + 2[\beta]} (\pm b \bar{b} s' r' t \bar{t}) \right\}^{-1} \in H.
        \end{split}
    \end{equation*}
    Set $r' = \pm 1/2$ and let $s_1 = b \bar{b} s' \in S_\theta$, then $x_{[\beta]}(s_1 t) \in H$. By Proposition \ref{z to Rz}, we get $x_{[\alpha] + [\beta]}(s_1 t) \in H$. 
    Now, in $G_\sigma(R_S)$, for any $r_1 \in R$, we have 
    \[ [x_{-[\beta]}(r_1), \psi_\mathfrak{m}(z)] = x_{[\alpha]}(\pm (r_1 \bar{t} + \bar{r}_1 t)) x_{[\alpha] + [\beta]}(\pm r_1 u) x_{[\alpha]}(\pm r_1 \bar{r}_1 u) \in \psi_\mathfrak{m}(H). \]
    Put $r_1 = s_1$, then by the proof of Lemma \ref{z to Rz in phi'}, we have $x_{[\alpha]}(\pm (s_1 \bar{t} + \bar{s}_1 t)) \in \psi_\mathfrak{m}(H)$, and hence
    \begin{equation*}
        \begin{split}
            x_{[\alpha]}(\pm (s_1 \bar{t} + \bar{s}_1 t))^{-1} (x_{[\alpha]}(\pm (s_1 \bar{t} + \bar{s}_1 t)) x_{[\alpha] + [\beta]}(\pm s_1 u) x_{[\alpha]}(\pm s_1 \bar{s}_1 u)) \\
            = x_{[\alpha] + [\beta]}(\pm s_1 u) x_{[\alpha]}(\pm s_1 \bar{s}_1 u) \in \psi_\mathfrak{m}(H).
        \end{split}
    \end{equation*}
    Thus, 
    \begin{equation*}
        w_{[\beta]}(1) x_{[\alpha] + [\beta]}(\pm s_1 u) x_{[\alpha]}(\pm s_1 \bar{s}_1 u) w_{[\beta]}(1)^{-1} = x_{[\alpha] + [\beta]}(\pm s_1 u) x_{[\alpha] + 2[\beta]}(\pm s_1 \bar{s}_1 u) \in \psi_\mathfrak{m}(H).
    \end{equation*}
    Finally, by the above argument, we have $x_{[\alpha] + [\beta]}(s_2 u) \in H$ for some $s_2 \in S_\theta$. Therefore, by Proposition \ref{z to Rz}, we have $x_{[\alpha] + 2[\beta]}(s_2 u) \in H$. This proves (\ref{eq_aa}).

    Now let $x = x_{[\beta]}(t) x_{[\alpha]}(u) x_{[\alpha] + [\beta]}(v) x_{[\alpha] + 2[\beta]}(w) \in H$. By a similar argument as above and as in Lemma \ref{lemma:U cap H}, we can prove the desired result.

    \vspace{2mm}
    
    \noindent (b) This part can also be proven using the same reasoning as in the proofs of Lemma \ref{lemma:UTV cap H} and the methods of G. Taddei \cite{GT} (cf. Lemma \ref{lemma:GT}), similar to the approach taken for part (a). Therefore, we omit the detailed proof.
\end{proof}

\begin{rmk}
    To prove Proposition \ref{prop:U(RS) cap psi(H) subset U(JS)}, we must use not only the method of proof of Proposition \ref{prop:U(R) cap H subset U(J)}, but also the lemma by G. Taddei \cite[Lemma 3.14]{GT}, as we did in the proof of the previous lemma.
\end{rmk}

%


\vspace{2mm}

\noindent \textit{Proof of Proposition \ref{local:H subset G(R,J)}.} If $J_S = R_S$, the proof is complete. Therefore, we assume $J_S \neq R_S$. Since $J_S$ is a $\theta$-invariant proper ideal of $R_S$, we have $J_S \subset \operatorname{rad}(R_S) = I_S R_S$. By Corollary \ref{G(R,J)=UTV}, it follows that 
$$
G_\sigma(R_S, J_S) = U_\sigma(J_S) T_\sigma(R_S, J_S) U^{-}_\sigma(J_S).
$$ 
Assume, for the sake of contradiction, that $\psi_\mathfrak{m}(H) \not\subset G_\sigma(R_S, J_S)$. Under this assumption, we will show that there exists an element $z \in H$ such that $\psi_\mathfrak{m}(z) \not\in G_\sigma(R_S, J_S)$ and $\psi_\mathfrak{m}(z) \in U_\sigma(I_S R_S) T_\sigma(R_S) U^{-}_\sigma(R_S)$. This, however, leads to a contradiction with Proposition \ref{prop:U(RS) cap psi(H) subset U(JS)}.

Let $\pi: G_\sigma (R_S) \longrightarrow G_\sigma (k_S)$ be the canonical homomorphism. 
Suppose $\pi (\psi_\mathfrak{m} (H))$ is central. Then $\psi_\mathfrak{m} (H) \subset G_\sigma (R_S, I_S R_S)$ and hence, by Corollary~\ref{G(R,J)=UTV}, we are done. 
Now assume that $\pi(\psi_\mathfrak{m}(H))$ is non-central. Since $\pi \circ \psi_\mathfrak{m}$ is surjective on elementary subgroups, the subgroup $\pi(\psi_\mathfrak{m}(H))$ of $G_\sigma (k_S)$ is normalized by $E'_\sigma (k_S)$. We claim that $E'_\sigma (k_S) \subset \pi(\psi_\mathfrak{m}(H))$. Assuming the claim to be true for the moment, let us proceed to prove the rest of the result. 
For a given $x_{[\alpha]}(t + I_S) \in U^{-}_\sigma (k_S) \ (t \not \in I_S)$, by our claim, there exists $z \in H$ such that 
$$ \pi (\psi_\mathfrak{m} (z)) = x_{[\alpha]}(t + I_S) = \pi (\psi_\mathfrak{m} (x_{[\alpha]} (t))).$$
Therefore, $\psi_\mathfrak{m} (z) (\psi_\mathfrak{m} (x_{[\alpha]} (t)))^{-1} \in \ker{\pi} = G_\sigma (I_S R_S) = U_\sigma(I_S R_S) T_\sigma (I_S R_S) U^{-}_\sigma (I_S R_S)$, the last equality is due to Proposition \ref{G=UTV}. 
But then $$\psi_\mathfrak{m} (z) \in U_\sigma(I_S R_S) T_\sigma (I_S R_S) U^{-}_\sigma (R_S) \subset U_\sigma(I_S R_S) T_\sigma (R_S) U^{-}_\sigma (R_S),$$ as desired.

Now it only remains to prove the claim. To do this, we observe that $E'_\sigma(k_S) \cap \pi(\psi_\mathfrak{m}(H))$ is a normal subgroup of $E'_\sigma(k_S)$. However, the group $E'_\sigma(k_S)$ is simple over its center. 
To see this, assume first that $\mathfrak{m} = \bar{\mathfrak{m}}$. In this case, $k_S = k_\mathfrak{m}$ is a field, and our result follows from \cite[Theorem 34]{RS}. Now, consider the case where $\mathfrak{m} \neq \bar{\mathfrak{m}}$. By Proposition~\ref{prop:Chevalley as TChevalley}, we can deduce that $E'_\sigma(k_S)$ is isomorphic to $E_\pi(\Phi, k_\mathfrak{m})$. Therefore, applying \cite[Theorem 5]{RS}, we conclude that $E'_\sigma(k_S)$ is simple over its center.
Thus, we can conclude that either $E'_\sigma(k_S) \cap \pi(\psi_\mathfrak{m}(H)) \subset Z(G_\sigma(k_S))$ or $E'_\sigma(k_S) \cap \pi(\psi_\mathfrak{m}(H)) = E'_\sigma(k_S)$. Assume that $E'_\sigma(k_S) \cap \pi(\psi_\mathfrak{m}(H)) \subset Z(G_\sigma(k_S))$. Then, by Proposition \ref{G=G'}, we have $\pi(\psi_\mathfrak{m}(H)) \subset T_\sigma(k_S)$. Since $\pi(\psi_\mathfrak{m}(H))$ is non-central, there exists $h(\chi) \in \pi(\psi_\mathfrak{m}(H))$ with $\chi(\alpha) \neq 1$ for some $\alpha \in \Phi$. But then $[h(\chi), x_{[\alpha]}(1)] = x_{[\alpha]}(\chi(\alpha)-1) \in \pi(\psi_\mathfrak{m}(H))$, which is a contradiction to our assumption. Therefore, we must have $E'_\sigma(k_S) \cap \pi(\psi_\mathfrak{m}(H)) = E'_\sigma(k_S)$, that is, $E'_\sigma(k_S) \subset \pi(\psi_\mathfrak{m}(H))$. \qed



\chapter{Normalizer of Twisted Chevalley Groups in a Larger Groups}\label{chapter:Normalizer_of_TCG}


Let $R$ be a subring of a ring $S$. This chapter aims to describe the normalizers of $G_\sigma(R)$ and $E'_\sigma(R)$ in the larger group $G_\sigma(S)$. Throughout this chapter, we are going to assume that $\Phi_\rho \sim {}^2 A_n \ (n \geq 3), {}^2 D_n \ (n \geq 4)$ or ${}^2 E_6$.


\section{The Tangent Algebra} \index{the tangent algebra}


In this section, we demonstrate how the Lie algebra associated with elementary Chevalley groups can also be reconstructed from the group itself. To achieve this, we introduce the concept of the tangent algebra, following the approach outlined by A. A. Klyachko in \cite{AK}.


\subsection{The case of Chevalley Groups}

Let $\mathcal{L}$ be a standard simple Lie algebra of type $\Phi$ over $\mathbb{C}$ (i.e., it can be thought of as a subalgebra of $M_n(\mathbb{C})$). 
Let $R$ be a commutative ring with unity. 
Consider Chevalley algebra $\mathcal{L}(\Phi, R)$ and corresponding adjoint elementary Chevalley group $E (R) = E_{\text{ad}}(\Phi, R)$. 

Let $R[t]$ be the polynomial ring over $R$ with indeterminate $t$. We call elements of $E(R[t])$ \emph{curves} on $E$ defined over $R$ and refer to $E(R[t])$ as the group of these curves. 
Let $f(t) \in R[t]$ be a polynomial. We define a map $\text{REP}_f: E(R[t]) \to E(R[t])$ that maps a curve $g(t) \in E(R[t])$ to the curve $g(f(t))$. This map is a group endomorphism of $E(R[t])$. We call $g(f(t))$ the \emph{reparameterization} of the curve $g(t)$ by the polynomial $f(t)$. 

We now define a tangent space 
\[
    T(E (R)) = \{ X \in M_n(R) \mid 1 + t X + t^2 Y \in E (R[t]) \text{ for some } Y \in M_n(R[t]) \}.
\]
This set is an $R [E(R)]$-module, i.e., it is closed under the following operations:
\begin{enumerate}[(i)]
    \item Addition: $(1 + t X + o(t))(1+ t Y + o(t)) = 1 + t(X+Y) + o(t)$; 
    \item The scalar multiplication by $r \in R$: $\text{REP}_{rt}(1 + tX + o(t)) = 1 + r t X + o(t)$;
    \item The action of the group $E (R)$: $g(1 + t X + o(t))g^{-1} = (1+t gXg^{-1} + o(t))$ (henceforth, we define $g \circ X := g X g^{-1}$).
\end{enumerate}
Here, $o(t)$ denotes a matrix of the form $t^2 Z$, where $Z \in M_n(R[t])$.

\begin{prop}[{\cite[Proposition 6]{AK}}]\label{prop:tangentspace of E(R)}
    $\mathcal{L}(\Phi, R) = T(E(R))$.
\end{prop}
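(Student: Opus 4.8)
The plan is to establish $\mathcal{L}(\Phi, R) = T(E(R))$ by proving the two inclusions $\mathcal{L}(\Phi, R) \subseteq T(E(R))$ and $T(E(R)) \subseteq \mathcal{L}(\Phi, R)$ separately, throughout regarding $\mathcal{L}(\Phi,R)$ as embedded in $M_n(R)$ via the adjoint representation $\mathrm{ad}$, so that each $X \in \mathcal{L}(\Phi,R)$ is identified with the matrix $\mathrm{ad}(X)$. The three module operations recorded after the definition of $T(E(R))$ — addition of curves, scalar reparametrisation $\mathrm{REP}_{rt}$, and conjugation by $E(R)$ — will be the principal tools for the first inclusion.

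For $\mathcal{L}(\Phi,R) \subseteq T(E(R))$: since $T(E(R))$ is an $R$-module it suffices to exhibit a curve for each element of a Chevalley basis. For a root vector, the curve $x_\alpha(t) = \exp(t\,\mathrm{ad}(X_\alpha)) = 1 + t\,\mathrm{ad}(X_\alpha) + o(t)$ lies in $E(R[t])$ by definition and has tangent $\mathrm{ad}(X_\alpha)$, so $R X_\alpha \subseteq T(E(R))$. The toral directions are the delicate point: here I would compute the tangent of the commutator $[x_\alpha(t), x_{-\alpha}(1)]$. Writing $g = x_{-\alpha}(1)$ and using $x_\alpha(t)^{-1} = x_\alpha(-t)$ gives $[x_\alpha(t),g] = x_\alpha(t)\cdot g\,x_\alpha(-t)\,g^{-1} = 1 + t\big(\mathrm{ad}(X_\alpha) - g\circ \mathrm{ad}(X_\alpha)\big) + o(t)$, whose tangent is $\mathrm{ad}\big(X_\alpha - \exp(\mathrm{ad}\,X_{-\alpha})X_\alpha\big)$. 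Expanding the exponential inside the $\mathfrak{sl}_2$-triple spanned by $X_\alpha, H_\alpha, X_{-\alpha}$ and using $[X_{-\alpha},X_\alpha] = -H_\alpha$ shows this element equals $H_\alpha + c\,X_{-\alpha}$ for some integer $c$. Multiplying the commutator curve by $x_{-\alpha}(-ct)$, which is legitimate by the module structure, cancels the root-vector part and yields a curve with tangent $H_\alpha$. Taking $\alpha = \alpha_i$ gives each $H_i \in T(E(R))$, and since $T(E(R))$ is an $R$-module we recover the whole Cartan summand; combined with the $RX_\alpha$ this gives $\mathcal{L}(\Phi,R)\subseteq T(E(R))$.

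For $T(E(R)) \subseteq \mathcal{L}(\Phi,R)$: the idea is to pass to the ambient Chevalley group and use its scheme structure. Given $X \in T(E(R))$, choose $Y$ with $g(t) = 1 + tX + t^2 Y \in E(R[t]) \subseteq G(R[t])$. Reducing modulo $t^2$ sends $g(t)$ to an $R[t]/(t^2)$-point of $G$ that maps to the identity under $t \mapsto 0$; equivalently, evaluating the integral defining equations $\widetilde p_1, \dots, \widetilde p_m$ of $G \subseteq GL_n$ on $g(t)$ and extracting the coefficient of $t$ forces the differentials $d\widetilde p_i|_1(X)$ to vanish. These linear conditions cut out exactly the tangent space at the identity of the Chevalley–Demazure group scheme, which for the adjoint group is $\mathrm{ad}(\mathcal{L}(\Phi,R))$, whence $X \in \mathcal{L}(\Phi,R)$.

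The hard part will be the second inclusion, and specifically the clean identification of $\ker\big(G_{\mathrm{ad}}(R[t]/(t^2)) \to G_{\mathrm{ad}}(R)\big)$ with $\mathcal{L}(\Phi,R)$ over an arbitrary commutative ring $R$, without domain or field hypotheses. I would handle this by invoking the group-scheme description of $G_\pi(\Phi,-)$ recalled earlier in the thesis: smoothness of the Chevalley–Demazure scheme guarantees that its tangent space at the identity is a free $R$-module functorially identified with the Chevalley algebra, and for $\Lambda_\pi = \Lambda_r$ this is precisely $\mathrm{ad}(\mathcal{L}(\Phi,R))$. Note that this direction uses only the reduction of $R[t]$-curves to the dual numbers, so no lifting of dual-number tangent vectors back to honest polynomial curves is required; the first inclusion already supplies genuine $R[t]$-curves, and the two inclusions together give the stated equality.
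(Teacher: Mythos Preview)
Your argument for $\mathcal{L}(\Phi,R)\subseteq T(E(R))$ is essentially the paper's: the commutator curve $[x_\alpha(t),x_{-\alpha}(1)]$ multiplied by $x_{-\alpha}(-t)$ has tangent $H_\alpha$, and unwinding it reproduces exactly the identity the paper writes down directly, namely $H_{\alpha_i}=x_{\alpha_i}(1)\circ X_{-\alpha_i}+X_{\alpha_i}-X_{-\alpha_i}$.

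For the reverse inclusion your route diverges from the paper's and carries a genuine gap. You pass to $G_{\mathrm{ad}}(R[t]/(t^2))$ and identify the kernel of reduction with $\mathrm{ad}(\mathcal{L}(\Phi,R))$ inside $M_n(R)$. That identification fails over arbitrary $R$: for $\Phi=A_1$ and $R=\mathbb{F}_2$ one has $\mathrm{ad}(H)=0$, so $\mathrm{ad}(\mathfrak{sl}_2(\mathbb{F}_2))$ is $2$-dimensional, whereas $\mathrm{Lie}(PGL_2)(\mathbb{F}_2)\cong\mathfrak{pgl}_2(\mathbb{F}_2)$ embeds via $d\mathrm{Ad}$ as a $3$-dimensional subspace of $M_3(\mathbb{F}_2)$ (the class of $E_{11}$ gives a derivation of $\mathfrak{sl}_2$ not of the form $\mathrm{ad}(Y)$ with $Y\in\mathfrak{sl}_2$). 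Thus your bound $T(E(R))\subseteq d\mathrm{Ad}\bigl(\mathrm{Lie}(G_{\mathrm{ad}})(R)\bigr)$ is strictly weaker than required, and smoothness of the Chevalley--Demazure scheme does not rescue it.

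The paper sidesteps this entirely, and the step you flagged as ``hard'' is in fact the easy one once you stay inside $E(R[t])$: write $1+tX+o(t)=\prod_j x_{\alpha_j}(r_jt^{k_j})$ as a product of elementary generators, observe that only factors with $k_j\in\{0,1\}$ contribute to the $t$-coefficient, set $t=0$ to see that the product of the constant ($k_j=0$) factors is $1$, and then the product rule gives $X=\sum_j g_j\circ(r_j\,\mathrm{ad}\,X_{\alpha_j})$ with each $g_j\in E(R)$, hence $X\in\mathrm{ad}(\mathcal{L}(\Phi,R))$. No scheme theory is needed and no hypothesis on $R$ enters.
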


\begin{proof}
    $(\subseteq):$ Consider a Chevalley basis $\{X_\alpha, H_{\alpha_i} \mid \alpha \in \Phi \text{ and } \alpha_i \in \Delta \}$ of $\mathcal{L}$. Since $T(E(R))$ is an $R[E(R)]$-module, to prove $\mathcal{L}(\Phi, R) \subset T(E(R))$, it suffices to show that each basis element is contained in $T(E(R))$. 
    Clearly, $X_\alpha \in T(E(R))$ because $x_{\alpha}(t) \in E (R[t])$. The remaining basis vectors $H_{\alpha_i} \in T(E(R))$ because 
    \[
        H_{\alpha_i} = x_{\alpha_i} (1) \circ X_{-\alpha_i} + X_{\alpha_i} - X_{-\alpha_i}.
    \]
    
    \noindent $(\supseteq):$ 
    Let $X \in T(E(R))$. This means that $1 + tX + o(t) \in E(R[t])$, and therefore it can be expressed as a product of elementary generators:
    \begin{equation}\label{eq_6.1.1}
        1 + tX + o(t) = \prod_{j=1}^{m} x_{\alpha_j}(r_j t^{k_j}).
    \end{equation}
    Clearly, we can assume that $k_j \in \{0, 1\}$. 
    Let $\mathcal{J}'$ be the subset of $\mathcal{J} = \{1, \dots, m\}$ consisting of all $j$ such that $k_j = 0$. 
    Now, for all $j \in \mathcal{J} \setminus \mathcal{J}'$, define 
    \[
        g_j = \prod_{i \in \mathcal{J}_j} x_{\alpha_i}(r_i) \in E(R),
    \]
    where $\mathcal{J}_i = \mathcal{J}' \cap \{1, \dots, j\}$.
    By putting $t = 0$ in equation~\eqref{eq_6.1.1}, we obtain 
    \[
        g_m = \prod_{j \in \mathcal{J}'} x_{\alpha_j}(r_j) = 1.
    \]
    Using this, we can rewrite equation~\eqref{eq_6.1.1} as 
    \[
        1 + tX + o(t) = \prod_{j \in \mathcal{J} \setminus \mathcal{J}'} g_j x_{\alpha_j}(r_j t) g_j^{-1}.
    \]
    By comparing both sides, we conclude that $X = \displaystyle\sum_{j \in \mathcal{J} \setminus \mathcal{J}'} g_j \circ (r_j X_{\alpha_j}) \in \mathcal{L}(\Phi, R)$, as desired.
\end{proof}


\subsection{The case of Twisted Chevalley Groups}

We now replicate the argument in the context of twisted Chevalley groups. 
Let $R$ be a commutative ring with unity. 
As before, assume there exists an automorphism $\theta$ of $R$ of order two. 
This automorphism can be extended to the polynomial ring $R[t]$ with the variable $t$ as follows: $t \mapsto t$ and $r \mapsto \theta(r)$ for every $r \in R$. Consequently, the group $E'_{\text{ad}, \sigma}(\Phi, R[t])$ is well-defined and contains $E'_{\text{ad}, \sigma}(\Phi, R)$ as a subgroup.

Let $\mathcal{L}$ be a standard simple Lie algebra of type $\Phi$ over $\mathbb{C}$. Consider the twisted Chevalley algebra $\mathcal{L}_\sigma(\Phi, R)$ and the corresponding adjoint elementary twisted Chevalley group $E'_\sigma(R) = E'_{\text{ad}, \sigma}(\Phi, R)$. 
As before, for $f(t) \in R_\theta[t]$, we define a map 
\[
    \text{REP}_f: E'_\sigma(R[t]) \longrightarrow E'_\sigma(R[t])
\]
that sends $g(t) \mapsto f(g(t))$. This map is then an endomorphism of the group $E'_\sigma(R)$.
Define a tangent space
$$T(E'_\sigma (R)) = \{ X \in M_n(R) \mid 1 + t X + t^2 Y \in E'_\sigma (R[t]) \text{ for some } Y \in M_n(R[t]) \}.$$
This set is an $R_\theta [E'_\sigma (R)]$-module, i.e., it is closed with respect to the following operations: 
\begin{enumerate}[(i)]
    \item Addition: $(1 + t X + o(t))(1+ t Y + o(t)) = 1 + t(X+Y) + o(t)$; 
    \item The scalar multiplication by $r \in R_\theta$: $\text{REP}_{rt}(1 + tX + o(t)) = 1 + r t X + o(t)$;
    \item The action of the group $E'_\sigma (R)$: $g(1 + t X + o(t))g^{-1} = (1+t gXg^{-1} + o(t))$ (henceforth, we define $g \circ X := g X g^{-1}$).
\end{enumerate}

Observe that the map $\sigma: E_{\text{ad}}(\Phi, R) \longrightarrow E_{\text{ad}}(\Phi, R)$ can be extended to a $R_\theta$-linear map from $M_n(R)$ onto itself, also denoted by the same symbol $\sigma$. 

\begin{prop}\label{prop:tangentspace of E'(R)}
    \normalfont
    $\mathcal{L}_\sigma (\Phi, R) = T(E'_\sigma (R)).$
\end{prop}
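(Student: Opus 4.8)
The plan is to mirror the proof of Proposition~\ref{prop:tangentspace of E(R)} as closely as possible, exploiting the basis of $\mathcal{L}_\sigma(\Phi, R)$ recorded in Section~\ref{sec:TCA} together with the generators $x_{[\alpha]}(t)$ of $E'_\sigma(R)$ from Section~\ref{sec:E(R)}. As in the untwisted case, the argument splits into the two inclusions.

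For the inclusion $\mathcal{L}_\sigma(\Phi, R) \subseteq T(E'_\sigma(R))$, I would observe that $T(E'_\sigma(R))$ is an $R_\theta[E'_\sigma(R)]$-module, so it suffices to show that each of the basis elements $X^{+}_{[\alpha]}$, $X^{-}_{[\alpha]}(\mathrm{I})$, $X^{-}_{[\alpha]}(\mathrm{II})$, $H^{+}_{[\alpha_i]}$, $H^{-}_{[\alpha_i]}$ lies in $T(E'_\sigma(R))$. The $X^{\pm}$-type vectors come directly from differentiating the root group curves: for $[\alpha]\sim A_1$ we have $x_{[\alpha]}(t)=1+tX_\alpha+o(t)$, and for $[\alpha]\sim A_1^2$ or $A_2$ the curve $x_{[\alpha]}(t)$ (resp.\ $x_{[\alpha]}(at)$ for the $a$-scaled vectors, using the invertible $a\in R_\theta^{-}$) has tangent vector precisely $X^{+}_{[\alpha]}$ (resp.\ $X^{-}_{[\alpha]}(\mathrm{I})$, $X^{-}_{[\alpha]}(\mathrm{II})$); here one reads off the linear term from the definitions of $x_{[\alpha]}$. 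For the Cartan-type vectors $H^{+}_{[\alpha_i]}$ and $H^{-}_{[\alpha_i]}$, I would reproduce the trick from the untwisted proof, writing each as a combination of the form $w_{[\alpha_i]}(1)\circ X_{-[\alpha_i]}+X_{[\alpha_i]}-X_{-[\alpha_i]}$ (suitably interpreted over the class $[\alpha_i]$ and scaled by $a$ where needed), so that it is an $R_\theta[E'_\sigma(R)]$-combination of tangent vectors already shown to lie in $T(E'_\sigma(R))$.

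For the reverse inclusion $T(E'_\sigma(R)) \subseteq \mathcal{L}_\sigma(\Phi, R)$, I would take $X\in T(E'_\sigma(R))$, so $1+tX+o(t)\in E'_\sigma(R[t])$, and write this as a product of generators $\prod_{j} x_{[\alpha_j]}(c_j)$ where each $c_j\in (R[t])_{[\alpha_j]}$. The essential step is to arrange, exactly as in Klyachko's argument, that the parameters $c_j$ are affine in $t$ (degree $\le 1$ in $t$), splitting the index set into the ``constant'' part $\mathcal{J}'$ and the ``linear'' part $\mathcal{J}\setminus\mathcal{J}'$; setting $t=0$ forces the product over $\mathcal{J}'$ to be $1$, which lets me rewrite $1+tX+o(t)$ as a product of conjugates $g_j\, x_{[\alpha_j]}(c_j)\, g_j^{-1}$ with $g_j\in E'_\sigma(R)$ and $c_j$ linear in $t$. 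Comparing linear terms in $t$ then exhibits $X$ as an $R_\theta[E'_\sigma(R)]$-combination of the tangent vectors of the root group curves, all of which lie in $\mathcal{L}_\sigma(\Phi, R)$ by the first inclusion, giving $X\in\mathcal{L}_\sigma(\Phi, R)$.

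The main obstacle I anticipate is the reduction to affine parameters in the reverse inclusion. In the untwisted case one simply replaces $r_jt^{k_j}$ by $k_j\in\{0,1\}$, but over the twisted generators the parameter $c_j$ lives in $(R[t])_{[\alpha_j]}$, which for $[\alpha_j]\sim A_2$ is the set $\mathcal{A}(R[t])$ of pairs $(u,v)$ with $u\bar u = v+\bar v$; truncating such a pair to its affine-in-$t$ part must be done compatibly with this quadratic constraint, and the group law $\oplus$ on $\mathcal{A}$ is nonabelian in its second coordinate. I would handle this by expanding each $c_j$ in powers of $t$, using the explicit formula $x_{[\alpha]}(t,u)x_{[\alpha]}(t',u')=x_{[\alpha]}(t+t',u+u'+\bar t t')$ to split off higher-order contributions into the $o(t)$ term, and checking that the constraint $u\bar u=v+\bar v$ is preserved degree-by-degree so that each truncated factor remains a legitimate element $x_{[\alpha_j]}(\cdot)\in E'_\sigma(R[t])$. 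Once this bookkeeping is in place, the remainder of the argument is formally identical to the proof of Proposition~\ref{prop:tangentspace of E(R)}.
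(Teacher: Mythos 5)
Your proposal follows essentially the same route as the paper: both inclusions are proved exactly as you describe, by checking each basis element of $\mathcal{L}_\sigma(\Phi,R)$ against a root-group curve (using $x_{[\alpha]}(t,t^2/2)$, $x_{[\alpha]}(at,(at)^2/2)$ and $x_{[\alpha]}(0,at)$ in the $A_2$ case) and by truncating a factorization of $1+tX+o(t)$ to parameters affine in $t$. One small correction: in the Cartan step the conjugator must be the unipotent element $x_{[\alpha_i]}(1)$ (resp.\ $x_{[\alpha_i]}(1,1/2)$ when $[\alpha_i]\sim A_2$), not the Weyl element $w_{[\alpha_i]}(1)$ --- conjugating $X_{-[\alpha_i]}$ by $w_{[\alpha_i]}(1)$ lands back in a root space, so the combination you wrote would not produce $H^{\pm}_{[\alpha_i]}$, whereas the paper's identity $H^{+}_{[\alpha]} = x_{[\alpha]}(1)\circ X^{+}_{-[\alpha]} + X^{+}_{[\alpha]} - X^{+}_{-[\alpha]}$ (and its $A_2$ analogue with coefficients $\tfrac{1}{2}$ and $\tfrac{3}{2}$) does. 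Your attention to truncating $\mathcal{A}(R[t])$-parameters compatibly with the constraint $u\bar u=v+\bar v$ is warranted; the paper passes over this point rather quickly.
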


\begin{proof} 
    \noindent $(\subseteq):$ Consider the $R_\theta$-basis of $\mathcal{L}_\sigma(\Phi, R)$ as described in Section~\ref{sec:TCA}. To establish the result, it suffices to show that each basis element lies in $T(E'_\sigma(R))$. We proceed as follows:
    
    \vspace{2mm}

    \noindent \textbf{Case $[\alpha] \sim A_1$ or $A_1^2$.}
    \begin{enumerate}[(a)]
        \item $X_{[\alpha]}^{+} \in T(E'_\sigma (R))$ because $x_{[\alpha]}(t) \in E'_\sigma (\Phi, R_\theta[t])$.
        \item $X_{[\alpha]}^{-} (\mathrm{I}) \in T(E'_\sigma (R))$ because $x_{[\alpha]}(a t) \in E'_\sigma (\Phi, R_\theta[t])$ (this is only in the case of $[\alpha] \sim A_1^2$).
        \item $H_{[\alpha]}^{+} \in T(E'_\sigma (R))$ because $H_{[\alpha]}^{+} = x_{[\alpha]} (1) \circ X_{-[\alpha]}^{+} + X_{[\alpha]}^{+} - X_{-[\alpha]}^{+}$.
        \item $H_{[\alpha]}^{-} \in T(E'_\sigma (R))$ because $H_{[\alpha]}^{-} = x_{[\alpha]} (1) \circ X_{-[\alpha]}^{-} (\mathrm{I}) + X_{[\alpha]}^{-}(\mathrm{I}) - X_{-[\alpha]}^{-}(\mathrm{I})$ (this is only in the case of $[\alpha] \sim A_1^2$).
    \end{enumerate}

    \noindent \textbf{Case $[\alpha] \sim A_2$.}
    \begin{enumerate}[(a)]
        \item $X_{[\alpha]}^{+} \in T(E'_\sigma (R))$ because $x_{[\alpha]}(t, t^2/2) \in E'_\sigma (\Phi, R_\theta[t])$.
        \item $X_{[\alpha]}^{-} (\mathrm{I}) \in T(E'_\sigma (R))$ because $x_{[\alpha]}(at, (at)^2/2) \in E'_\sigma (\Phi, R_\theta[t])$.
        \item $X_{[\alpha]}^{-} (\mathrm{II}) \in T(E'_\sigma (R))$ because $x_{[\alpha]}(0, at) \in E'_\sigma (\Phi, R_\theta[t])$.
        \item $H_{[\alpha]}^{+} \in T(E'_\sigma (R))$ because $H_{[\alpha]}^{+} = x_{[\alpha]} (1, 1/2) \circ X_{-[\alpha]}^{+} + \frac{1}{2} X_{[\alpha]}^{+} - X_{-[\alpha]}^{+}$.
        \item $H_{[\alpha]}^{-} \in T(E'_\sigma (R))$ because $H_{[\alpha]}^{-} = x_{[\alpha]} (1, 1/2) \circ X_{-[\alpha]}^{-} + \frac{3}{2} X_{[\alpha]}^{-} (\mathrm{I}) - X^{-}_{[\alpha]}(\mathrm{II}) - X^{-}_{-[\alpha]}(\mathrm{I})$.
    \end{enumerate}

    \noindent $(\supseteq):$ 
    Let $X \in T(E'_\sigma (R))$, i.e., $1 + t X + o(t) \in E'_\sigma (R[t])$. Write it as a product of elementary generators:
    \begin{equation}\label{eq_6.2.1}
        1 + t X + o(t) = \prod_{j=1}^m x_{[\alpha_j]}(f_j(t)),
    \end{equation}
    where $f_j (t) = r_j \cdot t^{k_j}$ if $[\alpha_j] \sim A_1$ or $A_1^2$; $(r_j^{(1)} t^{k_j^{(1)}}, r_j^{(2)} t^{k_j^{(2)}})$ if $[\alpha] \sim A_2$.
    Clearly, we can assume that $k_j, k_{j}^{(1)}, k_{j}^{(2)} \in \{ 0, 1 \}$.
    Let $\mathcal{J}'$ be the subset of $\mathcal{J} = \{1, \dots, m\}$ consisting of all $j$ such that $k_j = 0$ or $k_j^{(1)} = k_j^{(2)} = 0$.
    Now, for all $j \in \mathcal{J} \setminus \mathcal{J}'$, define
    \[
        g_j = \prod_{i \in \mathcal{J}_j} x_{[\alpha_i]}(f_i (t)) = \prod_{i \in \mathcal{J}_j} x_{[\alpha_i]}(f_i (0)) \in E'_\sigma (R),
    \]
    where $\mathcal{J}_i = \mathcal{J}' \cap \{ 1, \dots, j\}$.
    By putting $t=0$ in equation (\ref{eq_6.2.1}), we obtain
    \[
        g_m = \prod_{j \in \mathcal{J}'} x_{[\alpha_j]}(f_j (0)) = 1.
    \]
    Using this, we can rewrite equation (\ref{eq_6.2.1}) as
    \[
        1 + tX + o(t) = \prod_{j \in \mathcal{J} \setminus \mathcal{J}'} g_j x_{[\alpha_j]}(f_j(t)) g_j^{-1}.
    \]
    By comparing both sides, we conclude that $X = \displaystyle\sum_{j \in \mathcal{J} \setminus \mathcal{J}'} g_j \circ (r_j X^{\pm}_{[\alpha_j]}) \in \mathcal{L}_\sigma (\Phi, R)$, as desired.
\end{proof}


\section{Key Lemma}\label{sec:key lemma}

We now introduce a lemma that plays a crucial role in establishing the main result of this chapter.

\begin{lemma}
    The set $E'_{\pi, \sigma} (\Phi, R)$ generates $M_n (R)$ as an $R$-algebra, where $n$ is the dimension of the corresponding representation $\pi$ of Lie algebra $\mathcal{L}$. 
\end{lemma}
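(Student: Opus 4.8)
The statement asserts that the $R$-subalgebra of $M_n(R)$ generated by $E'_{\pi,\sigma}(\Phi,R)$ is all of $M_n(R)$. The natural strategy is to reduce the problem to a statement about the Lie algebra via the tangent-space construction of the previous section, and then to invoke the irreducibility of the representation $\pi$. Let $\mathcal{A}$ denote the $R$-subalgebra of $M_n(R)$ generated by $E'_{\pi,\sigma}(\Phi,R)$. First I would observe that $\mathcal{A}$ contains the tangent algebra $T(E'_\sigma(R))$: indeed, if $X \in T(E'_\sigma(R))$, then $1 + tX + t^2 Y \in E'_\sigma(R[t])$ for some $Y$, and the elementary generators $x_{[\alpha]}(f(t))$ are themselves (products of) elements of $E'_\sigma(R[t])$ whose matrix entries are polynomials in $t$ with coefficients that are $R$-linear combinations of entries of products of elements of $E'_\sigma(R)$; extracting the coefficient of $t$ expresses $X$ as an $R$-linear combination of products of elements of $E'_\sigma(R)$, hence $X \in \mathcal{A}$. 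By Proposition~\ref{prop:tangentspace of E'(R)}, $T(E'_\sigma(R)) = \mathcal{L}_\sigma(\Phi, R)$, so $\mathcal{A}$ contains $\mathcal{L}_\sigma(\Phi, R)$. Since $\mathcal{L}(\Phi, R) = R \otimes_{R_\theta} \mathcal{L}_\sigma(\Phi, R)$ (as recorded in Section~\ref{sec:TCA}), and $\mathcal{A}$ is an $R$-algebra, $\mathcal{A}$ in fact contains the full Chevalley algebra $\mathcal{L}(\Phi, R)$ as an $R$-submodule of $M_n(R)$.

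\textbf{Reduction to irreducibility.}
Once $\mathcal{A} \supseteq \mathcal{L}(\Phi, R)$, the task becomes showing that the $R$-subalgebra of $M_n(R)$ generated by the image $\pi(\mathcal{L})$ (extended to $R$) is all of $M_n(R)$. Over $\mathbb{C}$, the representation $\pi$ is irreducible and faithful, so by Burnside's theorem the associative $\mathbb{C}$-subalgebra of $M_n(\mathbb{C})$ generated by $\pi(\mathcal{L})$ is all of $M_n(\mathbb{C})$; equivalently, the enveloping associative algebra of $\pi(\mathcal{L})$ is $M_n(\mathbb{C})$. The plan is to transfer this to arbitrary $R$: choose finitely many products $P_1, \dots, P_{n^2}$ of elements of a Chevalley basis (viewed inside $\mathcal{U}(\mathcal{L})$ acting on $V$) whose images in $M_n(\mathbb{C})$ form a $\mathbb{C}$-basis. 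The determinant of the $n^2 \times n^2$ matrix of their entries is a nonzero integer, but over a general $R$ this determinant need not be invertible. I would therefore work with the Kostant $\mathbb{Z}$-form: by construction the elementary generators and hence the tangent algebra realize the divided-power structure, and the integral span $\mathcal{U}_\mathbb{Z}$ acts on the admissible lattice $M$. One then checks that the products $P_i$ can be chosen so that they already generate $M_n(\mathbb{Z})$ — or at least $M_n(\mathbb{Z}[1/N])$ for a controlled set of primes — and tensoring with $R$ yields $M_n(R)$.

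\textbf{The main obstacle.}
The delicate point is precisely this integrality: Burnside gives generation only over a field of characteristic zero, and passing to $M_n(R)$ for an arbitrary commutative ring $R$ (with the standing hypotheses $1/2 \in R$, and $1/3 \in R$ when $\Phi_\rho \sim {}^3 D_4$) requires that the chosen generating products span $M_n$ over $\mathbb{Z}$ localized away from exactly the primes that are already inverted. I expect the cleanest route is to argue that the associative algebra generated by $\mathcal{L}(\Phi, R)$ inside $M_n(R)$ is an $R$-form of $M_n$ that is a direct summand, and then to use that $M_n(\mathbb{Z})$ is generated as a ring by the matrix units $E_{ij}$, each of which must be exhibited as an integer (or $1/2$-, $1/3$-) combination of products of Chevalley basis elements acting on the weight-vector basis of $V$. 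Because $\pi$ is faithful and the weight spaces are one-dimensional along a highest-weight orbit, the root-vector actions $\pi(X_\alpha)$ shift between weight spaces and, together with the torus elements $H_i$ acting diagonally, already produce all matrix units after inverting the relevant small primes; verifying that no further primes beyond $2$ (and $3$ in the ${}^3D_4$ case) are needed is the computation I would isolate and treat type-by-type, using the explicit weight diagrams of the minimal representations $\pi$.
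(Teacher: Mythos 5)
Your overall architecture --- first show that the subalgebra $\mathcal{A}$ generated by $E'_{\pi,\sigma}(\Phi,R)$ contains the Lie algebra, then show the Lie algebra generates $M_n(R)$ --- matches the paper's, but both of your steps have gaps exactly where the paper's proof has its actual content.

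The first gap is your claim that $\mathcal{A} \supseteq T(E'_\sigma(R))$ by ``extracting the coefficient of $t$.'' If $1+tX+t^2Y = \prod_j x_{[\alpha_j]}(f_j(t))$, the coefficient of $t$ of the product is $\sum_j M_1(0)\cdots M_j'(0)\cdots M_m(0)$, and the derivatives $M_j'(0)$ involve $\pi(X_{\alpha_j})$ itself --- which is precisely what you are trying to show lies in $\mathcal{A}$, so the argument is circular. Nor can you isolate $X$ by finite differences of $g(s)=1+sX+s^2Y(s)$ at several points $s\in R$: over a general commutative ring you do not have enough units with invertible differences, and the degree of $Y$ is uncontrolled. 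The paper instead proves directly that each individual \emph{untwisted} root vector $\pi(X_\alpha)$ lies in $\mathcal{M}$, by explicit identities that use three specific pieces of structure: minimality of $\pi$ (so $\pi(X_\alpha)^3=0$ and $x_\alpha(t)$ is quadratic in $t$), the hypothesis $1/2\in R$, and the invertible element $a$ with $\theta(a)=-a$, which is what separates $X_\alpha$ from $X_{\bar\alpha}$ in the $A_1^2$ and $A_2$ cases, e.g.
\[
X_{\alpha} = \tfrac{1}{4}\Big( \big(x_{[\alpha]}(1) - x_{[\alpha]}(-1)\big) + a^{-1}\big(x_{[\alpha]}(a) - x_{[\alpha]}(-a)\big)\Big).
\]
Your sketch never uses $a$ or the nilpotency, which signals that the real computation has been skipped. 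Obtaining the full untwisted $\mathcal{L}(\Phi,R)$, rather than only $\mathcal{L}_\sigma(\Phi,R)$, is also what lets one re-exponentiate and conclude $E_\pi(\Phi,R)\subset\mathcal{M}$.

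The second gap is the endgame. Having reduced to the untwisted statement, you propose to prove that $\pi(\mathcal{L}(\Phi,R))$ (equivalently $E_\pi(\Phi,R)$) generates $M_n(R)$ via Burnside's theorem over $\mathbb{C}$ plus a type-by-type integrality verification --- and you explicitly defer that verification, which is the entire remaining difficulty. The paper does not reprove this; it invokes Lemma~2 of \cite{EB24:final}, which states exactly that $E(R)$ generates $M_n(R)$ as an $R$-algebra for the minimal representations in question. So your reduction is correct in spirit, but as written the proof is incomplete both at the entry point (getting $\pi(X_\alpha)\in\mathcal{A}$) and at the exit point (the untwisted generation statement).
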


\begin{proof}
        Let $\mathcal{M}$ be the $R$-subalgebra of $M_n (R)$ generated by the set $E'_\sigma (R)$. To prove the lemma we must show that $\mathcal{M} = M_n (R)$.
        We now assume that $\pi$ is a minimal representation (the definition of minimal representation can be found in Section 3 of Chapter 3 in \cite{NV1}). Therefore $\pi (X_\alpha)^3 = 0$ for all $\alpha \in \Phi$ (note that we only consider the case where $\Phi \sim A_n, D_n$, or $E_6$). In particular, we have 
        \[
            x_{\alpha}(t) = 1 + t \, \pi(X_\alpha) + \frac{t^2 \, \pi (X_\alpha)^2}{2!}.
        \]

        We now claim that the elements $\pi (X_\alpha) \in \mathcal{M}$ for every $\alpha \in \Phi$.
        To see this, let $\alpha \in \Phi$ and let $[\alpha]$ denote the corresponding element in $\Phi_\rho$. 
        Just for our convenience, we use the notation $X_{\alpha}$ to denote the linear map $\pi(X_{\alpha})$.
    
        Suppose $[\alpha] \sim A_1$, then 
        \[
            x_{[\alpha]}(t) = x_{\alpha}(t) = 1 + t \, X_\alpha + \frac{t^2 \, (X_\alpha)^2}{2!}.
        \] 
        Then we can easily check that 
        \[
            X_\alpha = \frac{(x_{[\alpha]}(1) - x_{[\alpha]}(-1))}{2}.
        \] 
    
        Now suppose $[\alpha] \sim A_1^2$, then 
        \begin{equation*}
            \begin{split}
                x_{[\alpha]}(t) = x_{\alpha} (t) x_{\bar{\alpha}}(\bar{t}) = 1 + t \, X_\alpha + \bar{t} \, X_{\bar{\alpha}} + \frac{t^2 \, (X_\alpha)^2}{2} + \frac{(\bar{t})^2 \, (X_{\bar{\alpha}})^2}{2} + t \bar{t} \, (X_\alpha \, X_{\bar{\alpha}}) \\
                \frac{t (\bar{t})^2 \, (X_\alpha \, (X_{\bar{\alpha}})^2)}{2} + \frac{(t^2) \bar{t} \, ((X_\alpha)^2 \, X_{\bar{\alpha}})}{2} + \frac{(t \bar{t})^2 \, ((X_\alpha)^2 \, (X_{\bar{\alpha}})^2)}{4}.
            \end{split}
        \end{equation*}
        A simple calculation on roots shows that 
        \[
            X_\alpha \, (X_{\bar{\alpha}})^2 
            = (X_\alpha)^2 \, X_{\bar{\alpha}} 
            = (X_\alpha)^2 \, (X_{\bar{\alpha}})^2
            = 0.
        \]
        Therefore, 
        \begin{equation*}
            x_{[\alpha]}(t) = 1 + 
            \Big( t \, X_\alpha + \bar{t} \, X_{\bar{\alpha}} \Big) + 
            \frac{1}{2} \Big( t \, X_\alpha + \bar{t} \, X_{\bar{\alpha}} \Big)^2.
        \end{equation*}
        Then we have 
        \begin{align*}
            X_{\alpha} + X_{\bar{\alpha}} &= \frac{1}{2} (x_{[\alpha]}(1) - x_{[\alpha]}(-1)), \text{ and} \\
            X_{\alpha} - X_{\bar{\alpha}} &= \frac{1}{2a} (x_{[\alpha]}(a) - x_{[\alpha]}(-a)).
        \end{align*}
        Hence 
        \begin{align*}
            X_{\alpha} &= \frac{1}{4} \Big( \big(x_{[\alpha]}(1) - x_{[\alpha]}(-1) \big) + a^{-1} \big(x_{[\alpha]}(a) - x_{[\alpha]}(-a) \big) \Big), \text{ and} \\
            X_{\bar{\alpha}} &= \frac{1}{4} \Big( \big(x_{[\alpha]}(1) - x_{[\alpha]}(-1) \big) - a^{-1} \big(x_{[\alpha]}(a) - x_{[\alpha]}(-a) \big) \Big).
        \end{align*}

        Finally suppose $[\alpha] \sim A_2$, then 
        \begin{align*}
            x_{[\alpha]}(t,u) &= x_{\alpha} (t) x_{\bar{\alpha}}(\bar{t}) x_{\alpha + \bar{\alpha}} (N_{\bar{\alpha}, \alpha} u) \\
            &= 1 + \Big( t \, X_\alpha + \bar{t} \, X_{\bar{\alpha}} + N_{\bar{\alpha}, \alpha} u \, X_{\alpha + \bar{\alpha}} \Big) 
            + \Big( \frac{t^2}{2} \, (X_\alpha)^2 + \frac{(\bar{t})^2}{2} \, (X_{\bar{\alpha}})^2 + \frac{u^2}{2} \, (X_{\alpha + \bar{\alpha}})^2 \\
            & \hspace{5mm} + t \bar{t} \, (X_\alpha \, X_{\bar{\alpha}}) + N_{\bar{\alpha},\alpha} t u \, (X_\alpha \, X_{\alpha + \bar{\alpha}}) + N_{\bar{\alpha},\alpha} \bar{t} u \, (X_{\bar{\alpha}} \, X_{\alpha + \bar{\alpha}}) \Big) \\
            & \hspace{5mm} + \Big( N_{\bar{\alpha}, \alpha} t \bar{t} u \, (X_\alpha \, X_{\bar{\alpha}} \, X_{\alpha+ \bar{\alpha}}) + \frac{t^2 \bar{t}}{2} \, ((X_\alpha)^2 \, X_{\bar{\alpha}}) + \frac{t (\bar{t})^2}{2} \, (X_\alpha \, (X_{\bar{\alpha}})^2) \Big).
        \end{align*}
        Note that all the other terms in the expansion of the product $x_{\alpha} (t) x_{\bar{\alpha}}(\bar{t}) x_{\alpha + \bar{\alpha}} (N_{\bar{\alpha}, \alpha} u)$ which does not appear in the above expression are $0$ (for instance $X_\alpha \, (X_{\alpha + \bar{\alpha}})^2 = 0, (X_\alpha)^2 \, X_{\alpha + \bar{\alpha}} = 0,$ etc.). 
        First note that, 
        \[
            X_{\alpha + \bar{\alpha}} = \frac{N_{\bar{\alpha},\alpha}}{2a} \Big( X_{[\alpha]}(0,a) - x_{[\alpha]}(0, -a) \Big).
        \]
        Now define
        \begin{align*}
            \mathcal{F}_{[\alpha]}(t, u) &= x_{[\alpha]}(t,u) - x_{[\alpha]}(-t, u) \\
            &= 2(t X_\alpha + \bar{t} X_{\bar{\alpha}}) + 2 u (t X_\alpha X_{\alpha + \bar{\alpha}} + \bar{t} X_{\bar{\alpha}} X_{\alpha + \bar{\alpha}}) + t \bar{t} (t (X_\alpha)^2 X_{\bar{\alpha}} + \bar{t} X_{\alpha} (X_{\bar{\alpha}})^2).
        \end{align*}
        Then 
        \begin{align*}
            X_{\alpha} + X_{\bar{\alpha}} &= \frac{1}{12} \big( 8 \mathcal{F}(1, 1/2) - \mathcal{F}(2, 2) \big), \text{ and } \\
            X_{\alpha} - X_{\bar{\alpha}} &= \frac{1}{12 a} \big( 8 \mathcal{F}(a, -\frac{a^2}{2}) - \mathcal{F}(2a, -2a^2) \big).
        \end{align*}
        Hence
        \begin{align*}
            X_{\alpha} &= \frac{1}{24} \Big( \big( 8 \mathcal{F}(1, 1/2) - \mathcal{F}(2, 2) \big) + a^{-1} \big( 8 \mathcal{F}(a, -\frac{a^2}{2}) - \mathcal{F}(2a, -2a^2) \big) \Big), \text{ and } \\
            X_{\bar{\alpha}} &= \frac{1}{24} \Big( \big( 8 \mathcal{F}(1, 1/2) - \mathcal{F}(2, 2) \big) - a^{-1} \big( 8 \mathcal{F}(a, -\frac{a^2}{2}) - \mathcal{F}(2a, -2a^2) \big) \Big).
        \end{align*}
    
        Therefore, $\pi (X_{\alpha}) \in \mathcal{M}$ for every $\alpha \in \Phi$. 
        In particular, $\pi (\mathcal{L}(R)) \subset \mathcal{M}$. 
        Hence, $E_\pi(\Phi, R) \subset \mathcal{M}$. 
        By Lemma $2$ of \cite{EB24:final}, the set $E(R)$ generates $M_n(R)$ as an $R$-algebra. Therefore, $\mathcal{M} = M_n(R)$. 
        This completes the proof of the lemma.
\end{proof}


\section{Normalizer of \texorpdfstring{$G_{\pi,\sigma}(\Phi, R)$}{G(R)} and \texorpdfstring{$E'_{\pi,\sigma}(\Phi, R)$}{E(R)} in \texorpdfstring{$G_{\pi,\sigma}(\Phi, S)$}{G(S)}}\label{sec:normalizer_of_G_and_E}

The primary goal of this section is to establish the following theorem:

\begin{thm}\label{thm:normalizer_of_G_and_E}
    \normalfont
    Let $R$ be a commutative ring with unity. 
    Let $G_{\pi, \sigma} (\Phi, R)$ be a twisted Chevalley group of type ${}^2 A_n \ (n \geq 3), {}^2 D_n \ (n \geq 4)$ or ${}^2 E_6$, and let $E'_{\pi, \sigma} (\Phi, R)$ be its elementary subgroup. 
    Assume that $1/2 \in R$ and, if $\Phi_\rho \sim {}^2 A_{2n}$, additionally assume that $1/3 \in R$. 
    Furthermore, assume that there exists an invertible element $a \in R$ such that $\theta(a) = - a$.  
    If $S$ is a ring extension of $R$, then we have
    \[
        N_{G_{\pi,\sigma}(\Phi, S)}(G_{\pi, \sigma}(\Phi, R)) = N_{G_{\pi,\sigma} (\Phi, S)} (E'_{\pi, \sigma} (\Phi, R)).
    \]
    Moreover, if $G$ is of adjoint type, we have
    \[
        N_{G_{\text{ad},\sigma}(\Phi, S)}(G_{\text{ad}, \sigma}(\Phi, R)) = N_{G_{\text{ad},\sigma} (\Phi, S)} (E'_{\text{ad}, \sigma} (\Phi, R)) = G_{\text{ad}, \sigma} (\Phi, R).
    \]
\end{thm}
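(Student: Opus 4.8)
The strategy is to establish the chain of equalities by proving two inclusions for the first identity and then strengthening to the adjoint case. The key conceptual tool is the tangent algebra $T(E'_\sigma(R))$ from Proposition~\ref{prop:tangentspace of E'(R)}, which recovers $\mathcal{L}_\sigma(\Phi, R)$ intrinsically from the group $E'_\sigma(R)$, together with the Key Lemma of Section~\ref{sec:key lemma}, which shows $E'_\sigma(R)$ generates $M_n(R)$ as an $R$-algebra. The plan is as follows.

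First I would prove the inclusion $N_{G_\sigma(S)}(G_\sigma(R)) \subseteq N_{G_\sigma(S)}(E'_\sigma(R))$. Since $E'_\sigma(R)$ is characteristic in $G_\sigma(R)$ (Corollary~\ref{char subgrp}), any $g \in G_\sigma(S)$ normalizing $G_\sigma(R)$ induces by conjugation an automorphism of $G_\sigma(R)$, which must carry the characteristic subgroup $E'_\sigma(R)$ to itself; hence $g$ normalizes $E'_\sigma(R)$. For the reverse inclusion $N_{G_\sigma(S)}(E'_\sigma(R)) \subseteq N_{G_\sigma(S)}(G_\sigma(R))$, suppose $g$ normalizes $E'_\sigma(R)$. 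Conjugation by $g$ is an $R_\theta$-linear (indeed $S$-conjugation) map preserving $E'_\sigma(R)$, and since the tangent-space construction $T(E'_\sigma(R))$ is defined purely group-theoretically and is preserved under the group-action operation $g \circ X = gXg^{-1}$, the element $g$ must preserve $T(E'_\sigma(R)) = \mathcal{L}_\sigma(\Phi, R)$. The point is then to upgrade preservation of the Lie algebra to preservation of the whole group $G_\sigma(R)$: using the Key Lemma, $g$ conjugates the $R$-algebra generated by $E'_\sigma(R)$, namely $M_n(R)$, into $M_n(R)$, so conjugation by $g$ restricts to an $R$-algebra automorphism of $M_n(R)$ compatible with $\sigma$; combined with preservation of the torus part (via the relation $G_\sigma(R) = E'_\sigma(R) T_\sigma(R)$ over appropriate rings and the action of $T_\sigma$ on root subgroups), this forces $g G_\sigma(R) g^{-1} = G_\sigma(R)$.

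For the adjoint statement, once both normalizers are shown equal it remains to prove $N_{G_{\text{ad},\sigma}(\Phi,S)}(E'_{\text{ad},\sigma}(\Phi,R)) = G_{\text{ad},\sigma}(\Phi,R)$. The containment $G_{\text{ad},\sigma}(\Phi,R) \subseteq N(\cdots)$ follows from normality of $E'_\sigma(R)$ in $G_\sigma(R)$ (Proposition~\ref{normal}). For the reverse containment, take $g \in N_{G_{\text{ad},\sigma}(\Phi,S)}(E'_{\text{ad},\sigma}(\Phi,R))$; by the above, conjugation by $g$ preserves $\mathcal{L}_\sigma(\Phi, R)$ and hence defines an $R_\theta$-algebra automorphism of $\mathcal{L}_\sigma(\Phi, R)$ and an $R$-algebra automorphism of $\mathcal{L}(\Phi, R)$. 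Invoking the structure of automorphisms of Chevalley algebras (the decomposition $\mathrm{Aut}_R(\mathcal{L}(\Phi,R)) \cong G_{\text{ad}}(\Phi,R) \rtimes \mathcal{D}(\Phi,R)$ from Section~\ref{sec:auto}, restricted to its $\sigma$-fixed part), the automorphism induced by $g$ must already be realized by an element of $G_{\text{ad},\sigma}(\Phi, R)$ acting by conjugation. Since adjoint groups have trivial center, this element agrees with $g$, giving $g \in G_{\text{ad},\sigma}(\Phi,R)$. Here the invertible $a$ with $\theta(a) = -a$ and the hypothesis $1/2 \in R$ (and $1/3 \in R$ for ${}^2A_{2n}$) are used to guarantee the tangent-space basis computations of Proposition~\ref{prop:tangentspace of E'(R)} go through and that graph-automorphism obstructions of the $D_{2n}$ type do not interfere.

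\textbf{Main obstacle.} The hardest step will be the reverse inclusion $N(E'_\sigma(R)) \subseteq N(G_\sigma(R))$, specifically the passage from ``$g$ preserves the Lie algebra $\mathcal{L}_\sigma(\Phi,R)$'' to ``$g$ preserves the full group $G_\sigma(R)$.'' Preserving $\mathcal{L}_\sigma(\Phi,R)$ and the generated algebra $M_n(R)$ tells us $g$ induces a compatible algebra automorphism, but one must carefully recover all of $G_\sigma(R)$ — including its non-elementary, torus part — from this data, and show no ``extra'' elements of $G_\sigma(S)$ sneak in; this requires combining the semilocal/localization decomposition results ($G_\sigma(R) = G'_\sigma(R)$ type statements, Proposition~\ref{G=G'}) with control over how $g$ acts on $T_\sigma(R)$, and is where the argument is most delicate. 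I expect the cleanest route is to work entirely through the faithful action on $M_n(R)$ and exploit that $G_\sigma(R)$ is exactly the set of $\sigma$-fixed points of $G_\pi(\Phi,R)$ cut out inside $M_n(R)$ by the defining conditions, which conjugation by $g$ respects.
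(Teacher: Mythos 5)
Your proposal is correct and follows essentially the same route as the paper: the characteristic-subgroup argument (Corollary~\ref{char subgrp}) for one inclusion, the Key Lemma for the other, and the tangent algebra plus Klyachko's decomposition of $\mathrm{Aut}_R(\mathcal{L}(\Phi,R))$ plus triviality of the centre for the adjoint statement. The one place where you diverge — and where you can simplify considerably — is the step you flag as the ``main obstacle.'' The paper does not need any control of the torus part, any decomposition $G_\sigma(R)=E'_\sigma(R)T_\sigma(R)$, or any localization to pass from preservation of $E'_\sigma(R)$ to preservation of $G_\sigma(R)$: once the Key Lemma gives $gM_n(R)g^{-1}=M_n(R)$, hence $g\,GL_n(R)\,g^{-1}=GL_n(R)$, one simply observes that $g G_{\pi,\sigma}(\Phi,R) g^{-1}$ lies in both $GL_n(R)$ and $G_{\pi,\sigma}(\Phi,S)$, and $GL_n(R)\cap G_{\pi,\sigma}(\Phi,S)=G_{\pi,\sigma}(\Phi,R)$ because the group is cut out by polynomial equations with coefficients in $R$ together with the $\sigma$-fixed-point condition. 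This is exactly the ``cleanest route'' you mention at the end of your proposal, so you should commit to it rather than treating it as a hope; the semilocal decomposition results play no role here. Two smaller points: the tangent algebra is needed only for the adjoint part (not for the first identity), and in the adjoint argument you should make explicit why the graph-automorphism factor $\delta$ in the decomposition $i_g=i_{g'}\circ\delta$ is trivial — the paper invokes the fact that the set of graph automorphisms meets the set of inner automorphisms of $E_{\mathrm{ad}}(\Phi,R)$ trivially, and only then does the trivial-centre argument identify $g$ with $g'$.
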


\begin{proof}
    We begin by proving that $N_{G_{\pi, \sigma} (\Phi, S)}(G_{\pi, \sigma} (\Phi, R)) = N_{G_{\pi, \sigma} (\Phi, S)}(E'_{\pi, \sigma} (\Phi, R))$. 
    Since $E'_{\pi, \sigma} (\Phi, R)$ is a characteristic subgroup of $G_{\pi,\sigma} (\Phi, R)$ (see Corollary~\ref{char subgrp}), it follows that 
    $$ N_{G_{\pi, \sigma} (\Phi, S)}(G_{\pi, \sigma} (\Phi, R)) \subset N_{G_{\pi,\sigma} (\Phi, S)}(E'_{\pi, \sigma} (\Phi, R)). $$ 
    
    For the reverse inclusion, let $g \in N_{G_{\pi,\sigma} (\Phi, S)} (E'_{\pi, \sigma} (\Phi, R))$. 
    Thus, $g E'_{\pi, \sigma} (\Phi, R) g^{-1} = E'_{\pi, \sigma} (\Phi, R).$
    By the above lemma, we have $g M_n (R) g^{-1} = M_n (R)$. 
    This implies that $g GL_n (R) g^{-1} = GL_n (R)$. 
    In particular, $g G_{\pi, \sigma} (\Phi, R) g^{-1} \subset GL_n (R)$.
    On the other hand, $g G_{\pi, \sigma} (\Phi, R) g^{-1} \subset G_{\pi, \sigma} (\Phi, S)$ as $g \in G_{\pi, \sigma} (\Phi, S)$.
    Therefore, we have 
    \[
        g G_{\pi, \sigma}(\Phi, R) g^{-1} \subset GL_n(R) \cap G_{\pi, \sigma} (\Phi, S) = G_{\pi, \sigma} (\Phi, R).
    \]
    Thus, $g \in N_{G_{\pi, \sigma} (\Phi, S)}(G_{\pi, \sigma} (\Phi, R))$, which implies that $$N_{G_{\pi, \sigma} (\Phi, S)}(E'_{\pi, \sigma} (\Phi, R)) \subset N_{G_{\pi, \sigma} (\Phi, S)}(G_{\pi, \sigma} (\Phi, R)),$$ as desired.  

    Now, it remains to prove that $N_{G_{\text{ad}, \sigma} (\Phi, S)} (E'_{\text{ad}, \sigma} (\Phi, R)) = G_{\text{ad}, \sigma} (\Phi, R).$ 
    Clearly, $$G_{\text{ad}, \sigma} (\Phi, R) \subset N_{G_{\text{ad}, \sigma} (\Phi, S)} (G_{\text{ad}, \sigma} (\Phi, R)) = N_{G_{\text{ad}, \sigma} (\Phi, S)} (E'_{\text{ad}, \sigma} (\Phi, R)).$$ 
    To establish the reverse inclusion, let $g \in N_{G_{\text{ad}, \sigma} (\Phi, S)} (E'_{\text{ad}, \sigma} (\Phi, R))$. 
    Then, by definition, $$g E'_{\text{ad}, \sigma} (\Phi, R) g^{-1} = E'_{\text{ad}, \sigma} (\Phi, R).$$ 
    Using Proposition~\ref{prop:tangentspace of E'(R)}, we conclude that $$g (\text{ad}(\mathcal{L}_\sigma (\Phi, R))) g^{-1} = \text{ad}(\mathcal{L}_\sigma (\Phi, R)).$$
    By ``complexification", we get $$g (\text{ad}(\mathcal{L} (\Phi, R))) g^{-1} = \text{ad}(\mathcal{L} (\Phi, R)).$$
    That is, $i_g$ is an automorphism of the Lie algebra $\text{ad}(\mathcal{L}(\Phi, R))$. 
    Thus, $i_g$ can be decomposed as $i_g = i_{g'} \circ \delta$, where $g' \in G_{\text{ad}}(\Phi, R)$ and $\delta$ is a graph automorphism (see Theorem 1 of \cite{AK} or Lemma 3 of \cite{EB12:main}). 
    Consequently, $i_g$ is an automorphism of $E_{\text{ad}} (\Phi, R)$ that admits the same decomposition.
    Since the intersection between the set of all graph automorphisms and the set of all inner automorphisms of $E_{\text{ad}} (\Phi, R)$ is trivial (see \cite{EA4}), we deduce that $\delta = \operatorname{id}$.
    It follows that $g \cdot {g'}^{-1} = \lambda \cdot 1_n \in G_{\text{ad}}(\Phi, S)$ for some $\lambda \in S$. 
    Since there are no scalar matrices in the group of adjoint type (as the centre is trivial, see \cite{EA&JH}), we conclude that $\lambda = 1$. 
    Consequently, $g = g' \in G_{\text{ad}, \sigma}(\Phi, S) \cap G_{\text{ad}}(\Phi, R) = G_{\text{ad}, \sigma}(\Phi, R)$.
    Therefore, $N_{G_{\text{ad}, \sigma} (\Phi, S)} (E'_{\text{ad}, \sigma} (\Phi, R)) \subset G_{\text{ad}, \sigma} (\Phi, R)$, as desired.
    This completes the proof of Theorem~\ref{thm:normalizer_of_G_and_E}.
\end{proof}







\printindex


\end{document}